\documentclass[letterpaper, oneside]{report}

\usepackage[english]{babel}
\usepackage[utf8]{inputenc}
\usepackage[T1]{fontenc}
\usepackage{fancyhdr}
\usepackage{csquotes}
\usepackage{cite}

\usepackage{amssymb}
\usepackage{amsmath}
\usepackage{nicefrac}
\usepackage{bbm}
\usepackage{amsfonts}
\usepackage{tensor}
\usepackage{amsthm}
\usepackage{mathrsfs} 
\usepackage{mathtools}
\usepackage{extpfeil}
\usepackage{tikz-cd}
\usepackage{bm}
\usepackage{bbold}
\usepackage[mathscr]{euscript}

\usepackage{color}
\usepackage{graphicx}
\usepackage{wrapfig}
\usepackage{subfigure}
\usepackage{placeins}

\usepackage{multicol}
\usepackage{ltxtable}
\usepackage{url}
\usepackage{paralist}
\usepackage{verbatim}
\usepackage{hyperref}
\usepackage{glossaries}
\usepackage{float}
\usepackage{nameref}
\usepackage{array}
\usepackage{marginnote}
\usepackage{adjustbox}
\usepackage{blindtext}
\usepackage{todonotes}
\usetikzlibrary{decorations.pathmorphing} 
\usepackage{quiver}
\usepackage{yfonts}
\usepackage{geometry}

\newcommand{\field}{\mathbb{k}}
\DeclareMathOperator{\colim}{colim }

\DeclareMathOperator{\map}{Map}
\DeclareMathOperator{\mor}{Mor}
\DeclareMathOperator{\alg}{Alg}
\DeclareMathOperator{\e}{End}
\DeclareMathOperator{\lmod}{LMod}
\DeclareMathOperator{\rmod}{RMod}

\DeclareMathOperator{\dk}{D(\field)}
\DeclareMathOperator{\ch}{Ch}

\DeclareMathOperator{\sym}{\textnormal{Sym}}
\DeclareMathOperator{\an}{\mathscr{S}}
\DeclareMathOperator{\fin}{\mathscr{F}in_{\ast}}
\DeclareMathOperator{\spec}{\textnormal{Spec}}
\DeclareMathOperator{\defm}{\textnormal{Def}}
\DeclareMathOperator{\pr}{\textnormal{Pr}^{\textnormal{L}}}
\newcommand{\obar}{\textnormal{Bar}}
\newcommand{\cobar}{\textnormal{Cobar}}
\newcommand{\ccoalg}{\textnormal{CoAlg}^{\textnormal{conil}}}
\newcommand{\coalg}{\textnormal{CoAlg}}
\newcommand{\lie}{\mathbf{Lie}}
\newcommand{\art}{\alg^{\textnormal{Art}}_{\mathscr{P}}}
\newcommand{\faut}{\widehat{\mathcal{A}\textnormal{ut}}}
\newcommand{\indcoh}{\textnormal{IndCoh}}
\newcommand{\qcoh}{\textnormal{QCoh}}
\renewcommand{\o}{\mathcal{O}^{\otimes}}
\newcommand{\lm}{\mathscr{LM}^{\otimes}}
\newcommand{\alm}{\mathscr{LM}}
\newcommand{\triv}{\mathscr{T}riv^{\otimes}}
\newcommand{\com}{\mathscr{C}om^{\otimes}}

\newcommand{\ass}{{\mathscr{A}ssoc}^{\otimes}}
\newcommand{\aass}{\mathscr{A}ssoc}
\newcommand{\nccoassoc}{\mathbf{coAssoc}^{\textnormal{nc}}}
\newcommand{\coassoc}{\mathbf{coAssoc}}
\newcommand{\assoc}{\mathbf{Assoc}}
\newcommand{\nuassoc}{\mathbf{Assoc}^{\textnormal{nu}}}
\newcommand{\cai}{\mathbb{A}^{\textnormal{u}}_{\infty}}

\newcommand{\braces}{\mathbf{Braces}}
\newcommand{\ger}{\mathbf{Ger}}
\newcommand{\dgcomm}{\mathbf{Com}}
\newcommand{\nucomm}{\mathbf{Com}^{\textnormal{nu}}}
\newcommand{\dgcocomm}{\mathbf{coCom}}
\newcommand{\nccocomm}{\mathbf{coCom}^{\textnormal{nc}}}
\newcommand{\td}{\textnormal{Td}}
\newcommand{\at}{\textnormal{At}}
\newcommand{\gt}{\textnormal{GT}}
\newcommand{\grt}{\textnormal{GRT}}
\newcommand{\aut}{\textnormal{Aut}}
\newcommand{\gal}{\textnormal{Gal}(\bar{\mathbb{Q}}/\mathbb{Q})}
\renewcommand{\hom}{\textnormal{Hom}}

\newcommand{\eone}{C_{\ast}(\mathbb{E}^T_1)}

\newcommand{\dgpsh}[1]{\textnormal{dgPSh}(#1)}
\newcommand{\dgsh}[1]{\textnormal{dgSh}(#1)}
\newcommand{\affpsh}[1]{\textnormal{dgPSh}^{\textnormal{aff}}(#1)}
\newcommand{\affsh}[1]{\textnormal{dgSh}^{\textnormal{aff}}(#1)}
\newcommand{\diag}{\textnormal{Diag}}
\DeclareMathOperator{\n}{\langle n\rangle}
\DeclareMathOperator{\M}{\langle m\rangle}
\DeclareMathOperator{\otimesC}{\otimes_{\mathscr{P}^{\text{!`}}}}

\newcounter{intro}

\newtheorem{theorem}{Theorem}
\newtheorem{Itheorem}[intro]{Theorem}
\newtheorem{lemma}[theorem]{Lemma}
\newtheorem{proposition}[theorem]{Proposition}
\newtheorem{corollary}[theorem]{Corollary}

\newtheorem{conjecture}[theorem]{Conjecture}
\newtheorem{defprop}[theorem]{Definition/Proposition}

\theoremstyle{definition}
\newtheorem{definition}[theorem]{Definition}
\newtheorem{notation}[theorem]{Notation}

\newtheorem{example}[theorem]{Example}

\theoremstyle{remark}
\newtheorem{remark}[theorem]{Remark}
\newtheorem*{remark*}{Remark}

\renewcommand{\thetheorem}{\thesection.\arabic{theorem}}
\renewcommand{\thetheorem}{\thesection.\arabic{theorem}}

\newcommand{\displaypar}{%
  \par\addvspace{\abovedisplayskip}%
}
\begin{document}
\begin{titlepage}
\thispagestyle{empty}
\centering
\vspace*{2cm}
{\huge \bfseries {Derived operadic centers in algebraic geometry and deformation quantization} \par}
\vspace{2cm}
{\large Sonja M. Farr \par}
\vfill
\normalsize{This is the author's Ph.D. thesis, submitted to the University of Nevada, Reno, in August 2026.}
\end{titlepage}
\begin{center}
\section*{Abstract}
\end{center}
\vspace*{1cm}
In algebraic geometry, it is well known that Hochschild cohomology and, in particular, the algebraic structure it carries, plays a central role in studying the infinitesimal noncommutative deformations of geometric spaces. This thesis provides, for the first time, an explicit interface between J. Lurie's work on higher centers and the Hochschild cohomology of an algebraic $\field$-scheme within the framework of formal deformation quantization. Our motivation stems from the mysterious appearance of the square root of the Todd genus in Kontsevich's formality theorem for algebraic varieties, as well as the conjectural relationships between these objects and the motivic Galois group.\par
Our main result is a canonical solution to Deligne's conjecture for Hochschild cochains in the affine and global cases, even for singular schemes, by exhibiting the Hochschild complex as an $\infty$-operadic center. We show that this equips the Hochschild complex with a universal $\mathbb{E}_2$-algebra structure that precisely agrees with the classical Gerstenhaber bracket and cup product on cohomology in the affine and smooth cases. Finally, we interpret this universal $\mathbb{E}_2$-algebra structure in terms of operadic formal moduli problems by exhibiting a relationship between $\infty$-operadic centers and formal automorphism groups. 
\newpage
\setcounter{tocdepth}{2}
\tableofcontents
\newpage
\chapter{Introduction}
Objects equipped with a fixed algebraic structure often assemble into a geometric object called a {moduli space}, rather than forming a discrete set. Such a moduli space is an algebro-geometric object that can be modeled by a ``functor of points'' from a subcategory of differential graded (dg) commutative algebras to homotopy types satisfying some additional properties, as described in the introduction of \cite{DAGX}. A moduli space encodes the relationship between algebraic objects of the given type. For example, given a fixed algebraic object $A$ viewed as a point in the given moduli space, the space of loops at the point $A$ encodes the automorphism group of $A$. {Deformation theory} is the study of the infinitesimal neighborhood of a point $A$ in a moduli space $\mathcal{M}$. Such an infinitesimal neighborhood is called a formal moduli problem. In this thesis, we are interested in the deformation theory of algebras of a given type. Classical examples include associative algebras, commutative algebras, Lie algebras, and Poisson algebras. Such a type of algebra is encoded by an {algebraic operad}, as defined for example in \cite[Chapter 5]{LV}. Operads are given by sequences of objects $\{\mathcal{O}(n)\}_n$, indexed by natural numbers, where we think of $\mathcal{O}(n)$ as the collection of universal $n$-ary operations of a given type of algebra, together with composition operations, which can be represented by grafting of trees 
\begin{center}
    \begin{tikzpicture}[scale=.7,
    line cap=round,
    line join=round,
    strand/.style={black,line width=0.68pt},
    labelnode/.style={
        circle,
        draw,
        fill=white,
        inner sep=1.2pt,
        minimum size=16pt,
        font=\large
    }
]

\coordinate (mu) at (0,0);
\coordinate (nu) at (0,1.52);

\draw[strand] (-0.34,0.10) -- (-2.95,1.98);
\draw[strand] (-0.31,0.20) -- (-2.70,2.05);
\draw[strand] (-0.23,0.30) -- (-2.36,1.94);
\draw[strand] (0.34,0.10) -- (2.95,1.98);
\draw[strand] (0.31,0.20) -- (2.70,2.05);
\draw[strand] (0.23,0.30) -- (2.36,1.94);

\draw[strand] (-0.34,1.62) -- (-1.64,2.17);
\draw[strand] (-0.30,1.72) -- (-1.48,2.25);
\draw[strand] (-0.22,1.82) -- (-1.18,2.25);
\draw[strand] (0.34,1.62) -- (1.64,2.17);
\draw[strand] (0.30,1.72) -- (1.48,2.25);
\draw[strand] (0.22,1.82) -- (1.18,2.25);

\draw[strand] (0,0.33) -- (0,1.19);
\draw[strand] (0,-0.33) -- (0,-0.82);

\node[labelnode] at (mu) {$\mu$};
\node[labelnode] at (nu) {$\nu$};
\node[font=\large] at (0.31,0.76) {};

\end{tikzpicture}
\end{center}
In the 1980s, P. Deligne, V. Drinfeld, and other mathematicians working in deformation theory noticed that, in characteristic 0, any formal moduli problem is controlled by a dg Lie algebra. In particular, the ``shifted'' tangent space of a formal moduli problem admits a dg Lie algebra structure, from which the original formal moduli problem can be reconstructed. At the time, this correspondence was primarily used as a heuristic. Later, it was sharpened into a theorem and rigorously proven by J. Pridham in \cite{Pr} and J. Lurie in \cite{DAGX}.\par
M. Kontsevich used this correspondence in 1997 to solve the problem of formal deformation quantization, which asks whether any Poisson manifold can be perturbatively quantized, i.e. equipped with an associative star product. One can rephrase this question in terms of moduli spaces. On the one hand, consider the formal moduli problem of the trivial Poisson structure on the algebra of smooth functions in the moduli space of Poisson structures. On the other hand, consider the formal moduli problem of the commutative algebra structure on the smooth functions in the moduli space of associative algebra structures. In \cite{K}, Kontsevich exhibits an explicit isomorphism in the homotopy category of dg Lie algebras between the dg Lie algebra controlling the Poisson formal moduli problem, and the dg Lie algebra controlling the associative formal moduli problem. By the Lurie-Pridham correspondence, the existence of this isomorphism implies that any Poisson deformation of the trivial Poisson structure yields an associative deformation of the commutative algebra structure, and hence any Poisson structure can be quantized. \par
The dg Lie algebra controlling the Poisson formal moduli problem is given by the so-called polyvector fields. This is the exterior algebra on vector fields, equipped with the Schouten bracket, which is the unique extension of the commutator bracket to a Lie bracket acting by derivations on the exterior algebra multiplication. The dg Lie algebra corresponding to the associative formal moduli problem is the complex of polydifferential operators, which is a smooth version of the Hochschild cochain complex.\par 
In particular, for a general associative algebra $A$, the dg Lie algebra corresponding to the infinitesimal neighborhood of $A$ in the moduli space of associative algebras is the shifted Hochschild cochain complex $C^{\ast}(A,A)[1]$. The dg Lie algebra structure on this complex was first described by M. Gerstenhaber in his 1963 paper \cite{G}. In addition to the shifted dg Lie structure, the Hochschild cochain complex also admits a cup product, which is only associative on the cochain level, but commutative in cohomology. Gerstenhaber also showed that on Hochschild cohomology, the shifted Lie bracket acts by derivations with respect to the cup product, making Hochschild cohomology $\text{HH}^{\ast}(A,A)$ into a shifted Poisson algebra. Such an algebra is now also called a Gerstenhaber algebra, and there is an operad $\ger$ whose algebras are precisely Gerstenhaber algebras. In a later paper \cite{GV}, Gerstenhaber and A. Voronov identified further canonical algebraic structure on the cochain level in the form of the so-called brace operations. These brace operations encode the cup product and the shifted dg Lie structure, but also a homotopy version of the compatibility of these two structures which corresponds to the Poisson structure in cohomology. It was later proved in \cite{GJ} that the brace operations are also controlled by an operad, called $\braces$. \par
This raises the question: precisely which algebraic structure is present on the Hochschild cochain complex. To this end, F. Cohen proved in \cite{Coh} that the operad $\ger$ governing Gerstenhaber algebras is isomorphic to the homology of the topological operad $D_2$ of little 2-disks, which is the topological operad encoding double loop spaces. This in particular implies that the cohomology of any algebra over the singular chains on little 2-disks operad $C_{\ast}(D_2)$ inherits the structure of a Gerstenhaber algebra. This prompted Deligne to conjecture in a 1993 letter that the Gerstenhaber algebra structure on Hochschild cohomology should lift to the structure of an algebra over the singular chains on little 2-disks operad on the Hochschild cochain complex. Deligne further explained that this $C_{\ast}(D_2)$-algebra structure is expected to come from the fact that the Hochschild cochain complex can be viewed as an ``algebraic double loop space''. This is because it can be identified with the derived automorphisms of $A$ as a bimodule over itself. It therefore inherits two multiplications, one from composition of automorphisms, and one given by the convolution product, which exists because $A$ is the monoidal unit on the monoidal category of $A$-bimodules. Deligne's conjecture has now been proven multiple times, for example in \cite{MCS}, \cite{Vor} and \cite{Tam}. However, all of these solutions to Deligne's conjecture require certain choices that are used to identify $C_{\ast}(D_2)$ with other operads that can be shown to act on the Hochschild complex.\displaypar
This discussion invites the following questions:
\begin{enumerate}
    \item Is there a universal $C_{\ast}(D_2)$-algebra structure on $C^{\ast}(A,A)$ which lifts the cup product and Gerstenhaber bracket in cohomology? If so, how does it compare to the ones constructed in \cite{MCS}, \cite{Vor}, and \cite{Tam}?
    \item How does Kontsevich's formality isomorphism interact with this additional algebraic structure?
    \item What is the deformation-theoretic meaning of the additional algebraic structure? 
\end{enumerate}
The second question was already addressed by Kontsevich in \cite{K}. He proves that a certain linearization of his quantization morphism is compatible with the products on both sides, but only in cohomology. In particular, it induces an isomorphism of commutative algebras in cohomology. \par
A partial answer to the first question can be found in D. Tamarkin's solution to Deligne's conjecture. In \cite{Tam}, Tamarkin constructs a morphism of operads from the homotopy Gerstenhaber algebra operad $\mathbf{Ger}_{\infty}$, which is the canonical resolution of the operad of Gerstenhaber algebras, to the operad $\braces$ encoding the brace operations found in \cite{GV}. This produces the structure of a homotopy Gerstenhaber algebra on the Hochschild cochain complex. This in fact solves Deligne's conjecture, since the dg operad $C_{\ast}(D_2)$ is formal in the sense of rational homotopy theory, i.e. it is equivalent to its homology $H_{\ast}(D_2) \cong \ger$. This was proven by Tamarkin in \cite{Tam2}. \par
However, neither the morphism from the homotopy Gerstenhaber operad to the braces operad, nor the equivalence between the chains on little 2-disks operad and its homology are canonical. Both instead depend on the choice of a so-called Drinfeld associator. These were first defined by Drinfeld in \cite{Dr} in the context of quantization of quasi-triangular quasi-Hopf algebras, or equivalently infinitesimally braided symmetric monoidal categories. Curiously, Drinfeld associators seem to appear in any type of formal quantization problem where one tries to extend an infinitesimal deformation to a formal deformation. \par
Associators form a torsor over an infinite dimensional algebraic group, the (pro-unipotent) Grothendieck-Teichmüller group $\text{GT}(\mathbb{Q})$. This group was also introduced by Drinfeld, building on A. Grothendieck's \cite{Gro2} and Y. Ihara's \cite{Ih} work on geometric actions of the absolute Galois group of the rationals. Drinfeld's pro-unipotent Grothendieck-Teichmüller group contains the motivic analogue of the absolute Galois group as a subgroup. In particular, this implies a direct connection between the homotopy theory of the little 2-disk operad, and the absolute Galois group $\text{Gal}(\bar{\mathbb{Q}}/\mathbb{Q})$. Just like the absolute Galois group, the pro-unipotent Grothendieck-Teichmüller group remains a mysterious object about which little is known. \par
Tamarkin's solution to Deligne's conjecture can be used to construct another formality morphism between polyvector fields and polydifferential operators, as explained in \cite{Tam} and \cite{K2}. By construction, Tamarkin's formality morphism is an $\infty$-quasi-isomorphism of $\ger_{\infty}$-algebras, not just of dg Lie algebras. In particular, regarding the second question, it is compatible with products on the complex level, not just in cohomology. In contrast to Kontsevich's formality morphism, the existence of Tamarkin's formality morphism is proven using obstruction theory, and the proof is thus non-constructive. In particular, while there is an explicit formula for computing the morphism on hypercohomology of Kontsevich's formality morphism at a fixed deformation, it is not known how the linearization at a fixed deformation of Tamarkin's formality morphism acts on hypercohomology. This is especially important for the global version of the comparison between polyvector fields and polydifferential operators, which was proven by D. Calaque and M. Van den Bergh for Kontsevich's formality morphism and Tamarkin's formality morphism in \cite{CVdB} and \cite{CVdB2} respectively.\par
Due to the dependence of Tamarkin's homotopy Gerstenhaber structure on the Hochschild complex on a choice of associator, changing this choice is expected to result in a different $\ger_{\infty}$-structure, and consequently a different $\ger_{\infty}$ formality morphism. Specifically, Kontsevich conjectured in \cite{K2} that the Grothendieck-Teichmüller group should act on the space of such formality quasi-isomorphisms. This conjecture was confirmed by V. Dolgushev, C. Rogers and T. Willwacher in \cite{DRW}. They proved that the Lie algebra of the Grothendieck-Teichmüller group acts on the Gerstenhaber algebra of polyvector fields by derivations in the derived 1-category of complexes of sheaves of $\field$-vector spaces, and therefore on $\ger_{\infty}$ formality maps by pre-composition with this action. This inspires another question:
\begin{enumerate}
    \item[4.] What can we learn about $\text{GT}(\mathbb{Q})$, and consequently about the absolute Galois group $\text{Gal}(\bar{\mathbb{Q}}/\mathbb{Q})$, from its actions on formality morphisms for Hochschild cochains?
\end{enumerate}
\section*{What we do}
In this thesis, we will give a complete answer to the first question by constructing a canonical $C_{\ast}(D_2)$-algebra structure on the Hochschild cochain complex which solves Deligne's conjecture. To do this, we show that the Hochschild complex of an associative algebra $A$ is a derived center of $A$ in the $\infty$-category of homotopy associative algebras.\par 
The theory of derived centers was developed by Lurie in \cite[Section 5.3]{HA}. Given a monoidal $\infty$-category $\mathcal{C}$, the center of an object $X\in \mathcal{C}$ is defined as the universal homotopy associative algebra acting on it. In particular, the center $\mathfrak{Z}(X) \in \alg_{\mathscr{A}ssoc}(\mathcal{C})$ is an associative algebra object in the underlying $\infty$-category. Taking $\mathcal{C}$ to be the symmetric monoidal $\infty$-category of homotopy associative algebras, the derived center of an associative algebra $A$ is a so-called 2-algebra object
\begin{align*}
    \mathfrak{Z}(A) \in \alg_{\mathscr{A}ssoc}(\alg_{\mathscr{A}ssoc}(\dk))
\end{align*}
in the derived $\infty$-category of the field $\field$. By the Dunn additivity theorem \cite[Theorem 5.1.2.2]{HA}, the $\infty$-category of 2-algebra objects is equivalent to the $\infty$-category of algebras over the little 2-disk operad. This is analogous to classical centers in the 1-category of associative algebras, where the center of an associative algebra forms a commutative algebra.\par
By showing that the Hochschild complex can be identified with the derived center of an associative algebra, we therefore obtain a $C_{\ast}(D_2)$-algebra structure on $C^{\ast}(A,A)$. In other words, the Hochschild cochain complex carries the structure of an algebra over the chains on little 2-disks operad because it is a center in the $\infty$-category of homotopy associative algebras. In contrast to the homotopy Gerstenhaber algebra structure constructed by Tamarkin, this $C_{\ast}(D_2)$-algebra structure is canonical, because it is constructed by means of a universal property in some $\infty$-category.\par
On the level of underlying complexes, we prove the identification of the Hochschild complex with the derived center using \cite[Theorem 5.3.1.30]{HA}, which says that, under certain circumstances, the derived center of an algebra $A$ over an operad $\mathcal{O}$ can be computed as the endomorphism object of $A$ viewed as an $\mathcal{O}$-operadic $A$-module. However, to obtain a true solution to Deligne's conjecture, one needs to show that the center $C_{\ast}(D_2)$-algebra structure on $C^{\ast}(A,A)$ lifts the shifted bracket and cup product on cohomology discovered by Gerstenhaber. This was not previously known to hold for the center $C_{\ast}(D_2)$-structure. To prove that it does, we build on the proof of \cite[Theorem 5.3.1.30]{HA} and prove that, as predicted by Deligne, the two underlying products on the center can be identified with the composition and convolution product respectively. We then further show that, in a precise sense, this already uniquely determines the 2-algebra structure on a center up to homotopy. \par
We then develop an $\infty$-categorical Eckmann-Hilton Argument to compute the Gerstenhaber bracket on cohomology of the center of an associative algebra. The 1-cycle in the topological operad $D_2$ corresponding to the Gerstenhaber bracket is the ``double twist''
\begin{center}
\begin{tikzpicture}[
  scale=.9,
  line join=round,
  strand/.style={draw, line width=1.1pt, line cap=round},
  over/.style={
    draw,
    line width=1.1pt,
    line cap=round,
    preaction={draw=white, line width=5pt, line cap=butt}
  }
]

\def\a{0.72}

\foreach \y in {1,0,-1}
  \draw[dashed] (-1.25,\y) -- (1.25,\y);

\draw[strand]
  plot[domain=0:360,samples=250,smooth]
    ({ \a*cos(\x)},{ 2-\x/90 });

\draw[strand]
  plot[domain=0:360,samples=250,smooth]
    ({-\a*cos(\x)},{ 2-\x/90 });

\draw[over]
  plot[domain=78:102,samples=60,smooth]
    ({ \a*cos(\x)},{ 2-\x/90 });

\draw[over]
  plot[domain=258:282,samples=60,smooth]
    ({-\a*cos(\x)},{ 2-\x/90 });

\end{tikzpicture}
\end{center}
While the Dunn additivity theorem states that the data of a homotopy associative algebra in the $\infty$-category of homotopy associative algebras is equivalent to the data of a $D_2$-algebra, it is highly non-trivial to express the data of the corresponding $D_2$-algebra from a 2-algebra. In particular, it is difficult to directly express the double twist in terms of coherence data of a 2-algebra. Our $\infty$-categorical Eckmann-Hilton Argument therefore decomposes the double twist into four half-twists as shown in the above figure. The half-twists can easily be expressed in terms of the 2-algebra structure as the compatibility data showing that the second multiplication is a morphism of algebras, where the algebra structure is given by the first multiplication.\par
The same techniques can be used for the global case. Similar to how Kontsevich defined the complex of polydifferential operators for a smooth manifold, one can also define a sheaf of polydifferential operators for a smooth variety. The hypercohomology of this sheaf computes the Hochschild cohomology of the variety. The braces operations glue together to make the sheaf of polydifferential operators into a sheaf of $\braces$-algebras. Therefore, the Hochschild cohomology of a smooth variety again inherits a cup product and Gerstenhaber bracket. This definition does however not extend to singular varieties. Instead, we solve this problem by defining the Hochschild cochain complex of a general scheme $X$ over $\field$ as the derived center of the structure sheaf $\mathcal{O}_X$, viewed as an associative algebra in the $\infty$-category of dg sheaves via the forgetful functor from commutative to associative algebras. By construction, this center carries a $C_{\ast}(D_2)$-algebra structure in the $\infty$-category of dg sheaves, regardless of whether the scheme is singular. Again using the $\infty$-categorical Eckmann-Hilton Argument, we show that, for smooth varieties, this $C_{\ast}(D_2)$-algebra structure recovers the Gerstenhaber algebra structure on hypercohomology.\displaypar
We also give a partial answer to the third question on the meaning of the additional algebraic structure on the Hochschild complex in terms of the associated formal moduli space. Given a dg operad $\mathcal{O}$, the formal moduli problem of deforming a fixed $\mathcal{O}$-algebra $A$ is controlled by the so-called operadic tangent complex of $A$. This is the dg Lie algebra of derived $\mathcal{O}$-operadic derivations of $A$. For the (non-unital) associative operad, this dg Lie algebra agrees with the shifted Hochschild cochain complex equipped with the Gerstenhaber bracket (up to the degree 0 component).\par
To understand the meaning of the cup product and more generally the $C_{\ast}(D_2)$-structure on the Hochschild complex in terms of the operadic tangent complex, one has to consider a generalization of formal moduli problems in noncommutative geometry. Given a dg operad $\mathcal{O}$, one can consider the notion of a formal neighborhood of a point in an affine $\mathcal{O}$-scheme, where the category of affine $\mathcal{O}$-schemes is given by the opposite category of coconnective (i.e. non-positively cochain graded) $\mathcal{O}$-algebras. For the little k-disk operads $D_k$, this was introduced by Lurie in \cite{DAGX}, and for more general dg operads by Calaque, R. Campos and J. Nuiten in \cite{CCN}. In particular we have the notion of a $D_2$-formal moduli problem, which is an infinitesimal space modeled on $D_2$-algebras.\par
There is a generalization of the Lurie-Pridham correspondence to operadic formal moduli problems, which was proven in \cite{CCN}. This generalization asserts that, for sufficiently nice operads $\mathcal{O}$, the $\infty$-category of $\mathcal{O}$-operadic formal moduli problems is equivalent to the $\infty$-category of algebras over a dual operad $\mathcal{O}^!$. Since $D_2$ is self-dual (up to a shift), this implies that $D_2$-algebras control $D_2$-formal moduli problems, similar to how Lie algebras control commutative formal moduli problems. We can thus ask which $D_2$-formal moduli problem corresponds to the center of a given associative algebra. The answer to this was given by Lurie in \cite[Section 5.3]{DAGX}, or in slightly different terms by J. Francis in \cite{FR}. Namely, the center of $A$ controls the $D_2$-deformation theory of the $\field$-linear $\infty$-category of right modules over $A$, which is very closely related to the $D_2$-deformation theory of $A$ as an associative algebra. \par
The dg Lie algebra controlling a commutative formal moduli problem can geometrically be expressed as the Lie algebra of the formal group given by the loop space of the formal moduli space. Given an algebra $A$ over a dg operad $\mathcal{O}$, the loop space of the formal neighborhood of $A$ in the moduli space of $\mathcal{O}$-algebras is given by the formal automorphism group of $A$. This is the formal neighborhood of the identity on $A$ in the algebraic group of automorphisms of $A$ as an $\mathcal{O}$-algebra. The center of an $\mathcal{O}$-algebra $A$ can in turn be interpreted as the linearization of a would-be $\mathcal{O}$-algebra of endomorphisms of $A$. Specifically, by \cite[Theorem 5.3.1.30]{HA}, it can be identified with the $\mathcal{O}$-algebra of endomorphisms of $A$ as an $\mathcal{O}$-operadic $A$-module. Noting that the $\infty$-category of $\mathcal{O}$-operadic $A$-modules is the tangent category at $A$ of the $\infty$-category of $\mathcal{O}$-algebras, this provides an abstract relation between the center and the formal moduli space of an $\mathcal{O}$-algebra.\displaypar
The final question on the implications of this theory for the Grothendieck-Teichmüller group was the initial motivation for this project. It is not answered in this thesis, but is instead joint work in progress with my advisor Chris Rogers.
\section*{Overview and main results}
We give an overview of the main results and the structure of this thesis. The main Theorems \ref{ithmA}, \ref{ithmB}, \ref{ithmC}, and \ref{ithmD} appear in my recent preprint \cite{FARR}, which was submitted to \textit{Advances in Mathematics} in June 2025. \displaypar
In Chapter \ref{Kontsevich_formality_and_deformation_quantization}, we give an exposition on Kontsevich's formality theorem and its generalizations. We define Hochschild cohomology for algebras and smooth varieties, and survey the different algebraic structures it carries. We discuss the relation of Kontsevich's formality morphism to the Duflo isomorphism in Lie algebra theory. Finally, we introduce the Grothendieck-Teichmüller group and its relation to the (motivic) absolute Galois group, and give an account of its action on formality morphisms as in \cite{DRW}.\par
Chapters \ref{infty_operads_and_centers} and \ref{The_hochschild_complex_as_a_center} contain the main new results contained in this thesis. In Chapter \ref{infty_operads_and_centers}, we review Lurie's theory of $\infty$-operadic centers. We define the little $k$-disks operads in the form of the $\mathbb{E}_k$-operads, and describe Lurie's version of the Dunn additivity theorem. The main result of Chapter \ref{infty_operads_and_centers} is the following description of the 2-algebra structure of the center of an $\mathbb{E}_1$-algebra.
\begin{Itheorem}[Corollary \ref{cor7}]\label{ithmA}
Let $\mathcal{C}$ be a symmetric monoidal $\infty$-category, and let $A\in \alg_{\mathbb{E}_1}(\mathcal{C})$. The underlying object of the center $\mathfrak{Z}(A)\in \alg_{\mathbb{E}_1}(\alg_{\mathbb{E}_1}(\mathcal{C}))$ can be identified with the endomorphism object of the $A$-bimodule $A$. The outer multiplication is given by the composition product $\circ$, and the inner multiplication is given by the convolution product $\ast$. Further, there is a contractible choice of fillings of the compatibility square
\begin{center}
\begin{tikzcd}[ampersand replacement=\&, column sep=large]
\mathfrak{Z}(A)^{\otimes 4} \arrow[d, "(\ast \otimes \ast) (\textnormal{id}\otimes \tau\otimes \textnormal{id})"'] \arrow[r, "\circ \otimes \circ"] \& \mathfrak{Z}(A)^{\otimes 2} \arrow[d, "\ast"] \\
\mathfrak{Z}(A)^{\otimes 2} \arrow[r, "\circ"']                                                                                           \& \mathfrak{Z}(A)                               
\end{tikzcd}
\end{center}
in $\mathcal{C} \times_{{}_A\textnormal{BiMod}_A(\mathcal{C})} {}_A\textnormal{BiMod}_A(\mathcal{C})/A$. 
\end{Itheorem}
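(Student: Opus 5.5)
The plan is to follow the structure of Lurie's proof of \cite[Theorem 5.3.1.30]{HA} and keep track of the extra structure it produces. Recall that the center $\mathfrak{Z}(A)$ is a final object of the $\infty$-category of centralizer data, that is, of $\mathbb{E}_1$-algebras $Z$ together with a map $Z\otimes A\to A$ of $\mathbb{E}_1$-algebras. Such a map is equivalently an $\mathbb{E}_1$-algebra $Z$ acting on $A$ compatibly with both $A$-module structures, i.e.\ an algebra acting on $A$ in ${}_A\textnormal{BiMod}_A(\mathcal{C})$. Hence $\mathfrak{Z}(A)$ is the endomorphism algebra object of $A$ in the $\mathcal{C}$-enriched (left-tensored) $\infty$-category ${}_A\textnormal{BiMod}_A(\mathcal{C})$. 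This gives the identification of the underlying object with $\textnormal{End}_{A\text{-}A}(A)$. The universal action map is the evaluation, which is encoded by the fibre product $\mathcal{C}\times_{{}_A\textnormal{BiMod}_A(\mathcal{C})}{}_A\textnormal{BiMod}_A(\mathcal{C})/A$: its objects are pairs consisting of $Z\in\mathcal{C}$ and a bimodule map $Z\otimes A\to A$, and $\mathfrak{Z}(A)$ is final there by \cite[4.7.1]{HA}. The \emph{outer} $\mathbb{E}_1$-structure, the one making $\mathfrak{Z}(A)$ an endomorphism algebra, is by construction the composition product $\circ$, because the monoidal structure on this fibre product is the one used to build endomorphism objects.

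For the inner multiplication, I will use that ${}_A\textnormal{BiMod}_A(\mathcal{C})$ is monoidal under $\otimes_A$ with unit $A$. The endomorphism object of the unit of a monoidal $\infty$-category then carries a second algebra structure. In the fibre product it is given by sending the pair of maps $Z\otimes A\to A$ and $Z'\otimes A\to A$ to the composite
\[Z\otimes Z'\otimes A\simeq (Z\otimes A)\otimes_A(Z'\otimes A)\to A\otimes_A A\simeq A.\]
This is exactly the convolution product $\ast$. To see that it agrees with the inner algebra structure on $\mathfrak{Z}(A)\in\alg_{\mathbb{E}_1}(\alg_{\mathbb{E}_1}(\mathcal{C}))$, I will argue that the inner structure is the one transported through the equivalence $\alg_{\mathbb{E}_1}$-objects $\simeq$ ${}_A\textnormal{BiMod}_A$, i.e.\ through the relative tensor product of bimodules. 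In Lurie's argument, $\mathfrak{Z}(A)$ inherits its $\mathbb{E}_1$-structure in $\alg_{\mathbb{E}_1}(\mathcal{C})$ from the monoidal structure of $\alg_{\mathbb{E}_1}(\mathcal{C})$, and restricted to centralizers of $A$ this is precisely the $\otimes_A$ construction above. Both products are then characterized as the essentially unique maps in the fibre product over the action, and uniqueness follows from finality.

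The compatibility square then comes for free. The two composites $\mathfrak{Z}(A)^{\otimes 4}\to\mathfrak{Z}(A)$ both lift to objects of the fibre product over $\mathfrak{Z}(A)^{\otimes 4}\otimes A\to A$. One uses the action of $\mathfrak{Z}(A)^{\otimes 4}$ given by the interchange of $\circ$ and $\otimes_A$ in the monoidal category of bimodules (functoriality of $\otimes_A$). Since $\mathfrak{Z}(A)$ is final, the space of maps from this object to $\mathfrak{Z}(A)$ in $\mathcal{C}\times_{{}_A\textnormal{BiMod}_A(\mathcal{C})}{}_A\textnormal{BiMod}_A(\mathcal{C})/A$ is contractible. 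Both composites are such maps, so the space of homotopies between them, which is the space of fillings of the square, is contractible.

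The main obstacle is the second step: identifying the abstract inner $\mathbb{E}_1$-structure, which comes from Lurie's construction of centers in $\alg_{\mathbb{E}_1}(\mathcal{C})$ via Dunn additivity and operadic modules, with the concrete convolution product. I would first try to unwind \cite[Theorem 5.3.1.30]{HA} through the equivalence $\textnormal{Mod}^{\mathbb{E}_1}_A\simeq{}_A\textnormal{BiMod}_A$ (\cite[4.4.1.28]{HA}). I would then check that the tensor product of centralizer data corresponds to $\otimes_A$ of bimodule maps, and verify the required coherence only at the level of the binary operation, using finality to avoid the higher coherences.
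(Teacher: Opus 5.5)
Your plan is essentially the paper's proof. Like the paper, you run Lurie's argument for \cite[Theorem 5.3.1.30]{HA} (Proposition \ref{prop7}, Theorem \ref{thm8}), read off the composition product as the outer structure from the endomorphism-object promotion (Proposition \ref{cor6}), identify the inner structure with convolution by unwinding the tensor product in the operadic slice $\mathcal{E}$ (Corollary \ref{cor8}, which is exactly your ``main obstacle''), and get contractible fillings from finality of the morphism object; the only cosmetic difference is that the paper obtains the square by evaluating the 2-algebra on the square (\ref{dig2}) rather than by an interchange argument. When you write it up, keep apart the two compatibilities you merge in your first paragraph: an $\mathbb{E}_1$-algebra map $Z\otimes A\to A$ under $A$ is a bimodule map compatible with the inner structure of $Z$ via $\otimes_A$ (an algebra in $\mathcal{E}$), not a module action by composition, and the link between the two is Proposition \ref{prop7}.
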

In Chapter \ref{The_hochschild_complex_as_a_center}, we prove that the $\mathbb{E}_1$-center indeed recovers the classical Gerstenhaber bracket and cup product on Hochschild cohomology, both for associative algebras and smooth varieties. In Section \ref{The_bracket_on_a_2_algebra}, we define the Gerstenhaber bracket on an $\mathbb{E}_2$-algebra using the double twist, and show how to use an $\infty$-categorical Eckmann-Hilton argument to compute the Gerstenhaber bracket of a 2-algebra. The main result of this section is Corollary \ref{cor1}. Then, in Section \ref{Recovering_the_Hochschild_complex_as_e1_center}, we use this result to prove that the Gerstenhaber bracket on the center of an associative algebra agrees with the classical one. For this, we use techniques from dg category theory, such as the dg nerve defined in \cite[Section 1.3.1]{HA}. The main result of this section is
\begin{Itheorem}[Theorem \ref{thm1}, Corollary \ref{cor9}]\label{ithmB}
    Let $A$ be an associative $\field$-algebra. The Hochschild complex
    \begin{align*}
        C^{\ast}(A,A) = \hom_{\field}(A^{\otimes \ast},A)
    \end{align*}
    together with the evaluation map is a center for $A\in \alg_{\mathbb{E}_1}(\dk)$. In particular, it is an object of $\alg_{\mathbb{E}_1}(\alg_{\mathbb{E}_1}(\dk))\simeq \alg_{\mathbb{E}_2}(\dk)$. Its underlying Gerstenhaber bracket and cup product in cohomology agree with the classical Gerstenhaber algebra structure obtained from the $\braces$-algebra structure.
\end{Itheorem}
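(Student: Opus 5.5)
We prove Theorem \ref{ithmB} in two stages. The first stage identifies $C^{\ast}(A,A)$, with its evaluation map, as a center of $A$ in $\alg_{\mathbb{E}_1}(\dk)$. The second stage reads off the cup product and bracket, using Theorem \ref{ithmA} and the $\infty$-categorical Eckmann--Hilton argument.

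\textbf{The center.} By \cite[Theorem 5.3.1.30]{HA}, the center $\mathfrak{Z}(A)$ is the endomorphism object of $A$ in the $\infty$-category of $\mathbb{E}_1$-operadic $A$-modules, i.e. of $A$-bimodules in $\dk$. Hence it suffices to compute the derived endomorphisms of $A$ as an $A$-bimodule. For this we take the two-sided bar resolution $\obar(A,A,A)=\bigoplus_n A\otimes A^{\otimes n}\otimes A$. It is a cofibrant (semi-free) resolution of $A$ as a dg $A^e$-module, so
\begin{align*}
\hom_{A^e}(\obar(A,A,A),A)\cong \hom_{\field}(A^{\otimes \ast},A)=C^{\ast}(A,A).
\end{align*}
To make this an equivalence of objects in $\dk$ carrying the action map, we model ${}_A\textnormal{BiMod}_A(\dk)$ by the dg nerve of the dg category of cofibrant bimodules. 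The universal action $\mathfrak{Z}(A)\otimes A\to A$ then corresponds to the evaluation map $C^{\ast}(A,A)\otimes A\to A$, namely $f\otimes a\mapsto f_0\cdot a$ on the bar-degree-zero part. We check that this pair satisfies the universal property of a center by comparing it with the module-endomorphism description in Lurie's proof.

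\textbf{The products.} Theorem \ref{ithmA} says that the outer multiplication is composition $\circ$ of bimodule endomorphisms and the inner one is convolution $\ast$. Under the bar model we show two things.
\begin{enumerate}
\item Composition, computed by lifting a cochain $f$ to an endomorphism of $\obar(A,A,A)$ via the diagonal (coproduct) on the bar complex, is homotopic to the cup product.
\item Convolution, computed as $\obar\simeq\obar\otimes_A\obar\to A\otimes_A A\cong A$, is also homotopic to the cup product.
\end{enumerate}
Both facts are standard dg computations.

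\textbf{The bracket.} This is the main obstacle. We use Corollary \ref{cor1}, which expresses the Gerstenhaber bracket of a 2-algebra through four half-twists. Each half-twist is a component of the filling of the compatibility square in Theorem \ref{ithmA}, and that filling is unique up to contractible choice. We therefore construct one explicit filling in the dg nerve: an explicit homotopy between $(f\circ g)\ast(h\circ k)$ and $(f\ast h)\circ(g\ast k)$, built from the standard homotopy comparing the diagonal on $\obar$ with its iterates. Contractibility guarantees that this explicit filling computes the canonical one. We then compose the four half-twists, specialising to $f,g$ together with units, as in the Eckmann--Hilton picture. The result is a homotopy from $f\cup g$ to $\pm g\cup f$, and we evaluate it as a brace expression $f\{g\}\pm g\{f\}$, which is Gerstenhaber's formula for the bracket in cohomology. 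The difficulty lies in this explicit homotopy: it must respect the dg-nerve coherences, and the signs have to be tracked carefully. Once it is in hand, agreement with the $\braces$-algebra Gerstenhaber structure follows because on cohomology both the cup product and the bracket are determined by the brace operations.
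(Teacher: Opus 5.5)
Your proposal takes essentially the same route as the paper. You identify the center with the derived bimodule endomorphisms of $A$ computed by the bar resolution; the paper makes your ``model by cofibrant bimodules'' step precise via Hinich's module rectification, the zig-zag $U_{C_\ast(\mathbb{E}_1^T)}(\tilde A)\simeq A\otimes A^{\mathrm{op}}$, and the comparison of dg and $\infty$-categorical morphism objects. You then identify composition and convolution with the cup product, and get the bracket from the four-half-twist decomposition using one explicit filling of the compatibility square, justified by contractibility. The one concrete difference is in that filling: the paper first replaces both products by $\smile$ and uses $f_1\otimes g_1\otimes f_2\otimes g_2\mapsto \pm f_1\smile f_2\{g_1\}\smile g_2$, taken from Gerstenhaber's homotopy-commutativity identity. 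With this choice the $\iota_{2,3}$-restrictions are immediately $\pm g\{f\}$ and $\pm f\{g\}$ and the $\iota_{1,4}$-restrictions are trivial, which avoids the diagonal-homotopy bookkeeping you identify as the main obstacle.
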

Section \ref{The_Hochschild_complex_of_a_scheme} is dedicated to the global case of the Hochschild cohomology of a scheme over $\field$. We define the Hochschild complex of a scheme as the $\mathbb{E}_1$-center of its structure sheaf, and characterize some basic properties of this object. The main result of this section is that the center of the structure sheaf behaves well with respect to the local structure on the scheme, namely
\begin{Itheorem}[Theorem \ref{thm5}]\label{ithmC}
Let $X$ be a quasi-compact separated scheme over $\field$, and denote by $\text{Sh}_{\infty}(X)$  the $\infty$-category of dg sheaves on $X$. Let $U = \textnormal{Spec}(A)\subseteq X$ be an affine open. The functor $\mathbb{R}\Gamma_U: \textnormal{Sh}_{\infty}(X) \rightarrow \dk$ is lax symmetric monoidal and hence induces a functor $\mathbb{R}\Gamma_U: \alg_{\mathbb{E}_2}(\textnormal{Sh}_{\infty}(X)) \rightarrow \alg_{\mathbb{E}_2}(\dk)$. We have
\begin{align*}
    \mathbb{R}\Gamma_U(\mathfrak{Z}_{\mathbb{E}_1}(\mathcal{O}_X)) \simeq \mathfrak{Z}_{\mathbb{E}_1}(A).
\end{align*}
\end{Itheorem}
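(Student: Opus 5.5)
Since $\mathbb{R}\Gamma_U$ is the derived restriction to $U$ followed by global sections on $U$, I would split it as $\mathbb{R}\Gamma_U \simeq \mathbb{R}\Gamma(U,-)\circ j^{\ast}$, where $j:U\hookrightarrow X$ is the open immersion, and treat the two steps separately. Step one: restriction commutes with centers, i.e. $j^{\ast}\mathfrak{Z}_{\mathbb{E}_1}(\mathcal{O}_X)\simeq \mathfrak{Z}_{\mathbb{E}_1}(\mathcal{O}_U)$ in $\alg_{\mathbb{E}_2}(\textnormal{Sh}_{\infty}(U))$. Step two: on the affine scheme $U=\spec(A)$, global sections take $\mathfrak{Z}_{\mathbb{E}_1}(\mathcal{O}_U)$ to $\mathfrak{Z}_{\mathbb{E}_1}(A)$. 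I would work throughout with the description of the center from \cite[Theorem 5.3.1.30]{HA}, refined in Corollary \ref{cor7}: it is the internal endomorphism object $\underline{\textnormal{End}}_{{}_{\mathcal{O}_X}\textnormal{BiMod}_{\mathcal{O}_X}}(\mathcal{O}_X)$. For each statement I first prove the equivalence on underlying objects and then upgrade it to $\mathbb{E}_2$-algebras.

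For step one, I would write the bimodule internal Hom as a totalization of the cosimplicial bar-construction object $\underline{\hom}(\mathcal{O}_X^{\otimes n},\mathcal{O}_X)$. Here $\underline{\hom}$ is the sheaf Hom in dg sheaves, and its restriction to an open set is computed locally. Since $j^{\ast}$ is symmetric monoidal and commutes with limits and with internal Homs (because $j^{\ast}$ has a left adjoint $j_!$ satisfying the projection formula), it follows that $j^{\ast}\mathfrak{Z}(\mathcal{O}_X)\simeq\mathfrak{Z}(\mathcal{O}_U)$ on underlying objects. To upgrade this, note that $j^{\ast}$ is symmetric monoidal, so it sends the action of $\mathfrak{Z}(\mathcal{O}_X)$ on $\mathcal{O}_X$ to an action on $\mathcal{O}_U$. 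This gives a canonical $\mathbb{E}_2$-map $j^{\ast}\mathfrak{Z}(\mathcal{O}_X)\to\mathfrak{Z}(\mathcal{O}_U)$ from the universal property of centers. It is an equivalence because the underlying map is one.

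For step two, $\Gamma(U,-)$ is lax symmetric monoidal, so it carries the universal action on $\mathcal{O}_U$ to an action of $\mathbb{R}\Gamma(U,\mathfrak{Z}(\mathcal{O}_U))$ on $A=\Gamma(U,\mathcal{O}_U)$. The universal property then gives an $\mathbb{E}_2$-comparison map $\mathbb{R}\Gamma(U,\mathfrak{Z}(\mathcal{O}_U))\to\mathfrak{Z}_{\mathbb{E}_1}(A)$, and I need to show it is an equivalence of complexes. Using the Theorem \ref{ithmB} model, the target is $\hom_{\field}(A^{\otimes\ast},A)$. The source is $\mathbb{R}\textnormal{Hom}_{\mathcal{O}_{U\times U}}(\mathcal{O}_\Delta,\mathcal{O}_\Delta)$ computed sheaf-theoretically on $U$, with the bar resolution by the sheaves $\mathcal{O}_U^{\otimes n}$.

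The main obstacle is exactly this comparison. The sheaf tensor products $\mathcal{O}_U^{\otimes_{\field} n}$ (tensor products of sheaves of $\field$-vector spaces) are not quasi-coherent on $U$, so they are not identified with $A^{\otimes n}$. Consequently $\mathbb{R}\Gamma(U,\underline{\hom}(\mathcal{O}_U^{\otimes n},\mathcal{O}_U))$ is $\hom(j_!\mathcal{O}^{\otimes n},\ldots)$-type data rather than $\hom_{\field}(A^{\otimes n},A)$.

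I would first try to resolve this with the tensor–Hom adjunction together with the adjunction between sheafification and the constant/global-sections functors. The key identity is $\mathbb{R}\textnormal{Hom}_{\textnormal{Sh}(U)}(\mathcal{F}\otimes\mathcal{G},\mathcal{O}_U)\simeq\mathbb{R}\textnormal{Hom}(\mathcal{F},\underline{\hom}(\mathcal{G},\mathcal{O}_U))$. Reducing inductively, this leaves a single Hom from the constant sheaf $\underline{\field}$, which gives $\mathbb{R}\Gamma$. The remaining input is that $\mathcal{O}_U$ is quasi-coherent on an affine scheme, so it is $\Gamma$-acyclic; quasi-compactness and separatedness of $X$ ensure this Čech-theoretic acyclicity.

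If that fails, I would fall back on quasi-coherent sheaves on $U\times U$. There, by Serre's vanishing theorem, the derived global sections are computed by the module-level $\mathbb{R}\textnormal{Hom}_{A\otimes A^{op}}(A,A)$, which is the Hochschild complex. I would then compare the sheaf-level bimodule category with this quasi-coherent category via the structure sheaf of the diagonal.
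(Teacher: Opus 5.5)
Your step one is fine: $j^{\ast}$ preserves tensor products, limits and internal Homs, so $j^{\ast}\mathfrak{Z}(\mathcal{O}_X)\simeq\mathfrak{Z}(\mathcal{O}_U)$, and the upgrade via the universal property is sound. The paper instead works on the site of affine opens of $X$, where $\mathcal{O}_X$ is already fibrant, and evaluates at $U$ directly; the difference is harmless.

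The gap is step two, which carries the whole content of the theorem and which you leave open. Your first attempt is circular. Iterating $\mathbb{R}\textnormal{Hom}(\mathcal{F}\otimes\mathcal{G},\mathcal{O}_U)\simeq\mathbb{R}\textnormal{Hom}(\mathcal{F},\underline{\hom}(\mathcal{G},\mathcal{O}_U))$ down to $\underline{\field}$ just returns $\mathbb{R}\Gamma(U,\underline{\hom}(\mathcal{O}_U^{\otimes n},\mathcal{O}_U))$. Moreover, $\Gamma$-acyclicity of $\mathcal{O}_U$ says nothing about the non-quasi-coherent sheaf $\underline{\hom}(\mathcal{O}_U^{\otimes n},\mathcal{O}_U)$.

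In fact no termwise identification with $\hom_{\field}(A^{\otimes n},A)$ is possible. Take $U=\mathbb{A}^1$, $A=\field[x]$, and send a rational function regular on an open $V$ to the polar part at $x=0$ of its partial fraction decomposition. This is a nonzero endomorphism of $\mathcal{O}_U$ as a sheaf of $\field$-vector spaces, and it vanishes on global sections. So already the natural map $\textnormal{Hom}_{\textnormal{Sh}(U)}(\mathcal{O}_U,\mathcal{O}_U)\to\hom_{\field}(A,A)$ is not injective. With the resolution by the $\mathcal{O}_U^{\otimes n}$, at best the totalizations agree, and nothing in your proposal proves that.

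Your fallback skips the real issue. The bimodule category lives on $U$ over the sheaf $\mathcal{O}_U\otimes\mathcal{O}_U$, whose stalks are $\mathcal{O}_{U,x}\otimes\mathcal{O}_{U,x}$ rather than local rings of $U\times U$. So it is not a category of quasi-coherent $\mathcal{O}_{U\times U}$-modules, and Serre's theorem cannot be invoked without a comparison you do not supply.

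The missing idea is to resolve $\mathcal{O}_U$ as a bimodule so that Homs out of the resolution reduce, by adjunction, to data over $A\otimes A$. Put the constant sheaf $\underline{A}$ in the middle instead of $\mathcal{O}_U$, i.e.\ use the bar construction with terms $\mathcal{O}_U\otimes\underline{A}^{\otimes n}\otimes\mathcal{O}_U$.

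On an affine open $W=\spec B\subseteq U$, the presheaf version of this term is $(B\otimes B)\otimes_{A\otimes A}B_n(A)$. Since $B\otimes B$ is flat over $A\otimes A$ and $B\otimes_A B\cong B$, the complex resolves $\mathcal{O}_U$. The $n$-th term is the free bimodule on $\underline{A^{\otimes n}}$, so $\mathbb{R}\textnormal{Hom}$ from it to $\mathcal{O}_U$ is $\hom_{\field}(A^{\otimes n},\mathbb{R}\Gamma(U,\mathcal{O}_U))=\hom_{\field}(A^{\otimes n},A)$. The totalization is then the Hochschild complex.

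This is exactly the paper's mechanism:
\begin{itemize}
\item On the site of product opens $W\times W$ (Lemma \ref{lem3}), the presheaf $\mathcal{O}_X\otimes\mathcal{O}_X$ becomes $\mathcal{O}_{X\times X}$.
\item The functor $P\mapsto\tilde{P}$ from $A\otimes A$-modules is left Quillen.
\item $\tilde{P}$ is a bifibrant model of $\mathcal{O}_U$; fibrancy again comes from the flatness above.
\item $\map(\tilde{P},\tilde{P})\cong\map_{A\otimes A}(P,P)$ by adjunction.
\end{itemize}
With that computation in place, your universal-property comparison map is an equivalence on underlying objects, and the $\mathbb{E}_2$ upgrade goes through as you describe.
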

We then compare the induced $\mathbb{E}_2$-algebra structure to the homotopy Gerstenhaber structure on the sheaf of polydifferential operators of a smooth variety. This yields the global comparison theorem
\begin{Itheorem}[Theorem \ref{thm6}, Corollary \ref{thm9}]\label{ithmD}
For a smooth quasi-compact separated scheme $X$ of finite type over $\field$, the sheaf of polydifferential operators $\mathcal{D}_{\textnormal{poly}}(X)$ is a center of $\mathcal{O}_X$:
\begin{align*}
    \mathcal{D}_{\textnormal{poly}}(X) \simeq \mathfrak{Z}_{\mathbb{E}_1}(\mathcal{O}_X).
\end{align*}
This equips $\mathcal{D}_{\textnormal{poly}}(X)$ with the structure of an $\mathbb{E}_2$-algebra. The corresponding Gerstenhaber algebra in the $\field$-linear derived 1-category agrees with the classical one coming from the $\braces$-algebra structure.
\end{Itheorem}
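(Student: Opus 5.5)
We prove the statement in two parts: first the identification $\mathcal{D}_{\textnormal{poly}}(X)\simeq \mathfrak{Z}_{\mathbb{E}_1}(\mathcal{O}_X)$ as objects of $\alg_{\mathbb{E}_2}(\textnormal{Sh}_{\infty}(X))$, then the comparison of the induced Gerstenhaber structure on hypercohomology with the classical one.

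For the identification, we produce an action map $\mathcal{D}_{\textnormal{poly}}(X)\otimes \mathcal{O}_X\to \mathcal{O}_X$ in $\alg_{\mathbb{E}_1}(\textnormal{Sh}_{\infty}(X))$. On each affine open $U=\textnormal{Spec}(A)$ this is the evaluation map on polydifferential operators $D_{\textnormal{poly}}(A)\subseteq C^{\ast}(A,A)$. These maps are compatible with restrictions, since polydifferential operators localize. Universality of the center then gives a comparison map $\phi:\mathcal{D}_{\textnormal{poly}}(X)\to \mathfrak{Z}_{\mathbb{E}_1}(\mathcal{O}_X)$ of $\mathbb{E}_2$-algebras. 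Being an equivalence of sheaves is local, so it suffices to check $\mathbb{R}\Gamma_U(\phi)$ on affine opens $U$ of finite type, which form a basis.
\begin{itemize}
\item By Theorem \ref{thm5}, $\mathbb{R}\Gamma_U\mathfrak{Z}_{\mathbb{E}_1}(\mathcal{O}_X)\simeq \mathfrak{Z}_{\mathbb{E}_1}(A)$.
\item By Theorem \ref{thm1}, $\mathfrak{Z}_{\mathbb{E}_1}(A)\simeq C^{\ast}(A,A)$ via evaluation.
\item $\mathcal{D}_{\textnormal{poly}}$ is quasi-coherent, hence has no higher cohomology on affines, so $\mathbb{R}\Gamma_U\mathcal{D}_{\textnormal{poly}}(X)=D_{\textnormal{poly}}(A)$.
\end{itemize}
Under these identifications, $\mathbb{R}\Gamma_U(\phi)$ becomes the inclusion $D_{\textnormal{poly}}(A)\hookrightarrow C^{\ast}(A,A)$. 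This inclusion is a quasi-isomorphism for $A$ smooth of finite type: both sides compute $\wedge^\bullet T_A$ by Hochschild–Kostant–Rosenberg, and the HKR map factors through polydifferential operators. So $\phi$ is an equivalence.

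One subtlety is that the map produced by universality must be checked to agree with this inclusion. For this we use the identification of the center with the endomorphism object of the bimodule from Corollary \ref{cor7}, which makes the comparison map the obvious one on underlying objects.

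For the Gerstenhaber structure, the $\mathbb{E}_2$-structure transported to $\mathcal{D}_{\textnormal{poly}}(X)$ gives a bracket and product in the derived $1$-category of sheaves. We compare them with those induced by the $\braces$-structure, and this comparison is local. Corollary \ref{cor7} identifies the two products on the center with composition and convolution. Corollary \ref{cor1}, the $\infty$-categorical Eckmann–Hilton argument, expresses the bracket through the half-twist fillings of the compatibility square. Running this argument sheafwise as in Corollary \ref{cor9} identifies the cup product with convolution. The bracket is then given by the Gerstenhaber formula: we write down explicit brace homotopies filling the square on the dg model and compare them with the contractible space of fillings from Corollary \ref{cor7}.

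The main obstacle is making this dg-level comparison functorial in $U$ and naturally compatible with the restriction maps. Only then do the local identifications glue to an equality of maps $\mathcal{D}_{\textnormal{poly}}^{\otimes 2}\to \mathcal{D}_{\textnormal{poly}}$ in the homotopy category of sheaves, rather than merely an equality on each affine. To arrange this, we would work with the dg nerve of sheaves of complexes and use that the brace homotopies are given by natural formulas. Since they are natural in the algebra, they are strictly compatible with localization, and contractibility of the space of fillings makes the resulting identifications canonical.
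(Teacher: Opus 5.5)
Your first step fails as written. A map into $\mathfrak{Z}(\tilde{\mathcal{O}}_X)$ obtained from its universal property needs a source that is already an object of $\alg_{\mathbb{E}_1}(\alg_{\mathbb{E}_1}(\textnormal{Sh}_{\infty}(X)))$ with a left module structure on $\tilde{\mathcal{O}}_X$. Even the weaker centralizer property of Definition \ref{def6} needs an $\mathbb{E}_1$-algebra map $\mathcal{D}_{\textnormal{poly}}(X)\otimes\tilde{\mathcal{O}}_X\to\tilde{\mathcal{O}}_X$ under the unit. $\mathcal{D}_{\textnormal{poly}}(X)$ has no such structure a priori. Getting an $\mathbb{E}_2$-structure from the braces requires a non-canonical identification (e.g.\ Tamarkin's associator-dependent map), and a compatible coherent action would still have to be built. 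The statement says the $\mathbb{E}_2$-structure is \emph{induced} by the center, so a ``comparison map of $\mathbb{E}_2$-algebras'' from universality is circular.

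There is also no chain-level map $D_{\textnormal{poly}}(A)\otimes A\to A$ that is ``the evaluation''. Cochains pair with a resolution, $\mathcal{D}_{\textnormal{poly}}(X)\otimes\mathcal{B}(\mathcal{O}_X)\to\mathcal{O}_X$. This pairing must be lifted to $Q\mathcal{D}_{\textnormal{poly}}(X)\otimes\mathcal{P}\to\mathcal{P}$ for a cofibrant $\mathcal{O}_X\otimes\mathcal{O}_X$-resolution $\mathcal{P}$. The ``subtlety'' you mention is therefore the crux of the argument, not a side check.

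The fix is to use the morphism object of Definition \ref{def3} instead of the center. By the universal property of $Q\mathcal{H}om_{\mathcal{O}_X\otimes\mathcal{O}_X}(\mathcal{P},\mathcal{P})$ (Theorem \ref{thm4}), the lifted evaluation induces a map $Q\mathcal{D}_{\textnormal{poly}}(X)\to\mathfrak{Z}(\tilde{\mathcal{O}}_X)(\mathfrak{a})$, but only in $\textnormal{Sh}_{\infty}(X)$. Once this map is an equivalence, Theorem \ref{thm8} and Proposition \ref{prop7} make $(\mathcal{D}_{\textnormal{poly}}(X),\textnormal{ev})$ a center and transport the $\mathbb{E}_2$-structure.

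With this change, your affine-local check is a valid alternative to the paper's argument; it uses Theorem \ref{thm5}, Theorem \ref{thm1}, and HKR for $D_{\textnormal{poly}}(A)\hookrightarrow C^{\ast}(A,A)$. The paper instead proves the object-level equivalence by a sheaf-level chain (Lemma \ref{lem6}): it passes to $\mathbb{R}\mathcal{H}om_{\mathcal{O}_X\overset{a}{\otimes}\mathcal{O}_X}(\mathcal{O}_X,\mathcal{O}_X)$, pushes K-injectives forward along $\Delta$, and applies Yekutieli's $\Delta_{\ast}\mathcal{D}_{\textnormal{poly}}(X)\simeq\mathbb{R}\mathcal{H}om_{\mathcal{O}_{X\times_{\field}X}}(\Delta_{\ast}\mathcal{O}_X,\Delta_{\ast}\mathcal{O}_X)$. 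Only afterwards does it identify the action with evaluation. Your route makes that identification explicit and reuses the affine theorem.

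Your Gerstenhaber part follows the paper, using Corollaries \ref{cor7} and \ref{cor5} with glued circle-product homotopies. The paper gets a globally defined convolution from Yekutieli's completed bar complex $\widehat{\mathcal{B}}(X)$ with a glued diagonal (Lemma \ref{lem4}). You would need a comparable global model, for example via naturality in $A$ of the bar diagonal.
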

In the final Chapter \ref{formal_moduli_problems_and_deformations}, we discuss the relation between the center of an algebra over an operad and its deformation theory. We describe the structure of the moduli space of algebras over an operad, and how this relates to the deformation theory of such an algebra. We then give an exposition on commutative and operadic formal moduli problems, following \cite{GR2} and \cite{CCN} respectively. We examine how the center of an operadic algebra corresponds to the formal moduli problem of the algebra using the generalized Lurie-Pridham correspondence, and finally relate this back to Kontsevich's solution of deformation quantization. While parts of this chapter are only conjectural, it is the first compilation of this information into a coherent picture. 
\section*{Conventions} 
Throughout this thesis, $\field$ will be a field of characteristic 0. In Chapters \ref{infty_operads_and_centers} and \ref{The_hochschild_complex_as_a_center}, complexes are generally chain graded unless stated otherwise, and we view non-negatively graded cochain complexes as non-positive chain complexes. In Chapters \ref{Kontsevich_formality_and_deformation_quantization} and \ref{formal_moduli_problems_and_deformations}, complexes are generally cochain graded to match the usual convention in the literature. For a cochain complex $C^{\ast}$, we denote by $C^{\ast}[n]$ the cochain complex with $C^k[n] = C^{k+n}$. We denote the 1-category of $\field$-vector spaces by $\text{Vect}_{\field}$, the dg category of (co)chain complexes of $\field$-vector spaces by $\ch(\field)$, and the derived $\infty$-category of $\field$-vector spaces by $\dk$. Singular (co)chains of a topological space are always taken with $\field$-coefficients.\par
The term ``operad'' refers to not necessarily reduced unital symmetric operads. We denote $\infty$-operads by calligraphic letters (e.g. $\ass$) and dg (co)operads as well as dg $\infty$-(co)operads by bold letters (e.g. $\assoc$). By ``Koszul operad'' we mean a binary quadratic Koszul operad. In particular, Koszul operads are reduced and canonically augmented. We always assume dg cooperads to be conilpotent. For a dg operad $\mathscr{P}$, we denote by $\mathscr{P}\{k\}$ its shift by $k$; in particular, $A$ is a $\mathscr{P}$-algebra if and only if $A[-k]$ is a $\mathscr{P}\{k\}$-algebra.\par
We try to use the term ``$\infty$-category'' for $\infty$-categories, but if nothing else is stated, ``category'' refers to $\infty$-category and ``1-category'' refers to ordinary categories. The $\infty$-category of spaces is denoted by $\an$. If $C$ is a model category, we denote by $C^c$ the full subcategory of cofibrant objects, and by $C^{\circ}$ the full subcategory of fibrant-cofibrant objects. We denote by $R$ and $Q$ the fibrant and cofibrant replacement functor respectively. \par
We denote the presheaf tensor product simply by ``$\otimes$'', and we decorate symbols with ``$(-)^a$'' to indicate sheafification.
\section*{Acknowledgments}
I want to thank my advisor, Christopher Rogers, for his guidance and friendship, and for sharing his insights with me. His way of thinking about mathematics and mathematicians has had a great impact on me. I would also like to thank Jonathan Beardsley, Christopher Herald, Bruce Blackadar, and Matthew Tucker for serving on my committee.\displaypar
Special thanks go to Marc Levine and Damien Calaque, who each spent a great deal of time and effort answering my questions and sharing their ideas. I would like to thank J. Beardsley, D. Borisov, A. Căldăraru, M. Hopkins, G. Horel, A. Joyal, J. Pridham, N. Rozenblyum, and M. Spitzweck for helpful discussions and for sharing their perspectives on this project.\displaypar
I am very grateful to my family, and especially to my parents Andrea Rahn-Farr and Karsten Farr, for their continuous support and encouragement. \par
Finally, I want to thank my wonderful husband, Dominik Farr, for going on this adventure with me, for reminding me to have fun along the way, and for giving me strength during the difficult times. I could not have done this without him. \displaypar
This work was supported by NSF Grant DMS-2305407.
\newpage
\chapter{Kontsevich formality and deformation quantization}\label{Kontsevich_formality_and_deformation_quantization} 
The motivation for our work on higher centers is to realize the program developed by Kontsevich in \cite{K2} on the connection between Hochschild cohomology, deformation theory, motives and the Grothendieck–Teichmüller group. Based on his original formality theorem in deformation quantization, and Tamarkin's generalization of it, Kontsevich conjectures that the pro-unipotent Grothendieck–Teichmüller group, or rather its subgroup given by the motivic Galois group of $\mathbb{Q}$, acts on formality morphisms for the smooth Hochschild cochain complex, as well as on the cohomology of polyvector fields themselves. In this chapter, we give some background on Kontsevich's program, and review the current state of these conjectures.\displaypar
In the same paper \cite{K2}, Kontsevich also states a ``generalized Deligne conjecture'', claiming that every $n$-algebra admits a universal $(n+1)$-algebra acting on it. He calls this universal object the Hochschild complex of the $n$-algebra, and he claims that it can be computed as a quotient of the deformation complex. The higher centers considered in Chapters \ref{infty_operads_and_centers} and \ref{The_hochschild_complex_as_a_center} are a rigorous formalization of this idea, and we therefore expect them to be instrumental in resolving the remaining questions in Kontsevich's program.
\section{Kontsevich's formality morphism}\label{Kontsevich_formality_morphism} 
Let $M$ be a smooth manifold. A Poisson structure on $M$ is given by a Poisson bracket on the commutative algebra $C^{\infty}(M)$ of smooth functions on $M$. Given such a Poisson bracket $\{-,-\}$ on $M$, one can try to extend it to an associative product $\star$ on $C^{\infty}(M)[[\hbar]]$ of the form
\begin{align*}
    f \star g  = fg + \{f,g\}\hbar + \text{higher order terms in }\hbar.
\end{align*}
Such an associative product is called a \textbf{star product} on $M$. The process of finding a star product is called \textbf{formal quantization} of the Poisson structure. In 1997, Kontsevich proved \cite{K} that every Poisson manifold can be formally quantized, and that its quantization is canonical up to gauge action. To prove this, Kontsevich introduced two dg Lie algebras $T_{\text{poly}}^{\ast}(M)[1]$ and $D_{\text{poly}}^{\ast}(M)[1]$, such that Maurer-Cartan elements in $T_{\text{poly}}^{\ast}(M)[1]$ correspond to Poisson structures on $M$, while Maurer-Cartan elements in $D_{\text{poly}}^{\ast}(M)[1] \otimes_{\mathbb{R}} \hbar \mathbb{R}[[\hbar]]$ correspond to star products. Note that in particular, a Maurer-Cartan element in $T_{\text{poly}}^{\ast}(M)[1]$ yields a Maurer-Cartan element in $T_{\text{poly}}^{\ast}(M)[1]\otimes_{\mathbb{R}} \hbar\mathbb{R}[[\hbar]]$. Kontsevich then exhibits an explicit $L_{\infty}$-quasi-isomorphism 
\begin{align*}
\mathscr{U}: T_{\text{poly}}^{\ast}(M)[1] \rightarrow D_{\text{poly}}^{\ast}(M)[1],
\end{align*}
which lifts the Hochschild-Kostant-Rosenberg quasi-isomorphism of the underlying cochain complexes. Consequently, we get a correspondence between the respective Maurer-Cartan elements, proving that every Poisson manifold can be formally quantized. \displaypar 
In particular, define the dg Lie algebra of \textbf{polyvector fields} as the complex
\begin{align*}
T_{\text{poly}}^{\ast}(M)[1] := \Gamma(M, \sym^{\ast+1}(T_M[-1]))
\end{align*}
equipped with the zero differential. The Lie bracket is given by the Schouten bracket, which agrees with the commutator of vector fields in homogeneous degree 0. The dg Lie algebra of \textbf{polydifferential operators} $D_{\text{poly}}^{\ast}(M)[1]$ is a sub dg Lie algebra of the shifted Hochschild cochain complex $C^{\ast}(C^{\infty}(M), C^{\infty}(M))[1]$, which in degree $n \geq -1$ is given by all $f\in \hom_{\mathbb{R}}(C^{\infty}(M)^{\otimes (n+1)}, C^{\infty}(M))$ which are differential operators in each variable separately. It is equipped with the shifted Gerstenhaber bracket. 
\section{Deligne's conjecture on Hochschild cochains}\label{delignes_conjecture_on_hochschild_cochains}
Both polyvector fields and polydifferential operators support additional algebraic structure. The dg Lie algebra of polyvector fields admits a commutative product of degree 1 coming from the product on the symmetric algebra $\sym^{\ast}(T_M[-1])$, and this commutative product makes the unshifted version $T_{\text{poly}}^{\ast}(M)$ into a \textbf{Gerstenhaber algebra}\footnote{Note that a Gerstenhaber algebra is the same as a $\mathbb{P}_2$-algebra, i.e. a shifted Poisson algebra with Poisson bracket of cochain degree -1.}. The dg Lie algebra of polydifferential operators also admits a product, which is inherited from the cup product on the Hochschild complex. This product is noncommutative, but on cocycles it is commutative up to a boundary. It was shown by Gerstenhaber \cite{G} that it makes the cohomology of $D_{\text{poly}}^{\ast}(M)$ into a Gerstenhaber algebra as well.\displaypar
By a result of Cohen \cite{Coh}, the Gerstenhaber operad is the singular homology of the topological operad $D_2$ of little 2-disks. In particular, if $A$ is an algebra over $C_{\ast}(D_2)$, then $H_{\ast}(A)$ is an algebra over the operad $\ger$ of Gerstenhaber algebras. This inspired Deligne in a 1993 letter to make the following conjecture.
\begin{conjecture}[Deligne's Conjecture]
    The Hochschild cochain complex of an associative $\field$-algebra $A$ is an algebra over the chains on little 2-disks operad in such a way that the induced Gerstenhaber algebra structure on cohomology recovers Gerstenhaber's original one.
\end{conjecture}
Note that the category of $A$-bimodules admits a monoidal structure by taking the tensor product over $A$. In his letter, Deligne explained that the $C_{\ast}(D_2)$-algebra structure on the cochain level should come from the two multiplications on $C^{\ast}(A,A) \simeq \mathbb{R}\hom_{A\otimes A^{\text{op}}}(A,A)$ induced by $A$ being a monoidal unit in this category. We have an ``inner multiplication'' coming from the bialgebra structure of $A$ in the bimodule category: Let $\e(A) := \hom_{A\otimes A^{\text{op}}}(A,A)$. Then we get a map
\begin{align*}
    \e(A) \otimes \e(A) \otimes A \xrightarrow{\cong} \e(A) \otimes \e(A) \otimes (A \otimes_A A) \\\cong \e(A) \otimes A \otimes_A \e(A) \otimes A \xrightarrow{\text{ev}\otimes \text{ev}} A \otimes_A A \xrightarrow{\cong} A
\end{align*}
inducing a multiplication $\e(A) \otimes \e(A) \rightarrow \e(A)$. We also have an ``outer multiplication'' given by composition. \displaypar
Multiple different proofs have been given for Deligne's Conjecture, see for example \cite{Tam}, \cite{Vor}, \cite{MCS}. Tamarkin \cite{Tam} solves the Deligne Conjecture by constructing a map $\Psi_T: \ger_{\infty} \rightarrow \braces$ from the operad $\ger_{\infty}$ of homotopy Gerstenhaber algebras to the Braces-operad defined in \cite{GV}, which canonically acts on the Hochschild cochain complex. This map $\Psi_T$ is compatible with the respective morphisms from the $L_{\infty}$-operad into $\ger_{\infty}$, and the Lie operad into $\braces$. In addition, Tamarkin proves a formality result for the little 2-disks operad, stating that $C_{\ast}(D_2)$ is quasi-isomorphic to its homology $H_{\ast}(D_2) \simeq \ger$, and therefore  $C_{\ast}(D_2)$ is also quasi-isomorphic to $\ger_{\infty}$. This equips $C^{\ast}(A,A)$ with a $C_{\ast}(D_2)$-algebra structure. \displaypar
Tamarkin's results in \cite{Tam} also give a new proof of Kontsevich's formality theorem\footnote{Tamarkin's argument works for $M= \mathbb{R}^n$, so $A = C^{\infty}(M) = \mathbb{R}[x_1,\dots,x_n]$.  It was shown in \cite{DTT} that Tamarkin's formality theorem remains true for any regular commutative algebra $A$.}. This new proof needs as only input Tamarkin's solution of Deligne's conjecture in the form of the morphism $\Psi_T$. From there, one can show that $T_{\text{poly}}^{\ast}(A) \simeq D_{\text{poly}}^{\ast}(A)$ as $\ger_{\infty}$-algebras by computing certain obstructions. This highlights the dependencies among these results. By construction, Tamarkin's version of the formality morphism is not only a $L_{\infty}$-quasi-isomorphism, but a $\mathbf{Ger}_{\infty}$-quasi-isomorphism.
\section{Formality for non-affine smooth varieties}
The classical formality theorem applies to smooth manifolds (which are affine $C^{\infty}$-schemes by \cite{J}) and smooth affine varieties. To extend it to the case of non-affine smooth varieties, one first needs a global version for polyvector fields and polydifferential operators. \displaypar
To this end, let $X$ be a smooth variety over $\field$. The \textbf{tangent sheaf of} $X$ is given by the $\mathcal{O}_X$-module $\mathcal{T}_X = \mathcal{H}om_{\mathcal{O}_X}(\Omega_{X/\field}, \mathcal{O}_X)$. 
\begin{definition}
The \textbf{sheaf of polyvector fields on} $X$ is given by 
\begin{align*}
    \mathcal{T}_{\textnormal{poly}}^{\ast}(X) := \sym_{\mathcal{O}_X} \mathcal{T}_X[-1].
\end{align*}
This is a sheaf of Gerstenhaber algebras with the symmetric algebra product and the Schouten bracket.
\end{definition}
Note however that the Schouten bracket is not $\mathcal{O}_X$-linear, so this is a Gerstenhaber algebra only as a sheaf of $\field$-cochain complexes. \displaypar
The situation for the Hochschild complex is somewhat more complicated, since $C^{\ast}(A,A)$ is not functorial in $A$, and hence does not glue together to a sheaf. For general varieties $X$ over $\field$, there have been multiple proposed definitions, for example by Grothendieck \cite{Gr} and J.-L. Loday \cite{Lo}, Gerstenhaber-D. Schack \cite{GS} and G. Swan \cite{S}. The perhaps most straightforward generalization from the affine case was given by Swan, who defined the Hochschild cohomology of $X$ to be
\begin{align*}
    \text{HH}^{\ast}(X) := \text{Ext}^{\ast}_{\mathcal{O}_{X\times_{\field} X}}(\Delta_{\ast}\mathcal{O}_X,\Delta_{\ast} \mathcal{O}_X).
\end{align*}
Unfortunately, this does not carry a Gerstenhaber bracket. For the case that $X$ is smooth, Kontsevich \cite{K} gave an alternative definition, which does carry the structure of a Gerstenhaber algebra.
\begin{defprop}[\cite{K}]
There exists a complex of quasi-coherent $\mathcal{O}_X$-modules $\mathcal{D}_{\textnormal{poly}}^{\ast}(X)$ such that on affine opens $U = \spec A$, 
\begin{align*}
    \mathcal{D}^{\ast}_{\textnormal{poly}}(X)(\spec(A)) \cong D^{\ast}_{\textnormal{poly}}(A).
\end{align*}
This is the \textbf{sheaf of polydifferential operators on} $X$. It is a sheaf of $\textbf{Braces}$-algebras, but again this algebraic structure is not $\mathcal{O}_X$-linear.
\end{defprop}
Using Tamarkin's map $\Psi_T$, the sheaf of polydifferential operators becomes a sheaf of $\mathbf{Ger}_{\infty}$-algebras. In particular, the Hochschild cohomology of $X$, defined as the hypercohomology of the sheaf of polydifferential operators
\begin{align*}
    \text{HH}^{\ast}(X) := \mathbb{H}^{\ast}(X,\mathcal{D}^{\ast}_{\text{poly}}(X)),
\end{align*}
obtains a Gerstenhaber algebra structure.\displaypar
In Section \ref{the_infty_category_of_dg_sheaves}, we give another definition for the Hochschild complex of a not necessarily smooth, separated, quasi-compact scheme $X$ over $\field$, that is given in terms of the $\mathbb{E}_1$-center of the structure sheaf in an $\infty$-category of dg sheaves on $X$. This definition automatically comes equipped with a canonical little 2-disks algebra structure, even in the singular case. The difficulty instead lies in proving that this is a sensible definition for the Hochschild complex. This is done in Theorem \ref{thm1} and Theorem \ref{thm6}. \displaypar
Using these definitions for the sheaf of polyvector fields and the sheaf of polydifferential operators, Calaque and Van den Bergh successfully extended Kontsevich's formality theorem to smooth varieties.
\begin{theorem}[Theorem 1.3 \cite{CVdB}]\label{thm12}
Let $X$ be a smooth variety over $\field$, and let $\td(X)$ denote the Todd class of $X$. The morphism
\begin{align*}
    \mathcal{T}_{\textnormal{poly}}^{\ast}(X) \xrightarrow{I_\textnormal{HKR}\circ \td(X)^{1/2}\wedge -}\mathcal{D}_{\textnormal{poly}}^{\ast}(X)
\end{align*}
is an isomorphism of Gerstenhaber algebras in the derived 1-category of sheaves of $\field$-cochain complexes on $X$.
\end{theorem}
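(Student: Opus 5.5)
This is the Calaque–Van den Bergh theorem, and I would follow their strategy: globalize the local formality results through formal geometry, then identify the correction term that globalization forces. \emph{Step 1: local formality.} I take as input Tamarkin's $\ger_{\infty}$-formality quasi-isomorphism $T_{\text{poly}}^{\ast}(A)\simeq D_{\text{poly}}^{\ast}(A)$ for $A=\field[[x_1,\dots,x_n]]$, extended to regular algebras as in \cite{DTT}. I will need a version that is equivariant under the linear group $\mathrm{GL}_n$, and whose linear Taylor component, restricted to a vector field $v$, is the identity of $D_{\text{poly}}^{\ast}(A)$. Such a version exists after averaging, since the obstructions vanish in the relevant degrees.

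\emph{Step 2: globalize.} Next I would use the bundle of formal coordinate systems $X^{\text{coor}}\to X$ together with the Harish-Chandra pair $(\mathrm{W}_n,\mathrm{GL}_n)$ of formal vector fields. Both $\mathcal{T}_{\text{poly}}(X)$ and $\mathcal{D}_{\text{poly}}(X)$ are then resolved by Gelfand–Fuks type de Rham complexes $\Omega(X^{\text{aff}},-)$, where $X^{\text{aff}}=X^{\text{coor}}/\mathrm{GL}_n$ has contractible fibers. In this description the Grothendieck connection is the Maurer–Cartan element $\omega$ of the $\mathrm{W}_n$-valued flat connection. Twisting the local $\ger_\infty$-morphism $\mathcal{U}$ by $\omega$ produces a twisted morphism $\mathcal{U}^\omega$, which is a $\ger_\infty$ quasi-isomorphism between the twisted complexes. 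The twisted polyvector complex is quasi-isomorphic to $\mathcal{T}_{\text{poly}}(X)$ with its usual structure, but only up to a correction. The reason is that $\mathcal{U}$ is not $\mathrm{W}_n$-equivariant: $\mathcal{U}_1(v)\neq v$ only for nonlinear $v$. That discrepancy is a Chevalley–Eilenberg cocycle of $(\mathrm{W}_n,\mathrm{GL}_n)$ valued in $T_{\text{poly}}$. In the end one gets a $\ger_\infty$-quasi-isomorphism between $\mathcal{D}_{\text{poly}}(X)$ and $\mathcal{T}_{\text{poly}}(X)$ with Schouten differential twisted by a closed form; that closed form turns out to be cohomologous to zero, so it is really $\mathcal{T}_{\text{poly}}(X)$.

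\emph{Step 3: identify the map in the derived category.} Passing to the derived category of sheaves, the composite is an isomorphism of Gerstenhaber algebras, because $\ger_\infty$-morphisms induce Gerstenhaber maps on (hyper)cohomology sheaves and at the derived level. It remains to show that this composite equals $I_{\text{HKR}}\circ \td(X)^{1/2}\wedge-$. Here I would compare with the Duflo-type correction. The first Taylor component of the twisted map differs from $I_{\text{HKR}}$ by contraction with a class built from the Atiyah class $\at(X)$. Concretely, it is $\exp$ of a combination $\sum_k c_k \operatorname{tr}(\at(X)^{k})$ of the Chern-character-type classes. These coefficients are universal, computed from the local formality weights, namely the wheels contribution.

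\emph{Main obstacle.} The hard step is pinning these universal coefficients down so that the correction is exactly $\td(X)^{1/2}$, i.e.\ $\log$ of the Duflo function $\frac{1}{2}\log\frac{x/2}{\sinh(x/2)}$ shifted appropriately. My first attempt would be to reduce to the Lie algebra case. Applying the construction to the formal neighborhood of the origin in $\mathfrak{g}^*$ for a Lie algebra $\mathfrak{g}$ turns the correction into the Duflo element $j^{1/2}(x)$, which is fixed by Kontsevich's computation. This determines the coefficients $c_k$. Atiyah-class functoriality, in the style of Kapranov's $L_\infty$ structure on $\mathcal{T}_X[-1]$, then transports the identity to $X$. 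The odd terms vanish by the standard symmetry argument, and the even terms match $\td^{1/2}$ because the Todd class is multiplicatively generated by $\frac{x}{1-e^{-x}}=e^{x/2}\frac{x/2}{\sinh(x/2)}$. The $e^{x/2}$ factor is a $c_1$-twist that is absorbed by a gauge equivalence.
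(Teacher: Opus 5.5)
Your proposal has a genuine gap, and it sits in Step 3.

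\textbf{The gap.} You build the global map from Tamarkin's $\ger_\infty$ formality morphism. That morphism comes from obstruction theory and depends on a Drinfeld associator, so it has no explicit graph expansion with computable weights. There is therefore no ``wheels contribution'' from ``local formality weights'' to compute. Likewise, Kontsevich's identification of the Duflo element \cite[Theorem 8.3.4]{K} is a statement about the twisted tangent map of his explicit $\mathscr{U}$, not of yours.

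Universality plus a Lie algebra test case cannot rescue this, because the coefficients are genuinely undetermined. By \cite[Theorem 9.2]{DRW}, acting on the formality morphism by a Deligne--Drinfeld element $\sigma_{2n+1}$ multiplies the correction term by $\exp(\textnormal{ch}_{2n+1}(X))$. So for an unspecified $\ger_\infty$ morphism the odd coefficients need not vanish. The vanishing of odd wheels is a property of Kontsevich's particular weights, not of formality morphisms in general. Consequently the Gerstenhaber isomorphism you obtain in Step 3 need not be $I_{\textnormal{HKR}}\circ\td(X)^{1/2}\wedge-$. The paper records exactly this limitation for \cite{CVdB2}: the $\ger_\infty$ globalization yields \emph{some} Gerstenhaber isomorphism in the derived category but cannot identify it.

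\textbf{The route the paper attributes to \cite{CVdB}.} It splits the two tasks differently.
\begin{enumerate}
\item Glue Kontsevich's explicit $L_\infty$ morphism through formal geometry. Its specific properties make the twisting work: $\mathscr{U}_1=I_{\textnormal{HKR}}$, $\mathscr{U}_n(v,\dots)=0$ for $n\geq 2$ and $v$ linear, and vanishing on tuples of vector fields. The first component of the twisted morphism is then computed by wheel graphs whose weights are known, and this is what pins down the Todd-type correction.
\item This yields only a zig-zag of sheaves of dg Lie algebras. Multiplicativity must be supplied separately, by globalizing Kontsevich's compatibility of the tangent map with cup products on cohomology \cite{K}. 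That is why the conclusion lives in the derived 1-category rather than at the $\ger_\infty$ level.
\end{enumerate}

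\textbf{If you keep Tamarkin's morphism,} you would need two further inputs. First, a proof that the correction has the form $\td(X)^{1/2}\exp\bigl(\sum_k c_{2k+1}\textnormal{ch}_{2k+1}(X)\bigr)$ with the even part forced. Second, the result of \cite{DRW} that contractions with the classes $\textnormal{ch}_{2k+1}(X)$ act by Gerstenhaber derivations, so that the extra factor is a Gerstenhaber automorphism.

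\textbf{Two smaller points.}
\begin{itemize}
\item The failure of $\mathrm{W}_n$-equivariance is not $\mathscr{U}_1(v)\neq v$, since the HKR component fixes every vector field. It is the non-vanishing of $\mathscr{U}_n(v,\dots)$, $n\geq 2$, for nonlinear $v$.
\item Your final sentence hides an input of the same kind. We have $\td(X)^{1/2}=e^{c_1(X)/4}\hat{A}(X)^{1/2}$, and the odd factor $e^{c_1(X)/4}$ is harmless only because contraction with the closed class $c_1(X)$ is itself a Gerstenhaber derivation. Calling it a gauge equivalence does not justify this.
\end{itemize}
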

In contrast to the affine case, in order for the formality isomorphism to respect the product and Gerstenhaber bracket, one needs to correct the Hochschild-Kostant-Rosenberg map $I_\text{HKR}$ by the contraction with the formal square root of the Todd class. The Todd class is defined as
\begin{align*}
    \td(X) := \det(q(\at(X))) \in \bigoplus_{n\geq 0} H^n(X,\Omega^n_X),
\end{align*}
where $\at(X) \in \text{Ext}^1(\mathcal{T}_X, \Omega^{\ast}_X \otimes_{\mathcal{O}_X} \mathcal{T}_X)$ is the Atiyah class of the tangent sheaf of $X$, and $q$ is the formal power series
\begin{align*}
    q(x) = \frac{x}{1-e^{-x}}.
\end{align*}
In particular, the Todd class can be formally expanded in terms of the scalar Atiyah classes
\begin{align*}
    a_i(X) \in H^i\left(X, (\mathcal{T}_{\textnormal{poly}}^i(X))^{\vee}\right),
\end{align*}
and therefore acts by contraction on $\mathcal{T}_{\text{poly}}^{\ast}(X)$. \displaypar
Notably, the above formality morphism is defined only in the derived 1-category. This is a consequence of the choice of local formality morphism it is built from. Forgetting their respective products, $\mathcal{T}_{\text{poly}}^{\ast}(X)[1]$ and $\mathcal{D}_{\text{poly}}^{\ast}(X)[1]$ are sheaves of dg Lie algebras, and Calaque and Van den Bergh construct their formality morphism essentially by gluing together Kontsevich's local formality morphisms described in Section \ref{Kontsevich_formality_morphism}. This gluing yields a zig-zag of quasi-isomorphisms of sheaves of dg Lie algebras, and in particular an isomorphism of Lie algebras in the derived 1-category of sheaves of $\field$-cochain complexes. Kontsevich argued in \cite{K} that his local formality morphism is compatible with cup products up to homotopy, and hence the global isomorphism of Lie algebras constructed above lifts to an isomorphism of Gerstenhaber algebras. The drawback of this strategy is that we do not obtain a zig-zag of quasi-isomorphisms of sheaves of $\ger_{\infty}$-algebras, since it is unknown if the cup product admits higher compatibility with the dg Lie algebra structure. The advantage, however, is that we can explicitly write down the obtained morphism in the derived 1-category of sheaves of $\field$-cochain complexes. In a second paper \cite{CVdB2}, Calaque and Van den Bergh instead use Tamarkin's local formality morphism to obtain a zig-zag of quasi-isomorphisms of sheaves of $\ger_{\infty}$-algebras directly, which then again implies the existence of an isomorphism of Gerstenhaber algebras in the derived 1-category. Since Tamarkin's local formality morphism is obtained by obstruction theoretic arguments, rather than an explicit construction, they are not able to further determine this induced isomorphism; in particular, it is not clear if it agrees with the one constructed from Kontsevich's formality morphism.
\section{The formality theorem and the Duflo isomorphism}
The correction term $\det(q(\at(X)))^{1/2}$ in the global formality theorem \ref{thm12} looks very similar to the correction term in the \textbf{Duflo isomorphism theorem}.
\begin{theorem}[\cite{D}, \cite{PT}]
Let $\mathfrak{g}$ be a finite-dimensional Lie algebra over $\field$. The morphism
\begin{align*}
    H^{\ast}(\mathfrak{g},\sym(\mathfrak{g})) \xrightarrow{I_{\textnormal{PBW}}\circ J^{1/2}} H^{\ast}(\mathfrak{g}, U(\mathfrak{g}))
\end{align*}
is an isomorphism of graded commutative algebras. Here $J\in \widehat{\sym}(\mathfrak{g}^{\vee})$ is the \textbf{Duflo element}, with $J(x) = \det(q'(\textnormal{ad}_x))$ and
\begin{align*}
    q'(x) = \frac{1-e^{-x}}{x}.
\end{align*}
\end{theorem}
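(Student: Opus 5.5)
This statement is the Duflo isomorphism, cited from \cite{D} and \cite{PT}. I would derive it from Kontsevich's formality, following Kontsevich's and Pevzner--Torossian's route, applied to the Poisson manifold $\mathfrak{g}^{\vee}$ with its linear (Kirillov--Kostant) Poisson structure. Throughout, I take the unproved facts in the first two steps from \cite{K} and \cite{PT}.

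First I translate both sides into Hochschild/polyvector language. The linear Poisson bivector $\pi$ on $\mathfrak{g}^{\vee}$ is a Maurer--Cartan element of $T_{\text{poly}}(\mathfrak{g}^{\vee})[1]$. The twisted complex $(T_{\text{poly}}(\mathfrak{g}^{\vee}), [\pi,-])$ restricted to polynomial coefficients is the Chevalley--Eilenberg complex $C^{\ast}(\mathfrak{g},\sym(\mathfrak{g}))$, and the wedge product induces the cup product on $H^{\ast}(\mathfrak{g},\sym(\mathfrak{g}))$. On the other side, Kontsevich's $\mathscr{U}$ sends $\hbar\pi$ to a star product $\star$ on $\sym(\mathfrak{g})[[\hbar]]$. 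For linear Poisson structures only finitely many graphs contribute to each polynomial degree, so one may set $\hbar=1$. One then shows that $(\sym(\mathfrak{g}),\star)\cong U(\mathfrak{g})$ via a map differing from $I_{\text{PBW}}$ by a formal differential operator with constant coefficients, namely $J^{1/2}$ after the wheel-graph computation. The twisted polydifferential complex computes $H^{\ast}(\mathfrak{g},U(\mathfrak{g}))$ with its cup product.

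Second, I use the compatibility of $\mathscr{U}$ with cup products in cohomology (Kontsevich's homotopy argument with the $2$-colored configuration space, the ``eye'' picture). The tangent map $d\mathscr{U}_{\pi}$ at the Maurer--Cartan element $\pi$ is then a quasi-isomorphism between the twisted complexes that is multiplicative on cohomology. This yields an algebra isomorphism $H^{\ast}(\mathfrak{g},\sym(\mathfrak{g}))\cong H^{\ast}(\mathfrak{g},U(\mathfrak{g}))$.

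Third, I identify the resulting map with $I_{\text{PBW}}\circ J^{1/2}$. For linear $\pi$, the only graphs contributing to the degree-$0$ part of $d\mathscr{U}_{\pi}$ are trees, which give the symmetrization $I_{\text{PBW}}$, together with wheels, which give $\exp\bigl(\sum_n c_{2n}\,\text{tr}(\text{ad}^{2n})\bigr)$. The wheels can be shown to depend only on the form $\text{tr}(\text{ad}_x^{k})$, and higher cochain degrees follow by the same graph bookkeeping. The main obstacle is computing the wheel weights $c_{2n}$ and matching them with $\log\det(q'(\text{ad}_x))^{1/2}$. For this I would follow the argument of \cite{PT}: specialize to $\mathfrak{g}$ semisimple, or test against known cases, and use uniqueness of a universal series compatible with the Harish-Chandra isomorphism. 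The ambiguity by odd terms and by characters $\exp(c\,\text{tr}\,\text{ad})$ is harmless on cohomology, since $\text{tr}\,\text{ad}$ is a cocycle-exact twist.
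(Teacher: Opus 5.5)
The paper does not prove this theorem itself. It quotes it from \cite{D}, \cite{PT}, and the surrounding text only records Kontsevich's identity $I_{\textnormal{PBW}}\circ J^{1/2}=I_{\textnormal{alg}}^{-1}\circ I_T$. That is exactly the route you take, so the strategy is sound, but your Steps 1 and 3 put the ingredients in the wrong places.

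The degree-$0$ tangent map $I_T$ has no tree contributions. With no ground vertices, each $\pi$-vertex must send exactly one edge to $f$: two edges into $f$ contract the antisymmetric $\pi^{ij}$ (or $c^{ij}_k$) against symmetric derivatives of $f$. Each $\pi$-vertex also receives at most one edge, because $\pi$ is linear. So the $\pi$-vertices form directed cycles, and $I_T$ is a pure exponential of wheels, a constant-coefficient operator $\sym(\mathfrak{g})\to\sym(\mathfrak{g})$. The symmetrization $I_{\textnormal{PBW}}$ lands in $U(\mathfrak{g})$, not in $(\sym(\mathfrak{g}),\star)$, and enters only through $I_{\textnormal{alg}}$, via the tree/BCH part of $\star$. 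You also put $J^{1/2}$ into $I_{\textnormal{alg}}$ in Step 1 and again into the wheels of $I_T$ in Step 3, which double counts. What the graph analysis actually yields is a statement about the composite only: $I_{\textnormal{alg}}^{-1}\circ I_T=I_{\textnormal{PBW}}\circ S$ with $S=\exp\bigl(\sum_k s_k\,\textnormal{tr}(\textnormal{ad}^k)\bigr)$ universal. The real task is to identify $S$.

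The genuine gap is in that identification. Your Harish-Chandra uniqueness argument does fix the even coefficients. Already for $\mathfrak{sl}_2$, a power series in the Laplacian acting on $\sym(\mathfrak{sl}_2)^{\mathfrak{sl}_2}=\field[C]$ as an algebra endomorphism must be the identity. But the argument cannot see odd traces, since $\textnormal{tr}(\textnormal{ad}_x^{2k+1})\equiv 0$ on semisimple Lie algebras. Your claim that the odd terms are harmless because ``$\textnormal{tr}\,\textnormal{ad}$ is a cocycle-exact twist'' is not an argument. For $k\geq 1$ the operators $\textnormal{tr}(\textnormal{ad}^{2k+1})$ are of higher order, and nothing you say makes $\exp(s_{2k+1}\textnormal{tr}(\textnormal{ad}^{2k+1}))$ an algebra automorphism of $H^{\ast}(\mathfrak{g},\sym(\mathfrak{g}))$. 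As written, you would only prove the theorem for $I_{\textnormal{PBW}}\circ J^{1/2}$ composed with an unknown odd factor.

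What you need is that the odd wheel weights vanish, which follows from the reflection $z\mapsto-\bar z$. On the configuration space of $f$ and $n$ $\pi$-vertices it has orientation sign $(-1)^n$ and preserves the product of edge forms. On the $I_{\textnormal{alg}}$ side it gives $f\star_{-\hbar}g=g\star_{\hbar}f$, so $\exp_\star(x)$ is even in $\hbar$.

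The $k=0$ term needs a different reason. Since tadpoles are excluded, the graphs produce $\det\bigl(\frac{\sinh(\textnormal{ad}/2)}{\textnormal{ad}/2}\bigr)^{1/2}$, which differs from the stated $J^{1/2}$ by $e^{-\frac{1}{4}\textnormal{tr}\,\textnormal{ad}}$. This is harmless because $\chi=\textnormal{tr}\circ\textnormal{ad}$ is a coadjoint-invariant character. Hence $e^{c\,\partial_\chi}$ is a $\mathfrak{g}$-equivariant algebra automorphism of $\sym(\mathfrak{g})$, and therefore of $H^{\ast}(\mathfrak{g},\sym(\mathfrak{g}))$.

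Finally, ``higher cochain degrees follow by the same bookkeeping'' hides the actual content of \cite{PT}. That requires cup-product compatibility on the whole tangent cohomology, not just on functions. It also requires an explicit comparison of the polydifferential Hochschild complex of $(\sym(\mathfrak{g}),\star)$ with $C^{\ast}(\mathfrak{g},U(\mathfrak{g}))$ that intertwines the tangent map with $I_{\textnormal{PBW}}\circ S$ on coefficients.
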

This is, of course, no coincidence. In fact, there are multiple related ways to link these two theorems together, see for example \cite[Section 8.3.4]{K} and \cite[Section 5.2]{CR}. In this section, we highlight two of these connections. \displaypar
The first one links the Duflo isomorphism theorem to the local formality morphism constructed by Kontsevich. Let $\mathfrak{g}$ be a finite-dimensional Lie algebra over $\field$. The Lie algebra structure on $\mathfrak{g}$ makes the dual space $\mathfrak{g}^{\vee}$ into a \textbf{Poisson variety} by uniquely extending the bracket on $\mathfrak{g}$ to a Poisson bracket on the symmetric algebra $\mathcal{O}_{\mathfrak{g}^{\vee}} \cong \sym(\mathfrak{g})$. This Poisson structure corresponds to a Maurer-Cartan element $\pi$ in the dg Lie algebra $T^{\ast}_{\text{poly}}(\mathfrak{g}^{\vee})[1]$. Tensoring with $\hbar \field[[\hbar]]$, this yields a Maurer-Cartan element $\hbar\pi$ in $T^{\ast}_{\text{poly}}(\mathfrak{g}^{\vee})[1] \otimes \hbar \field[[\hbar]]$. Since $\mathscr{U}$ is an $L_{\infty}$-quasi-isomorphism, it determines a corresponding Maurer-Cartan element $\tilde{\pi}$ of $D_{\text{poly}}^{\ast}(\mathfrak{g}^{\vee})[1]\otimes \hbar \field[[\hbar]]$, and hence an associative product $\star$ on $\mathcal{O}_{\mathfrak{g}^{\vee}}[[\hbar]] \cong \sym(\mathfrak{g})[[\hbar]]$. 
\begin{proposition}[Theorem 8.2 \cite{K}]
Setting $\hbar = 1$ yields a well-defined product $\star$ on $\sym(\mathfrak{g})$ such that for $x,y\in \mathfrak{g}$, 
\begin{align*}
    x\star y - y\star x = [x,y].
\end{align*}
We therefore have a unique isomorphism of algebras
\begin{align*}
    I_{\textnormal{alg}}: U(\mathfrak{g}) \xrightarrow{\cong} (\sym(\mathfrak{g}),\star).
\end{align*}
\end{proposition}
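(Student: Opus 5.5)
The plan is to obtain the star product at $\hbar=1$ by showing that Kontsevich's quantization of the linear Poisson structure on $\mathfrak{g}^{\vee}$ is polynomial in $\hbar$ in a controlled way, so that the specialization makes sense. Then we compute the commutator of two linear functions, and deduce $I_{\textnormal{alg}}$ from the universal property of $U(\mathfrak{g})$.

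First, the Maurer--Cartan element $\tilde{\pi} = \sum_{n\geq 1} \frac{\hbar^n}{n!}\mathscr{U}_n(\pi,\dots,\pi)$ is written out using Kontsevich's graph expansion. In this expansion each term of $\mathscr{U}_n(\pi,\dots,\pi)(f,g)$ is indexed by an admissible graph with $n$ aerial vertices and two ground vertices. Since $\pi$ is linear, each aerial vertex carries a coefficient $c_{ij}^k x_k$ of degree one. Every edge landing on an aerial vertex differentiates such a coefficient. Degree counting then shows that a graph with $n$ aerial vertices contributes a bidifferential operator that lowers the total polynomial degree of $f\otimes g$ by exactly $n$: there are $2n$ edges and $n$ linear coefficients. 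Consequently, on $\sym^{\leq p}(\mathfrak{g})\otimes \sym^{\leq q}(\mathfrak{g})$ only terms with $n\leq p+q$ survive. Hence $f\star g$ is a polynomial in $\hbar$, and setting $\hbar=1$ gives a well-defined product, which is associative because associativity holds coefficientwise in $\hbar$. I expect this degree bookkeeping to be the main point requiring care, in particular checking that edges hitting the aerial vertices cannot create negative powers or infinite sums.

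Second, for $x,y\in\mathfrak{g}$ (linear functions), all terms with two or more derivatives on either input vanish. The only graphs that contribute are therefore the $n=0$ term $xy$ and the $n=1$ term, which gives $\frac{1}{2}\pi(x,y)=\frac12[x,y]$ with Kontsevich's normalization of the weight of the one-vertex wedge. Graphs with $n\geq 2$ would need the aerial vertices' linear coefficients to be differentiated without total degree exceeding $2$; by the count above the output would have negative degree, so these terms vanish. Antisymmetrizing gives $x\star y - y\star x = [x,y]$.

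Finally, the inclusion $\mathfrak{g}\to(\sym(\mathfrak{g}),\star)$ is a Lie map for the commutator, so it extends uniquely to an algebra map $I_{\textnormal{alg}}: U(\mathfrak{g})\to(\sym(\mathfrak{g}),\star)$. To see that it is bijective, note that $\star$ differs from the commutative product by terms of strictly lower degree. Hence $I_{\textnormal{alg}}$ is filtered for the PBW filtration on $U(\mathfrak{g})$ and the degree filtration on $\sym(\mathfrak{g})$, and its associated graded map $\sym(\mathfrak{g})\to\sym(\mathfrak{g})$ is the identity by PBW. A filtered map of exhaustive, bounded-below filtrations with bijective associated graded is an isomorphism. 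Uniqueness follows because $U(\mathfrak{g})$ is generated by $\mathfrak{g}$.
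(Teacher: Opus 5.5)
The thesis gives no proof of this statement; it simply cites Kontsevich. Your Steps 1 and 3 reproduce the standard argument correctly. Every graph with $n$ aerial vertices is homogeneous of degree $-n$ on $\sym(\mathfrak{g})$, so $\star$ is polynomial in $\hbar$ and specializes at $\hbar=1$ to an associative filtered product. Its associated graded is the commutative product, so the PBW comparison of associated gradeds makes $I_{\textnormal{alg}}$ bijective.

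Step 2, however, contains a false claim.

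\begin{itemize}
\item Your degree count only kills graphs with $n\geq 3$ on $x\otimes y$. For $n=2$ the output has degree $1+1-2=0$, not negative, and such terms really occur.
\item Each of the four targets $L$, $R$, $v_1$, $v_2$ can absorb at most one of the four edges. So the only contributing graphs are the $2$-cycles $v_1\rightleftarrows v_2$ whose remaining edges from $v_1$ and $v_2$ go to different ground vertices.
\item These graphs evaluate to a constant multiple of $\hbar^2\,\textnormal{tr}(\textnormal{ad}_x\textnormal{ad}_y)$. This is the term $-\frac{\hbar^2}{6}\,\partial_{j_2}\pi^{i_1j_1}\,\partial_{j_1}\pi^{i_2j_2}\,\partial_{i_1}f\,\partial_{i_2}g$ in Kontsevich's explicit second-order formula.
\item That term is nonzero whenever the Killing form is, e.g.\ for semisimple $\mathfrak{g}$. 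So $x\star y$ is not just $xy$ plus the $n=1$ term, and your statement that the $n\geq 2$ terms vanish is wrong.
\end{itemize}

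What saves the commutator identity is symmetry. The Killing form is symmetric, so this term cancels in $x\star y-y\star x$.

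More structurally, the reflection $z\mapsto-\bar z$ of the upper half plane gives $w_{\Gamma'}=(-1)^n w_\Gamma$, where $\Gamma'$ is $\Gamma$ with its two ground vertices exchanged. Hence $g\star_{\hbar}f=f\star_{-\hbar}g$, so only odd powers of $\hbar$ survive in a commutator. On linear inputs the only odd $n\leq 2$ is $n=1$.

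With this repair your proof goes through. You also need $\pi$ normalized so that the full $n=1$ term, summed over both edge orderings, equals $\frac{\hbar}{2}[x,y]$; this factor of $2$ is purely a matter of convention.
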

We have an induced $L_{\infty}$-quasi-isomorphism between the twisted complexes 
\begin{align*}
    \mathscr{U}^{\pi}: (T^{\ast}_{\text{poly}}(\mathfrak{g}^{\vee})[1] \otimes \hbar \field[[\hbar]])^{\hbar \pi} \rightarrow (D_{\text{poly}}^{\ast}(\mathfrak{g}^{\vee})[1]\otimes \hbar \field[[\hbar]])^{\tilde{\pi}}.
\end{align*}
Its tangent map $(\mathscr{U}^{\pi})_1$ therefore yields a quasi-isomorphism of the underlying twisted complexes. One can again define this map for $\hbar = 1$, thus obtaining a quasi-isomorphism
\begin{align*}
    (\mathscr{U}^{\pi})_1: (T^{\ast}_{\text{poly}}(\mathfrak{g}^{\vee})[1])^{\pi} \rightarrow (D_{\text{poly}}^{\ast}(\mathfrak{g}^{\vee})[1])^{\tilde{\pi}}.
\end{align*}
Note in particular that the right hand side is just the shifted Hochschild cochain complex of the associative algebra $(\sym(\mathfrak{g}), \star)$. On degree 0 cochains, this yields an endomorphism $I_T$ of $\sym(\mathfrak{g})$.
\begin{theorem}[Theorem 8.3.4 \cite{K}]
The following diagram commutes
\begin{center}
\begin{tikzcd}[column sep=large]
{H^{\ast}(\mathfrak{g}, \sym(\mathfrak{g}))} \arrow[r, "I_{\text{PBW}}\circ J^{1/2}"'] \arrow[rr, "I_T", bend left] & {H^{\ast}(\mathfrak{g},U(\mathfrak{g}))} \arrow[r, "I_{\text{alg}}"'] & {H^{\ast}(\mathfrak{g}, (\sym(\mathfrak{g}), \star))}.
\end{tikzcd}
\end{center}
In particular, the Duflo morphism $I_{\text{PBW}}\circ J^{1/2}$ is given by $I_{\text{alg}}^{-1}\circ I_T$.
\end{theorem}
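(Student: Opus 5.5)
We work with the Chevalley--Eilenberg description of both sides. For a $\mathfrak{g}$-module $M$, the cochains $C^{\ast}(\mathfrak{g},M)$ are identified with the twisted complexes in the theorem. Concretely:
\begin{align*}
H^{\ast}(\mathfrak{g},\sym(\mathfrak{g})) &\cong H^{\ast}\big((T^{\ast}_{\text{poly}}(\mathfrak{g}^{\vee})[1])^{\pi}\big),\\
H^{\ast}(\mathfrak{g},(\sym(\mathfrak{g}),\star)) &\cong H^{\ast}\big((D^{\ast}_{\text{poly}}(\mathfrak{g}^{\vee})[1])^{\tilde{\pi}}\big).
\end{align*}
The first identification uses the polyvector fields on the linear space $\mathfrak{g}^{\vee}$. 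These are $\sym(\mathfrak{g})\otimes\Lambda^{\ast}\mathfrak{g}^{\vee}$ up to completion, with the differential $[\pi,-]$, which is the Chevalley--Eilenberg differential with coefficients in the adjoint action on $\sym(\mathfrak{g})$. For the second, the Hochschild cohomology of $U(\mathfrak{g})\cong(\sym(\mathfrak{g}),\star)$ is $H^{\ast}(\mathfrak{g},U(\mathfrak{g})^{\text{ad}})$ by the standard comparison via the Chevalley--Eilenberg resolution.

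Under these identifications, $I_T$ is the map induced on cohomology by $(\mathscr{U}^{\pi})_1$. It is a $\mathfrak{g}$-equivariant map $\sym(\mathfrak{g})\to\sym(\mathfrak{g})$ in degree $0$, and it extends to all degrees by the compatibility of the twisted tangent map with the $\mathfrak{g}$-action. The statement then reduces to two claims:
\begin{enumerate}
\item[(a)] $I_{\text{alg}}^{-1}\circ I_T = I_{\text{PBW}}\circ J^{1/2}$ as maps $\sym(\mathfrak{g})\to U(\mathfrak{g})$ on invariants, and by equivariance on cochains.
\item[(b)] The composite respects cup products.
\end{enumerate}

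For (b) we use Kontsevich's argument that the tangent map of $\mathscr{U}$ at a Maurer--Cartan element is compatible with cup products on cohomology. Its proof is a homotopy given by the boundary of the configuration space of two points near the real line in the upper half plane. Applying this to $\mathscr{U}^{\pi}$ with $\hbar=1$, which is well-defined because $\pi$ is linear and weights grow at most polynomially, makes $I_T$ multiplicative in cohomology. Since $I_{\text{alg}}$ is an algebra isomorphism, the composite respects products.

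For (a) we compute the graph expansion of $(\mathscr{U}^{\pi})_1$ applied to $f\in\sym(\mathfrak{g})$. Because $\pi$ is linear, each internal vertex has exactly one incoming edge. The admissible graphs are therefore unions of wheels attached to trees. Tree contributions reproduce the symmetrization map $I_{\text{PBW}}$ transported to $\star$ by $I_{\text{alg}}$. Each wheel $W_n$ contributes an operator $\text{tr}(\text{ad}_x^{n})$ acting as a constant-coefficient differential operator on $\sym(\mathfrak{g})$ with some weight $w_n$.

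The combinatorics of disjoint wheels exponentiate. This gives $I_T = I_{\text{alg}}\circ I_{\text{PBW}}\circ\exp\big(\sum_n w_n\,\text{tr}(\text{ad}^n)\big)$ up to a tree factor, which we show is trivial after composing with $I_{\text{alg}}^{-1}$. It then remains to identify $\exp(\sum w_n\text{tr}\,\text{ad}^n)$ with $J^{1/2} = \exp\big(\tfrac12\text{tr}\log q'(\text{ad})\big)$. For odd $n$ the wheels vanish on invariants or by the reflection symmetry of the weights. For even $n$ we need $w_{2k}$ to equal the corresponding Bernoulli coefficient $\tfrac{B_{2k}}{4k(2k)!}$ up to sign.

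\textbf{Main obstacle.} The step I expect to be hardest is this identification of the wheel weights. Direct integration is infeasible. The strategy I would try first avoids computing the weights. Let $F(x)=\exp(\sum w_{2k}\text{tr}\,\text{ad}_x^{2k})$. By (b), $I_{\text{PBW}}\circ F$ is multiplicative on $\sym(\mathfrak{g})^{\mathfrak{g}}$ for \emph{every} $\mathfrak{g}$. Testing this on enough Lie algebras pins down each $w_{2k}$, and the known Duflo value satisfies the constraint:
\begin{enumerate}
\item[(i)] First, $\mathfrak{sl}_2$, where Harish-Chandra and Duflo are explicit.
\item[(ii)] Then a family such as $\mathfrak{gl}_n$ or two-step nilpotent extensions, so that the traces $\text{tr}\,\text{ad}^{2k}$ are algebraically independent.
\end{enumerate}
If this uniqueness argument does not fully determine the weights, the fallback is to cite the direct computation of the wheel weights via the Stokes argument of Shoikhet/Willwacher.
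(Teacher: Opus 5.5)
Your skeleton (wheel expansion, odd wheels killed by the reflection $z\mapsto-\bar z$, multiplicativity from Kontsevich's cup-product compatibility, then a uniqueness argument for the even coefficients) is Kontsevich's; the paper itself gives no proof, only the citation. The central computation in (a), however, is wrong.

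$I_T$ is $(\mathscr{U}^{\pi})_1$ on functions, so it is a sum over graphs with \emph{no} boundary vertices. Since $\pi$ is linear, each $\pi$-vertex receives at most one edge. Hence every tree, whether free-standing or hanging off a cycle, contains a vertex both of whose outgoing edges end at the hub $f$, and such a graph vanishes by antisymmetry of $\pi$. There are therefore no tree contributions at all. Writing $\partial_F$ for the constant-coefficient operator with symbol $F$, one gets $I_T=\partial_{\exp(\sum_n w_n\mathrm{tr}\,\mathrm{ad}^n)}$ from wheels with spokes into the hub only, and nothing in this expansion produces $I_{\text{alg}}\circ I_{\text{PBW}}$.

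That map, $x^n\mapsto x^{\star n}$, is governed by the star-product graphs (two boundary vertices) and is in general not the identity. The missing lemma is that it too equals $\partial_\psi$ for a universal series $\psi(x)=\exp\bigl(\sum_n w'_n\,\mathrm{tr}(\mathrm{ad}_x^n)\bigr)$, equivalently $e^{x}_{\star}=\psi(x)\,e^{x}$, where the $w'_n$ are weights of wheels whose spokes point outward to the boundary. Only with this lemma do you get $I_{\text{alg}}^{-1}\circ I_T=I_{\text{PBW}}\circ\partial_{\exp\sum_n(w_n-w'_n)\mathrm{tr}\,\mathrm{ad}^n}$. It is then $w_{2k}-w'_{2k}$, not $w_{2k}$, that must equal $\tfrac{B_{2k}}{4k(2k)!}$. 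Your fallback actually contradicts your target: the result of Shoikhet you name says precisely that the hub-wheel integrals $w_n$ vanish, so $I_T=\mathrm{id}$ and the whole Duflo factor sits in $I_{\text{alg}}$.

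Once the lemma is supplied, your uniqueness step works, and $\mathfrak{sl}_2$ alone suffices. By Harish-Chandra's radial-part formula, multiplicativity of $\partial_{\exp\sum\gamma_{2k}\mathrm{tr}\,\mathrm{ad}^{2k}}$ on $\sym(\mathfrak{sl}_2)^{\mathfrak{sl}_2}$ forces every $\gamma_{2k}=0$. Your proposed nilpotent test algebras are useless, since every $\mathrm{tr}\,\mathrm{ad}^n$ vanishes on them.

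Two further steps are asserted rather than proved.

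First, the theorem concerns all of $H^\ast(\mathfrak{g},-)$, and ``extends to all degrees by compatibility with the $\mathfrak{g}$-action'' is not an argument. On a $k$-polyvector field, $(\mathscr{U}^\pi)_1$ is a sum over graphs with $k$ boundary vertices. You must show that, after the comparison $HH^\ast(U(\mathfrak{g}))\cong H^\ast(\mathfrak{g},U(\mathfrak{g}))$, its class is that of the degree-zero operator applied to coefficients; this is the extra work done by Pevzner--Torossian.

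Second, in the paper's normalization $q'(x)=e^{-x/2}\,\frac{\sinh(x/2)}{x/2}$, so $J^{1/2}$ contains the odd linear factor $\exp(-\tfrac14\mathrm{tr}\,\mathrm{ad})$. No admissible (loop-free) graph produces $\mathrm{tr}\,\mathrm{ad}$. So the identity you aim for is false on the nose on cochains for non-unimodular $\mathfrak{g}$, and your remark that ``odd wheels vanish'' does not address this term. You need that $\partial_{\mathrm{tr}\,\mathrm{ad}}$ acts by zero on $H^\ast(\mathfrak{g},\sym(\mathfrak{g}))$. This holds because the constant vector field $\mathrm{tr}\,\mathrm{ad}$ on $\mathfrak{g}^\vee$ is, up to sign, the modular vector field $\Delta\pi$ for a constant volume form. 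Then $[\Delta\pi,-]$ is, up to sign, the graded commutator $[\Delta,[\pi,-]]$, hence null-homotopic on the Chevalley--Eilenberg complex.
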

This means that, up to the identification of the quantized symmetric algebra with the universal enveloping algebra, the Duflo isomorphism is given by the tangent map of the local Kontsevich formality morphism at the Maurer-Cartan element defining the Lie algebra structure on $\mathfrak{g}$. The Hochschild-Kostant-Rosenberg map, which is the tangent map of the local formality morphism at the trivial Poisson structure, for the affine variety $\mathfrak{g}^{\vee}$ corresponds to the PBW morphism, so the above discussion shows that the Duflo correction term $J^{1/2}$ can be identified with the change in tangent map induced by twisting with the Maurer-Cartan element encoding the Lie algebra structure on $\mathfrak{g}$. \displaypar
In \cite{CVdB}, Calaque and Van den Bergh employ a very similar strategy to prove the global formality theorem. Given a smooth variety $X$, they construct a particular resolution of the structure sheaf $\mathcal{O}_X$, which carries a canonical connection. This connection can be identified with a Maurer-Cartan element in the dg Lie algebra of polyvector fields on this resolution, which is then used to twist the Kontsevich formality map.\displaypar
While this approach shows how to obtain the Lie theoretic Duflo morphism from the formality theorem, one can also understand the formality theorem through the lens of the Duflo isomorphism theorem. \displaypar
To this end, it was proven by M. Kapranov in \cite{Ka} that the shifted tangent sheaf $\mathcal{T}_X[-1]$ of a smooth variety $X$ is endowed with the structure of a Lie algebra in the derived 1-category of quasi-coherent sheaves on $X$. This Lie bracket, commonly called the \textbf{Atiyah Lie bracket}, is given by the Atiyah class of the tangent sheaf, which can be viewed as a morphism in the derived 1-category
\begin{align*}
    \alpha_X: \mathcal{T}_X \otimes \mathcal{T}_X \rightarrow \mathcal{T}_X[1].
\end{align*}
In addition, any quasi-coherent sheaf on $X$ admits an action by this Lie algebra via its Atiyah class. Therefore, the derived 1-category of quasi-coherent sheaves on $X$ can be identified with the category of representations of the Lie algebra $\mathcal{T}_X[-1]$. The adjoint action of this Lie algebra corresponds to the Atiyah class of the tangent sheaf, and therefore the global formality theorem \ref{thm12} can be identified with the Duflo isomorphism theorem\footnote{Note that the Duflo isomorphism theorem has not been proven for general tensor categories other than $\text{Vect}_{\mathbb{R}}$.} for $\mathfrak{g} = \mathcal{T}_X[-1]$. \displaypar
To make this analogy precise, we work in the context of derived algebraic geometry. In particular, in this section, all constructions are assumed to be ``derived''. We use notation and results from \cite[Chapters 5, 6, 7, and 8]{GR2}. For partial accounts of definitions used, see Section \ref{formal_moduli_problems_and_operadic_formal_moduli_problems} and Appendix \ref{appendixB}.\displaypar
Let $X$ be a (derived) scheme. The $\infty$-category $\textbf{FormGrpd}(X)$ of formal groupoids on $X$, as defined in \cite[Chapter 5, Section 2.2]{GR2}, has a final object $\widehat{X\times X}$, given by the completion of the pair groupoid of $X$ along the diagonal. Its Lie algebroid is given by the \textbf{tangent Lie algebroid} $\mathbb{T}_{X} \xrightarrow{\text{id}} \mathbb{T}_{X}$, with $\mathbb{T}_X$ the tangent complex of $X$ equipped with the commutator bracket. There is a functor 
\begin{align*}
    \Omega^{\text{fake}}: \textbf{FormGrpd}(X) \rightarrow \text{Grp}(\textbf{FMP}_{/X})
\end{align*}
given by viewing a formal groupoid $\mathcal{R}$ over $X$ as a pointed formal moduli problem over $X$ via the maps $\text{unit}: X\rightleftarrows \mathcal{R}: p_s$, and then taking loops. Applying this to the formal pair groupoid, we obtain the \textbf{formal inertia group} of $X$
\begin{align*}
    \widehat{\mathcal{L}X} := \Omega^{\text{fake}}(\widehat{X\times X}) \simeq X\times_{\widehat{X \times X}} X.
\end{align*}
This agrees with the formal completion of the free loop space $\mathcal{L}X$ at the unit section. The Lie algebra of the formal group $\widehat{\mathcal{L}X}$ is an object $\textnormal{Lie}_X(\widehat{\mathcal{L}X}) \in \alg_{\lie}(\indcoh(X))$, and by \cite[Chapter 7, Corollary 3.2.8]{GR2} has underlying ind-coherent sheaf
\begin{align*}
    \text{forget}_{\lie}(\textnormal{Lie}_X(\widehat{\mathcal{L}X})) \simeq \mathbb{T}_X[-1].
\end{align*}
The Lie algebra $\textnormal{Lie}_X(\widehat{\mathcal{L}X})$ canonically acts on any $\mathcal{F}\in \indcoh(X)$ as described in \cite[Chapter 8, Section 6.1]{GR2}. By \cite[Chapter 8, Section 6.1.4]{GR2}, if $\mathcal{F} = \Upsilon_X(\mathcal{E})$ for some $\mathcal{E}\in \qcoh(X)$, where $\Upsilon_X: \qcoh(X) \rightarrow \indcoh(X)$ as described in (\ref{eq4}), this action can be identified with the one induced by the canonical map $\textnormal{Lie}_X(\widehat{\mathcal{L}X}) \rightarrow \text{ker-anch}(\text{At}(\mathcal{E})) \simeq \Upsilon_X(\text{End}(\mathcal{E}))$ where $\text{At}(\mathcal{E})$ is the Lie algebroid $\mathbb{T}_{X/\text{Perf}}$ induced by the map $X \rightarrow \text{Perf}$ classifying $\mathcal{E}$. By \cite[Proposition 3.40]{He}, if $X$ is a smooth variety and $\mathcal{E}$ a perfect complex on $X$, $\text{At}(\mathcal{E})$ agrees with the classical Atiyah class of $\mathcal{E}$. By \cite[Proposition 3.41]{He}, the canonical action of $\textnormal{Lie}_X(\widehat{\mathcal{L}X})$ on the object $\mathbb{T}_X[-1]\in \indcoh(X)$ is given by the adjoint action. In particular, if $X$ is a smooth variety, the Lie bracket on $\mathbb{T}_X[-1]$ is the classical Atiyah bracket. \displaypar

The symmetric algebra on $\mathbb{T}_X[-1] \in \indcoh(X)$ is the universal enveloping algebra if we equip the shifted tangent complex with the \textbf{trivial Lie structure}. This trivial Lie algebra structure corresponds to viewing $\mathbb{T}_X[-1]$ as a vector prestack\footnote{This is the derived algebraic geometry version of viewing a classical vector space as an affine space. For a precise definition, see \cite[Chapter 7, Section 1.4.1]{GR2}.} equipped with the (formal) abelian group structure given by addition. In particular, using \cite[Chapter 6, Theorem 6.1.2]{GR2},
\begin{align*}
    \sym_{\indcoh(X)}(\mathbb{T}_X[-1]) &\simeq U^{\text{Hopf}}(\mathbb{T}_X[-1]^{\text{triv}})\\ &\simeq U^{\text{Hopf}}(\textnormal{Lie}_X \text{Vect}_X(\mathbb{T}_X[-1])) \\&\simeq \text{Grp}(\text{Distr}^{\text{Cocom}^{\text{aug}}})(\text{Vect}_X(\mathbb{T}_X[-1])).
\end{align*}
Similarly, the universal enveloping algebra of $\mathbb{T}_X[-1]$ equipped with the Atiyah bracket can be written in terms of the formal inertia group:
\begin{align*}
    U^{\text{Hopf}}(\mathbb{T}_X[-1]^{\text{Atiyah}}) &\simeq U^{\text{Hopf}}(\textnormal{Lie}_X \widehat{\mathcal{L}X}) \\&\simeq \text{Grp}(\text{Distr}^{\text{Cocom}^{\text{aug}}})(\widehat{\mathcal{L}X}).
\end{align*}
These identifications of the symmetric and universal enveloping algebra with distributions on the Lie algebra and the Lie group respectively are well-known in the classical case. \displaypar
By \cite[Chapter 7, Corollary 3.2.2]{GR2}, we have an equivalence of pointed formal moduli problems over $X$
\begin{align*}
    \text{exp}_{\widehat{\mathcal{L}X}}: \text{Vect}_X(\mathbb{T}_X[-1]) \rightarrow \widehat{\mathcal{L}X}.
\end{align*}
This equivalence is given by applying the formal spectrum functor $\spec^{\text{inf}}$ defined in \cite[Chapter 7, Section 1.3]{GR2} to the PBW isomorphism of cocommutative coalgebras $\sym_{\indcoh(X)}(\mathbb{T}_X[-1]) \rightarrow U^{\text{Hopf}}(\mathbb{T}_X[-1]^{\text{Atiyah}})$, and noting that $\spec^{\text{inf}}\circ U^{\text{Hopf}} \simeq \exp_X$. However, since the PBW morphism is not an isomorphism of associative algebras, the map $\exp_{\widehat{\mathcal{L}X}}$ does not respect the group structures\footnote{In particular, for smooth schemes, the $\qcoh$-pullback along $\exp_{\widehat{\mathcal{L}X}}$ can be identified with the Hochschild-Kostant-Rosenberg map.}. In particular, the induced map on distributions of $\text{Vect}_X(\mathbb{T}_X[-1])$ and $\widehat{\mathcal{L}X}$ respectively is not a morphism of associative algebras. The correction term in the Duflo isomorphism and the global formality morphism can be understood precisely as a measure of the failure of this map between distributions to be an associative algebra morphism. This is explained for example in \cite[Example 4.0.1]{KP}. \displaypar 
The Lie algebra $\mathbb{T}_X[-1] \in \alg_{\lie}(\indcoh(X))$ corresponds to the Lie algebra $\mathfrak{g}$ in the classical Duflo theorem. Recall that in the classical case, the Duflo isomorphism goes between the derived invariants under the respective $\mathfrak{g}$-actions on the symmetric algebra and the universal enveloping algebra of $\mathfrak{g}$. In the geometric case, taking derived invariants in $\text{Rep}_{\mathbb{T}_X[-1]}(\indcoh(X))$ corresponds to taking global sections of the underlying sheaves. For a classical smooth algebraic variety, N. Markarian proves in \cite{M} that $\mathcal{D}^{\ast}_{\text{poly}}(X)$ is the universal enveloping algebra of $\mathcal{T}_X[-1]$ in the derived 1-category of $\mathcal{O}_X$-modules. There are also results identifying Hochschild cohomology with classical distributions on the loop space of $X$, see for example \cite[2.9]{CH}.\displaypar
For a prestack $\mathcal{Y}$, let $\omega_{\mathcal{Y}} = p^!(\field)\in \indcoh(\mathcal{Y})$ be the \textbf{dualizing object} of $\mathcal{Y}$, where $p: \mathcal{Y}\rightarrow \spec(\field)$ is the projection. The distributions functor $\text{Distr}: \textbf{FMP}_{/X} \rightarrow \indcoh(X)$ is explicitly given by $(\mathcal{Y} \xrightarrow{\pi} X) \mapsto \pi^{\indcoh}_{\ast}(\omega_{\mathcal{Y}})$. We will explain how the induced morphism of $\exp_{\widehat{\mathcal{L}X}}$ on distributions corresponds to an identification of dualizing objects.\displaypar
For the remainder of this section, let $X$ be a smooth proper scheme. By \cite{GR2}, we have a symmetric monoidal equivalence of $\infty$-categories
\begin{align}\label{eq4}
    \Upsilon_X: \qcoh(X) &\rightarrow \indcoh(X)\\
    \mathcal{F} \mapsto \mathcal{F} \otimes \omega_X.
\end{align}
For a quasi-compact scheme, there also exists a $\qcoh$-dualizing object $\omega^{\qcoh}_X\in \qcoh(X)$. For a smooth $Z$, this is given by $\Omega^{\dim Z}_Z[\dim Z]$.
\begin{definition}[Definition 3.0.1 \cite{KP}]
For an almost finite type scheme $Z$, an \textbf{orientation} on $Z$ is a choice of equivalence $\mathcal{O}_Z \simeq \omega_Z^{\qcoh}$ in $\qcoh(Z)$.
\end{definition}
Let $(\mathcal{H}\xrightarrow{\pi} X)\in \text{Grp}(\textbf{FMP}_{/X})$ be a formal group over $X$ such that $\textnormal{Lie}_X(\mathcal{H})$ lies in $\text{Coh}(X)^{<0}$. Then the group structure on $\mathcal{H}$ yields a trivialization of the tangent complex of $\mathcal{H}$ over $X$ given by identifying it with ``left-invariant vector fields'':
\begin{align*}
    \mathbb{T}_{\mathcal{H}/X} \simeq \pi^{\ast}\mathbb{T}_{X/B_X\mathcal{H}} \simeq  \pi^{\ast} e^{\ast} \pi^{\ast} \mathbb{T}_{X/B_X\mathcal{H}} \simeq \pi^{\ast} e^{\ast} \mathbb{T}_{\mathcal{H}/X} \simeq \pi^{\ast} \textnormal{Lie}_X(\mathcal{H}).
\end{align*}
The vector prestack $\text{Vect}_X(\mathbb{T}_X[-1])$ is equipped with two formal group structures over $X$. One is just given by addition on the shifted tangent complex, and the second one is the loop group structure on $\widehat{\mathcal{L}X}$ pulled back via the exponential map. Applying the above procedure yields two different trivializations of the relative tangent complex $\mathbb{T}_{\text{Vect}_X(\mathbb{T}_X[-1])/X}$, where for the additive group structure we have the Lie algebra $\mathbb{T}_X[-1]^{\text{triv}}$, and for the loop group structure we have the Lie algebra $\mathbb{T}_X[-1]^{\text{Atiyah}}$. 
\begin{theorem}[Theorem 4.4.1 \cite{KP}]
The composition of the trivialization coming from the additive group structure with the inverse of the trivialization coming from the loop group structure is given by 
\begin{align*}
    d\exp_{\widehat{\mathcal{L}X}} = \frac{1-e^{-\textnormal{ad}_{\mathbb{T}_X[-1]}}}{\textnormal{ad}_{\mathbb{T}_X[-1]}}: \pi^{\ast} \mathbb{T}_X[-1] \simeq \mathbb{T}_{\textnormal{Vect}_X(\mathbb{T}_X[-1])/X} \simeq \pi^{\ast} \mathbb{T}_X[-1]
\end{align*}
\end{theorem}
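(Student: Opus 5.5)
We are asked to prove the cited theorem (Theorem 4.4.1 of \cite{KP}): the composite of the two trivializations of $\mathbb{T}_{\textnormal{Vect}_X(\mathbb{T}_X[-1])/X}$ is $\frac{1-e^{-\textnormal{ad}}}{\textnormal{ad}}$. The strategy is to reduce this to the classical computation of the differential of the exponential map of a Lie group. The point is that the formalism of formal groups over $X$ and their Lie algebras in $\indcoh(X)$ reproduces Lie theory faithfully, via the equivalence $\textnormal{Lie}_X:\text{Grp}(\textbf{FMP}_{/X})\simeq \alg_{\lie}(\indcoh(X))$ and the Hopf-algebra description of distributions.

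\textbf{Step 1: compare both trivializations on the level of distributions.} By construction, $\exp_{\widehat{\mathcal{L}X}}$ is $\spec^{\text{inf}}$ applied to the PBW coalgebra isomorphism
\begin{align*}
\sym(\mathbb{T}_X[-1])\to U^{\text{Hopf}}(\mathbb{T}_X[-1]^{\text{Atiyah}}).
\end{align*}
The tangent complex of $\text{Vect}_X(\mathbb{T}_X[-1])$ relative to $X$ is computed by the primitive/coderivation data of this coalgebra. Left-invariant vector fields for a group structure correspond to right multiplication by primitives in the Hopf algebra of distributions. So both trivializations can be written as maps
\begin{align*}
\pi^*\mathfrak{g}\to \mathbb{T}_{\text{Vect}/X},
\end{align*}
namely $v\mapsto (\text{right mult. by } v)$ in $\sym$ and in $U$ respectively, transported via PBW. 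Here $\mathfrak{g}=\mathbb{T}_X[-1]$.

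\textbf{Step 2: reduce to a universal Lie-algebraic identity.} The composite in question is then the pullback along PBW of right multiplication by $v$ in $U(\mathfrak{g})$, compared with the symmetric-algebra derivation. This is a natural transformation built only from the Lie operad acting on $\mathfrak{g}$. Hence it is given by a universal formal power series in $\textnormal{ad}$, with coefficients independent of $X$. This uses that all constructions are functorial in $\alg_{\lie}(\indcoh(X))$ and that $\indcoh(X)$ is symmetric monoidal $\field$-linear, characteristic $0$. It therefore suffices to identify the series by evaluating it in a free Lie algebra over $\field$ in $\text{Vect}_{\field}$, where it is the classical formula
\begin{align*}
d\exp_x=\frac{1-e^{-\textnormal{ad}_x}}{\textnormal{ad}_x}.
\end{align*}
That formula follows from the BCH/symmetrization computation
\begin{align*}
\tfrac{d}{dt}e^{x+tv}\big|_{t=0}=e^{x}\int_0^1 e^{-s\,\textnormal{ad}_x}v\,ds.
\end{align*}

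\textbf{Main obstacle.} The hard step is Step 2's claim that the comparison is a universal operation expressible through the Lie structure. Two things must be shown in the $\infty$-categorical setting. First, the trivialization $\mathbb{T}_{\mathcal{H}/X}\simeq\pi^*\textnormal{Lie}_X(\mathcal{H})$ defined above via $B_X\mathcal{H}$ agrees with the Hopf-algebra "right multiplication by primitives" description. Second, PBW is natural in Lie algebras in a way that lets one pass to the universal (free) case. To handle this, I would first check the agreement on the associated graded for the PBW filtration, then use the connectivity hypothesis $\textnormal{Lie}_X(\mathcal{H})\in\text{Coh}(X)^{<0}$ to ensure convergence of the completed symmetric algebra. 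Finally, I would invoke the equivalence of formal groups with Lie algebras from \cite{GR2} to reduce the coherence data to the free Lie algebra, where everything is a statement about ordinary power series.
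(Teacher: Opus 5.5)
The thesis does not prove this statement. It quotes it from \cite{KP} and uses it as a black box, so there is no internal argument to compare with, and your proposal has to stand on its own. The overall strategy is sensible: show the composite is a natural automorphism of $\pi^{\ast}\mathfrak{g}$, hence a universal series in $\textnormal{ad}$, and read off the coefficients from the classical $d\exp$ formula. However, the step carrying all the content is left open, and the tool you propose for it would not close it.

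That step is identifying the trivialization $\mathbb{T}_{\mathcal{H}/X}\simeq\pi^{\ast}\mathbb{T}_{X/B_X\mathcal{H}}\simeq\pi^{\ast}\textnormal{Lie}_X(\mathcal{H})$, defined by descent along $X\to B_X\mathcal{H}$ and restriction along the unit, with ``right multiplication by primitives'' on the distribution Hopf algebra. This also presupposes identifying $\mathbb{T}_{\textnormal{Vect}_X(\mathfrak{g})/X}$ with coderivation data, which you do not supply. Checking this on the associated graded of the PBW filtration and then passing to the completion proves nothing beyond the leading term:
\begin{itemize}
\item On the associated graded both group structures become the additive one ($\textnormal{gr}\,U(\mathfrak{g})\cong\sym(\mathfrak{g})$ as Hopf algebras). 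So two natural trivializations differing by any automorphism $1+O(\textnormal{ad})$ agree there.
\item Every nontrivial coefficient of $\frac{1-e^{-\textnormal{ad}}}{\textnormal{ad}}=1-\tfrac{1}{2}\textnormal{ad}+\tfrac{1}{6}\textnormal{ad}^2-\cdots$ raises the filtration, so it is invisible to such a check.
\item Convergence of the completed symmetric algebra does not change this.
\end{itemize}

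What is needed instead is a rigidity statement. The descent-defined trivialization is, by construction, equivariant for the translation action of $\mathcal{H}$ on itself, since it is pulled back from $B_X\mathcal{H}$. An equivariant trivialization of $\mathbb{T}_{\mathcal{H}/X}$ is determined by its restriction along the unit section $e$. You must then check that the Hopf-algebraic description is equivariant for the \emph{same} action and restricts to the identity along $e$. This is also where the left/right convention gets fixed, which you simply assert and which genuinely matters: trivializing by the other-sided translations gives $\frac{e^{\textnormal{ad}}-1}{\textnormal{ad}}$ instead.

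Your Step 2 depends on this too. $\textnormal{Vect}_X(\mathfrak{g})$, its relative tangent complex and $\pi^{\ast}$ live in prestacks over $X$. So ``naturality in Lie algebras in any $\field$-linear symmetric monoidal category'' only makes sense once everything has been rewritten coalgebraically inside $\indcoh(X)$, i.e.\ after Step 1.

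Granting that, the universality is easy to make precise, and you should say so:
\begin{itemize}
\item $\textnormal{Lie}(n+1)$ restricted to $\Sigma_n$ is the regular representation, so natural maps $\sym^n(\mathfrak{g})\otimes\mathfrak{g}\to\mathfrak{g}$ are multiples of $\textnormal{ad}^n$.
\item Since $\textnormal{Lie}(n)^{\Sigma_n}=0$ for $n\geq 2$, a natural coalgebra isomorphism $\sym(\mathfrak{g})\to U(\mathfrak{g})$ that is the identity on $\mathfrak{g}$ is unique. Hence the abstract PBW map of \cite{GR2} is symmetrization, and the classical exponential formula really applies.
\end{itemize}
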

In \cite[Section 3.3]{KP}, G. Kondyrev and A. Prikhodko develop a method to obtain orientations on $\text{Vect}_X(\mathbb{T}_X[-1])$ from the trivializations of the relative tangent complex coming from formal group structures on this space. The endomorphism of $\mathcal{O}_{\widehat{\mathcal{L}X}}$ given by composing the additive group orientation with the inverse of the loop group orientation is then given by the determinant of $d\exp_{\widehat{\mathcal{L}X}}$, which precisely agrees with the classical description of the Todd class.\displaypar
The morphism $\Upsilon_{\widehat{\mathcal{L}X}}$ sends $\mathcal{O}_{\widehat{\mathcal{L}X}}$ to $\mathcal{O}_{\widehat{\mathcal{L}X}} \otimes \omega_{\widehat{\mathcal{L}X}} \simeq \omega_{\widehat{\mathcal{L}X}} \in \indcoh(\widehat{\mathcal{L}X})$, and this endomorphism of $\mathcal{O}_{\widehat{\mathcal{L}X}}$ therefore corresponds to an endomorphism of $\omega_{\widehat{\mathcal{L}X}}$. Applying $\pi_{\ast}^{\indcoh}$, this then yields an endomorphism of $\text{Distr}(\widehat{\mathcal{L}X})$. We expect this endomorphism to be a ``correction term'' for $\text{Distr}^{\text{Cocom}^{\text{aug}}}(\exp_{\widehat{\mathcal{L}X}})$. 
\section{The Grothendieck-Teichmüller group}
Recall that Tamarkin \cite{Tam} constructs a morphism $\Psi_T: \ger_{\infty} \rightarrow \braces$, 
\begin{center}
\begin{tikzcd}
\ger_{\infty} \arrow[r, "\Psi_T"'] \arrow[rr, "\simeq", bend left] & \braces \arrow[r] & \ger
\end{tikzcd},
\end{center}
which shows that any $\braces$-algebra can be lifted to a $\ger_{\infty}$-algebra. To construct this factorization of the resolution of $\ger$, Tamarkin uses Etingof-Kazhdan quantization of Lie bialgebras \cite{EK}, which requires a choice of a \textbf{Drinfeld associator}. In particular, the map $\Psi_T$ is non-canonical and non-unique. Hence, the $\ger_{\infty}$-algebra structure on polydifferential operators also depends on the choice of a Drinfeld associator, and consequently so does the construction of Tamarkin's formality morphism. \displaypar
The goal of this section is to explain the appearance and consequences of Drinfeld associators in the formality theorem.\displaypar
As a general slogan, Drinfeld associators appear in any kind of formal quantization problem where one tries to extend an infinitesimal deformation to a formal deformation. They were first defined by Drinfeld in \cite{Dr} in the context of finding formal deformations extending first-order deformations of quasi-triangular quasi-Hopf algebras. Via Tannaka duality, quasi-triangular quasi-Hopf algebras correspond to (non-strict) braided monoidal 1-categories, and so this question can be equivalently posed as finding formal deformations of ``infinitesimally braided'' symmetric monoidal 1-categories as braided monoidal 1-categories. Drinfeld showed that indeed any first-order deformation of a symmetric monoidal 1-category can be formally quantized, and further that such a quantization is determined by a tuple $(\lambda, f)$ with $\lambda \in \field^{\times}$ and $f \in \text{Grp}(\field\langle \langle x,y\rangle \rangle)$ a group-like element in the noncommutative formal power series ring in two variables, satisfying certain relations, see \cite[Proposition 10.2.7]{F1}. These tuples are now called \textbf{Drinfeld associators}. Drinfeld also proved the existence of such an object over any $\field$. He explicitly constructed an associator over $\mathbb{C}$, the so-called \textbf{Knizhnik-Zamolodchikov associator} $\Phi_{\text{KZ}}$, which comes from a solution of the Knizhnik–Zamolodchikov differential equations. \displaypar
It has subsequently been shown that one can describe the set of Drinfeld associators in terms of operads. To this end, let $\mathcal{P}a\mathcal{B}$ be the operad in groupoids of \textbf{parenthesized braids}, and denote by $\widehat{\mathcal{P}a\mathcal{B}}$ its Malcev completion as defined in \cite[Section 9.2]{F1}. Further, let $\mathcal{P}a\mathcal{C}D$ be the operad in categories enriched in filtered cocommutative coalgebras of \textbf{parenthesized chord diagrams}, and denote by $\widehat{\mathcal{P}a\mathcal{C}D}$ its completion with respect to the filtration. 
\begin{theorem}[Theorem 10.2.9 \cite{F1}]
The set of Drinfeld associators is given by isomorphisms of operads in pro-unipotent groupoids
\begin{align*}
    \textnormal{Iso}_{\mathcal{O}p}(\widehat{\mathcal{P}a\mathcal{B}}, \textnormal{Grp}(\widehat{\mathcal{P}a\mathcal{C}D}))
\end{align*}
which restrict to the identity on objects.
\end{theorem}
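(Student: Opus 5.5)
The plan is to reduce both sides to the generators-and-relations description of the operad of parenthesized braids. The first step is a presentation of $\mathcal{P}a\mathcal{B}$. Its objects in arity $n$ are parenthesized permutations of $\{1,\dots,n\}$, so the object-sets are the free operad in sets on one binary operation. The morphisms are generated, as an operad in groupoids, by two elements:
\begin{itemize}
\item the braiding $\beta \in \hom_{\mathcal{P}a\mathcal{B}(2)}(12,21)$;
\item the associator $\alpha \in \hom_{\mathcal{P}a\mathcal{B}(3)}((12)3,1(23))$.
\end{itemize}
The defining relations are the pentagon in arity $4$ and the two hexagons in arity $3$, together with the unit conditions. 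This is the operadic form of Mac Lane's coherence theorem for braided monoidal categories, and I would simply quote it from \cite{F1}.

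From this I would deduce a universal property. For any operad $\mathcal{G}$ in groupoids with the same object-operad, morphisms $\mathcal{P}a\mathcal{B}\to\mathcal{G}$ that are the identity on objects correspond bijectively to pairs $(B,\Phi)$ of arrows in $\mathcal{G}(2)$ and $\mathcal{G}(3)$ satisfying the pentagon and hexagon equations. The Malcev completion $\widehat{\mathcal{P}a\mathcal{B}}$ has the same universal property, with $\mathcal{G}$ now ranging over operads in pro-unipotent groupoids. The reason is that the completion is computed arity-wise on the pure braid groups, which are the vertex groups, and these generate the morphisms together with the $\Sigma_n$-torsor structure of the objects.

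Next I would identify the target. The groupoid $\textnormal{Grp}(\widehat{\mathcal{P}a\mathcal{C}D})(n)$ has the same objects as $\mathcal{P}a\mathcal{B}(n)$. Its arrows $p\to q$ are group-like elements of the completed Drinfeld–Kohno algebra $\widehat{U}(\mathfrak{t}_n)$, twisted by the underlying permutation. Concretely:
\begin{itemize}
\item an arrow in arity $2$ from $12$ to $21$ is an element $(21)\cdot g$ with $g$ group-like in $\field[[t_{12}]]$, so $g=\exp(\mu t_{12}/2)$ for some $\mu\in\field$;
\item an arrow in arity $3$ from $(12)3$ to $1(23)$ is a group-like $f\in\widehat{U}(\mathfrak{t}_3)\cong \field\langle\langle t_{12},t_{23}\rangle\rangle\,\widehat{\otimes}\,\field[[t_{12}+t_{13}+t_{23}]]$.
\end{itemize}
Substituting $B=(21)\exp(\mu t_{12}/2)$ and $\Phi=f$ into the pentagon, the hexagons and the unit relations gives exactly Drinfeld's equations for the pair $(\mu,f)$. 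These are the relations of \cite[Proposition 10.2.7]{F1}. The unit and degeneracy relations force $f$ to lie in the image of the free group-like power series in $t_{12},t_{23}$, so $f$ is an element of $\textnormal{Grp}(\field\langle\langle x,y\rangle\rangle)$. At this stage I have a bijection between identity-on-objects morphisms and pairs $(\mu,f)$ satisfying the associator equations with arbitrary $\mu\in\field$.

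The last step, which I expect to be the main obstacle, is showing that such a morphism is an isomorphism exactly when $\mu\neq 0$, that is, $\mu=\lambda\in\field^\times$. Since both sides have identical object-operads and are connected groupoids in each arity up to the $\Sigma_n$-action, it suffices to check that the induced maps on vertex groups are isomorphisms. These maps go from the Malcev completion $\widehat{PB}_n$ of the pure braid group to $\exp\widehat{\mathfrak{t}}_n$. I would compare associated graded Lie algebras for the lower central series filtration and the degree filtration respectively, using the Kohno–Drinfeld identification $\textnormal{gr}\,\widehat{PB}_n\cong\mathfrak{t}_n$. A direct computation shows that the generator $x_{ij}=\beta^2$-conjugates maps to $\exp(\mu t_{ij})$ modulo higher terms, so the associated graded map is multiplication by $\mu$ in degree one and by $\mu^k$ in degree $k$. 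This is an isomorphism iff $\mu\neq0$. Since both sides are complete and separated, a filtered map whose associated graded map is an isomorphism is itself an isomorphism. The case $\mu=0$ visibly fails, since then the map kills the degree-one generators.
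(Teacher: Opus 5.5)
Your argument is correct. The paper gives no proof of its own and only quotes Fresse, and your outline follows what is essentially the standard argument there. You present $\mathcal{P}a\mathcal{B}$ by the braiding and associator with pentagon, hexagon and unit relations. You then use the universal property of the arity-wise Malcev completion to identify identity-on-objects morphisms with pairs $(\mu,f)$ satisfying Drinfeld's equations for arbitrary $\mu\in\field$. Finally, you detect invertibility on vertex groups via the associated graded map $t_{ij}\mapsto\mu t_{ij}$, using Kohno's identification $\textnormal{gr}\,\widehat{PB}_n\cong\mathfrak{t}_n$.

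Two small corrections. The coherence result behind the presentation is Joyal--Street's theorem for braided monoidal categories, not Mac Lane's. Also, the groupoids $\mathcal{P}a\mathcal{B}(n)$ are connected outright, not merely up to the $\Sigma_n$-action.
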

In the same paper \cite{Dr}, Drinfeld introduced two groups $\gt(\field)$ and $\grt(\field)$ which canonically act on the set of Drinfeld associators. In particular, Drinfeld associators form a torsor over these two groups. In operadic language, 
\begin{align*}
    \gt(\field) = \aut_{\mathcal{O}p}(\widehat{\mathcal{P}a\mathcal{B}})
\end{align*}
is the group of operad automorphisms which are the identity on objects of the completed parenthesized braids operad. Similarly, the group $\grt(\field)$ is the operad automorphism group of the group-like elements in the completed parenthesized chord diagrams operad. Both of these groups come equipped with a split surjection to $\field^{\times}$, and we denote the kernels of these by $\gt(\field)_1$ and $\grt(\field)_1$ respectively. Both of these are pro-unipotent groups.\displaypar
The set of Drinfeld associators, as well as the two automorphism groups $\gt(\field)$ and $\grt(\field)$, play a central role in deformation quantization. We have already seen that Tamarkin's local formality morphism requires a choice of associator. While Kontsevich's formality morphism does not require a lift of the $\braces$-algebra structure on polydifferential operators to a homotopy Gerstenhaber algebra structure, Kontsevich still uses specific integrals closely resembling the coefficients in the KZ-associator to build his map $\mathscr{U}$. \displaypar
Drinfeld named the group $\gt(\field)$ the (pro-unipotent) \textbf{Grothendieck–Teichmüller group}. This name comes from the fact that the equations satisfied by an element $(\lambda, f)\in \gt(\field)$ precisely resemble the equations found by Grothendieck in \cite{Gro2} while studying the action of the absolute Galois group $\gal$ on the so-called Teichmüller groupoids. For an algebraic variety $X$ over $\mathbb{Q}$, and the choice of a point $p: \spec \mathbb{Q} \rightarrow X$, there is an action
\begin{align*}
    \rho_{X,p}: \gal \rightarrow \aut(\pi_1^{\text{et}}(\overline{X},p)),
\end{align*}
where $\overline{X} = X\times_{\spec \mathbb{Q}} \spec \overline{\mathbb{Q}}$. Note that we have $\pi_1^{\text{et}}(\overline{X},p) \cong \widehat{\pi}_1(X(\mathbb{C}),p)$, the profinite completion of the topological fundamental group of the $\mathbb{C}$-points of ${X}$.\displaypar
Grothendieck's idea was to study the group $\gal$ through these actions for suitable $X$. Specifically, he was interested in $X= \mathcal{M}_{g,\nu}$ the moduli spaces of smooth curves of genus $g$ with $\nu$ marked points. Of specific interest is the moduli space
\begin{align*}
    \mathcal{M}_{0,4} \simeq \mathbb{P}^1-\{0,1,\infty\}.
\end{align*}
Note that $\mathbb{P}^1(\mathbb{C})-\{0,1,\infty\} \simeq \mathbb{C}-\{0,1\}$, and therefore 
\begin{align*}
    \pi_1(\mathbb{P}^1(\mathbb{C})-\{0,1,\infty\}, p) \cong \pi_1(\mathbb{C}-\{0,1\},p) \cong \mathbb{F}(x,y)
\end{align*}
with $\mathbb{F}(x,y)$ the free group on two generators $x,y$. It is a direct corollary of G. Belyi’s theorem on complex smooth projective curves \cite{Be} that the resulting action
\begin{align*}
    \rho_{\mathcal{M}_{0,4}}: \gal \rightarrow \aut(\widehat{\mathbb{F}}(x,y))
\end{align*}
is in fact faithful. This shows that the absolute Galois group can be realized as a subgroup of a very concrete group, and Ihara gave an equally concrete description of the map $\rho_{\mathcal{M}_{0,4}}$
\begin{theorem}[\cite{Ih}]
An element $\sigma \in \gal$ acts on the generators $x$ and $y$ of $\widehat{\mathbb{F}}(x,y)$ as
\begin{align*}
    \sigma(x) = x^{\chi(\sigma)}, \quad \sigma(y) = f_{\sigma}^{-1} y^{\chi(\sigma)} f_{\sigma},
\end{align*}
where $\chi: \gal \rightarrow \widehat{\mathbb{Z}}^{\times}$ is the cyclotomic character and $f_{\sigma}$ is a uniquely determined element of $[\widehat{\mathbb{F}}(x,y),\widehat{\mathbb{F}}(x,y)]$.
\end{theorem}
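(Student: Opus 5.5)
The plan is to prove the formula with \emph{tangential base points}, which makes both the element $f_{\sigma}$ and the normalization $f_{\sigma}\in[\widehat{\mathbb{F}}(x,y),\widehat{\mathbb{F}}(x,y)]$ natural. Let $X=\mathbb{P}^1-\{0,1,\infty\}$ over $\mathbb{Q}$.

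\textbf{Setup.} Take as base point the tangential base point $\vec{01}$, the unit tangent vector at $0$ pointing toward $1$. It is defined over $\mathbb{Q}$, so $\gal$ acts on $\pi_1^{\text{et}}(\overline{X},\vec{01})\cong\widehat{\mathbb{F}}(x,y)$. Let $x$ be the small loop around $0$ based at $\vec{01}$. Let $p$ be the straight path along $(0,1)$ from $\vec{01}$ to $\vec{10}$, and let $x'$ be the small loop around $1$ based at $\vec{10}$. Set $y=p^{-1}x'p$.

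\textbf{Step 1: the generator $x$.} The loop $x$ generates the image of $\pi_1^{\text{et}}$ of the punctured formal disc $\spec\overline{\mathbb{Q}}((z))$ at $0$, taken at the rational tangent vector. By Kummer theory (equivalently Puiseux), this group is $\widehat{\mathbb{Z}}(1)=\varprojlim\mu_n$, acting on the compatible roots $z^{1/n}$ with $z^{1/n}\mapsto\zeta_n z^{1/n}$. Since the tangential base point is rational, $\sigma$ acts on it through $\chi(\sigma)$ with no conjugation. This gives $\sigma(x)=x^{\chi(\sigma)}$.

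\textbf{Step 2: the generator $y$.} The same argument at $\vec{10}$, which is also rational, gives $\sigma(x')=x'^{\chi(\sigma)}$. The étale path torsor from $\vec{01}$ to $\vec{10}$ carries a Galois action, and $\sigma(p)=p\,f_{\sigma}$ with $f_{\sigma}:=p^{-1}\sigma(p)\in\widehat{\mathbb{F}}(x,y)$, a $1$-cocycle. Transporting then gives
\begin{align*}
\sigma(y)=\sigma(p)^{-1}\sigma(x')\sigma(p)=f_{\sigma}^{-1}p^{-1}x'^{\chi(\sigma)}p\,f_{\sigma}=f_{\sigma}^{-1}y^{\chi(\sigma)}f_{\sigma}.
\end{align*}

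\textbf{Step 3: $f_{\sigma}$ lies in the commutator subgroup.} I would pass to the abelianization $\widehat{\mathbb{Z}}(1)x\oplus\widehat{\mathbb{Z}}(1)y$. The image of $f_{\sigma}$ there is given by the Kummer characters of the functions $z$ and $1-z$, evaluated along $p$. Concretely, compare the compatible roots $z^{1/n}$ and $(1-z)^{1/n}$ normalized at $\vec{01}$ with those at $\vec{10}$.
\begin{itemize}
\item At $\vec{01}$ the function $1-z$ takes the value $1$, and $z$ has rational leading coefficient $1$.
\item At $\vec{10}$ the same holds with the roles of $z$ and $1-z$ exchanged.
\end{itemize}
The chosen roots are therefore real-positive along $(0,1)$ and rational at both ends. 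Hence the Kummer cocycle vanishes and the image of $f_{\sigma}$ in the abelianization is $0$, so $f_{\sigma}\in[\widehat{\mathbb{F}},\widehat{\mathbb{F}}]$.

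\textbf{Step 4: uniqueness.} Suppose $g^{-1}y^{\chi}g=f^{-1}y^{\chi}f$ with both $f,g$ in the commutator subgroup. Then $fg^{-1}$ centralizes $y^{\chi}$. In a free profinite group, the centralizer of a nontrivial power of a primitive element $y$ is the procyclic subgroup $\overline{\langle y\rangle}$, so $fg^{-1}=y^{a}$. Projecting to the abelianization forces $a=0$, hence $f=g$.

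\textbf{Main obstacle.} Step 3 is the hard part: making the Kummer-character computation along the tangential path rigorous, in Deligne's sense of tangential base points. Step 4 is the other delicate point, since it needs the profinite centralizer fact (that centralizers of nontrivial elements in free profinite groups are procyclic); I would cite it rather than prove it.
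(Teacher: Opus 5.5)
The paper does not prove this statement. It quotes it from \cite{Ih} as background, so there is no internal argument to compare against.

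Your proposal is the standard proof (Ihara's, phrased with Deligne's tangential base points), and it is correct.
\begin{itemize}
\item The exact formula for $x$ comes from the Puiseux/Kummer description of the local group at the rational tangential point $\vec{01}$.
\item $f_\sigma=p^{-1}\sigma(p)$ comes from the Galois action on the path torsor between the two rational tangential points.
\item The image of $f_\sigma$ in the abelianization vanishes because, at both ends, the real-positive branches of $z^{1/n}$ and $(1-z)^{1/n}$ have leading term (or value) exactly $1$. Hence $\sigma(p)$ and $p$ act identically on the fibres of the Kummer covers, which detect $\widehat{\mathbb{F}}^{\mathrm{ab}}/n$.
\item Uniqueness follows from $C_{\widehat{\mathbb{F}}}(y)=\overline{\langle y\rangle}$ together with $\overline{\langle y\rangle}\cap\ker(\widehat{\mathbb{F}}\to\widehat{\mathbb{F}}^{\mathrm{ab}})=1$.
\end{itemize}

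Three minor remarks.

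First, the tangential base point is essential, not cosmetic. The surrounding text of the paper sets up $\rho_{X,p}$ with an ordinary rational point $p$. Changing base point alters the action by an inner-automorphism-valued cocycle, so with an ordinary base point you only get $\sigma(x)$ conjugate to $x^{\chi(\sigma)}$. The statement as written really lives at $\vec{01}$, as you set it up.

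Second, in Step 4 you only need the unit case. Since $\chi(\sigma)\in\widehat{\mathbb{Z}}^\times$, the element $y^{\chi(\sigma)}$ topologically generates $\overline{\langle y\rangle}$, so its centralizer equals $C_{\widehat{\mathbb{F}}}(y)$. The lemma to cite is therefore precisely that the centralizer of a basis element $y$ of a free profinite group is $\overline{\langle y\rangle}$. There is no need to assert anything about arbitrary nontrivial $\widehat{\mathbb{Z}}$-powers, which is more than you use.

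Third, Step 3 shows that $f_\sigma$ lies in the kernel of abelianization, i.e.\ in the closed commutator subgroup. This is the right reading of $[\widehat{\mathbb{F}},\widehat{\mathbb{F}}]$ in the statement. For a finitely generated profinite group it coincides with the abstract commutator subgroup by Nikolov--Segal, so nothing is lost.
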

In particular, this yields an embedding
\begin{align*}
    \gal &\hookrightarrow \widehat{\mathbb{Z}}^{\times} \times \widehat{\mathbb{F}}(x,y),\\
    \sigma &\mapsto (\chi(\sigma), f_{\sigma}).
\end{align*}
Thus, an element of the absolute Galois group can be described as a tuple $(\lambda,f) \in \widehat{\mathbb{Z}}^{\times} \times \widehat{\mathbb{F}}(x,y)$, similarly to how an object of the pro-unipotent Grothendieck–Teichmüller group $\gt(\mathbb{Q})$ can be described as a tuple $(\lambda, f) \in \mathbb{Q}^{\times} \times \text{Grp}(\mathbb{Q}\langle \langle x,y\rangle \rangle)$. To understand the absolute Galois group of the rationals, one is now reduced to understanding the image of this embedding. In \cite{Ih}, Ihara identifies three equations which are satisfied by any tuple in the image of $\gal$ by considering the action of $\gal$ on other $\mathcal{M}_{0,\nu}$. 
\begin{theorem}[\cite{Ih}]\label{thm13}
If $(1 + 2\nu,f) \in \widehat{\mathbb{Z}}^{\times} \times \widehat{\mathbb{F}}(x,y)$ is in the image of $\gal \hookrightarrow \widehat{\mathbb{Z}}^{\times} \times \widehat{\mathbb{F}}(x,y)$, then $f(x,1) = 1 = f(1,y)$, and the following equations hold
\begin{enumerate}
    \item $f(x,y)f(y,x) =1$.
    \item $f(x,y)x^{\nu}f(z,x)z^{\nu}f(y,z)y^{\nu} = 1$ whenever $zyx = 1$.
    \item $f(x_{23},x_{34})f(x_{12}x_{12},x_{34}x_{24})f(x_{12},x_{23}) = f(x_{12},x_{24}x_{23})f(x_{23}x_{13},x_{34})$ for $x_{ij}$ the generators of the pure braid group $P_4$ on four strands.
\end{enumerate}
\end{theorem}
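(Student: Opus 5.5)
The plan is to derive all four assertions from the Galois-equivariance of maps between étale fundamental groupoids of the moduli spaces $\mathcal{M}_{0,n}$, based at Deligne's tangential base points. First I would fix the setup. Take $\overline{01}$ (the unit tangent vector at $0$) and $\overline{10}$ (the tangent vector $-1$ at $1$) as $\mathbb{Q}$-rational tangential base points on $\mathcal{M}_{0,4}=\mathbb{P}^1-\{0,1,\infty\}$. Let $p$ be the real path from $\overline{01}$ to $\overline{10}$, let $x$ be the small loop around $0$ based at $\overline{01}$, and let $y=p^{-1}y'p$, where $y'$ is the small loop around $1$ based at $\overline{10}$. Then
\begin{align*}
f_\sigma := p^{-1}\sigma(p)\in \widehat{\pi}_1(\overline{\mathcal{M}_{0,4}},\overline{01})\cong \widehat{\mathbb{F}}(x,y).
\end{align*}
Since $\sigma$ acts on inertia at a puncture through the cyclotomic character, we get $\sigma(x)=x^{\chi(\sigma)}$ and $\sigma(y)=f_\sigma^{-1}y^{\chi(\sigma)}f_\sigma$. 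I write $\lambda=\chi(\sigma)=1+2\nu$. For the normalization, $f_\sigma$ lies in the closed commutator subgroup. This can be arranged by comparing the action on the abelianization, which is $\widehat{\mathbb{Z}}(1)^2$, and uniqueness of $f_\sigma$ under that constraint. The substitutions $y\mapsto 1$ and $x\mapsto 1$ are homomorphisms to procyclic, hence abelian, groups, so they kill commutators. This gives $f(x,1)=1=f(1,y)$.

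For relation (1), I would use the $\mathbb{Q}$-rational automorphism $\theta: t\mapsto 1-t$ of $\mathcal{M}_{0,4}$. It swaps $\overline{01}$ and $\overline{10}$, sends $p$ to $p^{-1}$, and, under the identification via $p$, exchanges $x$ and $y$. Since $\theta$ is defined over $\mathbb{Q}$, it commutes with $\sigma$. Applying $\theta$ to $\sigma(p)=pf_\sigma(x,y)$ gives $\sigma(p^{-1})=p^{-1}f_\sigma(y,x)$. Comparing this with the inverse of the original identity yields $f(x,y)f(y,x)=1$.

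For relation (2), I would use the order-three automorphism $\omega: t\mapsto 1/(1-t)$. It cyclically permutes $0\to1\to\infty\to0$ and the three tangential base points. I would transport $p$ around the resulting triangle of paths $\overline{01}\to\overline{10}\to\overline{1\infty}\to\cdots$, connecting consecutive legs by half-turns around the punctures. This gives the identity $p\cdot(\text{half loop at }1)\cdot\omega(p)\cdots=1$, with $z$ the loop around $\infty$ so that $zyx=1$. Applying $\sigma$ and using that a half-loop $x^{1/2}$ is sent to $x^{\lambda/2}=x^{1/2}\,x^{\nu}$ produces the factors $x^\nu,z^\nu,y^\nu$ interleaved with $f(x,y)$, $f(z,x)$ and $f(y,z)$. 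This is the step where the constraint $\lambda=1+2\nu$ enters.

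For relation (3), I would pass to $\mathcal{M}_{0,5}$, whose fundamental group is $P_4$ modulo its center, generated by the $x_{ij}$. I would choose a tangential base point at a maximally degenerate point of the boundary of $\overline{\mathcal{M}}_{0,5}$ (a vertex of the pentagon of boundary divisors). Each of the five boundary divisors isomorphic to $\overline{\mathcal{M}}_{0,4}$ gives a $\mathbb{Q}$-rational embedding of $\mathcal{M}_{0,4}$-groupoids into that of $\mathcal{M}_{0,5}$ near the boundary. Under these embeddings the path $p$ goes to the edge paths of the pentagon and $(x,y)$ goes to the pairs of $x_{ij}$ appearing in the statement. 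The pentagon of edge paths composes to the identity in $\pi_1$, so applying $\sigma$ and using equivariance of the five embeddings gives the pentagon equation. The main obstacle is this step. It requires rigorous Galois-equivariance of the specialization maps at tangential base points on the boundary of $\overline{\mathcal{M}}_{0,5}$, via Ihara–Nakamura type results on $\pi_1$ near normal crossings divisors, together with bookkeeping that identifies the correct products $x_{ij}x_{kl}$ as the images of $x$ and $y$. I would first verify the identifications over $\mathbb{C}$ by explicit topology of $\mathcal{M}_{0,5}(\mathbb{R})$, and then appeal to the comparison theorem.
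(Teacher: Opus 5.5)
The paper gives no proof of this statement; it is quoted from Ihara. Your outline is the standard argument from that literature: tangential base points $\overline{01},\overline{10}$, $f_\sigma=p^{-1}\sigma(p)$, the symmetries $t\mapsto 1-t$ and $t\mapsto 1/(1-t)$ with half-turns, and the pentagon cell of $\mathcal{M}_{0,5}(\mathbb{R})$.

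The step that does not work as written is the normalization. Comparing actions on $\widehat{\mathbb{F}}(x,y)^{\mathrm{ab}}\cong\widehat{\mathbb{Z}}(1)^2$ tells you nothing about $g:=p^{-1}\sigma(p)$, because conjugation is invisible there: $\sigma(y)=g^{-1}y^{\lambda}g$ abelianizes to $\lambda y$ for every $g$. Moreover, that relation determines $g$ only up to left multiplication by the centralizer $\overline{\langle y\rangle}$, and such a factor can never remove an $x$-component of $g^{\mathrm{ab}}$. So you have not shown that $g$ is the normalized $f_\sigma\in[\widehat{\mathbb{F}},\widehat{\mathbb{F}}]$ of the embedding. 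This matters, since the relations are not invariant under $g\mapsto y^{a}g$.

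What is needed is that $g$ itself has trivial abelianization, and this comes from Kummer theory at the tangential base points. The $\mathbb{Q}$-morphisms $t,1-t:\mathcal{M}_{0,4}\to\mathbb{G}_m$ induce $\widehat{\mathbb{F}}^{\mathrm{ab}}\cong\widehat{\mathbb{Z}}(1)^2$. They send $p$ to real paths in $\mathbb{G}_m$ between the tangential point with tangent vector $1$ at $0$ and the rational point $1$. The Galois group acts on such paths through the Kummer character of $1$, which is trivial, so $g^{\mathrm{ab}}=0$. Only then does uniqueness give $g=f_\sigma$, and with it $f(x,1)=1=f(1,y)$. Alternatively, grant the existence of the normalized $f_\sigma$ (the theorem quoted just before this one). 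Then $g=y^{a}f_\sigma$, and abelianizing your geometric identity $g(x,y)g(y,x)=1$, which never used the normalization, forces $a=0$.

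Some smaller points remain.

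\begin{itemize}
\item For (2), make the half-turn step precise. Paths at a puncture from tangent vector $1$ to tangent vector $-1$ correspond to compatible systems of $n$-th roots of $-1$. Hence $\sigma$ multiplies the upper half-turn by $x^{\nu}$ via the Kummer character of $-1$, which is $(\chi(\sigma)-1)/2=\nu$. This is also why $\nu$ is well defined in $\widehat{\mathbb{Z}}$.
\item For (3), you obtain the pentagon in $\pi_1(\mathcal{M}_{0,5})\cong\widehat{P}_4/Z$, whereas it is stated in $\widehat{P}_4$. To lift it, use the splitting $P_4\cong(P_4/Z)\times Z$ coming from $\mathrm{Conf}_4(\mathbb{C})\cong\mathcal{M}_{0,5}\times\mathrm{Aff}$. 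Every factor $f(a,b)$ lies in the closed commutator subgroup, so it projects trivially to the $\widehat{\mathbb{Z}}$-factor.
\item The relation as printed is not what your bookkeeping can produce. With first argument $x_{12}x_{12}$ in the middle factor, the map $P_4\to P_3$ forgetting the third strand would force $f(x^2,y)=f(x,y)$, which fails in general. That argument must be a product of $x_{12}$ and $x_{13}$; Drinfeld's form is $f(x_{12},x_{23}x_{24})f(x_{13}x_{23},x_{34})=f(x_{23},x_{34})f(x_{12}x_{13},x_{24}x_{34})f(x_{12},x_{23})$.
\end{itemize}

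As you note, the substantive input for (3) is the Galois-equivariance of the five edge identifications at the $\mathbb{Q}$-rational vertex tangential base points (Ihara--Nakamura). Deferring it to the literature is consistent with the paper, which defers the whole theorem to Ihara.
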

\begin{definition}[\cite{Ih2}]
The \textbf{profinite Grothendieck–Teichmüller group} $\widehat{\textnormal{GT}}$ is the subgroup of  $\widehat{\mathbb{Z}}^{\times} \times \widehat{\mathbb{F}}(x,y)$ of tuples $(\lambda,f)$ satisfying the equations in Theorem \ref{thm13}.
\end{definition}
By construction, we have an embedding
\begin{align*}
    \gal \hookrightarrow \widehat{\gt}.
\end{align*}
It is still unknown if this is a bijection, or if there are further equations satisfied by tuples in the image.\displaypar
The equations in Theorem \ref{thm13} are precisely the same equations Drinfeld found for tuples $(\lambda, f) \in \field^{\times} \times \text{Grp}(\field\langle \langle x,y \rangle \rangle)$ in his pro-unipotent Grothendieck–Teichmüller group $\gt(\field)$. This raises the question whether there also is a pro-unipotent analogue of the (profinite) absolute Galois group. It turns out that the appropriate object to replace $\gal$ in the pro-unipotent world is the \textbf{motivic Galois group}.\displaypar
Let $\text{DM}(\mathbb{Q})_{\mathbb{Q}}$ be the triangulated category of mixed motives over $\mathbb{Q}$. Let $\text{DMT}(\mathbb{Q})_{\mathbb{Q}}$ be the triangulated subcategory generated by the Tate motives $\mathbb{Q}(n)$. M. Levine showed in \cite{L} that $\text{DMT}(\mathbb{Q})_{\mathbb{Q}}$ carries a canonical $t$-structure, and that the heart of this $t$-structure is the category $\text{MT}(\mathbb{Q})$ of mixed Tate motives over $\mathbb{Q}$. In \cite{DG}, Deligne and A. Goncharov introduced the full subcategory $\text{MT}(\mathbb{Z})$ of \textbf{mixed Tate motives unramified over} $\mathbb{Z}$, which is shown to be a neutral Tannakian category over $\mathbb{Q}$. In fact, $\text{MT}(\mathbb{Z})$ is equipped with multiple fiber functors corresponding to different cohomology theories. In particular, we get fiber functors
\begin{align*}
    \omega_{\bullet}:\text{MT}(\mathbb{Z}) \rightarrow \text{Vect}_{\mathbb{Q}}
\end{align*}
with $\bullet\in \{\text{dR}, B, \ell\}$, for the de Rham, Betti, and $\ell$-adic cohomology respectively, and their corresponding affine group schemes
\begin{align*}
    \mathcal{G}_{\bullet}^{\text{MT}(\mathbb{Z})}:= \underline{\aut}^{\otimes}(\omega_{\bullet}).
\end{align*}
These groups are called the \textbf{motivic Galois groups}. By general Tannaka theory, we have an equivalence
\begin{align*}
    \text{MT}(\mathbb{Z}) \simeq \text{Rep}\left(\mathcal{G}_{\bullet}^{\text{MT}(\mathbb{Z})}\right),
\end{align*}
and in particular, all of the motivic Galois groups are equivalent. The action of $\mathcal{G}_{\bullet}^{\text{MT}(\mathbb{Z})}$ on the Tate motive $\mathbb{Q}(1)$ yields a surjection $\mathcal{G}_{\bullet}^{\text{MT}(\mathbb{Z})}\rightarrow \mathbb{G}_m$, and therefore we have a short exact sequence
\begin{align*}
    1 \rightarrow \mathcal{U}_{\bullet}^{\text{MT}(\mathbb{Z})} \rightarrow \mathcal{G}_{\bullet}^{\text{MT}(\mathbb{Z})} \rightarrow \mathbb{G}_m \rightarrow 1,
\end{align*}
with $\mathcal{U}_{\bullet}^{\text{MT}(\mathbb{Z})}$ a pro-unipotent group. In $\text{MT}(\mathbb{Z})$, 
\begin{align*}
    \dim_{\mathbb{Q}} \text{Ext}^1_{\text{MT}(\mathbb{Z})}(\mathbb{Q}(0),\mathbb{Q}(m)) = \begin{cases}
        1 & m = 3,5,7,\dots\\ 0 & \text{ otherwise}
    \end{cases}.
\end{align*}
From this it follows that the Lie algebra of $\mathcal{U}_{\bullet}^{\text{MT}(\mathbb{Z})}$ is the free Lie algebra on generators $\sigma_3, \sigma_5,\sigma_7,\dots$, where $\sigma_{2n+1}$ sits in degree $-2n-1$. These generators are called the \textbf{Deligne-Drinfeld elements}. Note that they are only well-defined up to commutators.\displaypar
To understand the connection of the theory of motives to the Grothendieck–Teichmüller group, consider again the \'etale fundamental group of the projective line minus three points. Deligne realized in \cite{De} that it is part of a larger object, namely the \textbf{motivic fundamental groupoid}\footnote{To define this, one needs to use tangential basepoints as developed in \cite{De}.} $\pi_1^{\text{mot}}(\mathbb{P}^1-\{0,1,\infty\}, \vec{1}_0, -\vec{1}_1)$. This motivic fundamental groupoid was rigorously constructed in \cite{DG}, and it is a pro-object in the abelian category $\text{MT}(\mathbb{Z})$. Its Betti realization is given by the unipotent completion of the topological fundamental groupoid $\pi_1(\mathbb{P}^1-\{0,1,\infty\}, \vec{1}_0, -\vec{1}_1)$, so its $\mathbb{Q}$-points are given precisely by the group-like elements in $\mathbb{Q}\langle \langle x,y\rangle \rangle$. The affine ring of the de Rham realization is generated by the elements $\omega_0 = \frac{dz}{z}$ and $\omega_1 = \frac{dz}{1-z}$, and therefore its $\mathbb{Q}$-points are again given by group-like noncommutative formal power series in two generators dual to the $\omega_i$. The comparison isomorphism is given by the pairing of integrating a path with respect to the forms $\omega_i$. In particular, consider the path $\mathbf{dch}$ which goes from $0$ to $1$ in a straight line. Then the periods
\begin{align*}
    \int_{\mathbf{dch}} \omega_{i_1}\dots \omega_{i_n}
\end{align*}
with $i_k\in \{0,1\}$, and $i_1 = 1$, $i_n = 0$ are precisely the \textbf{multiple zeta values} $\zeta(n_1,\dots,n_r)$. These are closely related with the theory of Drinfeld associators. In fact by \cite[Lemma 2.2]{Bro}, the KZ-associator can be recovered as
\begin{align*}
    \Phi_{\text{KZ}} = \text{comp}_{\text{B},\text{dR}}(\mathbf{dch}) \in \pi_1^{\text{dR}}(\mathbb{P}^1-\{0,1,\infty\})(\mathbb{C}),
\end{align*}
and all its coefficients can be expressed in terms of multiple zeta values.\displaypar
By construction, the de Rham realization ${}_x\Pi_y := \pi_1^{\text{dR}}(\mathbb{P}^1-\{0,1,\infty\}, x,y)$, for base points $x,y\in \{\vec{1}_0,-\vec{1}_1\}$, receives an action of the de Rham motivic Galois group $\mathcal{G}_{\text{dR}}^{\text{MT}(\mathbb{Z})}$. It turns out that these actions are compatible with the groupoid structure on ${}_{\bullet}\Pi_{\bullet}$. From this, F. Brown \cite[Section 2.4]{Bro} obtains a description of the action of the pro-unipotent subgroup $\mathcal{U}_{\text{dR}}^{\text{MT}(\mathbb{Z})}$ on ${}_0\Pi_1$, in particular we get a map
\begin{align*}
    \rho: \mathcal{U}_{\text{dR}}^{\text{MT}(\mathbb{Z})} \rightarrow \aut(\pi_1^{\text{dR}}(\mathbb{P}^1-\{0,1,\infty\}))_1 \cong \text{Grp}(\mathbb{Q}\langle \langle x,y\rangle \rangle).
\end{align*}
Brown was able to prove a motivic analogue of Ihara's theorem:
\begin{theorem}[\cite{Bro2}]
The action of $\mathcal{G}_{\textnormal{dR}}^{\textnormal{MT}(\mathbb{Z})}$ on $\pi_1^{\textnormal{dR}}(\mathbb{P}^1-\{0,1,\infty\})$ is fully faithful. In particular, the map
\begin{align*}
    \rho: \mathcal{G}_{\textnormal{dR}}^{\textnormal{MT}(\mathbb{Z})} \simeq \mathbb{G}_m \rtimes \mathcal{U}_{\textnormal{dR}}^{\textnormal{MT}(\mathbb{Z})} \rightarrow \mathbb{Q}^{\times} \times \textnormal{Grp}(\mathbb{Q}\langle \langle x,y\rangle \rangle)
\end{align*}
is injective.
\end{theorem}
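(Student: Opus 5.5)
The plan is to reduce faithfulness to a statement about motivic periods, and then prove that statement by a dimension count. By Tannaka duality, applied to $\text{MT}(\mathbb{Z}) \simeq \text{Rep}(\mathcal{G}_{\text{dR}}^{\text{MT}(\mathbb{Z})})$, the homomorphism $\rho$ is injective (a closed immersion) if and only if the ind-object $\mathcal{O}(\pi_1^{\text{mot}}(\mathbb{P}^1-\{0,1,\infty\}, \vec{1}_0, -\vec{1}_1))$ generates $\text{MT}(\mathbb{Z})$ as a Tannakian category. Equivalently, the matrix coefficients of this object must generate the affine ring $\mathcal{O}(\mathcal{G}_{\text{dR}}^{\text{MT}(\mathbb{Z})})$.

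Passing to motivic periods, I would let $\mathcal{H}$ be the graded algebra of motivic multiple zeta values $\zeta^{\mathfrak{m}}(n_1,\dots,n_r)$, that is, the periods of ${}_0\Pi_1$. The claim then becomes an equality
\begin{align*}
    \mathcal{H} = \mathcal{O}(\mathcal{U}_{\text{dR}}^{\text{MT}(\mathbb{Z})}) \otimes \mathbb{Q}[\zeta^{\mathfrak{m}}(2)].
\end{align*}
The inclusion $\subseteq$ is formal. The description of $\text{Lie}(\mathcal{U}_{\text{dR}}^{\text{MT}(\mathbb{Z})})$ as the free Lie algebra on the generators $\sigma_{2n+1}$ computes the Poincaré series of the right-hand side as $1/(1-t^2-t^3)$. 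This recovers the Deligne–Goncharov upper bound $\dim \mathcal{H}_N \le d_N$, where $d_N = d_{N-2}+d_{N-3}$. It therefore suffices to exhibit $d_N$ linearly independent elements of $\mathcal{H}_N$.

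For these I would take the Hoffman elements $\zeta^{\mathfrak{m}}(n_1,\dots,n_r)$ with all $n_i\in\{2,3\}$, whose number in weight $N$ is exactly $d_N$. Independence is proved by induction on the \emph{level}, meaning the number of $3$'s. The main tools are Goncharov's coaction, upgraded to the motivic coaction $\Delta: \mathcal{H} \to \mathcal{A}\otimes \mathcal{H}$ with $\mathcal{A}=\mathcal{H}/\zeta^{\mathfrak{m}}(2)$, and its infinitesimal components $\partial_{2r+1}$ (with $2r+1 \le N$). These are obtained by projecting the left factor onto the weight-$(2r+1)$ indecomposables, which are one-dimensional and spanned by $\zeta^{\mathfrak{m}}(2r+1)$. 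On the level-$0$ elements, $\zeta^{\mathfrak{m}}(2,\dots,2)$ is a rational multiple of $\zeta^{\mathfrak{m}}(2)^n$, which starts the induction. Using the explicit formula for the motivic coaction in terms of subsequences of iterated integrals, one checks that $\partial_{2r+1}$ sends the level-$\ell$ span to the level-$(\ell-1)$ span, modulo lower levels. The coefficients that appear are rational multiples of the single-valued parts of $\zeta(2,\dots,2,3,2,\dots,2)$, and these are computed by Zagier's theorem as explicit combinations of $\zeta(2)^k\zeta(2m+1)$. Assembling the maps $\partial_{2r+1}$ gives a matrix $M_{N,\ell}$ from level $\ell$ in weight $N$ to level $\ell-1$ in lower weights. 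If $M_{N,\ell}$ is injective, the inductive hypothesis propagates independence. The argument closes because the kernel of $\bigoplus_r \partial_{2r+1}$ on $\mathcal{H}_N$ consists only of multiples of $\zeta^{\mathfrak{m}}(2)^{N/2}$.

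I expect the main obstacle to be the invertibility of $M_{N,\ell}$. My first approach is $2$-adic: use Zagier's formula to show that the coefficients carry explicit powers of $2$, with the leading term a $2$-adic unit (coming from $\binom{2r}{2a+2}(1-2^{-2r})$-type factors). After rescaling rows, $M_{N,\ell}$ should then reduce modulo $2$ to an upper triangular matrix with ones on the diagonal with respect to a suitable ordering of compositions, hence be invertible over $\mathbb{Q}$. Once this is established, the Hoffman elements form a basis, so $\mathcal{H}$ has the full dimension and the Tannakian criterion gives injectivity of $\rho$. The semidirect decomposition $\mathbb{G}_m \ltimes \mathcal{U}$ is respected because the weight grading on $\pi_1^{\text{dR}}$ recovers the $\mathbb{G}_m$-factor.
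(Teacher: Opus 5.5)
The paper gives no argument for this statement; it imports Brown's theorem. Your outline follows Brown's own proof: reduce faithfulness to $\mathcal{H}=\mathcal{H}^{\mathrm{MT}}$, use the bound $d_N$, and prove independence of the Hoffman elements by induction on the level using the infinitesimal coaction, Zagier's evaluation and a $2$-adic argument. The reductions (Tannakian criterion, Poincar\'e series $1/(1-t^2-t^3)$, level-$0$ start, the $\mathbb{G}_m$-factor) are fine, but the two steps that carry the content are not in place.

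First, Zagier's theorem is an identity between real numbers. Since the $\mathbb{Q}$-linear independence of the numbers $\zeta(2)^{r-k}\zeta(2k+1)$ is not known, it does not determine the image of $\zeta^{\mathfrak{m}}(2^{a}32^{b})$ in $\mathcal{L}_{2r+1}=\mathbb{Q}\,\zeta^{\mathfrak{m}}(2r+1)$, and that image is what actually enters $M_{N,\ell}$. You have to lift Zagier's formula to $\mathcal{H}$ first. By induction on the weight, show that both sides have the same $\partial_{2k+1}$ for all $2k+1<N$. Then their difference lies in $\ker\bigl(\bigoplus_{2k+1<N}\partial_{2k+1}\bigr)\cap\mathcal{H}_N=\mathbb{Q}\,\zeta^{\mathfrak{m}}(N)$, and the period map together with $\zeta(N)\neq 0$ kills the remaining multiple. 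This needs the kernel statement with $\partial_N$ excluded. The version you quote (all $2r+1\le N$) cannot be used here, because $\partial_N$ is precisely the unknown.

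Second, you assert the invertibility of $M_{N,\ell}$ rather than prove it, and the mechanism you suggest would not give it. The relevant coefficient is $c_{a,b}=2(-1)^r\bigl(\binom{2r}{2a+2}-(1-2^{-2r})\binom{2r}{2b+1}\bigr)$ with $r=a+b+1$. The factor $2^{-2r}$ sits on $\binom{2r}{2b+1}$, and the first binomial is $2$-adically negligible. One finds $2^{2r-1-v_2(2r)}c_{a,b}\equiv\binom{r-1}{b}\pmod 2$, which is odd for many $b$; for $r=2$, $2c_{1,0}=-11$ and $2c_{0,1}=9$. So unit leading coefficients do not make the matrix unitriangular modulo $2$. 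In weight $5$, level $1$, the rescaled matrix of $(\partial_5,\partial_3)$ on the two Hoffman elements is, up to the order of the columns, $\begin{pmatrix}-11&9\\3&-2\end{pmatrix}$. It becomes triangular modulo $2$ only because of the $\partial_3$ row.

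What is missing is Brown's explicit level-graded formula for $\partial_{2r+1}$ on $\zeta^{\mathfrak{m}}(2^{a_0}3\cdots 32^{a_\ell})$. That formula records which blocks $2^{\alpha}32^{\beta}$ around which $3$ are cut out, and with which combinations of the $c_{\alpha,\beta}$, including the subsequences touching the ends of the word. You also need the ordering of compositions for which its reduction modulo $2$ is unitriangular. That computation is the heart of the theorem and is not in your sketch.
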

Using the explicit description of $\rho$, one can again directly see that this action factors through the pro-unipotent Grothendieck–Teichmüller group. In particular, we have an embedding
\begin{align*}
    \mathcal{G}_{\textnormal{dR}}^{\textnormal{MT}(\mathbb{Z})} \hookrightarrow \gt(\mathbb{Q}).
\end{align*}
We also obtain an embedding of pro-unipotent subgroups $\mathcal{U}_{\textnormal{dR}}^{\textnormal{MT}(\mathbb{Z})} \hookrightarrow \gt(\mathbb{Q})_1$, and by the description of the Lie algebra of $\mathcal{U}_{\textnormal{dR}}^{\textnormal{MT}(\mathbb{Z})}$, this yields an inclusion
\begin{align*}
    \text{free}_{\text{Lie}}(\sigma_3,\sigma_5,\sigma_7,\dots) \hookrightarrow \mathfrak{gt},
\end{align*}
where $\mathfrak{g}\mathfrak{t}$ is the Lie algebra of $\gt(\mathbb{Q})_1$. The images of $\sigma_{2n+1}$ in $\mathfrak{g}\mathfrak{t}$ are again called Deligne-Drinfeld elements.\displaypar
In \cite{K}, Kontsevich builds his local formality morphism $\mathscr{U}: T_{\text{poly}}^{\ast}(M)[1] \rightarrow D_{\text{poly}}^{\ast}(M)[1]$ by introducing a certain operad of \textbf{graph complexes}. In particular, the component $\mathscr{U}_n$ is constructed from maps $\mathscr{U}_{\Gamma}$, which depend on a labeled graph $\Gamma$ with $n$ vertices. Kontsevich's graph complexes are related to the Grothendieck–Teichmüller group by the following result proved by Willwacher.
\begin{theorem}[\cite{Wi}]
Let $\mathbf{GC}$ denote Kontsevich's graph complex, and let $\mathfrak{grt}$ be the Lie algebra of the pro-unipotent part of the graded Grothendieck–Teichmüller group $\grt(\mathbb{Q})$. Then there is an isomorphism of Lie algebras
\begin{align*}
    H^0(\mathbf{GC}) \cong \mathfrak{grt}.
\end{align*}
If $\sigma_{2n+1}$ is a Deligne-Drinfeld element in $\mathfrak{grt}$, then each representative of the class $\tilde{\sigma}_{2n+1}$ in $H^0(\mathbf{GC})$ corresponding to $\sigma_{2n+1}$ has a nonzero component given by the $m$th ``spoked wheel'' for $m=2n+1$
\begin{center}
\begin{tikzpicture}[
    node/.style={circle, fill=black, inner sep=1.5pt},
    every node/.style={font=\small}
]

  \node[node, label=below:{$m+1$}] (center) at (0,0) {};

  \foreach \i/\label/\angle in {1/1/140, 2/2/110, 3/3/80, 4/n/170} {
    \node[node, label=\angle:{\label}] (v\i) at (\angle:2cm) {};
    \draw (center) -- (v\i);
  }

  \draw (v4) -- (v1) -- (v2) -- (v3);

  \draw[thick, dotted] (v3) -- (50:2cm);
  \draw[thick, dotted] (v4) -- (200:2cm);

\end{tikzpicture}
\end{center}
\end{theorem}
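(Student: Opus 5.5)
The strategy is to realize both sides of $H^0(\mathbf{GC}) \cong \mathfrak{grt}$ as the degree-zero homotopy derivations of the rationalized little 2-disks operad, and then to compare. On the graph side I would use Kontsevich's operad of graphs $\mathbf{Graphs}_2$. By Kontsevich's formality of the little disks operad, it is quasi-isomorphic to $C_{\ast}(D_2)\otimes \mathbb{Q}$ and to $\ger$. The full graph complex $\mathrm{f}\mathbf{GC}_2$ acts on $\mathbf{Graphs}_2$ by operadic derivations, by inserting a graph at a vertex and reconnecting edges. This action gives a map of dg Lie algebras
\begin{align*}
\mathrm{f}\mathbf{GC}_2 \longrightarrow \mathrm{Def}(\ger_{\infty} \to \mathbf{Graphs}_2).
\end{align*}
The first step is to show that this map is a quasi-isomorphism up to a known one-dimensional correction. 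I would prove this by filtering the deformation complex by the number of external legs. After passing to connected graphs, one reduces to $\mathbf{GC}_2$, with the correction coming from the class of the tadpole/line graph. The second step uses the Tamarkin--Fresse identification of homotopy automorphisms of the rationalized $E_2$ operad with $\grt(\mathbb{Q})$. On Lie algebras this identifies $H^0(\mathrm{Def}(\ger_\infty\to\ger))$, modulo the grading (scaling) class, with $\mathfrak{grt}$.

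To keep the isomorphism explicit, I would not rely only on that abstract identification. I would also construct a direct map from $\mathbf{GC}_2$ to the Drinfeld--Kohno Lie algebras $\mathfrak{t}_n$, namely the infinitesimal braid Lie algebras. A graph $\Gamma$ is sent to the element of $\widehat{\mathfrak{t}}_3$ obtained by summing over the ways of attaching the external vertices $1,2,3$ to the vertices of $\Gamma$. Cocycle conditions in $\mathbf{GC}_2$ should translate into the linearized pentagon and hexagon equations defining $\mathfrak{grt}$. The third step is to show that this map is an isomorphism on $H^0$ and that $H^{<0}(\mathbf{GC}_2)=0$. For this I would run a spectral sequence on the loop-order (first Betti number) filtration, comparing $\mathbf{GC}_2$ with the operadic deformation complex of $\mathfrak{t}$, which is formal by Drinfeld's theory. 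I expect this comparison, together with controlling convergence of the completed complexes, to be the main obstacle. My first attempt would be to bound the number of vertices per loop order and argue degreewise.

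For the wheel statement, I would define a linear functional $w_m$ on $\mathbf{GC}_2$ in degree $0$ that extracts the coefficient of the $m$th spoked wheel. I would first check that $w_m$ vanishes on coboundaries in loop order $m$. This follows because the wheel is the unique graph of that loop order whose vertices all have valence $3$ except the hub, so no graph with one more vertex can contract onto it except through trivalent terms that cancel in pairs. It follows that $w_m$ is well defined on classes in a suitable associated graded. The other side is a depth-one computation. Under the map to $\widehat{\mathfrak{t}}_3$, the wheel with $m=2n+1$ spokes maps, to leading depth, to a nonzero multiple of $\mathrm{ad}_x^{2n}y$. It is classical (Ihara, Drinfeld) that the Deligne--Drinfeld element $\sigma_{2n+1}$ has nonzero depth-one component $\mathrm{ad}_x^{2n}y$, and this component is unchanged by the commutator ambiguity. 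Every other graph at that loop order contributes terms of depth at least two. Hence every representative $\tilde\sigma_{2n+1}$ has nonzero wheel coefficient.
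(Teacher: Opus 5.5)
The thesis gives no proof of this statement; it is quoted from \cite{Wi}, so the comparison has to be with Willwacher's argument. Your first two steps follow its outer architecture: the map $\mathrm{fGC}_2\to\mathrm{Def}(\ger_{\infty}\to\mathrm{Graphs}_2)$, a filtration argument, and Kontsevich formality. Suppose you accept as a black box the group-level identification of the homotopy automorphisms of rational $E_2$ with $\mathrm{GRT}(\mathbb{Q})$, and you supply the passage from $\pi_0$ of that automorphism space to $H^0$ of the completed deformation complex with its bracket. Then you do obtain \emph{some} Lie algebra isomorphism $H^0(\mathbf{GC}_2)\cong\mathfrak{grt}$.

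That route is abstract, though, and says nothing about which class corresponds to $\sigma_{2n+1}$. Everything after it rests on your ``direct map'', which is not actually defined. Summing over ways of attaching external vertices $1,2,3$ to $\Gamma$ produces graphs with external vertices, i.e.\ elements of $\mathrm{Graphs}_2(3)$, which is a model for (co)chains on $\mathrm{Conf}_3(\mathbb{R}^2)$. It does not produce elements of $\widehat{\mathfrak{t}}_3$: the holonomy Lie algebra $\mathfrak{t}_3$ is recovered from that model only through Koszul duality. So the claim that cocycle conditions in $\mathbf{GC}_2$ ``translate into the linearized pentagon and hexagon equations'' is not a step to be checked. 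It is the content of the theorem, and the loop-order comparison that would establish it is exactly what you leave as ``the main obstacle''.

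The wheel part inherits this gap. Its overall shape is sensible. The wheel coefficient should be a functional on $H^0(\mathbf{GC}_2)$, and the coefficient of $\mathrm{ad}_x^{2n}y$ is a functional on $\mathfrak{grt}$ that kills brackets (Ihara brackets have depth $\geq 2$) and is nonzero on $\sigma_{2n+1}$. What must be proved is that the isomorphism intertwines these two functionals up to a nonzero scalar. You assert that $w_{2n+1}$ goes at leading depth to a nonzero multiple of $\mathrm{ad}_x^{2n}y$, and that every other graph of that loop order lands in depth $\geq 2$. Both assertions are made for a map that has not been constructed. You would also need that map to induce the same isomorphism as in your first part, or at least one compatible with loop order and weight, since ``the class corresponding to $\sigma_{2n+1}$'' depends on the isomorphism.

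The well-definedness step reaches the right conclusion for the wrong reasons. In $\mathbf{GC}_2$ the differential splits vertices, so the graphs whose coboundary could contain $w_m$ have one \emph{fewer} vertex, namely the contractions $w_m/e$. Contracting any spoke or any rim edge of $w_m$ creates a double edge, and such graphs vanish in $\mathbf{GC}_2$ because edges are odd. Hence $w_m$ never occurs in a coboundary, and its coefficient descends directly to $H^0(\mathbf{GC}_2)$; no associated graded is needed and no pairwise cancellation is involved. The uniqueness claim you invoke is false. For $m=9$, a hub joined to three disjoint triangles has the same valence profile as $w_9$, and it is nonzero in $\mathbf{GC}_2$ because every automorphism permutes its edges evenly.
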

This identification of the Grothendieck–Teichmüller Lie algebra with the cohomology of the graph complex enabled Dolgushev, Rogers and Willwacher in \cite{DRW} to exhibit an action of $\mathfrak{grt}$ on the cohomology of polyvector fields, where a given graph $\Gamma$ with $n$ vertices acts on $n$ polyvectors $v_1,\dots,v_n$ by contraction with certain differential operators, which are determined by the graph. 
\begin{theorem}[Theorem 7.1, Theorem 8.1 \cite{DRW}]
    Let $X$ be a smooth variety over $\mathbb{C}$. For any cocycle $\gamma \in \textnormal{GC}$ one can construct a map
    \begin{align*}
        D_{\gamma}: H^{\ast}(X,\mathcal{T}^{\ast}_{\textnormal{poly}}(X)) \rightarrow  H^{\ast}(X,\mathcal{T}^{\ast}_{\textnormal{poly}}(X)),
    \end{align*}
    which is a derivation of Gerstenhaber algebras. If $\gamma = \tilde{\sigma}_{2n+1}$ corresponds to a Deligne-Drinfeld element, then the action of the corresponding degree 0 derivation $D_{\tilde{\sigma}_{2n+1}}$ on $H^{\ast}(X,\mathcal{T}^{\ast}_{\textnormal{poly}}(X))$ is a non-zero scalar multiple of the contraction with the $(2n+1)$th component of the Chern character $\textnormal{ch}_{2n+1}(X)$ of the tangent sheaf of $X$.
\end{theorem}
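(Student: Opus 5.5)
The plan is to first build the action locally on formal polyvector fields, then globalize it by formal geometry, and finally compute the wheel term explicitly.

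\emph{Local action.} Kontsevich's graph complex $\mathbf{GC}$ acts on polyvector fields on the formal disk $\widehat{\mathbb{A}}^d$, i.e.\ on $\field[[x_1,\dots,x_d]][\xi_1,\dots,\xi_d]$ with $\xi_i = \partial/\partial x_i$ of degree $1$. Given a graph $\gamma$ with $m$ vertices, one places polyvectors $v_1,\dots,v_m$ on the vertices. Each edge $(a,b)$ is read as the operator $\sum_i \partial_{\xi_i}^{(a)}\partial_{x_i}^{(b)}$. One then multiplies the results and symmetrizes. I would verify three things in this order.
\begin{enumerate}
\item The assignment $\gamma\mapsto D_\gamma$ is a morphism of dg Lie algebras from $\mathbf{GC}$ into $L_\infty$-derivations of $T^{\ast}_{\textnormal{poly}}$. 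Here the Schouten bracket is the image of the one-edge graph, so compatibility with the differential amounts to the statement that the differential of $\mathbf{GC}$ is the bracket with that graph.
\item For a cocycle $\gamma$ of degree $0$, the Taylor components $D_\gamma^{(k)}$ with $k\geq 2$ are homotopies. Hence the linear part $D_\gamma^{(1)}$ induces a derivation of both the cup product and the bracket on cohomology. The cup product is seen by the same graphical Leibniz rule, since edges act as derivations of the wedge product.
\item The whole construction is $GL_d$-equivariant, because all contractions are $GL_d$-invariant.
\end{enumerate}

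\emph{Globalization.} Next I would globalize via the Calaque--Van den Bergh resolution of $\mathcal{O}_X$ by the jet-bundle/Dolbeault-type complex. On this resolution, $\mathcal{T}^{\ast}_{\textnormal{poly}}(X)$ is quasi-isomorphic to the sheaf of fiberwise formal polyvector fields twisted by the Grothendieck connection. That connection is $d+B$, with $B$ a Maurer--Cartan element whose linear-in-$x$ part is governed by the Atiyah class.

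Because the local operators $D_\gamma$ are $GL_d$-equivariant but not invariant under all formal coordinate changes, I would twist them by $B$:
\[
D_\gamma^B(v) = \sum_k \frac{1}{k!}D_\gamma^{(k+1)}(B,\dots,B,v).
\]
The cocycle condition $[\text{edge},\gamma]=0$ together with the Maurer--Cartan equation for $B$ shows that $D^B_\gamma$ commutes with the twisted differential. Twisting preserves the homotopy-derivation property, so on hypercohomology $H^{\ast}(X,\mathcal{T}^{\ast}_{\textnormal{poly}}(X))$ we obtain a derivation $D_\gamma$ of Gerstenhaber algebras. Independence of the choices (connection, formal coordinates) follows from a standard homotopy argument using that the space of such choices is contractible.

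\emph{The wheel term.} Finally, for $\gamma=\tilde\sigma_{2n+1}$, I would compute $D^B_\gamma(v)$ in cohomology. There are two inputs.
\begin{enumerate}
\item Only vertices of $\gamma$ filled with $B$ and exactly one vertex filled with $v$ contribute. By degree counting in $x$ and $\xi$, and because $B$ is at most quadratic in $x$ modulo exact terms, all graphs except those containing a wheel whose spokes end on $B$-vertices give contributions that are exact or vanish. In particular, graphs with any vertex of valence at least $3$ among the $B$-filled vertices vanish, because $B$'s relevant part is linear in $\xi$.
\item For the $(2n+1)$-spoked wheel with $v$ at the hub, going around the rim produces $\operatorname{tr}(\at(X)^{2n+1})$ contracted into $v$. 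This is a nonzero multiple of $\textnormal{ch}_{2n+1}(X)$. The coefficient of the wheel in any representative of $\tilde\sigma_{2n+1}$ is nonzero by Willwacher's theorem stated just above.
\end{enumerate}

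\emph{Main obstacle.} I expect the hardest step to be controlling all other graphs in a representative of $\tilde\sigma_{2n+1}$: showing that they contribute nothing in cohomology after twisting. I would first try a filtration by loop order combined with the grading by polynomial degree in $x$, and reduce to the fact that only one-loop (wheel) graphs can pair nontrivially with the Atiyah-class part of $B$.
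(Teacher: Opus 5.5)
The thesis gives no proof of this statement; it only cites \cite{DRW}. Your overall architecture is the right one: Kontsevich's local action, globalization by twisting with the formal-geometry Maurer--Cartan element $B$, and explicit evaluation of the wheel. However, the decisive wheel step fails as written.

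\textbf{The wheel computation.} You claim that graphs in which a $B$-filled vertex has valence $\geq 3$ vanish. That would kill the wheel itself, whose rim vertices are trivalent and carry $B$. The correct mechanism is a $\xi$-count:
\begin{itemize}
\item A degree-$0$ graph with $m$ vertices has $2m-2$ edges, and each edge removes one $\xi$.
\item Each copy of $B$ is a formal vector field, so it absorbs at most one $\partial_\xi$. Graphs with double edges vanish in $\mathrm{GC}$.
\item Hence in $D_\gamma(B,\dots,B,v)$ the vertex carrying $v$ must be adjacent to every other vertex, with all spokes differentiating $v$ in $\xi$. Each $B$-vertex then receives exactly one $\partial_\xi$.
\item Counting the remaining $m-1$ edges and using trivalence, each $B$-vertex has exactly one further edge. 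So the $B$-vertices span a disjoint union of cycles, and each $B$ is differentiated exactly twice in $x$.
\end{itemize}
This is where the Atiyah class enters: it sits in the quadratic part of $B$, not the linear part, which is the connection. The same count gives two things you need but did not check. First, $D_\gamma(B,\dots,B)=0$ because $e\geq 3m/2>m$; without this, $D^B_\gamma$ need not commute with the twisted differential. Second, inserting a linear vector field kills $D_\gamma$; this, beyond $GL_d$-equivariance, is what lets the twisted operator descend from the coordinate torsor.

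The survivors, however, are not only the wheel. A hub joined to all vertices of several disjoint odd cycles of total length $2n+1$ (e.g.\ $3+3+3$ for $2n+1=9$) is a nonzero degree-$0$ graph. It contributes contraction with $\prod_j\mathrm{tr}(\textnormal{At}^{\ell_j})$. Nothing in your argument excludes these, so you have not shown the result is a multiple of $\textnormal{ch}_{2n+1}(X)$. You need an extra input, e.g.\ that classes in $\mathrm{GC}$ have representatives without cut vertices; the $1$-vertex-irreducible subcomplex is quasi-isomorphic to $\mathrm{GC}$, by Conant--Gerlits--Vogtmann.

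\textbf{The Gerstenhaber derivation property.} For a graph with $m\geq 2$ vertices, $D_\gamma$ has only its $m$-ary component. So on the formal disk there is no linear part, and ``the components with $k\geq 2$ are homotopies'' has no content. The unary operator exists only after twisting. Its compatibility with the Schouten bracket up to homotopy comes from the $L_\infty$-derivation identity of $D^B_\gamma$, using its binary component.

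For the cup product, ``edges act as derivations of the wedge product'' gives no Leibniz rule: a vertex of valence $k$ applies a $k$-th order operator to its input. Indeed, the final answer is contraction with the $(2n+1)$-form $\textnormal{ch}_{2n+1}(X)$. Contraction with a $k$-form is a $k$-th order operator for $\wedge$, so this is not a derivation at the sheaf level, only on cohomology. The Gerstenhaber part of the statement therefore needs a genuine argument: the $\mathrm{GC}$-action must be by homotopy derivations of the whole $\mathbf{Ger}_{\infty}$-structure, and this must survive twisting by $B$. One route is to let $\mathrm{GC}$ act by operadic derivations on graphs whose internal vertices are filled with $B$, compatibly with the map from the Gerstenhaber operad.
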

In particular, this theorem yields an action of the graded Grothendieck–Teichmüller Lie algebra $\mathfrak{grt}$ on the Gerstenhaber algebra $H^{\ast}(X,\mathcal{T}^{\ast}_{\textnormal{poly}}(X))$ by derivations. Dolgushev, Rogers, and Willwacher also found classes of varieties for which this action is non-trivial, and can therefore be used to infer information about $\mathfrak{grt}$. To the author's knowledge, this is the only known non-trivial non-torsor action of the Grothendieck–Teichmüller group.\displaypar
Recall that Tamarkin's local formality morphism requires the choice of a Drinfeld associator, and so does the choice of a $\ger_{\infty}$-algebra structure on polydifferential operators. It is therefore expected that the Grothendieck–Teichmüller group, which is the automorphism group of associators, acts on these objects. This was conjectured by Kontsevich in \cite{K2}, and then proved in \cite{DRW} using the above action on the cohomology of polyvector fields. Let $X$ be as above. Then precomposing the global formality morphism on hypercohomology in Theorem \ref{thm12} with the derivation $D_{\gamma}$ induced from a 0-cocycle of the graph complex yields another isomorphism of Gerstenhaber algebras. This induces an action of $\mathfrak{grt}$ on the collection of global formality morphisms. 
\begin{theorem}[Theorem 9.2 \cite{DRW}]
In this action, a Deligne-Drinfeld element $\sigma_{2n+1}$ acts by changing the correction term $\td(X)^{1/2}$ by a factor of $\exp(\textnormal{ch}_{2n+1}(X))$.
\end{theorem}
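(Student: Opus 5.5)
The plan is to make the $\mathfrak{grt}$-action on global formality morphisms explicit and then compute it using the preceding theorem (Theorems 7.1 and 8.1 of \cite{DRW}). A $0$-cocycle $\gamma\in\mathbf{GC}$ gives a degree $0$ derivation $D_\gamma$ of the Gerstenhaber algebra $H^{\ast}(X,\mathcal{T}^{\ast}_{\textnormal{poly}}(X))$. The corresponding group element acts on a formality isomorphism $\Phi$ by precomposition with $\exp(D_\gamma)$. For this I would first check that $\exp(D_\gamma)$ is well defined and is an automorphism of Gerstenhaber algebras. The exponential of a derivation is an algebra automorphism in characteristic $0$, provided the series makes sense. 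Here it makes sense because $D_\gamma$ raises the Hodge bidegree, so it is nilpotent on $\bigoplus_{p,q} H^p(X,\wedge^q\mathcal{T}_X)$ whenever $\dim X<\infty$.

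The second step specializes to $\gamma=\tilde{\sigma}_{2n+1}$. By the preceding theorem, $D_{\tilde{\sigma}_{2n+1}}=c_n\,\iota_{\textnormal{ch}_{2n+1}(X)}$ for a nonzero scalar $c_n$. I would normalize the Deligne--Drinfeld element so that $c_n=1$; this is allowed because $\sigma_{2n+1}$ is only defined up to scalars and commutators. I would then record the elementary identity
\begin{align*}
    \iota_\alpha\circ\iota_\beta=\iota_{\alpha\wedge\beta}
\end{align*}
for classes $\alpha,\beta\in\bigoplus_k H^k(X,\Omega^k_X)$ acting on polyvectors by contraction. Classes of bidegree $(k,k)$ have even total degree, so these operators commute with no signs. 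This identity gives
\begin{align*}
    \exp(\iota_{\textnormal{ch}_{2n+1}(X)})=\iota_{\exp(\textnormal{ch}_{2n+1}(X))},
\end{align*}
and the exponential on the right is a finite sum because $\textnormal{ch}_{2n+1}(X)\in H^{2n+1}(X,\Omega^{2n+1}_X)$ vanishes once $2n+1>\dim X$ and is nilpotent in any case.

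The third step composes with the formality isomorphism of Theorem \ref{thm12}, $\Phi=I_{\textnormal{HKR}}\circ\iota_{\td(X)^{1/2}}$:
\begin{align*}
    \Phi\circ\exp(D_{\tilde{\sigma}_{2n+1}})=I_{\textnormal{HKR}}\circ\iota_{\td(X)^{1/2}}\circ\iota_{\exp(\textnormal{ch}_{2n+1}(X))}=I_{\textnormal{HKR}}\circ\iota_{\td(X)^{1/2}\wedge\exp(\textnormal{ch}_{2n+1}(X))}.
\end{align*}
This is exactly the claim that the correction term $\td(X)^{1/2}$ is multiplied by $\exp(\textnormal{ch}_{2n+1}(X))$.

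The step I expect to be the main obstacle is the input from the preceding theorem, namely that $D_{\tilde{\sigma}_{2n+1}}$ is exactly a multiple of contraction with $\textnormal{ch}_{2n+1}$ on hypercohomology. A representative of $\tilde{\sigma}_{2n+1}$ contains graphs other than the wheel, and one must show they contribute zero or exact terms. I would first try the argument of \cite{DRW}: in a Calaque--Van den Bergh / Fedosov-type resolution, each graph acts through iterated Atiyah classes. A graph with $k$ cycles then produces a product of traces of powers of the Atiyah class, so only the one-loop wheel gives a nonzero class, namely $\operatorname{tr}(\at(X)^{2n+1})\propto\textnormal{ch}_{2n+1}(X)$. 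The remaining subtlety is to confirm that the normalization of $\sigma_{2n+1}$ chosen in the second step is compatible with the normalization in that theorem.
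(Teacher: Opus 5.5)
Your argument is correct, and it is the derivation implicit in the paper, which takes the statement from \cite{DRW} after describing the action as precomposition with the derivations $D_\gamma$. Concretely, you read the action as precomposition with $\exp(D_\gamma)$, insert $D_{\tilde{\sigma}_{2n+1}}=c_n\,\iota_{\textnormal{ch}_{2n+1}(X)}$ from Theorems 7.1 and 8.1 of \cite{DRW}, and use $\iota_\alpha\iota_\beta=\iota_{\alpha\wedge\beta}$ to fold $\exp(c_n\,\iota_{\textnormal{ch}_{2n+1}(X)})$ into the Calaque--Van den Bergh correction $\td(X)^{1/2}$.

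Two small remarks. First, without your rescaling the factor you actually obtain is $\exp(c_n\,\textnormal{ch}_{2n+1}(X))$; the statement silently absorbs this nonzero scalar into $\sigma_{2n+1}$, so there is no further ``compatibility'' left to check. Second, the input you call the main obstacle is simply cited, which is fortunate, because your sketch of it is not a valid argument. Products of traces of powers of $\at(X)$, i.e.\ products of Chern character components, do not vanish in general (e.g.\ on $\mathbb{P}^N$ for $N$ large). So the multi-loop contributions have to be eliminated for structural reasons, not because their classes are zero.
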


\newpage
\setcounter{theorem}{0}
\chapter{$\infty$-operads and centers}\label{infty_operads_and_centers}
The theory of $\infty$-operads as developed in \cite{HA} is a way to formalize topological operads in the language of quasi-categories. Just like in classical operad theory, one can define the notion of an algebra over an $\infty$-operad, as well as modules over such an operadic algebra. By working in the formalism of $\infty$-categories, one can exploit derived universal properties in operadic constructions. This enables the definition of the center of an algebra over an $\infty$-operad as a universal object. In good situations, one can compute this center in terms of operadic modules over the algebra.\displaypar
Of special interest are the little $n$-disks $\infty$-operads $\mathbb{E}_n$, whose algebras are equipped with $n$ compatible multiplications. By the Dunn additivity theorem, the center of an $\mathbb{E}_n$-algebra is the universal $\mathbb{E}_{n+1}$-algebra acting on it.\displaypar
In this chapter, we review Lurie's theory of $\infty$-operads and higher centers, and prove the technical results necessary for the comparison of the center to the classical Hochschild complex in Chapter \ref{The_hochschild_complex_as_a_center}. The main result is Corollary \ref{cor7}, which states that the two compatible multiplications on the center of an $\mathbb{E}_1$-algebra are given by the convolution product and the composition product.  
\section{$\infty$-operads as $\infty$-categories of operators}
We follow Lurie's formalism for $\infty$-operads as developed in \cite{HA}. We briefly recollect some important notions. \displaypar
Let $p: \mathcal{O}^{\otimes} \rightarrow \fin$ be a functor. If $\langle n\rangle \in \fin$, we denote by $\mathcal{O}^{\otimes}_{\langle n\rangle}$ the fiber of $p$ over $\langle n\rangle$. For $n=1$, we denote $\o_{\langle 1\rangle}$ simply by $\mathcal{O}$. If $f: \langle m\rangle \rightarrow \langle n\rangle$ is a morphism in $\fin$, we denote by $\map_{\mathcal{O}^{\otimes}}^f(C,C')$ the union of the connected components of $\map_{\mathcal{O}^{\otimes}}(C,C')$ lying over $f$. For $1\leq i \leq n$, we denote by $\rho^i: \n \rightarrow \langle 1 \rangle$ the map sending $i$ to $1$, and $j\neq i$ to $\ast$.
\begin{definition}
An $\infty$\textbf{-operad} is a functor $p: \mathcal{O}^{\otimes} \rightarrow \fin$ such that the following hold.
\begin{enumerate}
    \item For every inert morphism $f: \langle m\rangle \rightarrow \langle n \rangle$ in $\fin$ and every $C\in \mathcal{O}^{\otimes}_{\langle m\rangle}$, there exists a $p$-coCartesian lift $\bar{f}: C \rightarrow C'$ in $\mathcal{O}^{\otimes}$ of $f$.
    \item  Let $C\in \o_{\M}$, $C'\in \o_{\n}$, and $f: \M \rightarrow \n$ in $\fin$. For $1\leq i\leq n$, fix $p$-coCartesian lifts $C' \rightarrow C'_i$ of $\rho^i$. The induced map 
    \begin{align*}
        \map_{\o}^{f}(C,C') \rightarrow \prod_{1\leq i\leq n} \map_{\o}^{\rho^i \circ f}(C,C'_i)
    \end{align*}
    is a homotopy equivalence.
    \item For $n\geq 0$, the collection of functors $\{\rho^i_!: \o \rightarrow \mathcal{O}\}_{1\leq i\leq n}$ given by sending $C\in \o$ to the target of the $p$-coCartesian lift of $\rho^i$ at $C$ induces an equivalence $\o_{\langle n\rangle} \xrightarrow{\simeq} \mathcal{O}^{ n}$.
\end{enumerate}
\end{definition}
Some classical examples include
\begin{example}
\begin{enumerate}
\item The identity functor $\fin \rightarrow \fin$ is called the commutative $\infty$-operad and denoted by $\com$.
\item The inclusion of the wide subcategory of $\fin$ whose morphisms are only the inert morphisms is called the trivial $\infty$-operad $\triv$.
\item Let $\mathcal{O}$ be a topological (colored) operad. Form the topological category $\mathcal{O}^{\otimes} \rightarrow \fin$ of operators. Then the homotopy coherent nerve of the corresponding simplicial category $N_{\textnormal{hc}}(\mathcal{O}_{\Delta}^{\otimes}) \rightarrow \fin$ is an $\infty$-operad by \cite[Proposition 2.1.1.27]{HA}.
\item Consider the classical associative operad \text{Assoc} in the category of sets. This can be viewed as a (discrete) topological operad, and applying the procedure above hence yields an $\infty$-operad. This is called the associative $\infty$-operad, and is denoted by $\ass$. 
\item Define a colored operad $\text{LM}$ in the category of sets as follows. The set of colors has two elements denoted $\mathfrak{a}$ and $\mathfrak{m}$. Let $\{c_i\}_{i\in I}$ be a finite collection of colors in $\text{LM}$, and let $d$ be another color. Then
\begin{align*}
    \text{LM}(\{c_i\}_{i\in I}, d) = \begin{cases}
        \text{Assoc}(I) & d = \mathfrak{a} \text{ and } \forall \, i\in I: c_i = \mathfrak{a} \\
        \text{Linear orderings }\{i_1 < \dots < i_n\} \text{ of }I    & d = \mathfrak{m}\\
         \text{such that }c_{i_n} = \mathfrak{m} \text{ and } c_{i_j} = \mathfrak{a} \text{ for } j < n
    \end{cases}.
\end{align*}
The composition law on $\text{LM}$ is determined by the composition of linear orderings. The corresponding $\infty$-operad is denoted by $\lm$.
\end{enumerate}
\end{example}
A morphism of $\infty$-operads $\o \rightarrow \mathcal{O}'^{\otimes}$ is a morphism $f\in \text{Fun}_{\fin}(\o, \mathcal{O}'^{\otimes})$ which preserves inert morphisms\footnote{A morphism of an $\infty$-operad $p:\o \rightarrow \fin$ is called inert if it is $p$-coCartesian, and its image in $\fin$ is inert.}. The $\infty$-category of morphisms from $\o$ to $\mathcal{O}'^{\otimes}$ is denoted by $\alg_{\mathcal{O}}(\mathcal{O}')$, and objects of this are frequently called $\mathcal{O}$\textbf{-algebras in }$\mathcal{O}'$.
\begin{definition}
\begin{enumerate}
    \item A morphism $f: \o \rightarrow \mathcal{O}'^{\otimes}$ of $\infty$-operads is a \textbf{fibration of }$\infty$\textbf{-operads} if $f$ is a categorical fibration.
    \item Let $p:\o \rightarrow \fin$ be an $\infty$-operad. A morphism $f: \mathcal{C}^{\otimes} \rightarrow \o$ of $\infty$-categories is a \textbf{coCartesian fibration of }$\infty$\textbf{-operads} if $f$ is a coCartesian fibration of $\infty$-categories, and $p\circ f: \mathcal{C}^{\otimes} \rightarrow \fin$ exhibits $\mathcal{C}^{\otimes}$ as an $\infty$-operad. In this case, we also call $\mathcal{C}$ an \textbf{$\mathcal{O}$-monoidal $\infty$-category}.
\end{enumerate}
\end{definition}
For example, a symmetric monoidal $\infty$-category is a coCartesian fibration of $\infty$-operads $\mathcal{C}^{\otimes} \rightarrow \com$.\displaypar
From now on, we will freely use notation from \cite{HA} regarding $\infty$-operads, algebras and modules over these.
\section{Morphism objects and operadic centers}
We recall the definition of endomorphism objects, centers and centralizers, and we show how these notions are related. The definitions we use can be found in \cite[Section 4.2, 4.7 and 5.3]{HA}.\displaypar
Recall that $\lm$ is the 2-colored $\infty$-operad whose algebras are given by pairs of homotopy associative algebras and modules over them as defined in \cite[Definition 4.2.1.7]{HA}. Denote by $\mathfrak{a}$ and $\mathfrak{m}$ the ``algebra'' and ``module'' color respectively.
\begin{definition}
Let $q: \mathcal{C}^{\otimes} \rightarrow \lm$ be a coCartesian fibration of $\infty$-operads. We then say that $q$ exhibits the $\infty$-category $\mathcal{C}_{\mathfrak{m}}$ as \textbf{left tensored} over the monoidal $\infty$-category $\mathcal{C}_{\mathfrak{a}}^{\otimes} := \mathcal{C}^{\otimes} \times_{\lm} \ass$. 
\end{definition}
In particular, if $q$ exhibits $\mathcal{C}_{\mathfrak{m}}$ as left tensored over $\mathcal{C}^{\otimes}_{\mathfrak{a}}$, then it determines a tensoring
\begin{align*}
    \otimes: \mathcal{C}_{\mathfrak{a}} \times \mathcal{C}_{\mathfrak{m}} \rightarrow \mathcal{C}_{\mathfrak{m}},
\end{align*}
well-defined up to homotopy, that is compatible with the monoidal structure on $\mathcal{C}_{\mathfrak{a}}$ up to homotopy. This is obtained as a coCartesian lift of the map $\{\mathfrak{a},\mathfrak{m}\} \rightarrow \mathfrak{m}$ over the active map $\langle 2\rangle \rightarrow \langle 1 \rangle$ in $\lm$. Note that any monoidal $\infty$-category $\mathcal{C}^{\otimes}\rightarrow \ass$ is left tensored over itself by considering the fiber product $\mathcal{C}^{\otimes} \times_{\ass} \lm \rightarrow \lm$.
\begin{notation}
Let $q: \mathcal{C}^{\otimes} \rightarrow \lm$ exhibit $\mathcal{C}_{\mathfrak{m}}$ as left tensored over $\mathcal{C}_{\mathfrak{a}}^{\otimes}$. Then we denote by 
\begin{align*}
    \lmod(\mathcal{C}) := \alg_{/\lm}(\mathcal{C})
\end{align*}
the $\infty$-category of pairs of associative algebras in $\mathcal{C}_{\mathfrak{a}}$ and left modules over them.
\end{notation}
Recall that for ordinary categories, internal homs and more generally enrichments are right adjoint to a tensoring. Similarly, one makes the following definition.
\begin{definition}\label{def3}
    Let $\mathcal{C}^{\otimes}\rightarrow \lm$ be a coCartesian fibration of $\infty$-operads exhibiting $\mathcal{C}_{\mathfrak{m}}$ as left tensored over $\mathcal{C}^{\otimes}_{\mathfrak{a}}$. If $M,N\in \mathcal{C}_{\mathfrak{m}}$, a \textbf{morphism object} for $M$ and $N$ is an object $\mor_{\mathcal{C}_{\mathfrak{m}}}(M,N)\in \mathcal{C}_{\mathfrak{a}}$ together with a map $\alpha\in \map_{\mathcal{C}_{\mathfrak{m}}}(\mor_{\mathcal{C}_{\mathfrak{m}}}(M,N)\otimes M,N)$ such that for each $C\in \mathcal{C}_{\mathfrak{a}}$, post-composition with $\alpha$ induces a homotopy equivalence
    \begin{align}
        \map_{\mathcal{C}_{\mathfrak{a}}}(C, \mor_{\mathcal{C}_{\mathfrak{m}}}(M,N))\xrightarrow{\simeq} \map_{\mathcal{C}_{\mathfrak{m}}}(C\otimes M,N).
    \end{align}
    We call $\mathcal{C}_{\mathfrak{m}}$ \textbf{enriched} over $\mathcal{C}^{\otimes}_{\mathfrak{a}}$ if morphism objects exist for all $M,N \in \mathcal{C}_{\mathfrak{m}}$.
\end{definition}
The following result shows that we can think of a morphism object as the classifying object of maps $A\otimes M \rightarrow N$ in $\mathcal{C}_{\mathfrak{m}}$ with $A\in \mathcal{C}_{\mathfrak{a}}$. Denote by $\mathcal{C}_{\mathfrak{a}} \times_{\mathcal{C}_{\mathfrak{m}}} {\mathcal{C}_{\mathfrak{m}}}_{/N}$ the pullback of the forgetful functor ${\mathcal{C}_{\mathfrak{m}}}_{/N} \rightarrow \mathcal{C}_{\mathfrak{m}}$ and the map
\begin{align}\label{eq2}
    -\otimes M: \mathcal{C}_{\mathfrak{a}} \rightarrow \mathcal{C}_{\mathfrak{m}}.
\end{align}
\begin{proposition}\label{prop6}
    Let $q: \mathcal{C}^{\otimes} \rightarrow \lm$ be a coCartesian fibration of $\infty$-operads. Let $M,N\in\mathcal{C}_{\mathfrak{m}}$. Then an object $\mor_{\mathcal{C}_{\mathfrak{m}}}(M,N)\in \mathcal{C}_{\mathfrak{a}}$ together with a map $\alpha: \mor_{\mathcal{C}_{\mathfrak{m}}}(M,N)\otimes M \rightarrow N$ is a morphism object of $M$ and $N$ if and only if $(\mor_{\mathcal{C}_{\mathfrak{m}}}(M,N),\alpha)\in \mathcal{C}_{\mathfrak{a}}\times_{\mathcal{C}_{\mathfrak{m}}}{\mathcal{C}_{\mathfrak{m}}}_{/N}$ is final.
\end{proposition}
\begin{proof}
    Note that ${\mathcal{C}_{\mathfrak{m}}}_{/N}\rightarrow \mathcal{C}_{\mathfrak{m}}$ is a right fibration, and since these are stable under base change, so is $f:\mathcal{C}_{\mathfrak{a}}\times_{\mathcal{C}_{\mathfrak{m}}}{\mathcal{C}_{\mathfrak{m}}}_{/N}\rightarrow \mathcal{C}_{\mathfrak{a}}$. Consider the functor $F: \mathcal{C}_{\mathfrak{a}}^{\textnormal{op}}\rightarrow \an$ classifying $f$. Then by \cite[Lemma 2.2.2.4]{HTT}, its underlying functor $hF: h\mathcal{C}_{\mathfrak{a}}^{\textnormal{op}}\rightarrow h\an$ can be recovered as follows. On objects, $X\in \mathcal{C}_{\mathfrak{a}}$ is sent to its fiber
    \begin{align*}
(\mathcal{C}_{\mathfrak{a}}\times_{\mathcal{C}_{\mathfrak{m}}}{\mathcal{C}_{\mathfrak{m}}}_{/N})\times_{\mathcal{C}_{\mathfrak{a}}}\{X\} \simeq \{X\otimes M\}\times_{\mathcal{C}_{\mathfrak{m}}}{\mathcal{C}_{\mathfrak{m}}}_{/N} \simeq \map_{\mathcal{C}_{\mathfrak{m}}}(X\otimes M,N).
    \end{align*}
    Given a morphism $e: Y\rightarrow X$ in $h\mathcal{C}_{\mathfrak{a}}$, the induced map between the fibers comes from solving the lifting problem
    \begin{center}
        \begin{tikzcd}
{\{1\}\times \text{Map}_{\mathcal{C}_{\mathfrak{m}}}(X\otimes M,N)} \arrow[d, hook] \arrow[rr, hook]  &                           & \mathcal{C}_{\mathfrak{a}}\times_{\mathcal{C}_{\mathfrak{m}}}{\mathcal{C}_{\mathfrak{m}}}_{/N} \arrow[d,"f"] \\
{\Delta^1\times \text{Map}_{\mathcal{C}_{\mathfrak{m}}}(X\otimes M,N)} \arrow[rd] \arrow[rru, dashed] &                           & \mathcal{C}_{\mathfrak{a}}                                               \\
                                                                                      & \Delta^1 \arrow[ru, "e"'] &                                                                         
\end{tikzcd},
    \end{center}
    and restricting the lift to $\{0\}\times \map_{\mathcal{C}_{\mathfrak{m}}}(X\otimes M,N)$. Since $f$ is a pullback of the right fibration ${\mathcal{C}_{\mathfrak{m}}}_{/N}\rightarrow \mathcal{C}_{\mathfrak{m}}$, the lift above is induced by the solution to 
    \begin{center}
        \begin{tikzcd}
{\{1\}\times \map_{\mathcal{C}_{\mathfrak{m}}}(X\otimes M,N)} \arrow[r] \arrow[d]                                      & {\mathcal{C}_{\mathfrak{m}}}_{/N} \arrow[d] \\
{\Delta^1\times \map_{\mathcal{C}_{\mathfrak{m}}}(X\otimes M,N)} \arrow[r, "e\otimes \text{id}_M"'] \arrow[ru, dashed] & \mathcal{C}_{\mathfrak{m}}               
\end{tikzcd}.
    \end{center}
    But the restriction of this lift to $\{0\}\times \map_{\mathcal{C}_{\mathfrak{m}}}(X\otimes M,N)$ is given by pre-composition with $e\otimes \text{id}_M$. Therefore, we see that $hF$ is given by the composition of $-\otimes M$ and $\map_{\mathcal{C}_{\mathfrak{m}}}(-,N)$. Recall that by \cite[Proposition 4.4.4.5]{HTT}, an object $(X,X\otimes M\xrightarrow{\eta} N)$ is final in $\mathcal{C}_{\mathfrak{a}}\times_{\mathcal{C}_{\mathfrak{m}}}{\mathcal{C}_{\mathfrak{m}}}_{/N}$ if and only if the pair $(X,\eta\in hF(X))$ represents $hF$. Then we are done after noting that by definition, $(\mor_{\mathcal{C}_{\mathfrak{m}}}(M,N),\alpha)$ is a morphism object exactly if it represents the functor $X\mapsto \map_{\mathcal{C}_{\mathfrak{m}}}(X\otimes M,N)$.
\end{proof}
Now put $N = M$. Then by the above proposition, $\text{End}(M):= \mor_{\mathcal{C}_{\mathfrak{m}}}(M,M)\in \mathcal{C}_{\mathfrak{a}}$ classifies maps $A \otimes M \rightarrow M$. Here, the object $A\in \mathcal{C}_{\mathfrak{a}}$ need not carry the structure of an algebra object. However, we do expect $\text{End}(M)$ to carry the structure of an associative algebra coming from composition, and $M$ to be a module over it. 
\begin{definition}\label{def1}
    Let $q: \mathcal{C}^{\otimes}\rightarrow \lm$ be a coCartesian fibration of $\infty$-operads. Define the \textbf{endomorphism} $\infty$\textbf{-category} of $M\in \mathcal{C}_{\mathfrak{m}}$ as
    \begin{align*}
        \mathcal{C}_{\mathfrak{a}}[M] := \mathcal{C}_{\mathfrak{a}} \times_{\mathcal{C}_{\mathfrak{m}}} {\mathcal{C}_{\mathfrak{m}}}_{/M}
    \end{align*}
    where the pullback is given by the forgetful functor and equation (\ref{eq2}).
\end{definition}
\begin{proposition}\label{prop9}
The $\infty$-category $\mathcal{C}_{\mathfrak{a}}[M]$ is the underlying $\infty$-category of a monoidal $\infty$-category with tensor product given up to homotopy by 
\begin{align*}
    (A, A\otimes M \xrightarrow{\alpha} M) \otimes (B, B\otimes M \xrightarrow{\beta} M) = (A \otimes B, A\otimes B \otimes M \xrightarrow{\text{id}\otimes \beta} A \otimes M \xrightarrow{\alpha} M).
\end{align*}
\end{proposition}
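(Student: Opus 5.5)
The plan is to realize $\mathcal{C}_{\mathfrak{a}}[M]$ as the fiber of a monoidal structure obtained from Lurie's construction in \cite[Section 4.7.1]{HA}, rather than building the monoidal structure by hand. The key observation is that $M\in\mathcal{C}_{\mathfrak{m}}$ determines a functor $-\otimes M:\mathcal{C}_{\mathfrak{a}}\to\mathcal{C}_{\mathfrak{m}}$, and that the left tensoring makes this functor compatible with the monoidal structure on $\mathcal{C}_{\mathfrak{a}}$ in the sense that $(A\otimes B)\otimes M\simeq A\otimes(B\otimes M)$ coherently. Equivalently, the tensoring exhibits $\mathcal{C}_{\mathfrak{a}}^{\otimes}$ as acting on $\mathcal{C}_{\mathfrak{m}}$, and the monoidal $\infty$-category of endofunctor-like data over $M$ is what we want.

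Concretely, I would proceed in three steps. First, form the $\infty$-category $\mathcal{C}^{\otimes}_{\mathfrak{a}}[M]^{\otimes}$ exactly as in \cite[Definition 4.7.1.1 / Construction 4.7.1.2]{HA}: its objects over $\langle n\rangle\in\ass$ are tuples $(A_1,\dots,A_n)$ together with a map $A_1\otimes\cdots\otimes A_n\otimes M\to M$, encoded as a fiber product of $\mathcal{C}^{\otimes}\times_{\lm}\ass$ with the slice over $M$ of the ``module part'' of $\mathcal{C}^{\otimes}$. Lurie proves (\cite[Proposition 4.7.1.30]{HA} and the surrounding discussion) that the projection $\mathcal{C}_{\mathfrak{a}}[M]^{\otimes}\to\ass$ is a coCartesian fibration of $\infty$-operads, i.e.\ a monoidal $\infty$-category, whenever $\mathcal{C}^{\otimes}\to\lm$ is a coCartesian fibration. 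Second, identify the fiber over $\langle 1\rangle$ with Definition \ref{def1}: by construction the fiber consists of pairs $(A,\alpha: A\otimes M\to M)$, and the fiber product description matches $\mathcal{C}_{\mathfrak{a}}\times_{\mathcal{C}_{\mathfrak{m}}}{\mathcal{C}_{\mathfrak{m}}}_{/M}$ because the tensoring $-\otimes M$ in (\ref{eq2}) is precisely the coCartesian pushforward along the active map $\{\mathfrak{a},\mathfrak{m}\}\to\mathfrak{m}$.

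Third, compute the tensor product up to homotopy by unwinding coCartesian lifts of the active map $\langle 2\rangle\to\langle 1\rangle$ in $\ass$. A coCartesian edge from $((A,B),\gamma)$ over $\langle 2\rangle$ is given by pushing forward $(A,B)$ to $A\otimes B$ in $\mathcal{C}_{\mathfrak{a}}$, and the structure map is transported along the canonical equivalence $(A\otimes B)\otimes M\simeq A\otimes(B\otimes M)$ from the $\lm$-monoidal structure. Here one must check that the two-object datum $\gamma: A\otimes B\otimes M\to M$ arising from $(A,\alpha),(B,\beta)$ via the inert projections is $\alpha\circ(\mathrm{id}_A\otimes\beta)$; this comes from the fact that morphisms in the slice over $M$ over the $2$-ary operation in $\lm$ factor through the intermediate module $B\otimes M$, giving the formula in the statement.

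The main obstacle I expect is the second and third steps' bookkeeping: making precise that the fiber of Lurie's construction is equivalent (not merely homotopy equivalent on objects) to the pullback in Definition \ref{def1}, and that the composite $\alpha\circ(\mathrm{id}\otimes\beta)$ rather than some other ordering appears. I would handle this by using that ${\mathcal{C}_{\mathfrak{m}}}_{/M}\to\mathcal{C}_{\mathfrak{m}}$ is a right fibration (as in the proof of Proposition \ref{prop6}) so that equivalences of the base pulled back give equivalences of fibers, and by tracing the active morphism $(\mathfrak{a},\mathfrak{a},\mathfrak{m})\to\mathfrak{m}$ in $\lm$ through its factorization via $(\mathfrak{a},\mathfrak{m})$.
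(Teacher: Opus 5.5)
Your strategy matches the paper's. The paper proves the proposition by identifying $\mathcal{C}_{\mathfrak{a}}[M]$ with Lurie's endomorphism $\infty$-category \cite[Definition 4.7.1.1]{HA} (Appendix~\ref{appendixA}), and then cites \cite[Proposition 4.7.1.30]{HA} for both the monoidal structure and the tensor-product formula.

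What your Step 2 glosses over is that Lurie's construction is planar. Its input is a map $\mathscr{M}^{\circledast}\to\Delta^1\times N(\mathbb{\Delta})^{\mathrm{op}}$, not $q:\mathcal{C}^{\otimes}\to\lm$. The paper handles this as follows:
\begin{enumerate}
\item It forms $\mathscr{M}^{\circledast}=\mathcal{C}^{\otimes}\times_{\lm}(N(\mathbb{\Delta}^{\mathrm{op}})\times\Delta^1)$ along Lurie's map $\gamma$.
\item It checks by pasting pullbacks that this exhibits $\mathcal{C}_{\mathfrak{m}}$ as left tensored over $\mathcal{C}^{\otimes}_{\mathfrak{a}}\times_{\ass}N(\mathbb{\Delta}^{\mathrm{op}})$ in the planar sense.
\item It unpacks an enriched morphism $M\xleftarrow{\alpha}X\xrightarrow{\beta}N$. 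Here $X=(C,M')$, the coCartesian lift over the active map turns $\alpha$ into a map $C\otimes M'\to M$, and inertness of $\beta$ forces $M'\simeq N$.
\end{enumerate}
Setting $N=M$ then gives exactly the pullback of Definition~\ref{def1}. That is the concrete content behind your ``by construction''; your right-fibration remark does not supply it by itself.

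The genuine error is your description of the fibers in Step 1, on which Step 3 relies. Objects over $\langle n\rangle$ cannot be tuples $(A_1,\dots,A_n)$ with a single map $A_1\otimes\cdots\otimes A_n\otimes M\to M$. In any $\infty$-operad with underlying $\infty$-category $\mathcal{C}_{\mathfrak{a}}[M]$, the fiber over $\langle 2\rangle$ must be equivalent, via the inert maps, to $\mathcal{C}_{\mathfrak{a}}[M]\times\mathcal{C}_{\mathfrak{a}}[M]$, i.e. to pairs $((A,\alpha),(B,\beta))$. A single $\gamma: A\otimes B\otimes M\to M$ neither determines nor is determined by such a pair; for vector spaces with $M=\field$ it is a bilinear form versus a pair of linear forms. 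So the object you describe fails the Segal condition and is not a monoidal $\infty$-category.

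For the same reason, the check in Step 3 is misplaced. You propose to show that the datum obtained from $(A,\alpha),(B,\beta)$ ``via the inert projections'' is $\alpha\circ(\mathrm{id}_A\otimes\beta)$. But the inert maps simply return $(A,\alpha)$ and $(B,\beta)$. The composite $\alpha\circ(\mathrm{id}_A\otimes\beta)$ is instead the target of the coCartesian edge over the active map $\langle 2\rangle\to\langle 1\rangle$, which in Lurie's model is composition of enriched morphisms. Your factorization of $(\mathfrak{a},\mathfrak{a},\mathfrak{m})\to\mathfrak{m}$ through $(\mathfrak{a},\mathfrak{m})$ is the right ingredient for that computation. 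Since the paper takes the tensor-product formula directly from \cite[Proposition 4.7.1.30]{HA}, you can drop Step 3 or recast it in these terms.
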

\begin{proof}
We show in Appendix \ref{appendixA} that $\mathcal{C}_{\mathfrak{a}}[M]$ agrees with Lurie's endomorphism $\infty$-category as in \cite[Definition 4.7.1.1]{HA}. It is shown in \cite[Proposition 4.7.1.30]{HA} that this admits the structure of a monoidal $\infty$-category with the described underlying tensor product.
\end{proof}
Then by \cite[Corollary 3.2.2.5]{HA} with $K= \emptyset$ and $\mathcal{O}^{\otimes} = \ass$ we automatically get the following.
\begin{proposition}\label{cor6}
    Assume that the $\infty$-category $\mathcal{C}_{\mathfrak{a}}[M]$ has a final object $(\e(M),\alpha)$. Then the $\infty$-category $\alg_{\aass}(\mathcal{C}_{\mathfrak{a}}[M])$ also admits a final object, and if $X$ is final in $\alg_{\aass}(\mathcal{C}_{\mathfrak{a}}[M])$ then we have an equivalence $X(\mathfrak{a})\simeq (\e(M),\alpha)$.
\end{proposition}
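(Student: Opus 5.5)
The plan is to deduce this directly from \cite[Corollary 3.2.2.5]{HA}, applied to the monoidal $\infty$-category $\mathcal{C}_{\mathfrak{a}}[M]$ of Proposition \ref{prop9} with $\mathcal{O}^{\otimes} = \ass$ and $K = \emptyset$. That corollary states the following. Let $\mathcal{D}^{\otimes}\rightarrow \mathcal{O}^{\otimes}$ be a coCartesian fibration of $\infty$-operads, and suppose each fiber $\mathcal{D}_X$ admits $K$-indexed limits that are preserved by the functors induced by the operations. Then $\alg_{\mathcal{O}}(\mathcal{D})$ admits $K$-indexed limits, and the forgetful functors to the fibers preserve them. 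Since $K=\emptyset$, "$K$-indexed limits" just means final objects. So the first step is to verify the hypotheses in this special case.

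\textbf{Step 1: the hypotheses.} By Proposition \ref{prop9}, $\mathcal{C}_{\mathfrak{a}}[M]^{\otimes}\rightarrow \ass$ is a coCartesian fibration of $\infty$-operads. Since $\ass$ has a single color $\mathfrak{a}$, its only fiber is $\mathcal{C}_{\mathfrak{a}}[M]$, which has the final object $(\e(M),\alpha)$ by assumption. Preservation of empty limits by an operation $\ass(n)$ means the following: the $n$-fold tensor product functor $\mathcal{C}_{\mathfrak{a}}[M]^{n}\rightarrow \mathcal{C}_{\mathfrak{a}}[M]$ must carry the final object of the product (namely the tuple of final objects) to a final object.

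\textbf{Step 2: the main obstacle.} Verifying this preservation is where the real content lies. For $n=0$ the condition says the unit $(\mathbf{1}, \mathbf{1}\otimes M\simeq M)$ is final. For $n\geq 2$ it says $(\e(M)^{\otimes n}, \alpha\circ(\mathrm{id}\otimes\alpha)\circ\cdots)$ is final. Neither is true in general. So I would instead invoke the version of Lurie's result that does not require preservation: \cite[Corollary 3.2.2.5]{HA} as used in \cite[Section 4.7.1]{HA}, where the limit condition is imposed only on the fibers. Failing that, I would argue directly. A final object $Z$ of a monoidal $\infty$-category $\mathcal{D}$ carries an essentially unique algebra structure making it final in $\alg_{\aass}(\mathcal{D})$. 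Concretely, algebra maps $A\rightarrow Z$ form the space of sections of a fibration whose fibers are mapping spaces into $Z$ and into its tensor powers, suitably twisted by the structure maps. These sections factor through the pointwise-final data, so the space of such maps is contractible. This is exactly the argument of \cite[Corollary 3.2.2.5]{HA}: limits in $\alg_{\mathcal{O}}$ are computed as $p$-limits relative to $\mathcal{O}^{\otimes}$. The preservation condition is needed only so that the relevant $p$-limit diagrams exist, and for $K=\emptyset$ these are just $p$-final objects over each $\langle n\rangle$ that are compatible along inert maps.

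\textbf{Step 3: conclusion.} Once the final object $X$ of $\alg_{\aass}(\mathcal{C}_{\mathfrak{a}}[M])$ exists and the forgetful functor $X\mapsto X(\mathfrak{a})$ preserves final objects, $X(\mathfrak{a})$ is final in $\mathcal{C}_{\mathfrak{a}}[M]$. Final objects are unique up to contractible choice, so $X(\mathfrak{a})\simeq(\e(M),\alpha)$. Any other final algebra is equivalent to $X$, and so has the same underlying object.
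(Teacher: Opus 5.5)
Your final route, applying \cite[Corollary 3.2.2.5]{HA} with $K=\emptyset$ and $\mathcal{O}^{\otimes}=\ass$ to the monoidal structure on $\mathcal{C}_{\mathfrak{a}}[M]$ from Proposition \ref{prop9} and then reading off that $X(\mathfrak{a})$ is final in $\mathcal{C}_{\mathfrak{a}}[M]$, is exactly the paper's one-line proof. Your Step 1 misstates that corollary, though: for a coCartesian fibration of $\infty$-operads it only asks that the fibers admit $K$-indexed limits, with no preservation hypothesis (such a hypothesis appears only in the colimit results of \cite[Section 3.2.3]{HA}); the reason is that every operation has a single target and, by the Segal condition together with coCartesianness, maps into a tuple of final objects over a fixed map in $\ass$ form a contractible space, so the ``obstacle'' you raise in Step 2 does not exist and the hedging and fallback are unnecessary.
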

Roughly speaking, Proposition \ref{cor6} implies that $\e(M)$ can be ``promoted'' to an algebra object in an essentially unique way. By abuse of notation, from here on, we denote by $\e(M)$ a final object of $\alg_{\aass}(\mathcal{C}_{\mathfrak{a}}[M])$ with underlying object $\e(M)\in \mathcal{C}_{\mathfrak{a}}[M]$. \displaypar
By Proposition \ref{prop9}, the tensor product of $\e(M)$ with itself in $\mathcal{C}_{\mathfrak{a}}[M]$ is given by
\begin{align*}
    (\e(M) \otimes \e(M))\otimes M \simeq \e(M)\otimes (\e(M)\otimes M) \xrightarrow{\text{id}_{\e(M)}\otimes \alpha} \e(M) \otimes M \xrightarrow{\alpha} M,
\end{align*}
and hence by the proof of \cite[Proposition 3.2.2.1]{HA}, the algebra structure on $\e(M)$ has a multiplication $\circ$ making the following diagram commute in $\mathcal{C}_{\mathfrak{m}}$
\begin{center}
    \begin{tikzcd}
                                                     & \e(M) \otimes M \arrow[rd, "\alpha"] &   \\
(\e(M)\otimes \e(M)) \otimes M \arrow[rr, "\alpha  (\text{id}\otimes \alpha)"'] \arrow[ru, "\circ \otimes \text{id}"] &                            & M
\end{tikzcd}.
\end{center}
The above diagram implies that $M$ is automatically a module over the algebra $\e(M)$, and this action $\e(M) \otimes M \xrightarrow{\alpha} M$ is universal among algebra actions making $M$ into a module. In particular, the fiber over $M$ of the functor $\lmod(\mathcal{C}) \rightarrow \mathcal{C}_{\mathfrak{m}}$ is non-trivial. Moreover, we have
\begin{proposition}
There is an equivalence of $\infty$-categories 
\begin{align*}
    \alg_{\aass}(\mathcal{C}_{\mathfrak{a}}[M]) \xrightarrow{\simeq} \lmod(\mathcal{C})\times_{\mathcal{C}_{\mathfrak{m}}}\{M\}.
\end{align*}
\end{proposition}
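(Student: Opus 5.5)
This equivalence is essentially Lurie's \cite[Theorem 4.7.1.34]{HA}. Since Definition \ref{def1} differs from Lurie's, the plan is to transport his statement across the identification of Appendix \ref{appendixA}, and to make the comparison functor explicit enough that it visibly matches the picture sketched above.

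First, I construct the functor. An object of $\alg_{\aass}(\mathcal{C}_{\mathfrak{a}}[M])$ is an associative algebra $A$ together with a map $\alpha: A\otimes M\rightarrow M$. The monoidal structure of Proposition \ref{prop9} has tensor product $(\alpha,\beta)\mapsto \alpha(\text{id}\otimes\beta)$, so the algebra structure maps in $\mathcal{C}_{\mathfrak{a}}[M]$ say exactly that $\alpha$ is compatible with the multiplication and unit of $A$, with all higher coherences. Concretely, I use the model of $\mathcal{C}_{\mathfrak{a}}[M]^{\otimes}$ from \cite[Section 4.7.1]{HA}: its $\ass$-family of objects is built from $\mathcal{C}^{\otimes}$ by adjoining the module color $\mathfrak{m}$, which is forced to be $M$ at the final vertex. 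An $\ass$-algebra in it then unwinds to a map of $\infty$-operads $\lm\rightarrow \mathcal{C}^{\otimes}$ over $\lm$ whose $\mathfrak{m}$-component is $M$. This gives a functor
\begin{align*}
\Theta:\alg_{\aass}(\mathcal{C}_{\mathfrak{a}}[M])\rightarrow \lmod(\mathcal{C})\times_{\mathcal{C}_{\mathfrak{m}}}\{M\}.
\end{align*}

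Second, I prove $\Theta$ is an equivalence. I would do this by showing that the monoidal $\infty$-category $\mathcal{C}_{\mathfrak{a}}[M]^{\otimes}$ corepresents, over $\ass$, the functor sending an $\infty$-operad map $\ass\rightarrow \mathcal{C}_{\mathfrak{a}}^{\otimes}$ to the space of its extensions to $\lm$ with fixed module $M$. Lurie establishes this universal property in \cite[Proposition 4.7.1.30 and Theorem 4.7.1.34]{HA}, using the categorical pattern/fibration
\begin{align*}
\lmod(\mathcal{C})\rightarrow \alg_{\aass}(\mathcal{C}_{\mathfrak{a}})\times\mathcal{C}_{\mathfrak{m}}.
\end{align*}
Taking the fiber over $M$ and comparing with the fibration $\alg_{\aass}(\mathcal{C}_{\mathfrak{a}}[M])\rightarrow \alg_{\aass}(\mathcal{C}_{\mathfrak{a}})$, it suffices to check that $\Theta$ is an equivalence on fibers over each algebra $A$. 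On the fiber over $A$, both sides are modelled by the same simplicial set of extensions of the operad map $A$ along $\ass\subset\lm$ with module $M$. This fiberwise identification, plus the fact that both forgetful maps are categorical fibrations, gives the equivalence.

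The main obstacle is that the equivalence of Appendix \ref{appendixA} between $\mathcal{C}_{\mathfrak{a}}\times_{\mathcal{C}_{\mathfrak{m}}}{\mathcal{C}_{\mathfrak{m}}}_{/M}$ and Lurie's $\mathcal{C}_{\mathfrak{a}}[M]$ must be compatible with the monoidal structures, not just with the underlying $\infty$-categories. Otherwise $\alg_{\aass}$ of the two models need not agree. I would try to handle this by defining the monoidal structure on our $\mathcal{C}_{\mathfrak{a}}[M]$ by transport along that equivalence, as Proposition \ref{prop9} implicitly already does. Then the statement reduces verbatim to Lurie's theorem, and the only remaining check is that the tensor product formula of Proposition \ref{prop9} is respected, which holds by construction.
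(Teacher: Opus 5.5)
Your route is the paper's: identify $\mathcal{C}_{\mathfrak{a}}[M]$ with Lurie's endomorphism $\infty$-category via Appendix \ref{appendixA}, give it the monoidal structure transported from \cite[Proposition 4.7.1.30]{HA}, and invoke \cite[Theorem 4.7.1.34]{HA}. Transporting the structure is exactly how Proposition \ref{prop9} handles the compatibility issue you raise.

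The gap is your closing claim that the statement then ``reduces verbatim'' to Lurie's theorem. Lurie's $\mathcal{C}[M]^{\circledast}$ is a \emph{planar} $\infty$-operad over $N(\mathbb{\Delta})^{\text{op}}$. It is built from the planar left-tensoring $\mathscr{M}^{\circledast}=\mathcal{C}^{\otimes}\times_{\lm}(N(\mathbb{\Delta})^{\text{op}}\times\Delta^1)$ of Appendix \ref{appendixA}, and Theorem 4.7.1.34 concerns algebras over that planar $\infty$-operad and left modules in the planar sense. So what you transport is only an $\mathbb{A}_{\infty}$-monoidal structure. The proposition, however, is about $\alg_{\aass}(\mathcal{C}_{\mathfrak{a}}[M])$ and $\lmod(\mathcal{C})=\alg_{/\lm}(\mathcal{C})$. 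The paper closes this with two inputs you are missing:
\begin{itemize}
\item \cite[Theorem 4.7.1.34]{HA} together with \cite[Remark 4.7.1.35]{HA} gives $\alg_{\mathbb{A}_{\infty}}(\mathcal{C}_{\mathfrak{a}}[M])\simeq\lmod(\mathcal{C})\times_{\mathcal{C}_{\mathfrak{m}}}\{M\}$;
\item \cite[Proposition 4.1.3.19]{HA} identifies $\alg_{\aass}(\mathcal{C}_{\mathfrak{a}}[M])$ with $\alg_{\mathbb{A}_{\infty}}(\mathcal{C}_{\mathfrak{a}}[M])$.
\end{itemize}
Composing these gives the statement.

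Separately, drop the fiberwise argument in your second paragraph in favour of the citation, because as written it proves nothing. A map over a common base between two categorical fibrations that is an equivalence on each fiber need not be an equivalence; consider $\partial\Delta^1\hookrightarrow\Delta^1$ over $\Delta^1$. You would need either
\begin{itemize}
\item both projections to $\alg_{\aass}(\mathcal{C}_{\mathfrak{a}})$ to be right fibrations, or
\item both to be Cartesian fibrations with $\Theta$ preserving Cartesian edges,
\end{itemize}
and you check neither. Moreover, the asserted fiberwise identification (``both sides are modelled by the same simplicial set of extensions'') is precisely the content of Lurie's theorem, not something you can observe directly.
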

\begin{proof}
By Appendix \ref{appendixA}, $\mathcal{C}_{\mathfrak{a}}[M]$ agrees with Lurie's endomorphism $\infty$-category. Then using \cite[Theorem 4.7.1.34]{HA} and \cite[Remark 4.7.1.35]{HA}, we get an equivalence of $\infty$-categories 
\begin{align*}
    \alg_{\mathbb{A}_{\infty}}(\mathcal{C}_{\mathfrak{a}}[M]) \xrightarrow{\simeq} \lmod(\mathcal{C})\times_{\mathcal{C}_{\mathfrak{m}}} \{M\}.
\end{align*}
Using \cite[Proposition 4.1.3.19]{HA}, we also have an equivalence of $\infty$-categories $\alg_{\aass}(\mathcal{C}_{\mathfrak{a}}[M]) \xrightarrow{\simeq} \alg_{\mathbb{A}_{\infty}}(\mathcal{C}_{\mathfrak{a}}[M])$, and the statement follows by composing these.
\end{proof}
There are a variety of interesting situations in which such an (endo)morphism object fails to exist, in particular if we consider $\infty$-categories arising as categories of algebra objects. The archetypal example is the following.
\begin{example}\label{ex1}
    Let $\field$ be a field and consider the (symmetric) monoidal category $\mathcal{C} = \alg_{\field}$ as left tensored over itself. Let $M\in \mathcal{C}$ be a $\field$-algebra. Then for any endomorphism $\varphi \in \hom_{\mathcal{C}}(M,M)$, the pair $(\field,\field\otimes M \cong M \xrightarrow{\varphi}M)$ is an object of $\mathcal{C}[M]$. Therefore, if $(A,\alpha)\in \mathcal{C}[M]$ is a final object, then $\alpha\circ (u_A\otimes \text{id}_M) = \varphi$ for any endomorphism $\varphi: M\rightarrow M$, where $u_A$ is the unit of $A$. But this is only possible if $M$ only admits a single endomorphism. This difficulty was to be expected, since we know that the monoidal category of $\field$-algebras is not closed.
\end{example}
Lurie's solution to this problem is to relax the expectations on the morphism object. In the above discussion, we start out requiring that $\mor_{\mathcal{C}_{\mathfrak{m}}}(M,N)$ classify all morphisms $C\otimes M\rightarrow N$ in $\mathcal{C}_{\mathfrak{m}}$, and then in the case $N=M$ get for free that $\e(M)$ also classifies algebra actions of algebras on $M$. Instead, we now consider objects that only classify the algebra actions. 
\begin{definition}\label{def5}
    Let $\mathcal{C}^{\otimes}\rightarrow \lm$ be a coCartesian fibration of $\infty$-operads, and let $M\in \mathcal{C}_{\mathfrak{m}}$. A \textbf{center} $\mathfrak{Z}(M)$ of $M$ is a final object of $\lmod(\mathcal{C})\times_{\mathcal{C}_{\mathfrak{m}}}\{M\}$. We generally identify $\mathfrak{Z}(M)$ with its image in $\alg_{\aass}(\mathcal{C}_{\mathfrak{a}})$.
\end{definition}
Clearly if $M$ admits an endomorphism object, then this endomorphism object is also a center of $M$. The converse does not hold: The category $\alg_{\aass}(\mathcal{C}_{\mathfrak{a}}[M])$ might have final objects although $\mathcal{C}_{\mathfrak{a}}[M]$ does not.
\begin{example}[Example \ref{ex1} continued]
The ordinary center $Z(A)$ of an associative $\field$-algebra $A$ is indeed the universal algebra object acting on $A$. To see this, note first that the center is a commutative algebra, and therefore an algebra object in the category of associative algebras. It comes with a natural action on $A$ given by multiplication in $A$. Now suppose that $B$ is a commutative algebra with action $\beta: B\otimes A \rightarrow A$ making $A$ into a $B$-module. Then the restriction of $\beta$ to $A$ yields the identity on $A$ and $\beta$ must be an algebra morphism. Hence 
\begin{align*}
    \beta(b\otimes a) &= \beta(b\otimes 1)\cdot \beta(1\otimes a) = \beta(b\otimes 1) \cdot a \quad \text{and} \\
    \beta(b\otimes a) &= \beta(1\otimes a) \cdot \beta(b\otimes 1) = a\cdot \beta(b\otimes 1),
\end{align*}
showing that $\beta$ sends $B$ to $Z(A)$.
\end{example}
There also is a relative version of the center. 
\begin{definition}\label{def6}
Let $\mathcal{C}^{\otimes} \rightarrow \lm$ be a coCartesian fibration of $\infty$-operads, let $\mathbb{1}$ denote the monoidal unit of $\mathcal{C}_{\mathfrak{a}}^{\otimes}$, and let $f: M \rightarrow N$ be a morphism in $\mathcal{C}_{\mathfrak{m}}$. A \textbf{centralizer} $\mathfrak{Z}(f)$ of $f$ is a final object in
\begin{align*}
    \textnormal{Act}(f) := (\mathcal{C}_{\mathfrak{a}})_{\mathbb{1}/} \times_{{\mathcal{C}_{\mathfrak{m}}}_{M/}} ({\mathcal{C}_{\mathfrak{m}}}_{M/})_{/f}.
\end{align*}
We generally identify $\mathfrak{Z}(f)$ with its image in $\mathcal{C}_{\mathfrak{a}}$.
\end{definition}
The objects of this $\infty$-category are given by commuting triangles in $\mathcal{C}_{\mathfrak{m}}$
\begin{center}
    \begin{tikzcd}
                                          & C\otimes M \arrow[rd] &   \\
\mathbb{1}\otimes M \arrow[rr,"f"'] \arrow[ru] &                       & N
\end{tikzcd}.
\end{center}
In particular, the centralizer is equipped with an action $\mathfrak{Z}(f) \otimes M \rightarrow N$ making the above diagram commute. 
\begin{lemma}\label{lem5}
Let $\mathcal{C}^{\otimes} \rightarrow \lm$ be a coCartesian fibration of $\infty$-operads. Let $f: M\rightarrow N$ be a morphism in $\mathcal{C}_{\mathfrak{m}}$. Let $\overline{M}\in \lmod_{\mathbb{1}}(\mathcal{C}_{\mathfrak{m}})$ be a lift of $M$ as module over the trivial algebra. Let $\mathcal{C}^{\otimes}_{\overline{M}_{{\alm}/}}\rightarrow \lm$ be defined as in \cite[Notation 2.2.2.3]{HA}. Then centralizers of $f$ can be identified with morphism objects
\begin{align*}
    \mor_{{\mathcal{C}_{\mathfrak{m}}}_{M/}}(\textnormal{id}_M,f)\in (\mathcal{C}_{\mathfrak{a}})_{\mathbb{1}/}.
\end{align*}
\end{lemma}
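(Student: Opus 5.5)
The plan is to reduce the statement to Proposition \ref{prop6}, applied to the undercategory tensoring. First, use \cite[Notation 2.2.2.3]{HA} and the lift $\overline{M}$: by \cite[Proposition 2.2.2.4]{HA} (or its left-tensored variant), $\mathcal{C}^{\otimes}_{\overline{M}_{\alm}/}\rightarrow \lm$ is again a coCartesian fibration of $\infty$-operads. Its algebra fiber is $(\mathcal{C}_{\mathfrak{a}})_{\mathbb{1}/}$ with the induced monoidal structure, and its module fiber is ${\mathcal{C}_{\mathfrak{m}}}_{M/}$. I will then identify the induced tensoring concretely. An object $(\mathbb{1}\xrightarrow{u} C)$ acts on $(M\xrightarrow{g} P)$ by
\begin{align*}
  \bigl(M\simeq \mathbb{1}\otimes M \xrightarrow{u\otimes g} C\otimes P\bigr).
\end{align*}
This uses that $\overline{M}$ exhibits $M$ as a $\mathbb{1}$-module with unit equivalence $\mathbb{1}\otimes M\simeq M$. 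In particular, $(\mathbb{1}\to C)\otimes \textnormal{id}_M$ is the map $\mathbb{1}\otimes M\to C\otimes M$ induced by the unit.

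Next, apply Proposition \ref{prop6} to this fibration, with module objects $\textnormal{id}_M$ and $f$. A morphism object $\mor_{{\mathcal{C}_{\mathfrak{m}}}_{M/}}(\textnormal{id}_M,f)$ is then exactly a final object of
\begin{align*}
  (\mathcal{C}_{\mathfrak{a}})_{\mathbb{1}/}\times_{{\mathcal{C}_{\mathfrak{m}}}_{M/}} \bigl({\mathcal{C}_{\mathfrak{m}}}_{M/}\bigr)_{/f},
\end{align*}
where the pullback is taken along the functor $-\otimes \textnormal{id}_M$. It remains to check that this pullback is $\textnormal{Act}(f)$ from Definition \ref{def6}, with the same structure functor. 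An object consists of $u:\mathbb{1}\to C$ together with a morphism in ${\mathcal{C}_{\mathfrak{m}}}_{M/}$ from $(M\to C\otimes M)$ to $f$. That is precisely a commuting triangle $\mathbb{1}\otimes M\to C\otimes M\to N$ filling $f$, as described after Definition \ref{def6}. Finality is invariant under equivalence of $\infty$-categories, so centralizers and morphism objects coincide.

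The main obstacle is the first step. We must make precise that the functor $-\otimes \textnormal{id}_M:(\mathcal{C}_{\mathfrak{a}})_{\mathbb{1}/}\to{\mathcal{C}_{\mathfrak{m}}}_{M/}$ produced by the operadic undercategory construction agrees with the naive functor $C\mapsto(M\to C\otimes M)$ used in defining $\textnormal{Act}(f)$. Here I would unwind Lurie's construction: the tensor product on $\mathcal{C}^{\otimes}_{\overline{M}_{\alm}/}$ is computed by coCartesian pushforward along the active map in $\lm$, applied to the diagram with cone point $\overline{M}$. Evaluating this on $(\mathbb{1}\to C,\ \textnormal{id}_M)$ gives $\mathbb{1}\otimes M\to C\otimes M$, precomposed with the structure equivalence from $\overline{M}$. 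This yields an equivalence of functors, which suffices because the pullbacks in question are along right fibrations.
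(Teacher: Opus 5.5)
Your proposal is correct and follows the paper's proof: pass to the operadic undercategory $\mathcal{C}^{\otimes}_{\overline{M}_{\alm}/}$, which is a coCartesian fibration by \cite[Theorem 2.2.2.4]{HA} because $\overline{M}$ is a module over the trivial algebra; then apply Proposition \ref{prop6} to $\textnormal{id}_M$ and $f$, and identify the resulting pullback with $\textnormal{Act}(f)$. Your unwinding of $-\otimes\textnormal{id}_M$ as $(\mathbb{1}\to C)\mapsto (M\simeq\mathbb{1}\otimes M\to C\otimes M)$, and your remark that the pullbacks are taken along right fibrations, make explicit what the paper dismisses as ``clear''.
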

\begin{proof}
Let $\mathcal{C}'^{\otimes} := \mathcal{C}^{\otimes}_{\overline{M}_{\lm}}$. By Proposition \ref{prop6}, it suffices to show that we have an equivalence of $\infty$-categories $\text{Act}(f)\simeq (\mathcal{C}')_{\mathfrak{a}} \times_{\mathcal{C}_{\mathfrak{m}}'} {\mathcal{C}_{\mathfrak{m}}'}_{/f}$. But we have $(\mathcal{C}')_{\mathfrak{a}} \times_{\mathcal{C}_{\mathfrak{m}}'} {\mathcal{C}_{\mathfrak{m}}'}_{/f} \simeq (\mathcal{C}_{\mathfrak{a}})_{\mathbb{1}/} \times_{{\mathcal{C}_{\mathfrak{m}}}_{M/}} ({\mathcal{C}_{\mathfrak{m}}}_{M/})_{/f}$, so this is clear.
\end{proof}
We would like to see that these notions are compatible, in the sense that the centralizer of an identity morphism recovers the center. 
\begin{proposition}[Proposition 5.3.1.8 \cite{HA}]\label{prop7}
Let $M \in \mathcal{C}_{\mathfrak{m}}$, and suppose there exists a centralizer $\mathfrak{Z}(\textnormal{id}_M)\in \mathcal{C}_{\mathfrak{a}}$. Then there exists a center $\mathfrak{Z}(M)\in \alg_{\aass}(\mathcal{C}_{\mathfrak{a}})$. Further, a lift of $M$ to a module over an algebra $A\in \alg_{\aass}(\mathcal{C}_{\mathfrak{a}})$ exhibits $A$ as a center of $M$ if and only if the action map $A\otimes M \rightarrow M$ exhibits $A$ as a centralizer of $\textnormal{id}_M$.
\end{proposition}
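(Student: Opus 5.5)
The plan is to pass through the endomorphism $\infty$-category of $\textnormal{id}_M$ in the pointed setting. By Lemma \ref{lem5}, a centralizer $\mathfrak{Z}(\textnormal{id}_M)$ is the same as a morphism object $\mor_{{\mathcal{C}_{\mathfrak{m}}}_{M/}}(\textnormal{id}_M,\textnormal{id}_M)$ in the $\infty$-operad $\mathcal{C}'^{\otimes} := \mathcal{C}^{\otimes}_{\overline{M}_{\alm/}}$. Its algebra part is $(\mathcal{C}_{\mathfrak{a}})_{\mathbb{1}/}$ and its module part is ${\mathcal{C}_{\mathfrak{m}}}_{M/}$. By Proposition \ref{prop6}, this morphism object is a final object of the endomorphism $\infty$-category $\mathcal{C}'_{\mathfrak{a}}[\textnormal{id}_M]$. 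Proposition \ref{cor6} then shows that $\alg_{\aass}(\mathcal{C}'_{\mathfrak{a}}[\textnormal{id}_M])$ has a final object, and that its underlying object is the centralizer together with its action map.

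Next I apply the equivalence $\alg_{\aass}(\mathcal{C}'_{\mathfrak{a}}[\textnormal{id}_M]) \simeq \lmod(\mathcal{C}')\times_{\mathcal{C}'_{\mathfrak{m}}}\{\textnormal{id}_M\}$ proved above. This turns the final object into a final object of $\lmod(\mathcal{C}')\times_{\mathcal{C}'_{\mathfrak{m}}}\{\textnormal{id}_M\}$. It remains to compare this $\infty$-category with $\lmod(\mathcal{C})\times_{\mathcal{C}_{\mathfrak{m}}}\{M\}$.

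An algebra in $(\mathcal{C}_{\mathfrak{a}})_{\mathbb{1}/}$ is an algebra $A$ in $\mathcal{C}_{\mathfrak{a}}$ together with a map $\mathbb{1}\to A$ of algebras, and $\mathbb{1}$ is initial in $\alg_{\aass}(\mathcal{C}_{\mathfrak{a}})$. Likewise, a module structure on $\textnormal{id}_M$ in the undercategory amounts to an $A$-module structure on $M$ compatible with the trivial $\mathbb{1}$-module structure $\overline{M}$, and that compatibility is contractible data. So I expect the forgetful functor
\begin{align*}
\lmod(\mathcal{C}')\times_{\mathcal{C}'_{\mathfrak{m}}}\{\textnormal{id}_M\} \rightarrow \lmod(\mathcal{C})\times_{\mathcal{C}_{\mathfrak{m}}}\{M\}
\end{align*}
to be an equivalence, and final objects then correspond. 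Making this rigorous is the main obstacle. I would use Lurie's description of algebras over $\mathcal{C}^{\otimes}_{\overline{M}_{\alm/}}$ as algebras under $\overline{M}$, from \cite[Notation 2.2.2.3]{HA} and \cite[Theorem 2.2.2.4]{HA}. These give
\begin{align*}
\lmod(\mathcal{C}') \simeq \lmod(\mathcal{C})_{\overline{M}/}.
\end{align*}
Taking the fiber over $\textnormal{id}_M$ then leaves the $\infty$-category of pairs $(A,N)$ with a map from $(\mathbb{1},\overline{M})$ that is the identity on the module. Since $\mathbb{1}$ is initial among algebras and the trivial module $\overline{M}$ is essentially unique, this data is a contractible choice over $\lmod(\mathcal{C})\times_{\mathcal{C}_{\mathfrak{m}}}\{M\}$. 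That gives existence of a center $\mathfrak{Z}(M)$.

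For the "if and only if" statement, I track a given lift of $M$ to an $A$-module through the chain of equivalences above. It is final in $\lmod(\mathcal{C})\times_{\mathcal{C}_{\mathfrak{m}}}\{M\}$ exactly when its image in $\alg_{\aass}(\mathcal{C}'_{\mathfrak{a}}[\textnormal{id}_M])$ is final. By Proposition \ref{cor6}, together with the fact that any final object there has the final underlying object, this holds exactly when $(A, A\otimes M\to M)$ is final in $\mathcal{C}'_{\mathfrak{a}}[\textnormal{id}_M]\simeq \textnormal{Act}(\textnormal{id}_M)$, which is precisely the condition that the action map exhibits $A$ as a centralizer of $\textnormal{id}_M$. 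For the converse direction I would use that final objects are unique up to contractible choice, so finality of the underlying object forces finality of the algebra lift.
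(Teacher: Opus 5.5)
Your proposal is correct and follows essentially the paper's route: Lemma \ref{lem5} and Proposition \ref{cor6}, together with $\alg_{\aass}(\mathcal{C}'_{\mathfrak{a}}[\textnormal{id}_M])\simeq \lmod(\mathcal{C}')\times_{\mathcal{C}'_{\mathfrak{m}}}\{\textnormal{id}_M\}$, give a center of $\textnormal{id}_M$ in the sliced setting, which is then transported along the forgetful functor to $\lmod(\mathcal{C})\times_{\mathcal{C}_{\mathfrak{m}}}\{M\}$; the only difference is that you prove this forgetful functor is an equivalence directly (slicing via \cite[Theorem 2.2.2.4]{HA}, initiality of $\mathbb{1}$, and $\lmod_{\mathbb{1}}(\mathcal{C}_{\mathfrak{m}})\simeq\mathcal{C}_{\mathfrak{m}}$), whereas the paper cites \cite[Lemma 5.3.1.10]{HA} to get preservation of final objects. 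One small repair is needed in the converse direction of the ``if and only if'': uniqueness of final objects alone is not enough---use that the forgetful functor $\alg_{\aass}(\mathcal{C}'_{\mathfrak{a}}[\textnormal{id}_M])\rightarrow\mathcal{C}'_{\mathfrak{a}}[\textnormal{id}_M]$ is conservative, so that the canonical map from your algebra to the final algebra, which is an equivalence on underlying (final) objects, is itself an equivalence.
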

\begin{proof}[Proof]
By Lemma \ref{lem5}, the centralizer of the identity is a morphism object $\mor_{{\mathcal{C}_{\mathfrak{m}}}_{M/}}(\text{id}_M,\text{id}_M)$. By Proposition \ref{cor6}, this morphism object admits an essentially unique structure of an algebra object in $(\mathcal{C}_{\mathfrak{a}})_{\mathbb{1}/}$, and $\text{id}_M$ lifts to a module over this algebra structure. In particular, $\mathfrak{Z}(\text{id}_M)$ admits a canonical algebra structure making it into the center of $\text{id}_M$ in ${\mathcal{C}_{\mathfrak{m}}}_{M/}$. Now use \cite[Lemma 5.3.1.10]{HA} to see that the forgetful functor $\lmod({\mathcal{C}_{\mathfrak{m}}}_{M/})\times_{{\mathcal{C}_{\mathfrak{m}}}_{M/}}\{\text{id}_M\} \rightarrow \lmod(\mathcal{C})\times_{\mathcal{C}_{\mathfrak{m}}}\{M\}$ preserves final objects.
\end{proof}
\section{Tensor product of $\infty$-operads}
The Boardman-Vogt tensor product on ordinary operads is designed such that algebras over the tensor product $\mathcal{P}\boxtimes_{\text{BV}} \mathcal{O}$ are given by $\mathcal{P}$-algebras in the category of $\mathcal{O}$-algebras. However, it is well-known that this tensor product does not make the category of (reduced) operads into a monoidal model category. We briefly review the corresponding construction for $\infty$-operads following \cite[Section 2.5.5]{HA}. \displaypar
We want to capture bilinearity of a map between $\infty$-operads. To this end, define a functor $\wedge: \fin \times \fin \rightarrow \fin$ by sending $(\langle m\rangle,\langle n\rangle)$ to the pointed set $(\langle m\rangle^{\circ}\times \langle n\rangle^{\circ})_{+}\cong \langle mn\rangle$, where the isomorphism is given by the lexicographic ordering, and by sending $(f: \langle m\rangle \rightarrow \langle n\rangle,g: \langle m'\rangle \rightarrow \langle n'\rangle)$ to 
\begin{align*}
    \langle mm'\rangle \xrightarrow{\cong} (\langle m\rangle^{\circ}\times \langle m'\rangle^{\circ})_+ \xrightarrow{f\times g} (\langle n\rangle^{\circ}\times \langle n'\rangle^{\circ})_+ \xrightarrow{\cong} \langle nn'\rangle.
\end{align*}
\begin{definition}
We call a map of simplicial sets $F: \mathcal{O}^{\otimes} \times \mathcal{O}'^{\otimes} \rightarrow \mathcal{O}''^{\otimes}$ a \textbf{bifunctor of $\infty$-operads} if the diagram below commutes, and if $F$ sends pairs of inert maps to an inert map in $\mathcal{O}''^{\otimes}$.
\begin{center}
\begin{tikzcd}
\mathcal{O}^{\otimes}\times \mathcal{O}'^{\otimes} \arrow[d] \arrow[r, "F"] & \mathcal{O}''^{\otimes} \arrow[d] \\
\fin \times \fin \arrow[r, "\wedge"]                                        & \fin                             
\end{tikzcd}
\end{center}
Define $\text{Bil}(\mathcal{O}^{\otimes},\mathcal{O}'^{\otimes};\mathcal{O}''^{\otimes})$ to be the full subcategory of $\text{Fun}_{\fin}(\mathcal{O}^{\otimes}\times \mathcal{O}'^{\otimes},\mathcal{O}''^{\otimes})$ spanned by the bifunctors.
\end{definition}
\begin{proposition}[Proposition 3.2.4.3, \cite{HA}]\label{prop10}
    Let $\mathcal{C}^{\otimes}$ be a symmetric monoidal $\infty$-category and $\mathcal{O}^{\otimes}$ an $\infty$-operad. Then the functor 
    \begin{align*}
        \mathbf{sSet}_{/\fin} \rightarrow \mathbf{Set}
    \end{align*}
    sending $K \rightarrow \fin$ to the set of diagrams
    \begin{center}
 \begin{tikzcd}
K \times \mathcal{O}^{\otimes} \arrow[d] \arrow[r, "F"] & \mathcal{C}^{\otimes} \arrow[d] \\
\fin\times \fin \arrow[r, "\wedge"]                      & \fin                           
\end{tikzcd}
\end{center}
such that for $v\in K$ a vertex and $f$ an inert morphism in $\mathcal{O}^{\otimes}$, the map $F(s_0(v),f)$ is inert in $\mathcal{C}^{\otimes}$, is representable. The representing object, which we denote by $\alg_{\mathcal{O}}(\mathcal{C})^{\otimes}\rightarrow \fin$, is a coCartesian fibration of $\infty$-operads. 
\end{proposition}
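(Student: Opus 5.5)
This is Lurie's result, so the plan is to reconstruct the argument of \cite[Proposition 3.2.4.3]{HA} in two stages. First I construct the representing object, which is purely a statement about simplicial sets. Then I check the operadic and coCartesian properties, which is where the symmetric monoidal hypothesis on $\mathcal{C}^{\otimes}$ enters.

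\textbf{Representability.} Define $\alg_{\mathcal{O}}(\mathcal{C})^{\otimes}\rightarrow\fin$ by declaring its $n$-simplices over a given $\sigma:\Delta^n\rightarrow\fin$ to be the maps $F:\Delta^n\times\mathcal{O}^{\otimes}\rightarrow\mathcal{C}^{\otimes}$ that make the square over $\wedge$ commute and send $(s_0(v),f)$ to an inert morphism for every vertex $v$ and every inert $f$. The functor in question is a limit-preserving functor $\mathbf{sSet}_{/\fin}^{\textnormal{op}}\rightarrow\mathbf{Set}$. Both conditions (commutation over $\wedge$, and inertness tested on degenerate edges) are checked vertexwise and edgewise, so they are compatible with colimits in $K$. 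Every object of $\mathbf{sSet}_{/\fin}$ is a colimit of simplices over $\fin$, so the functor is determined by its values on simplices and is represented by the simplicial set just described. This step is formal.

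\textbf{$\infty$-operad axioms.} The fiber over $\langle n\rangle$ consists of functors $\{\langle n\rangle\}\times\mathcal{O}^{\otimes}\rightarrow\mathcal{C}^{\otimes}$ lying over $\langle n\rangle\wedge-$ and preserving inerts. Since $\mathcal{C}^{\otimes}_{\langle nm\rangle}\simeq\mathcal{C}^{nm}$ via the inert projections, the map $(i,j)\mapsto\rho^{(i,j)}$ splits such a functor into $n$ $\mathcal{O}$-algebras. This gives $\alg_{\mathcal{O}}(\mathcal{C})^{\otimes}_{\langle n\rangle}\simeq\alg_{\mathcal{O}}(\mathcal{C})^n$, which is axiom (3). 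For axiom (1), lift an inert map $f:\langle m\rangle\rightarrow\langle n\rangle$ at an object $F$ by postcomposing $F$ with the coCartesian pushforward along $f\wedge\textnormal{id}$. Here $f\wedge\textnormal{id}_{\langle k\rangle}$ is again inert, so the resulting map is componentwise inert, and the lift is coCartesian. Axiom (2) then follows from the corresponding Segal condition for $\mathcal{C}^{\otimes}$ applied to mapping spaces of natural transformations.

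\textbf{coCartesian fibration.} This is the main obstacle. For an active map $\langle m\rangle\rightarrow\langle n\rangle$, one must produce a pointwise tensor product of algebras $F\mapsto\bigotimes F_i$. The naive candidate is to postcompose with the coCartesian pushforward along $\alpha\wedge\textnormal{id}$. This yields a functor out of $\mathcal{O}^{\otimes}$, but checking that it is again a bifunctor means verifying that inert maps in $\mathcal{O}^{\otimes}$ are still sent to inert maps after pushforward. That holds because, in a coCartesian fibration of $\infty$-operads, active and inert maps in $\fin$ commute up to a factorization that is compatible with coCartesian lifts.

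To carry this out rigorously, I would follow Lurie and apply \cite[Proposition 3.1.2.1]{HA}, or the relative-nerve and coCartesian lifting criterion \cite[Proposition 3.1.2.1, Corollary 3.2.2.5]{HA}. This requires showing the lift is coCartesian in $\textnormal{Fun}(\mathcal{O}^{\otimes},\mathcal{C}^{\otimes})$ and not merely pointwise, which I would do by using that pointwise coCartesian edges in a functor category over a coCartesian fibration are coCartesian \cite[Proposition 3.1.2.1]{HTT}. Combining the three steps yields the statement.
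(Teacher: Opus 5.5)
The paper gives no proof of its own here; it simply imports \cite[Proposition 3.2.4.3]{HA}. Your outline follows the same route as that source and is correct: formal representability, then realizing $\alg_{\mathcal{O}}(\mathcal{C})^{\otimes}$ as a full subcategory of $\mathrm{Fun}(\mathcal{O}^{\otimes},\mathcal{C}^{\otimes})\times_{\mathrm{Fun}(\mathcal{O}^{\otimes},\fin)}\fin$ with pointwise coCartesian pushforwards, checking that inert-preservation survives, and finally the Segal condition. The one step you justify only loosely is that pushing $F$ forward along $\alpha\wedge\mathrm{id}$ keeps inert maps inert. The clean argument: for inert $g\colon x\to y$, the composite $F(x)\to F(y)\to F'(y)$ is coCartesian and agrees with $F(x)\to F'(x)\to F'(y)$, whose first map is coCartesian, so $F'(g)$ is coCartesian by (the dual of) \cite[Proposition 2.4.1.7]{HTT}. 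This works for every $\alpha$, so it also covers the inert lifts in your axiom (1).
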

The fiber $\alg_{\mathcal{O}}(\mathcal{C})^{\otimes}_{\langle 1\rangle}$ over $\langle 1\rangle\in \fin$ is given by the full subcategory of $\text{Fun}_{\fin}(\mathcal{O}^{\otimes},\mathcal{C}^{\otimes})$ of maps that preserve inert morphisms, and hence can be identified with the $\infty$-category $\text{Alg}_{\mathcal{O}}(\mathcal{C})$. Hence, this proposition equips the $\infty$-category $\alg_{\mathcal{O}}(\mathcal{C})$ with the structure of a symmetric monoidal $\infty$-category. For $X\in \mathcal{O}$, the evaluation map $\alg_{\mathcal{O}}(\mathcal{C})^{\otimes} \rightarrow \mathcal{C}^{\otimes}$ is a morphism of $\infty$-operads, hence a lax symmetric monoidal functor, and we see that the symmetric monoidal structure on $\alg_{\mathcal{O}}(\mathcal{C})$ is given by the pointwise tensor product in $\mathcal{C}^{\otimes}$.
\begin{corollary}\label{cor11}
There is an equivalence of $\infty$-categories 
\begin{align*}
     \textnormal{Bil}(\mathcal{O}^{\otimes},\mathcal{O}'^{\otimes};\mathcal{O}''^{\otimes}) \simeq \alg_{\mathcal{O}}(\alg_{\mathcal{O}'}(\mathcal{C})),
\end{align*}
where the category $\alg_{\mathcal{O}'}(\mathcal{C})$ is viewed as a symmetric monoidal $\infty$-category via Proposition \ref{prop10}.
\end{corollary}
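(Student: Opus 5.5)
The plan is to read the statement with $\mathcal{O}''^{\otimes} = \mathcal{C}^{\otimes}$ a symmetric monoidal $\infty$-category, as the reference to $\alg_{\mathcal{O}'}(\mathcal{C})$ indicates. The corollary should then follow from the representability statement of Proposition \ref{prop10}, applied with the roles of the operads arranged so that $\alg_{\mathcal{O}'}(\mathcal{C})^{\otimes}$ is the representing object, upgraded from sets to simplicial sets. Concretely, I would apply Proposition \ref{prop10} with $\mathcal{O}'^{\otimes}$ in place of $\mathcal{O}^{\otimes}$, and with $K \rightarrow \fin$ ranging over the simplicial sets $\mathcal{O}^{\otimes}\times\Delta^n \rightarrow \fin$ (projection to $\mathcal{O}^{\otimes}$, then $p$).

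First, I upgrade the bijection to an isomorphism of simplicial sets. For each $n$, representability identifies $n$-simplices of $\text{Fun}_{\fin}(\mathcal{O}^{\otimes},\alg_{\mathcal{O}'}(\mathcal{C})^{\otimes})$ with maps $\mathcal{O}^{\otimes}\times\Delta^n\times\mathcal{O}'^{\otimes}\rightarrow\mathcal{C}^{\otimes}$ lying over $\wedge$ and satisfying the inertness condition on pairs $(s_0(v),g)$ with $g$ inert in $\mathcal{O}'^{\otimes}$. These are exactly $n$-simplices of the full subcategory of $\text{Fun}_{\fin}(\mathcal{O}^{\otimes}\times\mathcal{O}'^{\otimes},\mathcal{C}^{\otimes})$ satisfying that condition. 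Naturality in $[n]$ is automatic from naturality in $K$, so this gives an isomorphism of simplicial sets. Since the inertness conditions are conditions on vertices, both sides are full subcategories, and it remains to match them.

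Second, I identify the two conditions. A functor $F:\mathcal{O}^{\otimes}\rightarrow\alg_{\mathcal{O}'}(\mathcal{C})^{\otimes}$ is an algebra, i.e.\ an object of $\alg_{\mathcal{O}}(\alg_{\mathcal{O}'}(\mathcal{C}))$, iff it sends inert $f$ to inert morphisms. I claim that a morphism of $\alg_{\mathcal{O}'}(\mathcal{C})^{\otimes}$ over an inert map of $\fin$ is inert iff, for every object $X\in\mathcal{O}'^{\otimes}$, its image under evaluation at $X$ is inert in $\mathcal{C}^{\otimes}$. This uses that coCartesian lifts in the representing object are computed pointwise, which is the content of the pointwise description following Proposition \ref{prop10}. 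Hence $F$ preserves inerts iff the adjoint bifunctor $\tilde F$ sends pairs $(f,\text{id}_X)$ with $f$ inert to inert morphisms.

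Combining this with the built-in condition on pairs $(\text{id}_v,g)$, I factor an arbitrary pair of inerts as $(f,g)=(f,\text{id})\circ(\text{id},g)$ in the product. Since $\wedge$ of two inert maps is inert and inert morphisms are closed under composition, $\tilde F$ sends $(f,g)$ to an inert morphism. Conversely, a bifunctor in particular satisfies both special cases. Therefore the full subcategory on the right is exactly $\text{Bil}(\mathcal{O}^{\otimes},\mathcal{O}'^{\otimes};\mathcal{C}^{\otimes})$, and the corollary follows.

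The main obstacle is the claim in the second step, that inertness in $\alg_{\mathcal{O}'}(\mathcal{C})^{\otimes}$ is detected pointwise. I would prove it by unwinding Lurie's construction in \cite[Construction 3.2.4.1]{HA}. There, coCartesian edges over inert maps of $\fin$ are constructed by taking $\mathcal{C}^{\otimes}$-coCartesian lifts objectwise, so an edge is coCartesian iff each evaluation is. The argument is the standard fact that pointwise coCartesian edges in a functor category over a coCartesian fibration are coCartesian \cite[Proposition 3.1.2.1]{HTT}.
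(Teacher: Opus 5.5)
Your proposal is correct and follows the paper's proof. In both, representability (Proposition~\ref{prop10}) identifies functors $\mathcal{O}^{\otimes}\to\alg_{\mathcal{O}'}(\mathcal{C})^{\otimes}$ over $\fin$ with maps $\mathcal{O}^{\otimes}\times\mathcal{O}'^{\otimes}\to\mathcal{C}^{\otimes}$ over $\wedge$ that send pairs $(\textnormal{id},g)$, with $g$ inert, to inert morphisms. Preservation of inerts then becomes the corresponding condition on pairs $(f,\textnormal{id})$, and the two conditions together are exactly the bifunctor condition. You make explicit what the paper leaves implicit: the identification at the level of simplicial sets rather than only objects, the pointwise detection of inert morphisms in $\alg_{\mathcal{O}'}(\mathcal{C})^{\otimes}$, and the factorization $(f,g)=(f,\textnormal{id})\circ(\textnormal{id},g)$. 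For the pointwise detection, the cleaner reference is Lurie's treatment of coCartesian edges in \cite[Proposition 3.2.4.3]{HA}, since $\alg_{\mathcal{O}'}(\mathcal{C})^{\otimes}$ is not literally a functor category; your reading of $\mathcal{O}''^{\otimes}$ as $\mathcal{C}^{\otimes}$ is the intended one.
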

\begin{proof}
An $\mathcal{O}$-algebra in $\alg_{\mathcal{O}'}(\mathcal{C})^{\otimes}$ is given by a morphism of simplicial sets $\mathcal{O}^{\otimes} \rightarrow \alg_{\mathcal{O}'}(\mathcal{C})^{\otimes}$ over $\fin$ sending inert morphisms to inert morphisms. In particular, by the construction of $\alg_{\mathcal{O}'}(\mathcal{C})^{\otimes}$, such an $\mathcal{O}$-algebra is given by a diagram
\begin{center}
    \begin{tikzcd}
\mathcal{O}^{\otimes}\times \mathcal{O}'^{\otimes} \arrow[d] \arrow[r] & \mathcal{C}^{\otimes} \arrow[d] \\
\fin\times \fin \arrow[r, "\wedge"]                                    & \fin                           
\end{tikzcd}
\end{center}
such that for every $X\in \mathcal{O}^{\otimes}$ and every inert map $f$ in $\mathcal{O}'^{\otimes}$, the tuple $(\text{id}_X,f)$ is sent to an inert map in $\mathcal{C}^{\otimes}$. The condition that inert morphisms in $\mathcal{O}^{\otimes}$ are sent to inert maps in $\alg_{\mathcal{O}'}(\mathcal{C})^{\otimes}$ translates to the fact that for inert maps $f$ in $\mathcal{O}^{\otimes}$ and $X\in \mathcal{O}'$, the tuple $(f,\text{id}_X)$ is sent to an inert map in $\mathcal{C}^{\otimes}$; and together those two conditions say exactly that tuples of inert maps are sent to an inert map. This is true if and only if $F$ is a bifunctor $\mathcal{O}^{\otimes} \times \mathcal{O}'^{\otimes}\rightarrow \mathcal{C}^{\otimes}$.
\end{proof}
\begin{definition}
We say that a bifunctor $F:\mathcal{O}^{\otimes}\times \mathcal{O}'^{\otimes} \rightarrow \mathcal{O}''^{\otimes}$ \textbf{exhibits $\mathcal{O}''^{\otimes}$ as a tensor product} of $\mathcal{O}^{\otimes}$ and $\mathcal{O}'^{\otimes}$ if for every $\infty$-operad $\mathcal{C}^{\otimes}$, pre-composition with $F$ determines an equivalence of $\infty$-categories 
\begin{align*}
    \alg_{\mathcal{O}''}(\mathcal{C}) \rightarrow \text{Bil}(\mathcal{O}^{\otimes},\mathcal{O}'^{\otimes};\mathcal{C}^{\otimes}).
\end{align*}
We say that $\mathcal{O}''^{\otimes}$ is a tensor product of $\mathcal{O}^{\otimes}$ and $\mathcal{O}'^{\otimes}$ if there exists a bifunctor with this property.
\end{definition}
In particular, in this case, if $\mathcal{C}^{\otimes}$ is a symmetric monoidal $\infty$-category, the above discussion shows that we have an equivalence of $\infty$-categories
\begin{align*}
    \alg_{\mathcal{O}''}(\mathcal{C}) \rightarrow \alg_{\mathcal{O}}(\alg_{\mathcal{O}'}(\mathcal{C})).
\end{align*}
\par In this sense, the tensor product of $\infty$-operads is a derived version of the Boardman-Vogt tensor products of operads.
\section{The little cubes operads}\label{the_little_cubes_operads}
We are mainly interested in algebras over the little $k$-disks operads. These are topological operads, and we present a construction to view them as $\infty$-operads. For ease of construction, we consider ``little $k$-cubes'' instead of ``little $k$-disks''. \displaypar
Let $\square^k := [0,1]^k$ be the $k$-unit cube. Consider the topological one-colored operad $\mathbb{E}_k^T$ with $\mathbb{E}_k^T(n) = \text{Rect}(\square^k \times \{1,\dots,n\},\square^k)$ the space of rectilinear embeddings. Let $\mathbb{E}_k^{T,\otimes}$ denote its topological category of operators, and consider the corresponding simplicial category $\text{Sing}_{\bullet}(\mathbb{E}_k^{T,\otimes})$. Taking the homotopy coherent nerve we obtain an $\infty$-category $\mathbb{E}_k^{\otimes}$, which is an $\infty$-operad since the underlying simplicial operad is fibrant. In particular, objects in $\mathbb{E}_k^{\otimes}$ are given by $\langle n\rangle$ for $n\in \mathbb{N}$, morphisms are given by points in the space
\begin{align*}
    \map_{\mathbb{E}_k^{T,\otimes}}(\langle n\rangle,\langle m\rangle) = \coprod_{f: \langle n\rangle \rightarrow\langle m\rangle} \prod_{j\in \langle m\rangle^{\circ}}  \text{Rect}(\square^k \times \{1,\dots,n\},\square^k),
\end{align*}
and to give a 2-simplex with boundary as shown below is equivalent to giving a path from $F\circ E$ to $G$ in the space $\map_{\mathbb{E}_k^{T,\otimes}}(\langle m\rangle,\langle k\rangle)$.
\begin{center}
    \begin{tikzcd}
                                                  & \langle m\rangle \arrow[rd, "F"] &                  \\
\langle n\rangle \arrow[rr, "G"'] \arrow[ru, "E"] &                                  & \langle k\rangle
\end{tikzcd}
\end{center}
We sometimes also denote the object $\langle n\rangle$ of $\mathbb{E}_k^{\otimes}$ by $\{\underbrace{\mathfrak{a},\dots,\mathfrak{a}}_{n \text{ times}}\}$, where we view $\mathfrak{a}$ as the single color of $\mathbb{E}_k^T$.\displaypar 
Following \cite[Notation 2.1.1.16]{HA}, we denote by $\text{Mul}_{\mathbb{E}_k}(\langle n\rangle, \langle 1\rangle)\in h\an$ the fiber of the Kan complex $\map_{\mathbb{E}_k^{\otimes}}(\langle n\rangle, \langle 1\rangle)$ over the active map $\langle n \rangle \rightarrow \langle 1 \rangle$. Note that the mapping spaces of the $\infty$-category $\mathbb{E}_k^{\otimes}$ are weakly equivalent to the mapping spaces in the topological category $\mathbb{E}_k^{T,\otimes}$. We hence have a weak equivalence $\text{Mul}_{\mathbb{E}_k}(\langle n\rangle,\langle 1\rangle)\simeq \text{Rect}(\square^k\times \{1,\dots,n\},\square^k) = \mathbb{E}_k^T(n)$, and we will implicitly use the space $\mathbb{E}_k^T(n)$ as a representative for the weak homotopy type $\text{Mul}_{\mathbb{E}_k}(\langle n\rangle, \langle 1\rangle)$.\displaypar
For $k=1$ we obtain the $\mathbb{E}_1^{\otimes}$-operad, which is equivalent to the $\infty$-operad $\ass$ and governs homotopy associative algebras. We will generally identify algebras over these two $\infty$-operads. For $k=2$, we instead recover the $\infty$-operadic version of the little 2-disks operad $D_2$. In particular, an element in $\text{Mul}_{\mathbb{E}_1}(\langle n\rangle,\langle 1\rangle)$ is given by a rectangular embedding of $n$ copies of the interval $[0,1]$ into the interval $[0,1]$. An element in $\text{Mul}_{\mathbb{E}_2}(\langle n\rangle,\langle 1\rangle)$ is given by a rectangular embedding of $n$ copies of the square $[0,1]\times[0,1]$ into the square $[0,1]\times [0,1]$. 
\begin{figure}[h!]
    \centering
    \tikzset{every picture/.style={line width=0.75pt}} 
\begin{tikzpicture}[x=0.75pt,y=0.75pt,yscale=-1,xscale=1]

\draw   (230,109.75) .. controls (230,82.27) and (252.27,60) .. (279.75,60) .. controls (307.23,60) and (329.5,82.27) .. (329.5,109.75) .. controls (329.5,137.23) and (307.23,159.5) .. (279.75,159.5) .. controls (252.27,159.5) and (230,137.23) .. (230,109.75) -- cycle ;
\draw   (245.14,75.14) -- (314.36,75.14) -- (314.36,144.36) -- (245.14,144.36) -- cycle ;
\draw    (280.5,75) -- (281,144.5) ;
\draw  [fill={rgb, 255:red, 0; green, 0; blue, 0 }  ,fill opacity=1 ] (328.17,109.75) .. controls (328.17,109.01) and (328.76,108.42) .. (329.5,108.42) .. controls (330.24,108.42) and (330.83,109.01) .. (330.83,109.75) .. controls (330.83,110.49) and (330.24,111.08) .. (329.5,111.08) .. controls (328.76,111.08) and (328.17,110.49) .. (328.17,109.75) -- cycle ;

\draw (292.83,101.5) node [anchor=north west][inner sep=0.75pt]   [align=left] {1};
\draw (257.5,101.5) node [anchor=north west][inner sep=0.75pt]   [align=left] {2};
\draw (332,104) node [anchor=north west][inner sep=0.75pt]  [font=\footnotesize] [align=left] {0};

\end{tikzpicture}
    \caption{The element $\mu_0\in \map_{\mathbb{E}_2}(\langle 2\rangle,\langle 1\rangle)_0$.}
\end{figure}
\FloatBarrier
Note that for $k\geq 1$, we have a homotopy equivalence $\text{Mul}_{\mathbb{E}_k}(\langle 2\rangle,\langle 1\rangle)\simeq S^{k-1}$. To construct this, fix a circumscribed $(k-1)$-sphere about $[0,1]^k$. Then to each point in $\text{Mul}_{\mathbb{E}_k}(\langle 2\rangle,\langle 1\rangle)\simeq \mathbb{E}_k^T(2)$ given by a rectangular embedding of two $k$-squares into a $k$-square, assign the intersection $S^{k-1}\cap r_{\vec{21}}$, where $r_{\vec{21}}$ is the ray through the centers of the two embedded copies of $[0,1]^k$ which starts at the center of the second copy. We frequently use this homotopy equivalence as a convenient method to label morphisms in the little cubes operads. In particular, fix a homotopy inverse $S^0 \rightarrow \text{Mul}_{\mathbb{E}_1}(\langle 2\rangle,\langle 1\rangle)$ and a homotopy inverse $S^1 \rightarrow \text{Mul}_{\mathbb{E}_2}(\langle 2\rangle,\langle 1\rangle)$. Then we get two points in $\text{Mul}_{\mathbb{E}_1}(\langle 2\rangle,\langle 1\rangle)$ named $\mu_0$ and $\mu_{1}$, and for every $t\in [0,2\pi)$ we get a point $\mu_t\in \text{Mul}_{\mathbb{E}_2}(\langle 2\rangle,\langle 1\rangle)$. We denote by the same letters representatives of these operations as 0-simplices in the respective mapping spaces.\displaypar
 Recall that 2-morphisms\footnote{We use ``2-morphism'' and ``2-simplex'' in a $\infty$-category interchangeably. However, we will often favor ``2-morphism'' in a context where one edge of the 2-simplex is degenerate to emphasize the globular nature of topological and dg categories.} in $\mathbb{E}^{\otimes}_k$ are given by paths in the relevant mapping spaces. There are two such 2-morphisms in $\mathbb{E}^{\otimes}_2$ that will play a special role in the subsequent discussion. On the one hand, for each $t\in [0,2\pi)$, there is a 2-morphism $\sigma_t\in \map_{\mathbb{E}_2^{\otimes}}(\langle 2\rangle,\langle 1\rangle)_{1}$ with boundary given by $\mu_t$ and $\mu_{t+\pi (\text{mod }2\pi)}$ that is represented by the braid
\begin{figure}[h!]
    \centering
    \tikzset{every picture/.style={line width=0.75pt}} 

\begin{tikzpicture}[x=0.75pt,y=0.75pt,yscale=-1,xscale=1]

\draw    (170.33,70.67) -- (220.67,170.67) ;
\draw    (220.67,70.67) -- (198.67,114.67) ;
\draw    (192,126.67) -- (170,170.67) ;

\draw (223,57.4) node [anchor=north west][inner sep=0.75pt]    {$1$};
\draw (157.33,165.07) node [anchor=north west][inner sep=0.75pt]    {$1$};
\draw (224,166.73) node [anchor=north west][inner sep=0.75pt]    {$2$};
\draw (155,60.07) node [anchor=north west][inner sep=0.75pt]    {$2$};

\end{tikzpicture}.
\end{figure}
\FloatBarrier
On the other hand, for each $t\in [0,2\pi)$, there is a non-trivial 2-morphism $\gamma_t\in  \map_{\mathbb{E}_2^{\otimes}}(\langle 2\rangle,\langle 1\rangle)_{1}$ between $\mu_t$ and itself represented by the double-braid
\begin{figure}[h!]
    \centering
   \tikzset{every picture/.style={line width=0.75pt}} 

\begin{tikzpicture}[x=0.75pt,y=0.75pt,yscale=-1,xscale=1]

\draw    (169.67,50.67) -- (220,150.67) ;
\draw    (220,50.67) -- (198,94.67) ;
\draw    (191.33,106.67) -- (169.33,150.67) ;
\draw    (169.33,150.67) -- (219.67,250.67) ;
\draw    (220,150.67) -- (198,194.67) ;
\draw    (190.67,206) -- (168.67,250) ;

\draw (222.33,37.4) node [anchor=north west][inner sep=0.75pt]    {$1$};
\draw (222.67,249.73) node [anchor=north west][inner sep=0.75pt]    {$1$};
\draw (158.33,250.73) node [anchor=north west][inner sep=0.75pt]    {$2$};
\draw (154.33,40.07) node [anchor=north west][inner sep=0.75pt]    {$2$};

\end{tikzpicture}.
\end{figure}
\FloatBarrier
 The classical Eckmann-Hilton argument shows that, in the 1-categorical case, algebra objects in the category of algebra objects yield commutative algebra objects. In particular, if $(A,\cdot)$ is an associative algebra in a symmetric monoidal category ${C}$ and $\ast: (A,\cdot)\otimes(A,\cdot)\rightarrow (A,\cdot)$ endows $(A,\cdot)$ with the structure of an associative algebra object in $\alg({C})$, then both operations $\cdot$ and $\ast$ agree and they are commutative. In this sense, an $\mathbb{E}_1$-algebra inside the symmetric monoidal category of $\mathbb{E}_1$-algebras in ${C}$ is the same as a commutative algebra inside ${C}$, which in the 1-categorical case is the same as an $\mathbb{E}_2$-algebra. In fact, this pattern continues for all the little $k$-cubes operads, as was shown by Dunn for topological operads and later by Lurie for $\infty$-operads. To explain this, for $k,k'\geq 0$, define a topological functor
\begin{align*}
    \rho: \mathbb{E}_k^{T,\otimes}\times \mathbb{E}_{k'}^{T,\otimes} \rightarrow \mathbb{E}_{k+k'}^{T,\otimes}
\end{align*}
given on objects by $\rho(\langle m\rangle,\langle n\rangle) = \langle m\rangle \wedge \langle n\rangle$, and sending a pair of morphisms $(\alpha,\{f_j: \square^k\times \alpha^{-1}(\{j\})\rightarrow \square^k\}_{j\in \langle n\rangle^{\circ}})$ and $(\beta, \{g_i: \square^{k'}\times \beta^{-1}(\{i\})\rightarrow \square^{k'}\}_{i\in\langle n'\rangle^{\circ}})$ to 
\begin{align*}
    (\alpha \wedge \beta, \{f_j\times g_i:\square^{k+k'}\times \alpha^{-1}(\{j\})\times\beta^{-1}(\{i\})\rightarrow \square^{k+k'}\}_{j\in \langle n\rangle^{\circ},i\in \langle n'\rangle^{\circ}}).
\end{align*}
In order for this to make sense, we note that viewing a tuple $(j,i)\in \langle n\rangle^{\circ}\times \langle n'\rangle^{\circ}$ as an element of $\langle nn'\rangle^{\circ}$, we have $(\alpha\wedge \beta)^{-1}((j,i)) = \alpha^{-1}(\{j)\}\times \beta^{-1}(\{i\})$. This descends to a simplicial functor, which after applying the homotopy coherent nerve yields a map of $\infty$-categories $\rho: \mathbb{E}_k^{\otimes}\times \mathbb{E}_{k'}^{\otimes} \rightarrow\mathbb{E}_{k+k'}^{\otimes}$. By construction, the diagram
\begin{center}
    \begin{tikzcd}
\mathbb{E}_k^{\otimes}\times \mathbb{E}_{k'}^{\otimes} \arrow[d] \arrow[r, "\rho"] & \mathbb{E}_{k+k'}^{\otimes} \arrow[d] \\
\fin\times \fin \arrow[r, "\wedge"]                                                & \fin                                 
\end{tikzcd}
\end{center}
commutes, and clearly $\rho$ sends pairs of inert morphisms to inert morphisms. Thus, $\rho$ is a bifunctor of $\infty$-operads.
\begin{theorem}[Dunn Additivity Theorem, Theorem 5.1.2.2\cite{HA}]\label{thm7}
    The bifunctor $\rho: \mathbb{E}_k^{\otimes} \times \mathbb{E}_{k'}^{\otimes} \rightarrow \mathbb{E}_{k+k'}^{\otimes}$ exhibits the $\infty$-operad $\mathbb{E}_{k+k'}^{\otimes}$ as a tensor product of $\mathbb{E}_k^{\otimes}$ and $\mathbb{E}_{k'}^{\otimes}$.
\end{theorem}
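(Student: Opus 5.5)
The plan is to reduce the statement to the corresponding additivity of the topological little cubes operads, so that everything is carried by the bifunctor $\rho$. By definition, we must show that for every $\infty$-operad $\mathcal{C}^{\otimes}$, precomposition with $\rho$ gives an equivalence
\begin{align*}
\alg_{\mathbb{E}_{k+k'}}(\mathcal{C}) \rightarrow \text{Bil}(\mathbb{E}_k^{\otimes},\mathbb{E}_{k'}^{\otimes};\mathcal{C}^{\otimes}).
\end{align*}
First I would construct an $\infty$-operad $\mathcal{T}^{\otimes}$ that corepresents bifunctors out of $\mathbb{E}_k^{\otimes}\times\mathbb{E}_{k'}^{\otimes}$. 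I would localize the product $\mathbb{E}_k^{\otimes}\times\mathbb{E}_{k'}^{\otimes}$, viewed over $\fin$ via $\wedge$, at the pairs of inert morphisms, and then take the $\infty$-operadic envelope/fibrant replacement in the model structure on $\infty$-preoperads. In that model structure the tensor product exists and is computed this way. Then $\rho$ factors through a map of $\infty$-operads $\bar\rho:\mathcal{T}^{\otimes}\rightarrow\mathbb{E}_{k+k'}^{\otimes}$, and the claim becomes that $\bar\rho$ is an equivalence of $\infty$-operads.

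Both sides are unital and have a contractible space of colors, so an equivalence can be detected on spaces of active multimorphisms. Concretely, I would check that for each $n$ the induced map
\begin{align*}
\text{Mul}_{\mathcal{T}}(\langle n\rangle,\langle 1\rangle)\rightarrow \text{Mul}_{\mathbb{E}_{k+k'}}(\langle n\rangle,\langle 1\rangle)\simeq \mathbb{E}^T_{k+k'}(n)
\end{align*}
is a weak homotopy equivalence. To compute the left side, I would describe operations in $\mathcal{T}$ as a homotopy colimit over "factorization data". An $n$-ary operation should be a configuration of $n$ points together with a way of grouping them first by an $\mathbb{E}_{k'}$-operation and then by $\mathbb{E}_k$-operations, modulo identifications. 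This is Dunn's model of configurations of little cubes that are "decomposable" with respect to the product $\square^k\times\square^{k'}$.

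The main step, and the expected obstacle, is proving that this homotopy colimit of products of $\mathbb{E}_k$- and $\mathbb{E}_{k'}$-configurations is equivalent to the full configuration space of $n$ rectilinear cubes in $\square^{k+k'}$. I would first replace $\mathbb{E}^T_{k+k'}(n)$ by the ordered configuration space $\text{Conf}_n(\mathbb{R}^{k+k'})$. I would then stratify it by the partition of $\{1,\dots,n\}$ induced by equality of the projections to $\mathbb{R}^{k}$, together with the induced configurations. This realizes $\text{Conf}_n(\mathbb{R}^{k+k'})$ as a homotopy colimit over a category of ordered partitions $\{1,\dots,n\}\twoheadrightarrow S$ of terms $\text{Conf}_S(\mathbb{R}^k)\times\prod_{s}\text{Conf}_{f^{-1}(s)}(\mathbb{R}^{k'})$. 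This is the same diagram that computes $\text{Mul}_{\mathcal{T}}$.

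Proving this "fiberwise" homotopy colimit decomposition is where the real work lies. I would attempt it by a Seifert–van Kampen/hypercover argument, using an open cover of the configuration space by neighborhoods where the first-coordinate clustering is constant, and checking that finite intersections are contractible or have the expected homotopy type. As a sanity check for $n=2$, the decomposition should give the join $S^{k-1}\ast S^{k'-1}\simeq S^{k+k'-1}$, which matches $\text{Mul}_{\mathbb{E}_{k+k'}}(\langle 2\rangle,\langle 1\rangle)\simeq S^{k+k'-1}$.
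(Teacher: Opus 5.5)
The paper does not prove this statement itself; it imports it from \cite{HA}, so the relevant comparison is with Lurie's argument. Your outer reduction is fine: the tensor product $\mathcal{T}^{\otimes}$ exists, $\rho$ factors through $\bar\rho\colon\mathcal{T}^{\otimes}\to\mathbb{E}_{k+k'}^{\otimes}$, and it suffices to check multimorphism spaces. A Seifert--van Kampen type local-to-global argument on spaces of cubes is also indeed where the topology enters. But the step you identify as the main one has a genuine gap.

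First, nothing in the proposal justifies that $\mathrm{Mul}_{\mathcal{T}}(\langle n\rangle,\langle 1\rangle)$ is a homotopy colimit of products of $\mathbb{E}_k$- and $\mathbb{E}_{k'}$-operation spaces. $\mathcal{T}^{\otimes}$ is produced by a fibrant replacement of $\infty$-preoperads, and ``configurations grouped in two stages, modulo identifications'' is a heuristic, not a computation of its mapping spaces. This is exactly what Lurie's proof supplies and yours lacks. It never computes the tensor product directly. Instead it compares both $\mathcal{T}^{\otimes}$ and $\mathbb{E}_{k+k'}^{\otimes}$ with the explicit wreath product $\mathbb{E}_k^{\otimes}\wr\mathbb{E}_{k'}^{\otimes}$ through the theory of (weak) approximations. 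This reduces the theorem to weak contractibility of categories of factorizations of a fixed rectilinear embedding through wreath-product data.

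Second, your indexing data (partitions of $\{1,\dots,n\}$) omits the nullary operations, and units are essential. The non-unital analogue of the theorem is false: on a set with at least two elements, a left-zero and a right-zero semigroup structure satisfy the interchange law without agreeing. Your own $n=2$ check does not follow from your diagram. Every term there is equivalent to $S^{k-1}$ or $S^{k'-1}$, whereas $S^{k-1}\ast S^{k'-1}$ is the homotopy pushout of $S^{k-1}\leftarrow S^{k-1}\times S^{k'-1}\rightarrow S^{k'-1}$. The middle term, which carries the content, is the space of pairs in $\mathbb{E}_k(2)\times\mathbb{E}_{k'}(2)$ entering through the interchange square of a $4$-ary operation with two inputs filled by units. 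For $k=k'=1$ these are the Eckmann--Hilton $2$-simplices of Theorem \ref{thm11}.

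The topological half fails as stated for related reasons. For $k\geq 1$ the loci of constant $\mathbb{R}^k$-clustering are locally closed and only the generic one is open. So there is no open cover ``by neighborhoods where the first-coordinate clustering is constant''. A stratified space is also not the homotopy colimit of its strata: for $n=2$ the missing term $S^{k-1}\times S^{k'-1}$ is the link of the stratum where the $\mathbb{R}^k$-projections coincide. And along a coarsening of partitions there are no maps between your terms unless you choose operadic compositions.

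The honest open sets are the $U_f$: points in different $f$-blocks have distinct $\mathbb{R}^k$-projections, and points in a common block have distinct $\mathbb{R}^{k'}$-projections. These do cover $\mathrm{Conf}_n(\mathbb{R}^{k+k'})$, but $f\mapsto U_f$ is not monotone. Moreover, already for blocks $\{1,2\},\{3\}$ one gets $U_f\simeq S^{k-1}\times S^{k-1}\times S^{k'-1}$, not your term $S^{k-1}\times S^{k'-1}$. Matching them with operations forces multi-level alternating factorizations together with unit insertions, which is the bookkeeping the wreath-product and approximation machinery performs.

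To make the outline a proof, you need such a presentation result in place of the asserted homotopy-colimit formula for $\mathrm{Mul}_{\mathcal{T}}$. You then need the corresponding contractibility statement.
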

This means that for every symmetric monoidal $\infty$-category $\mathcal{C}^{\otimes}$, precomposition with $\rho$ determines an equivalence of $\infty$-categories
\begin{align*}
    \rho^{\ast}: \text{Alg}_{\mathbb{E}_{k+k'}}(\mathcal{C}) \rightarrow \text{Alg}_{\mathbb{E}_{k}}(\text{Alg}_{\mathbb{E}_{k'}}(\mathcal{C})).
\end{align*}
In particular, for every $\mathbb{E}_k$-algebra $A$ in $\mathbb{E}_{k'}$-algebras in $\mathcal{C}^{\otimes}$, there exists a $\mathbb{E}_{k+k'}$-algebra $\tilde{A}$ in $\mathcal{C}^{\otimes}$ such that $\tilde{A}\circ \rho$ is equivalent to $A$ in the $\infty$-category $\text{Alg}_{\mathbb{E}_{k}}(\text{Alg}_{\mathbb{E}_{k'}}(\mathcal{C}))$.\displaypar
Let $\mathcal{C}^{\otimes}$ be a symmetric monoidal $\infty$-category and consider the symmetric monoidal $\infty$-category $\alg_{\mathbb{E}_k}(\mathcal{C})^{\otimes}$ constructed in \ref{prop10}. Recall from Definition \ref{def5} that the center $\mathfrak{Z}(A)$ of an $\mathbb{E}_k$-algebra $A\in \alg_{\mathbb{E}_k}(\mathcal{C})$ carries the structure of an $\aass$-algebra in $\alg_{\mathbb{E}_k}(\mathcal{C})^{\otimes}$. Identifying $\ass$ with $\mathbb{E}_1^{\otimes}$, the Dunn additivity theorem then implies that the center is in fact an $\mathbb{E}_{k+1}$-algebra
\begin{align*}
    \mathfrak{Z}(A) \in \alg_{\mathbb{E}_1}(\alg_{\mathbb{E}_k}(\mathcal{C})) \simeq \alg_{\mathbb{E}_{k+1}}(\mathcal{C}).
\end{align*}
\section{The center as endomorphism object of bimodules} \label{the_center_as_endomorphism_object_of_bimodules}
We have a particularly nice description for the center in symmetric monoidal $\infty$-categories that arise as the category of $\mathcal{O}$-algebras for a coherent $\infty$-operad $\mathcal{O}$.\displaypar 
First consider the 1-categorical case. There is an equivalence of 1-categories between associative algebra objects in the monoidal category of $A$-bimodules and associative algebras under $A$
\begin{align*}
    \alg({}_A\text{BiMod}_A) \xrightarrow{\simeq} \alg(\text{Vect}_{\field})^{A/}.
\end{align*}
The equivalence is given by sending an algebra in bimodules to its unit morphism. Note that $A$ is the monoidal unit in $\text{Bimod}_A$ and hence $\alg(\text{Bimod}_A)$. The 1-centralizer of an algebra morphism $f: A\rightarrow B$ is defined as 
    \begin{align*}
        Z(f) = \{b\in B: \forall a\in A: f(a)b=bf(a)\}.
    \end{align*}
Viewing the data of $f$ as an $A$-bimodule structure on $B$, we can see that this agrees with the set of $A$-bimodule maps from $A$ to $B$:
    \begin{align*}
        {Z}(f) \cong \hom_{{}_A\text{BiMod}_A}(A,B).
    \end{align*}
We explain how this picture generalizes to the $\infty$-categorical case. Let $\mathcal{C}^{\otimes} \rightarrow \fin$ be a symmetric monoidal $\infty$-category and $\mathcal{O}^{\otimes}$ be a coherent $\infty$-operad; we will mostly be interested in the case $\mathcal{O}^{\otimes} = \mathbb{E}_1^{\otimes}$. Consider the unique bifunctor of $\infty$-operads $\mathcal{O}^{\otimes} \times \lm \rightarrow \fin \times \fin \xrightarrow{\wedge} \fin$. We have a coCartesian fibration $\alg_{\mathcal{O}}(\mathcal{C})^{\otimes} \rightarrow \lm$ with fibers over $\mathfrak{a}$ and $\mathfrak{m}$ respectively both equivalent to $\alg_{\mathcal{O}}(\mathcal{C})$ coming from the fact that $\alg_{\mathcal{O}}(\mathcal{C})$ admits the structure of a symmetric monoidal $\infty$-category as constructed in Proposition \ref{prop10}, and is hence left tensored over itself.\displaypar 
Let $A\in \alg_{\mathcal{O}}(\mathcal{C})_{\mathfrak{m}}$. To generalize the relationship between a centralizer and bimodule morphisms to $\infty$-operads, one needs to first recover the statement about algebra objects in the monoidal category of bimodules. Let $\overline{A}\in \lmod_{\mathbb{1}}(\alg_{\mathcal{O}}(\mathcal{C}))$ be a lift of $A$ as a module over the trivial algebra $\mathbb{1}\in \alg_{\mathbb{E}_1}(\alg_{\mathcal{O}}(\mathcal{C})_{\mathfrak{a}})$. We obtain a coCartesian $\lm$-family of $\mathcal{O}$-operads as defined in \cite[Definition 5.3.1.19]{HA} 
\begin{align*}
    \mathcal{C}^{\otimes} \times_{\fin} (\mathcal{O}^{\otimes} \times \lm) \rightarrow \mathcal{O}^{\otimes} \times \lm,
\end{align*}
and we can regard $\overline{A}$ as a coCartesian $\lm$-family of $\mathcal{O}$-algebras as defined in \cite[Remark 5.3.1.22]{HA} by noting that there is a bijection
\begin{align*}
    \text{Fun}_{\lm}(\lm, \alg_{/\mathcal{O}}^{\lm}(\mathcal{C}^{\otimes} \times_{\fin} (\mathcal{O}^{\otimes} \times \lm))) \cong \text{Fun}_{\lm}(\lm, \alg_{\mathcal{O}}(\mathcal{C})^{\otimes}).
\end{align*}
We can view $A$ as an $\mathcal{O}$-module over itself, and hence $\overline{A}$ also determines a coCartesian $\lm$-family of $\mathcal{O}$-algebras in the coCartesian $\lm$-family of $\mathcal{O}$-operads\footnote{See \cite[Definition 5.3.1.23]{HA}.}
\begin{align*}
    \overline{C}^{\otimes} := \text{Mod}^{\mathcal{O},\lm}_{\overline{A}}(\mathcal{C}^{\otimes} \times_{\fin} (\mathcal{O}^{\otimes} \times \lm))^{\otimes} \rightarrow \mathcal{O}^{\otimes} \times \lm.
\end{align*}
This allows us to identify algebra objects in the category of $A$-bimodules with algebras under $A$:
\begin{proposition}[Proposition 5.3.1.27 \cite{HA}]
The forgetful functor 
\begin{align*}
    \theta: \alg_{/\mathcal{O}}^{\lm}(\textnormal{Mod}^{\mathcal{O},\lm}_{\overline{A}}(\mathcal{C}^{\otimes} \times_{\fin} (\mathcal{O}^{\otimes} \times \lm))^{\overline{A}_{\lm/}} \rightarrow \alg_{/\mathcal{O}}^{\lm}(\mathcal{C}^{\otimes} \times_{\fin} (\mathcal{O}^{\otimes}\times \lm))^{\overline{A}_{\lm/}}
\end{align*}
is an equivalence of $\infty$-categories. 
\end{proposition}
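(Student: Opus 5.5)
The plan is to reduce the statement to a fiberwise claim about coCartesian $\lm$-families and then use the known comparison between modules and algebras under a fixed algebra. First, both sides are $\infty$-categories of $\lm$-families of $\mathcal{O}$-algebras under $\overline{A}_{\lm}$. The forgetful functor $\theta$ is induced by the forgetful map of coCartesian $\lm$-families of $\mathcal{O}$-operads
\begin{align*}
\textnormal{Mod}^{\mathcal{O},\lm}_{\overline{A}}(\mathcal{C}^{\otimes} \times_{\fin} (\mathcal{O}^{\otimes} \times \lm))^{\otimes} \rightarrow \mathcal{C}^{\otimes} \times_{\fin} (\mathcal{O}^{\otimes} \times \lm).
\end{align*}
I would show $\theta$ is an equivalence by showing it is compatible with the projection to $\lm$-indexed data and is an equivalence on each piece. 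Concretely, an object of $\alg^{\lm}_{/\mathcal{O}}(-)$ is a section over $\lm$, and both sides should decompose as a limit over the simplices of $\lm$ of the corresponding fiberwise $\infty$-categories of $\mathcal{O}$-algebras. Hence it suffices to check the fiberwise statement together with compatibility of the transition functors.

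The fiberwise statement is the $\infty$-operadic version of the classical fact that algebras in ${}_A\text{BiMod}_A$ correspond to algebras under $A$. For a single $\mathcal{O}$-monoidal $\infty$-category $\mathcal{D}^{\otimes}$ and $A \in \alg_{\mathcal{O}}(\mathcal{D})$, the forgetful functor
\begin{align*}
\alg_{/\mathcal{O}}(\textnormal{Mod}^{\mathcal{O}}_A(\mathcal{D})^{\otimes})^{A/} \rightarrow \alg_{/\mathcal{O}}(\mathcal{D})^{A/}
\end{align*}
is an equivalence, since $A$ is the unit of $\textnormal{Mod}^{\mathcal{O}}_A(\mathcal{D})^{\otimes}$ (this is where coherence of $\mathcal{O}$ is used, to have the operadic module category be $\mathcal{O}$-monoidal with unit $A$). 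This is Lurie's Corollary 3.4.1.7 / Proposition 3.4.1.9-type statement in \cite{HA}: an $\mathcal{O}$-algebra in $A$-modules receiving a map from the unit is the same as an $\mathcal{O}$-algebra $B$ with a map $A \rightarrow B$. I would prove it by identifying both sides with $\mathcal{O}$-algebra objects of $\textnormal{Mod}^{\mathcal{O}}_A(\mathcal{D})$ and noting that every algebra in a monoidal $\infty$-category is canonically under the unit, which is an initial algebra.

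Next, I would globalize over $\lm$. Since both families are coCartesian over $\mathcal{O}^{\otimes}\times\lm$ and $\theta$ preserves coCartesian edges, the fiberwise equivalences assemble, for example via \cite[Corollary 2.4.4.4]{HTT}-style fiberwise criteria applied to the Cartesian fibrations of algebra categories over $\lm$. This would give an equivalence on section categories, and passing to the undercategory at $\overline{A}_{\lm}$ preserves this.

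The main obstacle is the fiberwise step, specifically checking that sections of the module family over $\lm$ under $\overline{A}$ really correspond to sections of $\mathcal{C}$-algebras under $\overline{A}$, including the $\mathfrak{m}$-color and the tensoring maps. If this does not directly follow, I would first handle the $\ass$-part (the color $\mathfrak{a}$) and then treat the $\mathfrak{m}$-part as modules over it, using that the tensoring is induced by the same unit-type argument.
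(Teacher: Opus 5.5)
The paper gives no argument for this statement; it is imported from \cite[Proposition 5.3.1.27]{HA}. Your plan of deducing it from the single-fiber statement \cite[Corollary 3.4.1.7]{HA} and then globalizing over $\lm$ is sensible, but the globalization step as written does not go through. You apply a fiberwise criterion to the fibrations of algebra $\infty$-categories over $\lm$, obtain an equivalence of section $\infty$-categories, and only afterwards pass to the under-category at $\overline{A}$.

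Before slicing there is no fiberwise equivalence to assemble. Over the vertex $\mathfrak{m}$ the functor is $\alg_{/\mathcal{O}}(\textnormal{Mod}^{\mathcal{O}}_A(\mathcal{C}))\rightarrow\alg_{\mathcal{O}}(\mathcal{C})$. Its essential image contains only algebras receiving a map from $A$: for $\mathcal{C}=\dk$, $\mathcal{O}^{\otimes}=\mathbb{E}_1^{\otimes}$ and $A=M_2(\field)$, the algebra $\field$ is missed. So the unsliced functor on sections is not an equivalence, and there is nothing for the slice to preserve. Relatedly, sections over $\lm$ are lax: they contain non-invertible maps $e_!B_v\rightarrow B_w$. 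So they do not decompose as a limit of the fiberwise algebra categories.

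The repair is to build the slice into the fibrations before globalizing. For a section $s$ of $\mathcal{E}\rightarrow\lm$, the under-category of the section $\infty$-category is exactly the section $\infty$-category of the relative slice $\lm\times_{\mathcal{E}}\textnormal{Fun}(\Delta^1,\mathcal{E})\times_{\textnormal{Fun}(\Delta^1,\lm)}\lm$. This is formed using $s$, evaluation at $0$, and constant edges, and its fiber over $v$ is $(\mathcal{E}_v)^{s(v)/}$. Take $s=\overline{A}$ on both sides. The fiberwise functors are then equivalences by \cite[Corollary 3.4.1.7]{HA}; the slice on the left is harmless because $\overline{A}_v$ is initial there. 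The coCartesian form of \cite[Proposition 3.3.1.5]{HTT} then finishes the argument (the relevant fibrations over $\lm$ are coCartesian, not Cartesian). This works provided both relative slices are coCartesian over $\lm$ and $\theta$ preserves coCartesian edges.

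You assert the latter without reason, and this is exactly where you must use that $\overline{A}$ is a coCartesian $\lm$-family, i.e.\ that its structure maps $\mathbb{1}\otimes\mathbb{1}\rightarrow\mathbb{1}$ and $\mathbb{1}\otimes A\rightarrow A$ are equivalences. Only then is transport in the module family along $e\colon v\rightarrow w$ just $e_!$, rather than $e_!$ followed by extension of scalars along $e_!\overline{A}_v\rightarrow\overline{A}_w$. That is what makes transport commute with the forgetful functor, and it is also what makes the slice under $\overline{A}$ coCartesian.

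Finally, your sketched proof of the fiberwise statement is circular. Identifying algebras under $A$ with algebras in $A$-modules is the content of \cite[Corollary 3.4.1.7]{HA}, not a consequence of the unit being initial. Coherence is used there to make $\textnormal{Mod}^{\mathcal{O}}_A(\mathcal{C})^{\otimes}\rightarrow\mathcal{O}^{\otimes}$ a fibration of $\infty$-operads, not to make it $\mathcal{O}$-monoidal. You should simply cite it.
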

\begin{remark}
Note that for all $s\in\lm$, the algebra $\overline{A}_s\in \alg_{/\mathcal{O}}(\textnormal{Mod}^{\mathcal{O}}_{\overline{A}_s}(\mathcal{C}^{\otimes} \times_{\fin} (\mathcal{O}^{\otimes} \times \{s\})))$ is a trivial algebra, so in particular for $s= \mathfrak{m}$ we get an equivalence 
\begin{align*}
    \theta_{\mathfrak{m}}: \alg_{/\mathcal{O}}(\textnormal{Mod}_{A}^{\mathcal{O}}(\mathcal{C}^{\otimes} \times_{\fin} (\mathcal{O}^{\otimes} \times \{\mathfrak{m}\}))) \rightarrow \alg_{/\mathcal{O}}(\mathcal{C}^{\otimes} \times_{\fin} (\mathcal{O}^{\otimes} \times \{\mathfrak{m}\}))^{A/} \simeq \alg_{\mathcal{O}}(\mathcal{C})_{\mathfrak{m}}^{A/}.
\end{align*}
\end{remark}
Since $\overline{A}(\mathfrak{a})$ is the trivial algebra in $\alg_{\mathcal{O}}(\mathcal{C})_{\mathfrak{a}}$, we also have an equivalence $\overline{\mathcal{C}}^{\otimes}_{\mathfrak{a}} \simeq \mathcal{C}^{\otimes} \times_{\fin} (\mathcal{O}^{\otimes} \times \{\mathfrak{a}\})$, and therefore $\alg^{\lm}_{/\mathcal{O}}(\overline{\mathcal{C}})_{\mathfrak{a}} \simeq \alg_{\mathcal{O}}(\mathcal{C})_{\mathfrak{a}}$. To find the centralizer of $\text{id}_A$ in $\alg_{\mathcal{O}}(\mathcal{C})_{\mathfrak{m}}$, it hence suffices to find the centralizer of $\text{id}_A$ in $\alg_{/\mathcal{O}}(\textnormal{Mod}^{\mathcal{O}}_{A}(\mathcal{C}^{\otimes} \times_{\fin} (\mathcal{O}^{\otimes} \times \{\mathfrak{m}\})))$, in which $A$ is the trivial algebra. We can then use Proposition \ref{prop7} to upgrade this to a center $\mathfrak{Z}(A) \in \alg_{\mathbb{E}_1}(\alg_{\mathcal{O}}(\mathcal{C})_{\mathfrak{a}})$ of $A$. 
\begin{theorem}[Proposition 5.3.1.29 \cite{HA}]\label{thm8}
Suppose that for all $X\in \mathcal{O}$, there exists a morphism object $\mor_{\overline{\mathcal{C}}_{X,\mathfrak{m}}}({A}(X),{A}(X))\in \overline{ \mathcal{C}}_{X,\mathfrak{a}}$. Then there exists a centralizer $\mathfrak{Z}(\textnormal{id}_{A})\in \alg_{/\mathcal{O}}^{\lm}(\overline{\mathcal{C}})_{\mathfrak{a}}$. Furthermore, if $Z\in \alg_{/\mathcal{O}}^{\lm}(\overline{\mathcal{C}})_{\mathfrak{a}}$, then a commutative diagram 
\begin{center}
    \begin{tikzcd}
                                              & Z\otimes A \arrow[rd] &   \\
A \arrow[rr, "\textnormal{id}_A"'] \arrow[ru] &                       & A
\end{tikzcd}
\end{center}
exhibits $Z$ as the centralizer of $\textnormal{id}_A$ if and only if for all $X\in \mathcal{O}$, the induced map $Z(X)\otimes A(X) \rightarrow A(X)$ exhibits $Z(X)$ as a morphism object of $A(X)$ and $A(X)$.
\end{theorem}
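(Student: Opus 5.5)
We would reduce the statement to the existence and characterization of morphism objects in a single $\infty$-category $\mathcal{D}$ tensored over the $\mathcal{O}$-monoidal family. By Lemma \ref{lem5} (applied fiberwise over $\mathcal{O}$), a centralizer of $\textnormal{id}_A$ in $\alg_{/\mathcal{O}}^{\lm}(\overline{\mathcal{C}})_{\mathfrak{m}}$ is the same as a morphism object $\mor(\textnormal{id}_A,\textnormal{id}_A)$ in the undercategory tensored over $(\alg^{\lm}_{/\mathcal{O}}(\overline{\mathcal{C}})_{\mathfrak{a}})_{\mathbb{1}/}$. By Proposition \ref{prop6}, we must therefore exhibit a final object in the $\infty$-category $\textnormal{Act}(\textnormal{id}_A)$ of triangles $A\to Z\otimes A\to A$ composing to $\textnormal{id}_A$. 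Since $A$ is the trivial algebra in $\textnormal{Mod}^{\mathcal{O}}_A$, the unit of $Z$ is automatic, and the problem becomes constructing a final object of $\alg_{/\mathcal{O}}(\overline{\mathcal{C}}_{\mathfrak{a}})\times_{\alg_{/\mathcal{O}}(\overline{\mathcal{C}}_{\mathfrak{m}})}\alg_{/\mathcal{O}}(\overline{\mathcal{C}}_{\mathfrak{m}})_{/A}$.

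The key step is to promote the pointwise morphism objects to an $\mathcal{O}$-algebra. The hypothesis gives, for each color $X\in\mathcal{O}$, a morphism object $\mor(A(X),A(X))$ in $\overline{\mathcal{C}}_{X,\mathfrak{a}}$. We would apply Lurie's result that, for a coCartesian family of $\mathcal{O}$-operads tensored fiberwise, morphism objects assemble: the functor $X\mapsto \mor(A(X),A(X))$ carries a canonical lax structure making it an $\mathcal{O}$-algebra, since the multiplication maps $A(X_1)\otimes\cdots\otimes A(X_n)\to A(Y)$ induce, by the universal property of Definition \ref{def3}, maps $\bigotimes_i\mor(A(X_i),A(X_i))\to \mor(A(Y),A(Y))$. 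Concretely, we would realize this as a right Kan extension or relative final object: consider the projection $p:\alg_{/\mathcal{O}}(\overline{\mathcal{C}}_{\mathfrak{a}})\times\cdots_{/A}\to\alg_{/\mathcal{O}}(\overline{\mathcal{C}}_{\mathfrak{a}})$ and show that the fiberwise final objects (which exist by hypothesis) are compatible with the operadic structure maps, using that $\mathcal{O}$ is coherent so that the relevant $\lm$-family is sufficiently well behaved (the operadic left Kan extension criteria of \cite[Section 3.1.3]{HA}). Then \cite[Corollary 3.2.2.5]{HA}-type arguments, as in Proposition \ref{cor6}, show that a pointwise final object lifts to a final object of algebras, with evaluation at each $X$ preserving it.

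For the \emph{if and only if}, we would observe that evaluation at $X$ sends $\textnormal{Act}(\textnormal{id}_A)$ to the corresponding fiber category at $X$ and, by the construction above, preserves and detects final objects: a candidate $Z$ is final iff each $Z(X)\otimes A(X)\to A(X)$ is final there, which by Proposition \ref{prop6} means it is a morphism object. Detection follows because mapping spaces in $\alg_{/\mathcal{O}}$ are computed as limits over the operadic diagram of pointwise mapping spaces, and a limit of contractible spaces is contractible.

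The main obstacle is the second step: verifying that the pointwise morphism objects really do assemble into a $\mathcal{O}$-algebra which is final, i.e.\ that the mapping spaces in $\textnormal{Act}(\textnormal{id}_A)$ decompose as limits of the pointwise ones. This requires controlling the coCartesian $\lm$-family $\overline{\mathcal{C}}^{\otimes}$ fiberwise over $\mathcal{O}^{\otimes}$, and is where coherence of $\mathcal{O}$ and the identification $\overline{\mathcal{C}}_{\mathfrak{a}}^{\otimes}\simeq\mathcal{C}^{\otimes}\times_{\fin}(\mathcal{O}^{\otimes}\times\{\mathfrak{a}\})$ will be used; we would try first to phrase it as a relative right Kan extension along $\mathcal{O}^{\otimes}_{\textnormal{act}}$ and check the pointwise existence condition via the hypothesis.
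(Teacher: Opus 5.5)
Your reduction to a final object of $\textnormal{Act}(\textnormal{id}_A)$ is fine, since that is just the definition of the centralizer. However, the step you flag as the main obstacle carries the entire content of the statement, and the proposal does not contain the idea that resolves it.

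Maps $\bigotimes_i\mor(A(X_i),A(X_i))\to\mor(A(Y),A(Y))$ obtained from the universal property give the $\mathcal{O}$-algebra structure only up to homotopy, not coherently. The claim that mapping spaces in $\textnormal{Act}(\textnormal{id}_A)$ are limits of pointwise ones presupposes that $\textnormal{Act}(\textnormal{id}_A)$ is the $\infty$-category of $\mathcal{O}$-algebras in some $\mathcal{O}$-monoidal $\infty$-category whose fiber over $X$ is the action category of $A(X)$. You never construct such a category.

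The projection you do write down, from $\alg_{/\mathcal{O}}(\overline{\mathcal{C}}_{\mathfrak{a}})\times_{\alg_{/\mathcal{O}}(\overline{\mathcal{C}}_{\mathfrak{m}})}\alg_{/\mathcal{O}}(\overline{\mathcal{C}}_{\mathfrak{m}})_{/A}$ to $\alg_{/\mathcal{O}}(\overline{\mathcal{C}}_{\mathfrak{a}})$, is a right fibration. Its fiber over $Z$ is the space $\map_{\alg_{/\mathcal{O}}(\overline{\mathcal{C}}_{\mathfrak{m}})}(Z\otimes A,A)$. The final objects supplied by the hypothesis live in the color-wise fibers over $X\in\mathcal{O}$, not in fibers of this map, so ``fiberwise final objects'' there is a conflation. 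Also, \cite[Section 3.1.3]{HA} concerns operadic left Kan extensions and free algebras, i.e.\ colimits. That is the wrong direction for a statement about final objects, and coherence of $\mathcal{O}$ is not what makes this step work.

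The missing idea is the one the paper uses. Since $\mathbb{1}$ and $A$ are the trivial algebras in $\overline{\mathcal{C}}^{\otimes}_{\mathfrak{a}}$ and $\overline{\mathcal{C}}^{\otimes}_{\mathfrak{m}}$ respectively, the operadic slice construction \cite[Theorem 2.2.2.4]{HA} produces an $\mathcal{O}$-monoidal $\infty$-category
\[
\mathcal{E}^{\otimes}=(\overline{\mathcal{C}}_{\mathfrak{a}}^{\otimes})_{\mathbb{1}_{\mathcal{O}}/}\times_{(\overline{\mathcal{C}}_{\mathfrak{m}}^{\otimes})_{A_{\mathcal{O}}/}}(\overline{\mathcal{C}}_{\mathfrak{m}}^{\otimes})_{A_{\mathcal{O}}//A_{\mathcal{O}}}\to\mathcal{O}^{\otimes}.
\]
Here $\alg_{/\mathcal{O}}(\mathcal{E})$ is equivalent to the action category, and the fiber $\mathcal{E}_X$ is the action category of $\textnormal{id}_{A(X)}$.

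Then \cite[Corollary 3.2.2.5]{HA} with $K=\emptyset$ applies: limits in algebras over a coCartesian fibration of $\infty$-operads are computed fiberwise. It says at once that a final algebra exists as soon as each $\mathcal{E}_X$ has a final object, and that an algebra is final if and only if each of its values is final. This gives existence and the ``if and only if'' simultaneously. Proposition \ref{prop6} then identifies final objects of $\mathcal{E}_X$ with morphism objects of $A(X)$.

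Your relative-Kan-extension idea is essentially what happens inside the proof of that corollary. It can only be run once the ambient coCartesian fibration $\mathcal{E}^{\otimes}\to\mathcal{O}^{\otimes}$ has been built, and building it is the step your proposal lacks.
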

\begin{proof}
By definition, the centralizer is a final object of the $\infty$-category
\begin{align*}
    \mathcal{A} := (\alg_{/\mathcal{O}}^{\lm}(\overline{\mathcal{C}})_{\mathfrak{a}})_{\mathbb{1}} \times_{(\alg_{/\mathcal{O}}^{\lm}(\overline{\mathcal{C}})_{\mathfrak{m}})_{A/}} (\alg_{/\mathcal{O}}^{\lm}(\overline{\mathcal{C}})_{\mathfrak{m}})_{A//A}.
\end{align*}
Since $\overline{A}_s$ is the trivial algebra in $\overline{\mathcal{C}}_s^{\otimes}$ for $s\in \{\mathfrak{a},\mathfrak{m}\}$, we can use \cite[Theorem 2.2.2.4]{HA} to get an $\mathcal{O}^{\otimes}$-monoidal $\infty$-category 
\begin{align*}
    \mathcal{E}^{\otimes} := (\overline{\mathcal{C}}_{\mathfrak{a}}^{\otimes})_{\mathbb{1}_{\mathcal{O}/}} \times_{(\overline{\mathcal{C}}_{\mathfrak{m}}^{\otimes})_{A_{\mathcal{O}/}}} (\overline{\mathcal{C}}_{\mathfrak{m}}^{\otimes})_{A_{\mathcal{O}//A_{\mathcal{O}}}} \rightarrow \mathcal{O}^{\otimes}
\end{align*}
such that $\alg_{/\mathcal{O}}(\mathcal{E}) \simeq \mathcal{A}$. Finally, use that limits in algebra categories are computed objectwise by \cite[Corollary 3.2.2.5]{HA} to argue that we are reduced to showing that for each $X\in \mathcal{O}$, the fiber $\mathcal{E}_X$ admits a final object. But a final object in 
\begin{align*}
    \mathcal{E}_X \simeq (\overline{\mathcal{C}}_{X,\mathfrak{a}})_{\mathbb{1}(X)/} \times_{(\overline{\mathcal{C}}_{X,\mathfrak{m}})_{A(X)/}}(\overline{\mathcal{C}}_{X,\mathfrak{m}})_{A(X)//A(X)}
\end{align*}
is equivalent to a morphism object $\mor_{\overline{\mathcal{C}}_{X,\mathfrak{m}}}(A(X),A(X))$ by Proposition \ref{prop6}.
\end{proof}
\begin{corollary}\label{cor8}
Let $\mathcal{O}^{\otimes}= \mathbb{E}_1^{\otimes}$, and let $A\in \alg_{\mathbb{E}_1}(\mathcal{C})$ be an $\mathbb{E}_1$-algebra in a symmetric monoidal $\infty$-category $\mathcal{C}^{\otimes}$. Assume that the morphism object $\mor_{\textnormal{Mod}_A^{\mathbb{E}_1}(\mathcal{C}^{\otimes} \times_{\fin} \mathbb{E}_1^{\otimes})_{\mathfrak{a}}}(A,A)\in \mathcal{C}$ exists\footnote{Note that here we implicitly view $A$ as a module over itself, i.e. as an object $A\in \textnormal{Mod}_A^{\mathbb{E}_1}(\mathcal{C}^{\otimes} \times_{\fin} (\mathbb{E}_1^{\otimes}\times \{\mathfrak{m}\}))_{\mathfrak{a}}$.}. Then there exists a centralizer $\mathfrak{Z}(\textnormal{id}_A) \in \alg_{\mathbb{E}_1}(\mathcal{C})$ with underlying object 
\begin{align*}
\mathfrak{Z}(\textnormal{id}_A)(\mathfrak{a}) \simeq \mor_{\textnormal{Mod}_A^{\mathbb{E}_1}(\mathcal{C}^{\otimes} \times_{\fin} \mathbb{E}_1^{\otimes})_{\mathfrak{a}}}(A,A),
\end{align*}
and the action of the centralizer has underlying map given by the evaluation $\alpha$ of the morphism object on $A$. Further, the multiplication of the $\mathbb{E}_1$-algebra structure on $\mathfrak{Z}(\textnormal{id}_A)$ is induced by the action of the tensor product $\mathfrak{Z}(\textnormal{id}_A)(\mathfrak{a}) \otimes \mathfrak{Z}(\textnormal{id}_A)(\mathfrak{a})$ on $A$ given by the tensor product $\alpha \otimes \alpha$ in $\mathcal{E}_{\mathfrak{a}}$:
\begin{align*}
    \ast: A \xrightarrow{\simeq} A\otimes A \rightarrow (\mathfrak{Z}(\textnormal{id}_A)(\mathfrak{a}) \otimes A) \otimes (\mathfrak{Z}(\textnormal{id}_A)(\mathfrak{a}) \otimes A) \rightarrow A \otimes A \xrightarrow{\textnormal{mult.}} A.
\end{align*}
\end{corollary}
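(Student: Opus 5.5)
The plan is to specialize Theorem \ref{thm8} to $\mathcal{O}^{\otimes}=\mathbb{E}_1^{\otimes}$, whose underlying $\infty$-category $\mathcal{O}$ has (up to equivalence) the single object $\mathfrak{a}$. The hypothesis of Theorem \ref{thm8} then collapses to the existence of one morphism object, namely $\mor_{\overline{\mathcal{C}}_{\mathfrak{a},\mathfrak{m}}}(A,A)\in\overline{\mathcal{C}}_{\mathfrak{a},\mathfrak{a}}$. Here $\overline{\mathcal{C}}_{\mathfrak{a},\mathfrak{m}}$ is the fiber of $\textnormal{Mod}^{\mathbb{E}_1}_A(\mathcal{C}^{\otimes}\times_{\fin}\mathbb{E}_1^{\otimes})$ over the module color, and $\overline{\mathcal{C}}_{\mathfrak{a},\mathfrak{a}}\simeq\mathcal{C}$ because $\overline{A}(\mathfrak{a})$ is the trivial algebra. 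This is exactly the assumption of the corollary, so Theorem \ref{thm8} produces a centralizer $\mathfrak{Z}(\textnormal{id}_A)\in\alg^{\lm}_{/\mathbb{E}_1}(\overline{\mathcal{C}})_{\mathfrak{a}}\simeq\alg_{\mathbb{E}_1}(\mathcal{C})$. The ``if and only if'' clause then identifies its value at $\mathfrak{a}$, together with its action map, with the morphism object and its evaluation $\alpha$.

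The first step is to unwind the proof of Theorem \ref{thm8}. There the centralizer is a final object of $\alg_{/\mathbb{E}_1}(\mathcal{E})$, where $\mathcal{E}^{\otimes}\to\mathbb{E}_1^{\otimes}$ is the $\mathbb{E}_1$-monoidal $\infty$-category from \cite[Theorem 2.2.2.4]{HA}. Using \cite[Corollary 3.2.2.5]{HA} with $K=\emptyset$, as in Proposition \ref{cor6}, a final algebra is computed by taking the final object $(\mor(A,A),\alpha)$ of the fiber $\mathcal{E}_{\mathfrak{a}}$ and giving it its essentially unique algebra structure. By Proposition \ref{prop6}, this final object of $\mathcal{E}_{\mathfrak{a}}$ is the morphism object. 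This yields the statements about the underlying object and the action map.

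The second step identifies the multiplication. By the proof of \cite[Proposition 3.2.2.1]{HA}, as recalled after Proposition \ref{cor6}, the multiplication on the final algebra is the unique map $Z\otimes Z\to Z$ in $\mathcal{E}_{\mathfrak{a}}$ from the tensor square $(Z,\alpha)\otimes_{\mathcal{E}}(Z,\alpha)$ to the final object $(Z,\alpha)$. It remains to compute this tensor product in $\mathcal{E}_{\mathfrak{a}}$. The monoidal structure on $\mathcal{E}$ is inherited from the $\mathbb{E}_1$-monoidal structure on $\overline{\mathcal{C}}_{\mathfrak{m}}$, i.e.\ on $A$-modules, pointwise over the slice. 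There, the tensor of two objects $Z\otimes A\to A$ is obtained by tensoring them in $\mathcal{C}$ and then composing with the multiplication of $A$, which is the $\mathbb{E}_1$-algebra structure of $A$ viewed as an object of its own operadic module category. Precomposing with the unit equivalence $A\simeq A\otimes A$ coming from the slice under $A$ (the unit of $A$ in the first factor) gives the displayed composite $\ast$.

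The main obstacle is this last identification: making precise that the active-morphism coCartesian pushforward in $\textnormal{Mod}^{\mathbb{E}_1}_A(\cdots)^{\otimes}$ over $\langle2\rangle\to\langle1\rangle$ is given, on underlying objects, by tensoring in $\mathcal{C}$ followed by the multiplication of $A$. Second, one has to check that the slice structure under $A$ supplies the map $A\to A\otimes A$. I would try to settle both by computing fibers of $\mathcal{E}^{\otimes}$ over $\langle2\rangle$ via the description of operadic modules in \cite[Section 3.3]{HA}. The key facts there are that $A$ is the unit of $\textnormal{Mod}^{\mathbb{E}_1}_A$, and that the forgetful functor to $\mathcal{C}$ is lax monoidal, with the lax structure map given by $A$'s multiplication.
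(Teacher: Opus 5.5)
Your proposal is correct and follows essentially the same route as the paper: you read off the existence of the centralizer, its underlying object and its action from Theorem \ref{thm8}, and you identify the multiplication as the essentially unique map from the tensor square to the final object $(\mathfrak{Z}(\textnormal{id}_A)(\mathfrak{a}),\alpha)$ of $\mathcal{E}_{\mathfrak{a}}$, computing that tensor square via the operadic slice construction of \cite[Theorem 2.2.2.4]{HA}, which applies because $A$ is the unit (trivial algebra) of the module category. The step you flag as the main obstacle is exactly the one the paper also handles only by citing that construction and drawing the resulting diagrams, so your level of detail matches the paper's.
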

\begin{proof}
    The first part follows directly from Theorem \ref{thm8}. For the claim about the multiplication, note that by the proof of Theorem \ref{thm8} the algebra structure on the centralizer is induced from $\bigl(\mathfrak{Z}(\text{id}_A)(\mathfrak{a}), \alpha\bigr)$ being final in $\mathcal{E}_{\mathfrak{a}}$. In particular, in $\mathcal{E}_{\mathfrak{a}}$ we have a unique-up-to-contractible-choice map 
    \begin{align*}
        \mathfrak{Z}(\text{id}_A)(\mathfrak{a}) \otimes \mathfrak{Z}(\text{id}_A)(\mathfrak{a}) \rightarrow \mathfrak{Z}(\text{id}_A)(\mathfrak{a}).
    \end{align*}
    Now recall the construction of the tensor product in operadic slice categories in \cite[Theorem 2.2.2.4]{HA}. This shows that up to homotopy, the monoidal product in $\mathcal{E}_{\mathfrak{a}}$ is given in the first component by 
    \begin{center}
        \begin{tikzcd}
\mathbb{1} \arrow[dd] &          & \mathbb{1} \arrow[dd] &   & \mathbb{1} \arrow[d, "\simeq"']         \\
                      & \otimes  &                       & = & \mathbb{1} \otimes \mathbb{1} \arrow[d] \\
X                     &          & Y                     &   & X\otimes Y                             
\end{tikzcd}
    \end{center}
    and in the second component by 
    \begin{center}
       \begin{tikzcd}
A \arrow[d, "\simeq"'] \arrow[rdd, "\textnormal{id}_{A}"] &                 &                                                                                            & A \arrow[d, "\simeq"'] \arrow[rdd, "\textnormal{id}_{A}"] &                 \\
\mathbb{1} \otimes A \arrow[d]                                          &                 & \otimes                                                                                    & \mathbb{1} \otimes A \arrow[d]                                          &                 \\
X \otimes A \arrow[r]                                                   & A &                                                                                            & Y\otimes A \arrow[r]                                                    & A \\
                                                                                      &                 & A \arrow[d, "\simeq"'] \arrow[rrddd, "\textnormal{id}_{A}"]    &                                                                                       &                 \\
                                                                                      &                 & A\otimes A \arrow[d, "\simeq"']                                &                                                                                       &                 \\
                                                                                      & =               & (\mathbb{1} \otimes A) \otimes (\mathbb{1}\otimes A) \arrow[d] &                                                                                       &                 \\
                                                                                      &                 & (X\otimes A) \otimes (Y \otimes A) \arrow[r]                   & A\otimes A \arrow[r, "\textnormal{mult}"']                & A
\end{tikzcd}
    \end{center}
    This corresponds to the tensor product action given in the statement. 
\end{proof}
\begin{corollary}\label{cor7}
    Let $\mathcal{O}^{\otimes} = \mathbb{E}_1^{\otimes}$ and $A\in \alg_{\mathbb{E}_1}(\mathcal{C})$, and assume that the morphism object exists. Then there exists a center $\mathfrak{Z}(A)\in \alg_{\mathbb{E}_1}(\alg_{\mathbb{E}_1}(\mathcal{C}))$ with underlying object $\mathfrak{Z}(A)(\mathfrak{a})(\mathfrak{a}) \simeq \mor_{\textnormal{Mod}_A^{\mathbb{E}_1}(\mathcal{C}^{\otimes} \times_{\fin} \mathbb{E}_1^{\otimes})_{\mathfrak{a}}}(A,A)$. The outer multiplication is given by the composition product
    \begin{align*}
       \circ:  (\mathfrak{Z}(A)(\mathfrak{a})(\mathfrak{a}) \otimes \mathfrak{Z}(A)(\mathfrak{a})(\mathfrak{a})) \otimes A \xrightarrow{\textnormal{id}\otimes \alpha} \mathfrak{Z}(A)(\mathfrak{a})(\mathfrak{a}) \otimes A \xrightarrow{\alpha} A.
    \end{align*}
    The inner multiplication is given by the convolution product $\ast$ described in Corollary \ref{cor8}. View the center as an object in $\alg_{\mathbb{E}_1}(\alg_{/\mathbb{E}_1}(\mathcal{E}))$ with the monoidal structure on $\alg_{/\mathbb{E}_1}(\mathcal{E})$ given by Proposition \ref{prop9}. The two multiplications assemble into a square 
    \begin{center}
        \begin{tikzcd}
\Bigl(\mathfrak{Z}(A)(\mathfrak{a})\otimes \mathfrak{Z}(A)(\mathfrak{a})\Bigr)(\mathfrak{a})\otimes \Bigl(\mathfrak{Z}(A)(\mathfrak{a})\otimes \mathfrak{Z}(A)(\mathfrak{a})\Bigr)(\mathfrak{a}) \arrow[d] \arrow[rr] &  & \mathfrak{Z}(A)(\mathfrak{a})(\mathfrak{a}) \otimes \mathfrak{Z}(A)(\mathfrak{a})(\mathfrak{a}) \arrow[d, "\ast"] \\
\Bigl(\mathfrak{Z}(A)(\mathfrak{a})\otimes \mathfrak{Z}(A)(\mathfrak{a})\Bigr)(\mathfrak{a}) \arrow[rr, "\circ"']                                                                                           &  & \mathfrak{Z}(A)(\mathfrak{a})(\mathfrak{a})                                                                      
\end{tikzcd}
    \end{center}
    in $\mathcal{E}_{\mathfrak{a}}$. Since $\mathfrak{Z}(A)(\mathfrak{a})(\mathfrak{a})$ is the morphism object and thus final, there is a contractible choice of 2-simplices filling this square.
\end{corollary}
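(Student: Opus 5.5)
The plan is to assemble the statement from Corollary \ref{cor8}, Proposition \ref{prop7}, and the discussion preceding Proposition \ref{cor6}, working throughout in the $\mathbb{E}_1$-monoidal $\infty$-category $\mathcal{E}^{\otimes}$ from the proof of Theorem \ref{thm8}.

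First, Corollary \ref{cor8} gives a centralizer $\mathfrak{Z}(\textnormal{id}_A)\in \alg_{\mathbb{E}_1}(\mathcal{C})$ whose underlying object is the morphism object of $A$ as an $\mathbb{E}_1$-operadic $A$-module. Its multiplication is the convolution $\ast$. By Proposition \ref{prop7}, applied to the left tensoring of $\alg_{\mathbb{E}_1}(\mathcal{C})$ over itself, this centralizer admits an essentially unique $\aass$-algebra structure in $\alg_{\mathbb{E}_1}(\mathcal{C})$. That structure exhibits it as a center $\mathfrak{Z}(A)\in\alg_{\mathbb{E}_1}(\alg_{\mathbb{E}_1}(\mathcal{C}))$. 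Evaluating at the two colors identifies $\mathfrak{Z}(A)(\mathfrak{a})(\mathfrak{a})$ with the morphism object, and identifies the inner multiplication, which is the $\mathbb{E}_1$-structure of $\mathfrak{Z}(A)(\mathfrak{a})$ in $\mathcal{C}$, with $\ast$.

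Second, I identify the outer multiplication. By Lemma \ref{lem5} and the proof of Proposition \ref{prop7}, the outer algebra structure is the one Proposition \ref{cor6} produces on a final object of the endomorphism $\infty$-category $(\mathcal{C}'_{\mathfrak{a}})[\textnormal{id}_A]$. Applying Proposition \ref{prop9} and the diagram following Proposition \ref{cor6} inside $\alg_{/\mathbb{E}_1}(\mathcal{E})$, the multiplication is characterized as the unique map over $A$ making the triangle with $\alpha(\textnormal{id}\otimes\alpha)$ commute. This is precisely the composition product $\circ$ as displayed. Here I would check that the tensor product in $\mathcal{C}_{\mathfrak{a}}[M]$ is computed levelwise in $\mathcal{E}$, so that evaluation at $\mathfrak{a}$ commutes with everything. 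This follows because algebra categories over $\mathcal{E}$ have objectwise structure (\cite[Corollary 3.2.2.5]{HA}).

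Third, I construct the square. The algebra $\mathfrak{Z}(A)$ is an $\mathbb{E}_1$-algebra in $\alg_{/\mathbb{E}_1}(\mathcal{E})$, so its outer multiplication $\mathfrak{Z}(A)(\mathfrak{a})\otimes\mathfrak{Z}(A)(\mathfrak{a})\to\mathfrak{Z}(A)(\mathfrak{a})$ is a morphism of inner $\mathbb{E}_1$-algebras. Evaluating this morphism on the binary inner operation gives the square, with the left vertical map the inner multiplication of the tensor product algebra, which is $(\ast\otimes\ast)$ precomposed with the shuffle $\textnormal{id}\otimes\tau\otimes\textnormal{id}$. The same holds for the square's two composites: both are morphisms in $\mathcal{E}_{\mathfrak{a}}$ from the fourfold tensor product, equipped with its induced action, to $(\mathfrak{Z}(A)(\mathfrak{a})(\mathfrak{a}),\alpha)$, which is final in $\mathcal{E}_{\mathfrak{a}}$ by Theorem \ref{thm8} and Proposition \ref{prop6}. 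The mapping space into a final object is contractible, so the space of 2-simplices filling the square, which is a space of paths between two points of a contractible space, is contractible.

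The main obstacle is the bookkeeping in the second and third steps: one must make sure that both composites are genuinely morphisms over $A$ with the same induced action on the source, not just maps in $\mathcal{C}$. This requires verifying, via the explicit description of the monoidal product in $\mathcal{E}_{\mathfrak{a}}$ from the proof of Corollary \ref{cor8} (\cite[Theorem 2.2.2.4]{HA}), that the interchange shuffle is compatible with the actions. I would handle it by writing both actions of $\mathfrak{Z}^{\otimes4}$ on $A$ explicitly and matching them using the symmetric monoidal coherence of $\mathcal{C}$.
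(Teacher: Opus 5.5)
Your proposal is correct and follows essentially the paper's route. The outer multiplication is identified with $\circ$ via Proposition \ref{prop7} and the endomorphism monoidal structure of Proposition \ref{prop9}. Your observation that the outer multiplication is a morphism of inner $\mathbb{E}_1$-algebras is exactly the paper's step of evaluating $\mathfrak{Z}(A)$ on the interchange square in $\mathbb{E}_1^{\otimes}\times\mathbb{E}_1^{\otimes}$ built from $(\mu,\textnormal{id})$ and $(\textnormal{id},\mu)$, and contractibility of fillers again comes from finality of $(\mathfrak{Z}(A)(\mathfrak{a})(\mathfrak{a}),\alpha)$ in $\mathcal{E}_{\mathfrak{a}}$. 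The compatibility with the actions that you flag as the main obstacle comes for free once the square is produced as the image of that square under the algebra viewed in $\alg_{\mathbb{E}_1}(\alg_{/\mathbb{E}_1}(\mathcal{E}))$, since every edge is then already a morphism in $\mathcal{E}_{\mathfrak{a}}$.
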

\begin{proof}
    By Proposition \ref{prop7}, the action $\circ$ above is the one induced by the monoidal structure on $\alg_{/\mathbb{E}_1}(\mathcal{E})$. The square is given by evaluating $\mathfrak{Z}(A)$ at the square 
    \begin{equation}\label{dig2}
          \begin{tikzcd}[column sep = large]
{(\langle 2\rangle,\langle 2\rangle)} \arrow[d, "{(\mu_0,\textnormal{id}_{\langle 2\rangle})}"'] \arrow[r, "{(\textnormal{id}_{\langle 2\rangle},\mu_0)}"] & {(\langle 2\rangle,\langle 1\rangle)} \arrow[d, "{(\mu_0,\textnormal{id}_{\langle 1\rangle})}"] \\
{(\langle 1\rangle,\langle 2\rangle)} \arrow[r, "{(\textnormal{id}_{\langle 1\rangle},\mu_0)}"']                                                 & {(\langle 1\rangle,\langle 1\rangle)}                                                
\end{tikzcd}
     \end{equation}
in $\mathbb{E}_1^{\otimes} \times \mathbb{E}_1^{\otimes}$, i.e. it is given by 
    \begin{center}
        \begin{tikzcd}
{\mathfrak{Z}(A)(\{\mathfrak{a},\mathfrak{a}\})(\{\mathfrak{a},\mathfrak{a}\})} \arrow[d] \arrow[r] & {\mathfrak{Z}(A)(\mathfrak{a})(\{\mathfrak{a},\mathfrak{a}\})} \arrow[d] \\
{\mathfrak{Z}(A)(\{\mathfrak{a},\mathfrak{a}\})(\mathfrak{a})} \arrow[r]                            & \mathfrak{Z}(A)(\mathfrak{a})(\mathfrak{a})                             
\end{tikzcd}.
    \end{center}
\end{proof}
\begin{remark}
    Note that Corollary \ref{cor7} combined with Theorem \ref{thm7} implies that the morphism object completely determines the $\mathbb{E}_2$-algebra structure of the center.
\end{remark}
\newpage
\setcounter{theorem}{0}
\chapter{The Hochschild complex as a center}\label{The_hochschild_complex_as_a_center}
As described in Section \ref{the_little_cubes_operads}, the center of an $\mathbb{E}_1$-algebra has the structure of an $\mathbb{E}_1$-algebra in $\mathbb{E}_1$-algebras, which via the Dunn additivity theorem is equivalent to the structure of an $\mathbb{E}_2$-algebra. In the first part of this chapter, we develop a method to compute the Gerstenhaber bracket of such an $\mathbb{E}_2$-algebra coming from an $\mathbb{E}_1$-algebra in $\mathbb{E}_1$-algebras.\displaypar 
We use this result to prove our main result Theorem \ref{thm1}, stating that the underlying Gerstenhaber algebra structure of the $\mathbb{E}_1$-center of an associative $\field$-algebra recovers the classical Hochschild cohomology. \displaypar
Globalizing, we give a definition of the Hochschild cochain complex of a general scheme over $\field$ in terms of the $\mathbb{E}_1$-center of its structure sheaf, which by construction is again an $\mathbb{E}_2$-algebra. For smooth varieties, Kontsevich defined the Hochschild cochain complex as a quasi-coherent sheaf glued from the local Hochschild complexes. We show in Theorem \ref{thm6} that the center definition recovers this sheaf, including the algebraic structure in hypercohomology.
\section{The bracket operation on 2-algebras}\label{The_bracket_on_a_2_algebra}
The classical Eckmann-Hilton argument shows that two compatible associative algebra structures on a set are equivalent to a commutative algebra structure. If the two associative multiplications are labeled $\ast$ and $\cdot$, the argument first identifies the multiplication $\cdot$ with the opposite multiplication of $\ast$
\begin{align*}
    a\cdot b = (1\ast a)\cdot (b\ast 1) = (1\cdot b)\ast(a \cdot 1) = b \ast a,
\end{align*}
and then further identifies the opposite multiplication of $\ast$ with the opposite multiplication of $\cdot$
\begin{align*}
    b\ast a = (b\cdot 1)\ast(1\cdot a) = (b\ast 1)\cdot (1\ast a) = b \cdot a.
\end{align*}
Together, this yields $a\cdot b = b\cdot a$. The interchange law $(a\ast b) \cdot (c \ast d) = (a\cdot c) \ast( b\cdot d)$ can be expressed as the commutativity of a certain square\footnote{This is the first diagram in Theorem \ref{thm11}.} in $\mathbb{E}_1^{\otimes} \times \mathbb{E}_1^{\otimes}$. The identities of the form $a\cdot b = (1\ast a)\cdot (b\ast 1)$ correspond to certain semi-inert restriction maps\footnote{These are the triangle diagrams in Theorem \ref{thm11}.} in $\mathbb{E}_1^{\otimes} \times \mathbb{E}_1^{\otimes}$. In particular, in the $\infty$-categorical setting, all the identities used in the Eckmann-Hilton argument are witnessed by 2-simplices in the product $\mathbb{E}_1^{\otimes} \times \mathbb{E}_1^{\otimes}$. This yields a non-trivial homotopy $a\cdot b \simeq b\cdot a$. Repeating this argument with the respective opposite multiplications, i.e. with the roles of $a$ and $b$ reversed, yields another homotopy $b\cdot a \simeq a\cdot b$. Composing these two homotopies, we obtain a non-trivial homotopy $a\cdot b \simeq a\cdot b$. In this section, we show that this corresponds to the non-trivial 2-morphism in $\mathbb{E}_2^{\otimes}$ given by the double twist $\gamma_0$.
\subsection{The bracket operation of an $\mathbb{E}_2$-algebra}\label{the_bracket_operation_of_an_e2_algebra}
Consider the topological operad $\mathbb{E}^T_2$ of little 2-squares defined in Section \ref{the_little_cubes_operads} and its corresponding dg operad $C_{\ast}(\mathbb{E}^T_2)$. If $C_{\ast}(\mathbb{E}^T_2) \rightarrow \e_A$ is an operad algebra in $\ch(\field)$, we have an action of the 2-ary operation space
    \begin{align*}
        C_{\ast}(\mathbb{E}^T_2(2)) \otimes A^{\otimes 2} \rightarrow A,
    \end{align*}
and recalling that $\mathbb{E}^T_2(2) \simeq S^1$, taking homology yields a map
    \begin{align*}
        H_{\ast}(S^1) \otimes H_{\ast}(A)^{\otimes 2} \rightarrow H_{\ast}(A).
    \end{align*}
Since $H_{\ast}(S^1)\cong \mathbb{Z}[p] \oplus \mathbb{Z}[\gamma]$ for some choice of basepoint $p\in S^1$ and generating loop $\gamma: [0,1] \rightarrow S^1$, this yields two 2-ary operations on $H_{\ast}(A)$; one of degree 0 induced by $[p]$
    \begin{align*}
        \smile: H_{\ast}(A)\otimes H_{\ast}(A) \rightarrow H_{\ast}(A)
    \end{align*}
and one of degree 1 induced by $[\gamma]$
    \begin{align*}
        [\cdot,\cdot]: H_{\ast}(A) \otimes H_{\ast}(A) \rightarrow H_{\ast}(A)[-1].
    \end{align*}
Cohen showed in \cite{Coh} that these two operations make $H_{\ast}(A)$ into a Gerstenhaber algebra. In particular, the bracket operation on $H_{\ast}(A)$ is induced by the chain level operation $A^{\otimes 2} \rightarrow A$ corresponding to a choice of generating loop $\gamma$ of the homology of $S^1$. 
\begin{definition}
    Let $\mathcal{C}^{\otimes}$ be a symmetric monoidal $\infty$-category and let $A: \mathbb{E}_2^{\otimes} \rightarrow \mathcal{C}^{\otimes}$ be an $\mathbb{E}_2$-algebra in $\mathcal{C}^{\otimes}$. For $t\in [0,2\pi)$, we call the image under $A$ of $\gamma_t\in \map_{\mathbb{E}_2^{\otimes}}(\langle 2\rangle,\langle 1\rangle)_{1}$ the \textbf{bracket operation} of $A$ at $\mu_t\in \map_{\mathbb{E}_2^{\otimes}}(\langle 2\rangle, \langle 1 \rangle)_0$.
\end{definition}
\subsection{The bracket operation of a 2-algebra}\label{the_bracket_operation_of_a_2_algebra}
We now consider the special case of $\mathbb{E}_2$-algebras obtained via the Dunn additivity theorem \ref{thm7}. We have seen at the end of Section \ref{the_little_cubes_operads} that these are precisely the types of $\mathbb{E}_2$-algebras obtained as the center of an $\mathbb{E}_1$-algebra in a symmetric monoidal $\infty$-category. The main result of this section, Corollary \ref{cor1}, will enable us to recover the classical Gerstenhaber bracket in cohomology when defining the Hochschild complex as a center.
\begin{definition}
Let $\mathcal{C}^{\otimes}$ be a symmetric monoidal $\infty$-category. A \textbf{2-algebra} in $\mathcal{C}^{\otimes}$ is a bifunctor of $\infty$-operads $A: \mathbb{E}_1^{\otimes} \times \mathbb{E}_1^{\otimes} \rightarrow \mathcal{C}^{\otimes}$.
\end{definition}
\begin{remark}
Note that by Corollary \ref{cor11}, a 2-algebra is equivalently an object $A\in\alg_{\mathbb{E}_1}(\alg_{\mathbb{E}_1}(\mathcal{C}))$.
\end{remark}
Fix a 2-algebra $A: \mathbb{E}_1^{\otimes} \times \mathbb{E}_1^{\otimes} \rightarrow \mathcal{C}^{\otimes}$. The Dunn additivity theorem \ref{thm7} tells us that there exists an $\mathbb{E}_2$-algebra $\tilde{A}: \mathbb{E}_2^{\otimes} \rightarrow \mathcal{C}^{\otimes}$ such that the restriction of $\tilde{A}$ along $\rho: \mathbb{E}_1^{\otimes} \times \mathbb{E}_1^{\otimes} \rightarrow \mathbb{E}_2^{\otimes}$ is equivalent to $A$ in the category of bifunctors. Fixing such an $\mathbb{E}_2^{\otimes}$-algebra $\tilde{A}$, we can ask whether it is possible to express the bracket operations $\tilde{A}(\gamma_t)$ in terms of the original 2-algebra $A$. 
\begin{notation}
\begin{itemize}
    \item By abuse of notation, we denote by $A\in \mathcal{C}$ the image $A(\langle 1\rangle,\langle1 \rangle)$ and similarly by $\tilde{A} \in \mathcal{C}$ the image $\tilde{A}(\langle 1\rangle)$.
    \item We commonly label a morphism $f:\langle m\rangle \rightarrow \langle n\rangle$ in $\fin$ by its sequence of images $(f(1),\dots,f(m))$. We denote by $\tau: \langle 2\rangle \rightarrow \langle 2\rangle$ the switch map $\tau = (2,1)$, and by $\iota_{i,j}: \langle 2\rangle \rightarrow \langle 4\rangle$ the inclusion $1\mapsto i$, $2\mapsto j$.
    \item A map $F\in \map_{\mathbb{E}_k^{\otimes}}(\langle m\rangle,\langle n\rangle)_0$ is given by specifying a map $f: \langle m\rangle \rightarrow \langle n\rangle$ in $\fin$ and, for every $j\in \langle n\rangle^{\circ}$, specifying an element $F_j\in \mathbb{E}_k^T(f^{-1}(j))$. If the underlying map $f$ is clear, we denote such a morphism by the sequence $(F_1,\dots, F_n)$ of rectangular embeddings.
    \item Let $f: \langle m\rangle \rightarrow \langle n\rangle$ be semi-inert\footnote{See \cite[Definition 3.3.1.1]{HA}.} in $\fin$. We fix a coCartesian lift $F$ of $f$ in $\mathbb{E}_k^{\otimes}$ by choosing $F_j \in \mathbb{E}_k^T(f^{-1}(j))$ to be the identity map $\square^k \times \{f^{-1}(j)\} \rightarrow \square^k$ for each $j\in \langle n\rangle^{\circ}$ such that $f^{-1}(j)$ is non-empty. We denote those maps in $\mathbb{E}_k^{\otimes}$ by the semi-inert map they lift. 
\end{itemize}
\end{notation}
Recall that $\mu := \mu_0\in \map_{\mathbb{E}_1^{\otimes}}(\langle 2\rangle, \langle 1\rangle)_0$ corresponds to the following embedding of little intervals
\begin{center}
    \begin{tikzpicture}[x=0.75pt,y=0.75pt,yscale=-.8,xscale=.8]
    \draw    (220,100.5) -- (341,100.5) ;
    \draw [shift={(341,100.5)}, rotate = 180] [color={rgb, 255:red, 0; green, 0; blue, 0 }  ][line width=0.75]    (0,5.59) -- (0,-5.59)   ;
    \draw [shift={(280.5,100.5)}, rotate = 180] [color={rgb, 255:red, 0; green, 0; blue, 0 }  ][line width=0.75]    (0,5.59) -- (0,-5.59)   ;
    \draw [shift={(220,100.5)}, rotate = 180] [color={rgb, 255:red, 0; green, 0; blue, 0 }  ][line width=0.75]    (0,5.59) -- (0,-5.59)   ;
    \draw (306,82) node [anchor=north west][inner sep=0.75pt]   [align=left] {1};
    \draw (244,80.5) node [anchor=north west][inner sep=0.75pt]   [align=left] {2};
\end{tikzpicture}
\end{center}
The key observation in expressing the bracket operations in terms of the original 2-algebra is given by the following theorem.
\begin{theorem}\label{thm11}
     The images under $A$ of the 2-simplices in $\mathbb{E}^{\otimes}_1\times \mathbb{E}^{\otimes}_1$ 
     \begin{center}
          \begin{tikzcd}[column sep = large]
{(\langle 2\rangle,\langle 2\rangle)} \arrow[d, "{(\mu,\textnormal{id}_{\langle 2\rangle})}"'] \arrow[r, "{(\textnormal{id}_{\langle 2\rangle},\mu)}"] & {(\langle 2\rangle,\langle 1\rangle)} \arrow[d, "{(\mu,\textnormal{id}_{\langle 1\rangle})}"] \\
{(\langle 1\rangle,\langle 2\rangle)} \arrow[r, "{(\textnormal{id}_{\langle 1\rangle},\mu)}"']                                                 & {(\langle 1\rangle,\langle 1\rangle)}                                                
\end{tikzcd}
     \end{center}
     
     \begin{center}
          \begin{tikzcd}[column sep = large]
{(\langle 1\rangle,\langle 2\rangle)} \arrow[d, "{(\textnormal{id}_{\langle 1\rangle},(2,3))}"'] \arrow[rd, "{(\textnormal{id}_{\langle 1\rangle},\textnormal{id}_{\langle 2\rangle})}"] &                                       \\
{(\langle 1\rangle,\langle 4\rangle)} \arrow[r, "{(\textnormal{id}_{\langle 1\rangle},(\mu,\mu))}"']                                                                             & {(\langle 1\rangle,\langle 2\rangle)}
\end{tikzcd}
     \end{center}

     \begin{center}
         \begin{tikzcd}[column sep = large]
{(\langle 2\rangle,\langle 1\rangle)} \arrow[rd, "{(\tau,\textnormal{id}_{\langle 1\rangle})}"'] \arrow[r, "{((2,3),\textnormal{id}_{\langle 1\rangle})}"] & {(\langle 4\rangle,\langle 1\rangle)} \arrow[d, "{((\mu,\mu),\textnormal{id}_{\langle 1\rangle})}"] \\
                                                                                                                                               & {(\langle 2\rangle,\langle 1\rangle)}                                                    
\end{tikzcd}
     \end{center}
     can be composed to yield a 2-simplex in $\mathcal{C}^{\otimes}$ with boundary 
     \begin{center}
         \begin{tikzcd}[column sep = large]
{(A,A)} \arrow[rd, "{\iota_{2,3}}"] \arrow[rrd, "{\textnormal{id}_{(A,A)}}", bend left] \arrow[rdd, "\tau"', bend right] &                                                                               &                          \\
                                                                                                                   & {(A,A,A,A)} \arrow[d, "{(m_1,m_1)\circ \tau_{2,3}}"] \arrow[r, "{(m_2,m_2)}"] & {(A,A)} \arrow[d, "m_1"] \\
                                                                                                                   & {(A,A)} \arrow[r, "m_2"']                                                     & A                       
\end{tikzcd}
     \end{center}
     whose homotopy class is identified under the equivalence $\tilde{A}\circ \rho \simeq A$ with the image under $\tilde{A}$ of the half-twist $\sigma_0$ between $\mu_0$ and $\mu_\pi$ in $\map_{\mathbb{E}_2^{\otimes}}(\langle 2\rangle,\langle 1\rangle)_1$.
\end{theorem}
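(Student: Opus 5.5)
Since $A$ is equivalent to $\tilde{A}\circ\rho$ in the category of bifunctors, the plan is to reduce everything to a computation inside the topological operad $\mathbb{E}_2^{T,\otimes}$ and then transport it back to $\mathcal{C}^{\otimes}$. First, I would push the three given 2-simplices of $\mathbb{E}_1^{\otimes}\times\mathbb{E}_1^{\otimes}$ forward along $\rho$. Recall that a 2-simplex in $\mathbb{E}_2^{\otimes}$ is, by the description in Section \ref{the_little_cubes_operads}, a path in a mapping space of $\mathbb{E}_2^{T,\otimes}$ from a composite to a third edge. The square (the interchange law) yields the composites $\rho(\mu,\mathrm{id})\circ\rho(\mathrm{id},\mu)$ and $\rho(\mathrm{id},\mu)\circ\rho(\mu,\mathrm{id})$ out of $\langle 4\rangle$. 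Both composites are given by the same rectilinear embedding of four squares into $\square^2$, namely the $2\times 2$ grid. So the square is filled by constant paths, up to the relabeling $\tau_{2,3}$ forced by the lexicographic ordering in $\wedge$.

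The two triangles involve the semi-inert maps $(2,3)$, which by our convention are lifted by identity embeddings. Their images under $\rho$ are the paths of 2-ary configurations obtained by ``forgetting'' two of the four grid squares. For the first triangle, the two remaining cubes are the diagonal ones, and the composite $\rho(\mathrm{id},(\mu,\mu))\circ\rho(\mathrm{id},(2,3))$ is the configuration of two cubes stacked in the second coordinate. For the second triangle, the remaining cubes sit in the first coordinate, in swapped order.

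Next, I would glue the three 2-simplices along their common edges. By the inner horn filling in $\mathbb{E}_2^{\otimes}$, this produces a 2-simplex with boundary $(\mathrm{id},\tau,\ldots)$ as displayed in the statement. Because $A\simeq\tilde{A}\circ\rho$ as bifunctors and $\tilde{A}$ is a functor of $\infty$-categories, the composite 2-simplex in $\mathcal{C}^{\otimes}$ is identified, up to homotopy of 2-simplices with fixed boundary, with $\tilde{A}$ applied to the composite 2-simplex in $\mathbb{E}_2^{\otimes}$. Different horn fillers form a contractible space, so the homotopy class is well defined.

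The remaining task is purely topological. Concatenating the paths in $\mathbb{E}_2^T(2)$, the two cubes are moved as follows:
\begin{enumerate}
\item starting at $\mu_0$ (side by side), they are first expanded into the diagonal grid position;
\item they are then moved through the grid (constant, by the square);
\item they are finally collapsed to the vertically separated, swapped configuration, which is $\mu_\pi$ after composing with $\tau$.
\end{enumerate}
Under the homotopy equivalence $\mathbb{E}_2^T(2)\simeq S^1$ given by the direction of the ray between centers, I will check that the direction angle moves monotonically from $0$ to $\pi$ through one half-plane, never passing $-\pi/2$. Hence the loop is homotopic rel endpoints to the half-twist $\sigma_0$, and not to its inverse.

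The main obstacle is this last step together with the bookkeeping of orientations and orderings. One must track precisely how the lexicographic identification $\langle 2\rangle\wedge\langle 2\rangle\cong\langle 4\rangle$, the maps $\iota_{2,3}$ and $\tau_{2,3}$, and the chosen lifts of semi-inert maps interact, so that the resulting path runs through the upper (and not the lower) half-circle. I would carry this out by writing explicit coordinates for the four grid cubes and computing the center-ray angle at each stage. The rest is formal, given the 2-simplex description of $\mathbb{E}_k^{\otimes}$.
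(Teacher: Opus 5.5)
Your final, topological step fails as you have set it up. The outer edges of the composite are $m_1$ and $m_2\circ\tau$. Under $\rho$ these are $\rho(\mu,\mathrm{id}_{\langle 1\rangle})$ (cubes side by side) and $\rho(\mathrm{id}_{\langle 1\rangle},\mu)\circ\tau$ (cubes stacked in the second coordinate, labels swapped). The latter is not $\mu_\pi$: precomposing with $\tau$ reverses the center ray, so a vertically stacked configuration stays vertically stacked.

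Note also that the two triangles by themselves are paths of pairs of unary embeddings lying over $\mathrm{id}$ resp.\ $\tau\colon\langle 2\rangle\to\langle 2\rangle$, not paths of 2-ary configurations. Paths in $\mathbb{E}_2^T(2)$ only arise after whiskering: the first triangle with $\rho(\mu,\mathrm{id}_{\langle 1\rangle})$, the square with $\iota_{2,3}$, and the second triangle with $\rho(\mathrm{id}_{\langle 1\rangle},\mu)$. It is this last whiskering that makes the endpoint vertical.

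With the grid and ray conventions of the paper's figures, the middle stage puts cube $1$ in quadrant $2$ (bottom right) and cube $2$ in quadrant $3$ (top left). So the angle runs $0\to-\pi/4\to-\pi/2$, and your check ``from $0$ to $\pi$, never passing $-\pi/2$'' would fail: the path ends exactly there. What the argument proves is that the composite is one quarter of the double twist $\gamma_0$, running from $\mu_0$ to the configuration at a right angle to it. This is the sense of ``half-twist'' in the paper's picture; compare the four sections of the double twist drawn in the Introduction. It is also the only reading compatible with Corollary \ref{cor1}, since four rotations by $\pi$ would compose to a loop of total angle $4\pi$, twice $\gamma_0$. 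You should aim for, and state, the quarter-turn identification.

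Your route to composability differs from the paper's and is sound once made explicit. In $\mathbb{E}_1^{\otimes}\times\mathbb{E}_1^{\otimes}$ the three 2-simplices share no edges. The paper glues their $A$-images directly in $\mathcal{C}^{\otimes}$, using only that $A$ is a bifunctor: inert pairs go to inert maps, axiom (2) of an $\infty$-operad compares the maps $(A,A,A,A)\to(A,A)$, and contractibility of $\mathcal{C}^{\otimes}_{\langle 0\rangle}$ matches the two unit maps hidden in $\iota_{2,3}$.

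You glue after applying $\rho$, where the edges agree on the nose:
\begin{itemize}
\item $\rho(\mathrm{id}_{\langle 1\rangle},(2,3))=\rho((2,3),\mathrm{id}_{\langle 1\rangle})$, because $\mathbb{E}_2^T(0)$ is a point.
\item Both composites around the square are literally $\rho(\mu,\mu)$. So no relabeling by $\tau_{2,3}$ occurs in $\mathbb{E}_2^{T,\otimes}$; $\tau_{2,3}$ only appears when $A(\langle 2\rangle,\langle 2\rangle)$ is written as $(A,A,A,A)$.
\item $\rho(\mu,\mathrm{id}_{\langle 2\rangle})$ equals the second triangle's edge precisely when that $(\mu,\mu)$ has underlying map $(1,2,1,2)$. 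This is also what makes its composite with $(2,3)$ equal to $\tau$.
\end{itemize}
This gives composability for $\tilde{A}\circ\rho$. For the statement about $A$, you must say explicitly that you transport along the components of $A\simeq\tilde{A}\circ\rho$ at $(\langle 2\rangle,\langle 2\rangle)$, $(\langle 1\rangle,\langle 4\rangle)$ and $(\langle 4\rangle,\langle 1\rangle)$. This is cleaner than the paper's hand check, at the price of making the first half of the statement depend on Dunn additivity.
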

\begin{proof}
    First check that the 2-simplices in $\mathbb{E}_1^{\otimes}\times \mathbb{E}_1^{\otimes}$ indeed induce composable 2-simplices in $\mathcal{C}^{\otimes}$ with the depicted boundaries. Recall that a bifunctor of $\infty$-operads sends pairs of inert maps to inert maps. Therefore, if $\rho^i: \langle m\rangle \rightarrow \langle 1 \rangle$ is the inert map $i\mapsto 1$, $i\neq j \mapsto \ast$, the map $A(\rho^i,\text{id}): A(\langle m\rangle, \langle n\rangle) \rightarrow A(\langle 1\rangle, \langle n\rangle)$ must be an inert lift in $\mathcal{C}^{\otimes}$ of $\rho^i: \langle mn\rangle \rightarrow \langle 1\rangle$. A similar argument holds for $\rho^j$ in the second component. This shows that under the identification $\mathcal{C}^{\otimes}_{\langle n\rangle} \simeq \mathcal{C}^{\times n}$, the object $A(\langle m\rangle, \langle n\rangle)$ is equivalent to the tuple $(A,\dots, A)$ with $m$ entries. Applying $A$ to the diagrams in $\mathbb{E}_1^{\otimes} \times \mathbb{E}_1^{\otimes}$, we hence get diagrams
    \begin{center}
       \begin{tikzcd}
                                                                                  & {(A,A,A,A)} \arrow[rr] \arrow[dd] &                                                                        & {(A,A)} \arrow[dd] \\
{A(\langle 2\rangle,\langle 2\rangle)} \arrow[dd] \arrow[rr] \arrow[ru, "\simeq"] &                                   & {A(\langle 2\rangle,\langle 1\rangle)} \arrow[dd] \arrow[ru, "\simeq"] &                    \\
                                                                                  & {(A,A)} \arrow[rr]                &                                                                        & A                  \\
{A(\langle 1\rangle,\langle 2\rangle)} \arrow[rr] \arrow[ru, "\simeq"]            &                                   & {A(\langle 1\rangle,\langle 1\rangle)} \arrow[ru, "\simeq"]            &                   
\end{tikzcd}
    \end{center}

    \begin{center}
        \begin{tikzcd}
                                                                                 & {(A,A)} \arrow[d] \arrow[rd]                                &         \\
{A(\langle 1\rangle,\langle 2\rangle)} \arrow[d] \arrow[rd] \arrow[ru, "\simeq"] & {(A,A,A,A)} \arrow[r]                                       & {(A,A)} \\
{A(\langle 1\rangle,\langle 4\rangle)} \arrow[r] \arrow[ru, "\simeq" near end]            & {A(\langle 1\rangle,\langle 2\rangle)} \arrow[ru, "\simeq"] &        
\end{tikzcd}
    \end{center}

    \begin{center}
       \begin{tikzcd}
                                                                                 & {(A,A)} \arrow[r] \arrow[rd]                                          & {(A,A,A,A)} \arrow[d] \\
{A(\langle 2\rangle,\langle 1\rangle)} \arrow[r] \arrow[rd] \arrow[ru, "\simeq"] & {A(\langle 4\rangle,\langle 1\rangle)} \arrow[d] \arrow[ru, "\simeq" near start] & {(A,A)}               \\
                                                                                 & {A(\langle 2\rangle,\langle 1\rangle)} \arrow[ru, "\simeq"]           &                      
\end{tikzcd}
    \end{center}
    It hence suffices to show that the respective back sides of the diagrams fit together into the depicted 2-simplex. To this end we check that the maps $(A,A,A,A) \rightarrow (A,A)$ in the square agree with the respective maps $(A,A,A,A)\rightarrow (A,A)$ in the triangles, and similarly for the two maps $(A,A) \rightarrow (A,A,A,A)$ in the different triangles. Note that by point (2) of the definition \cite[Definition 2.1.1.10]{HA} of $\infty$-operads, it suffices to show that each of those pairs of maps agrees after post-composition with coCartesian lifts of the $\rho^i$. Consider first the maps induced by $(\langle 2\rangle,\langle 2\rangle) \xrightarrow{\textnormal{id}_{\langle 2\rangle},\mu} (\langle 2\rangle,\langle 1\rangle)$ and $(\langle 1\rangle,\langle 4\rangle) \xrightarrow{\textnormal{id}_{\langle 1\rangle},(\mu,\mu)} (\langle 1\rangle,\langle 2\rangle)$. We have a factorization 
    \begin{center}
        \begin{tikzcd}
{(\langle 2\rangle,\langle 2\rangle)} \arrow[r, "{\textnormal{id},\mu}"] \arrow[rd, "{\rho^i,\textnormal{id}}"'] & {(\langle 2\rangle,\langle 1\rangle)} \arrow[r, "{\rho^i,\textnormal{id}}"] & {(\langle 1\rangle,\langle 1\rangle)} \\
                                                                                                                 & {(\langle 1\rangle,\langle 2\rangle)} \arrow[ru, "{\textnormal{id},\mu}"']  &                                      
\end{tikzcd}
    \end{center}
    in $\mathbb{E}_1^{\otimes}\times \mathbb{E}_1^{\otimes}$, where the unique bifunctor $\mathbb{E}_1^{\otimes} \times \mathbb{E}_1^{\otimes} \rightarrow \fin$ sends the lower left hand side map to the inert map $f_1= 1,2,\ast,\ast: \langle 4\rangle \rightarrow \langle 2\rangle$ if $i=1$ and to the inert map $f_2 =\ast, \ast, 1,2$ if $i = 2$. Since inert pairs are sent to inert maps by $A$, this diagram maps to 
    \begin{center}
        \begin{tikzcd}
{(A,A,A,A)} \arrow[r, "{A(\textnormal{id},\mu)}"] \arrow[rd] & {(A,A)} \arrow[r, "\rho^i"] & A \\
                                                            & {(A,A)} \arrow[ru, "m_2"']  &  
\end{tikzcd}.
    \end{center}
    where $(A,A,A,A)\rightarrow (A,A)$ is an inert lift of $f_1$ or $f_2$ respectively. On the other hand, we also have a factorization 
    \begin{center}
        \begin{tikzcd}
{(\langle 1\rangle,\langle 4\rangle)} \arrow[r, "{\textnormal{id},(\mu,\mu)}"] \arrow[rd, "{\textnormal{id},f_i}"'] & {(\langle 1\rangle,\langle 2\rangle)} \arrow[r, "{\textnormal{id},\rho^i}"] & {(\langle 1\rangle,\langle 1\rangle)} \\
                                                                                                                    & {(\langle 1\rangle,\langle 2\rangle)} \arrow[ru, "{\textnormal{id},\mu}"']  &                                      
\end{tikzcd}
    \end{center}
    This again is sent by $A$ to 
    \begin{center}
        \begin{tikzcd}[column sep = large]
{(A,A,A,A)} \arrow[rd] \arrow[r, "{A(\textnormal{id},(\mu,\mu))}"] & {(A,A)} \arrow[r, "\rho^i"] & A \\
                                                                   & {(A,A)} \arrow[ru, "m_2"']  &  
\end{tikzcd},
    \end{center}
    showing that our two maps $(A,A,A,A)\rightarrow (A,A)$ indeed agree up to homotopy. An analogous analysis can be carried out with the other two maps $(A,A,A,A)\rightarrow (A,A)$.\par 
    For the two inclusions $(\langle 1\rangle,\langle 2\rangle) \rightarrow (\langle 1\rangle,\langle 4\rangle)$ and $(\langle 2\rangle,\langle 1\rangle)\rightarrow (\langle 4\rangle,\langle 1\rangle)$, it suffices to show that the two unit maps coming from $(\langle 1\rangle,\langle 0\rangle) \rightarrow (\langle 1\rangle,\langle 1\rangle)$ and $(\langle 0\rangle,\langle 1\rangle)\rightarrow (\langle 1\rangle,\langle 1\rangle)$ agree. To see this, note that the composition
    \begin{align*}
        (\langle 1\rangle, \langle 2\rangle) \rightarrow (\langle 1\rangle, \langle 4\rangle) \xrightarrow{\rho^i} (\langle 1\rangle, \langle 1\rangle)
    \end{align*}
    is either inert (for $i=2,3$), or it factors as 
    \begin{align*}
        (\langle 1\rangle, \langle 2\rangle) \xrightarrow{\text{inert}} (\langle 1\rangle, \langle 0\rangle) \rightarrow (\langle 1\rangle, \langle 1\rangle)
    \end{align*}
    for $i=1,4$. The same holds for the other inclusion with the entries switched. To show that the two unit maps agree, note that both $(\langle 0\rangle,\langle 0\rangle) \rightarrow (\langle 1\rangle,\langle 0\rangle)$ and $(\langle 0\rangle,\langle 0\rangle) \rightarrow (\langle 0\rangle,\langle 1\rangle)$ are sent to the identity on the empty tuple by $A$, up to homotopy, since $\mathcal{C}^{\otimes}_{\langle 0\rangle}$ is contractible. Then the image of the diagram
    \begin{center}
        \begin{tikzcd}
                                                                       & {(\langle 1\rangle,\langle 0\rangle)} \arrow[rd] &                                       \\
{(\langle 0\rangle,\langle 0\rangle)} \arrow[rr] \arrow[ru] \arrow[rd] &                                                  & {(\langle 1\rangle,\langle 1\rangle)} \\
                                                                       & {(\langle 0\rangle,\langle 1\rangle)} \arrow[ru] &                                      
\end{tikzcd}
    \end{center}
    under $A$ then shows that the two unit maps are homotopic.\par
    We now show that the depicted 2-simplex in $\mathcal{C}^{\otimes}$ indeed corresponds to the image of the half-twist under $\tilde{A}$. To this end, examine the images of the three 2-simplices in $\mathbb{E}_1^{\otimes} \times \mathbb{E}_1^{\otimes}$ under $\rho: \mathbb{E}_1^{\otimes}\times \mathbb{E}_1^{\otimes} \rightarrow \mathbb{E}_2^{\otimes}$. We get 
    \begin{center}
        \includegraphics[scale=.7]{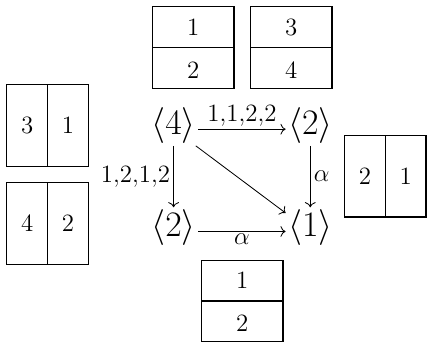}
    \end{center}

    \begin{center}
        \includegraphics[scale=.7]{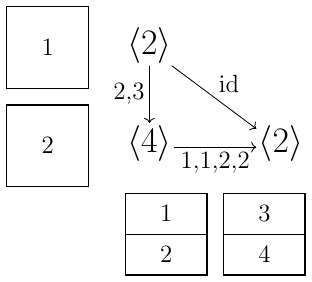}
    \end{center}

    \begin{center}
       \includegraphics[scale=.7]{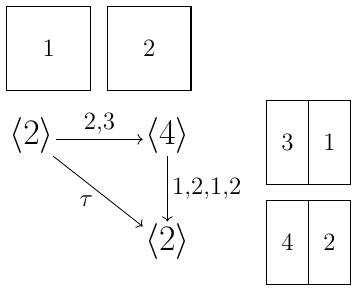}
    \end{center}
    with the paths forming the fillings of the two triangles in the square diagram both giving the constant path at
    \begin{center}
    \begin{tikzpicture}[x=1cm,y=1cm]
  \draw (0,0) rectangle (2,2);
  \draw (1,0) -- (1,2);
  \draw (0,1) -- (2,1);

  \node at (0.5,1.5) {3};
  \node at (1.5,1.5) {1};
  \node at (0.5,0.5) {4};
  \node at (1.5,0.5) {2};
\end{tikzpicture}
    \end{center}
    while the fillings of the triangle diagrams are given by continuously enlarging the respective rectangles. Composing those (as paths) in the topological category $\mathbb{E}_2^{T,\otimes}$ as depicted here
    \begin{equation}\label{dig3}
        \begin{tikzcd}
	{\langle 2\rangle} \\
	& {\langle 4\rangle} & {\langle 2\rangle} \\
	& {\langle 2\rangle} & {\langle 1\rangle}
	\arrow["{2,3}", from=1-1, to=2-2]
	\arrow["{\text{id}}", curve={height=-18pt}, from=1-1, to=2-3]
	\arrow["\tau"', curve={height=18pt}, from=1-1, to=3-2]
	\arrow["{1,1,2,2}", from=2-2, to=2-3]
	\arrow["{1,2,1,2}"', from=2-2, to=3-2]
	\arrow["\alpha", from=2-3, to=3-3]
	\arrow["\alpha"', from=3-2, to=3-3]
\end{tikzcd}
    \end{equation}
    we hence get the half-twist 
    \begin{center}
        \includegraphics[scale=.3]{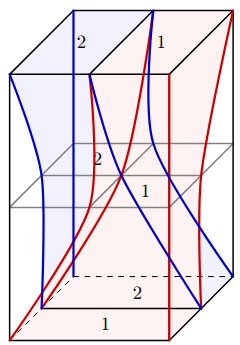}.
    \end{center}
    Applying $\tilde{A}$ to the above simplices (\ref{dig3}) induces a diagram of 2-simplices in $\mathcal{C}^{\otimes}$
    \begin{center}
        \begin{tikzcd}
{(\tilde{A},\tilde{A})} \arrow[rd] \arrow[rrd, bend left] \arrow[rdd, bend right] &                                                                  &                                   \\
                                                                                  & {(\tilde{A},\tilde{A},\tilde{A}, \tilde{A})} \arrow[r] \arrow[d] & {(\tilde{A},\tilde{A})} \arrow[d] \\
                                                                                  & {(\tilde{A},\tilde{A})} \arrow[r]                                & \tilde{A}                        
\end{tikzcd}
    \end{center}
    and by construction the isomorphism between $\tilde{A}\circ \rho$ and $A$ identifies this diagram with the one in the statement. Since maps between $\infty$-categories respect composition, this proves the claim.
\end{proof}
We can repeat this analysis for the other ``parts'' of the classical Eckmann-Hilton argument; using the inclusion $(1,4):\langle 2\rangle \rightarrow \langle 4\rangle$ and the opposite multiplication $\mu_1\in \map_{\mathbb{E}_1^{\otimes}}(\langle 2\rangle,\langle 1\rangle)_0$. In particular, we get representations of the image under $\tilde{A}$ of all four different parts of the double twist in terms of compositions of 2-simplices in the image of $A$. As a corollary, we obtain
\begin{corollary}\label{cor1}
    The homotopy class of the bracket on $\tilde{A}$ at $t=0$ can be produced as a composition of the following 2-simplices in $\mathcal{C}^{\otimes}$
    \begin{center}
         \begin{tikzcd}
{(A,A)} \arrow[rd, "{\iota_{2,3}}"] \arrow[rrd, "{\textnormal{id}}", bend left] \arrow[rdd, "\tau"', bend right] &                                                                               &                          \\
                                                                                                                   & {(A,A,A,A)} \arrow[d, "{(m_1,m_1)\circ \tau_{2,3}}"] \arrow[r, "{(m_2,m_2)}"] & {(A,A)} \arrow[d, "m_1"] \\
                                                                                                                   & {(A,A)} \arrow[r, "m_2"']                                                     & A                       
\end{tikzcd},
    \end{center}

    \begin{center}
        \begin{tikzcd}
{(A,A)} \arrow[rd, "{\iota_{1,4}}"] \arrow[rrd, "\textnormal{id}", bend left] \arrow[rdd, "\textnormal{id}"', bend right] &                                                                                                                                                        &                                            \\
                                                                                                              & {(A,A,A,A)} \arrow[d, "{(m_1^{\textnormal{op}},m_1^{\textnormal{op}})\circ \tau_{2,3}}"] \arrow[r, "{(m_2^{\textnormal{op}},m_2^{\textnormal{op}})}"] & {(A,A)} \arrow[d, "m_1^{\textnormal{op}}"] \\
                                                                                                              & {(A,A)} \arrow[r, "m_2^{\textnormal{op}}"']                                                                                                            & A                                         
\end{tikzcd},
    \end{center}

    \begin{center}
        \begin{tikzcd}
{(A,A)} \arrow[rd, "{\iota_{2,3}}"] \arrow[rrd, "\textnormal{id}", bend left] \arrow[rdd, "\tau"', bend right] &                                                                                                                                                        &                                            \\
                                                                                                         & {(A,A,A,A)} \arrow[d, "{(m_1^{\textnormal{op}},m_1^{\textnormal{op}})\circ \tau_{2,3}}"] \arrow[r, "{(m_2^{\textnormal{op}},m_2^{\textnormal{op}})}"] & {(A,A)} \arrow[d, "m_1^{\textnormal{op}}"] \\
                                                                                                         & {(A,A)} \arrow[r, "m_2^{\textnormal{op}}"']                                                                                                            & A                                         
\end{tikzcd},
    \end{center}

    \begin{center}
        \begin{tikzcd}
{(A,A)} \arrow[rd, "{\iota_{1,4}}"] \arrow[rrd, "\textnormal{id}", bend left] \arrow[rdd, "\textnormal{id}"', bend right] &                                                                               &                          \\
                                                                                                                          & {(A,A,A,A)} \arrow[d, "{(m_1,m_1)\circ \tau_{2,3}}"] \arrow[r, "{(m_2,m_2)}"] & {(A,A)} \arrow[d, "m_1"] \\
                                                                                                                          & {(A,A)} \arrow[r, "m_2"']                                                     & A                       
\end{tikzcd}.
    \end{center}
\end{corollary}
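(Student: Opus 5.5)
The plan is to decompose the double twist $\gamma_0$ in $\map_{\mathbb{E}_2^{\otimes}}(\langle 2\rangle,\langle 1\rangle)$ into four half-twists, each of which Theorem \ref{thm11} (or a variant of it) identifies with a composite of 2-simplices in the image of $A$. Concretely, $\gamma_0$ is homotopic, as a path in $\mathbb{E}_2^T(2)\simeq S^1$, to the concatenation of four quarter-turn paths: $\mu_0\to\mu_{\pi/2}\to\mu_\pi\to\mu_{3\pi/2}\to\mu_0$. However, Theorem \ref{thm11} produces half-twists between $\mu_0$ and $\mu_\pi$, which are paths of angle $\pi$ passing through either $\mu_{\pi/2}$ or $\mu_{3\pi/2}$. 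The first step is therefore to check which of the two halves of the circle each Eckmann--Hilton step traverses, and to pick the four pieces so that together they wind exactly once around $S^1$. I would verify this by applying $\rho$ to the three diagrams of Theorem \ref{thm11}, exactly as in that proof, for each of the four variants: the inclusion $\iota_{2,3}$ or $\iota_{1,4}$, combined with $m_1,m_2$ or their opposites $m_i^{\textnormal{op}}=\tilde A(\mu_1)$. One then reads off the resulting path of little squares.

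Second, I would reprove Theorem \ref{thm11} for the three remaining diagrams. The proof carries over verbatim. The composability checks use only that $A$ sends inert pairs to inert maps, the factorizations through $\rho^i$, and the contractibility of $\mathcal{C}^{\otimes}_{\langle 0\rangle}$ for the unit maps. For the $\iota_{1,4}$ diagrams, the left edge becomes the identity rather than $\tau$, since the outer entries $1,4$ are picked out. Under $\rho$, each diagram gives a path in $\mathbb{E}_2^T(2)$ from $\mu_0$ to $\mu_\pi$, or from $\mu_\pi$ to $\mu_0$, after identifying $\mu_1$ with the $\pi$-rotated configuration. These paths are built by enlarging rectangles inside the $2\times 2$ grid configuration.

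Third, I would concatenate. Each step of the classical argument, $a\cdot b=b\ast a$ and then $b\ast a=b\cdot a$, corresponds to one half-twist. The composite of the four 2-simplices in $\mathcal{C}^{\otimes}$ is the image under $A\simeq\tilde A\circ\rho$ of the composite path. Since $\tilde A$ preserves composition of 2-simplices up to coherent homotopy, the class of this composite equals $\tilde A$ applied to the class of the concatenated path. That path is a loop at $\mu_0$ of winding number one, hence homotopic to $\gamma_0$.

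The main obstacle is the winding bookkeeping: confirming that the four paths do not cancel, for instance by two of them traversing the same half-circle in opposite directions. This requires tracking the orientation of each half-twist, and whether the $\tau$-relabelling flips the direction of traversal. I would handle it by explicitly tracking the ray from the center of square 2 to the center of square 1 along each enlargement path, giving an angle function. I would then verify that the total change in angle is $2\pi$, with the $\iota_{1,4}$ pieces supplying the rotations through $3\pi/2$ and the $\iota_{2,3}$ pieces those through $\pi/2$.
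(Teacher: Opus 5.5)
Your overall route is the paper's. The paper proves the corollary only by remarking that the argument of Theorem~\ref{thm11} can be rerun with $\iota_{1,4}$ and with $\mu_1$ in place of $\mu_0$ to realize all four parts of the double twist. Your explicit angle-tracking would be a useful addition.

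However, the bookkeeping you sketch is wrong, and it is the only real content beyond Theorem~\ref{thm11}. None of the four diagrams is a half-twist from $\mu_0$ to $\mu_\pi$. Your first sentence was right: they are exactly the four quarter-turns. Read this off the boundaries, not off the phrase ``half-twist $\sigma_0$'' in Theorem~\ref{thm11}, which conflicts with the boundary displayed there ($m_1$ versus $m_2\circ\tau$). In every diagram one route ends in $m_1$ or $m_1^{\textnormal{op}}$, which $\rho$ sends to the horizontal configurations $\mu_0,\mu_\pi$. The other route ends in $m_2$ or $m_2^{\textnormal{op}}$, which $\rho$ sends to the vertical configurations $\mu_{\pi/2},\mu_{3\pi/2}$. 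So a single Eckmann--Hilton step such as $a\cdot b\simeq b\ast a$ is a quarter-turn, and two are needed for a half-twist. Your predicted grouping also fails. Suppose there were four half-twists between $\mu_0$ and $\mu_\pi$, the two $\iota_{1,4}$ ones through $\mu_{3\pi/2}$ and the two $\iota_{2,3}$ ones through $\mu_{\pi/2}$. Then every composable loop made from them has total angle $0$ or $\pm 4\pi$, never $\pm 2\pi$.

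Here is the correct picture. Use the paper's conventions: the first $\mathbb{E}_1$-factor gives the $x$-coordinate, the second the $y$-coordinate, and $\mu_0$ puts input $1$ on the right (resp.\ on top). So the grid is $3,1$ over $4,2$, and the opposite grid is $2,4$ over $1,3$. Track the ray from square $2$ to square $1$ through the diagonal configuration:
\begin{itemize}
\item the first diagram runs $\mu_0\to\mu_{3\pi/2}$ through angle $7\pi/4$;
\item the second runs $\mu_\pi\to\mu_{3\pi/2}$ through $5\pi/4$;
\item the third runs $\mu_\pi\to\mu_{\pi/2}$ through $3\pi/4$;
\item the fourth runs $\mu_0\to\mu_{\pi/2}$ through $\pi/4$.
\end{itemize}
So the loop is the first, the second reversed, the third, and the fourth reversed. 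The first pair is the half-twist $\mu_0\to\mu_\pi$ through $\mu_{3\pi/2}$, namely $m_1(a,b)\simeq m_2(b,a)\simeq m_1(b,a)$. The second pair returns through $\mu_{\pi/2}$. Each half uses one $\iota_{2,3}$ piece and one $\iota_{1,4}$ piece.

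Gluing also needs the identifications $m_2\circ\tau\simeq m_2^{\textnormal{op}}$ and $m_2^{\textnormal{op}}\circ\tau\simeq m_2$. These come from $\mu_0\circ\tau\simeq\mu_1$ in a contractible component of $\mathbb{E}_1^T(2)$; under $\rho$ they stay among vertical configurations and add no winding. The result is one full clockwise turn, hence $\gamma_0^{\pm1}$ depending on the orientation convention for $\gamma_0$, which only affects the sign of the bracket.
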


\section{Recovering the Hochschild complex as $\mathbb{E}_1$-center}\label{Recovering_the_Hochschild_complex_as_e1_center}
Often the Hochschild cohomology of a $\field$-algebra is called its ``derived center''. In this section we will show that this terminology is appropriate in a very precise sense. Namely, the Hochschild complex is the $\mathbb{E}_1$-center in the derived $\infty$-category of chain complexes. \par
In contrast to \cite[Definition 5.1]{BZFN}, we use the definition of the derived center via its universal property as in \ref{def5}, and then show, using Lurie's theory of higher centers, that we can reduce to considering the derived endomorphism complex of $A$ as an $A$-bimodule. As a consequence, we get a direct proof that the $\mathbb{E}_2$-algebra structure is obtained via the higher Eckmann-Hilton argument \ref{cor1} from the Yoneda and convolution products, and we can use this to show that this definition of the Hochschild cochain complex yields the correct Gerstenhaber algebra in cohomology.
\subsection{Symmetric monoidal dg model categories}\label{symmetric_monoidal_dg_model_categories}
We first examine how a symmetric monoidal model category with a compatible structure of a dg category yields a symmetric monoidal $\infty$-category. If $\mathbf{S}$ is a monoidal model category, one can define the notion of an $\mathbf{S}$-enriched model category as in \cite[Definition A.3.1.5]{HTT}. In this section, we consider $\mathbf{S}$ to be the (symmetric) monoidal model category $\ch(\field)$ of chain complexes of $\field$-modules with the projective model structure as in \cite[Proposition 4.2.13]{H}. We call $\ch(\field)$-enriched model categories \textbf{dg model categories}. Analogously to simplicial model categories, they are tensored and cotensored over $\ch(\field)$, and they satisfy an (SM7) property for the cotensoring.\displaypar
If $C$ is a dg category, denote by $C_0$ the underlying 1-category. If $C$ is a dg model category, we view $C_0$ as an ordinary model category.
\begin{theorem}
Let $C$ be a dg category that is tensored over the category $\ch(\field)$ of chain complexes of $\field$-modules, let $C'$ be a full dg subcategory, and let $W$ be a collection of morphisms in $C'$ that are isomorphisms in the homotopy category of the dg category $C$. Assume that the following conditions are satisfied:
\begin{itemize}
    \item Every isomorphism in $C'$ belongs to $W$.
    \item The set $W$ satisfies the 2-out-of-3 property.
    \item For all $X\in C'$, we also have $N_{\ast}(\Delta^1)\otimes X \in C'$.
    \item For each $X\in C'$, the map $N_{\ast}(\Delta^1)\otimes X \rightarrow X$ induced by the map $[1]\rightarrow [0]$ belongs to $W$.
\end{itemize}
Then the canonical map $\theta: N(C'_0) \rightarrow N_{\text{dg}}(C')$ constructed in \cite[Remark 1.3.1.9]{HA} induces an equivalence of $\infty$-categories $\theta': N(C'_0)[W^{-1}]$ $\simeq N_{\text{dg}}(C')$.
\end{theorem}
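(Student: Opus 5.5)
This is the dg analogue of \cite[Proposition 1.3.4.7]{HA}, and I would transport Lurie's proof for simplicial categories, with $N_{\ast}(\Delta^1)\otimes X$ serving as the cylinder object. The plan has three steps: first show $\theta$ inverts $W$, then construct a homotopy inverse, then check the composites.

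\textbf{Step 1: $\theta$ inverts $W$.} Every morphism in $W$ becomes an equivalence in the homotopy category of the dg category $C$, which is $hN_{\text{dg}}(C')$ because $C'$ is full. So $\theta$ sends $W$ to equivalences. By the universal property of localization it factors through a functor $\theta': N(C'_0)[W^{-1}]\rightarrow N_{\text{dg}}(C')$, unique up to contractible choice.

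\textbf{Step 2: an inverse built from cylinders.} A $1$-simplex of $N_{\text{dg}}(C')$ from $X$ to $Y$ is a degree $0$ cycle, i.e. a $0$-morphism. A $2$-simplex additionally includes a degree $1$ homotopy between a composite and a third edge. By the tensored structure, a degree $1$ homotopy $h$ between $f,g: X\rightarrow Y$ is the same as a chain map $N_{\ast}(\Delta^1)\otimes X\rightarrow Y$ restricting to $f$ and $g$ on the two endpoints. The two endpoint inclusions $X\rightrightarrows N_{\ast}(\Delta^1)\otimes X$ are sections of the projection to $X$, which lies in $W$ by the fourth hypothesis. By 2-out-of-3, both inclusions lie in $W$ and become homotopic in $N(C'_0)[W^{-1}]$. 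Hence $f$ and $g$ are identified there.

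\textbf{Step 3: essential surjectivity and full faithfulness.} Objects of the two sides agree, so $\theta'$ is essentially surjective. For full faithfulness I would follow Lurie and check the hypotheses of \cite[Proposition 1.3.4.7]{HA} or its proof pattern: compare $N(C'_0)$, localized at $W$, with a simplicial category whose $n$-simplices of maps are chain maps $N_{\ast}(\Delta^n)\otimes X\rightarrow Y$. Concretely, I would form the simplicial category $\mathcal{D}$ with $\map_{\mathcal{D}}(X,Y)_n=\hom_{C'_0}(N_{\ast}(\Delta^n)\otimes X,Y)$. This needs $N_{\ast}(\Delta^n)\otimes X\in C'$, which I would get by iterating the cylinder hypothesis, since $N_{\ast}(\Delta^n)$ is a retract of iterated cylinders up to quasi-isomorphism. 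This step may need an extra argument. Then I would show:
\begin{itemize}
    \item[(a)] the inclusion $N(C'_0)\rightarrow N(\mathcal{D})$ exhibits a localization at $W$, using that the degeneracy maps $N_{\ast}(\Delta^n)\otimes X\rightarrow X$ lie in $W$ and applying \cite[Proposition 1.3.4.7]{HA} or Dwyer–Kan's hammock argument;
    \item[(b)] $\map_{\mathcal{D}}(X,Y)$ is the Dold–Kan image of $\tau_{\geq0}\hom_C(X,Y)$, by adjunction and the Eilenberg–Zilber identification of $N_{\ast}(\Delta^n)$ with normalized chains. By the construction in \cite[Section 1.3.1]{HA}, this also computes the mapping spaces of $N_{\text{dg}}(C')$.
\end{itemize}

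The main obstacle is (a): that adjoining these simplicial homotopies is exactly the effect of inverting $W$. The key input there is that each $N_{\ast}(\Delta^n)\otimes X\rightarrow X$ lies in $W$. For the other pieces, the $1$-simplices are handled by Step 2 and the higher ones by contractibility of $\Delta^n$.
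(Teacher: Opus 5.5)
\textbf{The proposal has a genuine gap, in Step 3(a).} Your Steps 1 and 2 are fine, but they only control the homotopy category. The whole content of the theorem sits in Step 3(a): the claim that $N(C'_0)\to N(\mathcal{D})$ exhibits a localization at $W$. You sketch this as a Dwyer--Kan comparison of the level categories $\mathcal{D}_0\to\mathcal{D}_n$, where $\hom_{\mathcal{D}_n}(X,Y)=\hom_{C_0}(N_{\ast}(\Delta^n)\otimes X,Y)$.

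That argument needs, for every $n$, two things:
\begin{itemize}
\item $N_{\ast}(\Delta^n)\otimes X$ is an object of $C'$;
\item the projection $N_{\ast}(\Delta^n)\otimes X\to X$ lies in $W$.
\end{itemize}
Since $W$ is a class of morphisms of $C'$, your phrase ``the degeneracy maps lie in $W$'' has no meaning otherwise. The hypotheses supply this only for $n=1$. Iterating the cylinder hypothesis does give $N_{\ast}(\Delta^1)\otimes\cdots\otimes N_{\ast}(\Delta^1)\otimes X\in C'$, with projection to $X$ in $W$ by 2-out-of-3. But $N_{\ast}(\Delta^n)\otimes X$ is related to that object only by (homotopy) retractions. $C'$ is an arbitrary full subcategory, not assumed closed under retracts or under homotopy equivalence.

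So the ``extra argument'' you flag is not available from the hypotheses. As designed, your route proves only a variant of the theorem in which cylinders on all simplices are assumed. That stronger hypothesis does hold in the paper's application in Corollary~\ref{cor2}, where $C'=C^{\circ}$, because the argument given there for $N_{\ast}(\Delta^1)$ works equally for $N_{\ast}(\Delta^n)$. It does not cover the statement as given.

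\textbf{How the paper avoids this.} The paper does not go through an auxiliary simplicial category built from all simplices. It observes that Lurie's proof of \cite[Proposition 1.3.4.5]{HA} uses only the tensoring and the four listed properties, with the single cylinder $N_{\ast}(\Delta^1)\otimes X$. That proposition is already a statement about chain complexes, not a transported simplicial one. So its proof runs verbatim with $\ch(\mathcal{A})$ replaced by $C$.

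\textbf{Possible repairs.} You could follow that argument for the key step. Alternatively, you could rebuild Step 3 cubically, using only the iterated cylinders $N_{\ast}(\Delta^1)^{\otimes n}\otimes X$ that the hypotheses do provide. That would require cubical versions of both the Dold--Kan identification of mapping spaces and the Dwyer--Kan comparison.
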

\begin{proof}
The above conditions are exactly what is needed to repeat the proof of \cite[Proposition 1.3.4.5]{HA} replacing $\ch(\mathcal{A})$ with ${C}$ and $\ch(\mathcal{A})'$ with ${C}'$.
\end{proof}
\begin{corollary}\label{cor2}
Let $C$ be a dg model category, and let $C^{\circ}$ be the full dg subcategory on bifibrant objects. Then the map $N(C^{c}_0) \rightarrow N_{\text{dg}}(C^{\circ})$ exhibits the dg nerve as the underlying $\infty$-category of $C_0$.
\end{corollary}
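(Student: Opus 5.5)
The plan is to deduce Corollary \ref{cor2} from the preceding theorem, taking $C' := C^{\circ}$ and letting $W$ be the weak equivalences between bifibrant objects. The dg nerve is then compared to $N(C^{\circ}_0)[W^{-1}]$. The final step identifies this localization with $N(C^c_0)[W^{-1}]$, which is the underlying $\infty$-category of the model category $C_0$ by \cite[Definition 1.3.4.15, Remark 1.3.4.16]{HA}. The inclusion $C^{\circ}_0 \subseteq C^c_0$ induces an equivalence on localizations, because fibrant replacement of cofibrant objects gives a homotopy inverse; this is the same argument used in \cite[Proposition 1.3.4.5]{HA}.

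I will verify the hypotheses of the theorem in order.

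\begin{enumerate}
\item The dg model category $C$ is tensored over $\ch(\field)$ by definition.
\item Weak equivalences between bifibrant objects are chain homotopy equivalences, that is, isomorphisms in the homotopy category of the dg category $C$. This is the dg analogue of Whitehead's theorem. The (SM7) axiom makes $\map(X,-)$ preserve fibrations and trivial fibrations when $X$ is cofibrant, and makes $X \otimes -$ a left Quillen functor, so cylinder objects may be taken to be $N_{\ast}(\Delta^1)\otimes X$. Homotopy classes of maps between bifibrant objects are then computed by $H_0$ of the mapping complex.
\item Isomorphisms lie in $W$, and $W$ satisfies 2-out-of-3. Both are immediate from the model axioms.
\item For bifibrant $X$, the object $N_{\ast}(\Delta^1)\otimes X$ is cofibrant, since $N_{\ast}(\Delta^1)$ is cofibrant in the projective structure and tensoring with a cofibrant $X$ is left Quillen. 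The map $N_{\ast}(\Delta^1)\otimes X \to X$ is a weak equivalence, since it is a tensor of the quasi-isomorphism $N_{\ast}(\Delta^1)\to \field$ of cofibrant complexes with a cofibrant object.
\end{enumerate}

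The subtle point is fibrancy: $N_{\ast}(\Delta^1)\otimes X$ need not be fibrant, so it need not lie in $C^{\circ}$. I expect this to be the main obstacle. I will first check it using (SM7) together with $\ch(\field)$-enriched adjunctions. If that fails, I will instead apply the theorem with $C' := C^c$ and take $W$ to be all weak equivalences between cofibrant objects. In that case the needed inclusion $N_{\ast}(\Delta^1)\otimes X \in C'$ is clear. I will then compare $N_{\mathrm{dg}}(C^c)$ with $N_{\mathrm{dg}}(C^{\circ})$. Both dg categories have mapping complexes that are correct only between bifibrant objects, so this comparison again needs fibrant replacement, which I would establish using (SM7) in the form: $\map(X,-)$ preserves weak equivalences between fibrant objects when $X$ is cofibrant.

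The statement says $N(C^c_0)\to N_{\mathrm{dg}}(C^{\circ})$, and the functor $\theta$ is only defined on $C^{\circ}$. I therefore interpret the map as the composite of a fibrant-replacement zig-zag with $\theta'$, in the same way \cite[Proposition 1.3.4.5]{HA} is read.
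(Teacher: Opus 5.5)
Your overall route is the paper's. You apply the preceding theorem with $C'=C^{\circ}$ and $W$ the weak equivalences between bifibrant objects, check the hypotheses, and pass from $N(C^{\circ}_0)[W^{-1}]$ to $N(C^c_0)[W^{-1}]$. Items 1--4 are fine. Your Ken Brown argument that $N_{\ast}(\Delta^1)\otimes X\to X$ is a weak equivalence is a harmless variant of the paper's argument, which uses the trivial cofibration $d_0\colon \field\to N_{\ast}(\Delta^1)$ and 2-out-of-3.

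The gap is the one step you flag and then leave open: that $N_{\ast}(\Delta^1)\otimes X$ is again fibrant, so that it lies in $C'=C^{\circ}$. SM7 and enriched adjunction do not give this by themselves, because SM7 controls fibrancy of cotensors, not of tensors. The missing idea is dualizability. $N_{\ast}(\Delta^1)$ is a bounded complex of finite-dimensional vector spaces, hence dualizable in $\ch(\field)$. In any tensored and cotensored $\ch(\field)$-category one then has $K\otimes X\cong X^{K^{\vee}}$ for dualizable $K$. One map is adjoint to the evaluation $K^{\vee}\otimes K\to\field$; the inverse is built from the coevaluation $\field\to K\otimes K^{\vee}$ and the counit $K^{\vee}\otimes X^{K^{\vee}}\to X$. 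Since cotensoring is a right Quillen bifunctor, $X^{L}$ is fibrant whenever $X$ is fibrant and $L$ is cofibrant, and every complex over a field is cofibrant. This is exactly how the paper closes the step.

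Your fallback does not work. Take $C'=C^c$ and $W$ all weak equivalences between cofibrant objects. The theorem's hypothesis that maps in $W$ become isomorphisms in the dg homotopy category $hC$ then fails in general, because mapping complexes involving non-fibrant objects are not homotopy invariant. Your item 2 proves the hypothesis only for bifibrant objects.

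For instance, consider the local projective model structure on $\dgpsh{X}$, which the paper uses later. A local weak equivalence between cofibrant presheaves that is not a sectionwise quasi-isomorphism cannot be a dg homotopy equivalence, since evaluation at an open is a dg functor. A \v{C}ech resolution of a free representable presheaf is such a map. For the same reason $N_{\text{dg}}(C^c)$ does not present the localization, so the comparison with $N_{\text{dg}}(C^{\circ})$ you propose cannot be established. With the dualizability observation the fallback is unnecessary, and your argument then coincides with the paper's.
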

\begin{proof}
Let $C_{\Delta}$ be the simplicial category obtained from ${C}$ via the Dold-Kan correspondence. Since $N({C}_0^{c})[W^{-1}] \simeq N({C}_0^{\circ})[W^{-1}]$ it suffices to take ${C}' = {C}^{\circ}$ above and $W$ the set of homotopy equivalences. In a dg model category, left homotopy between bifibrant objects agrees with chain homotopy in the dg sense, i.e. a homotopy is a map $h: N(\Delta^1) \otimes X \rightarrow Y$ with the correct restrictions to $\{0\}$ and $\{1\}$. In particular,
\begin{align*}
    \hom_{{C}_0}(N_{\ast}(\Delta^1)\otimes X,Y)  &\cong \hom_{\ch(\field)}(N_{\ast}(\Delta^1),\map_{{C}}(X,Y)) \\&\cong \hom_{\ch_{\geq 0}(\field)}(N_{\ast}(\Delta^1),\tau_{\geq 0} \map_{{C}}(X,Y)) \\&\cong \hom_{\text{sSet}}(\Delta^1, \text{DK}_{\bullet}\tau_{\geq 0}\map_{{C}}(X,Y)) \\&\cong \map_{{C}_{\Delta}}(X,Y)_1
\end{align*}
so left homotopies correspond to 1-chains of the mapping complex of ${C}$. One checks directly that the diagram making $h: N_{\ast}(\Delta^1)\otimes X \rightarrow Y$ into a homotopy between $f,g\in \hom_{{C}_0}(X,Y)$ forces the corresponding 1-chain $z\in \map_{{C}}(X,Y)_1$ to satisfy $dz = f-g$. This shows that homotopy equivalences in ${C}^{\circ}_0$ become isomorphisms in the homotopy category $h{C}_{\Delta}$, which is isomorphic to the dg homotopy category $h{C}$. Clearly every isomorphism is a homotopy equivalence and the set of homotopy equivalences satisfies 2-out-of-3. By assumption, the map $\otimes: \ch(\field) \times {C} \rightarrow {C}$ is a left Quillen bifunctor. The complex $N_{\ast}(\Delta^1)$ is cofibrant, and hence $N_{\ast}(\Delta^1)\otimes -$ preserves cofibrant objects. It also preserves fibrant objects, since $N_{\ast}(\Delta^1)$ is dualizable and so $N_{\ast}(\Delta^1) \otimes - \cong \map_{C}(N_{\ast}(\Delta^1)^{\vee},-)$. This shows that $N_{\ast}(\Delta^1)\otimes X\in {C}^{\circ}$. Finally, note that $d_0: \field\rightarrow N_{\ast}(\Delta^1)$ is a trivial cofibration in $\ch(\field)$, and thus if $X\in {C}^{\circ}$, the map $X \cong \field\otimes X \rightarrow N_{\ast}(\Delta^1) \otimes X$ is again a trivial cofibration. Now the map $N_{\ast}(\Delta^1)\rightarrow \field$ is a left inverse to $d_0$ and in particular 
\begin{align*}
    X \rightarrow N_{\ast}(\Delta^1) \otimes X \rightarrow X
\end{align*}
is the identity on $X$ and thus a weak equivalence. By 2-out-of-3, this means that $N_{\ast}(\Delta^1)\otimes X \rightarrow X$ must be a weak equivalence.
\end{proof}
\begin{remark}
Note that for any dg category ${C}$, we have an equivalence of $\infty$-categories $N_{\text{hc}}({C}_{\Delta}) \rightarrow N_{\text{dg}}({C})$. For simplicial model categories, the homotopy coherent nerve of the bifibrant objects is always equivalent to the $\infty$-category underlying the model category. In contrast, $C_{\Delta}$ is only \textbf{weakly simplicially enriched}, i.e. it is neither tensored nor cotensored over $\mathbf{sSet}$, and therefore does not satisfy the requirements of this theorem. The above corollary then shows that we get this relationship between the homotopy coherent nerve and the model category regardless.
\end{remark}
If ${C}$ is a (symmetric) monoidal model category, then \cite[Example 4.1.7.6]{HA} shows that $N({C}^c)[W^{-1}]$ is a (symmetric) monoidal $\infty$-category. If ${C}$ is also a simplicial model category and the (symmetric) monoidal structure is compatible with the simplicial enrichment, then \cite[Corollary 4.1.7.16]{HA} shows that the (symmetric) monoidal structure on this $\infty$-category is given by $N_{\text{hc}}(({C}^{\circ})^{\otimes})$, and in fact one readily checks that the same holds if ${C}$ is just weakly simplicially enriched. Since the Dold-Kan functor $\mathrm{DK}_{\bullet}$ is not symmetric, Lurie's result does not imply that the dg nerve of a (symmetric) monoidal dg model category $C$ presents the (symmetric) monoidal structure of $N({C}^c)[W^{-1}]$. Nevertheless, $\mathrm{DK}_{\bullet}$ is homotopy symmetric lax monoidal, and in fact V. Hinich proved the following.
\begin{proposition}[\cite{Hin}, Theorem 3.2.3]\label{prop8}
The dg nerve  
\begin{align*}
    N_{\textnormal{dg}}: N(\textnormal{Cat}_{\textnormal{dg}})[W_{\textnormal{dg}}^{-1}] \rightarrow N(\textnormal{Cat}_{\Delta})[W_{\Delta}^{-1}]\simeq \textnormal{Cat}_{\infty}
\end{align*}
from the symmetric monoidal $\infty$-category of dg categories to the symmetric monoidal $\infty$-category of $\infty$-categories is lax symmetric monoidal. In particular, it is a morphism of $\infty$-operads. It thus induces a map from the $\infty$-category of symmetric monoidal dg categories to the $\infty$-category of symmetric monoidal $\infty$-categories.
\end{proposition}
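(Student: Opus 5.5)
The statement to prove is Proposition \ref{prop8}: the dg nerve is lax symmetric monoidal as a functor between the $\infty$-categorical localizations $N(\textnormal{Cat}_{\textnormal{dg}})[W_{\textnormal{dg}}^{-1}]$ and $\textnormal{Cat}_\infty$. Since this is attributed to Hinich, the plan is to reconstruct his argument rather than invent a new one. The basic observation is that $N_{\textnormal{dg}}$ factors, up to natural equivalence, as the composite of two functors. The first applies $\mathrm{DK}_{\bullet}\tau_{\geq 0}$ to mapping complexes and gives a functor $\textnormal{Cat}_{\textnormal{dg}}\to\textnormal{Cat}_\Delta$. The second is the homotopy coherent nerve $N_{\textnormal{hc}}$. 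This factorization is the equivalence $N_{\textnormal{hc}}(C_\Delta)\simeq N_{\textnormal{dg}}(C)$ from the Remark after Corollary \ref{cor2}. It therefore suffices to show two things: that each factor descends to the localizations, and that each descended factor is lax symmetric monoidal.

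For the second factor, the localized functor $N(\textnormal{Cat}_\Delta)[W_\Delta^{-1}]\to\textnormal{Cat}_\infty$ is an equivalence (Bergner--Joyal--Lurie). It is compatible with cartesian products, and both symmetric monoidal structures are cartesian, so it is in fact symmetric monoidal. For the first factor, I would use that $\tau_{\geq 0}\colon \ch(\field)\to\ch_{\geq0}(\field)$ is lax symmetric monoidal. I would also use that $\mathrm{DK}_\bullet=\Gamma$ is lax monoidal via the Eilenberg--Zilber (shuffle) map $\Gamma(V)\times\Gamma(W)\to\Gamma(V\otimes W)$. This shuffle map is symmetric, but it is only associative and unital up to the coherent homotopies supplied by the Alexander--Whitney map. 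Granting this, applying $\Gamma\tau_{\geq0}$ to mapping complexes sends a dg category to a simplicial category. It preserves Dwyer--Kan equivalences, because quasi-isomorphisms of mapping complexes give weak equivalences of the associated Kan complexes, and $H_0$ is preserved. Hence it descends to the localizations. A weak monoidal structure (an oplax structure up to equivalence) on the descended functor then induces a lax symmetric monoidal structure, via Lurie's result on lax monoidal functors between localized monoidal model categories (HA, Example 4.1.7.6).

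The main obstacle is that the shuffle map is not strictly associative compatibly with the symmetry in a way that makes $\Gamma$ a strict symmetric lax monoidal functor of $1$-categories on the nose. I would therefore not work with strict functors. Instead I would replace $\Gamma$ by the right adjoint of normalized chains $N_*\colon\textnormal{sSet}\to\ch_{\geq0}$, viewed as an $\infty$-functor. Here $N_*\colon \mathscr S\to \mathcal D(\field)_{\geq0}$ is symmetric monoidal at the $\infty$-level: it is the free functor $\Sigma^\infty_+\otimes\field$. Its right adjoint is therefore canonically lax symmetric monoidal, by the standard fact (HA, Corollary 7.3.2.7) that right adjoints of symmetric monoidal functors are lax. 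Applying this lax symmetric monoidal functor to the enrichment, we use that change of enrichment along a lax symmetric monoidal functor of presentably monoidal $\infty$-categories is lax symmetric monoidal on enriched $\infty$-categories (Gepner--Haugseng, and Hinich's own framework). Finally, the identification of $\mathcal D(\field)$-enriched $\infty$-categories with the localization of $\textnormal{Cat}_{\textnormal{dg}}$, compatibly with tensor products, yields the claim. The last sentence of the proposition is formal: a lax symmetric monoidal functor takes $\com$-algebras to $\com$-algebras, and commutative algebras in $\textnormal{Cat}_\infty$ are exactly symmetric monoidal $\infty$-categories.
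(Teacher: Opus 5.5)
You should know that the paper gives no argument of its own for this statement. It imports it from \cite{Hin}, remarking only that $\mathrm{DK}_\bullet$ is symmetric lax monoidal up to coherent homotopy but not on the nose. There is a genuine gap in your proposal, and your description of the Dold--Kan structures is also backwards.

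Your factorization $N_{\textnormal{dg}}\simeq N_{\textnormal{hc}}\circ(\Gamma\tau_{\geq 0})_*$ and your handling of $N_{\textnormal{hc}}$ are fine. The trouble is the first factor. The shuffle map $\nabla\colon N(A)\otimes N(B)\to N(A\otimes B)$ is strictly associative, unital and symmetric. It makes the normalization $N$ lax symmetric monoidal, and transported to $\Gamma=N^{-1}$ it becomes an \emph{oplax} structure $\Gamma(V\otimes W)\to\Gamma V\otimes\Gamma W$. The lax structure $\Gamma V\otimes\Gamma W\to\Gamma(V\otimes W)$ that is actually used to form $C_\Delta$, and hence the dg nerve, comes from Alexander--Whitney, which is strictly associative but not symmetric. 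So the obstacle is not associativity of the shuffle map. Your second paragraph therefore cannot produce a symmetric structure: change of enrichment requires a lax structure, and the strict lax structure on $\Gamma$ is non-symmetric.

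Your third paragraph sidesteps this by passing to enriched $\infty$-categories. Its first input is correct: the right adjoint of the symmetric monoidal functor $\field[-]\colon\mathscr{S}\to\dk_{\geq 0}$ is lax symmetric monoidal. But the closing sentence carries the entire content of the proposition and is only asserted. You would need two things:
\begin{enumerate}
\item[(a)] a \emph{symmetric monoidal} equivalence between $N(\textnormal{Cat}_{\textnormal{dg}})[W_{\textnormal{dg}}^{-1}]$, with the tensor product of dg categories, and $\dk$-enriched $\infty$-categories;
\item[(b)] that, under (a) and the equivalence of $\textnormal{Cat}_\infty$ with $\mathscr{S}$-enriched $\infty$-categories, Gepner--Haugseng change of enrichment along the $\infty$-categorical right adjoint is the dg nerve.
\end{enumerate}
Statement (b) is a naturality property of rectification with respect to the strict, Alexander--Whitney-based, non-symmetric change of enrichment along $U\Gamma\tau_{\geq 0}$. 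That is exactly where the non-symmetry has to be dealt with, so as written the decisive step is missing.

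A route that needs no enriched $\infty$-category theory uses the shuffle map where it actually lives.
\begin{enumerate}
\item Change of enrichment along $(N,\nabla)$ gives a strictly lax symmetric monoidal functor $N_*$ from $\field$-linear simplicial categories to connective dg categories.
\item Its structure maps $N_*\mathcal{A}\otimes N_*\mathcal{B}\to N_*(\mathcal{A}\otimes\mathcal{B})$ are the identity on objects and $\nabla$ on mapping complexes, hence DK-equivalences. So after localization $N_*$ is symmetric monoidal.
\item By the Dold--Kan comparison of these homotopy theories (Tabuada), $N_*$ is an equivalence after localization.
\item Since $\mathrm{AW}\circ\nabla=\mathrm{id}$, one checks $N_*\Gamma_*\cong\textnormal{id}$ on the nose. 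Hence $\Gamma_*$ induces the inverse equivalence and inherits a symmetric monoidal structure.
\item Compose with the strictly lax symmetric monoidal, DK-invariant functors $(\tau_{\geq 0})_*$ and $U_*$ (forgetting to simplicial sets), and then with $N_{\textnormal{hc}}$. This gives the lax symmetric monoidal structure on $N_{\textnormal{dg}}$.
\end{enumerate}
Your final step about commutative algebras is fine.
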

\subsection{Rectification of algebras over an $\infty$-operad}\label{Rectification_of_algebras_over_an_infty_operad}
Let $\mathcal{O}$ be a topological operad. Applying the singular chain functor with $\field$-coefficients produces a dg operad $C_{\ast}(\mathcal{O})$. In this section, we will generalize results of Hinich \cite{Hin} and D. Pavlov and J. Scholbach \cite{PS}, and show that $C_{\ast}(\mathcal{O})$-algebras in a symmetric monoidal dg model category $C$ correspond to algebras over the $\infty$-operad $N^{\otimes}(\text{Sing}_{\bullet}(\mathcal{O}))$ in the symmetric monoidal $\infty$-category $N_{\text{dg}}(C^{\circ})$.\displaypar
Let ${C}$ be a symmetric monoidal dg model category. Suppose further that ${C}$ is cofibrantly generated and symmetrically flat as defined in \cite[Definition 2.1. (vii)]{PS}, and that $C_{\ast}(\mathcal{O})$ is admissible \cite[Definition 5.1.]{PS} and well-pointed \cite[Definition 6.1]{PS} in ${C}$, and that $C$ admits a lax symmetric monoidal fibrant replacement functor. Let $\alg_{C_{\ast}(\mathcal{O})}(C)$ denote the model category of strict $C_{\ast}(\mathcal{O})$-algebras in $C$ with the transfer model structure from the forgetful functor to $C$. Note that this model structure exists by the assumption that $C_{\ast}(\mathcal{O})$ is admissible in $C$. The construction in \cite[Section 4.2]{Hin} generalizes directly to give a functor
\begin{align*}
    \phi: N(\alg_{C_{\ast}({\mathcal{O}})}({C})^{c}) \rightarrow \alg_{N^{\otimes}(\text{Sing}_{\bullet}(\mathcal{O}))}(N_{\text{dg}}({C}^{\circ})).
\end{align*}
Roughly, a cofibrant $C_{\ast}(\mathcal{O})$-algebra in $C$ is given by a symmetric monoidal functor of dg categories from the PROP $P_{C_{\ast}(\mathcal{O})}$ generated by $C_{\ast}(\mathcal{O})$ to $C^c$, and applying the dg nerve as in Proposition \ref{prop8} yields a symmetric monoidal functor $P_{\mathcal{O}} \rightarrow \text{N}_{\text{dg}}(C^{\circ})$. Here, $P_{\mathcal{O}}$ is the symmetric monoidal envelope of the $\infty$-operad $N^{\otimes}(\text{Sing}_{\bullet}(\mathcal{O}))$ as defined in \cite[Construction 2.2.4.1.]{HA}, and hence this construction yields an $N^{\otimes}(\text{Sing}_{\bullet}(\mathcal{O}))$-algebra in $\text{N}_{\text{dg}}(C^{\circ})$.\displaypar
This functor $\phi$ carries weak equivalences to equivalences as shown in \cite[Diagram 49]{Hin}, and therefore yields a comparison map
\begin{align*}
    \Phi:  N(\alg_{C_{\ast}(\mathcal{O})}({C})^{c})[W_{\alg_{C_{\ast}(\mathcal{O})}({C})}^{-1}] \rightarrow \alg_{N^{\otimes}(\text{Sing}_{\bullet}(\mathcal{O}))}(N_{\text{dg}}({C}^{\circ})).
\end{align*}
\begin{theorem}\label{thm3}
    Let $\mathcal{O}$ and ${C}$ as above. Then $\Phi$ is an equivalence of $\infty$-categories.
\end{theorem}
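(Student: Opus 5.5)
We would prove Theorem \ref{thm3} by reducing it to the known rectification results of Hinich \cite{Hin} and Pavlov--Scholbach \cite{PS}, which compare strict algebras over a simplicial (resp.\ topological) operad with $\infty$-algebras in the underlying $\infty$-category. Those results live in the simplicial world, while $\Phi$ passes through the dg nerve. The strategy is therefore to factor $\Phi$ through a simplicial model and check that each comparison map is an equivalence.

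Concretely, we would proceed in the following order.
\begin{enumerate}
\item First, we apply \cite[Theorem 7.10]{PS}. Under exactly the hypotheses assumed (cofibrantly generated, symmetrically flat, admissibility and well-pointedness of $C_{\ast}(\mathcal{O})$, lax symmetric monoidal fibrant replacement), it identifies
\begin{align*}
N(\alg_{C_{\ast}(\mathcal{O})}(C)^c)[W^{-1}] \simeq \alg_{N^{\otimes}(C_{\ast}(\mathcal{O}))}\bigl(N(C^c)[W^{-1}]\bigr).
\end{align*}
Here the right side means algebras over the $\infty$-operad associated to the dg operad, viewed as a simplicial operad via Dold--Kan on mapping complexes.
\item Second, we identify the target $\infty$-operads. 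The unit $\mathcal{O}\to \mathrm{DK}\,C_{\ast}(\mathcal{O})$ is not an equivalence integrally. However, in the enriched sense relevant for algebras in a $\field$-linear category it is: algebras over $N^{\otimes}(\mathrm{Sing}_{\bullet}\mathcal{O})$ in a stable $\field$-linear presentable $\infty$-category are equivalent to algebras over its $\field$-linearization. So we restrict along this map and argue via the free-algebra monads, comparing $\bigoplus_n C_{\ast}(\mathcal{O}(n))\otimes_{\Sigma_n} X^{\otimes n}$ with $\colim_{\mathrm{Sing}\mathcal{O}(n)_{h\Sigma_n}} X^{\otimes n}$. These agree because colimits over a space in a $\field$-linear category are computed by tensoring with its chains.
\item Third, we use Corollary \ref{cor2} together with Proposition \ref{prop8}. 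These identify $N(C^c)[W^{-1}]$ with $N_{\mathrm{dg}}(C^{\circ})$ as symmetric monoidal $\infty$-categories. We must check that this identification is compatible with the monoidal structure from \cite[Example 4.1.7.6]{HA}, which holds since both are induced by the lax monoidal localization of $C^c$.
\item Finally, we verify that the composite of these equivalences agrees with $\Phi$. Both are built from the same functor on objects, namely the PROP-to-envelope construction of \cite[Section 4.2]{Hin}. Hence it suffices to check agreement on underlying objects, via the conservative forgetful functor, and on free algebras.
\end{enumerate}

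The cleanest way to make the last step rigorous is a monadicity argument. Both sides are monadic over $N_{\mathrm{dg}}(C^{\circ})$: the left side by the transferred model structure, and the right side by \cite[Section 4.7.3]{HA}, since the forgetful functor preserves sifted colimits and is conservative. The map $\Phi$ commutes with the forgetful functors. It therefore suffices, by \cite[Corollary 4.7.3.16]{HA}, to check that $\Phi$ induces an equivalence of monads, i.e.\ that it preserves free algebras on cofibrant $X$. Symmetric flatness gives that the strict free algebra $\bigoplus_n C_{\ast}(\mathcal{O}(n))\otimes_{\Sigma_n}X^{\otimes n}$ computes the homotopy coinvariants. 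This matches the $\infty$-categorical free algebra formula \cite[Proposition 3.1.3.13]{HA}.

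The main obstacle is the comparison of the free-algebra formulas in the monadicity step, together with showing that the forgetful functor on the strict side preserves sifted homotopy colimits. The former needs the chains-versus-spaces identification of the operations, which works because $\field$ has characteristic $0$ and $C$ is $\field$-linear. The latter is exactly where admissibility and well-pointedness of $C_{\ast}(\mathcal{O})$ enter, through \cite{PS}.
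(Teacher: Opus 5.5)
Your monadicity paragraph is, in substance, the paper's proof, and it proves on its own that $\Phi$ is an equivalence; steps (1)--(4) are an unnecessary detour, and step (1) does not hold as written.

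The paper's argument runs as follows. It applies \cite[Corollary 4.7.3.16]{HA} to the forgetful functors $G$ and $G'$, following steps (a)--(e) of \cite[Theorem 4.5.4.7]{HA}. Conservativity of $G$ comes from the transferred weak equivalences. Preservation of geometric realizations comes from the strict forgetful functor preserving homotopy sifted colimits, \cite[Proposition 7.9]{PS}. The key step is that the strict free $C_{\ast}(\mathcal{O})$-algebra on a cofibrant $X$ is a free $N^{\otimes}(\mathrm{Sing}_{\bullet}(\mathcal{O}))$-algebra. For this it uses Hinich's colored analogue \cite[Lemma 4.3.4]{Hin} of \cite[Proposition 3.1.3.13]{HA}, noting that his arguments carry over from chain complexes to any symmetric monoidal dg model category. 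Your monadicity paragraph contains all of these ingredients, so you should present it as the complete proof.

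Step (1) fails because it relies on an object that does not exist as stated. There is no symmetric simplicial operad obtained from $C_{\ast}(\mathcal{O})$ by applying Dold--Kan to the operation complexes. $\mathrm{DK}_{\bullet}$ is lax monoidal only through the Alexander--Whitney map, which is not symmetric, so it does not send symmetric operads to symmetric operads. For the same reason, the ``unit'' $\mathcal{O}\to \mathrm{DK}\,C_{\ast}(\mathcal{O})$ in step (2) is not a map of operads you can restrict along. This is exactly why the paper cannot simply cite the rectification theorem of \cite{PS}. Instead it reruns the Lurie/Hinich monadicity argument in the dg setting, using Proposition~\ref{prop8} to obtain the symmetric monoidal structure on $N_{\mathrm{dg}}(C^{\circ})$.

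Three smaller points about the monadicity part:
\begin{itemize}
\item For a colored $\mathcal{O}$, the forgetful functors land in the product of copies of $N(C^c)[W^{-1}]$ indexed by the colors of $\mathcal{O}$, not in $N_{\mathrm{dg}}(C^{\circ})$ itself.
\item The free-algebra comparison in that colored setting uses Hinich's lemma rather than \cite[Proposition 3.1.3.13]{HA} directly.
\item The strict side is not ``monadic by the transferred model structure'' alone. The transfer gives only conservativity; preservation of geometric realizations is the separate input from \cite[Proposition 7.9]{PS}, which you correctly identify later as an obstacle.
\end{itemize}
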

\begin{proof}
    We use Corollary 4.7.3.16 in \cite{HA}, which is an application of the $\infty$-categorical Barr-Beck Theorem \cite[Theorem 4.7.3.5]{HA}. Consider the diagram
    \begin{center}
        \begin{tikzcd}
{ N(\alg_{C_{\ast}(\mathcal{O})}({C})^{c})[W_{\alg_{C_{\ast}(\mathcal{O})}({C})}^{-1}]} \arrow[rr, "\Phi"] \arrow[rd, "G"'] &                                              & { \alg_{N^{\otimes}(\text{Sing}_{\bullet}(\mathcal{O}))}(N({C}^{c})[W^{-1}])} \arrow[ld, "G'"] \\
                                                                                                                                                & {(N({C}^c)[W^{-1}])^{[\mathcal{O}]}} &                                                                                
\end{tikzcd}
    \end{center}
    where $(N(C^c)[W^{-1}])^{[\mathcal{O}]}$ denotes the coproduct of $N(C^c)[W^{-1}]$ over the collection of colors $[\mathcal{O}]$ in $\mathcal{O}$, and $G$ and $G'$ are the forgetful functors given by evaluation at the colors of $\mathcal{O}$. To show that $\Phi$ is an equivalence of $\infty$-categories, it then suffices to show that $G$ and $G'$ satisfy the assumptions (1)-(5) of \cite[Corollary 4.7.3.16]{HA}, and that in addition $G$ is conservative. To verify these assumptions, we follow the proof of \cite[Theorem 4.5.4.7]{HA}; in particular we will refer to ``steps (a)-(e)'' from this proof.\par
    Steps (a)-(c) can be proven exactly like in \cite[Theorem 4.5.4.7]{HA}, by just replacing the commutative operad by $\mathcal{O}$. For step (d), it is clear that $G$ is conservative since the weak equivalences in $\alg_{C_{\ast}(\mathcal{O})}({C})$ are transferred from the ones in ${C}$ via the forgetful functor. To show that $G$ preserves geometric realization of simplicial objects, it suffices to show that it preserves homotopy sifted colimits. This is shown in \cite[Proposition 7.9]{PS}. Finally, for step (e), we need to show that the canonical transformation $G'\circ F' \rightarrow G\circ F$ is an equivalence, where $F$ and $F'$ are the left adjoints of $G$ and $G'$ respectively. This boils down to showing that for any cofibrant object $X\in {C}^{[\mathcal{O}]}$, the strict free $C_{\ast}(\mathcal{O})$-algebra generated by $X$ is also a free $N^{\otimes}(\text{Sing}_{\bullet}(\mathcal{O}))$-algebra in the sense of \cite[Definition 3.1.3.1]{HA}. To this end, Hinich \cite[Lemma 4.3.4]{Hin} proved an analogue of \cite[Proposition 3.1.3.13]{HA} for the setting of free algebras generated by objects of different colors. We have a map of $\infty$-operads\footnote{Recall that $\triv$ is the trivial $\infty$-operad as defined in \cite[Example 2.1.3.5.]{HA}} $(\triv)^{[\mathcal{O}]} \rightarrow {N}^{\otimes}(\text{Sing}_{\bullet}(\mathcal{O}))$, and a collection $X:=\{X_i\}_{i\in [\mathcal{O}]}$ of objects in ${N}_{\text{dg}}(C^{\circ})$ induces a $(\triv)^{[\mathcal{O}]}$-algebra $\overline{X}$ in $N_{\text{dg}}(C^{\circ})^{\otimes} \times_{\fin} N^{\otimes}(\text{Sing}_{\bullet}(\mathcal{O}))$ over $N^{\otimes}(\text{Sing}_{\bullet}(\mathcal{O}))$. Let $\mathbb{F}_{C_{\ast}(\mathcal{O})}(X)$ be the strict free $C_{\ast}(\mathcal{O})$ algebra generated by $X$, and let $\mathbb{F}= \phi(\mathbb{F}_{C_{\ast}(\mathcal{O})}(X))$. For a color $c\in [\mathcal{O}]$, Hinich \cite[Lemma 4.3.4]{Hin} constructs a map $\text{Sym}_{\mathcal{O}}(\overline{X})_c \rightarrow \mathbb{F}_c$ which is an equivalence for all colors if and only if $\mathbb{F}$ is indeed the free $N^{\otimes}(\text{Sing}_{\bullet}(\mathcal{O}))$-algebra. Hinich checks this for the case of $C = \ch(\field)$, but one readily sees that all his arguments still work for any symmetric monoidal dg model category $C$.
\end{proof}
\begin{remark}
We call a homotopy preimage of an $N^{\otimes}(\text{Sing}_{\bullet}(\mathcal{O}))$-algebra $A$ under $\Phi$ a \textbf{strictification} of $A$.
\end{remark}
\begin{corollary}
The dg nerve induces a map $\alg_{\lm}(N(\textnormal{Cat}_{\textnormal{dg}})[W_{\textnormal{dg}}^{-1}]) \rightarrow \alg_{\lm}(\textnormal{Cat}_{\infty})$. By \ref{thm3}, this implies that a dg model category left tensored over a monoidal dg model category yields an $\infty$-category left tensored over a monoidal $\infty$-category.
\end{corollary}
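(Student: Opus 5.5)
The statement consists of two assertions, and I would prove them separately. The first is that the dg nerve induces a functor on $\lm$-algebras. The second is that, combined with Theorem \ref{thm3}, a dg model category left tensored over a monoidal dg model category gives an $\infty$-category left tensored over a monoidal $\infty$-category.

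For the first assertion, I use Proposition \ref{prop8}. It says that $N_{\textnormal{dg}}: N(\textnormal{Cat}_{\textnormal{dg}})[W_{\textnormal{dg}}^{-1}] \rightarrow \textnormal{Cat}_{\infty}$ is lax symmetric monoidal, i.e. a morphism of $\infty$-operads over $\com$. Algebras over any $\infty$-operad are functorial in the target under morphisms of $\infty$-operads: composing an $\lm$-algebra $\lm \rightarrow N(\textnormal{Cat}_{\textnormal{dg}})[W_{\textnormal{dg}}^{-1}]^{\otimes}$ with $N_{\textnormal{dg}}^{\otimes}$ still preserves inert morphisms. This gives the functor $\alg_{\lm}(N(\textnormal{Cat}_{\textnormal{dg}})[W_{\textnormal{dg}}^{-1}]) \rightarrow \alg_{\lm}(\textnormal{Cat}_{\infty})$ directly.

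For the second assertion, let $C$ be a dg model category left tensored over a monoidal dg model category $D$, with a compatible Quillen action. The strict data is a pair consisting of an associative algebra and a left module in dg categories, which is an algebra over the discrete operad $\text{LM}$. I restrict to bifibrant objects; this is where the cofibrancy and compatibility hypotheses enter, ensuring the tensor products and the action preserve these subcategories up to the usual replacement. This yields a strict $\text{LM}$-algebra in $\textnormal{Cat}_{\textnormal{dg}}$.

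I then apply Theorem \ref{thm3} with $\mathcal{O} = \text{LM}$, viewed as a discrete topological colored operad, so that $N^{\otimes}(\text{Sing}_{\bullet}(\text{LM})) \simeq \lm$. The target model category is $\textnormal{Cat}_{\textnormal{dg}}$ with its Dwyer–Kan/Tabuada model structure. Theorem \ref{thm3} identifies the $\infty$-category of strict $C_{\ast}(\text{LM})$-algebras, localized at weak equivalences, with $\alg_{\lm}(N(\textnormal{Cat}_{\textnormal{dg}})[W_{\textnormal{dg}}^{-1}])$. Pushing forward along the functor from the first assertion gives an $\lm$-algebra in $\textnormal{Cat}_{\infty}$. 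By the standard correspondence, such an algebra is a coCartesian fibration $\mathcal{C}^{\otimes}\rightarrow \lm$, which is exactly an $\infty$-category left tensored over a monoidal $\infty$-category. Finally, Corollary \ref{cor2} identifies the underlying $\infty$-categories with $N_{\textnormal{dg}}(C^{\circ})$ and $N_{\textnormal{dg}}(D^{\circ})$.

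The main obstacle is the hypotheses of Theorem \ref{thm3} in this setting. That theorem is stated for a symmetric monoidal dg model category that is symmetrically flat, with $C_{\ast}(\mathcal{O})$ admissible and well-pointed, and $\textnormal{Cat}_{\textnormal{dg}}$ is not obviously of this form. To get around this, I would first try the fact that $\text{LM}$ is $\Sigma$-free: it has no nontrivial symmetric group actions on its operations, so admissibility and flatness issues largely disappear. Alternatively, I would bypass Theorem \ref{thm3} and construct the $\lm$-algebra directly, via $\phi$ applied to the strictly tensored pair followed by Proposition \ref{prop8}. Only existence, not an equivalence, is needed for the final sentence of the statement.
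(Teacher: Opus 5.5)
Your route is the paper's: Proposition \ref{prop8} makes $N_{\textnormal{dg}}$ a morphism of $\infty$-operads, so composition gives the functor on $\alg_{\lm}$ (this part is fine). The second sentence is then supposed to follow by feeding in a strictly tensored pair. The step that fails is your construction of that strict input.

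Restricting to bifibrant objects does \emph{not} give a strict $\text{LM}$-algebra in $\textnormal{Cat}_{\textnormal{dg}}$. For $X,Y\in D^{\circ}$ and $M\in C^{\circ}$, the pushout-product axiom makes $X\otimes Y$ and $X\otimes M$ cofibrant, but nothing makes them fibrant. So $D^{\circ}$ and $C^{\circ}$ are not closed under the tensor product and the action. This already happens for the paper's own $\dgpsh{X}$: fibrant objects of the local projective structure satisfy descent, and sectionwise tensor products of such presheaves do not (cross terms appear already for a disjoint union of two opens).

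``Up to replacement'' cannot be made strict either. Even with a lax monoidal fibrant replacement $R$, the objects $R(R(X\otimes Y)\otimes Z)$ and $R(X\otimes R(Y\otimes Z))$ need not coincide, so you get a lax structure, not an algebra over the discrete operad $\text{LM}$.

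Passing instead to the cofibrant parts $(D^{c},C^{c})$, which do form an $\text{LM}$-monoidal dg category for a cofibrant unit, breaks your final appeal to Corollary \ref{cor2}. In $\dgpsh{X}$, $N_{\textnormal{dg}}(C^{c})$ does not invert local weak equivalences between cofibrant objects: for a representable $\field[h_U]$ one has $\map(\field[h_U],\mathcal{F})\cong\mathcal{F}(U)$, which only detects sectionwise quasi-isomorphisms. So it is not $N_{\textnormal{dg}}(C^{\circ})$. Your argument does work verbatim when every object is fibrant, as for $\ch(\field)$ and $\ch(A^e)$ in Theorem \ref{thm1}.

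A repair that works in general is to localize the $\lm$-monoidal $1$-category $(D^{c},C^{c})$ at the weak equivalences via \cite[Proposition 4.1.7.4]{HA}, which is stated over an arbitrary base $\infty$-operad. The tensor product and action preserve weak equivalences between cofibrant objects by Ken Brown's lemma, so this goes exactly as in \cite[Example 4.1.7.6]{HA}. Then identify the fibers with $N_{\textnormal{dg}}(D^{\circ})$ and $N_{\textnormal{dg}}(C^{\circ})$ by Corollary \ref{cor2}. Alternatively, use a category of operators on bifibrant objects with multimorphism complexes $\map_{C}(X_1\otimes\cdots\otimes X_n\otimes M,N)$, like the $(C^{\circ})^{\otimes}$ the paper invokes before Proposition \ref{prop8}. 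It is coCartesian over $\lm$ because precomposition with the trivial cofibration $X\otimes M\rightarrow R(X\otimes M)$ is a quasi-isomorphism on mapping complexes into fibrant objects; this is where the formula $R(A\otimes B)$ of Lemma \ref{lem1} comes from.

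Your worry about Theorem \ref{thm3} is justified, and it is a gloss in the paper's own ``By \ref{thm3}''. That theorem concerns $C_{\ast}(\mathcal{O})$-algebras in a $\ch(\field)$-enriched model category. $\textnormal{Cat}_{\textnormal{dg}}$ is not even $\field$-linear (there are no dg functors into the empty dg category), and with the Dwyer--Kan/Tabuada structure it is not a monoidal model category, since tensor products of cofibrant dg categories need not be cofibrant. $\Sigma$-freeness of $\text{LM}$ does not touch this, since the obstruction lies in the ambient category.

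Your bypass is right and needs no rectification. Over a field, $-\otimes\mathcal{B}$ preserves quasi-equivalences, so $N(\textnormal{Cat}_{\textnormal{dg}})\rightarrow N(\textnormal{Cat}_{\textnormal{dg}})[W_{\textnormal{dg}}^{-1}]$ is symmetric monoidal by the same \cite[Proposition 4.1.7.4]{HA} and carries strict $\text{LM}$-algebras to $\lm$-algebras. It still needs a strict input with the correct dg nerve, which, as shown above, the model category does not supply when some objects are not fibrant. The localization repair sidesteps Proposition \ref{prop8} and Theorem \ref{thm3} for the second sentence altogether.
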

We can now show that if $M$ is a dg category left tensored over a monoidal dg model category $C$, a dg morphism object yields an $\infty$-categorical morphism object in the sense of Definition \ref{def3}.
\begin{lemma}\label{lem1}
Let ${C}$ be a monoidal dg model category with underlying monoidal product $\otimes: {C}_0\times {C}_0 \rightarrow {C}_0$. Then the induced monoidal product $N_{\text{dg}}({C}^{\circ}) \times  N_{\text{dg}}({C}^{\circ})\rightarrow N_{\text{dg}}({C}^{\circ})$ sends $A,B\in {C}^{\circ}$ to an object equivalent to $R(A\otimes B)$. A similar statement holds for dg model categories left tensored over a monoidal dg model category.
\end{lemma}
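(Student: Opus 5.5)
The plan is to identify the monoidal structure on $N_{\text{dg}}(C^{\circ})$ with that on $N(C^c)[W^{-1}]$ and then compute in the latter. By \cite[Example 4.1.7.6]{HA}, the localization $N(C^c)[W^{-1}]$ carries a monoidal structure whose underlying tensor product is induced by the functor $\otimes: C^c_0\times C^c_0\rightarrow C^c_0$. This functor preserves weak equivalences between cofibrant objects by the pushout-product axiom. So the localization functor $N(C^c)\rightarrow N(C^c)[W^{-1}]$ is monoidal, and on objects it sends $(A,B)$ to the class of $A\otimes B$.

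Next I would compare this with the dg nerve. Proposition \ref{prop8} upgrades $N_{\text{dg}}$ to a lax symmetric monoidal functor, so the monoidal dg category $C^{\circ}$ yields a monoidal $\infty$-category. There is one subtlety: $C^{\circ}$ is not closed under $\otimes$, since a tensor product of fibrant objects need not be fibrant. The honest monoidal dg category is therefore $C^{c}$, or $C^{\circ}$ with product $R(-\otimes -)$, using the assumed lax monoidal fibrant replacement. I would take $C^c$ as the monoidal dg category and then use Corollary \ref{cor2}. That corollary identifies $N_{\text{dg}}(C^{\circ})$ with $N(C^c_0)[W^{-1}]$ via $\theta$. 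The inclusion $N_{\text{dg}}(C^{\circ})\hookrightarrow N_{\text{dg}}(C^c)$ is an equivalence, because every cofibrant object is dg-homotopy equivalent to its fibrant replacement. Under this inclusion the monoidal product of $A,B\in C^{\circ}$ is $A\otimes B\in C^c$. Transported back to $C^{\circ}$, it becomes an object equivalent to $R(A\otimes B)$, since $A\otimes B\rightarrow R(A\otimes B)$ is a trivial cofibration between cofibrant objects and hence an equivalence in the dg nerve.

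The main obstacle is checking that the map $\theta$ of Corollary \ref{cor2} is compatible with the two monoidal structures: Lurie's structure on the localization and Hinich's on the dg nerve. I would only verify this on the level of the underlying bifunctor, up to homotopy, which is all the statement needs. Concretely, I would show that the square formed by $\theta\times\theta$, the two tensor products, and $\theta$ commutes up to natural equivalence. On objects both composites are $A\otimes B$. On morphisms, the dg nerve tensor of $1$-simplices is given by Hinich's homotopy-lax structure of $\mathrm{DK}_{\bullet}$, which agrees with $f\otimes g$ on $0$-cells. Since equivalence of objects in an $\infty$-category is detected in the homotopy category, this object-level check is enough.

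For the left tensored version, I would run the same argument with $\lm$ in place of $\ass$. The ingredients are the corollary following Theorem \ref{thm3}, and the left Quillen bifunctor $C\times M\rightarrow M$ preserving cofibrant objects. The result is that $A\otimes X$ is identified with $R(A\otimes X)$ in $N_{\text{dg}}(M^{\circ})$.
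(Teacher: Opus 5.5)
There is a genuine gap in your second paragraph. Two of its claims are false in general:
\begin{itemize}
\item that $N_{\text{dg}}(C^{\circ})\hookrightarrow N_{\text{dg}}(C^{c})$ is an equivalence;
\item that the trivial cofibration $A\otimes B\to R(A\otimes B)$ is an equivalence in the dg nerve.
\end{itemize}
The dg mapping complex $\map_C(Y,Z)$ is only guaranteed to have the derived homotopy type when $Y$ is cofibrant and $Z$ is fibrant. So $N_{\text{dg}}(C^{c})$ does not present the underlying $\infty$-category. Its equivalences are dg-homotopy equivalences, not weak equivalences, and for $Y$ cofibrant but not fibrant the map $Y\to RY$ usually has no homotopy inverse $RY\to Y$.

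The paper's own sheaf setting gives a counterexample. In the local projective structure on $\dgpsh{X}$ one has $\map(\field[h_U],Z)=Z(U)$ rather than the derived $(RZ)(U)$. Moreover, $Q\mathcal{O}_X$ is cofibrant but not fibrant, since the paper notes that $\mathcal{O}_X$ is not fibrant there. If $Q\mathcal{O}_X\to RQ\mathcal{O}_X$ were a dg-homotopy equivalence, then $Q\mathcal{O}_X$ would be a dg-homotopy retract of a local object, hence local, hence fibrant. So your argument only works when every object is fibrant, e.g.\ $\ch(\field)$ or $\ch(A^e)$ with the projective structure. In that case the $R$ in the statement is irrelevant. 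It breaks down for the presheaf categories, which is exactly where the lemma is used, through Theorem~\ref{thm4}.

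The fix is to never pass through $N_{\text{dg}}(C^{c})$. Your first paragraph already works in the right place. In $N(C^{c}_0)[W^{-1}]$ the product of $A,B$ is $A\otimes B$, and $A\otimes B\to R(A\otimes B)$ is inverted there because it is a weak equivalence. The proof of Corollary~\ref{cor2} gives equivalences $N(C^{c}_0)[W^{-1}]\simeq N(C^{\circ}_0)[W^{-1}]\simeq N_{\text{dg}}(C^{\circ})$, and these send a cofibrant $Y$ to $RY$. So the transported product of $A,B\in C^{\circ}$ is $R(A\otimes B)$.

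What then remains is to identify this transported structure with the monoidal structure on $N_{\text{dg}}(C^{\circ})$ coming from Proposition~\ref{prop8}. The paper does not argue this separately; its proof is a one-line appeal to Hinich's result. Your object-level check cannot supply this step as written. The assertion ``on objects both composites are $A\otimes B$'' again treats $A\otimes B$ as an object of $N_{\text{dg}}(C^{\circ})$, which it is not, and $\theta$ is only defined on bifibrant objects.
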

\begin{proof}
    This follows directly from the description of the monoidal structure in Proposition \ref{prop8}.
\end{proof}
\begin{theorem}\label{thm4}
Let ${C}$ be a monoidal dg model category and let ${M}$ be a dg model category that is left tensored over ${C}$. In particular, we have a dg functor\footnote{Here $\boxtimes$ denotes the tensor product of dg categories.} $\otimes: {C}\boxtimes {M} \rightarrow {M}$ whose underlying functor is a left Quillen bifunctor. Assume that for $A,B\in {M}^{\circ}$ we have a dg morphism object $\mor_{{M}}(A,B)\in {C}$ together with map $\alpha: \mor_{{M}}(A,B)\otimes A \rightarrow B$ in ${M}$ such that composition with $\alpha$ induces an isomorphism 
\begin{align*}
    \map_{{C}}(C,\mor_{{M}}(A,B)) \cong \map_{{M}}(C\otimes A,B) \in \ch(\field).
\end{align*}
Then
\begin{itemize}
    \item[(1)] The induced map $\tilde{\alpha}\in \map_{N_{\textnormal{dg}}({M}^{\circ})}(R(Q\mor_{{M}}(A,B)\otimes A),B)$  makes $Q\mor_{{M}}(A,B)\in N_{\textnormal{dg}}({C}^{\circ})$ into a morphism object for $A,B\in N_{\textnormal{dg}}({M}^{\circ})$ in the sense of Definition \ref{def3}.
    \item[(2)] If $\beta: R(X\otimes A) \rightarrow B$ is another morphism object for $A$ and $B$ in $N_{\textnormal{dg}}({M}^{\circ})$, then the induced map $f: X\xrightarrow{\simeq} Q\mor_{{M}}(A,B)$ is a weak equivalence in ${C}$, and $\tilde{\alpha}\circ R(f\otimes \textnormal{id}_A) \simeq \beta$ are homotopic.
\end{itemize}
\end{theorem}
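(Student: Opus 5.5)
To prove Theorem \ref{thm4}, I will reduce the $\infty$-categorical universal property of Definition \ref{def3} to the strict dg isomorphism via derived mapping spaces. Mapping spaces in $N_{\textnormal{dg}}({C}^{\circ})$ between bifibrant objects $X,Y$ are computed as $\textnormal{DK}_{\bullet}\tau_{\geq 0}\map_{{C}}(X,Y)$; this was already used in the proof of Corollary \ref{cor2}. So for (1) it suffices to fix a bifibrant $X\in {C}^{\circ}$ and show that post-composition with $\tilde\alpha$ induces a quasi-isomorphism of mapping complexes
\begin{align*}
\map_{{C}}(X,Q\mor_{{M}}(A,B)) \rightarrow \map_{{M}}(R(X\otimes A),B).
\end{align*}
By Lemma \ref{lem1}, the source of $\tilde\alpha$ and the tensor $X\otimes A$ in the $\infty$-category are indeed modeled by $R(X\otimes A)$. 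This means the map above is the one appearing in Definition \ref{def3}, once we identify $\pi_0$ and the higher homotopy of $\textnormal{DK}$ with the homology of the complexes in nonnegative degrees.

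I will factor this map as a composite of three maps:
\begin{align*}
\map_{{C}}(X,Q\mor) \rightarrow \map_{{C}}(X,\mor) \cong \map_{{M}}(X\otimes A,B) \leftarrow \map_{{M}}(R(X\otimes A),B).
\end{align*}
The middle map is the given dg adjunction isomorphism, and it is induced by composition with $\alpha$. The outer maps need to be checked separately.

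The first map is a quasi-isomorphism: $X$ is cofibrant, $Q\mor\to\mor$ is a trivial fibration, and the SM7-type axiom for dg model categories tells us that $\map_{{C}}(X,-)$ preserves trivial fibrations. There is a subtlety here: $\mor_{{M}}(A,B)$ need not be fibrant. I expect it is fibrant, since $\map_{{C}}(-,\mor)\cong\map_{{M}}(-\otimes A,B)$ with $A$ cofibrant and $B$ fibrant, and the left Quillen bifunctor property makes $\mor$ right-lifting against trivial cofibrations. I will prove that first.

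The last map is induced by the trivial cofibration $X\otimes A\to R(X\otimes A)$, where $X\otimes A$ is cofibrant by the Quillen bifunctor property. Since $B$ is fibrant, $\map_{{M}}(-,B)$ sends this to a trivial fibration, hence a quasi-isomorphism. I then need to check that the triangle formed by the two routes commutes up to homotopy in $N_{\textnormal{dg}}$. That amounts to $\tilde\alpha$ being defined as the extension of $\alpha\circ(q\otimes\textnormal{id}_A)$ along $X\otimes A\to R(X\otimes A)$, which is how I will define $\tilde\alpha$.

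I expect the main obstacle to be this bookkeeping: identifying the $\infty$-categorical tensoring and the composition with $\tilde\alpha$ on mapping spaces with the strict dg composition. The tools are Lemma \ref{lem1} and the compatibility of the dg nerve with the left tensoring from the Corollary following Theorem \ref{thm3}. Here I will argue on $\pi_0$ and on higher homotopy groups through the Dold–Kan identification.

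For (2), morphism objects are unique up to contractible choice: by Proposition \ref{prop6} they are final objects of ${C}_{\mathfrak a}\times_{{C}_{\mathfrak m}}({C}_{\mathfrak m})_{/B}$. So the comparison map $f$ exists and is an equivalence in $N_{\textnormal{dg}}({C}^{\circ})$, with $\tilde\alpha\circ R(f\otimes\textnormal{id}_A)\simeq\beta$ witnessed by the corresponding $1$-simplex in the slice. An equivalence between bifibrant objects in the dg nerve is a homotopy equivalence in ${C}^{\circ}$, and therefore a weak equivalence in ${C}$; this is again via Corollary \ref{cor2}.
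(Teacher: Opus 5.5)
Your proposal follows the paper's proof: for (1) you use the same factorization $\map_{C}(X,Q\mor_{M}(A,B))\to\map_{C}(X,\mor_{M}(A,B))\cong\map_{M}(X\otimes A,B)\leftarrow\map_{M}(R(X\otimes A),B)$ into quasi-isomorphisms, followed by $\textnormal{DK}_{\bullet}\tau_{\geq 0}$ and Lemma \ref{lem1}, and for (2) you use uniqueness of morphism objects plus saturation of the model category. One small correction is that fibrancy of $\mor_{M}(A,B)$ is not needed for the first map to be a quasi-isomorphism, since SM7 with $X$ cofibrant and $Q\mor_{M}(A,B)\to\mor_{M}(A,B)$ a trivial fibration already suffices; your lifting argument for fibrancy is nonetheless correct, and it is exactly what makes $Q\mor_{M}(A,B)$ bifibrant, a point the paper leaves implicit.
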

\begin{proof}
For (1), note that if $C\in N_{\text{dg}}({C}^{\circ})$ is bifibrant, $Q\mor_{{M}}(A,B) \xtwoheadrightarrow{\simeq} \mor_{{M}}(A,B)$ is the cofibrant replacement map, and $C\otimes A\xhookrightarrow{\simeq} R(C\otimes A)$ is the fibrant replacement map, we get a weak equivalence
\begin{align*}
    \map_{{C}}(C,Q\mor_{{M}}(A,B)) \xrightarrow{\simeq} \map_{{C}}(C,\mor_{{M}}(A,B)) \cong \map_{{M}}(C\otimes A,B) \xrightarrow{\simeq} \map_{{M}}(R(C\otimes A),B)
\end{align*}
of chain complexes. Applying $\textnormal{DK}_{\bullet}\tau_{\geq 0}$, we get
\begin{align*}
    \map_{N_{\text{dg}}({C}^{\circ})}(C,Q\mor_{{M}}(A,B)) \simeq \map_{N_{\text{dg}}({M}^{\circ})}(R(C\otimes A),B).
\end{align*}
Together with Lemma \ref{lem1} this yields the result.\displaypar
For (2), we automatically get $M\simeq \mor_{{M}}(A,B)$ in the $\infty$-category $N_{\textnormal{dg}}({C}^{\circ})$ since morphism objects are unique up to equivalence. Now recall that $N_{\textnormal{dg}}({C}^{\circ}) \simeq N({C}^c)[W^{-1}]$, and since model categories are saturated this implies the result.
\end{proof}
\subsection{Gerstenhaber algebras in the homotopy category}
Recall that in Section \ref{the_bracket_operation_of_an_e2_algebra}, we defined the bracket operations of an $\mathbb{E}_2$-algebra as the image of the 2-morphisms $\gamma_t \in \map_{\mathbb{E}_2^{\otimes}}(\langle 2\rangle,\langle 1\rangle)_1$. We explain how to explicitly describe the bracket operations of a 2-algebra in a symmetric monoidal dg model category, and how the bracket operations of an $\mathbb{E}_2$-algebra in this setting correspond to actual Gerstenhaber brackets. \displaypar
The following result will make it possible for us to compute the Gerstenhaber bracket on the center of an $\mathbb{E}_1$-algebra in the dg nerve of a symmetric monoidal dg model category.
\begin{corollary}\label{cor5}
If $\mathcal{C}^{\otimes}$ is the dg nerve of a symmetric monoidal dg model category $C$ and $A$ is a 2-algebra with associated $\mathbb{E}_2$-algebra $\tilde{A}$, then the  bracket operation at $t=0$ on $\tilde{A}$ is given by the chain homotopy 
\begin{align*}
    \tilde{A}(\gamma_0) \simeq h\iota_{2,3} + h^{\text{op}}\iota_{1,4} +  h^{\text{op}}\iota_{2,3} + h \iota_{1,4}.
\end{align*}
Here $h$ and $h^{\text{op}}$ are the chain homotopies corresponding to the image of the square diagram (\ref{dig2}) in $\mathbb{E}^{\otimes}_1\times \mathbb{E}^{\otimes}_1$ for the multiplications and their opposite multiplications respectively.
\end{corollary}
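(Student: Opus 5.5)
The plan is to start from Corollary \ref{cor1}, which expresses the homotopy class of $\tilde{A}(\gamma_0)$ as a composite of four 2-simplices in $\mathcal{C}^{\otimes}$. Each 2-simplex is itself assembled, as in Theorem \ref{thm11}, from the image of the interchange square (\ref{dig2}) and two triangles coming from semi-inert restriction maps. The task is to translate each of these four pieces into an explicit chain homotopy in the dg nerve $\mathcal{C}^{\otimes}=N_{\textnormal{dg}}(C^{\circ})^{\otimes}$, and then add them up.

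First, I would recall that in the dg nerve, an edge is a closed degree-$0$ morphism. A 2-simplex with boundary edges $f,g$ and long edge $h'$ consists of a degree-$1$ element $z$ with $dz = h' - g\circ f$, up to sign conventions. Composition of 2-simplices along shared edges therefore adds homotopies, after pre- and post-composing with the relevant edges. Since multimorphisms out of $(A,\dots,A)$ in the fiber over an active map correspond to maps $A^{\otimes n}\to A$ in $C$, each 2-simplex in Corollary \ref{cor1} becomes a chain homotopy between two maps $A^{\otimes 2}\to A$.

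Second, I would show that the triangle fillers contribute nothing. Under a strictification, obtained from Theorem \ref{thm3} applied to $\mathbb{E}_1\otimes \mathbb{E}_1$, or simply by choosing strict units, the semi-inert triangles are images of 2-simplices built from unit insertions. These can be taken to be degenerate, so their homotopies are zero. The square (\ref{dig2}) is sent to a homotopy $h\colon A^{\otimes 4}\to A$ witnessing the interchange law $m_1(m_2\otimes m_2)\simeq m_2(m_1\otimes m_1)\tau_{2,3}$. Composing with $\iota_{2,3}$ or $\iota_{1,4}$, i.e. inserting units, gives $h\iota_{2,3}$ and $h\iota_{1,4}$. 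The same construction with $\mu_1$ in place of $\mu_0$ gives $h^{\textnormal{op}}$.

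Third, the four composites of Corollary \ref{cor1} concatenate into a loop at $m_2$ (equivalently $m_1$): the pieces go $m_2 \Rightarrow m_1^{\textnormal{op}}\Rightarrow m_2^{\textnormal{op}}\Rightarrow m_1 \Rightarrow m_2$. Concatenation of paths in the mapping complex corresponds to summing homotopies, so the result is
\begin{align*}
h\iota_{2,3}+h^{\textnormal{op}}\iota_{1,4}+h^{\textnormal{op}}\iota_{2,3}+h\iota_{1,4}.
\end{align*}
By construction, this is a degree-$1$ cycle in the mapping complex $\map_C(A^{\otimes 2},A)$. Its class is the image of $\gamma_0$ under $\tilde{A}$.

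The main obstacle is the second step: justifying that the triangle 2-simplices may be chosen with vanishing homotopy while keeping the identification $\tilde{A}\circ\rho\simeq A$. My first approach would be to pass to a strict $C_{\ast}(\mathbb{E}_1^T)\otimes C_{\ast}(\mathbb{E}_1^T)$-model, where units are strict and restriction along semi-inert maps is literal. I would then argue that any two choices of these fillers differ by a contractible space, since the mapping spaces of the semi-inert data in $\mathbb{E}_1^{\otimes}$ are contractible. Therefore the homotopy class of the total loop is unaffected. Signs and the ordering of the op-terms would need a direct check against the dg nerve's face formula.
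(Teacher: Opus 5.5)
Your argument follows the paper's proof in outline. That proof consists of: 2-simplices in the dg nerve are chain homotopies, whiskering by $\iota_{2,3}$ and $\iota_{1,4}$ is strict, composing 2-simplices adds homotopies, then apply Corollary \ref{cor1}.

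The paper never mentions the unit triangles. It fixes chain-level representatives and sums the whiskered square homotopies, which tacitly assumes strict units with zero triangle fillers, as in Corollary \ref{cor9}, where everything is computed with the strictly unital cup product. So the obstacle you isolate is genuine, but neither of your justifications removes it:
\begin{itemize}
\item \textbf{Strictification does not give strict units.} In $\mathbb{E}_1^{\otimes}$ the composite $(\mu,\mu)\circ(2,3)$ is a non-identity point of $\mathbb{E}_1^T(1)$ (the interval embedded onto a half), and the filler is the enlarging path. So in a $C_{\ast}(\mathbb{E}_1^T)\otimes C_{\ast}(\mathbb{E}_1^T)$-model, or in the $C_{\ast}(\mathbb{E}_2^T)$-algebra produced by Theorem \ref{thm3}, $m_i\circ(\eta\otimes\textnormal{id})$ is the action of that point. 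It agrees with $\textnormal{id}$ only up to the image of the path, which is generally nonzero.
\item \textbf{Contractibility does not give vanishing.} Contractibility of the space of fillers shows only that the class of the loop is independent of the chosen filler, not that the filler's image is zero.
\end{itemize}
Without strict units, $h\iota_{2,3}$ is not even a homotopy from $m_1$ to $m_2\tau$, since its boundary involves $m_2(\eta\otimes -)$ and $m_2(-\otimes\eta)$. You must either assume strictly unital representatives with zero fillers outright, or read each term as the full composite 2-simplex of Theorem \ref{thm11}, triangles included.

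Your final step also needs more than a sign check. Orient both squares the same way: $dh = m_1(m_2\otimes m_2)-m_2(m_1\otimes m_1)\tau_{2,3}$, and likewise for $h^{\textnormal{op}}$. With strict units one gets
\begin{itemize}
\item $d(h\iota_{2,3}) = m_1-m_2\tau$,
\item $d(h^{\textnormal{op}}\iota_{1,4}) = m_1\tau-m_2\tau$,
\item $d(h^{\textnormal{op}}\iota_{2,3}) = m_1\tau-m_2$,
\item $d(h\iota_{1,4}) = m_1-m_2$.
\end{itemize}
So in your loop $m_2\Rightarrow m_1^{\textnormal{op}}\Rightarrow m_2^{\textnormal{op}}\Rightarrow m_1\Rightarrow m_2$, two of the four pieces run against the orientation they inherit from $h$ and $h^{\textnormal{op}}$. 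The actual cycle is $h\iota_{2,3}-h^{\textnormal{op}}\iota_{1,4}+h^{\textnormal{op}}\iota_{2,3}-h\iota_{1,4}$, up to an overall sign. The all-plus sum has boundary $2(m_1-m_2)+2(m_1-m_2)\tau$, so it is not a cycle ``by construction.'' Since $h$ and $h^{\textnormal{op}}$ each occur in two terms of opposite sign, no choice of orientation conventions fixes this. The displayed formula agrees with the alternating sum when the $\iota_{1,4}$-terms vanish, which is the situation of the Hochschild computation in Corollary \ref{cor9}.
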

\begin{proof}
Note first that by construction of the symmetric monoidal structure on ${C}$, a morphism $\underbrace{(A,\dots,A)}_{m \text{ times}} \rightarrow \underbrace{(A,\dots,A)}_{n \text{ times}}$ in $\mathcal{C}^{\otimes}$ corresponds to a map $A^{\otimes m} \rightarrow A^{\otimes n}$ in $C$, and this map is unique up to chain homotopy. Similarly, a 2-morphism between such maps in $C^{\otimes}$ corresponds to a chain homotopy between the corresponding maps in $C$. Fix such maps in $C$ corresponding to all the involved diagrams. Horizontal composition of maps is strictly defined in dg categories, and hence we have well-defined whiskering compositions $h\iota_{2,3}$, $h\iota_{1,4}$, $h^{\text{op}}\iota_{2,3}$ and $h^{\text{op}}\iota_{1,4}$. Finally, note that horizontal composition of chain homotopies is given by addition. Then the result follows from Corollary \ref{cor1}.
\end{proof}
We now want to argue that in the case of a symmetric monoidal dg model category, the abstract bracket operations correspond precisely to the Gerstenhaber bracket in the homotopy category. \displaypar
Let $A: \mathbb{E}_2^{\otimes} \rightarrow N_{\text{dg}}(C^{\circ})^{\otimes}$ be an $\mathbb{E}_2$-algebra in the dg nerve of a symmetric monoidal dg model category $C$. Then we get an induced map
\begin{align*}
    \mathbb{E}^T_2(2) \simeq \map_{\mathbb{E}_2^{\otimes}}^{\alpha}(\langle 2\rangle, \langle 1\rangle) \rightarrow \map_{N_{\text{dg}}(C^{\circ})^{\otimes}}(A(\langle 2\rangle),A(\langle 1\rangle)).
\end{align*}
Let $A= A(\langle 1\rangle)\in C^{\circ}$. Then we have a homotopy equivalence
\begin{align*}
    \map_{N_{\text{dg}}(C^{\circ})^{\otimes}}(A(\langle 2\rangle),A(\langle 1\rangle)) \simeq \map_{N_{\text{dg}}(C^{\circ})}(A^{\otimes 2},A).
\end{align*}
Hence we get a map (well-defined up to homotopy)
\begin{align*}
    \mathbb{E}^T_2(2) \rightarrow \map_{N_{\text{dg}}(C^{\circ})}(A^{\otimes 2},A) \simeq \text{DK}\tau_{\geq 0} \map_C(A^{\otimes 2},A),
\end{align*}
and therefore taking homology 
\begin{align*}
    (\ger(2))_n \cong H_n(\mathbb{E}^T_2(2)) \rightarrow \hom_{hC}(A^{\otimes 2},A[n]).
\end{align*}
This procedure yields a Gerstenhaber algebra structure on $A$ in the dg homotopy category of $C$ whose bracket is indeed given by the image of $\gamma_0$.\displaypar
Given an operation $\rho \in \map_{\mathbb{E}_2^{\otimes}}^{\alpha}(\langle n\rangle, \langle 1\rangle)_0$, the algebra $A$ yields a map $A(\rho)\in \map_{N_{\textnormal{dg}}(C^{\circ})^{\otimes}}(A(\langle n\rangle),A(\langle 1\rangle))_0$, which by the construction of the symmetric monoidal structure \ref{prop8} on $N_{\textnormal{dg}}(C^{\circ})^{\otimes}$ corresponds to a morphism $A(\langle 1\rangle)^{\otimes n} \rightarrow A(\langle 1\rangle)$ in $C$. By abuse of notation, we often denote the image of $\rho$ as a map $A(\rho): A^{\otimes n} \rightarrow A$.
\begin{proposition}
Let $A: \mathbb{E}_2^{\otimes} \rightarrow N_{\textnormal{dg}}(C^{\circ})^{\otimes}$ be an $\mathbb{E}_2$-algebra in the dg nerve of a symmetric monoidal dg model category. Let $A^{\textnormal{str}}$ be a homotopy preimage of $A$ under $\Phi$. Without loss of generality, assume that the underlying object of $A^{\textnormal{str}}$ is fibrant. Note that $\Phi(A^{\textnormal{str}})(\langle 1\rangle) \simeq A^{\textnormal{str}}$ agree in $C$ by construction. There is a chain homotopy $h\in \map_C({A^{\textnormal{str}}}^{\otimes 2}, A)_1$
\begin{center}
\begin{tikzcd}
	{{A^{\textnormal{str}}}^{\otimes 2}} && {A^{\textnormal{str}}} \\
	{A^{\otimes 2}} && A
	\arrow[""{name=0, anchor=center, inner sep=0}, "{A^{\textnormal{str}}(\mu_0)}", from=1-1, to=1-3]
	\arrow["\simeq"', from=1-1, to=2-1]
	\arrow["\simeq", from=1-3, to=2-3]
	\arrow[""{name=1, anchor=center, inner sep=0}, "{A(\mu_0)}"', from=2-1, to=2-3]
	\arrow[shorten <=4pt, shorten >=4pt, Rightarrow, from=0, to=1, "h"]
\end{tikzcd}
\end{center}
and a degree 2 map $K\in \map_C({A^{\textnormal{str}}}^{\otimes 2}, A)_2$ filling the cylinder
\begin{center}
\begin{tikzcd}[column sep = large, row sep = large]
	{{A^{\textnormal{str}}}^{\otimes 2}} && {A^{\textnormal{str}}} \\
	\\
	{A^{\otimes 2}} && A
	\arrow[""{name=0, anchor=center, inner sep=0}, "{{A^{\textnormal{str}}(\mu_0)}}", curve={height=-18pt}, from=1-1, to=1-3]
	\arrow[""{name=1, anchor=center, inner sep=0}, "{{A^{\textnormal{str}}(\mu_0)}}"', curve={height=18pt}, from=1-1, to=1-3]
	\arrow["\simeq"', from=1-1, to=3-1]
	\arrow["\simeq", from=1-3, to=3-3]
	\arrow[""{name=2, anchor=center, inner sep=0}, "{{A(\mu_0)}}", curve={height=-18pt}, from=3-1, to=3-3]
	\arrow[""{name=3, anchor=center, inner sep=0}, "{{A(\mu_0)}}"', curve={height=18pt}, from=3-1, to=3-3]
	\arrow["{A^{\textnormal{str}}(\gamma_0)}", shorten <=5pt, shorten >=5pt, Rightarrow, from=0, to=1]
	\arrow["{A(\gamma_0)}", shorten <=5pt, shorten >=5pt, Rightarrow, from=2, to=3]
\end{tikzcd}.
\end{center}
These (higher) homotopies identify the product and bracket of $A$ and $A^{\textnormal{str}}$. 
\end{proposition}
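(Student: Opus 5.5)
The plan is to read the required homotopies off the equivalence $\Phi(A^{\textnormal{str}})\simeq A$ in $\alg_{\mathbb{E}_2}(N_{\textnormal{dg}}(C^{\circ}))$ provided by Theorem \ref{thm3}. The two algebras are related by an equivalence of $\infty$-operad maps $\mathbb{E}_2^{\otimes}\to N_{\textnormal{dg}}(C^{\circ})^{\otimes}$, that is, by a natural transformation $\eta:\Phi(A^{\textnormal{str}})\Rightarrow A$ whose components are equivalences. Concretely, $\eta$ is a functor
\begin{align*}
\mathbb{E}_2^{\otimes}\times\Delta^1\rightarrow N_{\textnormal{dg}}(C^{\circ})^{\otimes}
\end{align*}
restricting to $\Phi(A^{\textnormal{str}})$ on $\{0\}$ and to $A$ on $\{1\}$. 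Its component at $\langle 1\rangle$ gives the vertical equivalence $A^{\textnormal{str}}\xrightarrow{\simeq}A$. Here I use that the underlying object of $A^{\textnormal{str}}$ is fibrant, so that $\Phi(A^{\textnormal{str}})(\langle 1\rangle)=A^{\textnormal{str}}$ as a bifibrant object, after a cofibrant replacement if needed. By the inert/Segal condition, the component at $\langle 2\rangle$ is the tensor square of this equivalence, up to the identification of Lemma \ref{lem1}.

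For the first homotopy, I apply $\eta$ to $\mu_0\times\Delta^1$, which is a square in $\mathbb{E}_2^{\otimes}\times\Delta^1$. Its image is a square in the dg nerve, made of two $2$-simplices. The description of low-dimensional simplices of $N_{\textnormal{dg}}$ from \cite[Construction 1.3.1.6]{HA} says that a $2$-simplex with edges $f,g,k$ is a degree-$1$ element $z$ with $dz=k-g\circ f$. Composing the two triangles of the square (subtracting the $z$'s) gives a degree-$1$ element $h\in\map_C({A^{\textnormal{str}}}^{\otimes 2},A)_1$ with $dh=A(\mu_0)\circ(\eta\otimes\eta)-\eta\circ A^{\textnormal{str}}(\mu_0)$, as required. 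Since $\Phi$ is built from $\phi$ via Hinich's construction, $\Phi(A^{\textnormal{str}})(\mu_0)$ is literally the strict operation $A^{\textnormal{str}}(\mu_0)$ coming from the chain $\mu_0\in C_0(\mathbb{E}_2^T(2))$. The same holds for $\gamma_0$, viewed as a $1$-chain.

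For $K$, I apply $\eta$ to the $3$-dimensional prism $\gamma_0\times\Delta^1$, where $\gamma_0$ is the $2$-simplex (degenerate edge) of $\mathbb{E}_2^{\otimes}$. Triangulated, it consists of three $3$-simplices. A $3$-simplex of the dg nerve is a degree-$2$ element whose boundary is the alternating sum of the face data together with the composite terms of the dg nerve formula. Summing the three degree-$2$ elements with the appropriate signs, and using that the degenerate edges contribute identities, gives $K$. Its differential is
\begin{align*}
dK=\pm\bigl(A(\gamma_0)\circ(\eta\otimes\eta)-\eta\circ A^{\textnormal{str}}(\gamma_0)\bigr)\pm(h-h),
\end{align*}
where the last two terms are the homotopies $h$ on the top and bottom faces of the cylinder. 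This is exactly the filling of the depicted cylinder. Passing to the dg homotopy category, $h$ identifies the products, and $K$ shows that $[A(\gamma_0)]$ and $[A^{\textnormal{str}}(\gamma_0)]$ agree under the equivalence. So the brackets match, which is the Gerstenhaber comparison.

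The main obstacle is the bookkeeping of the dg-nerve $3$-simplex formula, namely the signs and the composite terms such as $z_{23}\circ z_{01}$. These must cancel or combine so that $dK$ is precisely the cylinder boundary. I would handle it by choosing the triangulation of $\Delta^2\times\Delta^1$ so that the degenerate edges of $\gamma_0$ kill most of the composite terms. A secondary point is justifying that $\Phi$ sends the strict chain $\gamma_0$ to the $2$-simplex $\gamma_0$; this follows by unwinding the dg nerve of the PROP in Proposition \ref{prop8}.
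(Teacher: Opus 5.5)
Your proposal is correct and follows essentially the same route as the paper. The paper also takes the natural equivalence $\eta\colon \Delta^1\times\mathbb{E}_2^{\otimes}\to N_{\textnormal{dg}}(C^{\circ})^{\otimes}$, obtains $h$ by composing the two $2$-simplices $(s_0e,s_1\mu_0)$ and $(s_1e,s_0\langle 2\rangle)$ of the square over $\mu_0$, and gets $K$ from the $3$-simplices of the prism over $\gamma_0$, leaving that $3$-simplex bookkeeping implicit just as you do.
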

\begin{proof}
There is a natural equivalence $\eta: \Delta^1 \times \mathbb{E}_2^{\otimes} \rightarrow N_{\text{dg}}(C^{\circ})^{\otimes}$ between $\Phi(A^{\text{str}})$ and $A$. Evaluating at the 1-simplex $(e, s_0\langle n\rangle)\in \Delta^1 \times \mathbb{E}_2^{\otimes}$, where $e$ denotes the non-degenerate 1-simplex of $\Delta^1$, yields equivalences $\Phi(A^{\text{str}})^{\otimes n} \xrightarrow{\simeq} A^{\otimes n}$ in $C$. Evaluating at the 1-simplex $(e,\mu_0)$ yields a map $\Phi(A^{\textnormal{str}})^{\otimes 2} \rightarrow A$, and the 2-simplex $(s_0e, s_1\mu_0)$ yields a chain homotopy $h_1$ between $\eta(e,\mu_0)$ and $\eta(e,s_0\langle 1 \rangle)\circ \Phi(A^{\textnormal{str}})(\mu_0)$. Similarly, the 2-simplex $(s_1e, s_0\langle 2\rangle)$ yields a chain homotopy $h_2$ between $\eta(e,\mu_0)$ and $A(\mu_0) \circ \eta(e,s_0 \langle 2\rangle)$. Composing these, we get the chain homotopy $h$ above. A similar analysis with 3-simplices yields the degree 2 map $K$.
\end{proof}
\begin{corollary}\label{cor10}
The induced Gerstenhaber algebra of $A^{\textnormal{str}}$ in the homotopy category agrees with the one constructed directly from $A$.
\end{corollary}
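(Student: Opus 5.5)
The plan is to reduce the comparison of Gerstenhaber structures to the two pieces of data produced by the preceding proposition, and then to check that these data carry the homology-level operations of $A^{\textnormal{str}}$ to those of $A$. Recall how the Gerstenhaber structure in the homotopy category is built. The map $\mathbb{E}^T_2(2)\rightarrow \map_{N_{\textnormal{dg}}(C^{\circ})}(A^{\otimes 2},A)\simeq \textnormal{DK}\tau_{\geq 0}\map_C(A^{\otimes 2},A)$ induces, on $H_0$ and $H_1$, the classes $[A(\mu_0)]\in \hom_{hC}(A^{\otimes 2},A)$ and $[A(\gamma_0)]\in \hom_{hC}(A^{\otimes 2},A[1])$. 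By Cohen's computation $H_{\ast}(\mathbb{E}^T_2(2))\cong \ger(2)$ is generated as an $S_2$-module by the point class and the loop class. It therefore suffices to compare the product and bracket of $A$ and $A^{\textnormal{str}}$ after transporting along the weak equivalence $w\colon A^{\textnormal{str}}\xrightarrow{\simeq}A$ appearing in the proposition. The operations of higher arity are determined by operadic composition of these binary operations, and $\eta$ is compatible with that composition.

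The product is handled directly by the chain homotopy $h$. Since $dh = w\circ A^{\textnormal{str}}(\mu_0) - A(\mu_0)\circ w^{\otimes 2}$, we obtain
\begin{align*}
    [w]\circ[A^{\textnormal{str}}(\mu_0)] = [A(\mu_0)]\circ[w]^{\otimes 2}
\end{align*}
in $hC$. Because $w$ is an isomorphism in $hC$ (both objects are bifibrant, so $w$ is a homotopy equivalence), this identifies the two products.

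For the bracket, $A^{\textnormal{str}}(\gamma_0)$ and $A(\gamma_0)$ are degree $1$ chains. Each is a cycle, because its boundary is $A(\mu_0)-A(\mu_0)=0$, and similarly for $A^{\textnormal{str}}$. The filler $K$ of the cylinder is a degree $2$ element whose boundary, read off from the faces of the cylinder, is
\begin{align*}
    w\circ A^{\textnormal{str}}(\gamma_0) - A(\gamma_0)\circ w^{\otimes 2} \pm (h - h) .
\end{align*}
The two copies of $h$ are the side faces, and they cancel because the top and bottom edges of each face coincide. Hence $[w]\circ[A^{\textnormal{str}}(\gamma_0)] = [A(\gamma_0)]\circ [w]^{\otimes 2}$ in $\hom_{hC}(A^{\otimes 2},A[1])$, so $[w]$ intertwines the brackets.

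Finally, I would note that the Gerstenhaber structure of $A^{\textnormal{str}}$ obtained directly from the strict $C_{\ast}(\mathbb{E}^T_2)$-action agrees with the one obtained from $\Phi(A^{\textnormal{str}})$. This holds because $\phi$ is built from the dg nerve, which sends a singular $1$-chain representing $\gamma_0$ to the corresponding $1$-simplex. I expect the main obstacle to be the boundary computation for $K$. One has to check carefully which faces of the relevant $3$-simplices of $\Delta^1\times\mathbb{E}_2^{\otimes}$ become the side faces, and that these contribute the same homotopy $h$ with opposite signs rather than two different homotopies. My first approach would be to choose the triangulation of $\Delta^1\times\Delta^1$ used in the proposition consistently for both boundary edges $\mu_0$ of $\gamma_0$. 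If the two side homotopies nevertheless differ, their difference is a $1$-cycle in $\map_C(A^{\textnormal{str}\otimes 2},A)$ obtained from $\eta$ restricted to the constant loop at $\mu_0$. Since $\eta$ is a natural transformation, this restriction is null-homotopic, so the difference vanishes in homology and the conclusion is unaffected.
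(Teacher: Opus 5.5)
Your proposal is correct and follows the paper's route: the corollary is read off from the preceding proposition, with $h$ identifying the products and the cylinder filler $K$ identifying the brackets in $hC$, and your boundary computation for $K$ simply makes explicit what the paper leaves implicit. The fallback in your last paragraph is unnecessary, and its justification is too loose to stand on its own. It is not needed because the two side faces of the prism are literally the image under $\eta$ of the same simplicial map $\mathrm{id}\times\mu_0\colon\Delta^1\times\Delta^1\to\Delta^1\times\mathbb{E}_2^{\otimes}$, so they cancel on the nose; the face over the degenerate edge of $\gamma_0$ contributes nothing.
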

\subsection{The category of $\mathbb{E}_1$-modules in chain complexes}\label{the_category_of_e1_modules_in_chain_complexes}
We want to use the description of the center in Corollary \ref{cor7} to compute the center of an associative $\field$-algebra. Recall that this description involves the morphism object in the category of $\mathbb{E}_1$-modules over the algebra. In this section, we examine this module category.\displaypar
Let $\mathcal{C}^{\otimes} = N_{\text{dg}}(\ch(\field))^{\otimes}$ be the symmetric monoidal $\infty$-category corresponding to the symmetric monoidal dg model category $\ch(\field)$. Let $A$ be an $\eone$-algebra in $\ch(\field)$. By abuse of notation, we denote $\Phi(A)\in \alg_{\mathbb{E}_1}(\mathcal{C})$ again by $A$, where $\Phi$ is the equivalence constructed in Section \ref{Rectification_of_algebras_over_an_infty_operad}. We use the notation from Section \ref{the_center_as_endomorphism_object_of_bimodules}.\displaypar
Recall that $\mathbb{E}_1^{\otimes}$ only has a single color $\mathfrak{a}$, and by Corollary \ref{cor7} the underlying object of the center $\mathfrak{Z}({A})$ at this color is equivalent to the morphism object $\text{Mor}_{\bar{\mathcal{C}}_{\mathfrak{a},\mathfrak{m}}}({A},{A})\in \bar{\mathcal{C}}_{\mathfrak{a},\mathfrak{a}}$. Here ${A}$ is viewed as a module over itself, i.e. as an object of
\begin{align*}
        \overline{\mathcal{C}}_{\mathfrak{a},\mathfrak{m}}=\text{Mod}_{{A}}^{\mathbb{E}_1}(\mathcal{C}\times_{\fin}\mathbb{E}_1)_{\mathfrak{a}}.
\end{align*}
Note that $\overline{\mathcal{C}}_{\mathfrak{a},\mathfrak{a}} = \text{Mod}_{\mathbb{1}}^{\mathbb{E}_1}(\mathcal{C}\times_{N(\text{Fin}_{\ast})}\mathbb{E}_1)_{\mathfrak{a}} \simeq (\mathcal{C}^{\otimes}\times_{\fin}\mathbb{E}_1^{\otimes})_{\mathfrak{a}} \simeq \mathcal{C}$, so 
\begin{align*}
    \mathfrak{Z}({A})(\mathfrak{a})\simeq \mor_{\text{Mod}_{{A}}^{\mathbb{E}_1}(\mathcal{C}\times_{\fin}\mathbb{E}_1)_{\mathfrak{a}}}({A},{A}) \in \mathcal{C}.
\end{align*}
By \cite[Proposition B.1.2]{Hin}, we have an equivalence of $\infty$-categories
\begin{align*}
    \text{Mod}_{{A}}^{\mathbb{E}_1}(N_{\text{dg}}(\ch(\field))\times_{\fin}\mathbb{E}_1)_{\mathfrak{a}} \simeq \alg_{M\mathbb{E}_1}(N_{\text{dg}}(\ch(\field))) \times_{\alg_{\mathbb{E}_1}(N_{\text{dg}}(\ch(\field)))} \{{A}\}.
\end{align*}
Here $M\mathbb{E}_1$ is the $\infty$-operad defined as in \cite[5.2]{Hin} that governs pairs of $\mathbb{E}_1$-algebras and bimodules over them. We can now use Hinich's Rectification Theorem for modules \cite[Theorem 5.2.3]{Hin} to get an equivalence of $\infty$-categories
\begin{align*}
    N(\text{Mod}_{{A}}^{\eone}(\ch(\field))^c)[W_{\text{Mod}}^{-1}] \xrightarrow{\simeq} \text{Mod}^{\mathbb{E}_1}_{{A}}(N_{\text{dg}}(\ch(\field))\times_{\fin}\mathbb{E}_1)_{\mathfrak{a}}.
\end{align*}
Note here that since ${A}$ is cofibrant, by \cite[Theorem 2.6]{BM} the module category $\text{Mod}_{{A}}^{\eone}(\ch(\field))$ indeed carries a model category structure transferred via the forgetful functor to $\ch(\field)$. 
\begin{notation}
    Let $\mathcal{O}$ be a dg operad and $B$ an $\mathcal{O}$-algebra. We denote by $U_{\mathcal{O}}(B)$ the enveloping algebra of $B$ as defined in \cite[Section 12.3.4]{LV}.
\end{notation}
By \cite[Proposition 2.7]{BM}, the category $\lmod_{U_{\eone}({A})}(\ch(\field))$ can also be made into a model category via transfer from the forgetful functor. By \cite[Theorem 1.10]{BM} we have an isomorphism of categories making the following diagram commute 
\begin{equation}\label{dig1}
    \begin{tikzcd}
	{\text{Mod}_{{A}}^{\eone}(\ch(\field))} && {\lmod_{U_{\eone}({A})}(\ch(\field))} \\
	& {\ch(\field)}
	\arrow["\cong", from=1-1, to=1-3]
	\arrow[from=1-1, to=2-2]
	\arrow[from=1-3, to=2-2]
\end{tikzcd}.
\end{equation}
In particular, this isomorphism yields a Quillen equivalence between these two model categories. Consequently, the model category $\lmod_{U_{\eone}(A)}(\ch(\field))$ has an underlying $\infty$-category equivalent to $\text{Mod}_A^{\mathbb{E}_1}(N_{\text{dg}}(\ch(\field))\times_{\fin} \mathbb{E}_1)_{\mathfrak{a}}$.
\subsection{The Hochschild complex as a center}\label{the_hochschild_complex_as_a_center}
Fix an associative $\field$-algebra $A$. In particular, $A$ is an $\assoc$-algebra in the category $\ch(\field)$. In order to exhibit the Hochschild cochain complex of $A$ as an $\mathbb{E}_1$-center, we use the equivalence $\Phi$ constructed in Section \ref{Rectification_of_algebras_over_an_infty_operad} to view $A$ as an $\mathbb{E}_1$-algebra in the symmetric monoidal $\infty$-category $\mathcal{C}^{\otimes} := N_{\text{dg}}(\ch(\field))^{\otimes}$.\\\par
To this end, let $\phi: C_{\ast}(\mathbb{E}^T_1) \xrightarrow{\simeq} \assoc$ be the projection map. Then we get $\phi^{\ast}A \in \alg_{C_{\ast}(\mathbb{E}^T_1)}(\ch(\field))$. If $\tilde{A}\xtwoheadrightarrow{\simeq} \phi^{\ast}A$ is a cofibrant replacement, we can then use Theorem \ref{thm3} to get an object $\Phi(\tilde{A})\in \alg_{\mathbb{E}_1}(N_{\text{dg}}(\ch(\field)))$, which we by abuse of notation again denote by $\tilde{A}$. We arrive at our next main result:
\begin{theorem}\label{thm1}
    For any projective resolution $P\xtwoheadrightarrow{\simeq}A$ of $A$ as an $A^e$-module, the evaluation map 
    \begin{align*}
        \textnormal{ev}: \map_{\ch(A^e)}(P,P) \otimes P \rightarrow P
    \end{align*}
    makes the Hochschild complex of $A$ into a center of $\tilde{A}\in \alg_{\mathbb{E}_1}(\ch(\field))$. In particular, this makes $\map_{\ch(A^e)}(P,P)$ into an object of $\alg_{\mathbb{E}_1}(\alg_{\mathbb{E}_1}(N_{\textnormal{dg}}(\ch(\field))))\simeq \alg_{\mathbb{E}_2}(N_{\textnormal{dg}}(\ch(\field)))$.
\end{theorem}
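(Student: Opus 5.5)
The plan is to reduce the statement to Corollary \ref{cor7} (together with Proposition \ref{prop7} and Theorem \ref{thm8}). These say that a center of $\tilde{A}$ exists and is given by the morphism object $\mor_{\textnormal{Mod}_{\tilde{A}}^{\mathbb{E}_1}(\mathcal{C}^{\otimes}\times_{\fin}\mathbb{E}_1^{\otimes})_{\mathfrak{a}}}(\tilde{A},\tilde{A})$ in $\mathcal{C}=N_{\textnormal{dg}}(\ch(\field))$, with the action given by evaluation. So it suffices to exhibit $\map_{\ch(A^e)}(P,P)$, together with $\textnormal{ev}$, as an $\infty$-categorical morphism object in the sense of Definition \ref{def3}. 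For this I will use Theorem \ref{thm4}, applied to a dg model category left tensored over $\ch(\field)$ whose underlying $\infty$-category is the $\mathbb{E}_1$-module category.

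First I identify the module category. By Section \ref{the_category_of_e1_modules_in_chain_complexes}, the $\infty$-category $\textnormal{Mod}_{\tilde{A}}^{\mathbb{E}_1}(N_{\textnormal{dg}}(\ch(\field))\times_{\fin}\mathbb{E}_1)_{\mathfrak{a}}$ is presented by the model category $\lmod_{U_{\eone}(\tilde{A})}(\ch(\field))$, using Hinich's rectification for modules and the isomorphism (\ref{dig1}). Next I compare enveloping algebras. The quasi-isomorphism $\tilde{A}\to\phi^{\ast}A$ and the map of operads $\phi:\eone\to\assoc$ induce a map $U_{\eone}(\tilde{A})\to U_{\assoc}(A)=A\otimes A^{\textnormal{op}}=A^e$. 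I will argue that this is a quasi-isomorphism of dg algebras. Since $\tilde{A}$ is cofibrant, the enveloping algebra is homotopy invariant, and it is compatible with $\phi$ because $\eone\to\assoc$ is a quasi-isomorphism of $\Sigma$-cofibrant operads in characteristic $0$. Restriction and extension of scalars along this map then give a Quillen equivalence $\lmod_{U_{\eone}(\tilde{A})}\simeq\lmod_{A^e}=\ch(A^e)$. Under this equivalence, $\tilde{A}$ as a module over itself corresponds to $A$ as a bimodule, and hence to its cofibrant replacement $P$. The $\ch(\field)$-tensoring $V\otimes M$ is the ordinary tensoring of a complex with a module.

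Now Theorem \ref{thm4} applies with $C=\ch(\field)$ and $M=\ch(A^e)$ carrying the projective model structure. Here $P$ is bifibrant, and the dg internal hom $\map_{\ch(A^e)}(P,P)$ with $\textnormal{ev}$ satisfies the strict adjunction $\map_{\ch(\field)}(V,\map_{\ch(A^e)}(P,P))\cong\map_{\ch(A^e)}(V\otimes P,P)$. Every complex over a field is cofibrant, so no replacement $Q$ is needed, and Theorem \ref{thm4}(1) gives the morphism object. The result is independent of the choice of $P$ by Theorem \ref{thm4}(2). Finally, $\map_{\ch(A^e)}(P,P)\simeq \mathbb{R}\hom_{A^e}(A,A)$; taking $P$ to be the bar resolution gives $\hom_{\field}(A^{\otimes\ast},A)$, which is the Hochschild complex. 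The $\mathbb{E}_2$-structure then follows from Corollary \ref{cor7} and Theorem \ref{thm7}.

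I expect the main obstacle to be the middle step: making precise that the morphism object computed in Hinich's $\lmod_{U_{\eone}(\tilde{A})}$ model agrees with the dg morphism object in $\ch(A^e)$, compatibly with the tensoring over $\ch(\field)$ and with the evaluation maps. The Quillen equivalence alone only matches mapping spaces, so I would also check that it is $\ch(\field)$-enriched. Restriction of scalars is a dg functor commuting with $V\otimes-$, and that should suffice: the adjunction isomorphism transports along it up to weak equivalence, which is all Definition \ref{def3} requires.
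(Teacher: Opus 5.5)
Your proposal is correct and follows the paper's proof essentially step for step. You reduce via Corollary \ref{cor7} to a morphism object in the $\mathbb{E}_1$-module category, present that category by $\lmod_{U_{\eone}(\tilde{A})}(\ch(\field))$, pass to $\ch(A^e)$ by comparing enveloping algebras, and conclude with Theorem \ref{thm4} for the bifibrant $P$. The one deviation is how you compare enveloping algebras: you use a single quasi-isomorphism $U_{\eone}(\tilde{A})\to A\otimes A^{\textnormal{op}}$, whereas the paper's Lemma \ref{prop2} uses a zig-zag through $\cai$. Your map is valid once you factor it as $U_{\eone}(\tilde{A})\xrightarrow{\phi_\flat}U_{\assoc}(\phi_!\tilde{A})\to U_{\assoc}(A)$, using that $\phi_!\tilde{A}\to A$ is a weak equivalence. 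The compatibility with the $\ch(\field)$-tensoring that you flag is left implicit in the paper.
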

Clearly, to prove the theorem we will need to apply our results in Section \ref{the_center_as_endomorphism_object_of_bimodules} and \ref{symmetric_monoidal_dg_model_categories} on endomorphism objects in bimodules, and dg mapping objects. Hence, the first step is to identify derived modules over $A^e$ with modules over $\mathbb{E}_1$, or equivalently via the Quillen equivalence (\ref{dig1}), modules over the enveloping algebra $U_{\eone}(\tilde{A})$.
\begin{lemma}\label{prop2}
There exists a zig-zag of quasi-isomorphisms between $U_{\eone}(\tilde{A})$ and $A\otimes A^{\textnormal{op}}$.
\end{lemma}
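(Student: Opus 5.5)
We prove the lemma by passing through the associative operad: the weak equivalence $\phi\colon \eone \to \assoc$ and the cofibrant replacement $q\colon \tilde{A} \to \phi^{\ast}A$ give the zig-zag
\begin{align*}
    U_{\eone}(\tilde{A}) \longrightarrow U_{\eone}(\phi^{\ast}A) \longrightarrow U_{\assoc}(A) \cong A\otimes A^{\textnormal{op}}.
\end{align*}
The last isomorphism is classical (see \cite[Section 12.3.4]{LV}): operadic modules over an associative algebra are exactly $A$-bimodules, so the enveloping algebra is $A^e$. The second map is induced by $\phi$; it exists because $U_{\mathcal{O}}(B)$ is functorial in pairs (operad, algebra). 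The first map is $U_{\eone}(q)$. Since every arrow points the same way, the zig-zag is in fact a composite, and it suffices to show that the first two maps are quasi-isomorphisms.

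For this I would use the explicit formula for the enveloping algebra as a coequalizer, or equivalently a quotient of $\bigoplus_n \mathcal{O}(n+1)\otimes_{\Sigma_n} B^{\otimes n}$ by the relations coming from the algebra structure. For cofibrant $B$ this is a derived construction: $U_{\mathcal{O}}(B)$ is the relative composite product $\mathcal{O}[1]\circ_{\mathcal{O}} B$, where $\mathcal{O}[1]$ is the ``one marked input'' right $\mathcal{O}$-module. The first map is then a quasi-isomorphism by \cite[Theorem 2.6]{BM} and related results of Berger--Moerdijk. Their statement is that for an admissible $\Sigma$-cofibrant operad, $U_{\mathcal{O}}$ preserves weak equivalences between cofibrant algebras, and more generally weak equivalences $B\to B'$ in which $B$ is cofibrant and $B'$ has underlying cofibrant object. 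Over a field of characteristic 0 every complex is cofibrant, and $\eone$ is $\Sigma$-cofibrant, so this applies to $q$.

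The second map is the main obstacle, since $A$ is not cofibrant. I would handle it by comparing both sides with derived versions. For $U_{\eone}(\phi^{\ast}A)\to U_{\assoc}(A)$ I would use the two-sided bar construction $\textnormal{B}(\mathcal{O}[1],\mathcal{O},B)$, which computes the derived enveloping algebra and is functorial in $(\mathcal{O},B)$. Since $\eone\to\assoc$ is a weak equivalence of $\Sigma$-cofibrant operads (in characteristic 0 all operads are $\Sigma$-cofibrant), the bar constructions are quasi-isomorphic. It would then remain to check that the bar construction computes the underived $U$ on both sides.

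If the general argument gets delicate, I would compute directly for $\assoc$. By the classical description, $U_{\assoc}(A)=A\otimes A^{\textnormal{op}}$, and this is exact in $A$ in characteristic 0; hence its derived and underived versions agree, because $A\mapsto A\otimes A^{\textnormal{op}}$ preserves quasi-isomorphisms. The fallback is therefore to replace the middle term by $U_{\assoc}(\phi_!\tilde{A})$, where $\phi_!\tilde{A}\to A$ is a quasi-isomorphism since extension of scalars along $\phi$ is a Quillen equivalence. This gives $U_{\eone}(\tilde{A})\cong U_{\assoc}(\phi_!\tilde{A})$ up to the Berger--Moerdijk comparison, followed by $U_{\assoc}(\phi_!\tilde{A})=\phi_!\tilde{A}\otimes(\phi_!\tilde{A})^{\textnormal{op}} \to A\otimes A^{\textnormal{op}}$, which is a quasi-isomorphism by the Künneth theorem over $\field$.
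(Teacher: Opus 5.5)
Your main line has a gap. Both arrows pass through $U_{\eone}(\phi^{\ast}A)$, and $\phi^{\ast}A$ is not a cofibrant $\eone$-algebra. The invariance results of Berger--Moerdijk and Fresse control $U_{\mathcal{O}}$ on weak equivalences between \emph{cofibrant} algebras. The extension you attribute to them, where the target is only cofibrant as a complex, is not available for a merely $\Sigma$-cofibrant operad, and it cannot hold in that generality. Over $\field$ every complex is cofibrant, so it would make $U_{\mathcal{O}}$ preserve all quasi-isomorphisms for every operad. That fails, for instance, for the Poisson operad, whose enveloping algebra involves K\"ahler differentials. For $\eone$ specifically such a statement would need its own argument, so $U_{\eone}(q)$ is unjustified. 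Your second arrow is left open for the same reason: you write that it ``would then remain to check that the bar construction computes the underived $U$'', and that is exactly the missing point. Note that the paper only ever applies $U_{\eone}$ to the cofibrant algebras $\psi_!\hat{A}$, $A_1$, $\tilde{A}$.

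Your fallback, on the other hand, is a complete proof, and it is shorter than the paper's.
\begin{itemize}
\item Since $\phi$ is a weak equivalence of $\Sigma$-cofibrant operads and $\tilde{A}$ is cofibrant, $\phi_\flat\colon U_{\eone}(\tilde{A})\to U_{\assoc}(\phi_!\tilde{A})$ is a quasi-isomorphism. This follows from the same results of Fresse (Theorems 17.4.A/B) that the paper uses for $\theta_\flat$ and $\psi_\flat$. It is a quasi-isomorphism, not an isomorphism, so write $\simeq$ rather than $\cong$.
\item The adjoint $\phi_!\tilde{A}\to A$ of $q$ is a quasi-isomorphism, because $\phi_!\dashv\phi^{\ast}$ is a Quillen equivalence with $\tilde{A}$ cofibrant and $A$ fibrant.
\item $B\mapsto B\otimes B^{\textnormal{op}}$ preserves quasi-isomorphisms by K\"unneth.
\end{itemize}
Together these give a composite of quasi-isomorphisms of dg algebras
\[
U_{\eone}(\tilde{A})\xrightarrow{\ \simeq\ }\phi_!\tilde{A}\otimes(\phi_!\tilde{A})^{\textnormal{op}}\xrightarrow{\ \simeq\ }A\otimes A^{\textnormal{op}}.
\]
The paper instead chooses a cofibrant operad $\cai\to\assoc$ factoring through $\eone$, cofibrant replacements $A'$, $\hat{A}$, $A_1$, and lifts between them. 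From several applications of the same invariance theorems it assembles the zig-zag $A\otimes A^{\textnormal{op}}\xleftarrow{\simeq}U_{\cai}(\hat{A})\xrightarrow{\simeq}U_{\eone}(\tilde{A})$.

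Both arguments rest on the same inputs: operad-change invariance of $U$ on cofibrant algebras, the Quillen equivalence of algebra categories, and exactness of $U_{\assoc}$. Yours uses each once, never evaluates $U$ on a non-cofibrant algebra over $\eone$, and yields a direct map rather than a zig-zag. Promote the fallback to the proof and drop the main line.
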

\begin{proof}
Let $\theta: \cai \xtwoheadrightarrow{\simeq} \assoc$ be a cofibrant replacement of dg operads, and note that we have a diagram
\begin{center}
    \begin{tikzcd}
	& \eone \\
	{\cai} & {\assoc}
	\arrow["{\phi,\simeq}", two heads, from=1-2, to=2-2]
	\arrow["{\psi, \simeq}", from=2-1, to=1-2]
	\arrow["{\theta,\simeq}"', two heads, from=2-1, to=2-2]
\end{tikzcd}.
\end{center}
Since we work in characteristic zero, all of the involved operads are automatically admissible and $\Sigma$-cofibrant. In particular, all the above weak equivalences are strong equivalences of operads, and thus induce a Quillen equivalence between their respective algebra categories. 
Let $A'\xtwoheadrightarrow{\simeq} A$ be a cofibrant replacement in associative algebras, and let $\hat{A} \xtwoheadrightarrow{\simeq} \theta^{\ast}A'$ be a cofibrant replacement in $\cai$-algebras. Then in particular, the unit map $\hat{A} \rightarrow \psi^{\ast}\psi_! \hat{A}$ is a weak equivalence, and hence using \cite[Theorem 17.4.A, 17.4.B]{F} we get the following diagram of weak equivalences of dg algebras
\begin{center}
    \begin{tikzcd}[sep=small]
	&&& {U_{\cai}(\hat{A})} \\
	&& {U_{\cai}(\theta^{\ast}A')} && {U_{\cai}(\psi^{\ast}\psi_!\hat{A})} \\
	& {A'\otimes A'^{\text{op}} = U_{\assoc}(A')} &&&& {U_{\eone}(\psi_!\hat{A})} \\
	{A\otimes A^{\text{op}}}
	\arrow["\simeq"', from=1-4, to=2-3]
	\arrow["\simeq", from=1-4, to=2-5]
	\arrow["{\theta_{\flat}}"', from=2-3, to=3-2]
	\arrow["{\psi_{\flat}}", from=2-5, to=3-6]
	\arrow["\simeq"', from=3-2, to=4-1]
\end{tikzcd}
\end{center}
Hence it suffices to show that $U_{\eone}(\psi_!\hat{A})$ is quasi-isomorphic to $U_{\eone}(\tilde{A})$. To this end, let $A_1 \xtwoheadrightarrow{\simeq} \phi^{\ast}A'$ be a cofibrant replacement. Note that $\psi_! \hat{A}$ is still cofibrant, and we hence have a lift $f:\psi_! \hat{A} \rightarrow A_1$ in the diagram
\begin{center}
    \begin{tikzcd}
\emptyset \arrow[d] \arrow[r]                                & A_1 \arrow[d, "\simeq", two heads] \\
\psi_!\tilde{A} \arrow[r, "\simeq"'] \arrow[ru, "f", dashed] & \phi^{\ast}A'                     
\end{tikzcd}.
\end{center}
By 2-out-of-3, $f$ must be a weak equivalence. This is a weak equivalence between cofibrant objects, so again by \cite[Theorem 17.4.A]{F}, we get a quasi-isomorphism $U_{\eone}(\psi_!\hat{A}) \xrightarrow{\simeq} U_{\eone}(A_1)$. The map $\phi^{\ast} A' \rightarrow \phi^{\ast}A$ is a trivial fibration as $\phi^{\ast}$ is right Quillen, and in particular the composition
\begin{align*}
    A_1 \xtwoheadrightarrow{\simeq} \phi^{\ast}A' \xtwoheadrightarrow{\simeq} \phi^{\ast}A
\end{align*}
is again a cofibrant replacement. We now again find a lift $g: A_1 \rightarrow \tilde{A}$ in the diagram
\begin{center}
    \begin{tikzcd}
\emptyset \arrow[d] \arrow[r]                               & \tilde{A} \arrow[d, "\simeq", two heads] \\
A_1 \arrow[r, "\simeq"', two heads] \arrow[ru, "g", dashed] & \phi^{\ast}A                            
\end{tikzcd}
\end{center}
which again is a weak equivalence, and thus finally induces a quasi-isomorphism
\begin{align*}
    U_{\eone}(A_1) \xrightarrow{\simeq} U_{\eone}(\tilde{A}).
\end{align*}
Summarizing, we get the following zig-zag
\begin{align*}
    A\otimes A^{\text{op}} \xleftarrow{\simeq} U_{\mathbb{A}_{\infty}}(\hat{A}) \xrightarrow{\simeq} U_{\eone}(\tilde{A}).
\end{align*}
\end{proof}
\begin{proof}[Proof of Theorem \ref{thm1}]
By Corollary \ref{cor8} and Corollary \ref{cor7}, the center of $\tilde{A}$ is given by the morphism object $\mor_{\textnormal{Mod}^{\mathbb{E}_1}_{\tilde{A}}(\mathcal{C}^{\otimes} \times_{\fin} \mathbb{E}_1^{\otimes})_{\mathfrak{a}}}(\tilde{A},\tilde{A})$ together with its evaluation map 
\begin{align*}
\alpha: \mor_{\textnormal{Mod}^{\mathbb{E}_1}_{\tilde{A}}(\mathcal{C}^{\otimes} \times_{\fin} \mathbb{E}_1^{\otimes})_{\mathfrak{a}}}(\tilde{A},\tilde{A}) \otimes \tilde{A} \rightarrow \tilde{A}.
\end{align*}
It therefore suffices to exhibit $\text{ev}: \map_{\ch(A^e)}(P,P) \otimes P \rightarrow P$ as such a morphism object. By the rectification results in Section \ref{the_category_of_e1_modules_in_chain_complexes} and the Quillen equivalence (\ref{dig1}), the $\mathbb{E}_1$-module category $\textnormal{Mod}^{\mathbb{E}_1}_{\tilde{A}}(\mathcal{C}^{\otimes} \times_{\fin} \mathbb{E}_1^{\otimes})_{\mathfrak{a}}$ is the underlying $\infty$-category of the model category $\lmod_{U_{\eone}(\tilde{A})}(\ch(\field))$. By Lemma \ref{prop2}, we further have a Quillen equivalence 
\begin{align*}
    \lmod_{U_{\eone}(\tilde{A})}(\ch(\field)) \simeq \lmod_{A\otimes A^{\text{op}}}(\ch(\field)) \cong \ch(A^e),
\end{align*}
and $\ch(A^e)$ is a dg model category; in particular it is left tensored over the symmetric monoidal dg model category $\ch(\field)$. We can hence apply Theorem \ref{thm4}: $P\in \ch(A^e)^{\circ}$ is bifibrant by assumption, and we already know that $\map_{\ch(A^e)}(P,P)$ together with the evaluation map is a dg endomorphism object for $P$. Therefore, it also is an endomorphism object for $P\in N_{\text{dg}}(\ch(A^e)^{\circ})$, and the equivalence of $\infty$-categories $N_{\text{dg}}(\ch(A^e)^{\circ})\simeq \text{Mod}^{\mathbb{E}_1}_{\tilde{A}}(\mathcal{C}^{\otimes}\times_{\fin} \mathbb{E}_1^{\otimes})_{\mathfrak{a}}$ identifies $P$ with $\tilde{A}$. This proves the claim.
\end{proof}
Now that we have established the Hochschild complex as a center, we can compute its bracket as an $\mathbb{E}_2$-algebra, as in Section \ref{the_bracket_operation_of_a_2_algebra}, and compare to the classical Gerstenhaber bracket. Recall that we view the Hochschild cochain complex 
\begin{align*}
    C^{\ast}(A,A) \simeq \hom_{\field}(A^{\otimes \ast},A)
\end{align*}
as a chain complex concentrated in non-positive degrees with component in degree $-n$ given by $\hom_{\field}(A^{\otimes n},A)$. We will follow the sign conventions in \cite{Wit}.
\begin{definition}
The \textbf{signed Gerstenhaber bracket} is the degree 1 map
\begin{align*}
    \hom_{\field}(A^p,A) \otimes \hom_{\field}(A^q,A) &\rightarrow \hom_{\field}(A^{p+q-1},A)\\
    f\otimes g &\mapsto (-1)^{p+1} (f\{g\} - (-1)^{(p-1)(q-1)} g\{f\})
\end{align*}
with $-\{-\}$ the circle product as defined in \cite[Definition 1.4.1]{Wit}
\begin{align*}
    f\{g\}(a_1\otimes \dots \otimes a_{p+q-1}) = \sum_{i=1}^p (-1)^{\epsilon_i} f(a_1\otimes \dots \otimes a_{i-1} \otimes g(a_i \otimes \dots \otimes a_{i+q})\otimes a_{i+q+1} \otimes \dots \otimes a_{p+q-1})
\end{align*}
and $\epsilon = (q-1)(i-1)$. We call the Gerstenhaber algebra structure on the Hochschild cohomology with the signed cup product and the signed Gerstenhaber bracket the \textbf{signed classical Gerstenhaber algebra structure}.
\end{definition}
\begin{corollary}\label{cor9}
Take $P= B(A) \xtwoheadrightarrow{\simeq}A$ to be the bar resolution. The strictification of the Hochschild complex ${\hom}_{\field}(A^{\otimes \ast},A)\in \ch(\field)$ as the center of $\tilde{A}$ naturally carries the structure of a $C_{\ast}(\mathbb{E}^T_2)$-algebra. This $C_{\ast}(\mathbb{E}^T_2)$-algebra structure recovers the signed classical Gerstenhaber algebra structure in cohomology.
\end{corollary}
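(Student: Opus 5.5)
Taking $P = B(A)$ in Theorem \ref{thm1}, the dg endomorphism complex $\map_{\ch(A^e)}(B(A),B(A))$ together with the evaluation map is a center of $\tilde{A}$. It is therefore an object of $\alg_{\mathbb{E}_1}(\alg_{\mathbb{E}_1}(N_{\textnormal{dg}}(\ch(\field))))$, and Dunn additivity (Theorem \ref{thm7}) promotes it to an $\mathbb{E}_2$-algebra. Applying the rectification equivalence $\Phi$ of Theorem \ref{thm3} to $\mathcal{O} = \mathbb{E}_2^T$ gives a strictification, i.e. a $C_{\ast}(\mathbb{E}_2^T)$-algebra. The quasi-isomorphism $\map_{\ch(A^e)}(B(A),B(A)) \simeq \hom_{A^e}(B(A),A) \cong \hom_{\field}(A^{\otimes \ast},A)$ transports this structure to the Hochschild complex up to equivalence; transport along weak equivalences is legitimate because $\Phi$ is an equivalence. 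This settles existence. By Corollary \ref{cor10}, the Gerstenhaber structure on cohomology can be computed from the $\infty$-categorical $\mathbb{E}_2$-algebra directly, without reference to the strict model.

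For the product, Corollary \ref{cor7} identifies the two multiplications on the center with composition $\circ$ and convolution $\ast$. The cup product on $H_{\ast}$ is the image of the class $[\mu_0]$, so I would show that it equals both. First, under $\hom_{A^e}(B(A),A)$, Yoneda composition corresponds to the cup product: this is the classical comparison of the Yoneda product with the cup product, obtained via the diagonal $B(A)\to B(A)\otimes_A B(A)$. Second, by Corollary \ref{cor8}, convolution is $f\ast g = \textnormal{mult}\circ(f\otimes g)\circ \Delta$ with the same diagonal. With the standard deconcatenation diagonal this is literally the signed cup product $f\smile g$. Eckmann–Hilton on cohomology then forces $\circ$ and $\ast$ to agree there, which is a consistency check.

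For the bracket, I would use Corollary \ref{cor5}: $\tilde{A}(\gamma_0)\simeq h\iota_{2,3}+h^{\textnormal{op}}\iota_{1,4}+h^{\textnormal{op}}\iota_{2,3}+h\iota_{1,4}$. Here $h$ is a chain homotopy filling the interchange square between $\circ\circ(\ast\otimes\ast)$ and $\ast\circ(\circ\otimes\circ)\circ\tau_{2,3}$ on four inputs. Any two fillings are homotopic because the square is filled contractibly (Corollary \ref{cor7}), so it suffices to write down one explicit $h$. I would build $h$ on the bar resolution from the brace operations: the comparison map $B(A)\to B(A)\otimes_A B(A)$ and its homotopy coassociativity data give $h(f_1,f_2,f_3,f_4)$ as sums of terms like $f_1\smile(f_2\{f_3\})\smile f_4$ up to sign. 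Restricting along the units via $\iota_{2,3}$ sets $f_1=f_4=1$, which leaves $\pm f\{g\}$. Restricting via $\iota_{1,4}$ similarly leaves $\pm g\{f\}$ or $0$, depending on the orientation conventions for $\mu_1$. The four whiskered terms should then sum to $(-1)^{p+1}(f\{g\}-(-1)^{(p-1)(q-1)}g\{f\})$ up to a coboundary, which is the signed Gerstenhaber bracket. This is a homotopy-level version of Gerstenhaber's proof that $f\smile g - (-1)^{pq} g\smile f = d(f\{g\})$.

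The main obstacle is the third step. One has to produce, on $\map_{\ch(A^e)}(B(A),B(A))$ rather than on $\hom_{\field}(A^{\otimes\ast},A)$, an explicit filler $h$ that is compatible with the units used in $\iota_{2,3}$ and $\iota_{1,4}$. This filler must then be transported across the quasi-isomorphism to cochains, and all signs must be tracked against the conventions of \cite{Wit}. I would first try working with the strict unit of $B(A)$ and the cap/diagonal formulas, so that the restricted homotopies become visibly the circle products. Failing that, I would compute only on cohomology classes, where any discrepancy between two fillers is exact.
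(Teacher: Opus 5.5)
Your existence argument and your treatment of the product follow the paper's proof: Theorem \ref{thm1} with $P=B(A)$, then Dunn additivity, rectification and Corollary \ref{cor10}, then Corollary \ref{cor7} to identify $\circ$ and $\ast$ with $\smile$. Your plan for the bracket is also the paper's. The gap is the reduction ``any two fillings are homotopic because the square is filled contractibly, so it suffices to write down one explicit $h$'', together with your fallback of passing to cohomology. Corollary \ref{cor7} gives contractibility of fillers in the action category $\mathcal{E}_{\mathfrak{a}}$, where the target is final. Fillers of the underlying square in $\mathcal{C}$ are only determined up to $1$-cycles of $\hom(Z^{\otimes 4},Z)$, with $Z=\hom_{\field}(A^{\otimes\ast},A)$, and such cycles change the output of Corollary \ref{cor5}.

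Concretely, label the inputs $x_1,\dots,x_4$ so that $x_2,x_3$ are the ones being interchanged. Gerstenhaber's identity can be read with the two factors in either order. Hence both your candidate $\pm x_1\smile x_2\{x_3\}\smile x_4$ and the paper's $\pm x_1\smile x_3\{x_2\}\smile x_4$, with signs normalized to have the same boundary, fill the square of cup products. Their difference is $\pm x_1\smile[x_2,x_3]\smile x_4$. Its restriction along $\iota_{2,3}$ is $\pm$ the bracket, so in general it is not exact. Run both fillers through Corollary \ref{cor5}, with $h^{\textnormal{op}}$ obtained by reversing the four inputs as in the paper's formulas. Both $\iota_{1,4}$-terms vanish since $1\{1\}=0$, and you get brackets $\beta$ and $-\beta$ respectively. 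Their average is an equally legitimate filler in $\mathcal{C}$ and produces the zero bracket. So the bracket genuinely depends on the filler, and neither ``up to sign'' nor ``only on cohomology'' rescues the step.

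What is missing is a proof that your $h$, and likewise $h^{\textnormal{op}}$, is the image of the essentially unique filler in $\mathcal{E}_{\mathfrak{a}}$. Under the morphism-object equivalence $\map(Z^{\otimes 4},Z)\simeq\map(Z^{\otimes 4}\otimes B(A),B(A))$, taken in bimodules, $h$ must correspond to a specific homotopy. That homotopy relates the two composite actions of $Z^{\otimes 4}$ on $B(A)$, and it is supplied by the monoidal structure of the action category, i.e.\ the tensor product of Corollary \ref{cor8} built from the diagonal of $B(A)$.

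Your ``main obstacle'' paragraph points in this direction, but compatibility with the units is not the right criterion. The difference cycle above vanishes as soon as a unit sits in position $2$ or $3$. Therefore none of the unit constraints coming from $\mathbb{E}_1^{\otimes}\times\mathbb{E}_1^{\otimes}$ can detect it.

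The paper's proof makes exactly the same reduction: ``the compatibility square has a contractible choice of fillings, and it therefore suffices to find a chain homotopy $h$''. Following it therefore does not close the gap. You have to compute the action-side homotopy on the bar resolution, transport it to cochains, and only then recognize the circle products.
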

\begin{proof}
By Theorem \ref{thm1}, the classical Hochschild complex is a center of $\tilde{A}$ and thus inherits a $\mathbb{E}_2$-algebra structure in the derived $\infty$-category. The first part of the corollary follows directly from the Rectification Theorem \ref{thm3}. For the second part, note that by Corollary \ref{cor10}, the underlying Gerstenhaber structure on cohomology of the strictification agrees with the Gerstenhaber structure computed directly from the $\mathbb{E}_2$-algebra $\hom_{\field}(A^{\otimes \ast},A)$.\par 
By Corollary \ref{cor7}, the two multiplications of the 2-algebra structure on the Hochschild complex as center are given by composition and convolution respectively. But both of these recover the formula for the signed cup product for the bar resolution. Hence, we get the classical signed cup product in cohomology. \par
Again by Corollary \ref{cor7}, the compatibility square has a contractible choice of fillings, and it therefore suffices to find a chain homotopy $h: \hom_{\field}(A^{\otimes \ast},A)^{\otimes 4} \rightarrow \hom_{\field}(A^{\otimes (\ast -1)}, A)$ in the square
\begin{center}
    \begin{tikzcd}
	{{\hom}_{\field}(A^{\otimes \ast},A)^{\otimes 4}} & {{\hom}_{\field}(A^{\otimes \ast},A)^{\otimes 2}} \\
	{{\hom}_{\field}(A^{\otimes \ast},A)^{\otimes 2}} & {{\hom}_{\field}(A^{\otimes \ast},A)}
	\arrow["{\smile \otimes \smile}", from=1-1, to=1-2]
	\arrow["{(\smile \otimes \smile) \circ (\text{id}\otimes \tau \otimes \text{id})}"', from=1-1, to=2-1]
	\arrow["h"', shorten <=14pt, shorten >=11pt, Rightarrow, from=1-2, to=2-1]
	\arrow["\smile", from=1-2, to=2-2]
	\arrow["\smile"', from=2-1, to=2-2]
\end{tikzcd}.
\end{center}
Recall that the circle product $f\otimes g \mapsto f\{g\}$ witnesses the homotopy commutativity of the signed cup product $f\otimes g \mapsto f\smile g$ via the following equality \cite[Lemma 1.4.5]{Wit}:
\begin{align*}
    f\smile g - (-1)^{pq} g\smile f = (-1)^{(p+1)q} \partial g\{f\} + (-1)^{(p+1)q-1} \partial(g\{f\}) + (-1)^{pq+1} g\{\partial f\}.
\end{align*}
with $|f| = -p$ and $|g| = -q$, and $\partial f = (-1)^p df$ with $d$ the usual Hochschild codifferential. This in turn yields a homotopy for the square by setting 
\begin{align*}
    h: f_1\otimes g_1 \otimes f_2 \otimes g_2 \mapsto (-1)^{|f_1|+|f_1|+|f_2||g_1|-1} f_1 \smile f_2\{g_1\}\smile g_2,
\end{align*}
with codifferential $\partial$ on the Hochschild complex. Restricting along $\iota_{1,4}$ yields the trivial homotopy between the cup product and itself, and restricting along $\iota_{2,3}$ yields the homotopy 
\begin{align*}
    H: {\hom}_{\field}(A^{\otimes p},A) \otimes {\hom}_{\field}(A^{\otimes q},A) &\rightarrow {\hom}_{\field}(A^{\otimes p+q-1},A)\\
    f\otimes g &\mapsto (-1)^{pq+q-1} g\{f\}
\end{align*}
between $\smile$ and $\smile \circ \tau$. For the opposite multiplications, we get 
\begin{align*}
    h^{\text{op}}: f_1\otimes g_1\otimes f_2\otimes g_2 \mapsto (-1)^{\alpha + |f_2||g_1| + |f_2| -1} g_2 \smile g_1\{f_2\} \smile f_1
\end{align*}
with $\alpha = |f_1||f_2| + |g_1||g_2| + |f_1||g_1| + |f_2||g_2| + |f_1||g_2| + |f_2||g_1|$. Restricting along $\iota_{1,4}$ again yields the trivial homotopy, and restricting along $\iota_{2,3}$ yields 
\begin{align*}
    H^{\text{op}}: f\otimes g \mapsto (-1)^{p+1} f\{g\}.
\end{align*}
Hence by the remark after Corollary \ref{cor5}, the bracket is given by
\begin{align*}
    [f,g] = H(f\otimes g) + H^{\text{op}}(f\otimes g) &= (-1)^{|f||g|+|g|-1} g\{f\} + (-1)^{|f|+1} f\{g\} \\&= (-1)^{|f|+1}(f\{g\} - (-1)^{(|f|+1)(|g|+1)} g\{f\}) \\&= (-1)^{|f|+1} [f,g]_G.
\end{align*}
This agrees with the signed classical Gerstenhaber bracket, proving the claim. 
\end{proof}

\section{The Hochschild complex of a scheme} \label{The_Hochschild_complex_of_a_scheme}
As explained in Section \ref{delignes_conjecture_on_hochschild_cochains}, we want to globalize the above results to a quasi-compact separated scheme $X$ over $\field$. We follow the same reasoning as in the previous section. The structure sheaf $\mathcal{O}_X$ is an associative algebra object in the category of (pre)sheaves of $\field$-modules on $X$. We consider the dg category $\dgpsh{X}$ of complexes of presheaves of $\field$-modules on $X$, and show that it can be equipped with a dg model category structure that presents the $\infty$-category of dg sheaves on $X$. We can make $\mathcal{O}_X$ into a $C_{\ast}(\mathbb{E}_1^T)$-algebra in $\dgpsh{X}$, which by Theorem \ref{thm3} produces an $\mathbb{E}_1$-algebra in the associated $\infty$-category of dg sheaves. We then argue that the center of this $\mathbb{E}_1$-algebra is a good model for the Hochschild cochain complex of $X$. Note that this does not require $X$ to be smooth. 
\subsection{The $\infty$-category of dg sheaves}\label{the_infty_category_of_dg_sheaves}
Let $X$ be a quasi-compact separated scheme over $\field$. Since the category of presheaves of complexes of $\field$-modules on $X$ is a functor category, it admits an injective and a projective model structure. These model structures do not know anything about the geometry of $X$; in particular the bifibrant objects are not ``homotopy sheaves'' in any way. In order to present the $\infty$-category of dg sheaves, one needs to localize these model structures at Čech nerves of covering families. In this section, we will recall how to construct the ``local projective'' model structure, and prove some basic properties of this model category. Note that we use presheaves instead of sheaves since the category of dg sheaves does not have enough projectives, and the injective model structure does not behave well with respect to tensor products.
\begin{proposition}[\cite{Hin2}, Theorem 1.3.1]
Let $S$ be a site. There is a cofibrantly generated model structure on the category $\dgpsh{S}$ of presheaves of $\field$-module complexes on $S$ with 
\begin{itemize}
    \item weak equivalences the maps $f: \mathcal{F} \rightarrow \mathcal{G}$ such that the degreewise sheafification $f^a: \mathcal{F}^a \rightarrow \mathcal{G}^a$ is a quasi-isomorphism of complexes of sheaves,
    \item cofibrations generated by maps $f: \mathcal{F} \rightarrow \mathcal{F}\langle x;dx = z\in \mathcal{F}(U)\rangle$ corresponding to adding a section to kill a cycle z over $U\in S$, and
    \item fibrations the maps $f: \mathcal{F} \rightarrow \mathcal{G}$ such that $f(U): \mathcal{F}(U) \rightarrow \mathcal{G}(U)$ is surjective for all $U\in S$ and for any hypercover $\epsilon: V_{\bullet} \rightarrow U$ the diagram
    \begin{center}
        \begin{tikzcd}
\mathcal{F}(U) \arrow[d, "f(U)"'] \arrow[r] & {\check{C}(V_{\bullet},\mathcal{F})} \arrow[d] \\
\mathcal{G}(U) \arrow[r]                    & {\check{C}(V_{\bullet},\mathcal{G})}          
\end{tikzcd}
    \end{center}
    is a homotopy pullback. 
\end{itemize}
\end{proposition}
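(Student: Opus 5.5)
The plan is to obtain the model structure as a left Bousfield localization of the projective model structure on $\dgpsh{S}$. First, $\dgpsh{S}=\text{Fun}(S^{\textnormal{op}},\ch(\field))$ carries the projective model structure: weak equivalences and fibrations are objectwise quasi-isomorphisms and objectwise surjections. It is cofibrantly generated, with generating cofibrations $\field[U]\otimes S^{n-1}\rightarrow \field[U]\otimes D^{n}$, where $\field[U]$ is the free presheaf on the representable $U\in S$. These are exactly the maps $\mathcal{F}\rightarrow \mathcal{F}\langle x; dx=z\in \mathcal{F}(U)\rangle$ adjoining a section to kill a cycle, which matches the cofibrations in the statement. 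Since $\ch(\field)$ with the projective structure is combinatorial, left proper and simplicial/dg enriched, the same holds for this functor category.

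Next, I localize at the set $H$ of maps $\textnormal{Tot}\,\field[V_\bullet]\rightarrow \field[U]$, with $V_\bullet\rightarrow U$ ranging over hypercovers. To get a \emph{set} rather than a class, I restrict to hypercovers that are bounded in cardinality; by the Dugger--Hollander--Isaksen argument, every hypercover is refined by one of these, so nothing is lost. Smith's theorem, or Hirschhorn's existence theorem for left Bousfield localizations of left proper combinatorial model categories, then gives a model structure $L_H\dgpsh{S}$. It has the same cofibrations, so it is cofibrantly generated by the generators above together with generating $H$-local trivial cofibrations. Its fibrant objects are the projectively fibrant $H$-local presheaves. Computing derived mapping complexes out of $\field[U]$ and $\textnormal{Tot}\,\field[V_\bullet]$ identifies these with the presheaves $\mathcal{F}$ for which $\mathcal{F}(U)\rightarrow \check{C}(V_\bullet,\mathcal{F})$ is a quasi-isomorphism for every hypercover. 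This is the fibration condition of the statement applied to $\mathcal{F}\rightarrow 0$.

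The weak equivalences of the localization are the $H$-local equivalences, and I must show these coincide with the maps $f$ for which $f^a$ is a quasi-isomorphism of complexes of sheaves. In one direction, each map in $H$ is a local quasi-isomorphism, since the Čech complex of a hypercover resolves $\field[U]^a$. The class of maps inducing isomorphisms on homology sheaves is closed under the saturation operations (2-out-of-3, retracts, homotopy colimits), because sheafification is exact. In the other direction, I use the DHI/Jardine style comparison. Any presheaf admits a local-fibrant replacement whose value at $U$ computes hypercohomology, namely the colimit of Čech complexes over hypercovers. A map inducing isomorphisms on homology sheaves therefore induces a quasi-isomorphism on these replacements, and hence is a local equivalence.

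The main obstacle is the description of \emph{all} fibrations, not just fibrant objects. For a left Bousfield localization of a left proper model category, a general fibration is characterized (Hirschhorn 3.4.6--3.4.8) as a projective fibration whose square with local fibrant replacements is a homotopy pullback. I would then translate this into the stated condition: surjectivity for every $U$ plus the homotopy pullback of $\mathcal{F}(U)\rightarrow\check{C}(V_\bullet,\mathcal{F})$ over $\mathcal{G}(U)\rightarrow\check{C}(V_\bullet,\mathcal{G})$ for all hypercovers. My first attempt is to pass to homotopy fibers, which is legitimate since $\ch(\field)$ is stable. Then $f$ satisfies the hypercover condition if and only if its objectwise fiber $\ker f$ is $H$-local, and this is equivalent to the Hirschhorn characterization because local fibrant replacement commutes with homotopy fibers. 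The remaining point, right-lifting against the generating local trivial cofibrations, follows by adjunction from these homotopy pullback squares.
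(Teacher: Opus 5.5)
Your strategy matches the paper's framing. The paper gives no proof of its own: it cites Hinich and remarks that this is the left Bousfield localization of the projective structure at the \v{C}ech complexes of hypercovers. As you set it up, however, the localization produces the wrong model structure.

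\textbf{The localization needs all shifts.} Hirschhorn's and Smith's existence theorems localize with respect to homotopy function complexes. For $\ch(\field)$-valued presheaves these are $\mathrm{DK}\,\tau_{\geq 0}\,\mathbb{R}\mathrm{Hom}(-,-)$, not the full derived mapping complex you compute. So $H$-locality of $\mathcal{F}$ only says that $\mathcal{F}(U)\rightarrow\check{C}(V_\bullet,\mathcal{F})$ is an isomorphism on $H_n$ for $n\geq 0$.

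A concrete failure: every presheaf $\mathcal{P}$ concentrated in chain degree $-1$ is $H$-local, since both sides then have vanishing $\tau_{\geq 0}$. Take $V_\bullet$ the \v{C}ech nerve of a covering, and let $h'$ be the shift of $h\colon\textnormal{Tot}\,\field[V_\bullet]\rightarrow\field[U]$ placing $\field[U]$ in degree $-1$. Then $\map^h(h',\mathcal{P})$ is the map of discrete spaces $\mathcal{P}(U)\rightarrow\check{H}^0(V_\bullet,\mathcal{P})$. So $h'$, which is a quasi-isomorphism after sheafification, is not a weak equivalence of your $L_H\dgpsh{S}$ whenever $\mathcal{P}$ fails the sheaf condition for that covering.

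The fix is to localize at the set $\{h[k] : h\in H,\ k\in\mathbb{Z}\}$, or equivalently in the $\ch(\field)$-enriched sense you implicitly used. Only then are the local objects exactly the presheaves with $\mathcal{F}(U)\simeq\check{C}(V_\bullet,\mathcal{F})$ for all hypercovers. Only then is the localization stable with an exact localization functor. Your step ``local fibrant replacement commutes with homotopy fibers'' is false without this closure and true with it.

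\textbf{The fibration step should be argued directly.} Hirschhorn's results do not characterize all fibrations of a left Bousfield localization in general. The characterization you want holds here because, with $H$ shift-closed, the localization is stable, and both directions are short.

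If $f$ is a local fibration, then $\ker f\rightarrow 0$ is its pullback along $0\rightarrow\mathcal{G}$, so $\ker f$ is local. Your fiber computation, using that $\check{C}(V_\bullet,-)$ preserves objectwise short exact sequences, then gives the homotopy pullback squares.

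Conversely, let $f$ be objectwise surjective with local kernel. Factor $f=p\circ i$ with $i$ a local trivial cofibration and $p$ a local fibration.
\begin{enumerate}
\item The map $\ker f\rightarrow\ker p$ has the same cofiber as $i$. By exactness it is a local equivalence between local objects, hence an objectwise quasi-isomorphism.
\item By the five lemma, $i$ is then a projective trivial cofibration.
\item Lifting $i$ against the projective fibration $f$ exhibits $f$ as a retract of $p$, so $f$ is a local fibration.
\end{enumerate}
This replaces your last sentence; Smith's theorem gives no explicit generating local trivial cofibrations to lift against.

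\textbf{The input for comparing weak equivalences.} What you actually need is that a presheaf satisfying hyperdescent with vanishing homology sheaves is objectwise acyclic. You can prove this directly, even for unbounded complexes, by refining a hypercover level by level until a given \v{C}ech cycle becomes a boundary. The product totalization absorbs the infinitely many components, so you need not rely on a colimit-over-hypercovers formula for unbounded hypercohomology.
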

This is the left Bousfield localization of the projective model structure on $\dgpsh{S}$ with respect to the Čech complexes of hypercoverings. We therefore call it the \textbf{local projective model structure}. In particular, acyclic fibrations in the local projective model structure are just acyclic fibrations in the unlocalized projective model structure. Note that if $S$ has enough points, weak equivalences can be detected at stalks, i.e. a map $f: \mathcal{F} \rightarrow \mathcal{G}$ is a weak equivalence if and only if for each point $x$ of $S$, the induced map $f_x: \mathcal{F}_x \rightarrow \mathcal{G}_x$ is a quasi-isomorphism.
\begin{notation}
Let $X$ be a scheme over $\field$. Let $\textnormal{Aff}(X)$ be the site of affine open subsets of $X$, and let $\textnormal{Open}(X)$ be the site of all open subsets of $X$. We call the associated categories of dg presheaves $\affpsh{X}$ and $\dgpsh{X}$ respectively. We have a natural inclusion $\iota: \textnormal{Aff}(X) \rightarrow \textnormal{Open}(X)$ that induces a restriction functor
\begin{align*}
    \iota_{\ast}: \dgpsh{X} \rightarrow\affpsh{X}.
\end{align*}
\end{notation}
\begin{proposition}\label{prop3}
The restriction functor $\iota_{\ast}$ admits a left adjoint $\iota^{-1}$, and the pair $\iota^{-1}\dashv \iota_{\ast}$ forms a Quillen equivalence. Both $\iota_{\ast}$ and $\iota^{-1}$ preserve weak equivalences, and $\iota^{-1}$ preserves acyclic fibrations. The unit $\textnormal{id} \Rightarrow \iota_{\ast} \iota^{-1}$ is an isomorphism, and the counit $\iota^{-1}\iota_{\ast} \Rightarrow \textnormal{id}$ is a componentwise weak equivalence.
\end{proposition}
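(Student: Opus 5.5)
The left adjoint $\iota^{-1}$ is left Kan extension along $\iota: \textnormal{Aff}(X) \rightarrow \textnormal{Open}(X)$. Concretely, for $\mathcal{F}\in\affpsh{X}$ and $V\in\textnormal{Open}(X)$ we set
\begin{align*}
    (\iota^{-1}\mathcal{F})(V) = \colim_{U\in \textnormal{Aff}(X)_{/V}} \mathcal{F}(U),
\end{align*}
where the colimit is taken over affine opens contained in $V$. This exists because $\ch(\field)$ is cocomplete, and $\iota^{-1}\dashv\iota_\ast$ is the standard Kan extension adjunction. Since $\iota$ is fully faithful, the comma category $\textnormal{Aff}(X)_{/U}$ has the terminal object $U$ whenever $U$ is affine. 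Hence $(\iota^{-1}\mathcal{F})(U)\cong \mathcal{F}(U)$, and the unit $\textnormal{id}\Rightarrow \iota_\ast\iota^{-1}$ is an isomorphism.

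Next I check how both functors interact with weak equivalences. Affine opens form a basis of the topology of $X$, so for a point $x$ the affine neighborhoods of $x$ are cofinal among all open neighborhoods. Therefore stalks are computed equally well on either site. Both sites have enough points, namely the points of $X$, so weak equivalences in the local projective model structures of \cite{Hin2} are detected stalkwise. For $\mathcal{G}\in\dgpsh{X}$ we have $(\iota_\ast\mathcal{G})_x\cong\mathcal{G}_x$. For $\mathcal{F}\in\affpsh{X}$, the filtered colimit over affine neighborhoods gives $(\iota^{-1}\mathcal{F})_x\cong\mathcal{F}_x$, since the colimit defining $\iota^{-1}$ is compatible with the cofinal system. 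It follows that both functors preserve and reflect weak equivalences. The counit $\iota^{-1}\iota_\ast\mathcal{G}\rightarrow\mathcal{G}$ is an isomorphism on every stalk, hence a componentwise weak equivalence.

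For the Quillen property, it is enough to show that $\iota^{-1}$ preserves cofibrations and acyclic cofibrations.
\begin{itemize}
    \item[(1)] Generating cofibrations are of the form $\mathcal{F}\rightarrow\mathcal{F}\langle x; dx=z\rangle$ with $z\in\mathcal{F}(U)$, i.e.\ pushouts of $h_U\otimes(S^{n-1}\rightarrow D^n)$ with $h_U$ the representable presheaf. Since $\iota^{-1}$ is a left adjoint, it preserves pushouts. Moreover $\iota^{-1}h_U^{\textnormal{Aff}}\cong h_U^{\textnormal{Open}}$ by the Yoneda lemma, because $\iota$ is fully faithful. Hence generating cofibrations go to generating cofibrations.
    \item[(2)] Acyclic cofibrations are cofibrations that are also weak equivalences. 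By (1) and the previous paragraph, $\iota^{-1}$ preserves both properties, so it preserves acyclic cofibrations.
\end{itemize}
So $\iota^{-1}\dashv\iota_\ast$ is a Quillen pair. Because both functors preserve weak equivalences, the derived unit and derived counit are just the unit and counit themselves. We have shown these are an isomorphism and a weak equivalence respectively, so the pair is a Quillen equivalence.

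It remains to show that $\iota^{-1}$ preserves acyclic fibrations. Acyclic fibrations in the local projective structure coincide with those of the unlocalized projective structure, i.e.\ sectionwise surjective quasi-isomorphisms. I expect this to be the main obstacle, because left Kan extension is not exact in general. The key point is that $\textnormal{Aff}(X)_{/V}$ need not be filtered, so the colimit cannot simply be commuted with homology. My first attempt would use that $X$ is separated: then intersections of affines are affine, $\textnormal{Aff}(X)$ has pullbacks, and $(\textnormal{Aff}(X)_{/V})^{\textnormal{op}}$ is cofiltered. If that fails, I would fall back on the following argument.
\begin{itemize}
    \item[(a)] $\iota^{-1}$ preserves weak equivalences, so $\iota^{-1}(f)$ is a local weak equivalence.
    \item[(b)] Sectionwise surjectivity is preserved because colimits preserve epimorphisms in $\ch(\field)$.
    \item[(c)] To show $\iota^{-1}(f)$ is a local fibration, check the Čech homotopy-pullback condition using $\iota_\ast$ and cofinality of affine hypercovers.
\end{itemize}
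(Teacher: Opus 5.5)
Your formula for $\iota^{-1}$ is not the left adjoint of restriction. Left Kan extension of a presheaf along $\iota$ is indexed by the comma category $V\downarrow\iota$. That means the affine opens $U$ \emph{containing} $V$, with restriction maps going from larger to smaller $U$:
$(\iota^{-1}\mathcal{F})(V)=\colim_{V\subseteq U\in\textnormal{Aff}(X)}\mathcal{F}(U)$.
This is the formula the paper uses. The category $\textnormal{Aff}(X)_{/V}$ of affines \emph{contained in} $V$ instead indexes the right Kan extension $\lim_{U\subseteq V}\mathcal{F}(U)$.

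Because $\mathcal{F}$ is contravariant, your diagram really lives on $(\textnormal{Aff}(X)_{/V})^{\textnormal{op}}$. There an affine $V$ is initial, not terminal. Moreover $\emptyset=\spec(0)$ is terminal there, so your colimit is just $\mathcal{F}(\emptyset)$.

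The conclusions you draw are true for the genuine left adjoint, but they need the correct reasons. The unit is an isomorphism because an affine $V$ is terminal among affines containing $V$, ordered by reverse inclusion. $\iota^{-1}$ preserves stalks because $(-)_x\circ\iota^{-1}$ is left adjoint to $\iota_\ast\circ x_\ast$, with $x_\ast$ the skyscraper functor, and $\iota_\ast\circ x_\ast$ is the skyscraper functor on $\textnormal{Aff}(X)$. Your ``compatibility with the cofinal system'' does not hold for the formula you wrote.

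More seriously, you leave open the step you flag as the main obstacle: that $\iota^{-1}$ preserves acyclic fibrations. This is exactly where the correct formula makes things immediate. Since $X$ is separated, the poset of affines containing $V$ is directed under reverse inclusion: if $U_1,U_2\supseteq V$ are affine, so is $U_1\cap U_2\supseteq V$. If no affine contains $V$, the colimit is simply $0$. So each $(\iota^{-1}\mathcal{F})(V)$ is a filtered colimit of sections of $\mathcal{F}$. Filtered colimits in $\ch(\field)$ are exact, so $\iota^{-1}$ sends sectionwise surjective quasi-isomorphisms to sectionwise surjective quasi-isomorphisms. This is the paper's argument.

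Your first attempt uses the wrong index category. Affines inside $V$ have lower bounds by intersection, not upper bounds, so $(\textnormal{Aff}(X)_{/V})^{\textnormal{op}}$ is not cofiltered in general. Your fallback step (c) is not an argument. Checking the \v{C}ech homotopy-pullback condition for $\iota^{-1}(f)$ on arbitrary open hypercovers is not reduced to anything concrete. The natural way to verify it would be to already know that $\iota^{-1}(f)$ is a sectionwise quasi-isomorphism.

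The remaining parts of your proposal are fine: cofibrations via $\iota^{-1}$ of representables, and the Quillen equivalence via both functors preserving weak equivalences.
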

\begin{proof}
The left adjoint $\iota^{-1}$ is given by $(\iota^{-1}\mathcal{F})(V) = \colim_{V\subseteq U \in {\textnormal{Aff}(X)}}\mathcal{F}(U)$. The direct image $\iota_{\ast}$ clearly preserves acyclic fibrations, since these are pointwise. The sites of all opens and of affine opens have the same points, namely points in the topological space $X$. This follows because affine opens form a basis of the Zariski topology. Even more, $\iota_{\ast}$ and $\iota^{-1}$ preserve stalks at these points. Taking stalks is a left adjoint, so this follows trivially for the inverse image, and for the direct image we note that small enough neighborhoods of a point $x\in X$ always contain an affine open neighborhood of $x$. This shows that both adjoints preserve weak equivalences, and in particular $\iota^{-1}$ preserves acyclic cofibrations. The fact that $\iota^{-1}$ preserves acyclic fibrations follows from the fact that filtered colimits are exact in Grothendieck categories. Finally, note that if $U$ is affine, then $\colim_{U\subseteq W \in {\textnormal{Aff}(X)}}\mathcal{F}(W) \cong F(U)$ since $U$ is final in the index category. This shows that the unit is an isomorphism. The fact that the counit is a componentwise weak equivalence again follows from the fact that both adjoints preserve weak equivalences.
\end{proof}
\begin{definition}
    We call the underlying $\infty$-category of the local projective model structure on $\dgpsh{X}$ the \textbf{$\infty$-category of dg sheaves on $X$}
    \begin{align*}
        \textnormal{Sh}_{\infty}(X) := N(\dgpsh{X}^c)[W^{-1}].
    \end{align*}
\end{definition}
By Proposition \ref{prop3}, we have an equivalence of $\infty$-categories
\begin{align*}
    \iota_{\ast}: \textnormal{Sh}_{\infty}(X)\rightarrow N(\affpsh{X}^c)[W^{-1}] 
\end{align*}
with quasi-inverse $\iota^{-1}: N(\affpsh{X}^c)[W^{-1}]\rightarrow \textnormal{Sh}_{\infty}(X)$.
\begin{remark}
Even though the model categories of presheaves on affine opens and general opens yield the same $\infty$-category, the above model category structure depends on the choice of site. On affine open subsets, all quasi-coherent sheaves on $X$ are automatically fibrant, which is not true for general opens. In particular, on affine opens the structure sheaf $\mathcal{O}_X$ itself is fibrant.
\end{remark}
\begin{proposition}
    If the topos on $S$ has enough points and $S$ admits finite products, then the local projective model structure yields a closed symmetric monoidal model category. If in addition $S$ admits a final object, then $\dgpsh{S}$ is a symmetric monoidal dg model category.
\end{proposition}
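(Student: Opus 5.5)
The plan is to equip $\dgpsh{S}$ with the objectwise tensor product $(\mathcal{F}\otimes \mathcal{G})(U) = \mathcal{F}(U)\otimes_{\field} \mathcal{G}(U)$. Its internal hom is $\mathcal{H}om(\mathcal{F},\mathcal{G})(U) = \map_{\dgpsh{S/U}}(\mathcal{F}|_U,\mathcal{G}|_U)$. This already makes the underlying category closed symmetric monoidal. What remains is the pushout-product axiom, the unit axiom, and compatibility with the $\ch(\field)$-enrichment.

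\textbf{Cofibration part.} Since the local projective structure is cofibrantly generated, it suffices to check the pushout-product axiom on generators. The generating cofibrations are the maps $\field[h_U]\otimes(S^{n-1}\to D^n)$, where $\field[h_U]$ is the free presheaf on the representable $h_U$. The key input is that $S$ has finite products, so
\begin{align*}
\field[h_U]\otimes \field[h_V] \cong \field[h_{U\times V}].
\end{align*}
The pushout-product of two generators is therefore $\field[h_{U\times V}]\otimes i$, where $i$ is the pushout-product of $S^{n-1}\to D^n$ and $S^{m-1}\to D^m$ in $\ch(\field)$. Since $i$ is a projective cofibration in $\ch(\field)$, this map is a cofibration. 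The unlocalized projective structure has the same cofibrations, so this step does not see the localization.

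\textbf{Acyclicity part (main obstacle).} We must show that if $f$ is a cofibration and $g$ is a trivial cofibration, then $f\,\square\, g$ is a weak equivalence. The weak equivalences are the local ones, so this is where the localization matters. I would use that the topos has enough points, so weak equivalences are detected on stalks. Taking stalks is a filtered colimit, hence it commutes with the objectwise tensor product and with pushouts: $(\mathcal{F}\otimes\mathcal{G})_x \cong \mathcal{F}_x\otimes\mathcal{G}_x$. The stalk of $f\,\square\, g$ at $x$ is therefore the pushout-product $f_x\,\square\, g_x$ in $\ch(\field)$. Cofibrations are retracts of cell maps and hence degreewise monomorphisms, and monomorphisms are preserved by filtered colimits, so $f_x$ and $g_x$ are monomorphisms with $g_x$ a quasi-isomorphism. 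Over a field every monomorphism of complexes is a cofibration in $\ch(\field)$, so the pushout-product axiom there makes $f_x\,\square\, g_x$ an acyclic monomorphism. The care needed here is in verifying that the stalk commutes with the relevant pushouts and that cofibrations really are stalkwise monomorphisms.

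\textbf{Unit and enrichment.} The unit is the constant presheaf $\underline{\field}$. When $S$ has a final object $\ast$, we have $\underline{\field}\cong \field[h_\ast]$, which is cofibrant, so the unit axiom holds. Without a final object one would instead check the unit axiom against a cofibrant replacement of $\underline{\field}$. For the dg model structure, I would use the functor $\ch(\field)\to \dgpsh{S}$, $C\mapsto C\otimes\field[h_\ast]$. It is strong symmetric monoidal, since $\field[h_\ast]\otimes\field[h_\ast]\cong\field[h_{\ast\times\ast}]=\field[h_\ast]$. It is also left Quillen, since it sends generating (trivial) cofibrations of $\ch(\field)$ to generating (trivial) cofibrations. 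This exhibits $\dgpsh{S}$ as a symmetric monoidal $\ch(\field)$-model category. The tensoring $C\otimes\mathcal{F}$ agrees with the objectwise tensor, and the (SM7) axiom follows from the pushout-product axiom already established.
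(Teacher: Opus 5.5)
Your proposal is correct. For the monoidal part it takes a more hands-on route than the paper; the enrichment part is essentially the same.

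\textbf{Monoidal part.} The paper does not work with generators. It cites Pavlov--Scholbach for the symmetric monoidal structure on the unlocalized projective model structure; this is where finite products enter. It then cites White's criterion for a left Bousfield localization to inherit a monoidal model structure. That criterion reduces everything to showing that $f\otimes\textnormal{id}_{\mathcal{F}}$ is a local weak equivalence whenever $f$ is a local weak equivalence and $\mathcal{F}$ is cofibrant, which the paper checks on stalks.

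You instead prove the pushout-product axiom directly. The cofibration half you check on generating cofibrations via $\field[h_U]\otimes\field[h_V]\cong\field[h_{U\times V}]$, which is essentially the content of the cited projective result. The acyclicity half you get by identifying the stalk of $f\,\square\,g$ with $f_x\,\square\,g_x$ in $\ch(\field)$, where monomorphisms are cofibrations.

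Your version is self-contained and shows exactly where each hypothesis is used. The paper's version is shorter and sits inside a general localization framework that does not need monomorphisms to be cofibrations, so it adapts more easily beyond a field. The price is that it relies on the standing hypotheses of the cited theorems.

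\textbf{Enrichment.} Your functor $C\mapsto C\otimes\field[h_\ast]$ is the paper's $\textnormal{const}_\ast$. You additionally show it is left Quillen, so it also preserves trivial cofibrations. The trivial-cofibration half of the enriched pushout-product axiom needs this, and the paper only asserts preservation of cofibrations.

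\textbf{Loose end.} Without a final object you leave the unit axiom unchecked, but it is immediate. The cofibrant replacement $Q\underline{\field}\to\underline{\field}$ is a trivial fibration, and trivial fibrations are unchanged by the localization, so it is an objectwise quasi-isomorphism. Tensoring over $\field$ is exact, so $Q\underline{\field}\otimes X\to X$ is an objectwise quasi-isomorphism for every $X$.
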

\begin{proof}
By \cite[Proposition 7.9]{PS}, the global projective model structure on $\dgpsh{S}$ inherits the structure of a symmetric monoidal model category since $S$ admits finite products. By \cite[Theorem 4.6]{W}, to show that this symmetric monoidal model structure descends to the local projective model structure, it suffices to argue that for $f$ a local weak equivalence and $\mathcal{F}$ a cofibrant object, the map $f\otimes \textnormal{id}_{\mathcal{F}}$ is again a local weak equivalence. But this is clear if the topos has enough points, since we can then check local weak equivalences on stalks. Since the presheaf category admits an internal hom given by
\begin{align*}
    \mathcal{H}om(\mathcal{F},\mathcal{G})(U) = \map_{\dgpsh{S_U}}(\mathcal{F}|_U, \mathcal{G}|_U),
\end{align*}
this proves the first part. For the second part, note that presheaves of chain complexes are the same as chain complexes of presheaves of $\field$-modules.  Since the latter is an abelian category, this category automatically admits a dg enrichment. Recall that if $\ast\in S$ is terminal, we have the constant presheaf functor
\begin{align*}
    \textnormal{const}_{\ast}: \ch(\field) &\rightarrow \dgpsh{S}\\
    C &\mapsto (U \mapsto C).
\end{align*}
We define a tensoring
\begin{align*}
    \ch(\field) \times \dgpsh{S} \rightarrow \dgpsh{S}, \quad (C,\mathcal{F}) \mapsto \textnormal{const}_{\ast}(C) \otimes \mathcal{F}
\end{align*}
as well as a powering
\begin{align*}
    \ch(\field)^{\textnormal{op}}\times \dgpsh{S} \rightarrow \dgpsh{S}, \quad (C,\mathcal{F}) \mapsto \mathcal{H}om(\textnormal{const}_{\ast}(C),\mathcal{F}).
\end{align*}
One easily checks that these indeed satisfy the correct adjointness properties. It hence suffices to check the pushout-product axiom. The argument below shows that $\textnormal{const}_{\ast}$ preserves cofibrations. Hence if $i: C\rightarrow D$ is a cofibration in $\ch(\field)$, then $\textnormal{const}_{\ast}(i): \textnormal{const}_{\ast}(C) \rightarrow \textnormal{const}_{\ast}(D)$ is a cofibration in $\dgpsh{S}$, and therefore the pushout-product axiom follows directly from the pushout-product axiom in $\dgpsh{S}$.
\end{proof}
\begin{corollary}
Let $X$ be a separated quasi-compact scheme over $\field$. The category $\dgpsh{X}$ admits the structure of a symmetric monoidal dg model category. In particular, the $\infty$-category $\textnormal{Sh}_{\infty}(X)$ admits the structure of a symmetric monoidal $\infty$-category $\textnormal{Sh}_{\infty}(X)^{\otimes}$.
\end{corollary}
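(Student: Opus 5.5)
The plan is to obtain the corollary by specializing the preceding proposition to the site $S=\textnormal{Open}(X)$. First we check its hypotheses. The Zariski topos of the underlying topological space of $X$ has enough points, namely the points $x\in X$ with their stalk functors. For finite products: $\textnormal{Open}(X)$ is a poset, so the product of two opens $U,V$ is the intersection $U\cap V$. The terminal object is $X$ itself, which also provides the empty product. Hence the proposition directly gives a closed symmetric monoidal model structure on $\dgpsh{X}$ for the local projective model structure. The same proposition, now using the terminal object $X$, makes the constant presheaf functor $\textnormal{const}_X$ available. This gives the tensoring and powering over $\ch(\field)$, and so a symmetric monoidal dg model category. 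Separatedness is not needed for this step. It would matter only if we insisted on the affine site $\textnormal{Aff}(X)$, where $U\cap V$ is affine when $X$ is separated. Quasi-compactness ensures $X$ is a finite union of affines, which is relevant only for the equivalence with $\textnormal{Aff}(X)$ in Proposition \ref{prop3}, not here.

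Next we pass to $\infty$-categories. By \cite[Example 4.1.7.6]{HA}, a symmetric monoidal model category $C$ gives a symmetric monoidal $\infty$-category $N(C^c)[W^{-1}]$, provided the tensor product of cofibrant objects preserves weak equivalences between cofibrant objects in each variable. That condition follows from the pushout-product axiom via Ken Brown's lemma, because in the local projective structure the tensor product of a cofibrant object with a weak equivalence is a weak equivalence. This was the very input (\cite[Theorem 4.6]{W}, checked on stalks) used in the proof of the preceding proposition. This yields $\textnormal{Sh}_{\infty}(X)^{\otimes}$ with underlying $\infty$-category $\textnormal{Sh}_{\infty}(X)=N(\dgpsh{X}^c)[W^{-1}]$. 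To use it later alongside Corollary \ref{cor2} and Proposition \ref{prop8}, we also note that the dg nerve $N_{\textnormal{dg}}(\dgpsh{X}^{\circ})$ presents the same $\infty$-category, and Hinich's result equips it with a compatible symmetric monoidal structure.

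The only step needing care is the one the proposition's proof passed over: the claim that $\textnormal{const}_X$ preserves cofibrations, and that the unit of $\dgpsh{X}$ is cofibrant. We would argue this directly. The generating cofibrations of $\ch(\field)$ are the sphere-to-disk inclusions $S^{n-1}\to D^n$. Under $\textnormal{const}_X$ these go to maps freely adjoining a section $x$ over $X$ with $dx=z$. That is precisely a generating cofibration of the local projective structure, namely the representable $h_X$ tensored with the sphere-to-disk inclusion, since $h_X$ is the constant presheaf at $\field$ on the terminal object. As $\textnormal{const}_X$ is a left adjoint, with right adjoint the global sections functor $\Gamma(X,-)$, it preserves cell complexes and retracts, and hence all cofibrations. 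This is where the hypotheses of the proposition are genuinely used.
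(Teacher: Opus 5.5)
Your proposal is correct and follows the paper's implicit argument: the corollary is the preceding proposition applied to $S=\textnormal{Open}(X)$ (whose topos has enough points, whose finite products are intersections, and which has terminal object $X$), followed by \cite[Example 4.1.7.6]{HA} to obtain $\textnormal{Sh}_{\infty}(X)^{\otimes}$. Your explicit check that $\textnormal{const}_X$ sends $S^{n-1}\to D^n$ to the generating cofibration $\field[h_X]\otimes(S^{n-1}\to D^n)$ fills in a step the paper only asserts; the same computation applied to $0\to D^n$ shows that $\textnormal{const}_X$ also preserves trivial cofibrations, which the pushout-product axiom requires as well.
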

Let $U\subseteq X$ be an affine open. Then we have adjoint functors
\begin{center}
    \begin{tikzcd}
            \ch(\field)\arrow[r, shift left=1ex, "C_U"{name=G}] & \affpsh{X}\arrow[l, shift left=.5ex, "\Gamma_U"{name=F}]
            \arrow[phantom, from=F, to=G, , "\scriptscriptstyle\boldsymbol{\dashv}" rotate=-90]
        \end{tikzcd}
\end{center}
where $\Gamma_U$ sends a complex of presheaves $\mathcal{F}$ to $\mathcal{F}(U)$ and $C_U$ is the constant presheaf functor sending $C$ to the presheaf 
\begin{align*}
    V\mapsto \begin{cases}
        C & \text{if } V\subseteq U\\
        0 & \text{otherwise}
    \end{cases}.
\end{align*}
If we equip $\affpsh{X}$ with the projective model structure, then $\Gamma_U$ preserves fibrations and weak equivalences by construction. Therefore we obtain a Quillen adjunction. We can compose this with the Quillen adjunction
\begin{center}
    \begin{tikzcd}
            \affpsh{X}^{\text{proj}}\arrow[r, shift left=1ex, "\text{id}"{name=G}] & \affpsh{X}^{\text{loc}}\arrow[l, shift left=.5ex, "\text{id}"{name=F}]
            \arrow[phantom, from=F, to=G, , "\scriptscriptstyle\boldsymbol{\dashv}" rotate=-90]
        \end{tikzcd}
\end{center}
of the Bousfield localization to obtain a Quillen adjunction
\begin{center}
    \begin{tikzcd}
            \ch(\field)\arrow[r, shift left=1ex, "C_U"{name=G}] & \affpsh{X}^{\text{loc}}\arrow[l, shift left=.5ex, "\Gamma_U"{name=F}]
            \arrow[phantom, from=F, to=G, , "\scriptscriptstyle\boldsymbol{\dashv}" rotate=-90].
        \end{tikzcd}
\end{center}
Since $C_U$ is left Quillen, it preserves weak equivalences between cofibrant objects. But every object in $\ch(\field)$ is cofibrant, so $C_U$ preserves weak equivalences.\displaypar
The same argument works if we instead consider the site of all opens $\textnormal{Open}(X)$. In this case, for any open $V\subseteq X$ we obtain a Quillen adjunction $C_V \dashv \Gamma_V$. Note that in particular we can then take $V=X$. If $V$ is affine, this agrees with the above construction. The functors $C_V$ and $\Gamma_V$ are both strong symmetric monoidal, since the tensor product of presheaves is taken sectionwise. In particular, we obtain lax symmetric monoidal functors of $\infty$-categories $C_V: \dk^{\otimes} \rightarrow \text{Sh}_{\infty}(X)^{\otimes}$ and $\mathbb{R}\Gamma_V: \text{Sh}_{\infty}(X)^{\otimes} \rightarrow \dk^{\otimes}$.
\begin{notation}
    Let $\mathcal{O}$ be a dg operad. We then have an operad $C_X(\mathcal{O})$ in $\dgpsh{X}$. By abuse of notation, we will usually denote the operad $C_X(\mathcal{O})$ just by $\mathcal{O}$.
\end{notation}
\begin{lemma}\label{lem2}
    The functor $C_X$ preserves cofibrancy and $\Sigma$-cofibrancy of operads, as well as weak equivalences of operads. Every operad in $\dgpsh{X}$ is admissible \cite[Definition 5.1.]{PS}, and even strongly admissible \cite[Definition 5.1.]{PS} if it lies in the image of $C_X$.
\end{lemma}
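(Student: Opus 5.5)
The plan is to handle the four claims of Lemma \ref{lem2} separately, with everything resting on one observation. The functor $C_X = \textnormal{const}_X: \ch(\field) \rightarrow \dgpsh{X}$ is strong symmetric monoidal, because the tensor product of presheaves is computed sectionwise. It therefore sends dg operads (and their symmetric sequences) to operads in $\dgpsh{X}$, and it commutes with the free-operad, composition-product and pushout constructions, since it also preserves colimits as a left adjoint.

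\emph{Weak equivalences.} If $f: \mathcal{O} \rightarrow \mathcal{P}$ is an arityw
ise quasi-isomorphism, then $C_X(f)$ is a sectionwise quasi-isomorphism. Hence it is a weak equivalence in the unlocalized projective structure, and a fortiori in the local projective one. Alternatively, one can use that $C_X$ is left Quillen and that every object of $\ch(\field)$ is cofibrant, as in the discussion preceding the lemma.

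\emph{Cofibrancy and $\Sigma$-cofibrancy.} I would first show that $C_X$ sends generating cofibrations $S^{n-1}\rightarrow D^n$ of $\ch(\field)$ to cofibrations of $\dgpsh{X}$. Indeed, $C_X(D^n)$ is obtained from $C_X(S^{n-1})$ by adding a section over the final object $X$ killing a cycle, which is exactly a generating cofibration in the sense of \cite[Theorem 1.3.1]{Hin2}. Since $C_X$ commutes with colimits and retracts, it preserves all cofibrations. The same argument applies to the projective model structure on $\Sigma_n$-objects, because $C_X(V\otimes \field[\Sigma_n]) = C_X(V)\otimes \field[\Sigma_n]$; this gives $\Sigma$-cofibrancy. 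For cofibrant operads, one uses that the generating cofibrations of operads are the free maps $F(\partial)$ on generating cofibrations of symmetric sequences, and that $C_X F \cong F C_X$. Cell attachments therefore go to cell attachments, and retracts go to retracts.

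\emph{Admissibility.} Every operad in $\dgpsh{X}$ should be admissible because we are in characteristic $0$. For any symmetric sequence $\mathcal{P}$ and any object $M$, the coinvariants $(\mathcal{P}(n)\otimes M^{\otimes n})_{\Sigma_n}$ are retracts of $\mathcal{P}(n)\otimes M^{\otimes n}$ via the averaging idempotent, which is available sectionwise. I would then verify the criterion of \cite{PS}: free algebra functors send pushouts of generating acyclic cofibrations to weak equivalences. The pushout-product axiom and symmetric flatness of $\dgpsh{X}$ reduce this to showing that tensoring with an acyclic cofibration stays acyclic after taking $\Sigma_n$-coinvariants. That holds because coinvariants are a retract and weak equivalences are detected on stalks, where we are back in $\ch(\field)$.

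\emph{Strong admissibility.} For operads in the image of $C_X$, strong admissibility should follow because $C_X$ of a $\Sigma$-cofibrant replacement is again $\Sigma$-cofibrant, by the second step. One then combines this with the symmetric flatness of $\dgpsh{X}$ inherited from \cite[Proposition 7.9]{PS}.

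The main obstacle is the admissibility step. There one must check carefully that the localized weak equivalences, those defined via sheafification, interact well with $\Sigma_n$-coinvariants and the pushout-product. I would try to reduce everything to stalks, using that sheafification and stalks commute with tensor products and colimits.
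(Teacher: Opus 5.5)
The weak-equivalence and ($\Sigma$-)cofibrancy steps are correct and amount to the paper's argument. The paper phrases it as $C_X\dashv\Gamma_X$ inducing Quillen adjunctions on symmetric sequences and on operads, because $\Gamma_X$ preserves the aritywise fibrations and trivial fibrations. You instead check that $C_X$ sends generating cofibrations to generating cofibrations and commutes with the free operad functor, which is equivalent.

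For admissibility the paper only cites \cite[Theorem 5.11]{PS} and Section 8 of \cite{PS2}. Your hands-on verification is a reasonable alternative, with one caveat. The $\Sigma_n$-objects that appear in the cell-attachment filtration are the arities of the enveloping operad of the algebra you attach cells to, and these are arbitrary, not cofibrant. So the pushout-product axiom is not what makes the argument work. What does the work is:
\begin{itemize}
\item $K\otimes -$ preserves stalkwise quasi-isomorphisms for every $K$, since we are over a field;
\item the averaging idempotent makes $\Sigma_n$-coinvariants a retract;
\item all maps in sight are sectionwise monomorphisms, so pushouts and transfinite composites along them preserve local weak equivalences.
\end{itemize}

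The genuine gap is in the strong admissibility step. Strong admissibility is a point-set property of the given operad $C_X(\mathcal{O})$; as the paper uses it later, it means the forgetful functor from its algebras preserves cofibrant objects. This property is not transported along weak equivalences of operads, nor by rectification along $C_X(\mathcal{O}')\rightarrow C_X(\mathcal{O})$. So knowing that $C_X(\mathcal{O}')$ is $\Sigma$-cofibrant for a $\Sigma$-cofibrant replacement $\mathcal{O}'\xrightarrow{\simeq}\mathcal{O}$ tells you nothing about $C_X(\mathcal{O})$. The vague appeal to symmetric flatness does not bridge this either.

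The missing observation is that no replacement is needed. Since $\mathrm{char}(\field)=0$, each $\field[\Sigma_n]$ is semisimple, so every $\Sigma_n$-equivariant chain complex is projectively cofibrant. Hence every dg operad $\mathcal{O}$ in $\ch(\field)$ is already $\Sigma$-cofibrant. Your second step then shows that $C_X(\mathcal{O})$ itself is $\Sigma$-cofibrant. Finally, \cite[Proposition 6.3]{PS}, which converts $\Sigma$-cofibrancy into strong admissibility, finishes the argument. This is exactly how the paper proceeds, and that citation is what should replace your reference to symmetric flatness.
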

\begin{proof}
The Quillen adjunction $C_X\dashv \Gamma_X$ induces adjunctions between the respective categories of symmetric collections and symmetric operads, since both are strong symmetric monoidal. The model structure on symmetric collection is transferred from the underlying model category, and hence the adjunction is again Quillen. Similarly, fibrations and weak equivalences of operads are pointwise, and hence $\Gamma_X$ preserves fibrations and trivial fibrations of operads. To see that $C_X$ preserves weak equivalences, note that these are pointwise in operads, and $C_X$ preserves weak equivalences on the underlying model categories. To see that operads in $\dgpsh{X}$ are admissible, use \cite[Theorem 5.11]{PS} and Section 8 of \cite{PS2}. Now to see that every operad in the image of $C_X$ is even strongly admissible, use \cite[Proposition 6.3]{PS} together with the fact that every operad in $\ch(\field)$ is $\Sigma$-cofibrant.
\end{proof}
This shows that we get a diagram of admissible $\Sigma$-cofibrant operads
\begin{center}
    \begin{tikzcd}
                                                                                             & C_X(C_{\ast}(\mathbb{E}^T_1)) \arrow[d, "{C_X(\phi), \simeq}"] \\
C_X(\cai) \arrow[ru, "{C_X(\psi),\simeq}"] \arrow[r, "{C_X(\theta),\simeq}"'] & C_X(\assoc)                                           
\end{tikzcd}
\end{center}
and $C_X(\cai)$ is still cofibrant.\\\par
If $X$ is a quasi-compact separated scheme over $\field$ with structure sheaf $\mathcal{O}_X$, we consider the $C_{\ast}(\mathbb{E}^T_1)$-algebra $\phi^{\ast}\mathcal{O}_X$ and choose a cofibrant replacement $\tilde{\mathcal{O}}_X \xtwoheadrightarrow{\simeq}\phi^{\ast}\mathcal{O}_X$. Applying the functor $\Phi$ constructed in Section \ref{Rectification_of_algebras_over_an_infty_operad}, we get $\Phi(\tilde{\mathcal{O}}_X)\in \alg_{\mathbb{E}_1}(\text{Sh}_{\infty}(X))$. As in the affine case, we will keep denoting this by just $\tilde{\mathcal{O}}_X$.
\begin{definition}
Let $X$ be a quasi-compact separated scheme over $\field$. We define the \textbf{Hochschild cochain complex} of $X$ to be the center
\begin{align*}
    \mathfrak{Z}(\tilde{\mathcal{O}}_X)\in \alg_{\mathbb{E}_1}(\alg_{\mathbb{E}_1}(\text{Sh}_{\infty}(X))) \simeq \alg_{\mathbb{E}_2}(\text{Sh}_{\infty}(X)).
\end{align*}
\end{definition}
%
\subsection{The Hochschild complex of a scheme is local}
In this section we will prove the following theorem, showing that the center of a scheme glues together the affine Hochschild complexes.
\begin{theorem}\label{thm5}
Let $U = \textnormal{Spec}(A)\subseteq X$ be an affine open. The map $\mathbb{R}\Gamma_U: \textnormal{Sh}_{\infty}(X) \rightarrow \dk$ is lax symmetric monoidal and hence induces a map $\mathbb{R}\Gamma_U: \alg_{\mathbb{E}_2}(\textnormal{Sh}_{\infty}(X)) \rightarrow \alg_{\mathbb{E}_2}(\dk)$. We have
\begin{align*}
    \mathbb{R}\Gamma_U(\mathfrak{Z}(\tilde{\mathcal{O}}_X)) \simeq \mathfrak{Z}(\tilde{A})\in \alg_{\mathbb{E}_2}(\dk)
\end{align*}
for any cofibrant replacement $\tilde{A}$ of $\phi^{\ast}A$.
\end{theorem}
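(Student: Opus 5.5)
The plan is to compute the left-hand side via the endomorphism-object description of the center, and then compare with the affine case, Theorem \ref{thm1}.

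First, the lax symmetric monoidal structure. By the discussion preceding Lemma \ref{lem2}, the functor $\Gamma_U$ is strong symmetric monoidal and right Quillen for the local projective structure, so $\mathbb{R}\Gamma_U$ is lax symmetric monoidal. It therefore induces functors on $\alg_{\mathbb{E}_1}(\alg_{\mathbb{E}_1}(-))$, compatible with Dunn additivity, Theorem \ref{thm7}. Since $\mathbb{R}\Gamma_U$ is only lax, it does not obviously preserve centers. For this reason I would not argue via a universal property. Instead I would identify both sides as endomorphism objects, using Corollary \ref{cor7}, and then check that the induced $2$-algebra structures agree.

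Step 1 (sheaf side). Exactly as in the proof of Theorem \ref{thm1}, use Hinich's rectification for modules to present $\textnormal{Mod}^{\mathbb{E}_1}_{\tilde{\mathcal{O}}_X}$ by the model category of left modules over $U_{\mathbb{E}_1}(\tilde{\mathcal{O}}_X)$ in $\dgpsh{X}$. Then use the sheafified version of Lemma \ref{prop2}, which goes through because Lemma \ref{lem2} shows that $C_X$ preserves the relevant operad equivalences. This gives a Quillen equivalence with modules over $\mathcal{O}_X\otimes \mathcal{O}_X^{\textnormal{op}}$ in presheaves, where the tensor product is sectionwise. Choose a bifibrant resolution $\mathcal{P}\to \mathcal{O}_X$ there. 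Theorem \ref{thm4}, applied to the dg model category of such modules tensored over $\dgpsh{X}$ with internal hom $\mathcal{H}om$, identifies
\[
\mathfrak{Z}(\tilde{\mathcal{O}}_X)\simeq \mathcal{H}om_{\mathcal{O}_X\otimes\mathcal{O}_X^{\textnormal{op}}}(\mathcal{P},\mathcal{P}).
\]

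Step 2 (restriction to $U$). Sections over $U$ give
\[
\Gamma_U\mathcal{H}om(\mathcal{P},\mathcal{P}) = \map(\mathcal{P}|_U,\mathcal{P}|_U).
\]
On the affine site, the object $U$ is final in $\textnormal{Aff}(U)$, and $\mathcal{O}_X$ is fibrant on affine opens (see the Remark following Proposition \ref{prop3}). I would take $\mathcal{P}$ of bar type, $\mathcal{P}=B(\mathcal{O}_X)$ sectionwise. Its restriction to $U$ is then a resolution whose $U$-sections $B(A)$ form a projective $A^e$-resolution of $A$. Evaluation at the final object $U$ should give a Quillen equivalence between modules over $(\mathcal{O}_X\otimes\mathcal{O}_X^{\textnormal{op}})|_U$ on $\textnormal{Aff}(U)$ and $\ch(A^e)$. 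It follows that $\map(\mathcal{P}|_U,\mathcal{P}|_U)$ agrees with $\map_{\ch(A^e)}(B(A),B(A))$, which is $\mathfrak{Z}(\tilde{A})$ by Theorem \ref{thm1}. Here I use Proposition \ref{prop3} to pass freely between the sites of opens and of affine opens.

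Step 3 (algebra structures). By the Remark after Corollary \ref{cor7}, the $\mathbb{E}_2$-structure is determined by the two multiplications (composition and convolution) together with the contractibly filled interchange square. Both are built from the evaluation map and $\mathcal{O}_X$'s multiplication. Since $\Gamma_U$ is strong monoidal on presheaves, it carries composition and convolution on $\mathcal{H}om(\mathcal{P},\mathcal{P})$ to those on $\map(B(A),B(A))$. The filler is unique up to contractible choice, so the square is preserved as well. To make this precise, I would produce a map $\mathbb{R}\Gamma_U\mathfrak{Z}(\tilde{\mathcal{O}}_X)\to\mathfrak{Z}(\tilde{A})$ of $2$-algebras by the universal property of $\mathfrak{Z}(\tilde{A})$ as a final object. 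This requires knowing that $\mathbb{R}\Gamma_U\mathfrak{Z}(\tilde{\mathcal{O}}_X)$ acts on $\mathbb{R}\Gamma_U\tilde{\mathcal{O}}_X\simeq\tilde{A}$ via the lax structure, and that $\mathbb{R}\Gamma_U\tilde{\mathcal{O}}_X\simeq\tilde{A}$ as $\mathbb{E}_1$-algebras, which holds because $\mathcal{O}_X$ is fibrant on affines with $\mathcal{O}_X(U)=A$. Step 2 shows this map is an equivalence on underlying objects, and that suffices.

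The main obstacle is Step 2. One must show that a bifibrant resolution in the local projective module category restricts over $U$ to a genuine projective $A^e$-resolution, and that derived $\mathcal{H}om$ of sheaves computed over $U$ agrees with derived $\textnormal{Hom}_{A^e}$. In particular, the local fibrancy condition (Čech descent) must not alter sections over the affine $U$. My first attempt would be to use quasi-coherence of $\mathcal{O}_X\otimes\mathcal{O}_X^{\textnormal{op}}$-modules on affines together with the vanishing of higher Čech cohomology on the separated affine $U$.
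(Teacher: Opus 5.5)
Your architecture matches the paper's: identify $\mathfrak{Z}(\tilde{\mathcal{O}}_X)(\mathfrak{a})$ with the internal $\mathcal{H}om$ of a resolution of $\mathcal{O}_X$ over $\mathcal{O}_X\otimes\mathcal{O}_X$ (module rectification, the enveloping-algebra comparison, Theorem~\ref{thm4}), compute sections over $U$, and compare evaluation maps. Your Step~3 is fine. Getting a $2$-algebra map $\mathbb{R}\Gamma_U\mathfrak{Z}(\tilde{\mathcal{O}}_X)\to\mathfrak{Z}(\tilde{A})$ from finality is a tidier route to $\mathbb{E}_2$-compatibility than the paper's. It does require that the Step~2 identification intertwines the evaluation maps, which the paper checks in its last paragraph. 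The problem is Step~2, which is the actual content of the theorem: the mechanism you propose there fails, for three reasons.

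\emph{Evaluation at $U$ is not a Quillen equivalence.} It is not one between $(\mathcal{O}_X\otimes\mathcal{O}_X)|_U$-module presheaves on the affine site of $U$ and $\ch(A^e)$. It is right Quillen, with left adjoint $M\mapsto\widetilde{M}$, $\widetilde{M}(W)=M\otimes_{A\otimes A}(\mathcal{O}(W)\otimes\mathcal{O}(W))$. But the module category also contains non-quasi-coherent objects, e.g.\ $j_!\mathcal{O}_W$ for an affine $W\subsetneq U$, which are not recovered from their derived $U$-sections. Only $\widetilde{(-)}$ is homotopically fully faithful.

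\emph{$B(\mathcal{O}_X)$ is not a legitimate choice of $\mathcal{P}$.} It is not cofibrant in general. Already $B_0=\mathcal{O}_X\otimes\mathcal{O}_X$ is the free module on the constant presheaf, and $\hom(\mathcal{O}_X\otimes\mathcal{O}_X,-)=\lim_{\textnormal{Aff}(X)}$ is not exact for non-affine $X$. Its restriction to $U$ is also not $\widetilde{B(A)}$: its value on $W$ is $\mathcal{O}(W)^{\otimes(n+2)}$ rather than $\mathcal{O}(W)\otimes A^{\otimes n}\otimes\mathcal{O}(W)$.

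\emph{Knowing $\mathcal{P}(U)$ does not compute the mapping complex.} $\map(\mathcal{P}|_U,\mathcal{P}|_U)$ is an end over all affine $W\subseteq U$. So knowing that $\mathcal{P}(U)$ is a projective $A^e$-resolution does not determine it.

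The missing idea is the paper's:
\begin{enumerate}
\item Pass to the affine site (Proposition~\ref{prop4}). There $\mathcal{O}_X$ is fibrant, so a cofibrant resolution $\mathcal{P}$ is bifibrant and $\mathbb{R}\Gamma_U\mathcal{H}om(\mathcal{P},\mathcal{P})\simeq\map(\mathcal{P}|_U,\mathcal{P}|_U)$.
\item Rewrite this over $\mathcal{O}_{U\times U}$ on $\diag_U$ (Lemma~\ref{lem3}).
\item Take a cofibrant $A^e$-resolution $P\to A$. Then $\widetilde{P}$ is cofibrant because $\widetilde{(-)}$ is left Quillen, and fibrant because it is quasi-coherent; this is where your \v{C}ech-vanishing idea belongs. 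Hence $\widetilde{P}$ is weakly equivalent to the restricted resolution through a map of bifibrant objects.
\item The enriched adjunction gives $\map(\widetilde{P},\widetilde{P})\cong\map_{A\otimes A}(P,\Gamma_U\widetilde{P})=\map_{A\otimes A}(P,P)$.
\end{enumerate}
What makes the comparison work is this adjunction isomorphism on the quasi-coherent object $\widetilde{P}$, not an equivalence of module categories.
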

Just like in the affine case, the center has underlying object
\begin{align*}
    \mathfrak{Z}(\tilde{\mathcal{O}}_X)(\mathfrak{a}) \simeq \text{Mor}_{\text{Mod}^{\mathbb{E}_1}_{\tilde{\mathcal{O}}_X}(\text{Sh}_{\infty}(X)\times_{\fin} \mathbb{E}_1)_{\mathfrak{a}}}(\tilde{\mathcal{O}}_X,\tilde{\mathcal{O}}_X) \in \text{Sh}_{\infty}(X),
\end{align*}
and it hence suffices to understand this endomorphism object. To this end, note that we can adapt Hinich's Rectification Theorem for modules \cite[Theorem 5.2.3]{Hin} to the local projective model structure on complexes of presheaves by using the generalized Rectification Theorem \ref{thm3}. We hence have an equivalence of $\infty$-categories
\begin{align*}
    N(\text{Mod}_{\tilde{\mathcal{O}}_X}^{C_{\ast}(\mathbb{E}^T_1)}(\dgpsh{X}^c))[W_{\text{Mod}}^{-1}] \simeq   \text{Mod}^{\mathbb{E}_1}_{\tilde{\mathcal{O}}_X}(\text{Sh}_{\infty}(X)\times_{\fin}\mathbb{E}_1)_{\mathfrak{a}}.
\end{align*}
By \cite[Theorem 1.5.1]{Hin2}, for any associative algebra object $\mathcal{O}$ in $\dgpsh{X}$, the category $\lmod_{\mathcal{O}}(\dgpsh{X})$ carries a model structure transferred from the local projective model structure on $\dgpsh{X}$. Again following the affine case, we have a Quillen equivalence
\begin{align*}
        \text{Mod}_{\tilde{\mathcal{O}}_X}^{C_{\ast}(\mathbb{E}^T_1)}(\dgpsh{X}) \cong \lmod_{U_{C_{\ast}(\mathbb{E}^T_1)}(\tilde{\mathcal{O}}_X)}(\dgpsh{X}).
\end{align*}
\begin{proposition}
    There exists a zig-zag of weak equivalences between $U_{C_{\ast}(\mathbb{E}^T_1)}(\tilde{\mathcal{O}}_X)$ and $\mathcal{O}_X\otimes \mathcal{O}_X$ in the category of associative algebras in $\dgpsh{X}$.
\end{proposition}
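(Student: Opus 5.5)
The plan is to transport the affine argument of Lemma \ref{prop2} to the sheaf setting, replacing $\ch(\field)$ by the local projective model category $\dgpsh{X}$ and the operads by their images under $C_X$. By Lemma \ref{lem2}, the triangle $C_X(\cai)\to C_X(C_{\ast}(\mathbb{E}^T_1))\to C_X(\assoc)$ consists of weak equivalences between admissible, $\Sigma$-cofibrant operads, with $C_X(\cai)$ cofibrant. Hence restriction and extension along $C_X(\theta)$, $C_X(\psi)$ and $C_X(\phi)$ are Quillen equivalences between the corresponding algebra categories in $\dgpsh{X}$. The comparison of enveloping algebras along weak equivalences of operads, and along weak equivalences of cofibrant algebras (\cite[Theorems 17.4.A, 17.4.B]{F}), is formulated for a general symmetric monoidal model category. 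I will verify that its hypotheses hold in $\dgpsh{X}$: symmetric flatness and well-pointedness, as used in Theorem \ref{thm3}, should suffice here.

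Concretely, I first choose a cofibrant replacement $\mathcal{O}'\twoheadrightarrow\mathcal{O}_X$ in associative algebras in $\dgpsh{X}$. Next I take a cofibrant replacement $\hat{\mathcal{O}}\twoheadrightarrow\theta^{\ast}\mathcal{O}'$ in $C_X(\cai)$-algebras. This gives weak equivalences
\begin{align*}
U_{\cai}(\hat{\mathcal{O}})\xrightarrow{\simeq} U_{\cai}(\theta^{\ast}\mathcal{O}')\xrightarrow{\theta_\flat} U_{\assoc}(\mathcal{O}')=\mathcal{O}'\otimes\mathcal{O}'^{\textnormal{op}},
\end{align*}
and in the other direction
\begin{align*}
U_{\cai}(\hat{\mathcal{O}})\xrightarrow{\simeq} U_{\cai}(\psi^{\ast}\psi_!\hat{\mathcal{O}})\xrightarrow{\psi_\flat} U_{C_{\ast}(\mathbb{E}^T_1)}(\psi_!\hat{\mathcal{O}}).
\end{align*}
Then, exactly as in Lemma \ref{prop2}, I lift along the trivial fibrations $A_1\twoheadrightarrow\phi^{\ast}\mathcal{O}'$ and $\tilde{\mathcal{O}}_X\twoheadrightarrow\phi^{\ast}\mathcal{O}_X$. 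This produces weak equivalences of cofibrant $C_{\ast}(\mathbb{E}^T_1)$-algebras $\psi_!\hat{\mathcal{O}}\to A_1\to\tilde{\mathcal{O}}_X$, and hence weak equivalences of their enveloping algebras.

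Two points differ from the affine case. First, the last link $\mathcal{O}'\otimes\mathcal{O}'^{\textnormal{op}}\to\mathcal{O}_X\otimes\mathcal{O}_X$ must be a local weak equivalence. Since $\mathcal{O}_X$ is commutative, we have $\mathcal{O}_X^{\textnormal{op}}=\mathcal{O}_X$. The presheaf tensor product is sectionwise over $\field$, so on stalks this map is the tensor product over a field of two quasi-isomorphisms, hence a quasi-isomorphism. This works because $X$ has enough points and weak equivalences are detected on stalks. Second, all constructions must land in associative algebras in $\dgpsh{X}$, which holds since enveloping algebras are formed internally in this symmetric monoidal category.

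I expect the main obstacle to be justifying \cite[Theorem 17.4]{F} in $\dgpsh{X}$ with the local model structure. That theorem presents $U_{\mathcal{O}}(B)$ as a coequalizer or relative composite built from $\mathcal{O}(n+1)\otimes B^{\otimes n}$, and it needs tensor powers of cofibrant algebras and $\Sigma_n$-coinvariants to preserve local weak equivalences. My first approach is to check this stalkwise: stalks commute with tensor products, colimits and coinvariants, so the required homotopy invariance reduces pointwise to the known statement in $\ch(\field)$, where characteristic zero makes all coinvariants exact.
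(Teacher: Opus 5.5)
Your zig-zag and your stalkwise treatment of $\mathcal{O}'\otimes\mathcal{O}'^{\textnormal{op}}\to\mathcal{O}_X\otimes\mathcal{O}_X$ match the paper. The gap is in how you justify \cite[Theorems 17.4.A, 17.4.B]{F} in $\dgpsh{X}$, which is the one real amendment the paper makes. The links $U_{\cai}(\hat{\mathcal{O}})\to U_{\cai}(\theta^{\ast}\mathcal{O}')$ and $U_{\cai}(\hat{\mathcal{O}})\to U_{\cai}(\psi^{\ast}\psi_!\hat{\mathcal{O}})$ are not weak equivalences of cofibrant algebras, because $\theta^{\ast}\mathcal{O}'$ and $\psi^{\ast}\psi_!\hat{\mathcal{O}}$ are not cofibrant $\cai$-algebras. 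Fresse's comparison covers them only in the form that requires the algebras to have cofibrant \emph{underlying objects}. In $\ch(\field)$ this holds automatically, which is why the affine proof never mentions it. In $\dgpsh{X}$ it has to be checked, and your proposal never does so: symmetric flatness and well-pointedness constrain the base category and the operads, not these particular objects. The paper checks it with Lemma \ref{lem2}. Operads in the image of $C_X$ are strongly admissible, so forgetful functors preserve cofibrant objects. Hence $\mathcal{O}'$, $\hat{\mathcal{O}}$ and $\psi_!\hat{\mathcal{O}}$ (cofibrant because $\psi_!$ is left Quillen) have cofibrant underlying presheaves, and $\theta^{\ast}$, $\psi^{\ast}$ leave the underlying presheaves unchanged.

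Your stalkwise fallback does not close this gap as stated. It is true that tensor powers and coinvariants preserve local weak equivalences, but that does not give homotopy invariance of $U$, which is a coequalizer. Even in $\ch(\field)$ in characteristic $0$, $U_{\mathcal{O}}$ is not homotopy invariant on arbitrary algebras. For example, for non-unital commutative algebras with vanishing triple products one gets $U(B)=\field\oplus B/B^2$, and $B/B^2$ is not homotopy invariant. For the non-cofibrant operad $C_{\ast}(\mathbb{E}^T_1)$, invariance is only available on cofibrant algebras. Stalks of cofibrant presheaf algebras are not cofibrant algebras in $\ch(\field)$, so the $\ch(\field)$ statement cannot simply be invoked at stalks for $\psi_\flat$ or for $U(\psi_!\hat{\mathcal{O}})\to U(A_1)\to U(\tilde{\mathcal{O}}_X)$.

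If you want to avoid strong admissibility, argue sectionwise rather than stalkwise. Evaluation at an open $V$ preserves colimits of algebras. It also sends the generating cofibrations (free maps on $C_U(i)$, with $U$ open and $i$ a generating cofibration of $\ch(\field)$) to cofibrations or identities. So sections of cofibrant presheaf algebras are cofibrant algebras in $\ch(\field)$. Every map in your zig-zag is then a sectionwise quasi-isomorphism:
\begin{itemize}
\item trivial fibrations of the local projective structure are sectionwise;
\item $\hat{\mathcal{O}}\to\psi^{\ast}\psi_!\hat{\mathcal{O}}$ is sectionwise a quasi-isomorphism, since $\psi_!$ is computed sectionwise and $\psi_!\dashv\psi^{\ast}$ is a Quillen equivalence in $\ch(\field)$;
\item the remaining maps are quasi-isomorphisms by 2-out-of-3.
\end{itemize}
Since $U$ is also computed sectionwise, at each $V$ your zig-zag is literally the one of Lemma \ref{prop2}. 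Sectionwise quasi-isomorphisms are local weak equivalences, which gives the claim.
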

\begin{proof}
    We adapt the proof of Proposition \ref{prop2}. Let $\mathcal{O}_X' \xtwoheadrightarrow{\simeq} \mathcal{O}_X$ be a cofibrant resolution of $\assoc$-algebras in $\dgpsh{X}$. Then
    \begin{align*}
        (\mathcal{O}_X'\otimes \mathcal{O}_X')_x \xrightarrow{\cong} \mathcal{O}_{X,x}' \otimes \mathcal{O}_{X,x}' \xrightarrow{\simeq} \mathcal{O}_{X,x} \otimes \mathcal{O}_{X,x} \xrightarrow{\cong} (\mathcal{O}_X\otimes \mathcal{O}_X)_x,
    \end{align*}
    showing that $\mathcal{O}_X' \otimes \mathcal{O}_X'$ is weakly equivalent to $\mathcal{O}_X\otimes \mathcal{O}_X$. The rest of the argument goes through exactly like before with the following amendment: Let $\hat{\mathcal{O}}_X \xtwoheadrightarrow{\simeq} \theta^{\ast}\mathcal{O}_X'$ be a cofibrant replacement of $\cai$-algebras. To obtain the zig-zag 
    \begin{center}
        \begin{tikzcd}
                                                     & U_{\cai}(\hat{\mathcal{O}}_X) \arrow[ld, "\simeq"'] \arrow[rd, "\simeq"] &                                                               \\
U_{\cai}(\theta^{\ast}\mathcal{O}_X') &                                                                                         & U_{\cai}(\psi^{\ast}\psi_!\hat{\mathcal{O}}_X)
\end{tikzcd}
    \end{center}
    using \cite[Theorem 17.4.A, 17.4.B]{F}, we have to argue that the underlying complexes of presheaves of these $\cai$-algebras are cofibrant. To this end, recall from Lemma \ref{lem2} that $\cai$ and $\assoc$ are both strongly admissible, meaning that the forgetful functor from algebras preserves cofibrant objects. In particular, the underlying complexes of presheaves of $\mathcal{O}_X'$ and $\hat{\mathcal{O}}_X$ are both cofibrant. Now recall that $\psi_!$ is left Quillen and hence preserves cofibrancy, and the restriction of scalars functors $\theta^{\ast}$ and $\psi^{\ast}$ do not alter the underlying complex. This finishes the argument.
\end{proof}
Using \cite[Lemma 1.5.3]{Hin2}, we deduce that when equipped with the transfer model structure, the category $\lmod_{U_{\eone}(\tilde{\mathcal{O}}_X)}(\dgpsh{X})$ is Quillen equivalent to the category $\lmod_{\mathcal{O}_X\otimes \mathcal{O}_X}(\dgpsh{X})$. 
\begin{lemma}
    The category $\lmod_{\mathcal{O}_X\otimes \mathcal{O}_X}(\dgpsh{X})$ is a dg model category that is left tensored over the symmetric monoidal dg model category $\dgpsh{X}$. For $\mathcal{M},\mathcal{N}\in \lmod_{\mathcal{O}_X\otimes \mathcal{O}_X}(\dgpsh{X})$, we have a dg morphism object $\mathcal{H}om_{\mathcal{O}_X\otimes \mathcal{O}_X}(\mathcal{M},\mathcal{N})\in \dgpsh{X}$ with
    \begin{align*}
        \mathcal{H}om_{\mathcal{O}_X\otimes \mathcal{O}_X}(\mathcal{M},\mathcal{N})(V) =\map_{\mathcal{O}_V\otimes \mathcal{O}_V}(\mathcal{M}|_{V}, \mathcal{N}|_V).
    \end{align*}
\end{lemma}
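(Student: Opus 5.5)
Set $\mathcal{R} := \mathcal{O}_X\otimes\mathcal{O}_X$. The plan is to check two things: the standard enriched-module axioms for $\lmod_{\mathcal{R}}(\dgpsh{X})$, and compatibility of the transferred model structure with the dg tensoring, so that the relevant pushout-product axiom (SM7) holds.

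First, the dg enrichment and the tensoring. $\lmod_{\mathcal{R}}(\dgpsh{X})$ is the category of chain complexes in the abelian category of presheaves of $\mathcal{R}$-modules, so it carries a natural dg enrichment. We use the symmetric monoidal structure on $\dgpsh{X}$ given by the sectionwise tensor product. For $\mathcal{F}\in\dgpsh{X}$ and $\mathcal{M}$ an $\mathcal{R}$-module, define $\mathcal{F}\otimes\mathcal{M}$ with the $\mathcal{R}$-action on the second factor; the associativity and unit constraints are inherited from $\dgpsh{X}$. For $\mathcal{H}om_{\mathcal{R}}(\mathcal{M},\mathcal{N})$ as in the statement, the maps sit on $V$ and restrict along $W\subseteq V$, so this is a presheaf. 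We then verify the adjunction
\begin{align*}
\hom_{\dgpsh{X}}(\mathcal{F},\mathcal{H}om_{\mathcal{R}}(\mathcal{M},\mathcal{N}))\cong \hom_{\mathcal{R}}(\mathcal{F}\otimes\mathcal{M},\mathcal{N})
\end{align*}
naturally in all variables and at the level of mapping complexes. The argument mirrors the one for the internal hom of presheaves in the proof that $\dgpsh{X}$ is closed: a map $\mathcal{F}\to\mathcal{H}om$ is a compatible family $\mathcal{F}(V)\otimes\mathcal{M}|_V\to\mathcal{N}|_V$, and evaluating at $V$ itself by a Yoneda-type argument recovers $\mathcal{F}(V)\otimes\mathcal{M}(V)\to\mathcal{N}(V)$, and conversely. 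The $\ch(\field)$-tensoring and cotensoring come from restricting along $\textnormal{const}_\ast$, since $X$ is the final object of $\textnormal{Open}(X)$.

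Second, the model-categorical compatibility. The model structure is transferred along the forgetful functor $U$ to $\dgpsh{X}$, as in \cite[Theorem 1.5.1]{Hin2}, so fibrations and weak equivalences are detected in $\dgpsh{X}$. We verify the equivalent SM7 form: for a cofibration $i:\mathcal{F}\to\mathcal{G}$ in $\dgpsh{X}$ and a fibration $p:\mathcal{M}\to\mathcal{N}$ of $\mathcal{R}$-modules, the map
\begin{align*}
\mathcal{H}om_{\mathcal{R}}(\mathcal{G},\mathcal{M})\to\mathcal{H}om_{\mathcal{R}}(\mathcal{F},\mathcal{M})\times\mathcal{H}om_{\mathcal{R}}(\mathcal{G},\mathcal{N})
\end{align*}
is a fibration, and trivial if either input is trivial. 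Here $\mathcal{H}om_{\mathcal{R}}(\mathcal{G},\mathcal{M})$ means the cotensor, computed as the internal hom in $\dgpsh{X}$ with $\mathcal{R}$-action from $\mathcal{M}$. Since fibrations are created by $U$, and $U$ commutes with this cotensor, the claim reduces to the pushout-product axiom of $\dgpsh{X}$ proved above. The $\ch(\field)$-enriched statement then follows because $\textnormal{const}_\ast$ preserves cofibrations.

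The main obstacle is the local (Bousfield-localized) part of the model structure. Fibrations in $\dgpsh{X}$ involve a homotopy descent condition, and we must make sure that the transferred structure on modules really is detected by $U$, and that cotensors preserve local fibrancy. If the direct reduction is delicate, I would instead check the dual statement on generating cofibrations $\mathcal{R}\otimes i$ together with the tensor. Here $(\mathcal{R}\otimes i)\,\square\, j$ is $\mathcal{R}\otimes(i\,\square\, j)$, which is a cofibration because free functors are left Quillen. Acyclicity then comes from the pushout-product axiom in $\dgpsh{X}$, whose stalkwise proof applies since the Zariski topos has enough points.
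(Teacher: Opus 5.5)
Your proposal is correct and follows essentially the same route as the paper: the dg enrichment is automatic, the tensoring uses the $\mathcal{O}_X\otimes\mathcal{O}_X$-action on the second factor, the cotensor is the internal hom with the action coming from the target, and the $\ch(\field)$-structure is obtained through the constant presheaf functor. The only difference is that you derive the morphism-object adjunction and the pushout-product axiom by hand, whereas the paper simply cites \cite[Lemma 1.6.3]{Hin2} and the surrounding section. You reduce the latter, via the transferred model structure and the fact that the forgetful functor commutes with cotensors, to the monoidal model structure on $\dgpsh{X}$.

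Two small corrections. First, the target of your corner map should be the fiber product over $\mathcal{H}om(\mathcal{F},\mathcal{N})$ rather than a plain product. Second, for the $\ch(\field)$-enriched axiom you need $\textnormal{const}_\ast = C_X$ to be left Quillen, i.e.\ to preserve trivial cofibrations as well, not merely cofibrations.
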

\begin{proof}
Since $\lmod_{\mathcal{O}_X\otimes \mathcal{O}_X}(\dgpsh{X})$ is the category of complexes of presheaves of $\mathcal{O}_X\otimes \mathcal{O}_X$-modules, we automatically have an enrichment over $\ch(\field)$. Now note that $\lmod_{\mathcal{O}_X\otimes \mathcal{O}_X}(\dgpsh{X}))$ is tensored and powered over $\dgpsh{X}$: If $\mathcal{F}\in \dgpsh{X}$ and $\mathcal{M} \in \lmod_{\mathcal{O}_X\otimes \mathcal{O}_X}(\dgpsh{X})$, we obtain an $\mathcal{O}_X\otimes \mathcal{O}_X$-module structure on the tensor product in complexes of presheaves by 
\begin{align*}
    (\mathcal{O}_X\otimes \mathcal{O}_X)\otimes (\mathcal{F} \otimes \mathcal{M}) \cong \mathcal{F} \otimes \mathcal{O}_X\otimes \mathcal{O}_X \otimes \mathcal{M} \rightarrow \mathcal{F} \otimes \mathcal{M}.
\end{align*}
We obtain a module structure on $\mathcal{H}om(\mathcal{F},\mathcal{M})$ by the pointwise module structure
\begin{align*}
        \mathcal{O}_X\otimes \mathcal{O}_X \otimes \mathcal{H}om(\mathcal{F},\mathcal{M}) \rightarrow \mathcal{H}om(\mathcal{M},\mathcal{M}) \otimes \mathcal{H}om(\mathcal{F},\mathcal{M}) \\\xrightarrow{\text{comp}} \mathcal{H}om(\mathcal{F},\mathcal{M}).
\end{align*}
One easily checks that these indeed yield a tensoring and powering respectively. Now to obtain the tensoring and powering over $\ch(\field)$, we pre-compose these operations with the constant presheaf functor $C_X: \ch(\field) \rightarrow \dgpsh{X}$. The pushout-product axiom is checked in \cite[Lemma 1.6.3]{Hin2}, and Hinich also shows in the same section that the above actually yields a morphism object. 
\end{proof}
We can now again use Theorem \ref{thm4} to conclude that the center of $\tilde{\mathcal{O}}_X$ in the $\infty$-category $\alg_{\mathbb{E}_1}(\text{Sh}_{\infty}(X))$ is given by 
\begin{align*}
    \mathfrak{Z}(\tilde{\mathcal{O}}_X)(\mathfrak{a}) \simeq Q\mathcal{H}om_{\mathcal{O}_X\otimes\mathcal{O}_X}(\mathcal{O},\mathcal{O})
\end{align*}
for a bifibrant model $\mathcal{O}$ of $\mathcal{O}_X$ as an $\mathcal{O}_X\otimes \mathcal{O}_X$-module. The center action is given by the evaluation map
\begin{align*}
    R(Q\mathcal{H}om_{\mathcal{O}_X\otimes \mathcal{O}_X}(\mathcal{O},\mathcal{O}) \otimes \mathcal{O}) \rightarrow \mathcal{O}.
\end{align*}
\par Note that in contrast to the affine case, where $A$ was already fibrant as an $A \otimes A$-module, we need to take a \textit{bifibrant} resolution $\mathcal{O}$ of $\mathcal{O}_X$, since it is not fibrant in the local projective model structure on $\dgpsh{X}$. However, $\mathcal{O}_X$ is fibrant in the local projective model structure for the site of affine opens on $X$, and we have already seen that presheaves on this smaller site present the same $\infty$-category. We now show that we can indeed compute the restriction $\iota_{\ast}\mathfrak{Z}(\tilde{\mathcal{O}}_X)$ by just taking a cofibrant resolution of $\mathcal{O}_X$.
\begin{proposition}\label{prop4}
We have an equivalence
\begin{align*}
    \iota_{\ast}(Q\mathcal{H}om_{\mathcal{O}_X\otimes \mathcal{O}_X}(\mathcal{O},\mathcal{O})) \simeq Q\mathcal{H}om_{\iota_{\ast}(\mathcal{O}_X\otimes \mathcal{O}_X)}(\mathcal{P},\mathcal{P})
\end{align*}
for any cofibrant resolution $\mathcal{P} \xtwoheadrightarrow{\simeq} \iota_{\ast}\mathcal{O}_X$ in $\lmod_{\iota_{\ast}(\mathcal{O}_X\otimes\mathcal{O}_X)}(\affpsh{X})$. 
\end{proposition}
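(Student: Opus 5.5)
The plan is to compare both sides through the Quillen equivalence $\iota^{-1}\dashv \iota_{\ast}$ of Proposition \ref{prop3}, upgraded to module categories. First I would check that $\iota^{-1}$ and $\iota_{\ast}$ are strong monoidal for the sectionwise tensor product. For $\iota_\ast$ this is clear. For $\iota^{-1}$ it follows because it is a filtered colimit and filtered colimits commute with $\otimes_{\field}$; the index category of affine opens containing $V$ is cofiltered since $X$ is separated, so intersections of affines are affine. Consequently $\iota_{\ast}(\mathcal{O}_X\otimes\mathcal{O}_X)=\iota_\ast\mathcal{O}_X\otimes\iota_\ast\mathcal{O}_X$, and the adjunction lifts to a Quillen adjunction $\iota^{-1}\dashv\iota_{\ast}$ between $\lmod_{\iota_{\ast}(\mathcal{O}_X\otimes\mathcal{O}_X)}(\affpsh{X})$ and $\lmod_{\iota^{-1}\iota_\ast(\mathcal{O}_X\otimes\mathcal{O}_X)}(\dgpsh{X})$. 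The latter is Quillen equivalent to $\lmod_{\mathcal{O}_X\otimes\mathcal{O}_X}(\dgpsh{X})$ by \cite[Lemma 1.5.3]{Hin2}, because the counit $\iota^{-1}\iota_\ast(\mathcal{O}_X\otimes\mathcal{O}_X)\to\mathcal{O}_X\otimes\mathcal{O}_X$ is a weak equivalence. Since weak equivalences are stalkwise and both functors preserve stalks, this composite is a Quillen equivalence of module categories.

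Next, take $\mathcal{P}\to\iota_\ast\mathcal{O}_X$ cofibrant. Then $\iota^{-1}\mathcal{P}$ is a cofibrant model of $\mathcal{O}_X$. I would choose the bifibrant model as $\mathcal{O}:=R\iota^{-1}\mathcal{P}$, using a fibrant replacement $\iota^{-1}\mathcal{P}\hookrightarrow\mathcal{O}$ that is an acyclic cofibration, so $\mathcal{O}$ stays cofibrant. By Theorem \ref{thm4}(2), the answer is independent of the bifibrant model up to equivalence. The internal hom then gives, sectionwise on an affine $U$,
\begin{align*}
\iota_\ast\mathcal{H}om(\mathcal{O},\mathcal{O})(U)=\map_{\mathcal{O}_U\otimes\mathcal{O}_U}(\mathcal{O}|_U,\mathcal{O}|_U)\simeq \map(\iota^{-1}\mathcal{P}|_U,\mathcal{O}|_U)\cong\map(\mathcal{P}|_U,\iota_\ast\mathcal{O}|_U).
\end{align*}
The middle step uses SM7: precomposition with an acyclic cofibration between cofibrant objects into a fibrant target is a quasi-isomorphism. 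The last step is the adjunction, restricted to the slice over $U$, which is again the affine site of $U$.

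Finally, I need to compare $\map(\mathcal{P}|_U,\iota_\ast\mathcal{O}|_U)$ with $\map(\mathcal{P}|_U,\mathcal{P}|_U)$. The key input is the Remark that $\iota_\ast\mathcal{O}_X$ is already fibrant in the local projective structure on affine opens, so $\mathcal{P}$ may be chosen bifibrant there. Postcomposing with the weak equivalence $\mathcal{P}\to\iota_\ast\mathcal{O}_X$ and with $\iota_\ast$ of $\mathcal{O}_X\simeq\mathcal{O}$ (a weak equivalence between fibrant objects, since $\iota_\ast$ is right Quillen) gives quasi-isomorphisms of mapping complexes out of a cofibrant source. I expect this to be the main obstacle. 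The difficulty is making these identifications natural in $U$ so that they assemble into a map of presheaves, rather than a collection of sectionwise quasi-isomorphisms. Restriction to $U$ must also preserve cofibrancy and fibrancy, and $\mathcal{P}|_U$ needs to be cofibrant over the slice site. My first attempt is to write everything as internal homs $\mathcal{H}om$, so that the maps are induced by morphisms of presheaves of modules, and to check weak equivalence on stalks. Applying $Q$ to both sides then finishes the argument.
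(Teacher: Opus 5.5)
Your skeleton matches the paper's proof. It uses the composite Quillen equivalence $\iota^\ast\dashv\iota_\ast$ of module categories built from Proposition~\ref{prop3} and \cite[Lemma 1.5.3]{Hin2}. It models $\mathcal{O}$ by a fibrant replacement of $\iota^\ast\mathcal{P}$; this is the paper's $\mathcal{R}$, which the paper compares to the given $\mathcal{O}$ by an explicit lift rather than via Theorem~\ref{thm4}. And it uses the adjunction identity $\iota_\ast\mathcal{H}om(\iota^\ast\mathcal{F},\mathcal{G})\cong\mathcal{H}om(\mathcal{F},\iota_\ast\mathcal{G})$.

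The gap is in your last step, where you compare $\iota_\ast\mathcal{O}$ with $\mathcal{P}$ by ``postcomposing with $\iota_\ast$ of $\mathcal{O}_X\simeq\mathcal{O}$, a weak equivalence between fibrant objects''. There is no such morphism. $\mathcal{O}_X$ and $\mathcal{O}$ are only linked by the span $\mathcal{O}_X\leftarrow\iota^\ast\mathcal{P}\rightarrow\mathcal{O}$. Moreover, $\mathcal{O}_X$ is not fibrant in $\lmod_{\mathcal{O}_X\otimes\mathcal{O}_X}(\dgpsh{X})$, which is exactly why a bifibrant $\mathcal{O}$ had to be introduced. So you can neither extend $\iota^\ast\mathcal{P}\to\mathcal{O}_X$ over the trivial cofibration $\iota^\ast\mathcal{P}\to\mathcal{O}$ nor apply Ken Brown's lemma to that pair.

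The missing observation is $\iota_\ast\iota^\ast\mathcal{P}\cong\mathcal{P}$: the unit of $\iota^{-1}\dashv\iota_\ast$ is an isomorphism, and extension of scalars along the counit $\epsilon$ does nothing on affine sections. Hence $\iota_\ast$ applied to the trivial cofibration $\iota^\ast\mathcal{P}\to\mathcal{O}$ is already a map $\mathcal{P}\to\iota_\ast\mathcal{O}$. It is a weak equivalence because $\iota_\ast$ preserves all weak equivalences. Both ends are fibrant: $\iota_\ast\mathcal{O}$ because $\iota_\ast$ is right Quillen, and $\mathcal{P}$ because it is a trivial fibration onto the fibrant $\iota_\ast\mathcal{O}_X$. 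So fibrancy of $\mathcal{P}$ is automatic for every $\mathcal{P}$ in the statement, not something you get to choose. Postcomposition then gives $\mathcal{H}om(\mathcal{P},\mathcal{P})\xrightarrow{\simeq}\mathcal{H}om(\mathcal{P},\iota_\ast\mathcal{O})$ directly. The detour through $\iota_\ast\mathcal{O}_X$ is unnecessary, and any honest zig-zag implementing it would have to pass through this same map.

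With that in place, the ``main obstacle'' you flag is not one. Work with internal Homs from the start. Each comparison map is then either induced by a fixed morphism of presheaves of modules (precomposition with $\iota^\ast\mathcal{P}\to\mathcal{O}$, postcomposition with $\mathcal{P}\to\iota_\ast\mathcal{O}$) or is the adjunction isomorphism. The adjunction isomorphism holds as an isomorphism of presheaves, since restricting to an affine $U$ just replaces $X$ by $U$; by separatedness, for $V\subseteq U$ the affine opens of $U$ containing $V$ are cofinal among those of $X$.

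That these maps are weak equivalences follows, as in the paper, from two facts. First, $\mathcal{H}om$ is a right Quillen bifunctor, by the pushout-product axiom checked in \cite[Lemma 1.6.3]{Hin2}. Second, $\iota_\ast$ preserves weak equivalences. You do not need to compute stalks of internal Homs, which are not Homs of stalks. Your sectionwise SM7 argument would also work, since sectionwise quasi-isomorphisms are local weak equivalences. In that case you must verify that restriction to $U$ preserves cofibrations and fibrations of modules.
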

Before we prove the proposition, we need a few lemmas.
\begin{lemma}
We have a Quillen equivalence 
\begin{center}
    \begin{tikzcd}
           \lmod_{\iota_{\ast}(\mathcal{O}_X \otimes \mathcal{O}_X)}(\affpsh{X})\arrow[r, shift left=1ex, "\iota^{\ast}"{name=G}] & \lmod_{\mathcal{O}_X \otimes \mathcal{O}_X}(\dgpsh{X})\arrow[l, shift left=.5ex, "\iota_{\ast}"{name=F}]
            \arrow[phantom, from=F, to=G, , "\scriptscriptstyle\boldsymbol{\dashv}" rotate=-90].
        \end{tikzcd}
\end{center}
Both adjoints preserve weak equivalences.
\end{lemma}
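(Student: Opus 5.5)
The plan is to deduce the statement from Proposition \ref{prop3}, applied one level up to module categories. Write $\iota^{\ast}$ (the $\iota^{-1}$ of Proposition \ref{prop3}) for the left adjoint, $(\iota^{\ast}\mathcal{M})(V) = \colim_{V\subseteq U\in \textnormal{Aff}(X)} \mathcal{M}(U)$. The first step is to check that this adjunction lifts to modules. Since $\iota_{\ast}$ is restriction of presheaves, it is strong symmetric monoidal for the sectionwise tensor product, so it sends $\mathcal{O}_X\otimes\mathcal{O}_X$-modules to $\iota_{\ast}(\mathcal{O}_X\otimes\mathcal{O}_X)$-modules. For the left adjoint I use that $\iota^{\ast}$ is also strong monoidal: colimits over the index category $\{U \in \textnormal{Aff}(X): V\subseteq U\}$ are filtered, because $X$ is separated, so intersections of affine opens are affine and the category is cofiltered as a poset. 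Filtered colimits commute with finite tensor products, hence $\iota^{\ast}(\mathcal{F}\otimes\mathcal{G})\cong \iota^{\ast}\mathcal{F}\otimes\iota^{\ast}\mathcal{G}$. Then $\iota^{\ast}$ sends an $\iota_{\ast}(\mathcal{O}_X\otimes\mathcal{O}_X)$-module to an $\iota^{\ast}\iota_{\ast}(\mathcal{O}_X\otimes\mathcal{O}_X)$-module, and I push forward along the counit $\iota^{\ast}\iota_{\ast}(\mathcal{O}_X\otimes \mathcal{O}_X)\to \mathcal{O}_X\otimes\mathcal{O}_X$ via base change. The adjunction on modules follows formally from the underlying adjunction together with the unit being an isomorphism.

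Next, both module categories carry transferred model structures, by \cite[Theorem 1.5.1]{Hin2}, so weak equivalences and fibrations are detected by the forgetful functors. Hence $\iota_{\ast}$ preserves fibrations and weak equivalences because it does so on underlying presheaves (Proposition \ref{prop3}). So the adjunction is Quillen, and $\iota_{\ast}$ preserves all weak equivalences.

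For $\iota^{\ast}$ preserving weak equivalences, I check on stalks. Proposition \ref{prop3} shows stalks are preserved by $\iota^{\ast}$ on underlying presheaves, and the base change along the counit does not change stalks, since the counit is an isomorphism on stalks ($\iota_{\ast}$ and $\iota^{\ast}$ preserve stalks). Here is the main obstacle: strictly, base change is $\mathcal{O}_X\otimes\mathcal{O}_X \otimes_{\iota^{\ast}\iota_{\ast}(\mathcal{O}_X\otimes\mathcal{O}_X)} -$. I would avoid it entirely by observing that for $V$ arbitrary open, $\iota^{\ast}\iota_{\ast}$ of a presheaf is just the colimit over affine $U\supseteq V$, and the module action of $(\mathcal{O}_X\otimes\mathcal{O}_X)(V)$ can be defined directly through restriction maps, so no base change is needed. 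Alternatively I would use that the relative tensor product commutes with stalks and is an isomorphism when the algebra map is a stalkwise isomorphism.

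Finally, for the Quillen equivalence, I use the criterion that a Quillen adjunction in which both functors preserve weak equivalences is a Quillen equivalence iff the derived unit and counit are weak equivalences. The unit $\textnormal{id}\Rightarrow\iota_{\ast}\iota^{\ast}$ is an isomorphism on underlying presheaves by Proposition \ref{prop3}: the colimit has a final object when $V$ is affine. Since the module structures match, it is an isomorphism of modules. The counit $\iota^{\ast}\iota_{\ast}\mathcal{M}\to\mathcal{M}$ is a stalkwise isomorphism by the same stalk argument, hence a weak equivalence. These statements hold on all objects, so in particular on the (co)fibrant ones.
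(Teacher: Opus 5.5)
Your overall strategy works, but the step you offer as your preferred way around the ``main obstacle'' fails, so you need your fallback. You cannot make $\iota^{-1}\mathcal{M}$ an $\mathcal{O}_X\otimes\mathcal{O}_X$-module ``directly through restriction maps''. For $V\subseteq U$, restriction goes $(\mathcal{O}_X\otimes\mathcal{O}_X)(U)\to(\mathcal{O}_X\otimes\mathcal{O}_X)(V)$. So on $(\iota^{-1}\mathcal{M})(V)=\colim_{V\subseteq U\in\textnormal{Aff}(X)}\mathcal{M}(U)$ you only get an action of $\bigl(\iota^{-1}\iota_{\ast}(\mathcal{O}_X\otimes\mathcal{O}_X)\bigr)(V)$, which maps \emph{to} $(\mathcal{O}_X\otimes\mathcal{O}_X)(V)$ via the counit, not from it. Sections over a non-affine $V$ need not extend to any affine $U\supseteq V$ (for $V=X$ non-affine there is no such $U$). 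In any case the left adjoint is fixed by its universal property as the base change $(\mathcal{O}_X\otimes\mathcal{O}_X)\otimes_{\iota^{-1}\iota_{\ast}(\mathcal{O}_X\otimes\mathcal{O}_X)}\iota^{-1}(-)$. So your ``alternatively'' argument (relative tensor products commute with stalks, and $\epsilon$ is a stalkwise isomorphism) is the one to use; it is exactly how the paper shows $\epsilon^{\ast}$ is homotopical.

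A related slip: $\iota^{-1}$ is not strong monoidal. For $V$ contained in no affine open the indexing poset is empty, hence not filtered, and $(\iota^{-1}\underline{\field})(V)=0$. So $\iota^{-1}\iota_{\ast}(\mathcal{O}_X\otimes\mathcal{O}_X)$ has no unit compatible with restriction when $X$ is not affine; the paper's proof glosses over the same point. The clean fix, writing $\mathcal{R}=\mathcal{O}_X\otimes\mathcal{O}_X$, is to define the left adjoint directly as the coequalizer of the evident pair $\mathcal{R}\otimes\iota^{-1}(\iota_{\ast}\mathcal{R}\otimes\mathcal{M})\rightrightarrows\mathcal{R}\otimes\iota^{-1}\mathcal{M}$. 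This extends $\iota_{\ast}\mathcal{R}\otimes\mathcal{F}\mapsto\mathcal{R}\otimes\iota^{-1}\mathcal{F}$ and never uses a unit of $\iota^{-1}\iota_{\ast}\mathcal{R}$. Its sections over an affine $V$ are $\mathcal{R}(V)\otimes_{\mathcal{R}(V)}\mathcal{M}(V)=\mathcal{M}(V)$, and its stalk at $x$ is $\mathcal{R}_x\otimes_{\mathcal{R}_x}\mathcal{M}_x\cong\mathcal{M}_x$. That is all your later steps need; note your unit argument needs the first of these facts, not just the final-object remark for $\iota^{-1}$.

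With that repair, your route to the Quillen equivalence is genuinely different from the paper's. The paper factors the adjunction through $\lmod_{\iota^{-1}\iota_{\ast}(\mathcal{O}_X\otimes\mathcal{O}_X)}(\dgpsh{X})$ and composes two legs:
\begin{enumerate}
\item the first is asserted, without argument, to be induced by the Quillen equivalence of Proposition \ref{prop3};
\item the second is extension/restriction of scalars along the weak equivalence of algebras $\epsilon$, a Quillen equivalence by \cite[Lemma 1.5.3]{Hin2}.
\end{enumerate}
You handle the composite in one go. It is Quillen because $\iota_{\ast}$ commutes with the forgetful functors and both model structures are transferred. Both adjoints are homotopical by the stalk computations. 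The unit is an isomorphism and the counit a stalkwise isomorphism, and for a Quillen adjunction between homotopical functors this suffices. Your version is more self-contained, needing neither Hinich's lemma nor the intermediate Quillen equivalence, and it gives the sharper fact that the underived unit and counit are already equivalences. The paper's version is shorter, given the citation.
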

\begin{proof}
The Quillen equivalence $\iota^{-1}\dashv \iota_{\ast}$ induces a Quillen equivalence
\begin{center}
    \begin{tikzcd}
           \lmod_{\iota_{\ast}(\mathcal{O}_X \otimes \mathcal{O}_X)}(\affpsh{X})\arrow[r, shift left=1ex, "\iota^{-1}"{name=G}] & \lmod_{\iota^{-1}\iota_{\ast}(\mathcal{O}_X \otimes \mathcal{O}_X)}(\dgpsh{X})\arrow[l, shift left=.5ex, "\iota_{\ast}"{name=F}]
            \arrow[phantom, from=F, to=G, , "\scriptscriptstyle\boldsymbol{\dashv}" rotate=-90].
        \end{tikzcd}
\end{center}
The counit yields a weak equivalence $\epsilon: \iota^{-1}\iota_{\ast}(\mathcal{O}_X\otimes \mathcal{O}_X) \xrightarrow{\simeq} \mathcal{O}_X\otimes\mathcal{O}_X$, and hence by \cite[Lemma 1.5.3]{Hin2} we get a Quillen equivalence
\begin{center}
    \begin{tikzcd}
           \lmod_{\iota^{-1}\iota_{\ast}(\mathcal{O}_X \otimes \mathcal{O}_X)}(\dgpsh{X})\arrow[r, shift left=1ex, "\epsilon^{\ast}"{name=G}] & \lmod_{\mathcal{O}_X \otimes \mathcal{O}_X}(\dgpsh{X})\arrow[l, shift left=.5ex, "\epsilon_{\ast}"{name=F}]
            \arrow[phantom, from=F, to=G, , "\scriptscriptstyle\boldsymbol{\dashv}" rotate=-90].
        \end{tikzcd}
\end{center}
composing these yields the Quillen equivalence in the statement. Clearly, $\epsilon_{\ast}$ preserves weak equivalences. But $\epsilon$ is an isomorphism on stalks, and hence $\epsilon^{\ast}$ also preserves weak equivalences.
\end{proof}
\begin{lemma}
For $\mathcal{F}\in \lmod_{\iota_{\ast}(\mathcal{O}_X\otimes \mathcal{O}_X)}(\affpsh{X})$ and $\mathcal{G} \in \lmod_{\mathcal{O}_X\otimes \mathcal{O}_X}(\dgpsh{X})$, we have
\begin{align*}
    \iota_{\ast}\mathcal{H}om_{\mathcal{O}_X\otimes \mathcal{O}_X}(\iota^{\ast}\mathcal{F},\mathcal{G}) \cong \mathcal{H}om_{\iota_{\ast}(\mathcal{O}_X\otimes \mathcal{O}_X)}(\mathcal{F},\iota_{\ast}\mathcal{G}).
\end{align*}
\end{lemma}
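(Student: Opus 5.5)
The identification will be checked section by section, at the level of chain complexes. For an affine open $V\in\textnormal{Aff}(X)$, the left-hand side evaluates to
\begin{align*}
    \mathcal{H}om_{\mathcal{O}_X\otimes\mathcal{O}_X}(\iota^{\ast}\mathcal{F},\mathcal{G})(V) = \map_{\mathcal{O}_V\otimes\mathcal{O}_V}\bigl((\iota^{\ast}\mathcal{F})|_V,\mathcal{G}|_V\bigr),
\end{align*}
using the description of the dg morphism object in the previous lemma. The right-hand side evaluates to $\map_{\iota_{\ast}(\mathcal{O}_V\otimes\mathcal{O}_V)}(\mathcal{F}|_V,(\iota_{\ast}\mathcal{G})|_V)$, with the analogous description on the affine site. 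Here $\mathcal{F}|_V$ denotes the restriction to the affine opens contained in $V$. So the claim reduces to a restricted, enriched version of the adjunction $\iota^{\ast}\dashv\iota_{\ast}$ on module categories. I expect $\iota^{\ast}$ here to be the composite $\epsilon^{\ast}\iota^{-1}$ from the previous lemma.

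The first step is to check that restriction to an open commutes with both functors. For $V$ open, $(\iota_{\ast}\mathcal{G})|_V=\iota_{V,\ast}(\mathcal{G}|_V)$ holds trivially, since it is just evaluation on affine opens inside $V$. For $W\subseteq V$, we have $(\iota^{-1}\mathcal{F})|_V=\iota_V^{-1}(\mathcal{F}|_V)$, because the colimit over affines $U\supseteq W$ is cofinally computed by affines $U\subseteq V$. This uses that $V\cap U$ is covered by affines, and separatedness makes intersections of affines affine, so affines inside $V$ containing $W$ are cofinal. Base change along $\epsilon$ is a tensor product $(\mathcal{O}_X\otimes\mathcal{O}_X)\otimes_{\iota^{-1}\iota_{\ast}(\mathcal{O}_X\otimes\mathcal{O}_X)}-$, computed sectionwise, so it also commutes with restriction.

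The second step is the adjunction isomorphism $\map_{\mathcal{O}_V\otimes\mathcal{O}_V}(\iota_V^{\ast}\mathcal{F}',\mathcal{G}')\cong\map_{\iota_{\ast}(\mathcal{O}_V\otimes\mathcal{O}_V)}(\mathcal{F}',\iota_{V,\ast}\mathcal{G}')$ as an isomorphism of chain complexes, not just of hom sets. I would prove this degreewise: the degree $n$ part of each mapping complex is the set of module morphisms out of a shift, $\hom(\mathcal{F}'[n],-)$. Both $\iota^{-1}$ and $\epsilon^{\ast}$ commute with shifts and are compatible with the differentials, so the ordinary adjunction bijections assemble into a chain isomorphism. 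One has to check that $\iota^{-1}\dashv\iota_{\ast}$ really holds on module categories: since $\iota_{\ast}\iota^{-1}\cong\textnormal{id}$ by Proposition \ref{prop3}, $\iota^{-1}$ sends an $\iota_{\ast}(\mathcal{O}\otimes\mathcal{O})$-module to an $\iota^{-1}\iota_{\ast}(\mathcal{O}\otimes\mathcal{O})$-module, and the extension-restriction adjunction along $\epsilon$ composes with it.

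Finally, naturality in $V$ must be checked so that the sectionwise isomorphisms glue to an isomorphism of presheaves on $\textnormal{Aff}(X)$. This follows because the adjunction bijections are natural with respect to the restriction maps identified in the first step. I expect the main obstacle to be the cofinality argument in the first step, that is, restriction commuting with $\iota^{-1}$. If the direct cofinality argument is delicate, I would instead note that $\iota^{-1}\mathcal{F}$ restricted to $V$ is determined by its values on affines $W\subseteq V$, where it agrees with $\mathcal{F}$ by the unit isomorphism. Any module map out of it is then determined by its restriction to affines, which gives the bijection directly.
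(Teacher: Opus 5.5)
Your proposal is correct and follows the paper's proof. Both evaluate at an affine $U$ and identify $(\iota^{\ast}\mathcal{F})|_U$ with $\iota'^{\ast}(\mathcal{F}|_U)$ for $\iota'\colon\textnormal{Aff}(U)\to\textnormal{Open}(U)$; both then apply the adjunction $\iota'^{\ast}\dashv\iota'_{\ast}$ on mapping complexes and identify $\iota'_{\ast}(\mathcal{G}|_U)$ with $(\iota_{\ast}\mathcal{G})|_U$. You also supply details the paper leaves implicit: cofinality of affines inside $U$ via separatedness, and naturality in $U$.

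Two small precisions. First, the degree-$n$ part of a mapping complex consists of \emph{graded} (not chain) module maps out of a shift. So run the adjunction on underlying graded modules and then check compatibility with the differentials. Second, $\iota_U^{-1}$ carries modules to modules because its defining colimits are filtered and hence commute with $\otimes$. The unit isomorphism $\iota_{\ast}\iota^{-1}\cong\textnormal{id}$ is not what gives this.
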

\begin{proof}
Let $U\in \textnormal{Aff}(X)$, and let $\iota': \textnormal{Aff}(U) \rightarrow \textnormal{Open}(U)$ be the inclusion. Then
\begin{align*}
    \map_{\mathcal{O}_U\otimes \mathcal{O}_U}(\iota^{\ast}\mathcal{F}|_U,\mathcal{G}|_U) &\cong \map_{\mathcal{O}_U\otimes \mathcal{O}_U}(\iota'^{\ast}(\mathcal{F}|_U),\mathcal{G}|_U) \\&\cong \map_{\iota'_{\ast}(\mathcal{O}_U\otimes \mathcal{O}_U)}(\mathcal{F}|_U,\iota'_{\ast}(\mathcal{G}|_U)) \\&\cong \map_{\iota_{\ast}(\mathcal{O}_X\otimes \mathcal{O}_X)|_U}(\mathcal{F}|_U,\iota_{\ast}\mathcal{G}|_U).
\end{align*}
\end{proof}
\begin{proof}[Proof of Proposition \ref{prop4}]
By definition, we have a diagram in $\lmod_{\mathcal{O}_X\otimes \mathcal{O}_X}(\dgpsh{X})$
\begin{center}
    \begin{tikzcd}
\mathcal{P}' \arrow[d, "\simeq"', tail] \arrow[r, "\simeq", two heads] & \mathcal{O}_X \\
\mathcal{O}                                                            &              
\end{tikzcd}
\end{center}
with $\mathcal{P}'$ cofibrant and $\mathcal{O}$ bifibrant. If $\mathcal{P} \xtwoheadrightarrow{\simeq} \iota_{\ast}\mathcal{O}_X$ is a cofibrant resolution, we get a weak equivalence $\iota^{\ast}\mathcal{P} \xrightarrow{\simeq} \iota^{\ast}\iota_{\ast} \mathcal{O}_X$ and $\iota^{\ast} \mathcal{P}$ is still cofibrant. We can hence solve the following lifting problem
\begin{center}
    \begin{tikzcd}
0 \arrow[rr] \arrow[d, tail]                                     &                                                            & \mathcal{P}' \arrow[d, "\simeq", two heads] \\
\iota^{\ast}\mathcal{P} \arrow[r, "\simeq"'] \arrow[rru, dashed] & \iota^{\ast}\iota_{\ast}\mathcal{O}_X \arrow[r, "\simeq"'] & \mathcal{O}_X                              
\end{tikzcd}
\end{center}
and by 2-out-of-3, the lift $\iota^{\ast}\mathcal{P} \rightarrow \mathcal{P}'$ is again a weak equivalence. Let $\iota^{\ast}\mathcal{P} \overset{\simeq}{\rightarrowtail} \mathcal{R}$ be a fibrant resolution. We can also solve the lifting problem
\begin{center}
    \begin{tikzcd}
\iota^{\ast}\mathcal{P} \arrow[d, "\simeq"', tail] \arrow[r, "\simeq"] & \mathcal{P}' \arrow[r, "\simeq", tail] & \mathcal{O} \arrow[d] \\
\mathcal{R} \arrow[rr] \arrow[rru, dashed]                             &                                        & 0                    
\end{tikzcd}
\end{center}
to obtain a weak equivalence $\mathcal{R} \xrightarrow{\simeq} \mathcal{O}$. In particular, this is a weak equivalence between bifibrant objects. Therefore,
\begin{align*}
    \iota_{\ast}Q\mathcal{H}om_{\mathcal{O}_X\otimes \mathcal{O}_X}(\mathcal{O}, \mathcal{O}) &\simeq \iota_{\ast}Q\mathcal{H}om_{\mathcal{O}_X\otimes \mathcal{O}_X}(\mathcal{R},\mathcal{R}) \\&\simeq \iota_{\ast}Q\mathcal{H}om_{\mathcal{O}_X\otimes \mathcal{O}_X}(\iota^{\ast}\mathcal{P},\mathcal{R}) \\&\simeq Q\mathcal{H}om_{\iota_{\ast}(\mathcal{O}_X\otimes \mathcal{O}_X)}(\mathcal{P},\iota_{\ast}\mathcal{R}) \\&\simeq Q\mathcal{H}om_{\iota_{\ast}(\mathcal{O}_X\otimes \mathcal{O}_X)}(\mathcal{P},  \mathcal{P}), 
\end{align*}
where in the last step we used that $\iota_{\ast}\iota^{\ast}(\mathcal{P}) \cong \mathcal{P}$ and that hence $\iota_{\ast} \mathcal{R} \leftarrow \iota_{\ast}\iota^{\ast}\mathcal{P}$ is a weak equivalence between fibrant objects. 
\end{proof}
Since $\iota_{\ast}$ is symmetric monoidal, Proposition \ref{prop4} shows that we get an induced $\mathbb{E}_2$-algebra structure on $Q\mathcal{H}om_{\iota_{\ast}(\mathcal{O}_X\otimes \mathcal{O}_X)}(\mathcal{P},\mathcal{P})$. Further, if $U\subseteq X$ is affine, then 
\begin{align*}
    \mathbb{R}\Gamma_U(\mathfrak{Z}(\tilde{\mathcal{O}}_X)) \simeq \mathbb{R}\Gamma_U(Q\mathcal{H}om_{\iota_{\ast}(\mathcal{O}_X\otimes \mathcal{O}_X)}(\mathcal{P},\mathcal{P}))
\end{align*}
as $\mathbb{E}_2$-algebras. \displaypar
In order to prove Theorem \ref{thm5}, it thus suffices to show that 
\begin{align*}
    \mathbb{R}\Gamma_U(Q\mathcal{H}om_{\iota_{\ast}(\mathcal{O}_X\otimes \mathcal{O}_X)}(\mathcal{P},\mathcal{P})) \simeq \map_{A\otimes A}(P,P)
\end{align*}
for $U = \textnormal{Spec}(A)$ and $P \xtwoheadrightarrow{\simeq} A$ a cofibrant replacement. Since we will solely work with the affine open site from now on, we will denote the restriction of a presheaf $\mathcal{F}$ on $\textnormal{Open}(X)$ to a presheaf on $\textnormal{Aff}(X)$ simply by $\mathcal{F}$ for the remainder of this section.
\begin{notation}
Let ${\diag}_X$ be the site of affine opens on $X\times_{\field} X$ of the form $W\times_{\field} W$ for $W\subseteq X$ affine open. Of course, this site is isomorphic to the affine open site on $X$, but it better conceptualizes sheaves coming from $A$-bimodules.
\end{notation}
\begin{lemma}\label{lem3}
\begin{enumerate}
    \item The functors
    \begin{align*}
        \Delta_{\ast}: \affpsh{X} \rightarrow \dgpsh{\diag_X}&, \quad \mathcal{F} \mapsto \Bigl(W\times_{\field} W \mapsto \mathcal{F}\bigl(\Delta^{-1}(W\times_{\field} W)\bigr) = \mathcal{F}(W)\Bigr),\\
        \Delta^{-1}: \dgpsh{\diag_X} \rightarrow \affpsh{X}&, \quad \mathcal{G} \mapsto \Bigl(U \mapsto \colim_{\Delta(U)\subseteq W\times_{\field} W}\mathcal{G}(W\times_{\field} W) \cong \mathcal{G}(U\times_{\field} U)\Bigr)
    \end{align*}
    are isomorphisms of categories. 
    \item We have $\Delta_{\ast}(\mathcal{O}_X \otimes \mathcal{O}_X) \cong \mathcal{O}_{X\times_{\field} X}$ and $\Delta^{-1}(\mathcal{O}_{X\times_{\field} X}) \cong \mathcal{O}_X \otimes \mathcal{O}_X$. In particular, the above isomorphism yields an isomorphism
    \begin{align*}
        \lmod_{\mathcal{O}_{X\times_{\field} X}}(\dgpsh{\diag_X}) \cong \lmod_{\mathcal{O}_X\otimes \mathcal{O}_X}(\affpsh{X}).
    \end{align*}
    \item Both $\Delta_{\ast}$ and $\Delta^{-1}$ preserve all three classes of fibrations, cofibrations and weak equivalences in the presheaf categories as well as in the left module categories.
    \item For any commutative $\field$-algebra $A$, the adjunction $\tilde{(-)} \dashv \Gamma_{\textnormal{Spec}(A)}$ between complexes of $A$-modules and complexes of presheaves of $\mathcal{O}_{\textnormal{Spec}(A)}$-modules is a Quillen adjunction. In addition, $\tilde{(-)}$ preserves acyclic fibrations.
    \item The previous statement remains true if we consider $\tilde{(-)}$ as a functor from complexes of $A\otimes A$-modules to complexes of presheaves of $\mathcal{O}_{\textnormal{Spec}(A)}$-modules on the site $\diag_{\textnormal{Spec}(A)}$.
\end{enumerate}
\end{lemma}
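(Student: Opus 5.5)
We handle the five parts of Lemma \ref{lem3} in order, since each uses the previous ones.

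\emph{Part (1).} Since $X$ is separated, for $W$ affine the preimage $\Delta^{-1}(W\times_{\field}W)=W$. So $\Delta_{\ast}$ is just transport of structure along the isomorphism of sites
\begin{align*}
\textnormal{Aff}(X)\cong\diag_X,\qquad W\mapsto W\times_{\field}W.
\end{align*}
The only thing to check is that this assignment is a well-defined isomorphism of sites. It is bijective on objects. It preserves inclusions both ways, because $W\times W\subseteq W'\times W'$ iff $W\subseteq W'$: apply $\Delta^{-1}$ to get the converse. It identifies covers, because $\{W_i\times W_i\}$ covers $W\times W$ in $\diag_X$ exactly when $\{W_i\}$ covers $W$. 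This matching of covers is really a choice of Grothendieck topology on $\diag_X$, which we fix that way.

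For $\Delta^{-1}$, the colimit formula is indexed by the poset of $W\times W\supseteq\Delta(U)$, i.e.\ by $W\supseteq U$, and $U\times U$ is initial among these. Hence the colimit is $\mathcal{G}(U\times U)$, and the two functors are mutually inverse.

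\emph{Part (2).} Sectionwise, $(\mathcal{O}_X\otimes\mathcal{O}_X)(W)=\mathcal{O}(W)\otimes_{\field}\mathcal{O}(W)$. For $W=\textnormal{Spec}(B)$ affine this is $B\otimes_{\field}B=\mathcal{O}_{X\times X}(W\times W)$. These identifications are compatible with restrictions, which gives the first isomorphism. The second follows from Part (1). Since the isomorphism of presheaf categories is symmetric monoidal (tensor products are sectionwise), it also identifies algebras and their module categories.

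\emph{Part (3).} An isomorphism of sites identifies the generating cofibrations, stalks/points, and hypercovers on the two sides. By the description of Hinich's local projective model structure, the weak equivalences, cofibrations and fibrations therefore correspond. On the module categories the model structures are transferred along forgetful functors that commute with $\Delta_{\ast}$ and $\Delta^{-1}$, so they correspond as well.

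\emph{Part (4).} Write $\tilde{M}(V)=M\otimes_A\mathcal{O}(V)$ on affine opens $V$ of $\textnormal{Spec}(A)$. This is left adjoint to $\Gamma$, since presheaf module maps $\tilde{M}\to\mathcal{F}$ are determined by $M\to\mathcal{F}(\textnormal{Spec}A)$. To get a Quillen adjunction we show that $\Gamma$ preserves fibrations and trivial fibrations.
\begin{itemize}
\item Fibrations in the local model structure are in particular sectionwise surjective.
\item Trivial fibrations coincide with projective trivial fibrations, hence are sectionwise surjective quasi-isomorphisms.
\end{itemize}
Evaluating at the global object $\textnormal{Spec}(A)$ then gives a fibration, respectively a trivial fibration, of $A$-module complexes.

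For $\tilde{(-)}$ preserving acyclic fibrations: restriction maps $A\to\mathcal{O}(V)$ on affine opens are flat localizations. So $-\otimes_A\mathcal{O}(V)$ preserves both surjectivity and quasi-isomorphisms, and $\tilde{f}$ is a sectionwise acyclic surjection. Sectionwise acyclic surjections are projective trivial fibrations, hence local trivial fibrations.

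\emph{Part (5).} Use $\tilde{N}(W\times W)=N\otimes_{A\otimes A}(\mathcal{O}(W)\otimes\mathcal{O}(W))$ and repeat the argument of Part (4). The needed flatness holds because $\mathcal{O}(W)\otimes\mathcal{O}(W)$ is flat over $A\otimes A$, being a tensor product of flat maps.

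The main obstacle is Part (3), specifically the claim that the local structures match. Cofibrations and fibrations are phrased via sections and hypercovers, so they transfer once the topologies are identified. Weak equivalences, however, are defined via sheafification and stalks. The step I would check most carefully is that the points of the topos on $\diag_X$, with the topology fixed in Part (1), are exactly the points $x\in X$, equivalently the diagonal points of $X\times X$. Given that, the stalk comparison is immediate.
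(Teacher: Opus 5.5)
Your proposal is correct and follows the paper's proof in all essentials. Parts (1)--(3) are transport of structure along the isomorphism of sites $\textnormal{Aff}(X)\cong\diag_X$, together with the sectionwise identification $\mathcal{O}_{X\times_{\field}X}(W\times_{\field}W)\cong\mathcal{O}_X(W)\otimes_{\field}\mathcal{O}_X(W)$. Parts (4)--(5) rest on flatness of $A\to\mathcal{O}(V)$, respectively $A\otimes A\to\mathcal{O}(W)\otimes\mathcal{O}(W)$.

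There are two minor differences from the paper. First, in (4) you check that $\Gamma$ preserves fibrations and trivial fibrations, whereas the paper checks that $\tilde{(-)}$ preserves all quasi-isomorphisms while $\Gamma$ preserves acyclic fibrations; both criteria suffice. Second, you explicitly fix the transported topology on $\diag_X$, which is exactly what the paper tacitly assumes when it calls the sites isomorphic. This choice is genuinely needed: with the topology induced from $X\times_{\field}X$, a family $\{W_i\times_{\field}W_i\}$ need not cover $W\times_{\field}W$ even when $\{W_i\}$ covers $W$.
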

\begin{proof}
Statement 1 is true by construction of the functors. \\
For 2., simply note that $\mathcal{O}_{X\times_k X}(W\times_{\field} W) \cong \mathcal{O}_X(W)\otimes_{\field} \mathcal{O}_X(W) \cong (\mathcal{O}_X\otimes \mathcal{O}_X)(W)$. \\
For 3., note that we get two adjoint equivalences $\Delta^{-1}\dashv \Delta_{\ast}$ and $\Delta_{\ast} \dashv \Delta^{-1}$. Clearly both $\Delta^{-1}$ and $\Delta_{\ast}$ preserve acyclic fibrations, and hence both also preserve cofibrations. Since $\diag_X$ is isomorphic as a site to $\textnormal{Aff}(X)$, the sheaf topos $\text{Sh}({\diag_X})_{\field}$ also has enough points and hence we can check weak equivalences at stalks. But since $\diag_X$ is isomorphic to $\text{Aff}(X)$, \cite[Chapter VII.5, Corollary 4]{MLM} shows that all points in the sheaf topos on $\diag_X$ are of the form $\Delta(x)$ for $x\in X$, and $(\Delta_{\ast}\mathcal{F})_{\Delta(x)} \cong \mathcal{F}_x$ and $(\Delta^{-1}\mathcal{G})_{x} \cong \mathcal{G}_x$, proving that both $\Delta^{-1}$ and $\Delta_{\ast}$ preserve weak equivalences. \\
For 4., first note that this is indeed an adjunction. To see this, let $M$ be a complex of $A$-modules and consider a map $M\rightarrow \mathcal{F}(\textnormal{Spec}(A))$. If $U= \textnormal{Spec}(B)$ is an affine open of $X = \textnormal{Spec}(A)$, then we get a restriction map $\mathcal{F}(X) \rightarrow \mathcal{F}(\textnormal{Spec}(B))$ which is a map of $A$-modules. But $\mathcal{F}(\textnormal{Spec}(B))$ is a $B$-module, and hence we get a map $\mathcal{F}(X) \otimes_A B \rightarrow \mathcal{F}(\textnormal{Spec}(B))$. We can hence construct a map
\begin{align*}
    \tilde{M}(\textnormal{Spec}(B)) \cong M\otimes_A B \rightarrow \mathcal{F}(X)\otimes_A B \rightarrow \mathcal{F}(\textnormal{Spec}(B)).
\end{align*}
of $B$-modules. Now note that $\tilde{(-)}$ sends quasi-isomorphisms to pointwise weak equivalences: If $M \rightarrow N$ is a quasi-isomorphism of complexes of $A$-modules and $U = \textnormal{Spec}(B) \subseteq \textnormal{Spec}(A)$ is an affine open, then in particular $B$ is flat over $A$ and therefore $- \otimes_A B$ preserves quasi-isomorphisms. Hence $\tilde(M)(U) = M\otimes_A B \rightarrow N\otimes_A B = \tilde{N}(U)$ is again a quasi-isomorphism. Further, we already know that the global sections functor preserves acyclic fibrations. This shows that the above adjunction is Quillen. Now if $M \rightarrow N$ is an acyclic fibration, then so is $M\otimes_A B \rightarrow N\otimes_A B$. This finishes the proof.\\
For 5., just note that everything in the proof of 4. still works.
\end{proof}
\begin{proof}[Proof of Theorem \ref{thm5}]
We work over the affine open site. Let $\mathcal{P} \xtwoheadrightarrow{\simeq} \mathcal{O}_X$ be a cofibrant resolution in $\mathcal{O}_X\otimes \mathcal{O}_X$ modules. The $\mathcal{O}_X\otimes\mathcal{O}_X$-module $\mathcal{P}$ is bifibrant and $\mathcal{H}om_{\mathcal{O}_X\otimes \mathcal{O}_X}(-,-)$ is a right Quillen bifunctor, implying that $Q\mathcal{H}om_{\mathcal{O}_X\otimes \mathcal{O}_X}(\mathcal{P},\mathcal{P})$ is again bifibrant in $\affpsh{X}$. We hence get a weak equivalence
\begin{align*}
    \mathbb{R}\Gamma_U(Q\mathcal{H}om_{\mathcal{O}_X\otimes \mathcal{O}_X}(\mathcal{P}, \mathcal{P})) \simeq \mathcal{H}om_{\mathcal{O}_X\otimes \mathcal{O}_X}(\mathcal{P}, \mathcal{P})(U) \cong \map_{\mathcal{O}_U\otimes \mathcal{O}_U}(\mathcal{P}|_U,\mathcal{P}|_U)
\end{align*}
We then have the following chain of weak equivalences 
\begin{align*}
    \map_{\mathcal{O}_U\otimes \mathcal{O}_U}(\mathcal{P}|_U,\mathcal{P}|_U)&\cong \map_{\mathcal{O}_{U\times_{\field} U}}((\Delta_U)_{\ast}(\mathcal{P}|_U), (\Delta_U)_{\ast}(\mathcal{P}|_U)) \\&\cong \map_{\mathcal{O}_{X\times_{\field} X}|_{U\times_{\field} U}}(\Delta_{\ast}(\mathcal{P})|_{U\times_{\field} U}, \Delta_{\ast}(\mathcal{P})|_{U\times_{\field}U}).
\end{align*} 
By the above lemma $\Delta_{\ast}\mathcal{P}$ is again bifibrant, and $(\Delta_{\ast}\mathcal{P})|_{U\times_{\field}U}$ is fibrant. We can therefore use \cite[Proposition 1.7.3]{Hin2} with a choice of cofibrant resolution $\mathcal{P}'\xtwoheadrightarrow{\simeq} (\Delta_{\ast}(\mathcal{P}))|_{U\times_{\field} U}$ to get weak equivalences
\begin{align*}
    \map_{\mathcal{O}_{X\times_{\field} X}|_{U\times_{\field} U}}(\Delta_{\ast}(\mathcal{P})|_{U\times_{\field} U}, \Delta_{\ast}(\mathcal{P})|_{U\times_{\field}U}) &\xrightarrow{\simeq} \map_{\mathcal{O}_{X\times_{\field} X}|_{U\times_{\field} U}}(\mathcal{P}', \Delta_{\ast}(\mathcal{P})|_{U\times_{\field}U})\\& \xleftarrow{\simeq} \map_{\mathcal{O}_{X\times_{\field} X}|_{U\times_{\field} U}}(\mathcal{P}', \mathcal{P}').
\end{align*}
Note that $(\Delta_{\ast} \mathcal{P})|_{U\times_{\field} U} \xtwoheadrightarrow{\simeq} (\Delta_U)_{\ast}\mathcal{O}_U \cong \tilde{A}$ is again a trivial fibration, and therefore $\mathcal{P}' \xtwoheadrightarrow{\simeq} \tilde{A}$ is a cofibrant resolution in $\mathcal{O}_{U\times_{\field} U}$-modules. Now let $P \xtwoheadrightarrow{\simeq} A$ be a cofibrant resolution of $A$ as an $A^e$-module. Then $\tilde{P}$ is a cofibrant $\mathcal{O}_{U\times_{\field} U}$-module, and we hence get a weak equivalence $\tilde{P} \xrightarrow{\simeq} \mathcal{P}'$ between bifibrant objects. Therefore,
\begin{align*}
    \map_{\mathcal{O}_{U\times_{\field}U}}(\mathcal{P}',\mathcal{P}') &\xrightarrow{\simeq} \map_{\mathcal{O}_{U\times_{\field}U}}(\tilde{P},\mathcal{P}') \\&\xleftarrow{\simeq} \map_{\mathcal{O}_{U\times_{\field}U}}(\tilde{P},\tilde{P}) \\&\cong \map_{A\otimes A} (P,P).
\end{align*}
This proves that $\mathbb{R}\Gamma_U(\mathfrak{Z}(\tilde{\mathcal{O}}_X)(\mathfrak{a})) \simeq \mathfrak{Z}(\tilde{A})(\mathfrak{a})$ as complexes of $\field$-modules. Recall that $\mathbb{R}\Gamma_U$ is lax symmetric monoidal, and therefore we get an induced evaluation map 
\begin{align*}
    Q\mathcal{H}om_{\mathcal{O}_X\otimes \mathcal{O}_X}(\mathcal{P}, \mathcal{P})(U) \otimes \mathcal{P}(U) \rightarrow R(Q\mathcal{H}om_{\mathcal{O}_X\otimes \mathcal{O}_X}(\mathcal{P},\mathcal{P})(U) \otimes \mathcal{P}(U)) \rightarrow \mathcal{P}(U)
\end{align*}
But $\mathcal{P}(U) \simeq \mathcal{O}_X(U) \cong A \simeq P$, so this is in fact equivalent to the evaluation map 
\begin{align*}
    \map_{A\otimes A}(P,P) \otimes P \rightarrow P
\end{align*}
of the center of $A$. This shows that the above equivalence is indeed an equivalence of $\mathbb{E}_2$-algebras.
\end{proof}
\subsection{Recovering polydifferential operators as the center of $\mathcal{O}_X$}
Now suppose that $X$ is quasi-compact, separated, of finite type and smooth over $\field$. Let $\dgsh{X}$ denote the category of sheaves of complexes of $\field$-modules on the site $\text{Open}(X)$. Recall that in this case the Hochschild cohomology of $X$ is given by the hypercohomology of the sheaf of polydifferential operators $\mathcal{D}_{\text{poly}}(X)\in \dgsh{X}$. If $U = \text{Spec}(A)\subseteq X$ is an affine open, then 
\begin{align*}
    \mathcal{D}_{\text{poly}}(X)_n(U) &= \{f\in \text{Hom}_{\field}(A^{\otimes n},A): f \text{ is a differential operator in each factor}\}\\&\subseteq \text{Hom}_{\field}(A^{\otimes n},A).
\end{align*}
The sheaf of polydifferential operators is quasi-coherent as an $\mathcal{O}_X$-module, and therefore fibrant in the local projective model structure on affine opens. We want to show that the sheaf of polydifferential operators is indeed a model of the center of $\mathcal{O}_X$.
\begin{theorem}\label{thm6}
Let $X$ be a smooth, quasi-compact, separated scheme of finite type over $\field$. We have an equivalence
\begin{align*}
    Q\mathcal{D}_{\textnormal{poly}}(X) \simeq \mathfrak{Z}(\tilde{\mathcal{O}}_X)(\mathfrak{a})
\end{align*}
    in the $\infty$-category $\lmod_{\tilde{\mathcal{O}}_X}(\textnormal{Sh}_{\infty}(X))$ of $\tilde{\mathcal{O}}_X$-modules. 
\end{theorem}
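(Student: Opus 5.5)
Since the center already has a concrete model, I will work on the affine open site and reduce the statement to a comparison of presheaves. By Proposition \ref{prop4}, after restricting along $\iota_{\ast}$ (an equivalence of $\infty$-categories), the underlying object $\mathfrak{Z}(\tilde{\mathcal{O}}_X)(\mathfrak{a})$ is modelled by $Q\mathcal{H}om_{\mathcal{O}_X\otimes\mathcal{O}_X}(\mathcal{P},\mathcal{P})$ for any cofibrant resolution $\mathcal{P}\xtwoheadrightarrow{\simeq}\mathcal{O}_X$ in $\lmod_{\mathcal{O}_X\otimes\mathcal{O}_X}(\affpsh{X})$. I will take $\mathcal{P}$ to be the presheaf bar resolution $\mathcal{B}(\mathcal{O}_X)$, with $\mathcal{B}(\mathcal{O}_X)(U)=B(A)=A\otimes A^{\otimes\ast}\otimes A$ for $U=\spec(A)$; this is functorial in affine opens. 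On $\textnormal{Aff}(X)$ it is degreewise a direct sum of free $\mathcal{O}_X\otimes\mathcal{O}_X$-modules on presheaves $C_X(V)$-type generators, so it should be cofibrant after checking it is a filtered colimit of cell attachments; if that fails, I will replace it cofibrantly and compare via the lifting arguments already used in Proposition \ref{prop4}. Then $\mathcal{H}om_{\mathcal{O}_X\otimes\mathcal{O}_X}(\mathcal{B},\mathcal{B})$ has sections over $U$ which, by Theorem \ref{thm5} and its proof, are quasi-isomorphic to $\hom_{\field}(A^{\otimes\ast},A)$. The subtlety is that these sections are homs of \emph{presheaf} modules on $U$, not just maps of $A^e$-modules at the top. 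Via the restriction maps and the localization isomorphisms $B(A)\otimes_{A^e}(B\otimes B)\cong B(B)$ up to quasi-isomorphism, I expect them to compute $\hom_{\field}(A^{\otimes\ast},A)$ compatibly with restriction.

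Next I will construct an explicit natural map of presheaves $\mathcal{D}_{\text{poly}}(X)\to\mathcal{H}om_{\mathcal{O}_X\otimes\mathcal{O}_X}(\mathcal{B},\mathcal{O}_X)$. A polydifferential operator $f$ on $U$ extends uniquely to any smaller affine $V\subseteq U$, since differential operators localize. This gives a compatible family of cochains $A_V^{\otimes n}\to A_V$, hence a morphism of presheaf bimodules $\mathcal{B}|_U\to\mathcal{O}_U$. The target is equivalent to $\mathcal{H}om(\mathcal{B},\mathcal{B})$ via postcomposition with the trivial fibration $\mathcal{B}\to\mathcal{O}_X$, because $\mathcal{B}$ is cofibrant and $\mathcal{O}_X$ is fibrant on affine opens. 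It therefore suffices to prove that this map is a local weak equivalence, which I will check on stalks.

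The key step, and the main obstacle, is this stalk comparison. On sections over a small affine $U=\spec(A)$ with $A$ smooth of finite type, there is the classical result that the inclusion $D_{\text{poly}}(A)\hookrightarrow C^{\ast}(A,A)$ is a quasi-isomorphism, both computing $\wedge^{\ast}_A T_A$ by HKR. I must check that the presheaf hom over $U$ agrees with a localized version of $C^{\ast}(A,A)$, namely cochains compatible with all localizations. For this I will use that smooth $A$ has a Koszul-type resolution of $A$ by finitely generated projective $A^e$-modules, which localizes well ($\tilde{(-)}$ preserves it, Lemma \ref{lem3}). With that resolution, $\mathcal{H}om(\tilde{K},\tilde{K})(U)\simeq\hom_{A^e}(K,K)\simeq\wedge^{\ast}T_A$, and the HKR maps from both $\mathcal{D}_{\text{poly}}$ and the center commute with restriction. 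Both sides then agree with $\wedge^\ast T$ as presheaves, so the map is a weak equivalence. Cofibrantly replacing $\mathcal{D}_{\text{poly}}(X)$ gives $Q\mathcal{D}_{\text{poly}}(X)$.

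Finally, for the $\tilde{\mathcal{O}}_X$-module statement, I will observe that all maps above are $\mathcal{O}_X$-linear for the left multiplication action, which is natural in each construction. They therefore lift to equivalences in $\lmod_{\tilde{\mathcal{O}}_X}(\textnormal{Sh}_{\infty}(X))$ after rectifying modules as in Section \ref{the_category_of_e1_modules_in_chain_complexes}.
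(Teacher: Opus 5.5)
The gap is in your first move. $\mathcal{B}(\mathcal{O}_X)$ is not cofibrant, and every later step is built on it.

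Cofibrant $\mathcal{O}_X\otimes\mathcal{O}_X$-modules are degreewise retracts of sums of $(\mathcal{O}_X\otimes\mathcal{O}_X)\otimes C_V(\field)$. But already $\mathcal{B}_0=\mathcal{O}_X\otimes\mathcal{O}_X$ is the free module on the constant presheaf $\underline{\field}$. Maps out of it compute $\lim_{\textnormal{Aff}(X)^{\textnormal{op}}}$, which is not exact when $\textnormal{Aff}(X)$ has no terminal object. For instance, for $X=\mathbb{P}^1$ no affine open contains all affine opens containing a given small one. So a compatible family in $M=\bigoplus_V(\mathcal{O}_X\otimes\mathcal{O}_X)\otimes C_V(\field)$ vanishes on small affines, and $1\otimes 1$ does not lift along the surjection $M\to\mathcal{O}_X\otimes\mathcal{O}_X$.

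Consequently $\mathcal{H}om(\mathcal{B},\mathcal{O}_X)(U)$ is just the complex of localization-compatible cochains, and nothing you invoke shows it computes the derived hom.
\begin{itemize}
\item The proof of Theorem \ref{thm5} and your claimed equivalence $\mathcal{H}om(\mathcal{B},\mathcal{O}_X)\simeq\mathcal{H}om(\mathcal{B},\mathcal{B})$ both use cofibrancy.
\item Your Koszul computation of $\mathcal{H}om(\tilde K,\tilde K)(U)$ never connects to this complex. Restricting along a comparison map $\tilde K\to\mathcal{B}|_U$ is a quasi-isomorphism only if $\mathcal{B}|_U$ is cofibrant.
\item Even granting that, knowing that both sides are abstractly $\wedge^\ast T$ does not show that your particular map realizes that identification.
\end{itemize}
Your fallback ``replace it cofibrantly'' does not fix this by itself. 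It changes the target of your map, and the lifting arguments of Proposition \ref{prop4} only compare cofibrant models with each other.

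The argument can be repaired within your framework, and it then gives a genuinely different proof from the paper's. Let $\Phi\colon\mathcal{D}_{\textnormal{poly}}(X)\to\mathcal{H}om(\mathcal{B},\mathcal{O}_X)$ be your localization map. Take a cofibrant resolution $\mathcal{P}\to\mathcal{O}_X$ and a lift $\ell\colon\mathcal{P}\to\mathcal{B}(\mathcal{O}_X)$; the lift exists because $\mathcal{B}(\mathcal{O}_X)\to\mathcal{O}_X$ is a sectionwise surjective quasi-isomorphism. Now study $\ell^\ast\circ\Phi\colon\mathcal{D}_{\textnormal{poly}}(X)\to\mathcal{H}om(\mathcal{P},\mathcal{O}_X)$, whose target receives a weak equivalence from $\mathcal{H}om(\mathcal{P},\mathcal{P})$.

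Over $U=\spec(A)$, pick any cofibrant $A^e$-resolution $P\to A$.
\begin{itemize}
\item $\tilde P$ is cofibrant on $\textnormal{Aff}(U)$ by Lemma \ref{lem3}.
\item $\mathcal{P}|_U$ is cofibrant, since restriction sends $C_V(\field)$ to $C_{V\cap U}(\field)$.
\item A lift $m\colon\tilde P\to\mathcal{P}|_U$ therefore induces a quasi-isomorphism $m^\ast$ into $\map(\tilde P,\mathcal{O}_U)\cong\hom_{A^e}(P,A)$.
\end{itemize}
The composite $D_{\textnormal{poly}}(A)\to\hom_{A^e}(P,A)$ is $f\mapsto f\circ c$, where $c=\Gamma_U(\ell m)\colon P\to B(A)$ is a comparison of resolutions. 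So $\ell^\ast\circ\Phi$ is a quasi-isomorphism on every affine, precisely because $D_{\textnormal{poly}}(A)\hookrightarrow C^\ast(A,A)$ is one for smooth $A$. No stalks, Koszul resolutions or HKR bookkeeping are needed.

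The paper never writes down a comparison map. It sheafifies via the Quillen equivalence along $\mathcal{O}_X\otimes\mathcal{O}_X\to\mathcal{O}_X\overset{a}{\otimes}\mathcal{O}_X$ together with a K-injective resolution (Lemma \ref{lem6}), transports along $\Delta^{-1}\dashv\Delta_\ast$, and imports Yekutieli's global identification $\Delta_\ast\mathcal{D}_{\textnormal{poly}}(X)\simeq\mathbb{R}\mathcal{H}om_{\mathcal{O}_{X\times_{\field}X}}(\Delta_\ast\mathcal{O}_X,\Delta_\ast\mathcal{O}_X)$. The repaired version of your argument needs only the affine theorem plus the locality of polydifferential operators. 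It also produces a map visibly compatible with evaluation on $\mathcal{B}(\mathcal{O}_X)$, which the paper afterwards only asserts is ``clear from the proof''.
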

It suffices to show this equivalence for the sites of affine opens, since they yield an equivalent $\infty$-category. Since $\iota_{\ast}\mathfrak{Z}(\tilde{\mathcal{O}}_X)(\mathfrak{a})\simeq Q\mathcal{H}om_{\mathcal{O}_X\otimes \mathcal{O}_X}(\mathcal{P},\mathcal{P})$, it thus suffices to show 
\begin{align*}
    \iota_{\ast}\mathcal{D}_{\text{poly}}(X) \simeq \mathcal{H}om_{\mathcal{O}_X\otimes \mathcal{O}_X}(\mathcal{P},\mathcal{P})
\end{align*}
as presheaves of $\mathcal{O}_X$-modules. In the following we will work in the site of affine opens and we suppress the restriction of sheaves to affine opens.
\begin{notation}
    Let $\mathcal{O}$ be an associative algebra in dg sheaves. If $\mathcal{F},\mathcal{G}$ are sheaves of left $\mathcal{O}$-modules, recall that $\mathbb{R}\mathcal{H}om_{\mathcal{O}}(\mathcal{F},\mathcal{G}) = \mathcal{H}om_{\mathcal{O}}(\mathcal{F},\mathcal{J})$ for some K-injective resolution $\mathcal{J}$ of $\mathcal{G}$ in the category of sheaves of left $\mathcal{O}$-modules. Let $\Delta: X\rightarrow X\times_{\field} X$ be the diagonal. We have already used the adjunction $\Delta^{-1}\dashv \Delta_{\ast}$ induced by this in Lemma \ref{lem3} above, but we now want to consider the full site of affine opens on $X\times_{\field} X$ instead of the smaller site $\diag_X$, and we also consider sheaves instead of presheaves. In particular, the map $\Delta^{-1}: \affsh{X\times_{\field}X} \rightarrow \affsh{X}$ is now given by the presheaf version followed by sheafification. We then have $\Delta^{-1}\Delta_{\ast} \cong \text{id}$ since $X$ is separated. In particular, $\Delta^{-1}\mathcal{O}_{X\times_{\field} X} \cong \mathcal{O}_X \overset{a}{\otimes} \mathcal{O}_X$, where we denote by $\overset{a}{\otimes}$ the tensor product of sheaves.
\end{notation}
\begin{lemma}\label{lem6}
\begin{enumerate}
    \item Let $\mathcal{P} \xtwoheadrightarrow{\simeq}\mathcal{O}_X$ be a cofibrant resolution of presheaves of $\mathcal{O}_X\otimes \mathcal{O}_X$-modules. We have a local quasi-isomorphism of complexes of presheaves 
    \begin{align*}
        \mathcal{H}om_{\mathcal{O}_X\otimes \mathcal{O}_X}(\mathcal{P},\mathcal{P}) \simeq \mathbb{R}\mathcal{H}om_{\mathcal{O}_X\overset{a}{\otimes}\mathcal{O}_X}(\mathcal{O}_X,\mathcal{O}_X)
    \end{align*}
    \item If $\mathcal{F}$ and $\mathcal{G}$ are sheaves, then $\Delta_{\ast} \mathcal{H}om_{\mathcal{O}_X\overset{a}{\otimes}\mathcal{O}_X}(\Delta^{-1}\mathcal{F},\mathcal{G}) \cong \mathcal{H}om_{\mathcal{O}_{X\times_{\field} X}}(\mathcal{F},\Delta_{\ast}\mathcal{G})$.
    \item If $\mathcal{O}_X\xrightarrow{\simeq} \mathcal{I}$ is a K-injective resolution in sheaves of $\mathcal{O}_X\overset{a}{\otimes}\mathcal{O}_X$-modules, then $\Delta_{\ast}\mathcal{O}_X \rightarrow \Delta_{\ast}\mathcal{I}$ is a K-injective resolution in $\mathcal{O}_{X\times_{\field} X}$-modules.
\end{enumerate}
\end{lemma}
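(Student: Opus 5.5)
I will prove the three parts of Lemma \ref{lem6} in reverse order of difficulty, starting with the formal adjunction statements (2) and (3) and then using them, together with the model-categorical facts already established, for (1).

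For (2), I will check the isomorphism sectionwise. Over an affine open $W\subseteq X\times_{\field} X$, the left side is $\map_{\mathcal{O}_{V}\overset{a}{\otimes}\mathcal{O}_{V}}(\Delta^{-1}\mathcal{F}|_{V},\mathcal{G}|_{V})$ with $V=\Delta^{-1}(W)$. The right side is $\map_{\mathcal{O}_{W}}(\mathcal{F}|_W,(\Delta_\ast\mathcal{G})|_W)$. Since $\Delta$ restricts to the diagonal $V\to W$, and restriction commutes with $\Delta^{-1}$ and $\Delta_\ast$, the identification is the usual $\Delta^{-1}\dashv\Delta_\ast$ adjunction for modules over $V\to W$. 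Here I use that $\Delta^{-1}\mathcal{O}_{X\times X}\cong\mathcal{O}_X\overset{a}{\otimes}\mathcal{O}_X$, so module structures correspond. The adjunction is compatible with the differentials, so it gives an isomorphism of complexes, natural in $W$.

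For (3), $\Delta_\ast$ is right adjoint to the exact functor $\Delta^{-1}$, so it preserves K-injectivity. Indeed, $\hom(\mathcal{A},\Delta_\ast\mathcal{I})\cong\hom(\Delta^{-1}\mathcal{A},\mathcal{I})$ on homotopy categories, and $\Delta^{-1}$ preserves acyclic complexes. Moreover $\Delta_\ast$ is exact, since $\Delta$ is a closed immersion ($X$ separated). Hence $\Delta_\ast\mathcal{O}_X\to\Delta_\ast\mathcal{I}$ remains a quasi-isomorphism.

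For (1), I compare both sides through a common zig-zag. I take $\mathcal{P}\to\mathcal{R}$ to be a fibrant replacement in the local projective structure on $\lmod_{\mathcal{O}_X\otimes\mathcal{O}_X}$. Then $\mathcal{H}om(\mathcal{P},\mathcal{P})\simeq\mathcal{H}om(\mathcal{P},\mathcal{R})$; on affine opens $\mathcal{P}$ is already fibrant, as in the proof of Theorem \ref{thm5}. The sheafification $\mathcal{P}^a$ is a resolution of $\mathcal{O}_X$ by sheaves of $\mathcal{O}_X\overset{a}{\otimes}\mathcal{O}_X$-modules. I choose a K-injective resolution $\mathcal{O}_X\to\mathcal{I}$ and lift it to a map $\mathcal{P}^a\to\mathcal{I}$, so that $\mathbb{R}\mathcal{H}om(\mathcal{O}_X,\mathcal{O}_X)\simeq\mathcal{H}om(\mathcal{P}^a,\mathcal{I})$ by K-injectivity of $\mathcal{I}$. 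The adjunction between sheafification and inclusion gives $\mathcal{H}om(\mathcal{P}^a,\mathcal{I})\cong\mathcal{H}om_{\mathcal{O}_X\otimes\mathcal{O}_X}(\mathcal{P},\mathcal{I})$, computed in presheaves. It remains to show that $\mathcal{H}om(\mathcal{P},\mathcal{R})\to\mathcal{H}om(\mathcal{P},\mathcal{I})$, or a zig-zag through a common fibrant model, is a local quasi-isomorphism. The key input is that a K-injective sheaf complex is fibrant in the local projective model structure, since it satisfies Čech descent for hypercovers. Then $\mathcal{R}$ and $\mathcal{I}$ are weakly equivalent fibrant objects, and mapping out of the cofibrant $\mathcal{P}$ preserves weak equivalences between them (SM7 / the right Quillen bifunctor property).

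The main obstacle is this fibrancy claim for $\mathcal{I}$ in the module category with its transferred structure. I would try to deduce it from Hinich's \cite[Proposition 1.7.3]{Hin2}. If that falls short, I would fall back on showing sectionwise that $\mathcal{I}(U)\to\check{C}(V_\bullet,\mathcal{I})$ is a quasi-isomorphism, using that K-injectives are flasque-like and thus acyclic for hypercohomology.
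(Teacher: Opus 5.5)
Your proposal follows the paper's proof part for part. In (2) you use the same sectionwise $\Delta^{-1}\dashv\Delta_\ast$ identification over $\Delta^{-1}(W)$. The K-injectivity half of (3) is the same ``right adjoint of the exact functor $\Delta^{-1}$'' argument. In (1) you use the same ingredients: fibrancy of $\mathcal{P}$ and $\mathcal{I}$ in the local projective structure, the sheafification adjunction, and K-injectivity of $\mathcal{I}$ against $\mathcal{P}^a\to\mathcal{O}_X$. The only difference there is that the paper first passes through the Quillen equivalence $\alpha^\ast\dashv\alpha_\ast$ for $\alpha\colon\mathcal{O}_X\otimes\mathcal{O}_X\to\mathcal{O}_X\overset{a}{\otimes}\mathcal{O}_X$ and then sheafifies, whereas you merge the two adjunctions. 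Getting the quasi-isomorphism $\Delta_\ast\mathcal{O}_X\to\Delta_\ast\mathcal{I}$ from exactness of pushforward along the closed immersion $\Delta$ is a harmless simplification of the paper's fibrancy/right-Quillen argument. The fibrancy of $\mathcal{I}$ that you single out is used by the paper without comment. It is true, but not for a flasqueness reason, since K-injective complexes need not be termwise flasque. The map $\mathcal{I}(U)\to\check{C}(V_\bullet,\mathcal{I})$ is obtained by applying the Hom complex into $\mathcal{I}$ to the augmentation from the \v{C}ech chain complex of $V_\bullet$ (free module sheaves extended by zero) to the free module sheaf on $U$. That augmentation is a stalkwise quasi-isomorphism because a hypercover is stalkwise a trivial Kan fibration, so K-injectivity of $\mathcal{I}$ finishes the argument.

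There is, however, one step that fails as written. You inherit it from the Notation preceding the lemma, so the paper's proofs of (2) and (3) have the same defect. The identification $\Delta^{-1}\mathcal{O}_{X\times_{\field}X}\cong\mathcal{O}_X\overset{a}{\otimes}\mathcal{O}_X$ that you invoke (``module structures correspond'') is false. The stalks at $x$ are $\mathcal{O}_{X\times X,\Delta(x)}$, which is local, and $\mathcal{O}_{X,x}\otimes_{\field}\mathcal{O}_{X,x}$, which is not. For $X=\mathbb{A}^1$ and $x=0$, the element $1+s-t$ is a unit in $\field[s,t]_{(s,t)}$. It is not a unit in $\field[s]_{(s)}\otimes_{\field}\field[t]_{(t)}$: it is irreducible and divides no product $f(s)g(t)$ with $f(0)g(0)\neq 0$. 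The natural ring map only goes $\mathcal{O}_X\overset{a}{\otimes}\mathcal{O}_X\to\Delta^{-1}\mathcal{O}_{X\times X}$. So a general $\mathcal{O}_X\overset{a}{\otimes}\mathcal{O}_X$-module $\mathcal{G}$ does not extend to a $\Delta^{-1}\mathcal{O}_{X\times X}$-module; already $\mathcal{O}_X\overset{a}{\otimes}\mathcal{O}_X$ itself fails, since $1+s-t$ does not act invertibly on it. For such $\mathcal{G}$, and in particular for an arbitrary K-injective $\mathcal{I}$, $\Delta_\ast\mathcal{G}$ is not an $\mathcal{O}_{X\times X}$-module. Hence the right side of (2) and the statement of (3) do not make sense. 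The repair is to take $\mathcal{G}$ and $\mathcal{I}$ to be $\Delta^{-1}\mathcal{O}_{X\times X}$-modules, with $\mathcal{I}$ K-injective over that ring; your arguments for (2) and (3) then go through verbatim. To reconnect with (1), note that $\mathcal{O}_X\overset{a}{\otimes}\mathcal{O}_X\to\Delta^{-1}\mathcal{O}_{X\times X}$ is stalkwise the localization at the prime of $(x,x)$, hence flat. Moreover $\Delta^{-1}\mathcal{O}_{X\times X}\otimes_{\mathcal{O}_X\overset{a}{\otimes}\mathcal{O}_X}\mathcal{O}_X\cong\mathcal{O}_X$. So $\mathbb{R}\mathcal{H}om(\mathcal{O}_X,\mathcal{O}_X)$ is the same over either ring.
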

\begin{proof}
For 1., let $\alpha: \mathcal{O}_X\otimes \mathcal{O}_X \xrightarrow{\simeq} \mathcal{O}_X \overset{a}{\otimes} \mathcal{O}_X$ be the unit of the sheafification adjunction. This is a weak equivalence of dg algebras in presheaves, and therefore by \cite[Lemma 1.5.3]{Hin2} induces a Quillen equivalence
\begin{center}
    \begin{tikzcd}
           \lmod_{\mathcal{O}_X \otimes \mathcal{O}_X}(\affpsh{X})\arrow[r, shift left=1ex, "\alpha^{\ast}"{name=G}] & \lmod_{\mathcal{O}_X \overset{a}{\otimes} \mathcal{O}_X}(\affpsh{X})\arrow[l, shift left=.5ex, "\alpha_{\ast}"{name=F}]
            \arrow[phantom, from=F, to=G, , "\scriptscriptstyle\boldsymbol{\dashv}" rotate=-90]
        \end{tikzcd}.
\end{center}
We therefore get the following chain of weak equivalences
\begin{align*}
\mathcal{H}om_{\mathcal{O}_X\otimes \mathcal{O}_X}(\mathcal{P},\mathcal{P}) &\simeq \mathcal{H}om_{\mathcal{O}_X\otimes \mathcal{O}_X}(\mathcal{P},\alpha_{\ast}\mathcal{O}_X) \\&\cong \mathcal{H}om_{\mathcal{O}_X\overset{a}{\otimes}\mathcal{O}_X}(\alpha^{\ast}\mathcal{P},\mathcal{O}_X) \\&\simeq \mathcal{H}om_{\mathcal{O}_X\overset{a}{\otimes} \mathcal{O}_X}(\alpha^{\ast}\mathcal{P},\mathcal{I}) \\&\cong \mathcal{H}om_{\mathcal{O}_X\overset{a}{\otimes}\mathcal{O}_X}((\alpha^{\ast}\mathcal{P})^a,\mathcal{I})  \\&\simeq \mathcal{H}om_{\mathcal{O}_X\overset{a}{\otimes} \mathcal{O}_X}(\mathcal{O}_X,\mathcal{I}) \\&= \mathbb{R}\mathcal{H}om_{\mathcal{O}_X\overset{a}{\otimes} \mathcal{O}_X}(\mathcal{O}_X,\mathcal{O}_X).
\end{align*}
The 2. statement follows directly from the definition of the presheaf hom and the adjoint properties of $\Delta^{-1}$ and $\Delta_{\ast}$. If $U$ is an affine open in $X\times_{\field} X$, then
\begin{align*}
    \Delta_{\ast}\mathcal{H}om_{\mathcal{O}_X \overset{a}{\otimes} \mathcal{O}_X}(\Delta^{-1}\mathcal{F},\mathcal{G})(U) &= \map_{(\mathcal{O}_X \overset{a}{\otimes} \mathcal{O}_X)|_{\Delta^{-1}(U)}}(\Delta^{-1}\mathcal{F}|_{\Delta^{-1}(U)}, \mathcal{G}|_{\Delta^{-1}(U)}) \\&\cong \map_{(\mathcal{O}_X \overset{a}{\otimes} \mathcal{O}_X)|_{\Delta^{-1}(U)}}((\Delta|_{\Delta^{-1}(U)})^{-1}\mathcal{F}|_{U}, \mathcal{G}|_{\Delta^{-1}(U)}) \\&\cong \map_{\mathcal{O}_{(X\times_{\field}X)|_U}}(\mathcal{F}_U, \Delta_{\ast}\mathcal{G}|_U) = \mathcal{H}om_{\mathcal{O}_{X\times_{\field} X}}(\mathcal{F},\Delta_{\ast}\mathcal{G})(U).
\end{align*}
For the 3. statement, note first that $\mathcal{O}_X$ and $\mathcal{I}$ are both fibrant in the local projective model structure, and the presheaf version of the $\Delta^{-1}\dashv \Delta_{\ast}$ adjunction is Quillen for this model structure on the affine open sites, so $\Delta_{\ast} \mathcal{O}_X \rightarrow \Delta_{\ast}\mathcal{I}$ is again a local weak equivalence. Further $\Delta^{-1}$ is exact, and therefore preserves acyclic complexes. Therefore, if $\mathcal{S}$ is an acyclic $\mathcal{O}_{X\times_{\field} X}$-module, then 
\begin{align*}
    \map_{\mathcal{O}_{X\times_k X}}(\mathcal{S},\Delta_{\ast}\mathcal{I}) \cong \map_{\mathcal{O}_X\overset{a}{\otimes}\mathcal{O}_X}(\Delta^{-1}\mathcal{S},\mathcal{I})
\end{align*}
is acyclic, proving that $\Delta_{\ast}\mathcal{I}$ is K-injective.
\end{proof}
\begin{proof}[Proof of Theorem \ref{thm6}]
By \cite[Corollary 2.9]{Y} we have a local weak equivalence 
\begin{align*}
\Delta_{\ast} \mathcal{D}_{\textnormal{poly}}(X)\simeq \mathbb{R}\mathcal{H}om_{\mathcal{O}_{X\times_{\field} X}}(\Delta_{\ast}\mathcal{O}_X,\Delta_{\ast}\mathcal{O}_X).
\end{align*}
We then get the following chain of local weak equivalences
\begin{align*}
    \Delta_{\ast} \mathbb{R}\mathcal{H}om_{\mathcal{O}_X \overset{a}{\otimes} \mathcal{O}_X}(\mathcal{O}_X,\mathcal{O}_X) &= \Delta_{\ast}\mathcal{H}om_{\mathcal{O}_X \overset{a}{\otimes} \mathcal{O}_X}(\mathcal{O}_X,\mathcal{I})\\&\cong \Delta_{\ast}\mathcal{H}om_{\mathcal{O}_X \overset{a}{\otimes} \mathcal{O}_X}(\Delta^{-1}\Delta_{\ast}\mathcal{O}_X,\mathcal{I}) \\&\cong \mathcal{H}om_{\mathcal{O}_{X\times_{\field}X}}(\Delta_{\ast}\mathcal{O}_X,\Delta_{\ast}\mathcal{I}) \\&\simeq \mathbb{R}\mathcal{H}om_{\mathcal{O}_{X\times_{\field} X}}(\Delta_{\ast}\mathcal{O}_X,\Delta_{\ast}\mathcal{O}_X) \\&\simeq \Delta_{\ast}\mathcal{D}_{\textnormal{poly}}(X)
\end{align*}
where in the second to last step we used \ref{lem6}(3.). Now note that $\Delta^{-1}$ preserves local weak equivalences, and therefore 
\begin{align*}
\mathbb{R}\mathcal{H}om_{\mathcal{O}_X\overset{a}{\otimes} \mathcal{O}_X}(\mathcal{O}_X,\mathcal{O}_X) \simeq \mathcal{D}_{\textnormal{poly}}(X).
\end{align*}
Together with \ref{lem6}(1.) this finishes the proof.
\end{proof}

Let $\mathcal{B}(\mathcal{O}_X)$ denote the $\mathcal{O}_X\otimes \mathcal{O}_X$-module presheaf $U= \text{Spec}(A)\mapsto B(A)$, where $B(A)$ denotes the bar complex of $A$. We have an acyclic fibration $\mathcal{B}(\mathcal{O}_X) \xtwoheadrightarrow{\simeq} \mathcal{O}_X$ given by multiplication. Hence for a cofibrant resolution $\mathcal{P} \xtwoheadrightarrow{\simeq} \mathcal{O}_X$ of $\mathcal{O}_X$ as an $\mathcal{O}_X\otimes \mathcal{O}_X$-module, we get a lift $\mathcal{P} \rightarrow \mathcal{B}(\mathcal{O}_X)$. We get an evaluation map
\begin{align*}
    \mathcal{D}_{\text{poly}}(X) \otimes \mathcal{B}(\mathcal{O}_X) \rightarrow \mathcal{O}_X
\end{align*}
coming from the fact that $\mathcal{D}_{\text{poly}}(X)$ is affine locally a subcomplex of the Hochschild complex. This lifts to a map
\begin{center}
    \begin{tikzcd}
Q\mathcal{D}_{\text{poly}}(X)\otimes \mathcal{P} \arrow[d] \arrow[r, dashed]       & \mathcal{P} \arrow[d, "\simeq", two heads] \\
\mathcal{D}_{\text{poly}}(X)\otimes \mathcal{B}(\mathcal{O}_X) \arrow[r] & \mathcal{O}_X                             
\end{tikzcd}
\end{center}
and it is clear from the proof of Theorem \ref{thm6} that this map corresponds to the evaluation map
\begin{align*}
    Q\mathcal{H}om_{\mathcal{O}_X\otimes \mathcal{O}_X}(\mathcal{P},\mathcal{P}) \otimes \mathcal{P} \rightarrow \mathcal{P}.
\end{align*}
It therefore makes $Q\mathcal{D}_{\text{poly}}(X)$ into a center of $\tilde{\mathcal{O}}_X$. In particular, this equips the sheaf of polydifferential operators with a new $\mathbb{E}_2$-algebra structure in the $\infty$-category of dg sheaves on $X$. 
\subsection{Comparison to the classical homotopy Gerstenhaber algebra structure on polydifferential operators}
For a separated quasi-compact smooth finite type scheme $X$ over $\field$, Tamarkin's proof of Deligne's Conjecture equips $\mathcal{D}_{\text{poly}}(X)$ with a $\ger_{\infty}$-algebra structure coming from the canonical $\braces$-algebra structure, and thus a Gerstenhaber algebra structure in the $\field$-linear derived 1-category on $X$. On the other hand, exhibiting $\mathcal{D}_{\text{poly}}(X)$ as a center of $\tilde{\mathcal{O}}_X$ equips it with an $\mathbb{E}_2$-algebra structure in $\text{Sh}_{\infty}(X)$, and therefore another Gerstenhaber algebra structure in the derived 1-category, which is just the homotopy category of $\text{Sh}_{\infty}(X)$. We want to compare these two Gerstenhaber algebra structures. \displaypar 
To this end, note that by Corollary \ref{cor7}, the multiplication in the center Gerstenhaber algebra structure is given equivalently by the convolution product or the composition product, and the bracket is induced by any filling of the appropriate square in the action category. \displaypar
By \cite{Y}, we have an isomorphism of sheaves
\begin{align*}
    \mathcal{D}_{\textnormal{poly}}(X) \cong \mathcal{H}\textnormal{om}^{\textnormal{cont}}_{\mathcal{O}_{X\times_{\field} X}}(\widehat{\mathcal{B}}(X),\mathcal{O}_X)
\end{align*}
where $\widehat{\mathcal{B}}_n(X) = \mathcal{O}_{\mathfrak{X}^{n+2}}$ is the complete bar complex with $\mathfrak{X}^n$ the formal completion of $X^n$ along the $n$-fold diagonal. From this presentation it is easy to compute the convolution and composition product. Note that for $U = \spec(A)\subseteq X$, we have
\begin{align*}
    \Gamma_U(\mathcal{H}\textnormal{om}^{\textnormal{cont}}_{\mathcal{O}_{X\times_{\field} X}}(\widehat{\mathcal{B}}(X),\mathcal{O}_X)) \cong \map_{A\otimes A}^{\textnormal{cont}}(\widehat{B}(A),A)
\end{align*}
for $\widehat{B}_n(A)$ the adic completion of $B_n(A)$ at the kernel of the multiplication map $B_n(A) \rightarrow A$. In particular, the flat resolution
\begin{align*}
    \widehat{\mathcal{B}}(X) \rightarrow \mathcal{O}_X
\end{align*}
admits a section $s: \mathcal{O}_X \rightarrow \widehat{\mathcal{B}}_0(X)$ that is glued together from the sections of the resolutions $B(A) \rightarrow A$. We can hence build a diagonal 
\begin{align*}
    \Delta: \widehat{\mathcal{B}}(X) \rightarrow \mathcal{O}_X \xrightarrow{\cong} \mathcal{O}_X \overset{a}{\otimes}_{\mathcal{O}_X} \mathcal{O}_X \xrightarrow{s\otimes s}  \widehat{\mathcal{B}}(X)\overset{a}{\otimes}_{\mathcal{O}_X}  \widehat{\mathcal{B}}(X).
\end{align*}
\begin{lemma}\label{lem4}
    The convolution product on $\mathcal{H}\textnormal{om}^{\textnormal{cont}}_{\mathcal{O}_{X\times_{\field} X}}(\widehat{\mathcal{B}}(X),\mathcal{O}_X)$ is homotopic to the cup product on $\mathcal{D}_{\textnormal{poly}}(X)$, which locally agrees with the classical cup product on Hochschild cochains. 
\end{lemma}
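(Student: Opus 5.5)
The plan is to compute the convolution product affine-locally and then glue. Fix an affine open $U=\spec(A)\subseteq X$. By the displayed isomorphism, sections over $U$ of $\mathcal{H}\textnormal{om}^{\textnormal{cont}}_{\mathcal{O}_{X\times_{\field}X}}(\widehat{\mathcal{B}}(X),\mathcal{O}_X)$ are $\map^{\textnormal{cont}}_{A\otimes A}(\widehat{B}(A),A)$. Continuity means such a map factors through the restriction along $B(A)\to\widehat{B}(A)$, so this complex embeds into $\map_{A\otimes A}(B(A),A)\cong\hom_{\field}(A^{\otimes\ast},A)$. Its image is exactly the polydifferential cochains $D_{\textnormal{poly}}(A)$. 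This is the content of \cite{Y}, and I will take it as the local form of the isomorphism.

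\textbf{Step 1: the affine convolution formula.} By Corollary \ref{cor8}, the convolution product of $f,g$ is the composite
\begin{align*}
\widehat{B}(A)\xrightarrow{\Delta}\widehat{B}(A)\otimes_A\widehat{B}(A)\xrightarrow{f\otimes g}A\otimes_A A\cong A,
\end{align*}
for any diagonal $\Delta$ lifting the identification $A\simeq A\otimes_A A$ through the resolution. The diagonal built from the section $s$ is one such lift. Uniqueness up to homotopy of lifts into a resolution (projectivity of $B(A)$, or topological projectivity of $\widehat{B}(A)$) lets me replace it by the standard Alexander--Whitney diagonal
\begin{align*}
\Delta_{AW}\bigl(a_0[a_1|\cdots|a_n]a_{n+1}\bigr)=\sum_{i}a_0[a_1|\cdots|a_i]\otimes 1\otimes[a_{i+1}|\cdots|a_n]a_{n+1},
\end{align*}
and the result changes only by a homotopy.

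\textbf{Step 2: compare with the cup product.} Composing $f\otimes g$ with $\Delta_{AW}$ gives exactly $(f\smile g)(a_1,\dots,a_n)=\pm f(a_1,\dots,a_p)g(a_{p+1},\dots,a_n)$, with the signs of \cite{Wit}. This is the same computation already used in the proof of Corollary \ref{cor9} for the bar resolution. If $f$ and $g$ are polydifferential, then $f\smile g$ is polydifferential and continuous for the adic topology. So the homotopy between the two diagonals descends to $D_{\textnormal{poly}}(A)$ as long as it is itself continuous. I will arrange this by writing the homotopy with the standard contracting homotopy $s_{-1}(a_0[\cdots]a_{n+1})=[a_0|\cdots]a_{n+1}$. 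This contracting homotopy preserves the augmentation-ideal filtration up to a shift, so it extends to the completion.

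\textbf{Step 3: glue.} The section $s$, the Alexander--Whitney diagonal and the contracting homotopy are all natural with respect to localizations $A\to A_h$. Hence the local homotopies are compatible with restriction and glue to a homotopy of sheaves between the convolution product and the cup product on $\mathcal{D}_{\textnormal{poly}}(X)$. Locally this cup product is the classical one by Step 2.

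The main obstacle is Step 1 together with the continuity in Step 2. The global diagonal is built from $s$ through the sheafified tensor product over $\mathcal{O}_X$. One has to check that the homotopy between $s\otimes s$-type diagonals and $\Delta_{AW}$ respects the $I$-adic topology and is compatible with restriction to smaller affines, so that it glues. The first thing I would try is to show directly that, on the image of $s$, the diagonal already coincides with $\Delta_{AW}$ in degree $0$. I would then extend to all degrees by the standard inductive lifting argument using $s_{-1}$, checking adic continuity at each degree.
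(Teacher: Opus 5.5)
Your route is the paper's: compare the diagonal $\Delta_s$ built from the section $s$ with the Alexander--Whitney diagonal $\Delta_{AW}$ by a homotopy on $B(A)$, observe that convolution with $\Delta_{AW}$ is the cup product, and glue. The paper writes down an explicit homotopy $H$, where you appeal to uniqueness of lifts. Step 1, however, has a genuine gap.

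Uniqueness of lifts up to homotopy (the comparison theorem) holds only among $A^e$-linear chain maps out of a projective $A^e$-resolution. $\Delta_s$ is not $A^e$-linear. In degree $0$ it is $a_0\otimes a_1\mapsto(1\otimes 1)\otimes_A(1\otimes a_0a_1)$, so
\[
\Delta_s(ba_0\otimes a_1)=(1\otimes 1)\otimes_A(1\otimes ba_0a_1)\neq (b\otimes 1)\otimes_A(1\otimes a_0a_1)=b\cdot\Delta_s(a_0\otimes a_1).
\]
The inductive construction with $s_{-1}$ you propose has the same defect. Since $s_{-1}$ is only right $A$-linear, the resulting homotopy composed with $f\otimes g$ is not $\mathcal{O}_{X\times_{\field}X}$-linear. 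So it is not a homotopy inside $\mathcal{H}\textnormal{om}^{\textnormal{cont}}_{\mathcal{O}_{X\times_{\field}X}}(\widehat{\mathcal{B}}(X),\mathcal{O}_X)$. The obstruction is linearity, not the adic continuity you single out.

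This cannot be patched for $\Delta_s$ itself. Since $\Delta_s$ vanishes on $\widehat{B}_n$ for $n>0$, the product $f\ast_s g$ built from it is zero as soon as $f$ or $g$ has positive degree. Take $A=\field[x]$ with $1\in\textnormal{HH}^0$ and $\partial_x\in\textnormal{HH}^1=\textnormal{Der}(A)$. Then $1\smile\partial_x=\pm\partial_x\neq 0$, but $1\ast_s\partial_x=0$. Degree-one coboundaries vanish for commutative $A$, so no homotopy in the Hochschild complex can relate $\ast_s$ and $\smile$.

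The paper's $H$ has the same problem: it places $a_0$ in an inner tensor slot, so it is not left $A$-linear. Following the paper more closely would therefore not close the gap.

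What works is to discard $\Delta_s$ entirely. The convolution product of Corollary \ref{cor8} lives in the monoidal structure on bimodules, so it is modeled by any $A^e$-linear diagonal lifting $A\simeq A\otimes_A A$, and $\Delta_{AW}$ is one. In the coordinates $B_p(A)\otimes_A B_q(A)\cong A^{\otimes(p+q+3)}$, $\Delta_{AW}$ just inserts a $1$. That is, it is pullback along a coordinate projection $X^{p+q+3}\to X^{p+q+2}$, which preserves the small diagonals. Hence it extends continuously to the completions and commutes with localization. With this diagonal, your Step 2 gives the cup product on the nose, your Step 3 glues, and any other continuous $\mathcal{O}_{X\times_{\field}X}$-linear diagonal yields a homotopic product.
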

\begin{proof}
    The diagonal is zero on $\widehat{\mathcal{B}}_n(X)$ for $n>0$, and for $n=0$ it is given locally by the formula
    \begin{align*}
        a_0 \otimes a_1 \mapsto (1\otimes 1) \otimes_A (1 \otimes a_0a_1).
    \end{align*}
    The diagonal on $B(A)$ coming from its universal property is given for $n=0$ by 
    \begin{align*}
        a_0 \otimes a_1 \mapsto (a_0 \otimes 1) \otimes_A (1\otimes a_1),
    \end{align*}
    and for $n>0$ by 
    \begin{align*}
        a_0 \otimes a_1 \otimes \dots \otimes a_n \otimes a_{n+1} \mapsto \sum_{i = 0}^n (a_0 \otimes a_1 \otimes \dots \otimes a_i \otimes 1) \otimes_A (1 \otimes a_{i+1} \otimes \dots \otimes a_n \otimes a_{n+1}).
    \end{align*}
    Locally on $B(A)$, a homotopy between these two maps is given by 
    \begin{align*}
        H: B(A) &\rightarrow (B(A) \otimes_A B(A))[1]\\
        a_0 \otimes a_1 \otimes \dots \otimes a_n \otimes a_{n+1} &\mapsto \sum_{i=0}^{n+1} (1\otimes a_0 \otimes \dots \otimes a_{i-1} \otimes 1) \otimes_A (1\otimes a_i \otimes \dots \otimes a_{n+1}).
    \end{align*}
    Recall that the convolution product of two continuous maps $f$ and $g$ is given by 
    \begin{align*}
        \widehat{\mathcal{B}}(X) \xrightarrow{\Delta}  \widehat{\mathcal{B}}(X) \overset{a}{\otimes}_{\mathcal{O}} \widehat{\mathcal{B}}(X) \xrightarrow{f\otimes g} \mathcal{O}_X\overset{a}{\otimes}_{\mathcal{O}_X} \mathcal{O}_X \rightarrow \mathcal{O}_X.
    \end{align*}
    Locally, it suffices to consider the restriction to $B(A)$. We can then use the above homotopy $H$ to obtain a homotopy between the cup product and the above formula for the convolution product with our new diagonal. Inspecting the formula for $H$ we see that these glue together to yield a global homotopy between the global convolution product and the cup product.
\end{proof}
We already know that the local circle products coming from the $\braces$-algebra structure glue together to give a homotopy for the square
\begin{center}
    \begin{tikzcd}
\mathcal{D}_{\textnormal{poly}}(X)^{\overset{a}{\otimes} 4} \arrow[d, "(\smile \otimes \smile) \circ(\textnormal{id}\otimes \tau \otimes \textnormal{id})"'] \arrow[r, "\smile \otimes \smile"] & \mathcal{D}_{\textnormal{poly}}(X)^{\overset{a}{\otimes} 2} \arrow[d, "\smile"] \\
\mathcal{D}_{\textnormal{poly}}(X)^{\overset{a}{\otimes} 2} \arrow[r, "\smile"']                                                                                                                & \mathcal{D}_{\textnormal{poly}}(X)                                
\end{tikzcd}
\end{center}
analogous to the affine case. This implies the following:
\begin{corollary}\label{thm9}
    The Gerstenhaber structure on $\mathcal{D}_{\textnormal{poly}}(X)$ in the $\field$-linear derived 1-category coming from the center is isomorphic to the signed classical one coming from the $\braces$-algebra structure.
\end{corollary}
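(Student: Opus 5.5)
We will mirror the affine argument of Corollary \ref{cor9}, but carried out at the level of the homotopy category of $\text{Sh}_{\infty}(X)$. By Theorem \ref{thm6} and the discussion following it, $Q\mathcal{D}_{\textnormal{poly}}(X)$ together with the lifted evaluation map is a center of $\tilde{\mathcal{O}}_X$. Corollary \ref{cor7} therefore identifies the two multiplications of the resulting 2-algebra with the composition and convolution products. It also says that the compatibility square has a contractible space of fillings in the action category $\mathcal{E}_{\mathfrak{a}}$. Because of this contractibility, it suffices to exhibit one concrete chain-level filling and then read off the bracket from it.

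First we identify the product. The convenient model is the continuous Hom presentation $\mathcal{H}\textnormal{om}^{\textnormal{cont}}_{\mathcal{O}_{X\times_{\field}X}}(\widehat{\mathcal{B}}(X),\mathcal{O}_X)$ from \cite{Y}. By Lemma \ref{lem4}, the convolution product in this model is homotopic to the cup product $\smile$ on $\mathcal{D}_{\textnormal{poly}}(X)$. Composition with the diagonal induces the same product up to homotopy, so the product in the derived category is the classical signed cup product. To be sure the convolution/composition description computed in this model really is the one carried by the center, we compare the evaluation maps: the lift $\mathcal{P}\to\mathcal{B}(\mathcal{O}_X)$ and the completion $\mathcal{B}\to\widehat{\mathcal{B}}$ are compatible with evaluation. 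With that compatibility, Theorem \ref{thm4}(2) transports the structure between the models.

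Next we produce the filling and compute the bracket. On each affine open $U=\spec(A)$, the braces formula $h(f_1\otimes g_1\otimes f_2\otimes g_2)=\pm f_1\smile f_2\{g_1\}\smile g_2$ from the proof of Corollary \ref{cor9} fills the square. Circle products of polydifferential operators are again polydifferential and are compatible with restriction, so these local fillings glue to a filling $h$ on $\mathcal{D}_{\textnormal{poly}}(X)$. In the same way we obtain $h^{\textnormal{op}}$ for the opposite multiplications. Corollary \ref{cor5} applies in the symmetric monoidal dg model category $\dgpsh{X}$, and it gives the bracket as the whiskered sum $h\iota_{2,3}+h^{\textnormal{op}}\iota_{1,4}+h^{\textnormal{op}}\iota_{2,3}+h\iota_{1,4}$. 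The sign computation from Corollary \ref{cor9} then goes through verbatim sectionwise and yields $(-1)^{|f|+1}[f,g]_G$. Finally, Corollary \ref{cor10} passes from the $\infty$-categorical $\mathbb{E}_2$-structure to a strictification, so the comparison becomes one of Gerstenhaber algebras in the $\field$-linear derived 1-category.

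The main obstacle is that Corollary \ref{cor7} fixes the filling only in the action category $\mathcal{E}_{\mathfrak{a}}$, which is built from the evaluation map against the bifibrant model $\mathcal{P}$. The glued braces homotopy, by contrast, lives on $\mathcal{D}_{\textnormal{poly}}(X)$ acting on $\mathcal{B}(\mathcal{O}_X)$ or $\widehat{\mathcal{B}}(X)$. To bridge this, we first transport $h$ through the lift square defining $Q\mathcal{D}_{\textnormal{poly}}(X)\otimes\mathcal{P}\to\mathcal{P}$. Second, we show that it becomes a 2-simplex over the identity on $\mathcal{P}$, which amounts to checking that the evaluation of $h$ is null-homotopic compatibly with evaluation. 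Since the objects are final in $\mathcal{E}_{\mathfrak{a}}$, any such compatible filling is equivalent to the canonical one. The remaining verification, that the Lemma \ref{lem4} homotopy does not alter the bracket class, holds because changing a filling by a homotopy of its boundary changes $\tilde{A}(\gamma_0)$ only by a boundary.
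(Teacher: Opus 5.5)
Your proposal follows essentially the same route as the paper: Lemma \ref{lem4} identifies the center product with the signed cup product, and the glued local circle-product homotopies $h$ and $h^{\textnormal{op}}$ fill the interchange square, so Corollary \ref{cor7} combined with Corollary \ref{cor5} and the sign computation from Corollary \ref{cor9} gives the classical bracket. The paper leaves implicit the extra checks you flag, namely transporting between the $\mathcal{P}$, $\mathcal{B}(\mathcal{O}_X)$ and $\widehat{\mathcal{B}}(X)$ models, and verifying that $h$ is a filling in $\mathcal{E}_{\mathfrak{a}}$ and not merely in $\textnormal{Sh}_{\infty}(X)$ (a filling that exists only in $\textnormal{Sh}_{\infty}(X)$ could differ from the canonical one by a degree-one cycle and change the bracket); it simply invokes the contractibility from Corollary \ref{cor7}, so your version is if anything more careful, though the appeal to Corollary \ref{cor10} is unnecessary because the Gerstenhaber structure in the homotopy category is read off directly from the $\mathbb{E}_2$-algebra.
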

\begin{proof}
   By the above Lemma \ref{lem4}, the center product is given by the usual signed cup product in the derived 1-category. Again using Corollary \ref{cor7} and the fact that the global circle product induces a homotopy
   \begin{align*}
       h: \mathcal{D}_{\text{poly}}(X)^{\overset{a}{\otimes} 4} &\rightarrow \mathcal{D}_{\text{poly}}(X)[1]\\
       \mathcal{D}_{\text{poly}}(\spec(A))^{\otimes 4} \ni f_1 \otimes g_1 \otimes f_2 \otimes g_2 &\mapsto (-1)^{|f_1|+|f_2|+|f_2||g_1|-1} f_1 \smile f_2\{g_1\} \smile g_2
   \end{align*}
   in the above square, we see that the bracket determined by Corollary \ref{cor5} of the center Gerstenhaber algebra structure is equal to the classical Gerstenhaber bracket on polydifferential operators.
\end{proof}

\newpage
\setcounter{theorem}{0}
\chapter{Formal moduli problems and deformations}\label{formal_moduli_problems_and_deformations}
As discussed in Section \ref{Kontsevich_formality_morphism}, the complex of polyvector fields and the Hochschild cochain complex control deformations of Poisson algebras and associative algebras respectively. This yields a universal definition of Lie algebra structure on the (shifted) Hochschild cochain complex of an associative $\field$-algebra $A$, namely as its deformation complex over the associative operad. In this chapter, we explore the connection between the center of an operadic algebra and its deformation theory. \displaypar
We start by recalling some general theory on formal moduli problems and the deformation theory of operadic algebras. In addition to the theory of classical formal moduli problems developed by Hinich, Pridham, Lurie, D. Gaitsgory and N. Rozenblyum, there is a theory of operadic formal moduli problems, which use noncommutative test spaces. This was developed for the $\mathbb{E}_n$-operads by Lurie in \cite{DAGX}, and for Koszul operads by Calaque, Campos, and Nuiten in \cite{CCN}. \displaypar
In \cite{DAGX}, Lurie proves that the deformation problem of the right module category over an $\mathbb{E}_1$-algebra $A$ lifts to an $\mathbb{E}_2$-formal moduli problem, which is controlled by the center of $A$. This recovers a result by J. Francis, who showed that the non-unital $\mathbb{E}_2$-algebra structure on fiber of the map $\mathfrak{Z}(A) \rightarrow A$ lifts the Lie algebra structure on the operadic tangent complex of $A$. \displaypar
The goal of this chapter is to prove an analogous result for algebras over a Koszul operad $\mathscr{P}$. This is done in Section \ref{the_formal_automorphism_group}. To this end, we define an internal formal automorphism group of a $\mathscr{P}$-algebra $A$, and we show that it lifts to an operadic formal moduli problem over the Koszul dual operad $\mathscr{P}^!$. The main result of this chapter is Theorem \ref{thm21}, which shows that this operadic formal moduli problem is indeed controlled by the center of $A$. In this context, we interpret the additional $\mathbb{E}_1$-algebra structure on the center as corresponding to the composition operation in the internal infinitesimal automorphism group. \displaypar
We work with a different model of homotopy operads, namely \textbf{dg operads}\footnote{Note that we assume no constraints on 0 and 1 ary operations for general dg operads.}. We follow the conventions of \cite{LV}. For a dg operad $\mathscr{P}$ and a symmetric monoidal dg category $C$, we denote by $\alg_{\mathscr{P}}(C)$ the 1-category of $\mathscr{P}$-algebras in $C$. Similarly, if $\mathcal{C}$ is a symmetric monoidal dg $\infty$-category\footnote{A dg $\infty$-category is a presentable $\field$-linear $\infty$-category.}, we denote by $\alg_{\mathscr{P}}(\mathcal{C})$ the $\infty$-category of $\mathscr{P}$-algebras in $\mathcal{C}$. Similar to the Rectification Theorem \ref{thm3}, there also exists a rectification theorem for dg operads. If $C$ is a symmetric monoidal dg model category, we call a dg operad $\mathscr{P}$ admissible in $\mathcal{C}$ if the transfer model structure on $\alg_{\mathscr{P}}(C)$ exists. In this case, under certain technical conditions (see \cite[Theorem 4.10]{Hau}), there is an equivalence of $\infty$-categories
\begin{align*}
    \alg_{\mathscr{P}}(C)^c[W^{-1}] \xrightarrow{\simeq} \alg_{\mathscr{P}}(C[W^{-1}]) \simeq \alg_{\mathscr{P}}(N_{\text{dg}}(C^{\circ})).
\end{align*}
This Rectification Theorem holds in particular for $C= \ch(\field)$, i.e. for any dg operad $\mathscr{P}$, we have an equivalence of $\infty$-categories $\alg_{\mathscr{P}}(\ch(\field))[W^{-1}] \simeq \alg_{\mathscr{P}}(\dk)$. For more information about dg $\infty$-operads, see \cite{Hau}, \cite[Chapter 6, Section 1]{GR2}, and \cite{CHa}.\displaypar
For a dg cooperad $\mathscr{C}$, and a symmetric monoidal dg category $C$, denote by $\coalg_{\mathscr{C}}(C)$ the 1-category of $\mathscr{C}$-coalgebras, and by $\ccoalg_{\mathscr{C}}(C)$ the 1-category of conilpotent $\mathscr{C}$-coalgebras in $C$. For a symmetric monoidal dg $\infty$-category $\mathcal{C}$, denote by $\coalg_{\mathscr{C}}(\mathcal{C})$ the $\infty$-category of $\mathscr{C}$-coalgebras in $\mathcal{C}$. \displaypar
We denote by $B\mathscr{P}$ the Bar construction of an augmented dg operad $\mathscr{P}$, and by $\Omega \mathscr{C}$ the Cobar construction of a coaugmented dg cooperad $\mathscr{C}$. If $\alpha: \Omega\mathscr{C} \rightarrow \mathscr{P}$ is a twisting morphism, we denote the corresponding Bar functor on $\mathscr{P}$-algebras by $\obar_{\alpha}$, and the Cobar functor on (conilpotent) $\mathscr{C}$-coalgebras by $\cobar_{\alpha}$. If $\alpha$ is the canonical twisting morphism of a Koszul operad, we denote these functors simply by $\obar$ and $\cobar$ respectively. \displaypar
Recall that by our convention, Koszul operads are assumed to be binary quadratic, and hence satisfy $\mathscr{P}(0) = 0$ and $\mathscr{P}(1) = \field$. 
\section{Deformation problems from operadic algebras}\label{deformation_problems_from_operadic_algebras}
We start by recalling the deformation theory of algebras over a dg operad. We loosely follow the exposition in \cite{CG}. 
\begin{definition}
A \textbf{dg Artin $\field$-algebra} is an augmented commutative algebra $R\rightarrow \field$ in $\ch(\field)$ such that
\begin{compactitem}
    \item For all $n$, $H^n(R)$ is finite-dimensional.
    \item For $n>0$ and $n\ll 0$, $H^n(R) = 0$.
    \item $H^0(R)$ is an Artin local $\field$-algebra.
\end{compactitem}
We denote the $\infty$-category of dg Artin $\field$-algebras by $\alg_{\dgcomm}^{\text{Art}}$. We call a dg Artin $\field$-algebra \textbf{strict} if its augmentation ideal is nilpotent and finite-dimensional on the complex level. 
\end{definition}
Let $\mathscr{P}$ be a dg operad, and let $\phi: \mathscr{P} \rightarrow \e_A$ be a $\mathscr{P}$-algebra in $\ch(\field)$. A $\mathscr{P}$\textbf{-algebra deformation} of $\phi$ over a dg Artin $\field$-algebra $R$ is a $\mathscr{P}$-algebra structure on $R \otimes A \in \text{Mod}_R(\ch(\field))$ together with an equivalence of $\mathscr{P}$-algebras $\field \otimes_R (R\otimes A) \simeq A$ in $\ch(\field)$. This yields a functor, the \textbf{deformation functor} associated to $\phi$, 
\begin{align*}
    \widetilde{\defm}_{\phi}&: \alg_{\dgcomm}^{\text{Art}} \rightarrow \mathscr{S} \\
    R &\mapsto \map_{\text{dg}\mathcal{O}p}(\mathscr{P}, \e_A \otimes R) \times_{\map_{\text{dg}\mathcal{O}p}(\mathscr{P},\e_A)} \{\phi\}
\end{align*}
from the $\infty$-category of dg Artin $\field$-algebras to the $\infty$-category of spaces. Such a deformation functor is the prototypical example of a \textbf{formal moduli problem}. \displaypar
The general slogan, which was recognized by Deligne and Drinfeld in the 80s, is that in characteristic 0, such a formal moduli problem should be controlled by a dg Lie algebra. One way to understand this is to view a deformation functor as the formal neighborhood of a point in a moduli space. Then the corresponding dg Lie algebra is the (shifted) tangent space of that moduli space at this point. In the above example, this is the moduli space ${\mathcal{M}}^A_{\mathscr{P}}$ of $\mathscr{P}$-algebra structures on $A$, and we consider the formal neighborhood of the point $\phi$.\displaypar
If $\mathscr{P}$ is of the form $\Omega \mathscr{C}$ for a coaugmented dg cooperad $\mathcal{C}$, one can easily describe the dg Lie algebra corresponding to this deformation functor $\widetilde{\defm}_{\phi}$. To this end, consider the \textbf{convolution Lie algebra}
\begin{align*}
    \mathfrak{g}_{\mathscr{P}, A} := \text{Conv}(\bar{\mathscr{C}}, \e_A) \cong \prod_{n\geq 1} \map_{\Sigma_n}(\bar{\mathscr{C}}(n),\e_A(n))
\end{align*}
with $\bar{\mathscr{C}}$ the cokernel of the coaugmentation of $\mathscr{C}$. This is equipped with the Lie bracket coming from the convolution pre-Lie structure, see \cite[Proposition 6.4.3]{LV}. Then the theory of twisting morphisms shows (see for example \cite[Proposition 10.1.1]{LV}) that a $\mathscr{P}$-algebra structure on $A$ is equivalently given by a Maurer-Cartan element in $\mathfrak{g}_{\mathscr{P},A}$. 
\begin{definition}
Let $A\in \ch(\field)$, $\mathscr{P} = \Omega \mathscr{C}$, and $\phi \in \text{MC}(\mathfrak{g}_{\mathscr{P},A})$. The \textbf{deformation complex} of $\phi$ is the dg Lie algebra $\mathfrak{g}_{\mathscr{P},A}^{\phi}$ obtained by twisting the differential of the convolution Lie algebra by the Maurer-Cartan element $\phi$.
\end{definition}
The deformation complex of $\phi$ is the dg Lie algebra corresponding to the formal moduli problem of deforming $\phi$ in ${\mathcal{M}}^A_{\mathscr{P}}$ in the following sense.
\begin{proposition}[Proposition 3.9 \cite{CCN}]
Suppose that $R$ is a strict dg Artin $\field$-algebra with augmentation ideal $\mathfrak{m}_R$. Then 
\begin{align*}
    \widetilde{\defm}_{\phi}(R) \simeq \mathbf{MC}_{\bullet}(\mathfrak{g}_{\mathscr{P},A}^{\phi} \otimes \mathfrak{m}_R),
\end{align*}
where $\mathbf{MC}_{\bullet}$ denotes the Kan complex of Maurer-Cartan elements as defined in \cite[Definition 8.1.1]{Hin3}.
\end{proposition}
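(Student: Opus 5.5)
We would compare both sides on $R$-points by reducing the deformation functor to a mapping space out of the cobar construction. Since $\mathscr{P} = \Omega\mathscr{C}$ is quasi-free, it is cofibrant, and for a strict dg Artin algebra $R$ the operad $\e_A\otimes R$ is an honest dg operad, so $\map_{\text{dg}\mathcal{O}p}(\mathscr{P},\e_A\otimes R)$ can be computed by the simplicial mapping space
\begin{align*}
  [k] \mapsto \hom_{\text{dg}\mathcal{O}p}(\Omega\mathscr{C}, \e_A\otimes R\otimes \Omega^{\ast}(\Delta^k)),
\end{align*}
using polynomial de Rham forms $\Omega^{\ast}(\Delta^k)$ as a simplicial path object. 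This step only requires knowing that $\e_A\otimes R\otimes\Omega^\ast(\Delta^\bullet)$ gives a simplicial framing in the model category of dg operads, which holds in characteristic $0$.

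Next we apply the universal property of the cobar construction (\cite[Proposition 6.5.7]{LV} and \cite[Proposition 10.1.1]{LV}). An operad morphism $\Omega\mathscr{C}\to\mathscr{Q}$ is the same as a twisting morphism $\bar{\mathscr{C}}\to\mathscr{Q}$, that is, a Maurer--Cartan element of $\text{Conv}(\bar{\mathscr{C}},\mathscr{Q})$. For $\mathscr{Q}=\e_A\otimes R\otimes\Omega^\ast(\Delta^k)$ we have
\begin{align*}
  \text{Conv}(\bar{\mathscr{C}},\mathscr{Q}) \cong \mathfrak{g}_{\mathscr{P},A}\otimes R\otimes \Omega^\ast(\Delta^k),
\end{align*}
because $R$ is finite-dimensional on the complex level, so the tensor product commutes with $\prod_n$. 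The $k$-simplices of the mapping space are therefore $\text{MC}(\mathfrak{g}_{\mathscr{P},A}\otimes R\otimes\Omega^\ast(\Delta^k))$.

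We then take the fiber over $\phi$. The augmentation $R\to\field$ induces the map to $\map(\mathscr{P},\e_A)$. Writing $R=\field\oplus\mathfrak{m}_R$, an MC element lying over $\phi$ has the form $\phi+x$ with $x\in\mathfrak{g}\otimes\mathfrak{m}_R\otimes\Omega^\ast(\Delta^k)$. The MC equation for $\phi+x$ is equivalent to the twisted MC equation for $x$ in $\mathfrak{g}^{\phi}_{\mathscr{P},A}\otimes\mathfrak{m}_R\otimes\Omega^\ast(\Delta^k)$. This identifies the strict fiber with $\mathbf{MC}_\bullet(\mathfrak{g}^\phi_{\mathscr{P},A}\otimes\mathfrak{m}_R)$ in the sense of \cite{Hin3}.

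The main obstacle is that the definition uses a homotopy fiber, so we must show the strict fiber computes it. Here we would argue that the map induced by $R\to\field$ is a Kan fibration. It is induced by the surjection $\mathfrak{g}\otimes R\otimes\Omega^\ast(\Delta^\bullet)\to\mathfrak{g}\otimes\Omega^\ast(\Delta^\bullet)$. Since $\mathfrak{m}_R$ is nilpotent, we can filter by powers of $\mathfrak{m}_R$ and use the standard result that MC spaces send surjections of complete (here nilpotent) dg Lie algebras to Kan fibrations (Hinich, Getzler). Equivalently, $\e_A\otimes R\to\e_A$ is a fibration of operads between fibrant objects, so the strict and homotopy fibers agree. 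The fiber over the vertex $\phi$ is then the required simplicial set, which gives the claimed equivalence.
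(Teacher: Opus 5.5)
Your overall route is the standard one, and most steps are fine: the polynomial-forms frame $\e_A\otimes R\otimes\Omega^{\ast}(\Delta^{\bullet})$, the universal property of $\Omega\mathscr{C}$, the substitution $\phi+x$, and the strict-versus-homotopy-fiber argument via the Reedy fibration $\e_A\otimes R\otimes\Omega^{\ast}(\Delta^{\bullet})\to\e_A\otimes\Omega^{\ast}(\Delta^{\bullet})$. Two small corrections: your first justification via Hinich/Getzler does not apply as stated, since $\mathfrak{g}_{\mathscr{P},A}$ itself is not nilpotent; and cofibrancy of $\Omega\mathscr{C}$ comes from conilpotency of $\mathscr{C}$, not from quasi-freeness alone.

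The genuine gap is the claimed isomorphism
\[
\text{Conv}(\bar{\mathscr{C}},\e_A\otimes R\otimes\Omega^{\ast}(\Delta^k))\cong\mathfrak{g}_{\mathscr{P},A}\otimes R\otimes\Omega^{\ast}(\Delta^k).
\]
Finite-dimensionality of $R$ lets you pull $R$ out of $\prod_n$ and out of $\hom_{\Sigma_n}(\bar{\mathscr{C}}(n),-)$. It does nothing for $\Omega^{\ast}(\Delta^k)$, whose degree-$0$ part is a polynomial ring for $k\geq 1$. For example, take $\mathscr{P}=\mathbb{A}_{\infty}$ and $A$ an unbounded complex. A family $(f_n\otimes t^n)_n$, with $f_n$ of fixed degree in arity $n$, lies in the left-hand side but not in the right-hand side. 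So your computation identifies $\widetilde{\defm}_{\phi}(R)$ with
\[
[k]\mapsto\text{MC}\bigl(\text{Conv}(\bar{\mathscr{C}},\e_A\otimes\Omega^{\ast}(\Delta^k))^{\phi}\otimes\mathfrak{m}_R\bigr).
\]
This is an arity-completed Maurer--Cartan space. It contains Hinich's $\mathbf{MC}_{\bullet}(\mathfrak{g}^{\phi}_{\mathscr{P},A}\otimes\mathfrak{m}_R)$ as a simplicial subset. The two agree only under extra finiteness hypotheses, for instance each $\bar{\mathscr{C}}(n)$ finite-dimensional and only finitely many arities contributing in each degree, as for $\mathbb{A}_{\infty}$ with $A$ concentrated in degree $0$.

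What is missing is a proof that this inclusion is a weak equivalence. Here is one way to supply it.
\begin{enumerate}
\item At $k=0$ the two constructions coincide, so the two simplicial sets have the same vertices.
\item Factor $R\to\field$ through the extensions $R/\mathfrak{m}_R^{p+1}\to R/\mathfrak{m}_R^{p}$. Their kernels $I$ are dg ideals with $\mathfrak{m}_R I=0$.
\item Both constructions send these extensions to Kan fibrations. Over a vertex, the two fibers are empty simultaneously, since they have the same vertices.
\item When nonempty, the fibers are affine simplicial spaces on the degree-$1$ cocycles of $\mathfrak{g}^{\phi}_{\mathscr{P},A}\otimes I\otimes\Omega^{\ast}(\Delta^{\bullet})$ and of $\text{Conv}(\bar{\mathscr{C}},\e_A\otimes\Omega^{\ast}(\Delta^{\bullet}))^{\phi}\otimes I$ respectively. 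The bracket with a lift of the base vertex only sees $\phi$, because $\mathfrak{m}_R I=0$.
\item Dupont's contraction of $\Omega^{\ast}(\Delta^{\bullet})$ onto the finite-dimensional simplicial cochains $C^{\ast}(\Delta^{\bullet})$ is natural and linear in the form variable. So it passes through $\hom_{\Sigma_n}(\bar{\mathscr{C}}(n),\e_A(n)\otimes I\otimes -)$ and $\prod_n$, and commutes with $[\phi,-]$. Both sides therefore retract onto the same complex $\mathfrak{g}^{\phi}_{\mathscr{P},A}\otimes I\otimes C^{\ast}(\Delta^{\bullet})$.
\item Hence both fibers have $\pi_i\cong H^{1-i}(\mathfrak{g}^{\phi}_{\mathscr{P},A}\otimes I)$, compatibly with the inclusion.
\end{enumerate}
Induction on the nilpotence length of $\mathfrak{m}_R$ then gives the claimed equivalence.
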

\begin{example}\label{ex2}
Consider the non-unital associative dg operad $\nuassoc$. This is a Koszul operad with Koszul dual cooperad given by the shifted non-counital coassociative dg cooperad $\nccoassoc\{1\}$. Since $\nccoassoc \{1\}(1) \simeq \field$, it is canonically coaugmented, and taking this as the cooperad $\mathcal{C}$ in the above construction yields the dg operad for non-unital homotopy associative algebras $\mathscr{P} = \Omega \nccoassoc \{1\} = \mathbb{A}_{\infty}$. Then
\begin{align*}
    \mathfrak{g}_{\mathbb{A}_{\infty},A} &\cong \prod_{n\geq 1} \map_{\Sigma_n}(\overline{\nccoassoc \{1\}}(n), \e_A(n)) \\&\cong \prod_{n\geq 2} \map_{\Sigma_n}(\field[\Sigma_n][1-n], \e_A(n)) \\&\cong \prod_{n\geq 2} \map_{\field}(A^{\otimes n}, A)[n-1].
\end{align*}
If $\phi: \nuassoc \rightarrow \e_A$ is an associative algebra structure, and $A$ is concentrated in degree 0, then as a graded $\field$-vector space this is a shifted and truncated version of the Hochschild cochain complex of $A$. The differential of the twisted complex $\mathfrak{g}_{\mathbb{A}_{\infty},A}^{\phi}$ is then precisely the differential on the (truncated, shifted) Hochschild cochain complex of $A$. The fact that the Hochschild cochain complex controls deformations of an associative algebra was recognized by Gerstenhaber in \cite{G2}, where he first developed algebraic deformation theory.
\end{example}
There is a second, closely related, deformation functor associated to a $\mathscr{P}$-algebra structure $\phi$ on $A$. Note that in the definition of a $\mathscr{P}$-algebra deformation of $\phi$, we fixed the underlying complex of the $\mathscr{P}$-algebra over the dg Artin $\field$-algebra $R$ to be $R \otimes A$. If we also allow deformations of this underlying complex, we arrive at \textbf{geometric} $\mathscr{P}$\textbf{-algebra deformations} of $\phi$. This yields a new deformation functor 
\begin{align*}
    {\defm}_{\phi}&: \alg_{\dgcomm}^{\text{Art}} \rightarrow \mathscr{S} \\
    R &\mapsto \alg_{\mathscr{P}}(\text{Mod}_R(\dk)) \times_{\alg_{\mathscr{P}}(\dk)} \{\phi\}.
\end{align*}
This is the formal moduli problem corresponding to the formal neighborhood of the moduli space $\mathcal{M}_{\mathscr{P}}$ of $\mathscr{P}$-algebras. If $\mathscr{P}$ and $A$ are connective, Hinich showed in \cite{Hin4} that the corresponding dg Lie algebra of this formal moduli problem is the operadic tangent complex of $\phi$. 
\begin{definition}
Let $\mathscr{P}$ be a dg operad and $\phi: \mathscr{P} \rightarrow \e_A$ a $\mathscr{P}$-algebra. The \textbf{operadic tangent complex} of $\phi$ is the dg Lie algebra of derived $\mathscr{P}$-derivations 
\begin{align*}
    \mathbb{T}_{\phi}^{\mathscr{P}} := \mathbb{R}\text{Der}^{\mathscr{P}}(A,A).
\end{align*}
\end{definition}
If $\mathscr{P}$ is Koszul, and $\mathscr{C} = \mathscr{P}^{\text{!`}}$ its Koszul dual cooperad, we have a cofibrant replacement of a $\mathscr{P}$-algebra $\phi$ on $A$ given by the counit of the operadic Bar-Cobar adjunction with respect to the canonical twisting morphism
\begin{align*}
    \epsilon: \cobar(\obar A) \xrightarrow{\simeq} A.
\end{align*}
In this case, we can compute the operadic tangent complex as
\begin{align*}
    \mathbb{T}_{\phi}^{\mathscr{P}} &\simeq \text{Der}^{\mathscr{P}}(\cobar(\obar A), \cobar(\obar A)) \\&\cong \map_{\field}(\obar A, \cobar(\obar A)) \\&\xrightarrow[\simeq]{\epsilon_{\ast}} \map_{\field}(\obar A,A) \\&\cong \left(\prod_{n\geq 1} \map_{\field}(\mathscr{P}^{\text{!`}}(A), A)\right)^{\phi} \\& \cong \text{Conv}(\mathscr{P}^{\text{!`}}, \e_A)^{\phi}.
\end{align*}
We readily see that this agrees with the deformation complex $\mathfrak{g}_{\mathscr{P}_{\infty},A}^{\phi}$ up to the arity 1 component of $\mathscr{P}^{\text{!`}}$. This is precisely the component of the dg Lie algebra controlling deformations of the underlying complex. In particular 
\begin{proposition}[Proposition 4.18, \cite{CG}]
    There is a homotopy fiber sequence of dg Lie algebras
    \begin{align*}
        \mathfrak{g}_{\mathscr{P}_{\infty},A}^{\phi} \rightarrow \mathbb{T}_{\phi}^{\mathscr{P}} \rightarrow \map_{\field}(A,A).
    \end{align*}
\end{proposition}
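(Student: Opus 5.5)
The fiber sequence is obtained by realizing all three terms inside the computation of $\mathbb{T}^{\mathscr{P}}_\phi$ given just above, and splitting off the arity $1$ part. Write $\mathscr{C}=\mathscr{P}^{\text{!`}}$ and use the identification
\begin{align*}
\mathbb{T}^{\mathscr{P}}_\phi \simeq \text{Conv}(\mathscr{C},\e_A)^{\phi}=\prod_{n\geq 1}\map_{\Sigma_n}(\mathscr{C}(n),\e_A(n))
\end{align*}
with differential twisted by $\phi$. Since $\mathscr{P}$ is Koszul, $\mathscr{C}(1)=\field$ is the coaugmentation, so as graded objects
\begin{align*}
\text{Conv}(\mathscr{C},\e_A)\cong \map_\field(A,A)\times \text{Conv}(\bar{\mathscr{C}},\e_A).
\end{align*}
The second factor is, by definition, $\mathfrak{g}_{\mathscr{P}_\infty,A}$, with $\mathscr{P}_\infty=\Omega\mathscr{C}$.

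First I check that the inclusion $\text{Conv}(\bar{\mathscr{C}},\e_A)^\phi\hookrightarrow\text{Conv}(\mathscr{C},\e_A)^\phi$ is a map of dg Lie algebras. The convolution pre-Lie product $f\star g$ is computed through the decomposition map of $\mathscr{C}$. If $f$ and $g$ both vanish on $\mathscr{C}(1)$, then $f\star g$ vanishes on $\mathscr{C}(1)=\field$, because the only decomposition of the identity is into identities. The twisted differential $d+[\phi,-]$ preserves this subspace for the same reason, since $\phi$ itself vanishes on $\mathscr{C}(1)$.

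Next I define the projection $p$ to arity $1$ by restricting along $\field=\mathscr{C}(1)\to\mathscr{C}$, which lands in $\map_\field(A,A)=\e_A(1)$. For $p$ to be a Lie map, the arity $1$ component of $f\star g$ must depend only on the arity $1$ components of $f$ and $g$, where it is composition. This holds because the only decomposition of an element of $\mathscr{C}(1)$ in the infinitesimal decomposition is $\mathrm{id}\circ\mathrm{id}$. The twisting term $[\phi,-]$ contributes nothing in arity $1$, because $\phi$ lives in arities $\geq 2$ (the Koszul operad is binary). Hence the twisted differential in arity $1$ is the internal differential, and $p$ lands in $\map_\field(A,A)$ with its commutator bracket and internal differential.

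The sequence is then strictly exact, with $\ker p=\mathfrak{g}^{\phi}_{\mathscr{P}_\infty,A}$, and $p$ is degreewise surjective since it is a projection onto a product factor. A surjection of dg Lie algebras is a fibration in the projective model structure, so the strict fiber is a homotopy fiber.

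The main obstacle is transporting the sequence across the equivalence $\mathbb{T}^{\mathscr{P}}_\phi\simeq \text{Conv}(\mathscr{C},\e_A)^\phi$ as \emph{dg Lie algebras}, not just as complexes. On the derivation side the bracket is the commutator of derivations of $\cobar(\obar A)$. The map $\epsilon_*$ is only a quasi-isomorphism of complexes. To handle this I would argue that the isomorphism $\text{Der}^{\mathscr{P}}(\cobar\obar A,\cobar\obar A)\cong\map(\obar A,\cobar\obar A)$ carries commutators to the convolution bracket under the twisting-morphism formalism, following \cite[Section 6.4]{LV} and \cite[Proposition 4.18]{CG}. Failing a strict statement, I would instead realize the convolution side as a sub dg Lie algebra of coderivations of $\obar A$, namely $\mathrm{Coder}(\mathscr{C}(A))^{\phi}\cong \text{Conv}(\mathscr{C},\e_A)^\phi$. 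There the bracket is a genuine commutator, and restriction to cogenerators gives the arity $1$ projection.
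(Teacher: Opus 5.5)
Your strategy is the paper's. The paper reads the statement off the identification $\mathbb{T}^{\mathscr{P}}_\phi\simeq\text{Conv}(\mathscr{P}^{\text{!`}},\e_A)^{\phi}$ and the observation that this differs from $\mathfrak{g}^{\phi}_{\mathscr{P}_\infty,A}$ exactly by the arity-one summand $\map_\field(A,A)$, and it defers the rest to \cite{CG}. Your checks on the convolution side are correct. Because $\mathscr{C}(0)=0$, $\bar{\mathscr{C}}(1)=0$ and $\phi$ lives in arity $2$, the only infinitesimal decomposition in arity one is $\mathrm{id}\otimes\mathrm{id}$. Hence $\text{Conv}(\bar{\mathscr{C}},\e_A)^\phi$ is a dg Lie subalgebra, and the arity-one projection is a dg Lie map onto $\map_\field(A,A)$ with commutator bracket and untwisted differential. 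Since that projection is surjective, its kernel computes the homotopy fiber.

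The incomplete point is the one you flag yourself. The paper's chain passes through $\epsilon_*\colon\map_\field(\obar A,\cobar\obar A)\to\map_\field(\obar A,A)$, so it only identifies $\mathbb{T}^{\mathscr{P}}_\phi$ with the convolution algebra as a complex. Your first suggestion does not repair this, for two reasons:
\begin{itemize}
\item $\map_\field(\obar A,\cobar\obar A)$ carries no convolution bracket.
\item $\epsilon_*$ does not intertwine the commutator of arbitrary derivations with the convolution bracket: on a generator $x$, the element $\epsilon\bigl([\theta_1,\theta_2](x)\bigr)$ depends on all of $\theta_2(x)\in\cobar\obar A$, not only on $\epsilon\theta_2(x)$.
\end{itemize}

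The coderivation route works, but it needs a link you have not stated: a dg Lie quasi-isomorphism from coderivations to derivations. Send a coderivation $D$ of $\obar A=\mathscr{C}(A)$ to the unique derivation $\tilde D$ of the quasi-free algebra $\cobar\obar A$ that agrees with $D$ on the generators $\obar A$.
\begin{itemize}
\item The coderivation identity gives $[d_{\cobar\obar A},\tilde D]=\widetilde{[d_{\obar A},D]}$.
\item Uniqueness of extensions gives $[\tilde D_1,\tilde D_2]=\widetilde{[D_1,D_2]}$.
\end{itemize}
So $D\mapsto\tilde D$ is a map of dg Lie algebras $\text{Coder}(\obar A)\to\text{Der}^{\mathscr{P}}(\cobar\obar A,\cobar\obar A)$. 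On generators, $\epsilon$ is the projection $\mathscr{C}(A)\to A$, so the composite of this map with $\epsilon_*$ is the isomorphism $\text{Coder}(\obar A)\cong\map_\field(\obar A,A)$. Since $\epsilon_*$ is a quasi-isomorphism, $D\mapsto\tilde D$ is one too, by two-out-of-three. Therefore $\text{Conv}(\mathscr{C},\e_A)^\phi\cong\text{Coder}(\obar A)$ is a dg Lie model of $\mathbb{T}^{\mathscr{P}}_\phi$, and your strict short exact sequence then gives the homotopy fiber sequence.
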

As we have seen in Example \ref{ex2}, the operadic tangent complex of an associative $\field$-algebra $A$ recovers $C^{\ast}(A,A)[1]$ up to the degree -1 component. One can easily show that this is compatible with the dg Lie algebra structures on these complexes. By Corollary \ref{cor9}, the dg Lie structure on the Hochschild complex can, at least up to non-canonical quasi-isomorphism, be obtained from the $C_{\ast}(\mathbb{E}_2)$-algebra structure on the center of $A$.  
\section{Formal moduli problems and operadic formal moduli problems}\label{formal_moduli_problems_and_operadic_formal_moduli_problems}
Given a derived scheme $X$ over $\field$, we have an associated functor of points $\alg_{\dgcomm}(\dk^{\text{cn}}) \rightarrow \mathscr{S}$ which is the restriction of the functor $\textbf{Sch}^{\text{op}}_{/\field} \rightarrow \mathscr{S}$ represented by $X$. On the other hand, given a functor $F: \alg_{\dgcomm}(\dk^{\text{cn}}) \rightarrow \mathscr{S}$, which we think of as sending $R$ to a space of families of objects parametrized by $\spec R$, we can ask whether there exists a geometric object $\mathcal{Y}$ representing it, possibly in some larger category. Finding such a representing object is commonly called a \textbf{moduli problem}, and a solution, meaning a representing object, is commonly called a moduli space for the functor. Since moduli spaces are often quite complicated, one might try to examine the formal neighborhood of a point of the moduli space instead. To this end, given a point $\eta \in F(\field)$, one studies the completion of $F$ at $\eta$, which is given by the \textbf{formal moduli problem} $\hat{F}_{\eta}$ sending a strict dg Artin $\field$-algebra $R$ to
\begin{align*}
    \hat{F}_{\eta}(R) := F(R) \times_{F(R/\mathfrak{m}_R)} \{\eta\}. 
\end{align*}
If the moduli space in question is given by algebraic structures of a certain type, then this should be interpreted as studying infinitesimal deformations of a fixed algebraic object of that type. For example, if $\mathscr{P}$ is a connective dg operad, and $A\in \ch(\field)^{\leq 0}$, then the completion of a $\field$-point $A\in \alg_{\mathscr{P}}(\dk)$ in the moduli space $\mathcal{M}_{\mathscr{P}}: R \mapsto \alg_{\mathscr{P}}(\lmod_R(\dk))$ of $\mathscr{P}$-algebra structures is precisely given by the geometric deformation functor $\defm_A$ defined in the previous section.\displaypar 
In general, (the solution to) a formal moduli problem is a certain kind of prestack over $\field$, whose geometry reflects the fact that it is supposed to model the formal completion of a sufficiently geometric stack at a point. This is made precise by Gaitsgory and Rozenblyum in \cite{GR2}, where they make the following definition for formal moduli problems parametrized by a base prestack $\mathcal{X}$.
\begin{definition}[Chapter 5, Section 1.1 \cite{GR2}]
    Let $\mathcal{X}$ be a prestack locally almost of finite type\footnote{See \cite[Chapter 2, Section 1.7]{GR1}.}. The $\infty$-category of \textbf{formal moduli problems over} $\mathcal{X}$ is the full subcategory
    \begin{align*}
        \mathbf{FMP}_{/\mathcal{X}} \subseteq (\mathbf{PreStk}_{\textnormal{laft}})_{/\mathcal{X}}
    \end{align*}
    spanned by morphisms of prestacks $\mathcal{Y} \rightarrow \mathcal{X}$ which are \textbf{inf-schematic}\footnote{A prestack locally almost of finite type is an inf-scheme if it admits deformation theory and its reduced prestack is a quasi-compact scheme. A morphism is inf-schematic if its base change by an affine yields an inf-scheme. See \cite[Chapter 2, Definition 3.1.5]{GR1}.} and induce an isomorphism on reduced prestacks $\mathcal{Y}^{\text{red}} \xrightarrow{\simeq} \mathcal{X}^{\text{red}}$.
\end{definition}
In particular, if $\mathcal{X} = \spec(\field)$ is the point, then 
\begin{align*}
    \mathbf{FMP}_{/\spec(\field)} \simeq (\mathbf{InfSch}_{\text{laft}})_{\text{nil-iso to}  \spec(\field)} \simeq (\mathbf{PreStk}_{\text{laft-def}})_{\text{nil-iso to} \spec(\field)}
\end{align*}
is equivalent to the $\infty$-category of locally almost of finite type prestacks which admit deformation theory, and whose reduced prestack is the point. Denote the category of formal moduli problems over the point simply by $\mathbf{FMP}$.
\begin{definition}
A \textbf{formal group\footnote{Note that these are different objects than what is commonly called a formal group in classical algebraic geometry.} over} $\mathcal{X}$ is a group object in the cartesian symmetric monoidal $\infty$-category $\mathbf{FMP}_{/\mathcal{X}}$. 
\end{definition}
The $\infty$-category of formal moduli problems over $\mathcal{X}$ enjoys a special stability property. Namely let $(\mathbf{FMP}_{/\mathcal{X}})_{\ast}:= \mathbf{FMP}_{\mathcal{X}//\mathcal{X}}$ denote the category of \textbf{pointed formal moduli problems over} $\mathcal{X}$. There is a loop functor 
\begin{align*}
    \Omega_{\mathcal{X}}: (\mathbf{FMP}_{/\mathcal{X}})_{\ast} &\rightarrow \mathbf{Grp}(\mathbf{FMP}_{/\mathcal{X}}), \\ \mathcal{Y} &\mapsto  \mathcal{X} \times_{\mathcal{Y}} \mathcal{X}.
\end{align*}
\begin{theorem}[Chapter 5, Theorem 1.6.4 \cite{GR2}]\label{thm20}
The functor $\Omega_{\mathcal{X}}$ is an equivalence with inverse denoted by $B_{\mathcal{X}}$.
\end{theorem}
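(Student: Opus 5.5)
The statement to prove is Theorem \ref{thm20}, that $\Omega_{\mathcal{X}}$ is an equivalence between pointed formal moduli problems over $\mathcal{X}$ and formal groups over $\mathcal{X}$. I would construct the inverse $B_{\mathcal{X}}$ as a geometric realization and check both composites using the deformation theory that objects of $\mathbf{FMP}_{/\mathcal{X}}$ admit.

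\textbf{Construction of $B_{\mathcal{X}}$.} Given a group object $\mathcal{H}\in \mathbf{Grp}(\mathbf{FMP}_{/\mathcal{X}})$, form its bar construction: the simplicial object $\mathcal{H}^{\times_{\mathcal{X}}\bullet}$ in $\mathbf{FMP}_{/\mathcal{X}}$. Take its geometric realization in prestacks over $\mathcal{X}$ and then apply the left adjoint to the inclusion $\mathbf{FMP}_{/\mathcal{X}}\subseteq (\mathbf{PreStk}_{\text{laft}})_{/\mathcal{X}}$. Under the assumption that this inclusion has a left adjoint that commutes with the relevant colimits, this gives a pointed object $B_{\mathcal{X}}\mathcal{H}$, pointed by the $0$-simplices.

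\textbf{Reduction to Lie algebras.} To check that $\Omega_{\mathcal{X}}$ and $B_{\mathcal{X}}$ are mutually inverse, pass to the tangent/Lie algebroid side. The tangent functor $T_{\mathcal{X}}$ sends a pointed formal moduli problem to $\mathbb{T}_{\mathcal{X}/\mathcal{Y}}$ restricted along the section, an object of $\indcoh(\mathcal{X})$. The key facts to establish are:
\begin{enumerate}
\item pointed formal moduli problems over $\mathcal{X}$ are determined by their tangent data, via the equivalence with Lie algebras in $\indcoh(\mathcal{X})$ (the Lurie--Pridham correspondence in the relative form of \cite{GR2}), whose underlying object is $T_{\mathcal{X}}(\mathcal{Y})[-1]$;
\item the loop functor on the tangent side is the shift: $T(\Omega_{\mathcal{X}}\mathcal{Y})\simeq T(\mathcal{Y})[-1]$. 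This holds because tangent complexes send fiber products of formal moduli problems to fiber products in the stable category $\indcoh(\mathcal{X})$.
\end{enumerate}
Since shifting is an equivalence of stable categories, $\Omega_{\mathcal{X}}$ induces an equivalence on tangent data. One still has to check that the group structure on $\Omega_{\mathcal{X}}\mathcal{Y}$ matches the Lie structure. Using the exponential equivalence of the form $\text{Vect}_X(\mathfrak{g})\simeq\exp(\mathfrak{g})$, one identifies $\mathbf{Grp}(\mathbf{FMP}_{/\mathcal{X}})$ with Lie algebras in $\indcoh(\mathcal{X})$ through $\text{Lie}_{\mathcal{X}}$, compatibly with the equivalence in (1).

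\textbf{Conclusion.} Combining (1), (2) and the exponential identification gives a commutative triangle in which two sides are equivalences, so $\Omega_{\mathcal{X}}$ is an equivalence. Its inverse agrees with the bar construction $B_{\mathcal{X}}$ because the unit map $\mathcal{H}\to\Omega_{\mathcal{X}}B_{\mathcal{X}}\mathcal{H}$ induces an isomorphism on tangent complexes, and maps of formal moduli problems that are isomorphisms on tangent complexes and on reduced prestacks are equivalences.

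\textbf{Main obstacle.} The hardest step is showing that the geometric realization defining $B_{\mathcal{X}}\mathcal{H}$ again admits deformation theory and is inf-schematic, and that tangent complexes commute with this colimit. My first attempt would be to avoid the realization altogether: build $B_{\mathcal{X}}$ directly as $\spec^{\text{inf}}$ applied to the Chevalley--Eilenberg coalgebra of $\text{Lie}_{\mathcal{X}}(\mathcal{H})$, verify that this object has the right loops, and only afterwards identify it with the realization.
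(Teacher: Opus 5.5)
There is a genuine gap: your reduction to Lie algebras is circular. Step (1) invokes the equivalence between pointed formal moduli problems over $\mathcal{X}$ and $\alg_{\lie}(\indcoh(\mathcal{X}))$ ``in the relative form of \cite{GR2}''. In \cite{GR2} that equivalence is obtained as the composite $B_{\mathcal{X}}\circ\exp_{\mathcal{X}}$, and it is an equivalence precisely because $\Omega_{\mathcal{X}}$ is one. The paper says this explicitly right after Theorem \ref{thm15}. The paper gives no proof of Theorem \ref{thm20} and only quotes \cite{GR2}. There it is a Chapter 5 result on formal moduli (with the groupoid version, Ch.~5, Thm.~2.3.2), proved before the Lie theory of Chapters 6--7, which is then built on top of it.

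If you instead use an independent correspondence such as Lurie's $\Psi$, two problems remain. First, it is only available over $\mathcal{X}=\spec(\field)$. Second, the step you pass over is not free: your triangle must commute, i.e.\ $\Omega\Psi(\mathfrak{g})\simeq\exp(\mathfrak{g})$ as groups. That is exactly the comparison of Appendix \ref{appendixB}, and its proof in the paper (Lemma \ref{lem7} and the lemma after it) applies $B\Omega\simeq\textnormal{id}$. Your fallback has the same defect. Defining $B_{\mathcal{X}}\mathcal{H}$ as $\spec^{\textnormal{inf}}$ of the Chevalley--Eilenberg coalgebra of $\textnormal{Lie}_{\mathcal{X}}(\mathcal{H})$ still requires showing that this object is a formal moduli problem whose loop group is $\mathcal{H}$, which is the theorem itself.

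What you actually need is the step you call the main obstacle, and once it is done the Lie detour is unnecessary. Your point (2), together with the fact you quote at the end (a map in $\mathbf{FMP}_{/\mathcal{X}}$ inducing an isomorphism on tangent complexes is an isomorphism), already shows that $\Omega_{\mathcal{X}}$ is conservative. It therefore suffices to prove three things:
\begin{enumerate}
\item the quotient $B_{\mathcal{X}}\mathcal{H}$ exists among pointed formal moduli problems over $\mathcal{X}$;
\item $B_{\mathcal{X}}$ is left adjoint to $\Omega_{\mathcal{X}}$;
\item the unit $\mathcal{H}\to\Omega_{\mathcal{X}}B_{\mathcal{X}}\mathcal{H}$ is an isomorphism, e.g.\ by showing $T(B_{\mathcal{X}}\mathcal{H})\simeq T(\mathcal{H})[1]$.
\end{enumerate}
A fully faithful left adjoint with a conservative right adjoint is an equivalence.

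This step is genuinely hard. The objectwise quotient in prestacks only sees trivial torsors and does not admit deformation theory. On split square-zero extensions it recovers only the base-point component of the expected delooping $\textnormal{Maps}(T^*_e\mathcal{H}[-1],\mathcal{F})$. So you must control the passage into $\mathbf{FMP}_{/\mathcal{X}}$ and prove that such nil-isomorphic quotients by formal group(oid)s are effective. That effectivity statement is the real content of the theorem, and your proposal leaves it unproved.
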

The general slogan that any formal moduli problem in characteristic 0 should correspond to a dg Lie algebra has been rigorously formulated and proven by Pridham and Lurie, and is now called the \textbf{Lurie-Pridham correspondence}. To be precise, we have the following
\begin{theorem}[\cite{Pr}, \cite{DAGX}, \cite{GR2}]\label{thm15}
    Let $X\in {}^{<\infty}\mathbf{Sch}^{\textnormal{aff}}_{\textnormal{ft}}$ be an eventually coconnective finite type affine scheme over $\field$. There is an equivalence of $\infty$-categories
    \begin{align*}
        (\mathbf{FMP}_{/{X}})_{\ast} \simeq \alg_{\lie}(\textnormal{IndCoh}({X})).
    \end{align*}
    In particular, for ${X} = \spec(\field)$, we get an equivalence $\mathbf{FMP}\simeq \alg_{\lie}(\dk)$.
\end{theorem}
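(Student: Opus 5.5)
This result is cited from \cite{Pr}, \cite{DAGX}, \cite{GR2}, so the plan below reconstructs the argument of \cite[Chapters 5--7]{GR2} rather than a new one. The spine is Theorem \ref{thm20}. Since $\Omega_X$ identifies $(\mathbf{FMP}_{/X})_{\ast}$ with $\mathbf{Grp}(\mathbf{FMP}_{/X})$, it suffices to give an equivalence
\begin{align*}
\mathbf{Grp}(\mathbf{FMP}_{/X}) \simeq \alg_{\lie}(\indcoh(X)).
\end{align*}
Rather than go through Lie algebras directly, I would pass through cocommutative Hopf algebras. To a formal group $\mathcal{H} \rightarrow X$ one assigns its distributions $\text{Distr}(\mathcal{H}) = \pi^{\indcoh}_{\ast}\omega_{\mathcal{H}}$. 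This is an augmented cocommutative coalgebra in $\indcoh(X)$, and the group law makes it a bialgebra, i.e.\ an object of $\text{Grp}(\text{Distr}^{\text{Cocom}^{\text{aug}}})$ as used earlier.

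The first key step is to show that $\text{Distr}$ restricts to an equivalence between $\mathbf{FMP}_{/X}$ and a suitable full subcategory of ind-nilpotent augmented cocommutative coalgebras in $\indcoh(X)$, with inverse $\spec^{\text{inf}}$. The proof is by induction along square-zero extensions. Because objects of $\mathbf{FMP}_{/X}$ admit deformation theory, every one is a colimit of iterated square-zero extensions of $X$. Both sides take these pushouts to the corresponding limit constructions, using that $\text{Distr}$ commutes with the relevant colimits. The finite type hypothesis on $X$ enters here: it makes Serre duality available, so $\indcoh(X)$ is well behaved and $\omega_X$ exists. Since $\text{Distr}$ is symmetric monoidal for the Cartesian product, group objects correspond, and $\mathbf{Grp}(\mathbf{FMP}_{/X})$ becomes equivalent to cocommutative Hopf algebras of this type.

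The second key step is the PBW/Milnor--Moore theorem in $\indcoh(X)$. The functor $U^{\text{Hopf}}: \alg_{\lie}(\indcoh(X)) \rightarrow$ Hopf algebras has a right adjoint given by primitives. It is fully faithful with the ind-nilpotent Hopf algebras as essential image, because PBW identifies the coalgebra underlying $U^{\text{Hopf}}(\mathfrak{g})$ with $\sym(\mathfrak{g})$, which is cofree in the ind-nilpotent sense. Characteristic $0$ is essential here. Composing the two steps, $\mathcal{H} \mapsto \textnormal{Lie}_X(\mathcal{H})$ gives the equivalence. For $X = \spec(\field)$ one has $\indcoh(\spec\field)\simeq \dk$, which gives $\mathbf{FMP} \simeq \alg_{\lie}(\dk)$.

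The main obstacle is the first step: showing that every formal moduli problem is recovered from its distributions, i.e.\ that $\spec^{\text{inf}}\circ\text{Distr}\simeq\text{id}$. This needs the deformation-theoretic induction together with convergence and left-completeness control on eventually coconnective objects. For that control I would follow the filtered (Hodge/deformation to the normal cone) argument of \cite[Chapter 9]{GR2}.
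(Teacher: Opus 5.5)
\textbf{There is a genuine gap in your first key step.} You claim that $\textnormal{Distr}$ is fully faithful on all of $\mathbf{FMP}_{/X}$, i.e.\ that every formal moduli problem is inf-affine, $\spec^{\textnormal{inf}}\circ\textnormal{Distr}\simeq\textnormal{id}$. This is false, and the paper says so: neither $\textnormal{Distr}^{\textnormal{Cocom}^{\textnormal{aug}}}$ nor $\spec^{\textnormal{inf}}$ is fully faithful (Appendix~\ref{appendixB}, and the footnote pointing to \cite[Chapter 7, Section 2.2]{GR2}).

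A counterexample already occurs for $X=\spec(\field)$ and $\mathfrak{g}=\mathfrak{sl}_2$ in degree $0$. Here $\Psi(\mathfrak{g})=B\widehat{G}$ and $\textnormal{Distr}(B\widehat{G})\simeq\textnormal{Chev}^{\textnormal{enh}}(\mathfrak{sl}_2)$. Since $C^{\ast}(\mathfrak{sl}_2)\simeq\Lambda(c_3)$ is formal, this coalgebra is $\sym(\field x)$ on one odd generator of homological degree $3$. That is the distribution coalgebra of a vector prestack with one-dimensional tangent complex. So $\spec^{\textnormal{inf}}\textnormal{Distr}(B\widehat{G})$ is that vector prestack, not $B\widehat{G}$, whose tangent complex is $\mathfrak{sl}_2[1]$.

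Your square-zero induction never uses a group structure, so it would prove this false statement. The step from the inf-affine building blocks to a general $\mathcal{Y}$ requires $\spec^{\textnormal{inf}}$, a right adjoint, to commute with the colimits presenting $\mathcal{Y}$. $\textnormal{Distr}$ does commute with them, but $\spec^{\textnormal{inf}}$ does not, as the example shows. Restricted to formal groups the statement is true. However, in \cite{GR2} this is \cite[Chapter 7, Corollary 3.2.2]{GR2}, which is deduced from the equivalence you are trying to prove, so it cannot serve as your first step.

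\textbf{How the paper's outline avoids this.} The route in Theorem~\ref{thm20} and Appendix~\ref{appendixB} only ever needs inf-affineness of vector prestacks.
\begin{itemize}
\item One defines $\exp=\textnormal{Monoid}(\spec^{\textnormal{inf}})\circ\textnormal{Grp}(\textnormal{Chev}^{\textnormal{enh}})\circ\Omega$.
\item PBW (Proposition~\ref{prop11}) shows that the underlying formal moduli problem of $\exp(\mathfrak{g})$ is the vector prestack on the underlying object of $\mathfrak{g}$. That prestack is inf-affine by a direct computation.
\item Then $\textnormal{Lie}\circ\exp\simeq\textnormal{id}$ follows because $B\circ\textnormal{Monoid}(\textnormal{coChev}^{\textnormal{enh}})$ is left inverse to $\textnormal{Grp}(\textnormal{Chev}^{\textnormal{enh}})\circ\Omega$. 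This is essentially your Milnor--Moore/PBW second step, which is fine.
\end{itemize}
What remains is essential surjectivity of $\exp$, equivalently that every formal group is inf-affine. This needs a separate argument that genuinely uses the group law. It is the content of \cite[Chapter 7, Theorem 3.1.4]{GR2}, which the paper cites rather than reproves. Your second step, composed with the false first step, does not yield the equivalence.
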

By the stability property of formal moduli problems \ref{thm20}, this can be interpreted as follows: Use the equivalence $\Omega_{{X}}$ to identify pointed formal moduli problems over ${X}$ with formal groups over ${X}$. The Lie algebra corresponding to a pointed formal moduli problem $\mathcal{Y}$ is then given by the Lie algebra of the formal group $\Omega_{{X}}(\mathcal{Y})$. In particular, to prove the theorem it suffices to construct mutually inverse functors
\begin{align*}
    \text{Lie}_{{X}}: \mathbf{Grp}(\mathbf{FMP}_{/X}) \rightarrow \alg_{\lie}(\text{IndCoh}(X))\\
    \text{exp}_X: \alg_{\lie}(\text{IndCoh}(X)) \rightarrow \mathbf{Grp}(\mathbf{FMP}_{/X})
\end{align*}
where $\text{Lie}_X$ assigns to a formal group its Lie algebra object, and $\text{exp}_X$ assigns to a Lie algebra its exponential group. This has been done in \cite{GR2}. This shows that answering the question of why formal moduli problems correspond to Lie algebras is equivalent to answering the question of why the tangent space at the identity of a Lie group has the structure of a Lie algebra.\displaypar
There are essentially two answers to this question; one geometric and one algebraic. The geometric one goes as follows: A formal space, modeled here as a formal moduli problem, is determined\footnote{For general formal moduli problems this is not quite true. See \cite[Chapter 7, Section 2.2]{GR2}.} by its (cocommutative) coalgebra of point distributions. A group structure on a formal space makes the coalgebra of distributions into a cocommutative Hopf algebra. Taking primitive elements of this cocommutative Hopf algebra then yields a Lie algebra, which is the Lie algebra associated to the formal group. One can identify the underlying object of this Lie algebra with the tangent space at the identity by noting that primitive elements in the Hopf algebra of distributions are equivalently derivations $A\rightarrow \field$ for $A$ the affine ring of the formal group, which is given by the linear dual of the coalgebra of distributions. On the other hand, after abstractly identifying a formal group with its associative algebra object in cocommutative coalgebras, one can apply the Cobar construction for the universal twisting morphism between the Lie operad and the cocommutative cooperad to get an associative algebra object in the category of Lie algebras. Then taking the delooping functor for Lie algebras, which is an equivalence of $\infty$-categories, one again obtains a Lie algebra associated to the formal group.\displaypar
These two methods of associating a Lie algebra object to a formal group agree by the following
\begin{theorem}[Chapter 6, Theorem 6.1.2 \cite{GR2}]
Let $\mathcal{C}$ be a $\field$-linear symmetric monoidal $\infty$-category. There is a canonical isomorphism of functors
\begin{align*}
    U^{\textnormal{Hopf}} \simeq \textnormal{Grp}(\textnormal{Chev}^{\textnormal{enh}}) \circ \Omega: \alg_{\lie}(\mathcal{C}) \rightarrow \alg_{\mathbb{E}_1}(\coalg^{\textnormal{coaug}}_{\dgcocomm}(\mathcal{C})),
\end{align*}
where $U^{\textnormal{Hopf}}: \alg_{\lie}(\mathcal{C}) \rightarrow \alg_{\mathbb{E}_1}(\coalg^{\textnormal{coaug}}_{\dgcocomm}(\mathcal{C})) \simeq \coalg_{\dgcocomm}(\alg^{\textnormal{aug}}_{\mathbb{E}_1}(\mathcal{C}))$ is a lift of the left adjoint of the restriction functor from augmented associative algebras to Lie algebras. 
\end{theorem}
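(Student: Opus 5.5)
This statement is quoted from Gaitsgory--Rozenblyum, and within the paper it is used as a black box; what follows is only a sketch of how I would reconstruct the argument. The plan is to compare both functors through their right adjoints, or equivalently through a universal property on a generating class. In the sketch, $\textnormal{Chev}^{\textnormal{enh}}$ denotes the enhanced Chevalley functor. It is the one which, via the Cobar construction for the canonical twisting morphism $\dgcocomm \to \lie$, sends a Lie algebra to a coaugmented cocommutative coalgebra, and it is an equivalence onto its image in a suitable ind-nilpotent sense. Applied to the group object $\Omega\mathfrak{g}$ in $\alg_{\lie}(\mathcal{C})$, it produces a group object in coalgebras, that is, an $\mathbb{E}_1$-algebra in $\coalg^{\textnormal{coaug}}_{\dgcocomm}(\mathcal{C})$. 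Here $\Omega$ denotes the loop functor, which is an equivalence between Lie algebras and group objects in Lie algebras. The first step is therefore to check that $\textnormal{Chev}^{\textnormal{enh}}$ preserves finite products, so that $\textnormal{Grp}(\textnormal{Chev}^{\textnormal{enh}})$ makes sense. Products of Lie algebras are direct sums, and $\textnormal{Chev}$ takes direct sums to tensor products, which are the products in cocommutative coalgebras.

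The second step is to build a comparison map $U^{\textnormal{Hopf}}(\mathfrak{g}) \to \textnormal{Grp}(\textnormal{Chev}^{\textnormal{enh}})(\Omega\mathfrak{g})$. I would produce it by adjunction. The restriction functor from augmented associative algebras to Lie algebras lifts to a functor from bialgebras to Lie algebras, namely taking primitives followed by forgetting. Composing the unit $\mathfrak{g} \to \textnormal{Prim}$ with this identification yields a natural transformation. I would then verify that the transformation respects the $\mathbb{E}_1$-structures; this holds because $\textnormal{Grp}(\textnormal{Chev}^{\textnormal{enh}})$ is defined functorially on group objects.

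The third step is to show the map is an equivalence on underlying objects after forgetting both the algebra and coalgebra structures. On the left the underlying object is $\sym(\mathfrak{g})$, by PBW. On the right it is $\textnormal{Chev}(\mathfrak{g}[-1]\text{-shifted loop})$, which is again $\sym(\mathfrak{g})$, because the loop object of $\mathfrak{g}$ is $\mathfrak{g}[-1]$ with trivial bracket on the underlying level and $\textnormal{Chev}$ shifts back by one. Both functors commute with sifted colimits. Hence I would first reduce to free Lie algebras $\textnormal{free}_{\lie}(V)$ and then resolve a general $\mathfrak{g}$ by a simplicial diagram of free ones. For a free Lie algebra both sides are the tensor algebra $T(V)$, with its shuffle coproduct, and the comparison map is the identity there.

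The main obstacle is the PBW identification at the $\infty$-categorical level, in a general $\field$-linear $\mathcal{C}$. One must check that the comparison map is compatible with the symmetrization isomorphism and commutes with the relevant colimits on the coalgebra side, where coalgebras are not monadic. I would handle this by working with the filtration by Lie word length on $U^{\textnormal{Hopf}}$, whose associated graded is $\sym(\mathfrak{g})$ in characteristic 0. I would then use conservativity of the associated graded on complete filtered objects to conclude.
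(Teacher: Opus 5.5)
The paper gives no proof of this statement: it is quoted from \cite{GR2} and used as input. So the only question is whether your sketch stands on its own. Its skeleton is sound. Both functors preserve geometric realizations on underlying objects, the forgetful functor $\alg_{\mathbb{E}_1}(\coalg^{\textnormal{coaug}}_{\dgcocomm}(\mathcal{C}))\to\mathcal{C}$ is conservative, and every Lie algebra is a geometric realization of free ones. But this reduction only applies once you have a natural transformation of Hopf-algebra-valued functors, and Step 2 does not produce one.

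Via $U\dashv\mathrm{res}$, an $\mathbb{E}_1$-map $U(\mathfrak{g})\to H:=\mathrm{oblv}\,\mathrm{Grp}(\mathrm{Chev}^{\mathrm{enh}})(\Omega\mathfrak{g})$ is the same as a Lie map $\mathfrak{g}\to\mathrm{res}(H)$, and you never say where one comes from. This is the heart of the theorem. As you use yourself, $\Omega\mathfrak{g}$ is abelian as a Lie algebra, so the bracket of $\mathfrak{g}$ survives only in the group structure, i.e.\ in the multiplication of $H$. Such a Lie map therefore amounts to showing, with all coherences, that commutators in $H$ recover that bracket.

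Primitives do not help here. $\mathrm{Prim}$ is not a lift of $\mathrm{res}$; it merely maps to $\mathrm{res}\circ\mathrm{oblv}$. If you define it as the right adjoint of $U^{\mathrm{Hopf}}$, you have no independent handle on it. If you define it as $B\circ\mathrm{Monoid}(\mathrm{coChev}^{\mathrm{enh}})$, its adjointness to $U^{\mathrm{Hopf}}$ is exactly what is being proved; the paper records $\mathrm{Prim}\simeq B\circ\mathrm{Monoid}(\mathrm{coChev}^{\mathrm{enh}})$ as a consequence of the statement. The unit $\mathfrak{g}\to\mathrm{Prim}(U^{\mathrm{Hopf}}\mathfrak{g})$ you mention also points into the wrong Hopf algebra.

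Compatibility with coproducts does not follow from functoriality of $\mathrm{Grp}(\mathrm{Chev}^{\mathrm{enh}})$ either. You would need the transformation $U\Rightarrow\mathrm{oblv}\circ\mathrm{Grp}(\mathrm{Chev}^{\mathrm{enh}})\circ\Omega$ to be symmetric monoidal (direct sum on Lie algebras, $\otimes$ on algebras), and then apply $\coalg_{\dgcocomm}(-)$.

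Step 3 inherits this gap and has its own. That $U(\mathrm{free}_{\lie}V)\simeq T(V)$ is formal. But that $\mathrm{Grp}(\mathrm{Chev}^{\mathrm{enh}})(\Omega\,\mathrm{free}_{\lie}V)$ is $T(V)$ \emph{as an $\mathbb{E}_1$-algebra} is only asserted. A natural argument runs as follows: its classifying object is $\mathrm{Chev}^{\mathrm{enh}}(\mathrm{free}_{\lie}V)\simeq\field\oplus V[1]$ with trivial coproduct, so its bar construction is trivial, and associative Koszul duality forces it to be free. That argument needs a convergence device (e.g.\ a weight grading with $V$ in weight one), which you do not supply. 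Also, ``the comparison map is the identity there'' presupposes Step 2.

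Finally, you locate the main obstacle in the wrong place. PBW for general $\mathfrak{g}$ is unnecessary once you reduce to free Lie algebras, where $T(V)\cong\sym(\mathrm{free}_{\lie}V)$ is classical. It is also circular here, since the paper obtains the $\infty$-categorical PBW precisely by combining this theorem with Proposition \ref{prop11}. The filtration remedy is only named. It would need a compatible filtration on the right-hand side and a filtered comparison map, and the PBW filtration is exhaustive rather than complete.
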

Here, $\text{Chev}^{\text{enh}}\dashv \text{coChev}^{\text{enh}}$ is the $\infty$-categorical non-conilpotent Bar-Cobar adjunction for the Lie-operad as defined in \cite[Chapter 6, Section 4.1]{GR2}. Consequently, we also get an isomorphism between the respective right adjoints
\begin{align*}
    \text{Prim} \simeq B \circ \text{Monoid}(\text{coChev}^{\text{enh}}).
\end{align*}
This discussion highlights that the reason that the tangent space of a group structure on a formal space built from commutative algebra objects is a Lie algebra can be traced back to the fact that the Lie operad is Koszul dual to the commutative operad. In the context of noncommutative geometry, where spaces are built from algebra objects over an operad $\mathscr{P}$, one may hence expect a modified version of the correspondence between formal moduli problems and Lie algebra objects to hold, where formal moduli problems are replaced by $\mathscr{P}$-\textbf{operadic formal moduli problems}, and Lie algebras are replaced by algebras over the Koszul dual operad $\mathscr{P}^{!}$. \displaypar
Analogous to the commutative case, a $\mathscr{P}$-operadic formal moduli problem is supposed to model the completion at a point of a sufficiently geometric $\mathscr{P}$-stack. For a non-positively graded dg operad $\mathscr{P}$, define the category of $\mathscr{P}$-affine schemes to be the opposite category of the category of connective $\mathscr{P}$-algebras. Then a $\mathscr{P}$-prestack is simply a functor $X: \alg_{\mathscr{P}}(\dk^{\text{cn}}) \rightarrow \mathscr{S}$. It is a known difficulty to define $\mathscr{P}$-schemes or more generally geometric $\mathscr{P}$-stacks from such prestacks, essentially because the typical choices for Grothendieck topologies on the category of affine $\mathscr{P}$-schemes are not well-defined, see for example \cite{KR}. However, since we are mainly interested in formal neighborhoods of a point in such a would-be $\mathscr{P}$-stack, it suffices to understand affine $\mathscr{P}$-schemes together with some notion of ``infinitesimal neighborhood''. Since we already know what affine $\mathscr{P}$-schemes are supposed to be, the problem is reduced to finding a good notion of formal neighborhood. \displaypar
To this end, consider first the commutative case. If $X: \alg_{\dgcomm}(\dk^{\text{cn}}) \rightarrow \mathscr{S}$ is a scheme over $\field$ together with a $\field$-point $\eta \in X(\spec(\field))$, then a point ``infinitesimally close'' to $\eta$ is an $R$-point $\tilde{\eta}$ for $R$ an infinitesimal thickening of the point, i.e. a local dg Artin $\field$-algebra, such that $\tilde{\eta}$ agrees with $\eta$ at $\spec(\field)$. In particular, this notion only depends on the choice of local Artin algebras as the first-order objects to deform along. For the $\mathscr{P}$-operadic case, it therefore suffices to fix the first-order $\mathscr{P}$-algebras we want to deform along. This is the structure of a \textbf{deformation context}. 
\begin{definition}[Definition 1.1.3 \cite{DAGX}]
A deformation context is a presentable $\infty$-category $\Upsilon$ together with a set of objects $\{E_{\alpha}\}_{\alpha\in T}$ in the stabilization $\text{Stab}(\Upsilon)$. For a reduced dg operad $\mathcal{P}$, we consider the deformation context $(\alg_{\mathscr{P}}(\dk), \{\field\})$, where $\field$ corresponds to the spectrum object $\Omega^{\infty-n} \field = \field[n]$.
\end{definition}
There is a general definition of Artinian objects in a deformation context \cite[Definition 1.1.8]{DAGX}, but for the specific deformation context associated to a reduced dg operad $\mathscr{P}$ one can show that the following definition is equivalent \cite[Proposition 4.28]{CG}.
\begin{definition}
Let $\mathscr{P}$ be a reduced dg operad. The $\infty$-category $\alg^{\text{Art}}_{\mathscr{P}}$ of \textbf{Artin $\mathscr{P}$-algebras} is the smallest full subcategory of $\alg_{\mathscr{P}}(\dk)$ such that
\begin{itemize}
    \item $\field[n] \in \art$ for all $n\leq 0$,
    \item for any $A\in \art$ and any map $A\rightarrow \field[n]$ of $\mathscr{P}$-algebras with $n\leq 1$, the homotopy pullback $A\times_{\field[n]} 0$ is also Artinian.
\end{itemize}
\end{definition}
If $\mathscr{P}$ is the non-unital commutative operad, the corresponding Artin algebras are precisely the augmentation ideals of dg Artin $\field$-algebras. \displaypar
To define the notion of a $\mathscr{P}$-operadic formal moduli problem from this, note first that a commutative formal moduli problem $X\in \mathbf{FMP}$ is already uniquely determined by its restriction to the $\infty$-category of dg Artin $\field$-algebras. In particular, $X$ is an object of $\mathbf{PreStk}_{\text{laft}}$, and therefore uniquely determined by its restriction to eventually coconnective finite type affine schemes \cite[Chapter 2, Proposition 1.7.6]{GR1}. Further,
\begin{theorem}[Chapter 5, Proposition 1.2.2 \cite{GR2}]\label{thm14}
 Every formal moduli problem $\mathcal{Y}: ({}^{<\infty}\mathbf{Sch}^{\textnormal{aff}}_{\textnormal{ft}})^{\textnormal{op}} \rightarrow \mathscr{S}$ over $\spec(\field)$ is the left Kan extension of its restriction to the full subcategory 
    \begin{align*}
        ({}^{<\infty}\mathbf{Sch}_{\textnormal{ft}}^{\textnormal{aff}})_{\textnormal{nil-iso to} \spec(\field)}
    \end{align*}
    of eventually coconnective finite type affine schemes $S$ that satisfy $S^{\textnormal{red}} \simeq \spec \field$. Further, if $\tilde{\mathcal{Y}}$ is a presheaf on that subcategory, then its left Kan extension along the inclusion defines a formal moduli problem if and only if 
    \begin{enumerate}
        \item $\tilde{\mathcal{Y}}(\spec(\field)) \simeq \ast$,
        \item For a pushout diagram $S_1 \sqcup_S S'$ in $(\mathbf{Sch}^{\textnormal{aff}})_{\textnormal{ft}, \textnormal{nil-iso to} \spec(\field)}$, where $S\rightarrow S'$ is a square-zero extension, the resulting map 
        \begin{align*}
            \tilde{\mathcal{Y}}(S_1 \sqcup_S S') \rightarrow \tilde{\mathcal{Y}}(S_1) \times_{\tilde{\mathcal{Y}}(S)} \tilde{\mathcal{Y}}(S')
        \end{align*}
        is an isomorphism.
    \end{enumerate}
\end{theorem}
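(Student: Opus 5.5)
This result is quoted from Gaitsgory–Rozenblyum, so my plan is to reconstruct the argument in their framework rather than build anything new. There are two parts. First, every formal moduli problem is the left Kan extension of its restriction to the subcategory of affine schemes nil-isomorphic to the point. Second, such an extension is a formal moduli problem exactly when the reduced-point and square-zero gluing conditions hold.

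For the first part, I take $\mathcal{Y} \in \mathbf{FMP}$ and an eventually coconnective finite type affine $S$. I would compute the left Kan extension at $S$ as a colimit over factorizations $S \to S' \to \mathcal{Y}$ with $S'$ nil-isomorphic to $\spec(\field)$.
\begin{itemize}
\item The key input is that $\mathcal{Y}^{\textnormal{red}} \simeq \spec(\field)$. Hence any map $S \to \mathcal{Y}$ factors through $\spec(\field)$ on reduced parts.
\item Because $\mathcal{Y}$ is inf-schematic and laft, such a map factors through a nilpotent thickening of $\spec(\field)$. This uses the description of inf-schemes as colimits of nil-thickenings in \cite[Chapter 2]{GR1}.
\item I would then show the indexing category of these factorizations is filtered or contractible. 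This follows because pushouts of nil-isomorphic affines along nil-isomorphisms stay nil-isomorphic, giving cofinality. The colimit therefore recovers $\mathcal{Y}(S)$.
\end{itemize}

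For the second part, necessity is short. Condition (1) holds because $\mathcal{Y}$ is pointed with reduced part a point. Condition (2) is the statement that $\mathcal{Y}$ admits deformation theory, i.e. is infinitesimally cohesive, restricted to these pushouts.

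Sufficiency is where the work lies. I would check directly that the left Kan extension $\mathcal{Y}$ of $\tilde{\mathcal{Y}}$ has the three required properties.
\begin{itemize}
\item \emph{Laft.} This is automatic, since $\mathcal{Y}$ is a Kan extension from finite type objects.
\item \emph{Reduced part.} $\mathcal{Y}^{\textnormal{red}} = \spec(\field)$ by condition (1), together with the fact that reduced test schemes map only through the point.
\item \emph{Deformation theory.} I would reduce to the existence of a pro-cotangent complex and to cohesiveness for square-zero pushouts of arbitrary affines. I would build the pro-cotangent complex from the tangent fiber $T = \Omega\tilde{\mathcal{Y}}(\spec(\field \oplus \field[n]))$, using condition (2) to make $n \mapsto \tilde{\mathcal{Y}}(\spec(\field\oplus\field[n]))$ excisive.
\end{itemize}

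The main obstacle is extending cohesiveness from nil-isomorphic pushouts to arbitrary square-zero pushouts. Left Kan extension does not obviously commute with the fiber products appearing in condition (2). My first attempt would be to show that, for a square-zero extension $S \to S'$ of arbitrary affines, the relevant comma categories decompose as products. That would make the colimit defining $\mathcal{Y}$ sifted over the relevant diagrams, so that it commutes with the fiber product. The second ingredient is that, for nil-isomorphic targets, a square-zero extension of $S$ is detected by the associated graded pieces $\field[n]$. Combining the two, cohesiveness would then reduce to the excisiveness already established.

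If this direct route fails, I would fall back on the Lurie–Pridham equivalence (Theorem \ref{thm15}). I would build a Lie algebra from the tangent fiber of $\tilde{\mathcal{Y}}$ and compare $\tilde{\mathcal{Y}}$ with the restriction of its exponential, which is a formal moduli problem. Conditions (1) and (2) would then make the comparison map an equivalence by induction along the square-zero filtration of Artinian objects.
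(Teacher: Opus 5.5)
The paper gives no proof of this statement; it is quoted from Gaitsgory--Rozenblyum, so your sketch has to stand on its own. Necessity is fine, as are the laft and reduced-part checks for sufficiency, and you correctly locate the difficulty. But the step that carries the weight is not supplied. Your mechanism for passing from Artinian squares to arbitrary square-zero pushouts in ${}^{<\infty}\mathbf{Sch}^{\textnormal{aff}}_{\textnormal{ft}}$ fails. Siftedness of the indexing categories makes colimits in $\mathscr{S}$ commute with finite products, not with fiber products. For a counterexample, realize levelwise $\ast\rightarrow S^1_{\bullet}\leftarrow\ast$ for a simplicial model $S^1_\bullet$ of the circle: the levelwise pullbacks are points, but $\ast\times_{S^1}\ast\simeq\mathbb{Z}$. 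The ``graded pieces $\field[n]$'' idea only operates inside the Artinian category, where it is Lurie's reduction of condition (2) to elementary extensions. A square-zero extension of a finite type $S$ by a coherent module is not assembled from such pieces. Building the pro-cotangent complex from the tangent fiber at the base point is also off target, since it says nothing about cotangent spaces at arbitrary $S$-points. For laft prestacks it is unnecessary anyway, because cohesiveness already produces the pro-cotangent complex (for $f\colon S'\rightarrow S$ one has $S_{f_\ast M'}\simeq S'_{M'}\sqcup_{S'}S$). Everything therefore rests on cohesiveness, and that is exactly where your argument has a hole.

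The missing idea is filteredness, which comes from a pro-Artinian envelope. For $S=\spec A$ of finite type, put $S^{\circ}:=S\sqcup_{S^{\textnormal{red}}}\spec\field=\spec(A\times_{A^{\textnormal{red}}}\field)$. Every map from $S$ to a scheme nil-isomorphic to $\spec\field$ factors uniquely through $S^{\circ}$. In general $S^{\circ}$ is not of finite type: for $A=\field[x,\epsilon]/\epsilon^2$ one gets $\field\oplus\epsilon\field[x]$. But $S^{\circ}$ is eventually coconnective, with $\pi_0$ local with nilpotent maximal ideal and residue field $\field$, and one checks it is a cofiltered limit of Artinian $S_\alpha$. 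So the left Kan extension is the filtered colimit $\colim_\alpha\tilde{\mathcal{Y}}(S_\alpha)$.

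This also repairs your first part, whose cofinality claim does not hold up. Cocones in the category of factorizations would require pushouts over $S$ along maps $S\rightarrow S'$ that are not nil-isomorphisms, and these leave the Artinian, even the connective, world. Instead, for a genuine formal moduli problem, cohesiveness along $S^{\textnormal{red}}\rightarrow S$ together with $\mathcal{Y}(S^{\textnormal{red}})\simeq\ast$ gives $\mathcal{Y}(S)\simeq\mathcal{Y}(S^{\circ})$, and laft gives $\mathcal{Y}(S^{\circ})\simeq\colim_\alpha\mathcal{Y}(S_\alpha)$.

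For sufficiency, $(-)^{\circ}$ carries a square-zero pushout $S_1\sqcup_S S'$ to the square-zero pushout $S_1^{\circ}\sqcup_{S^{\circ}}(S')^{\circ}$. The lemma you need is that this square is a cofiltered limit of square-zero pushout squares of Artinian schemes. Condition (2) then passes to the colimit, because filtered colimits in $\mathscr{S}$ commute with pullbacks.

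Your fallback does not circumvent this, for three reasons.
\begin{itemize}
\item Theorem~\ref{thm15} concerns objects already in $\mathbf{FMP}$, so applying it to $\tilde{\mathcal{Y}}$ presupposes the sufficiency being proved.
\item You give no source for a comparison map between $\tilde{\mathcal{Y}}$ and the restriction of $B\exp(\mathfrak{g})$.
\item The paper's comparison of Lurie's $\Psi$ with $B\circ\textnormal{exp}$ in Appendix~\ref{appendixB} already regards $\Psi(\mathfrak{g})$ as an object of $\mathbf{FMP}$, i.e.\ it uses the present statement.
\end{itemize}
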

Note that the category $({}^{<\infty}\mathbf{Sch}_{\textnormal{ft}}^{\textnormal{aff}})_{\textnormal{nil-iso to} \spec(\field)}$ is the opposite category of dg Artin $\field$-algebras. This confirms the expectation that a formal neighborhood of a point in a geometric stack is uniquely determined by its values on dg Artin $\field$-algebras. In fact, the conditions on a functor $\alg_{\mathscr{C}om}^{\textnormal{Art}}(\dk) \rightarrow \mathscr{S}$ given by Theorem \ref{thm14} are the more common definition of a formal moduli problem, and such a functor is often called a Schlessinger functor. From this second description of formal moduli problems, one can read off the more general definition of a formal moduli problem in a given deformation context \cite[Definition 1.1.14]{DAGX}. For the deformation context attached to a reduced dg operad $\mathscr{P}$, we then get the following.  
\begin{definition}[Definition 2.11 \cite{CCN}]
Let $\mathscr{P}$ be a reduced dg operad. A \textbf{$\mathscr{P}$-operadic formal moduli problem} is a functor
\begin{align*}
    X: \art \rightarrow \mathscr{S}
\end{align*}
such that 
\begin{enumerate}
    \item $X(0) \simeq \ast$, where $0$ is the zero $\mathscr{P}$-algebra.
    \item For any $A\in \art$ and any map $A \rightarrow \field[n]$ with $n\leq 1$, the functor $X$ sends the pullback $A \times_{\field[n]} 0$ to a pullback in $\mathscr{S}$.
\end{enumerate}
We let $\mathbf{FMP}_{\mathscr{P}}$ denote the $\infty$-category of $\mathscr{P}$-operadic formal moduli problems.
\end{definition}
We can now state the operadic generalization of Theorem \ref{thm15}.
\begin{theorem}[Theorem 1.1 \cite{CCN}]\label{thm16}
Let $\mathscr{P}$ be a Koszul operad concentrated in non-positive degrees. Then there is an equivalence of $\infty$-categories
\begin{align*}
    \mathbf{FMP}_{\mathscr{P}} \simeq \alg_{\mathscr{P}^!}(\dk).
\end{align*}
\end{theorem}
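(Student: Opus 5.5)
This theorem is cited from \cite{CCN}, so the plan is to reconstruct its proof along the lines of Lurie's deformation-theory axiomatics in \cite{DAGX}. The argument has three parts. First, construct a Koszul duality functor $\mathfrak{D}: \alg_{\mathscr{P}}(\dk)^{\textnormal{op}} \rightarrow \alg_{\mathscr{P}^!}(\dk)$. Second, check that $\mathfrak{D}$ is part of a deformation theory in the sense of \cite[Definition 1.3.1]{DAGX} for the deformation context $(\alg_{\mathscr{P}}(\dk),\{\field\})$. Third, apply Lurie's general theorem \cite[Theorem 1.3.12]{DAGX}, which identifies formal moduli problems with the target category of a deformation theory.

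\textbf{Step 1: the functor.} I would define $\mathfrak{D}(A)$ as the linear dual of the bar construction, $\mathfrak{D}(A) = \obar(A)^{\vee}$. Here $\obar(A)$ is a conilpotent $\mathscr{P}^{\text{!`}}$-coalgebra, so its dual carries an algebra structure over the dual of $\mathscr{P}^{\text{!`}}$. After the appropriate shift, that dual operad is $\mathscr{P}^!$. To make this homotopically meaningful, I would use the rectification for dg operads, \cite[Theorem 4.10]{Hau}, together with the fact that $\obar$ preserves weak equivalences between $\mathscr{P}$-algebras (in characteristic $0$). The functor $\mathfrak{D}$ is a left adjoint, with right adjoint sending $\mathfrak{g}$ to $\mathfrak{g}^{\vee}$ viewed as a $\mathscr{P}^{\text{!`}}$-coalgebra, followed by $\cobar$. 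Hence $\mathfrak{D}$ sends colimits in $\alg_{\mathscr{P}}$ to limits.

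\textbf{Step 2: Lurie's axioms.} There are four things to verify.
\begin{enumerate}
\item $\alg_{\mathscr{P}^!}(\dk)$ is presentable and $\mathfrak{D}$ admits a right adjoint. This is formal from Step 1.
\item The square-zero algebras $\field[n]$ are sent to the free $\mathscr{P}^!$-algebras on $\field[-n-1]$, up to the shift conventions. Since $\obar(\field[n])$ is the cofree conilpotent coalgebra, this is a direct computation.
\item $\mathfrak{D}$ restricted to Artinian objects is fully faithful. I would prove this by induction along the pullbacks $A\times_{\field[n]}0$ that generate $\art$. The inductive step uses that $\mathfrak{D}$ turns these pullbacks into pushouts of $\mathscr{P}^!$-algebras. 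This in turn follows because $\obar$ of an Artinian algebra has finite-dimensional graded pieces, so duality commutes with the relevant (co)limits.
\item The forgetful functor from $\mathscr{P}^!$-algebras, composed with the map to the stabilization, is conservative and preserves sifted colimits. This holds because $\mathscr{P}^!$ is Koszul, so the forgetful functor to $\dk$ is monadic.
\end{enumerate}

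\textbf{Main obstacle: the biduality $A\simeq \mathfrak{D}^{\ast}\mathfrak{D}(A)$ on Artin $\mathscr{P}$-algebras.} This requires convergence of the completed bar--cobar comparison. The hypothesis that $\mathscr{P}$ is concentrated in non-positive degrees is needed exactly here. It ensures that Artinian algebras are coconnective with finite-dimensional cohomology, and that the weight filtration on $\obar(A)$ is complete and finite in each degree.

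I would first prove biduality for the trivial algebras $\field[n]$ by explicit computation. I would then propagate it along the generating pullbacks using the exactness proven in item 3. Once the axioms hold, Lurie's theorem gives the equivalence $\mathbf{FMP}_{\mathscr{P}}\simeq\alg_{\mathscr{P}^!}(\dk)$: a formal moduli problem $X$ is sent to the $\mathscr{P}^!$-algebra representing it on Artinian objects, via $X(R)\simeq\map(\mathfrak{D}(R),\mathfrak{g})$.
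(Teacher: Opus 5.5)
Your overall strategy is the same as the paper's, which cites \cite[Theorem 5.1]{CCN}: define $\mathfrak{D}$ as the shifted linear dual of $\obar$, show it is a deformation theory in Lurie's sense, and apply \cite[Theorem 1.3.12]{DAGX}. However, Step 1 and your axiom 1 get the adjunction backwards.

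\textbf{The adjunction.} Lurie's first axiom requires $\mathfrak{D}\colon \alg_{\mathscr{P}}(\dk)^{\textnormal{op}}\to\alg_{\mathscr{P}^!}(\dk)$ to admit a \emph{left} adjoint $\mathfrak{D}'$, and that is what actually holds. Your claim fails on two counts.
\begin{itemize}
\item $\mathfrak{D}$ is not a left adjoint. It does not send infinite products of trivial $\mathscr{P}$-algebras to coproducts of free $\mathscr{P}^!$-algebras, because linear duality does not turn products into sums.
\item Your proposed right adjoint $\mathfrak{g}\mapsto\cobar(\mathfrak{g}^{\vee})$ is not defined. The dual of an infinite-dimensional $\mathscr{P}^!$-algebra is not a $\mathscr{P}^{\text{!`}}$-coalgebra, since $(\mathfrak{g}^{\otimes n})^{\vee}\neq(\mathfrak{g}^{\vee})^{\otimes n}$.
\end{itemize}
The property you state at the end of Step 1 is correct: $\mathfrak{D}$ sends colimits of $\mathscr{P}$-algebras to limits. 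But that says $\mathfrak{D}$ preserves limits of $\alg_{\mathscr{P}}(\dk)^{\textnormal{op}}$, which is the signature of a right adjoint. With accessibility, the adjoint functor theorem then gives the left adjoint $\mathfrak{D}'$, exactly as the paper says. On finite-type $\mathfrak{g}$, a completed version of your formula $\cobar(\mathfrak{g}^\vee)$ does compute $\mathfrak{D}'(\mathfrak{g})$; it is the analogue of Chevalley--Eilenberg cochains.

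\textbf{Why this matters for Step 2.} Lurie's second axiom is phrased on the $\mathscr{P}^!$-side in terms of $\mathfrak{D}'$. You need a full subcategory $\mathcal{B}_0\subseteq\alg_{\mathscr{P}^!}(\dk)$ with the following properties:
\begin{itemize}
\item it contains $0$ and (suitably shifted) free algebras $K_n$ on one generator with $\mathfrak{D}'(K_n)\simeq\field[n]$;
\item it is closed under pushouts $K\sqcup_{K_n}0$;
\item the unit $K\to\mathfrak{D}\mathfrak{D}'K$ is an equivalence on it.
\end{itemize}
In \cite{CCN}, $\mathcal{B}_0$ is the category of good $\mathscr{P}^!$-algebras, and you never identify this subcategory. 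Since $\mathfrak{D}'$ preserves colimits, it sends such a pushout to the pullback $\mathfrak{D}'K\times_{\field[n]}0$.

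So the unit condition on $K\sqcup_{K_n}0$ reduces exactly to your item 3: $\mathfrak{D}$ must take this pullback of Artinian algebras to a pushout. That is where finiteness of $\obar$ on Artinian algebras and the non-positivity of $\mathscr{P}$ enter. The non-formal content of your proposal therefore survives, but it needs to be reorganized around $\mathfrak{D}'$ and $\mathcal{B}_0$. Your item 2 supplies the unit condition for the $K_n$. Your ``biduality'' should read as the counit $\mathfrak{D}'\mathfrak{D}(A)\simeq A$ on Artinian $A$. The result is the equivalence $\alg^{\text{good}}_{\mathscr{P}^!}(\dk)\simeq(\art)^{\textnormal{op}}$ that the paper records, after which Lurie's theorem applies.

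A minor point: your item 4 holds for algebras over any dg operad in characteristic zero, so Koszulness is not what is being used there.
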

Note that one could formally extend a $\mathscr{P}$-operadic formal moduli problem to a $\mathscr{P}$-prestack $\alg_{\mathscr{P}}(\dk^{\text{cn}}) \rightarrow \mathscr{S}$ by reversing the steps in \cite[Chapter 2, Proposition 1.7.6]{GR1} and Theorem \ref{thm14}. This would yield a more ``geometric'' description of operadic formal moduli problems. Nevertheless, the stability property and the construction of the functors $\text{exp}$ and $\text{Lie}$ in \cite{GR2} depend on the commutative and Lie operad, and do not extend to $\mathcal{P}$-prestacks. \displaypar 
Instead, to prove Theorem \ref{thm16}, Calaque-Campos-Nuiten follow the proof of Theorem \ref{thm15} given in \cite{DAGX}, which uses the algebraic interpretation of why the tangent space of a Lie group is a Lie algebra. A proof that Lurie's functor is equivalent to the one given by Gaitsgory and Rozenblyum for the category $\mathbf{FMP}$ of commutative formal moduli problems over the point is given in Appendix \ref{appendixB}. In particular, Lurie constructs a functor 
\begin{align*}
    \Psi: \alg_{\lie}(\dk) \rightarrow \mathbf{FMP}
\end{align*}
such that the cocommutative coalgebra of distributions on $\Psi(\mathfrak{g})$ is equivalent to the Chevalley-Eilenberg coalgebra $\text{Chev}^{\text{enh}}(\mathfrak{g})$. This functor should be interpreted as the Maurer-Cartan space functor, which sends a dg Lie algebra $\mathfrak{g}$ and a strict dg Artin $\field$-algebra $A$ to the Kan complex $\mathbf{MC}_{\bullet}(\mathfrak{g} \otimes A)$ as defined in \cite[Definition 8.1.1]{Hin3}. The formal space $\mathbf{MC}_{\bullet}(\mathfrak{g} \otimes -)$ is a model for the classifying space $B\exp(\mathfrak{g})$, see for example \cite[Theorem 5.2]{B}. However, in the homotopical setting, this does not yield a well-defined functor from the $\infty$-category of dg Lie algebras to the $\infty$-category of formal moduli problems. Essentially, there are two ways to circumvent this problem: One can work with slightly stricter point-set models, which in the end yield the same homotopy theory, but which make the Maurer-Cartan space functor well-behaved. This was done in \cite{Pr}. On the other hand, one can define a more suitable functor which agrees with the Maurer-Cartan space functor in the situations where it is defined. This is the approach taken in \cite{DAGX}, and it is the approach more easily adapted to the operadic setting.\displaypar
I will give a brief account of Calaque-Campos-Nuiten's proof of Theorem \ref{thm16}. For the remainder of this section, let $\mathscr{P}$ be a Koszul operad concentrated in non-positive degrees.\displaypar
Assume for now that $\mathscr{P}$ admits a Hopf unital structure\footnote{See \cite[Definition 1.11]{Saf}.}, i.e. the symmetric sequence $\mathscr{P}^{\text{un}}:= \mathscr{P} \oplus \field$ has the structure of a Hopf operad such that the inclusion $\mathscr{P} \rightarrow \mathscr{P}^{\text{un}}$ is a map of operads. Then, by \cite[Lemma 1.14]{Saf}, $\mathscr{P}$-algebras can be identified with augmented $\mathscr{P}^{\text{un}}$-algebras. Let $\mathscr{C} = \mathscr{P}^{\text{!`}}$ be the Koszul dual cooperad. Consider an affine $\mathscr{P}^{\text{un}}$-scheme $X = \spec (A)$ together with a $\field$-point given by an augmentation $\epsilon: A \rightarrow \field$. Then we expect the $\mathscr{P}^{!}$-algebra corresponding to the formal moduli problem of $X$ at $\epsilon$ to be the one corresponding to the $\mathscr{P}^{\vee}$-coalgebra of point distributions of $X$ under the Bar-Cobar duality for $\mathscr{P}^!$-algebras. In other words, one takes the completion of $A$ with respect to the augmentation, takes the augmentation ideal, and then takes the continuous dual of this $\mathscr{P}$-algebra to obtain a $\mathscr{P}^{\vee}$-coalgebra, and then applies the complete Cobar functor $\widehat{\text{Cobar}}_{\mathscr{P}^{\vee}}: \text{CoAlg}_{\mathscr{P}^{\vee}}(\ch(\field)) \rightarrow \alg_{\mathscr{P}^!}(\ch(\field))$. This agrees with taking the linear dual composed with the Bar functor $\text{Bar}: \alg_{\mathscr{P}}(\ch(\field)) \rightarrow \coalg_{\mathscr{C}}(\ch(\field))$. In particular, we deduce that an equivalence
\begin{align*}
    \mathbf{FMP}_{\mathscr{P}} \xrightarrow{\simeq} \alg_{\mathscr{P}^!}(\dk)
\end{align*}
should send a formal moduli problem of the form $\spec \bar{A} \oplus \field$ to $(\text{Bar}(\bar{A}))^{\vee}$. We take this as a blueprint even for the case of a general Koszul operad without Hopf unital structure. \displaypar
Denote by $\mathfrak{D}: (\alg_{\mathscr{P}}(\dk))^{\text{op}} \rightarrow \alg_{\mathscr{P}^!}(\dk)$ the opposite of the functor
\begin{align*}
    \alg_{\mathscr{P}}(\ch(\field)) \xrightarrow{\obar} \coalg_{\mathscr{C}}(\ch(\field)) \xrightarrow{(-)^{\vee}} (\alg_{\mathscr{P}^!\{-1\}}(\ch(\field)))^{\text{op}} \xrightarrow{[-1]} (\alg_{\mathscr{P}^!}(\ch(\field)))^{\text{op}}.
\end{align*}
The functor $\mathfrak{D}$ preserves limits, and therefore admits a left adjoint $\mathfrak{D}': \alg_{\mathscr{P}^!}(\dk) \rightarrow (\alg_{\mathscr{P}}(\dk))^{\text{op}}$. 
\begin{theorem}[Theorem 5.1 \cite{CCN}]
The adjoint functors $\mathfrak{D}' \dashv \mathfrak{D}$ above form a \textbf{deformation theory}\footnote{Specifically, they even form a \textbf{Koszul duality context}, see \cite{CaG}.} in the sense of \cite[Definition 1.3.9]{DAGX}. The subcategory $(\alg_{\mathscr{P}^!}(\dk))_0$ is given by the so-called \textbf{good $\mathscr{P}^!$-algebras}, see \cite[Section 4.2]{CCN}. 
\end{theorem}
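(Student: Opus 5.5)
The plan is to unwind Lurie's definition \cite[Definition 1.3.9]{DAGX} for the deformation context $(\alg_{\mathscr{P}}(\dk),\{\field\})$ and check its axioms one at a time. Set $\mathcal{A}=\alg_{\mathscr{P}^!}(\dk)$. Let $\mathcal{A}_0\subseteq\mathcal{A}$ be the good $\mathscr{P}^!$-algebras: those built from the initial (zero) algebra by finitely many cell attachments along free algebras $\mathrm{Free}_{\mathscr{P}^!}(\field[m])$, with $m$ in a range of degrees dictated by the shift $[-1]$ in the definition of $\mathfrak{D}$. This is modelled on Lurie's good dg Lie algebras. For a weak deformation theory \cite[Definition 1.3.1]{DAGX} we must verify three things:
\begin{itemize}
\item[(a)] $\mathcal{A}_0$ contains the initial object and is closed under the relevant cell attachments;
\item[(b)] each $\mathfrak{D}(\field[n])$, for $n\le 0$, is equivalent to a cell $K_n\in\mathcal{A}_0$, compatibly with the loop maps;
\item[(c)] the unit $A\to\mathfrak{D}\mathfrak{D}'A$ is an equivalence for every $A\in\mathcal{A}_0$.
\end{itemize}
The full deformation theory additionally requires a conservative, sifted-colimit-preserving family of functors $e_n\colon\mathcal{A}\to\mathrm{Sp}$ compatible with $\mathfrak{D}$ on the stabilization.

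For (b), I will compute $\mathfrak{D}$ on the trivial (square-zero) $\mathscr{P}$-algebra $\field[n]$. Its Bar construction is the cofree conilpotent coalgebra $\mathscr{P}^{\text{!`}}(\field[n])$, with zero twisted differential. The linear dual is therefore the completed free $\mathscr{P}^!\{-1\}$-algebra $\prod_k(\mathscr{P}^{!}(k)\otimes\field[-n]^{\otimes k})_{\Sigma_k}$ up to shift. The key finiteness input is that $\mathscr{P}$ is Koszul, hence binary quadratic with finite-dimensional arity components, and concentrated in non-positive degrees. This makes the weight-$k$ piece live in a degree that goes to infinity with $k$ for $n\le 0$, so the product agrees with the direct sum. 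Hence $\mathfrak{D}(\field[n])\simeq \mathrm{Free}_{\mathscr{P}^!}(\field[-n-1])$ is a cell. Since $\mathfrak{D}$ preserves limits, pullbacks $A\times_{\field[n]}0$ of Artin algebras are sent to pushouts of free cells. This identifies $\mathfrak{D}(\art)^{\text{op}}$ with (a subcategory of) $\mathcal{A}_0$ and gives the cell-closure part of (a).

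For (c), I argue by induction on the number of cells. The unit is an equivalence on $0$. For the inductive step, $\mathfrak{D}'$ is a left adjoint and so turns a cell attachment into a pullback. The Artin algebras produced this way are built by pullbacks along $\field[n]$ with $n\le 1$, and $\mathfrak{D}$ preserves these. The induction therefore reduces to the base case $A=\mathrm{Free}_{\mathscr{P}^!}(\field[m])$. Here $\mathfrak{D}'A$ is the square-zero algebra $\field[-m-1]$ by adjunction, and the unit is exactly the comparison between free and completed free from the previous paragraph. I expect this to be the main obstacle: one must check the degree and convergence estimates uniformly for all the $m$ allowed in a good algebra, and check that the linear dual of the Bar coalgebra is not disturbed by non-finite-type phenomena. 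The fallback is to impose a weight grading and check the equivalence weight by weight, using Koszulness to know that the Bar construction of a square-zero algebra is concentrated on the Koszul diagonal.

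Finally, for the deformation-theory axiom, I take $e_n$ to be the underlying-complex functor $\mathcal{A}\to\dk$, shifted by $n$ and composed with the Eilenberg--MacLane spectrum. It is conservative and preserves sifted colimits, since the forgetful functor from algebras over a dg operad in characteristic $0$ does (cf.\ the rectification and \cite[Proposition 7.9]{PS}-type arguments used earlier). The compatibility $e_n\circ\mathfrak{D}\simeq$ the shifted linear dual of the underlying object on the stabilization follows from the computation $\mathfrak{D}(\field[n])\simeq\mathrm{Free}(\field[-n-1])$, whose linear (weight-one) part is $\field[-n-1]$. Together these give the claimed deformation theory with $\mathcal{A}_0$ the good $\mathscr{P}^!$-algebras.
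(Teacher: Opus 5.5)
\textbf{The proposal has a genuine gap: the inductive step in (c).} That step is the whole content of the theorem, and it rests on a variance error.

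In (b) you write ``since $\mathfrak{D}$ preserves limits, pullbacks $A\times_{\field[n]}0$ of Artin algebras are sent to pushouts of free cells.'' In (c) you then use ``$\mathfrak{D}$ preserves these.'' Neither follows.

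\begin{itemize}
\item The functor $\mathfrak{D}\colon \alg_{\mathscr{P}}(\dk)^{\mathrm{op}}\to\alg_{\mathscr{P}^!}(\dk)$ preserves limits of $\alg_{\mathscr{P}}(\dk)^{\mathrm{op}}$. In other words, it turns colimits of $\mathscr{P}$-algebras into limits of $\mathscr{P}^!$-algebras.
\item A pullback $A\times_{\field[n]}0$ of $\mathscr{P}$-algebras is a pushout in $\alg_{\mathscr{P}}(\dk)^{\mathrm{op}}$. A right adjoint has no reason to carry it to a pushout in $\alg_{\mathscr{P}^!}(\dk)$.
\item The step is also circular. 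Let $\mathfrak{g}'=\mathfrak{g}\sqcup_K 0$, and suppose the unit is already an equivalence on $\mathfrak{g}$, $K$ and $0$. Then the claim $\mathfrak{D}(\mathfrak{D}'\mathfrak{g}\times_{\field[n]}0)\simeq\mathfrak{g}\sqcup_K 0$ is literally the statement that the unit $\mathfrak{g}'\to\mathfrak{D}\mathfrak{D}'\mathfrak{g}'$ is an equivalence.
\item In Lurie's framework this pushout property is a \emph{consequence} of axiom (D2)(a), not an input for proving it. Your identification of $\mathfrak{D}$ of Artin algebras with cell algebras in (b) rests on the same unjustified step.
\end{itemize}
So the argument only establishes the unit equivalence on single free algebras, not on good algebras.

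\textbf{What is needed} is control of $\mathfrak{D}\mathfrak{D}'$ on every good algebra, or equivalently of $\mathfrak{D}$ on every Artin algebra. This is where the finiteness and degree hypotheses really enter, exactly as in Lurie's treatment of good dg Lie algebras. Two possible routes:
\begin{itemize}
\item Take a good $\mathfrak{g}$, i.e. cofibrant and free as a graded algebra on finitely many generators in the allowed degrees. Show that $\mathfrak{D}'\mathfrak{g}$ is the (shifted) linear dual of $\mathrm{Bar}_{\mathscr{P}^!}(\mathfrak{g})$, which is of finite type. The unit then becomes the comparison between the cobar--bar resolution of $\mathfrak{g}$ and its completion. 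Show that the completion is harmless using the weight filtration together with the fact that weight-$k$ pieces move off to infinity in degree.
\item Alternatively, prove the pushout property directly. Apply the colimit-preserving and conservative functor $\mathrm{Bar}_{\mathscr{P}^!}$ to conilpotent coalgebras, and check that it sends $\mathfrak{D}(A)$ to the (shifted) dual of $A$ for every Artin $A$. The dual of a pullback of finite-dimensional algebras along a surjection is then a pushout of coalgebras along an injection.
\end{itemize}

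Your weight-grading fallback is the right tool, but it has to be applied at every stage of the cell filtration, not only to one free algebra. The degree range of the cells must also be pinned down precisely. For example, for the associative operad a generator in degree $0$ gives $\field[[x]]$ versus $\field[x]$, and these are not equivalent.

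Finally, the condition on the $e_\alpha$ in Lurie's definition is corepresentability, $\Omega^{\infty-n}\circ e\simeq\map(K_n,-)$. Your forgetful functors do satisfy this, because the $K_n$ are free, but that is the condition you should verify, rather than a compatibility with $\mathfrak{D}$ on stabilizations.
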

This in particular implies that the adjunction $\mathfrak{D}'\dashv \mathfrak{D}$ restricts to an equivalence
\begin{align*}
    \alg^{\text{good}}_{\mathscr{P}^!}(\dk) \simeq (\art)^{\text{op}}.
\end{align*}
As a direct consequence, one can deduce from \cite[Theorem 1.3.12]{DAGX} that the functor
\begin{align*}
    \Psi_{\mathscr{P}}: \alg_{\mathscr{P}^!}(\dk) &\rightarrow \mathbf{FMP}_{\mathscr{P}}\\
    \mathfrak{g} &\mapsto \map_{\alg_{\mathscr{P}^!}(\dk)}(\mathfrak{D}(-),\mathfrak{g})
\end{align*}
is an equivalence of $\infty$-categories. One can interpret this as follows. There is an equivalence between Artin $\mathscr{P}$-algebras and representable $\mathscr{P}$-formal moduli problems. On the other hand, the above discussion shows that Artin $\mathscr{P}$-algebras are also equivalent to good $\mathscr{P}^!$-algebras. Because $\mathbf{FMP}_{\mathscr{P}}\subseteq \text{Fun}(\art, \mathscr{S})$, any formal moduli problem can be written as some colimit of representable ones. Lurie then proves in \cite[Proof of Theorem 1.3.12]{DAGX} that the types of colimits that produce formal moduli problems are precisely the types of colimits that produce the full category of $\mathscr{P}^!$-algebras from good ones. 
\begin{remark}
For a strict Artin $\mathscr{P}$-algebra $A$ and some conditions on $\mathscr{P}$ (see \cite[Assumption 7.6]{CCN}), we can again interpret the object $\Psi_{\mathscr{P}}(\mathfrak{g})(A)$ as the Maurer-Cartan space $\mathbf{MC}_{\bullet}(\mathfrak{g} \otimes A)$. Indeed, for $\mathfrak{g}$ a $\mathscr{C}^{\vee}$-algebra and $A$ a $\mathscr{P}$-algebra, the tensor product $\mathfrak{g} \otimes A$ inherits the structure of a shifted $L_{\infty}$-algebra.
\end{remark}
\section{The $\mathbb{E}_2$-FMP corresponding to the center}
The $\infty$-operad $\mathbb{E}^{\otimes}_n$ is Koszul self-dual up to a shift. Therefore, by the preceding section, we know that $\mathbb{E}_n$-formal moduli problems\footnote{The $\mathbb{E}_n$-operad is not a dg operad. Nevertheless, one can define $\mathbb{E}_n$-formal moduli problems analogously to the dg operadic case; see \cite[Section 4]{DAGX}.} correspond precisely to (shifted, non-unital) $\mathbb{E}_n$-algebras. Recall from Section \ref{deformation_problems_from_operadic_algebras} that to an $\mathbb{E}_1$-algebra $A$ we can associate a commutative formal moduli problem ${\text{Def}}_A$ sending a dg Artin $\field$-algebra $R$ to the space of $\mathbb{E}_1$-algebras in $\text{Mod}_R(\dk)$ that reduce to $A$ when restricted to $\text{Mod}_\field$. \displaypar
To define the formal moduli problem $\defm_A$, we used the fact that, for a commutative dg algebra $R$, the $\infty$-category $\text{Mod}_R(\dk)$ is monoidal. More generally, if $\mathcal{C}$ is a presentable symmetric monoidal $\infty$-category (see \cite[Definition 3.4.4.1]{HA}), we can view $R\in \alg_{\mathbb{E}_k}(\mathcal{C})$ as an $\mathbb{E}_1$-algebra via the functor $\mathbb{E}^{\otimes}_1 \rightarrow \mathbb{E}^{\otimes}_k$, and it therefore makes sense to consider the $\infty$-category $\lmod_R(\mathcal{C})$ of left modules over $R$. By \cite[Corollary 4.8.5.20]{HA}, we have a functor 
\begin{align*}
    \alg_{\mathbb{E}_k}(\mathcal{C}) \simeq \alg_{\mathbb{E}_{k-1}}(\alg_{\mathbb{E}_1}(\mathcal{C})) \rightarrow \alg_{\mathbb{E}_{k-1}}(\pr)
\end{align*}
sending $R\in \alg_{\mathbb{E}_k}$ to a $\mathbb{E}_{k-1}$-monoidal enhancement of $\lmod_R(\mathcal{C})$. In particular, if $R\in \alg_{\mathbb{E}_2}(\mathcal{C})$, then $\lmod_R(\mathcal{C})$ is a monoidal $\infty$-category. \displaypar
This enables us to upgrade the commutative formal moduli problem ${\text{Def}}_A$ associated to an $\mathbb{E}_1$-algebra in $\dk$ to an $\mathbb{E}_2$-formal moduli problem. For an augmented $\mathbb{E}_2$-algebra $R$, an \textbf{$R$-deformation of $A$} is an $\mathbb{E}_1$-algebra $B\in \alg_{\mathbb{E}_1}(\lmod_R(\dk))$ together with an equivalence of $\mathbb{E}_1$-algebras $\field \otimes_R B \simeq A$.  We can compose the functor $\lmod_{(-)}(\dk): \alg_{\mathbb{E}_2}(\dk) \rightarrow \alg_{\mathbb{E}_1}(\pr_\field)$ with the functor $\alg_{\mathbb{E}_1}(\pr_\field) \rightarrow \pr$ sending an $\mathbb{E}_1$-monoidal $\field$-linear $\infty$-category $\mathcal{C}$ to the $\infty$-category $\alg_{\mathbb{E}_1}(\mathcal{C})$ of $\mathbb{E}_1$-algebras to get a functor 
\begin{align*}
    \alg_{(-)}: \alg_{\mathbb{E}_2}(\dk) &\rightarrow \pr,\\
    R &\mapsto \alg_{\mathbb{E}_1}(\lmod_R)
\end{align*}
which is classified by some coCartesian fibration $\alg \rightarrow \alg_{\mathbb{E}_2}(\dk)$. This yields a left fibration
\begin{align*}
    \alg^{\text{coCart}}_{/(\field, A)} \rightarrow \alg^{\text{aug}}_{\mathbb{E}_2}(\dk)
\end{align*}
whose fiber over an $\mathbb{E}_2$-algebra $R$ is given by the space of $R$-deformations of $A$. By \cite[Corollary 4.17, Corollary 4.18]{BKP}, restricting the straightening of this left fibration to Artin $\mathbb{E}_2$-algebras yields a 1-proximate\footnote{See \cite[Definition 5.1.5]{DAGX}.} $\mathbb{E}_2$-formal moduli problem
\begin{align*}
    \mathbf{Def}_A: \alg_{\mathbb{E}_2}^{\text{Art}} \rightarrow \mathscr{S},
\end{align*}
which is an $\mathbb{E}_2$-formal moduli problem if $A$ is eventually connective \cite[Proposition 4.19]{BKP}. Recall that for connective $\mathbb{E}_1$-algebras, the dg Lie algebra corresponding to the formal moduli problem $\defm_A$ is given by the operadic tangent complex $\mathbb{T}^{\mathbb{E}_1}_A$. By \cite[Theorem 1.1]{FR}, there is a fiber sequence of Lie algebras
\begin{align*}
    A \rightarrow \mathbb{T}_A^{\mathbb{E}_1} \rightarrow \mathfrak{Z}(A)[1],
\end{align*}
which is a restriction of a fiber sequence of non-unital $\mathbb{E}_2$-algebras
\begin{align}\label{eq3}
    A[-1] \rightarrow \mathbb{T}_A^{\mathbb{E}_1}[-1] \rightarrow \mathfrak{Z}(A).
\end{align}
The non-unital $\mathbb{E}_2$-algebra of the $\mathbb{E}_2$-formal moduli problem corresponding to the 1-proximate $\mathbb{E}_2$-formal moduli problem $\mathbf{Def}_A$ can be identified with precisely this $\mathbb{E}_2$-algebra structure on $\mathbb{T}_A^{\mathbb{E}_1}[-1]$, see \cite[Proposition 4.3]{BKP}.\displaypar
Instead of deforming an $\mathbb{E}_1$-algebra $A$ directly, one can also deform its category of modules $\rmod_A(\dk)$. If $\mathcal{C} \in \lmod_{\dk}(\pr)$ is a $\field$-linear $\infty$-category and $R\in \alg_{\mathbb{E}_2}(\dk)$, \textbf{an $R$-deformation of $\mathcal{C}$} is a $\field$-linear $\infty$-category $\mathcal{D}$ left tensored over $\lmod_R(\dk)$ together with an equivalence of $\field$-linear $\infty$-categories
\begin{align*}
    \dk \otimes_{\lmod_R(\dk)} \mathcal{D} \xrightarrow{\simeq} \mathcal{C}. 
\end{align*}
There again is a coCartesian fibration\footnote{For a definition of $\text{LCat}(\field)$, see \cite[Construction 5.3.2]{DAGX}.} $\text{LCat}(\field) \rightarrow \alg_{\mathbb{E}_2}(\dk)$ which restricts to a left fibration $\text{LCat}(\field)^{\text{coCart}}_{/(\field, \mathcal{C})} \rightarrow \alg^{\text{aug}}_{\mathbb{E}_2}(\dk)$, such that the fiber over $R$ is given by the $R$-deformations of $\mathcal{C}$. By \cite[Corollary 5.3.8]{DAGX}, restricting to Artin $\mathbb{E}_2$-algebras, we get a 2-proximate $\mathbb{E}_2$-formal moduli problem (after passing to a larger universe)
\begin{align*}
    \mathbf{CatDef}_{\mathcal{C}}: \alg_{\mathbb{E}_2}^{\text{Art}} \rightarrow \mathscr{S}.
\end{align*}
By \cite[Theorem 5.2.16]{DAGX}, the corresponding $\mathbb{E}_2$-formal moduli problem is controlled by the \textbf{$\field$-linear center of $\mathcal{C}$}, which is the final object of $\text{RCat}(\field)\times_{\lmod_{\dk}(\pr)} \{\mathcal{C}\}$.\displaypar
If $A\in \alg_{\mathbb{E}_1}(\dk)$ and $\mathcal{C} = \rmod_A(\dk)$, then by \cite[Corollary 4.38]{FR}, the $\field$-linear center of $\rmod_A(\dk)$ can be identified with the center of the $\mathbb{E}_1$-algebra $A$. In particular, the $\mathbb{E}_2$-formal moduli problem given by $\mathbf{CatDef}_{\rmod_A(\dk)}$ corresponds to the $\mathbb{E}_2$-algebra $\mathfrak{Z}(A)$. The fiber sequence (\ref{eq3}) then corresponds to the morphism of 2-proximate $\mathbb{E}_2$-formal moduli problems
\begin{align*}
    \mathbf{Def}_A \rightarrow \mathbf{CatDef}_{\rmod_A(\dk)}
\end{align*}
given by the map $\alg^{\text{coCart}}_{/(\field,A)} \rightarrow \text{LCat}(\field)^{\text{coCart}}_{/(\field, \rmod_A(\dk))}$ sending an augmented $\mathbb{E}_2$-algebra $R$ together with an $R$-deformation $B\in \alg_{\mathbb{E}_1}(\lmod_R(\dk))$ of $A$ to the same augmented $\mathbb{E}_2$-algebra $R$ together with the induced $R$-deformation $\rmod_B(\dk)$ of $\rmod_A(\dk)$.\displaypar
One can interpret this in the following way: The difference between deforming an $\mathbb{E}_1$-algebra $A$ and deforming its right module category $\rmod_A(\dk)$ is that $\rmod_A(\dk)$ does not remember the distinguished generator $A$. In fact, one can identify deformations of the $\mathbb{E}_1$-algebra $A$ with deformations of the pair $(\rmod_A(\dk),A)$ as in \cite[Section 4.1]{BKP}. The fact that the center of $A$ identifies with the center of $\rmod_A(\dk)$ shows that it is Morita invariant, and therefore it makes sense that its associated $\mathbb{E}_2$-formal moduli problem does not know about the distinguished generator. \displaypar
\section{The formal automorphism group}\label{the_formal_automorphism_group}
We have seen in the previous section that the deformation problem associated to an $\mathbb{E}_1$-algebra in $\dk$ can be lifted from a commutative formal moduli problem to an $\mathbb{E}_2$-formal moduli problem, which is controlled by the $\mathbb{E}_2$-algebra given by the fiber of the center $\mathfrak{Z}(A) \rightarrow A$. In this section, we prove that this result generalizes to (sufficiently nice) Koszul operads $\mathscr{P}$. In particular, we show in Theorem \ref{thm21} that the formal group corresponding to the deformation problem of a $\mathscr{P}$-algebra $A$ lifts to a group object in $\mathscr{P}^!$-formal moduli problems, and that this formal $\mathscr{P}^!$-group is controlled by the fiber of the center of $A$. The ideas presented here were inspired by Tamarkin's paper \cite{Tam4}. \displaypar 
Throughout this section, let $\mathscr{P}$ be a Koszul operad concentrated in non-positive degrees. \displaypar 
Recall from Section \ref{deformation_problems_from_operadic_algebras} that for $A\in \alg_{\mathscr{P}}$, the deformation problem ${\defm}_A$ can be interpreted as the formal neighborhood $(\mathcal{M}_{\mathscr{P}})^{\wedge}_A$ of $A$ in the moduli space $\mathcal{M}_{\mathscr{P}}$ of $\mathscr{P}$-algebras. This moduli space is given by the prestack $\mathcal{M}_{\mathscr{P}}: \alg_{\dgcomm}(\dk^{\text{cn}}) \ni R\mapsto \alg_{\mathscr{P}}(\lmod_R(\dk))$. Following Gaitsgory-Rozenblyum's identification between formal moduli problems and dg Lie algebras, the associated dg Lie algebra of ${\defm}_A$ is the Lie algebra of the formal group $\Omega (\mathcal{M}_{\mathscr{P}})^{\wedge}_A$. This can be identified with $(\Omega_A \mathcal{M}_{\mathscr{P}})^{\wedge}_A$, the formal completion of the loop space at the constant loop at $A$. The prestack $\Omega \mathcal{M}_{\mathscr{P}}$ can be identified as the moduli space $\mathcal{A}\text{ut}_{\mathscr{P}}(A)$ of $\mathscr{P}$-algebra automorphisms of $A$,
\begin{align*}
    \mathcal{A}\text{ut}_{\mathscr{P}}(A): \alg_{\dgcomm}(\dk^{\text{cn}}) &\rightarrow \mathscr{S}\\
    R &\mapsto \text{Aut}_{\alg_{\mathscr{P}}(\lmod_R(\dk))}(R\otimes A).
\end{align*}
Consequently, the completion $(\Omega_A \mathcal{M}_{\mathscr{P}})^{\wedge}_A$ sends a dg Artin $\field$-algebra $R$ to the \textbf{formal automorphism group}
\begin{align*}
    \faut_{\mathscr{P}}(A)(R) :=  \text{Aut}_{\alg_{\mathscr{P}}(\lmod_R(\dk))}(R\otimes A) \times_{\text{Aut}_{\alg_{\mathscr{P}}(\dk)}(A)} \{\text{id}_A\}.
\end{align*}
That this is indeed the formal group corresponding to the formal moduli problem ${\defm}_A$ is shown in \cite[Theorem 0.3]{GY}.\displaypar
We can model the $\infty$-category of $\mathscr{P}$-algebras by the model category of conilpotent coalgebras over the Koszul dual cooperad $\mathscr{P}^{\text{!`}}$ with model structure transferred via the Bar-Cobar adjunction with respect to the Koszul twisting morphism. Denote the corresponding class of weak equivalences by $W_{\text{Kos}}$. Then, for a strict dg Artin $\field$-algebra $R$, 
\begin{align*}
\map_{\alg_{\mathscr{P}}(\lmod_R(\dk))}(R\otimes A, R\otimes A) &\simeq \map_{\alg_{\mathscr{P}}(\dk)}(A, R\otimes A)\\&\simeq \map_{\alg_{\mathscr{P}}(\ch(\field))[W^{-1}]}(A, R\otimes A) \\ &\simeq \map_{\ccoalg_{\mathscr{P}^{\text{!`}}}(\ch(\field))[W_{\text{Kos}}^{-1}]}(R^{\vee} \otimes \text{Bar}(A), \text{Bar}(A)),
\end{align*}
where in the last step we use the theory of twisting morphisms as in \cite[Proposition 11.3.2]{LV}. \displaypar
The category of strict dg Artin $\field$-algebras is equivalent to the opposite category of finite dimensional conilpotent dg cocommutative coalgebras by taking coaugmentation cokernels of linear duals. The $\infty$-category $\ccoalg_{\dgcocomm}(\ch(\field))[W^{-1}]$ is a left Bousfield localization of the $\infty$-category $\ccoalg_{\dgcocomm}(\ch(\field))[W_{\text{Kos}}^{-1}]$, and thus a full subcategory. In particular, $\faut_{\mathscr{P}}(A)$ is a restriction to the $\infty$-category of dg Artin $\field$-algebras of the functor
\begin{align*}
    (\text{CoAlg}^{\text{conil}}_{\dgcocomm}[W_{\text{Kos}}^{-1}])^{\text{op}} &\rightarrow \mathscr{S}\\
    S \mapsto \map_{\ccoalg_{\mathscr{P}^{\text{!`}}}[W_{\text{Kos}}^{-1}]}(S^+\otimes \text{Bar}(A),\text{Bar}(A))&\times_{\map_{\ccoalg_{\mathscr{P}^{\text{!`}}}[W_{\text{Kos}}^{-1}]}(\text{Bar}(A),\text{Bar}(A))}\{\text{id}_{\obar(A)}\},
\end{align*}
where $S^+ = \field \oplus S$ is the corresponding coaugmented cocommutative coalgebra. Now assume that $\mathscr{P}^{\text{!`}}$ admits an admissible Hopf counital structure:
\begin{definition}[Definition 1.12 \cite{Saf}]\label{def8}
Let $\mathscr{C}$ be a dg cooperad. Let $\mathscr{C}^{\text{cu}}$ be the symmetric sequence that agrees with $\mathscr{C}$ in arities $n>0$, and satisfies $\mathscr{C}^{\text{cu}}(0) = \mathscr{C}(0) \oplus \field$. A \textbf{Hopf counital structure} on $\mathscr{C}$ is a Hopf cooperad structure on the dg cooperad $\mathscr{C}^{\text{cu}}$ such that the projection $\mathscr{C}^{\text{cu}} \rightarrow \mathscr{C}$ is a morphism of cooperads and such that the unit (of the algebra structure with respect to the Hadamard tensor product) on $\mathscr{C}^{\text{cu}}(0)$ is given by the inclusion into the second factor.
\end{definition}
In this case, by \cite[Lemma 1.14]{Saf}, the category $\ccoalg_{\mathscr{P}^{\text{!`}}}(\ch(\field)) \simeq \coalg^{\text{coaug}}_{(\mathscr{P}^{\text{!`}})^{\text{cu}}}(\ch(\field))$ inherits the structure of a symmetric monoidal category. If this makes $\ccoalg_{\mathscr{P}^{\text{!`}}}(\ch(\field))$ together with the model structure transferred from the Bar-Cobar adjunction into a symmetric monoidal model category, we call the Hopf counital structure \textbf{admissible}. In this case, the equivalence of $\infty$-categories
\begin{center}
    \begin{tikzcd}
             \alg_{\mathscr{P}}(\ch(\field))[W^{-1}] \arrow[r, shift left=1ex, "\text{Bar}"{name=G}] &\ccoalg_{\mathscr{P}^{\text{!`}}}(\ch(\field))[W_{\text{Kos}}^{-1}]\arrow[l, shift left=.5ex, "\text{Cobar}"{name=F}]
            \arrow[phantom, from=F, to=G, , "\scriptscriptstyle\boldsymbol{\simeq}" rotate=0]
        \end{tikzcd}
\end{center}
makes $\alg_{\mathscr{P}}(\dk) \simeq \alg_{\mathscr{P}}(\ch(\field))[W^{-1}]$ into a symmetric monoidal $\infty$-category. In particular, the symmetric monoidal model category $(\ccoalg_{\mathscr{P}^{\text{!`}}}(\ch(\field)), W_{\text{Kos}}^{-1})$ can be taken as a model for this symmetric monoidal $\infty$-category. We denote by $\otimesC$ the tensor product on $\ccoalg_{\mathscr{P}^{\text{!`}}}(\ch(\field))[W_{\text{Kos}}^{-1}]$.
\begin{remark}
The coalgebra model category is better behaved than the algebra model category. For example, the $\infty$-category $\alg_{\mathbb{E}_1}(\alg_{\mathscr{P}}(\dk))$ is generally equivalent to $\alg_{\assoc}(\ccoalg_{\mathscr{P}^{\text{!`}}}(\ch(\field)))[W_{\text{Kos}}^{-1}]$, but not to $\alg_{\assoc}(\alg_{\mathscr{P}}(\ch(\field)))[W^{-1}]$. To see this, take for example $\mathscr{P} = \nuassoc$.
\end{remark}
In this situation, one can modify the functor $\faut_{\mathscr{P}}(A)$ to get a new functor
\begin{align*}
    &\underline{\faut}_{\mathscr{P}}(A): (\ccoalg_{\mathscr{P}^{\text{!`}}}(\ch(\field))[W_{\text{Kos}^{-1}}])^{\text{op}} \rightarrow \mathscr{S}\\
    &S \mapsto \map_{\coalg_{\mathscr{P}^{\text{!`}}}(\ch(\field))[W_{\text{Kos}}^{-1}]}(S\otimesC \text{Bar}(A),\text{Bar}(A))\times_{\map_{\coalg_{\mathscr{P}^{\text{!`}}}(\ch(\field))[W_{\text{Kos}}^{-1}]}(\text{Bar}(A),\text{Bar}(A))}\{\text{id}_{\obar(A)}\},
\end{align*}
which we understand as an ``internal'' version of the formal automorphism group.\displaypar
The following new result is the principal link between the center and the deformation theory of $A$.
\begin{proposition}
The functor $\underline{\faut}_{\mathscr{P}}(A)$ is representable, with representing object given by the centralizer of $\textnormal{id}_{\textnormal{Bar}(A)}$ in $\ccoalg_{\mathscr{P}^{\textnormal{!`}}}(\ch(\field))[W_{\textnormal{Kos}^{-1}}]$. 
\end{proposition}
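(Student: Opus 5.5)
The plan is to reduce representability to the universal property of the centralizer, via Lemma \ref{lem5} and Proposition \ref{prop6}. Write $\mathcal{K} := \ccoalg_{\mathscr{P}^{\textnormal{!`}}}(\ch(\field))[W_{\textnormal{Kos}}^{-1}]$, viewed as a symmetric monoidal $\infty$-category via the admissible Hopf counital structure, with unit $\mathbb{1}$ the trivial coalgebra. Then $\mathcal{K}$ is left tensored over itself through $\mathcal{K}^{\otimes}\times_{\ass}\lm$, so Definition \ref{def6} applies with $M=N=\obar(A)$ and $f=\textnormal{id}_{\obar(A)}$. Lemma \ref{lem5} identifies the centralizer $\mathfrak{Z}(\textnormal{id}_{\obar(A)})$ with a morphism object $\mor_{\mathcal{K}_{\obar(A)/}}(\textnormal{id},\textnormal{id})$ in $\mathcal{K}_{\mathbb{1}/}$. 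By Definition \ref{def3}, for every $\mathbb{1}\to S$ in $\mathcal{K}_{\mathbb{1}/}$ it therefore represents
\begin{align*}
\map_{\mathcal{K}_{\obar(A)/}}\bigl(S\otimesC \obar(A),\ \obar(A)\bigr),
\end{align*}
where $S\otimesC\obar(A)$ is regarded as under $\obar(A)$ via $\obar(A)\simeq \mathbb{1}\otimesC\obar(A)\to S\otimesC\obar(A)$, and $\obar(A)$ is under itself via the identity.

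Next I would compute this mapping space. Mapping spaces in an undercategory are fibers: the space above is the fiber of precomposition
\begin{align*}
\map_{\mathcal{K}}(S\otimesC\obar(A),\obar(A)) \rightarrow \map_{\mathcal{K}}(\obar(A),\obar(A))
\end{align*}
over $\textnormal{id}_{\obar(A)}$. This is exactly the fiber product defining $\underline{\faut}_{\mathscr{P}}(A)(S)$. Every conilpotent $\mathscr{P}^{\textnormal{!`}}$-coalgebra $S$ is canonically coaugmented, through the unit given by the second summand in Definition \ref{def8}, so $\mathcal{K}_{\mathbb{1}/}\simeq\mathcal{K}$ on the relevant objects. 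Hence the functor $S\mapsto\map_{\mathcal{K}}(S,\mathfrak{Z}(\textnormal{id}_{\obar(A)}))$ agrees with $\underline{\faut}_{\mathscr{P}}(A)$. I would check naturality in $S$ by arguing as in the proof of Proposition \ref{prop6}, where functoriality on homotopy categories comes from straightening the right fibration $\mathcal{K}_{\mathbb{1}/}\times_{\mathcal{K}_{\obar(A)/}}(\mathcal{K}_{\obar(A)/})_{/\textnormal{id}}\to\mathcal{K}_{\mathbb{1}/}$.

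The main obstacle is existence: I must show that the centralizer, equivalently the morphism object, exists in $\mathcal{K}$. My first approach is presentability. $\mathcal{K}$ is the underlying $\infty$-category of a combinatorial symmetric monoidal model category, so it is presentable. Admissibility of the Hopf counital structure should make $-\otimesC X$ preserve colimits, because it is a left Quillen functor in each variable. By the adjoint functor theorem, the functor $-\otimesC\obar(A)$ then has a right adjoint, which gives internal homs. The same holds in the undercategory, where the coslice tensoring still preserves colimits, since colimits in $\mathcal{K}_{\obar(A)/}$ are computed in $\mathcal{K}$ for weakly contractible diagrams.

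A remaining subtlety is that the centralizer lives in $\mathcal{K}_{\mathbb{1}/}$ rather than in $\mathcal{K}$. I would handle this using the fact that $\mathbb{1}$ is initial among coaugmented objects in the sense above, or otherwise state the representability over $\mathcal{K}_{\mathbb{1}/}$. If cocontinuity of the tensoring fails, the fallback is to construct the morphism object explicitly as a cofree-type coalgebra of coderivations, following the convolution description of $\textnormal{Conv}(\mathscr{P}^{\textnormal{!`}},\e_A)$.
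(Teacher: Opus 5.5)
Your identification of $\underline{\faut}_{\mathscr{P}}(A)$ with the functor represented by the centralizer is the paper's argument. The paper pulls back the right fibration $\mathcal{K}_{\obar(A)//\obar(A)}\to\mathcal{K}_{\obar(A)/}$ along $-\otimesC\textnormal{id}_{\obar(A)}\colon\mathcal{K}_{0/}\to\mathcal{K}_{\obar(A)/}$. It notes that the resulting domain is $\textnormal{Act}(\textnormal{id}_{\obar(A)})$, so a final object there is a representing object for the classified functor. It then identifies the coslice mapping spaces with the fiber products defining $\underline{\faut}_{\mathscr{P}}(A)$. This is what you get by unwinding Lemma \ref{lem5}, Proposition \ref{prop6} and Definition \ref{def3}. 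The paper takes the existence of $\mathfrak{Z}(\textnormal{id}_{\obar(A)})$ for granted, so your existence paragraph goes beyond it. As written, however, that paragraph contains a step that fails.

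You claim that $-\otimesC X\colon\mathcal{K}\to\mathcal{K}$ preserves colimits and therefore has a right adjoint giving internal homs. This cannot hold. The unit $\mathbb{1}$ is the zero coalgebra, which is initial in $\mathcal{K}$; you use this yourself to get $\mathcal{K}_{\mathbb{1}/}\simeq\mathcal{K}$. A cocontinuous $-\otimesC X$ would send $\mathbb{1}$ to an initial object, whereas $\mathbb{1}\otimesC X\simeq X$. Equivalently, an internal hom would force $\map_{\mathcal{K}}(\obar(A),\obar(A))\simeq\map_{\mathcal{K}}(\mathbb{1},\underline{\textnormal{Hom}}(\obar(A),\obar(A)))\simeq\ast$, which is false. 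Concretely, on underlying complexes $S\otimesC T\cong S\oplus T\oplus(S\otimes T)$, so $(S\oplus R)\otimesC T$ has one fewer copy of $T$ than $(S\otimesC T)\oplus(R\otimesC T)$. Hence no admissibility hypothesis gives you cocontinuity of $\otimesC$ in each variable, or internal homs in $\mathcal{K}$.

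The adjoint functor theorem should instead be applied only to the coslice functor $L\colon\mathcal{K}\simeq\mathcal{K}_{\mathbb{1}/}\to\mathcal{K}_{\obar(A)/}$, $S\mapsto(\obar(A)\to S\otimesC\obar(A))$. Its right adjoint, evaluated at $\textnormal{id}_{\obar(A)}$, is the centralizer. A colimit of $p\colon K\to\mathcal{K}_{\mathbb{1}/}$ is a colimit in $\mathcal{K}$ of the cone diagram $K^{\triangleleft}\to\mathcal{K}$, and $K^{\triangleleft}$ is weakly contractible. So $L$ is cocontinuous as soon as $-\otimesC\obar(A)$ preserves weakly contractible colimits. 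That weaker property, not full cocontinuity, is what you must extract from the model structure. At the point-set level, connected colimits of coaugmented coalgebras are computed on underlying complexes and commute with $\otimes$, so this is the statement to promote to $\mathcal{K}$. Your remark that colimits in $\mathcal{K}_{\obar(A)/}$ agree with those in $\mathcal{K}$ for weakly contractible diagrams does not by itself handle coproducts or the initial object. The cone argument above does.
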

\begin{proof}
Consider the right fibration
\begin{align*}
    (\ccoalg_{\mathscr{P}^{\text{!`}}}(\ch(\field))[W_{\text{Kos}}^{-1}])_{\text{Bar}(A)//\text{Bar}(A)} \rightarrow (\ccoalg_{\mathscr{P}^{\text{!`}}}(\ch(\field))[W_{\text{Kos}}^{-1}])_{\text{Bar}(A)/}
\end{align*}
classified by the representable functor $\map_{(\ccoalg_{\mathscr{P}^{\text{!`}}}(\ch(\field))[W_{\text{Kos}}^{-1}])_{\text{Bar}(A)/}}(-,\text{id}_{\text{Bar}(A)})$. Then by stability of right fibrations, the functor 
\begin{align*}
    (\ccoalg_{\mathscr{P}^{\text{!`}}}(\ch(\field))[W_{\text{Kos}}^{-1}])_{0/}\times_{(\ccoalg_{\mathscr{P}^{\text{!`}}}(\ch(\field))[W_{\text{Kos}}^{-1}])_{\text{Bar}(A)/}} (\ccoalg_{\mathscr{P}^{\text{!`}}}(\ch(\field))[W_{\text{Kos}}^{-1}])_{\text{Bar}(A)//\text{Bar}(A)} \\\rightarrow (\ccoalg_{\mathscr{P}^{\text{!`}}}(\ch(\field))[W_{\text{Kos}}^{-1}])_{0/}
\end{align*}
is again a right fibration, which is classified by the functor 
\begin{align*}
F: (\ccoalg_{\mathscr{P}^{\text{!`}}}(\ch(\field))[W_{\text{Kos}}^{-1}])_{0/}^{\text{op}} &\xrightarrow{-\otimesC \text{id}_{\text{Bar(A)}}} (\ccoalg_{\mathscr{P}^{\text{!`}}}(\ch(\field))[W_{\text{Kos}}^{-1}])_{\text{Bar}(A)/}^{\text{op}} \\ &\xrightarrow{\map_{(\ccoalg_{\mathscr{P}^{\text{!`}}}(\ch(\field))[W_{\text{Kos}}^{-1}])_{\text{Bar}(A)/}}(-,\text{id}_{\text{Bar}(A)})} \an.
\end{align*}
The domain of this right fibration is precisely the $\infty$-category $\text{Act}(\text{id}_{\text{Bar}(A)})$ from the definition of the centralizer \ref{def6}. In particular, the centralizer $\mathfrak{Z}(\text{id}_{\text{Bar}(A)})$ is final in 
\begin{align*}
    &\text{Act}(\text{id}_{\text{Bar}(A)}) \\&= (\ccoalg_{\mathscr{P}^{\text{!`}}}(\ch(\field))[W_{\text{Kos}}^{-1}])_{0/}\times_{(\ccoalg_{\mathscr{P}^{\text{!`}}}(\ch(\field))[W_{\text{Kos}}^{-1}])_{\text{Bar}(A)/}} (\ccoalg_{\mathscr{P}^{\text{!`}}}(\ch(\field))[W_{\text{Kos}}^{-1}])_{\text{Bar}(A)//\text{Bar}(A)}.
\end{align*}
By \cite[Proposition 4.4.4.5]{HTT}, a final object in the domain of a right fibration corresponds to an object representing the functor classifying it. In particular, the functor $F$ is representable, with representing object\footnote{Recall that by abuse of notation, we denote the image of the centralizer in the underlying monoidal $\infty$-category by the same symbol.} 
\begin{align*}
    \mathfrak{Z}(\text{id}_{\text{Bar}(A)}) \in \ccoalg_{\mathscr{P}^{\text{!`}}}(\ch(\field))[W_{\text{Kos}}^{-1}].
\end{align*}
Let $(u: 0 \rightarrow S)\in (\ccoalg_{\mathscr{P}^{\text{!`}}}(\ch(\field))[W_{\text{Kos}}^{-1}])_{0/}$. By \cite[Lemma 5.5.5.12]{HTT}, we can identify
\begin{align*}
&\map_{(\ccoalg_{\mathscr{P}^{\text{!`}}}(\ch(\field))[W_{\text{Kos}}^{-1}])_{\text{Bar}(A)/}}(u,\text{id}_{\text{Bar}(A)}) \\ &\simeq \map_{\coalg_{\mathscr{P}^{\text{!`}}}(\ch(\field))[W_{\text{Kos}}^{-1}]}(S\otimesC \text{Bar}(A),\text{Bar}(A))\times_{\map_{\coalg_{\mathscr{P}^{\text{!`}}}(\ch(\field))[W_{\text{Kos}}^{-1}]}(\text{Bar}(A),\text{Bar}(A))}\{\text{id}_{\obar(A)}\}.
\end{align*}
In particular, the functor $F$ is equivalent to $\underline{\faut}_{\mathscr{P}}(A)$, and hence
\begin{align*}
\underline{\faut}_{\mathscr{P}}(A) \simeq \map_{\ccoalg_{\mathscr{P}^{\text{!`}}}(\ch(\field))[W_{\text{Kos}}^{-1}]}(-, \mathfrak{Z}(\text{id}_{\text{Bar}(A)})).
\end{align*}
\end{proof}
As in the classical commutative case, for any strict Artin $\mathscr{P}^!$-algebra $R$, the shifted dual $R^{\vee}[-1]$ is a conilpotent $\mathscr{P}^{\text{!`}}$-coalgebra, and we can hence apply $\underline{\faut}_{\mathscr{P}}(A)$ to the $\infty$-category of strict Artin $\mathscr{P}^{\text{!}}$-algebras $R$ by setting
\begin{align*}
    \underline{\faut}_{\mathscr{P}}(A)(R) := \underline{\faut}_{\mathscr{P}}(A)(R^{\vee}[-1]).
\end{align*}
For a general Artin $\mathscr{P}^!$-algebra, consider the canonical resolution $\mathscr{P}^!_{\infty} = \Omega (\mathscr{P}^!)^{\text{!`}} \xrightarrow{\simeq} \mathscr{P}^!$. This yields an equivalence $\alg^{\text{Art}}_{\mathscr{P}^!} \xrightarrow{\simeq} \alg^{\text{Art}}_{\mathscr{P}_{\infty}^!}$, and since $\mathscr{P}^!_{\infty}$ is cofibrant, by \cite[Lemma 5.12]{CCN}, every Artin $\mathscr{P}^!_{\infty}$-algebra is equivalent to a strict one. For a strict Artin $\mathscr{P}^!_{\infty}$-algebra $R$, the linear dual $R^{\vee}$ is a conilpotent coalgebra over $(\mathscr{P}^!_{\infty})^{\vee} \xleftarrow{\simeq} (\mathscr{P}^!)^{\vee} = \mathscr{P}^{\text{!`}}\{-1\}$, and hence the coextension of $R^{\vee}[-1]$ along this equivalence of cooperads is a conilpotent $\mathscr{P}^{\text{!`}}$-coalgebra. 
\begin{lemma}
Let $R\in \alg^{\textnormal{Art}}_{\mathscr{P}^!}$. Then $R_s^{\vee}[-1]$ is equivalent to 
\begin{align*}
\textnormal{Bar}(\mathfrak{D}_{\phi}(R))
\end{align*}
as a $\mathscr{P}^{\text{!`}}$-coalgebra, where $\phi$ is the twisting morphism corresponding to the identity on $\mathscr{P}^!_{\infty}$. 
\end{lemma}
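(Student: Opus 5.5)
The plan is to unwind the definition of $\mathfrak{D}_{\phi}$ and compare the two coalgebras directly on a strict model. By \cite[Lemma 5.12]{CCN}, we first replace $R$ by a strict Artin $\mathscr{P}^!_{\infty}$-algebra $R_s$, meaning finite dimensional and nilpotent on the complex level. For such $R_s$ the linear dual $R_s^{\vee}$ is an honest conilpotent $(\mathscr{P}^!_{\infty})^{\vee}$-coalgebra, and $R_s^{\vee}[-1]$ is its coextension to a $\mathscr{P}^{\text{!`}}$-coalgebra as described just before the statement.

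Next, recall that $\mathfrak{D}_{\phi}$ is built, like $\mathfrak{D}$, as the opposite of the composite $\obar_{\phi}$, then linear dual, then shift. Here $\phi$ is the twisting morphism $(\mathscr{P}^!_{\infty})^{\text{!`}}$-side corresponding to $\mathrm{id}_{\mathscr{P}^!_{\infty}}$. Concretely, this gives
\begin{align*}
\mathfrak{D}_{\phi}(R_s)\simeq \bigl(\obar_{\phi}(R_s)\bigr)^{\vee}[\pm 1],
\end{align*}
a $\mathscr{P}$-algebra (up to the appropriate shift of operads). Because $R_s$ is finite dimensional in each degree and nilpotent, the bar construction is locally finite. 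Dualizing it therefore identifies $(\obar_{\phi} R_s)^{\vee}$ with the complete Cobar construction $\widehat{\cobar}(R_s^{\vee})$, and by nilpotence this coincides with the ordinary Cobar construction $\cobar(R_s^{\vee}[-1])$ for the Koszul twisting morphism $\mathscr{P}^{\text{!`}}\to\mathscr{P}$. This identification is a direct check on generators and differentials, with the shift conventions $\mathscr{P}^{\text{!`}}\{-1\}=(\mathscr{P}^!)^{\vee}$ accounting for the $[-1]$.

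Third, we apply $\obar$ and use the counit of the Bar--Cobar adjunction for the Koszul twisting morphism. The counit
\begin{align*}
R_s^{\vee}[-1]\longrightarrow \obar\,\cobar\bigl(R_s^{\vee}[-1]\bigr)\simeq \obar\bigl(\mathfrak{D}_{\phi}(R_s)\bigr)
\end{align*}
is a weak equivalence in $W_{\text{Kos}}$ for every conilpotent coalgebra, by \cite[Theorem 11.3.3]{LV}. It is in fact a quasi-isomorphism here, since $\mathscr{P}$ is Koszul and everything is finite and connective enough. This gives the claimed equivalence.

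I expect the main obstacle to be the second step. Dualizing $\obar_{\phi}$ involves an infinite product over arities, and one must check that nilpotence of $R_s$ truncates it so that the dual is the ordinary rather than the completed Cobar. One must also check that the coextension along $(\mathscr{P}^!)^{\vee}\to(\mathscr{P}^!_{\infty})^{\vee}$ matches the twisting morphism $\phi$, and that all degree shifts and signs line up. I would handle this by working arity-wise, using that $\phi$ restricted to $(\mathscr{P}^!)^{\text{!`}}$ is the canonical Koszul twisting morphism.
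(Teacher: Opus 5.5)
Your overall route is the paper's. You strictify $R$ to $R_s$, identify $\mathfrak{D}_{\phi}(R_s)$ with the Cobar construction of $R_s^{\vee}[-1]$, and then apply $\obar$ and Bar--Cobar duality. The identification uses that the dual twisting morphism $\phi^{\dag}$ is the Koszul one up to shift, because $B(\mathscr{P}\{-1\})\simeq\mathscr{P}^{\text{!`}}\{-1\}$. (The map $C\to\obar\cobar C$ is the unit, not the counit, but that is cosmetic.) The difference is that the paper takes the key identification $\mathfrak{D}_{\phi}(R_s)\simeq\cobar_{\phi^{\dag}}(R_s^{\vee})[-1]$ from \cite[Remark 7.9]{CCN}. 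You try to derive it by hand, and the mechanism you give for it is wrong.

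Nilpotence of $R_s$ does not truncate the product over arities. The underlying graded object of $\obar_{\phi}(R_s)$ is the cofree conilpotent coalgebra $\bigoplus_{n}\bigl((\mathscr{P}^!)^{\text{!`}}(n)\otimes R_s^{\otimes n}\bigr)_{\Sigma_n}$. This does not see the algebra structure at all; nilpotence only affects the twisting term of the differential. Take $R_s$ with zero structure on a space of dimension at least $2$. For $\mathscr{P}^!=\mathbf{Com}$ the bar construction is then, up to shift, the cofree Lie coalgebra on $R_s$, which is nonzero in infinitely many arities. So $(\obar_{\phi}R_s)^{\vee}$ really is the completed Cobar construction, whose underlying graded object is a product over arities, and no arity-wise check changes this.

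What you actually need is that the comparison map $\cobar(R_s^{\vee}[-1])\to\widehat{\cobar}(R_s^{\vee}[-1])$ is a quasi-isomorphism. That is a convergence statement governed by degrees, not by nilpotence. Connectivity of Artin algebras, the degrees of the operations of $\mathscr{P}$, and the shifts in the Koszul duality must together ensure that each cohomological degree receives contributions from only finitely many arities. This happens, for instance, for $\mathscr{P}=\mathbf{Lie}$ with connective $R$.

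This step cannot be bypassed, because $\obar$ reflects weak equivalences. If the two Cobar constructions were not quasi-isomorphic, $\obar(\mathfrak{D}_{\phi}(R_s))$ would not be equivalent to $R_s^{\vee}[-1]$. So either cite \cite[Remark 7.9]{CCN} as the paper does, together with the hypotheses it is stated under, or replace ``by nilpotence'' with an honest degree-counting argument.
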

\begin{proof}
By \cite[Remark 7.9]{CCN}, we have
\begin{align*}
\mathfrak{D}_{\phi}(R) \simeq \mathfrak{D}_{\phi}(R_s) \simeq \text{Cobar}_{\phi^{\dag}}(R_s^{\vee})[-1]
\end{align*}
with $\phi^{\dag}: B(((\mathscr{P}^!)^{\text{!`}})^{\vee}) \rightarrow ((\mathscr{P}^!)^{\text{!`}})^{\vee}$ the canonical twisting morphism. But $((\mathscr{P}^!)^{\text{!`}})^{\vee} \cong \mathscr{P}\{-1\}$, so $B(((\mathscr{P}^!)^{\text{!`}})^{\vee}) \simeq \mathscr{P}^{\text{!`}}\{-1\}$, and hence $\phi^{\dag}$ is equivalent to the Koszul twisting morphism for $\mathscr{P}$ up to shift. Hence, $\text{Cobar}_{\phi^{\dag}}(R_s^{\vee})[-1] \simeq \text{Cobar}(R^{\vee}_s[-1])$. 
\end{proof}
We can hence restrict $\underline{\faut}_{\mathscr{P}}(A)$ along 
\begin{align*}
    \delta_{\mathscr{P}^!} := \text{Bar}(\mathfrak{D}_{\phi}(-)): \alg^{\text{Art}}_{\mathscr{P}^!} \rightarrow (\ccoalg_{\mathscr{P}^{\text{!`}}}(\ch(\field))[W_{\text{Kos}^{-1}}])^{\text{op}}
\end{align*}
to obtain a functor
\begin{align*}
    \underline{\faut}_{\mathscr{P}}(A)|_{\alg^{\text{Art}}_{\mathscr{P}^{\text{!}}}}: \alg^{\text{Art}}_{\mathscr{P}^{\text{!}}} \rightarrow \an.
\end{align*}
\begin{theorem}\label{thm21}
The functor $\underline{\faut}_{\mathscr{P}}(A)|_{\alg^{\textnormal{Art}}_{\mathscr{P}^{\textnormal{!}}}}$ is a $\mathscr{P}^!$-operadic formal moduli problem, which under the operadic Lurie-Pridham correspondence \ref{thm16} corresponds to the $\mathscr{P}$-algebra given by the centralizer of $\textnormal{id}_{A}$ in the symmetric monoidal $\infty$-category of $\mathscr{P}$-algebras.
\end{theorem}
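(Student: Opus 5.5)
Combining the representability proposition with the definition of the restricted functor, we get
\begin{align*}
\underline{\faut}_{\mathscr{P}}(A)|_{\alg^{\textnormal{Art}}_{\mathscr{P}^{!}}}(R) \simeq \map_{\ccoalg_{\mathscr{P}^{\textnormal{!`}}}(\ch(\field))[W_{\textnormal{Kos}}^{-1}]}\bigl(\textnormal{Bar}(\mathfrak{D}_{\phi}(R)), \mathfrak{Z}(\textnormal{id}_{\textnormal{Bar}(A)})\bigr).
\end{align*}
Since $\textnormal{Bar}\dashv\textnormal{Cobar}$ is an equivalence of $\infty$-categories, this equals $\map_{\alg_{\mathscr{P}}(\dk)}(\mathfrak{D}_{\phi}(R), Z)$, where $Z := \textnormal{Cobar}(\mathfrak{Z}(\textnormal{id}_{\textnormal{Bar}(A)}))$. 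The plan is to recognize the right-hand side as the functor $\Psi_{\mathscr{P}^!}(Z)$ from the operadic Lurie--Pridham correspondence (Theorem \ref{thm16}) applied to the Koszul dual pair $(\mathscr{P}^!,\mathscr{P})$. Here $\mathscr{P}^{!!}\simeq\mathscr{P}$, and $\mathfrak{D}_{\phi}$ is the corresponding duality functor $(\alg_{\mathscr{P}^!})^{\textnormal{op}}\to\alg_{\mathscr{P}}$. Once this identification holds naturally in $R$, both claims follow together: the functor is in the essential image of $\Psi$, so it is a $\mathscr{P}^!$-formal moduli problem, and the controlling $\mathscr{P}$-algebra is $Z$.

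The key steps, in order, are as follows.

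First, check that the equivalence of the preceding Lemma, $R^\vee_s[-1]\simeq \textnormal{Bar}(\mathfrak{D}_\phi(R))$, is natural in $R$. This makes $\delta_{\mathscr{P}^!}$ a functor, and it shows that the restriction of $\underline{\faut}_{\mathscr{P}}(A)$ agrees with $\map(\mathfrak{D}_{\phi}(-),Z)$ as functors on $\alg^{\textnormal{Art}}_{\mathscr{P}^!}$, not merely objectwise.

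Second, invoke the deformation-theory/Koszul duality context of \cite[Theorem 5.1]{CCN} for the pair $(\mathscr{P}^!,\mathscr{P})$. This requires checking that $\mathscr{P}^!$ is again a Koszul operad satisfying the hypotheses there (concentrated in the right degrees, up to the shift conventions $\{-1\}$). Granting this, $\map(\mathfrak{D}_\phi(-),Z)$ is a formal moduli problem by \cite[Theorem 1.3.12]{DAGX}. As an independent sanity check, one can verify the two axioms directly. First, $\mathfrak{D}_\phi(0)$ is initial, so $X(0)\simeq\ast$. Second, $\mathfrak{D}_\phi$ sends the pullbacks $R\times_{\field[n]}0$ to pushouts, and $\map(-,Z)$ turns pushouts into pullbacks.

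Third, identify $Z$ with the centralizer of $\textnormal{id}_A$ in $\alg_{\mathscr{P}}(\dk)$. The symmetric monoidal structure on $\alg_{\mathscr{P}}(\dk)$ is by definition transported along $\textnormal{Bar}$ from $\otimesC$, so $\textnormal{Cobar}$ is a symmetric monoidal equivalence. Centralizers are defined by a universal property (Definition \ref{def6}) invariant under monoidal equivalences, so $\textnormal{Cobar}$ sends $\mathfrak{Z}(\textnormal{id}_{\textnormal{Bar}(A)})$ to $\mathfrak{Z}(\textnormal{id}_{\textnormal{Cobar}\,\textnormal{Bar}(A)})\simeq\mathfrak{Z}(\textnormal{id}_A)$.

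The main obstacle is the second step, together with the naturality in the first. The comparison lemma was proved via strictification of Artin algebras (\cite[Lemma 5.12, Remark 7.9]{CCN}), which is a priori only objectwise. I would handle this by working throughout with the cofibrant resolution $\mathscr{P}^!_\infty$ and strict Artin algebras. On these, $\mathfrak{D}_\phi$ is given by the explicit point-set formula $\textnormal{Cobar}(R^\vee[-1])$, which is strictly functorial. Because strict Artin algebras are essentially surjective, it suffices to compare the two functors on that subcategory.
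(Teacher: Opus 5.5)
This is essentially the paper's proof: representability of $\underline{\faut}_{\mathscr{P}}(A)$ by $\mathfrak{Z}(\textnormal{id}_{\textnormal{Bar}(A)})$ plus the Bar--Cobar equivalence identifies the restricted functor with $\Psi_{\mathscr{P}^!}\bigl(\textnormal{Cobar}\,\mathfrak{Z}(\textnormal{id}_{\textnormal{Bar}(A)})\bigr)$. Your check that $\mathscr{P}^!$ satisfies the hypotheses of Theorem \ref{thm16}, and your identification of $\textnormal{Cobar}\,\mathfrak{Z}(\textnormal{id}_{\textnormal{Bar}(A)})$ with $\mathfrak{Z}(\textnormal{id}_A)$, only make explicit what the paper leaves implicit.

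The naturality problem you single out as the main obstacle is moot. The restricted functor is defined as the composite $\underline{\faut}_{\mathscr{P}}(A)\circ\delta_{\mathscr{P}^!}$, and $\delta_{\mathscr{P}^!}=\textnormal{Bar}\circ\mathfrak{D}_\phi$ is already a functor; the Lemma only reconciles this with the description $R\mapsto R_s^\vee[-1]$ on strict objects. Your proposed fix would also need more than essential surjectivity: strict Artin algebras would have to present $\alg^{\textnormal{Art}}_{\mathscr{P}^!}$ as a localization, since agreement on strict morphisms alone does not compare the two functors.
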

\begin{proof}
Let $R\in \alg^{\textnormal{Art}}_{\mathscr{P}^!}$. Then
\begin{align*}
    \underline{\faut}_{\mathscr{P}}(A)(R) &\simeq \map_{\ccoalg_{\mathscr{P}^{\text{!`}}}(\ch(\field))[W_{\text{Kos}}^{-1}]}(\textnormal{Bar}(\mathfrak{D}_{\phi}(R)), \mathfrak{Z}(\text{id}_{\text{Bar}(A)})) \\&\simeq \map_{\alg_{\mathscr{P}}(\ch(\field))[W^{-1}]}(\mathfrak{D}_{\phi}(R), \textnormal{Cobar}(\mathfrak{Z}(\text{id}_{\text{Bar}(A)}))).
\end{align*}
Hence, $\underline{\faut}_{\mathscr{P}}(A)|_{\alg^{\textnormal{Art}}_{\mathscr{P}^{\textnormal{!}}}} \simeq \Psi_{\mathscr{P}^!}(\textnormal{Cobar}(\mathfrak{Z}(\text{id}_{\text{Bar}(A)})))$.
\end{proof}
The functor $\underline{\faut}_{\mathscr{P}}(A)$ has a lax monoidal structure induced by the composition in the (formal) automorphism group. In particular, for $S, T\in \ccoalg_{\mathscr{P}^{\text{!`}}}(\ch(\field))[W_{\text{Kos}^{-1}}]$, we have a canonical map
\begin{align*}
    \underline{\faut}_{\mathscr{P}}(A)(T) \times \underline{\faut}_{\mathscr{P}}(A)(S) &\rightarrow \underline{\faut}_{\mathscr{P}}(A)(T\otimesC S) 
\end{align*}
sending $f: S \otimesC \text{Bar}(A) \rightarrow \text{Bar}(A)$ and $g: T \otimesC \text{Bar}(A) \rightarrow \text{Bar}(A)$ to the composition
\begin{align*}
    T \otimesC S \otimesC \text{Bar}(A) \xrightarrow{\text{id}_T\otimesC f} T \otimesC \text{Bar}(A) \xrightarrow{g} \text{Bar}(A).
\end{align*}
Since lax monoidal presheaves correspond to $\mathbb{E}_1$-algebras in the Day convolution monoidal structure, and the Yoneda embedding is fully faithful and monoidal, this precisely corresponds to the structure of an $\mathbb{E}_1$-algebra on the representing object $\mathfrak{Z}(\text{id}_A)$. By construction, this agrees with the $\mathbb{E}_1$-algebra structure on the center $\mathfrak{Z}(A)$.
\begin{remark}
We can think of monoids with respect to the Day convolution symmetric monoidal structure on $\text{FMP}_{\mathscr{P}^!}$ as the generalization to operadic formal moduli problems of formal groups, i.e. group objects in the Cartesian symmetric monoidal category of commutative formal moduli problems. 
\end{remark}
We can summarize the situation as follows: If $\mathscr{P}^{\text{!`}}$ admits a Hopf counital structure, the internal formal automorphism group of a $\mathscr{P}$-algebra $A$ is a formal $\mathscr{P}^!$-group, whose tangent space is given by the center $\mathfrak{Z}(A)\in \alg_{\mathbb{E}_1}(\alg_{\mathscr{P}}(\dk))$. \displaypar
To get a comparison of the $\mathscr{P}^!$-formal moduli problem $\underline{\faut}_{\mathscr{P}}(A)|_{\alg^{\text{Art}}_{\mathscr{P}^{\text{!}}}}$ to the commutative formal moduli problem $\text{Def}_A$, and therefore of the center $\mathfrak{Z}(A)$ to the operadic tangent complex $\mathbb{T}_A^{\mathscr{P}}$, we need to fix a morphism of cooperads\footnote{Often one instead has a natural morphism $\nccocomm \rightarrow \mathscr{P}^{\text{!`}}\{k\}$ for some $k\in \mathbb{Z}$. For example, there is a canonical map $\nccocomm \rightarrow \nccoassoc = (\nuassoc)^{\text{!`}}\{-1\}$. One can easily trace such a shift through the constructions below.}
\begin{align*}
\varphi: \nccocomm \rightarrow \mathscr{P}^{\text{!`}},
\end{align*}
which we assume to be compatible with the respective Hopf counital structures. This yields a (derived) symmetric monoidal corestriction functor $\varphi_{\ast}: \ccoalg_{\nccocomm}(\ch(\field))[W_{\text{Kos}^{-1}}] \rightarrow \ccoalg_{\mathscr{P}^{\text{!`}}}(\ch(\field))[W_{\text{Kos}^{-1}}]$, and we can precompose $\underline{\faut}_{\mathscr{P}}(A)$ with $\varphi_{\ast}$ to get 
\begin{align*}
    (\varphi_{\ast})^{\ast}(\underline{\faut}_{\mathscr{P}}(A)): \ccoalg_{\nccocomm}(\ch(\field))[W_{\text{Kos}^{-1}}]^{\text{op}} &\rightarrow \an\\
    S \mapsto \map_{\ccoalg_{\mathscr{P}^{\text{!`}}}[W_{\text{Kos}}^{-1}]}(\varphi_{\ast}(S)\otimesC \text{Bar}(A),\text{Bar}(A))&\times_{\map_{\ccoalg_{\mathscr{P}^{\text{!`}}}[W_{\text{Kos}}^{-1}]}(\text{Bar}(A),\text{Bar}(A))}\{\text{id}_{\obar(A)}\}.
\end{align*}
This almost agrees with $\faut_{\mathscr{P}}(A)$, but the tensor product of $\mathscr{P}^{\text{!`}}$-coalgebras $\varphi_{\ast}(S) \otimesC \text{Bar}(A)$ does not recover the tensoring of $\mathscr{P}^{\text{!`}}$-coalgebras over (counital) cocommutative coalgebras. In particular, since the monoidal structure on $\ccoalg_{\mathscr{P}^{\text{!`}}}(\ch(\field))$ comes from the monoidal structure on coaugmented $(\mathscr{P}^{\text{!`}})^{\text{cu}}$-coalgebras, the underlying complex of $\varphi_{\ast}(S) \otimesC \text{Bar}(A)$ is given by $S \oplus \text{Bar}(A) \oplus (S\otimes_{\field} \text{Bar}(A))$. On the other hand, if $S^+$ is the corresponding coaugmented counital cocommutative coalgebra, the underlying complex of $S^+ \otimes \text{Bar}(A)$ is given by $\text{Bar}(A) \oplus (S\otimes \text{Bar}(A))$. Hence, there is a morphism 
\begin{align*}
    {\faut}_{\mathscr{P}}(A) \rightarrow (\varphi_{\ast})^{\ast}(\underline{\faut}_{\mathscr{P}}(A))
\end{align*}
given by extending a morphism $S^+ \otimes \text{Bar}(A) \rightarrow \text{Bar}(A)$ to $\varphi_{\ast}(S)\otimesC \text{Bar}(A)$ by sending $S$ to $0$. One can identify $(\varphi_{\ast})^{\ast}(\underline{\faut}_{\mathscr{P}}(A))$ with the formal moduli problem sending a conilpotent non-counital cocommutative coalgebra $S$ to 
\begin{align*}
\map_{\text{CoAlg}^{\text{conil,coaug}}_{(\mathscr{P}^{\text{!`}})^{\text{cu}}}[W_{\text{Kos}}^{-1}]}(\varphi_{\ast}(S)^+\otimesC \text{Bar}(A)^+,\text{Bar}^+(A))&\times_{\map_{\text{CoAlg}^{\text{conil,coaug}}_{(\mathscr{P}^{\text{!`}})^{\text{cu}}}[W_{\text{Kos}}^{-1}]}(\text{Bar}(A)^+,\text{Bar}(A)^+)}\{\text{id}_{\obar(A)^+}\}.
\end{align*}
Since $\faut_{\mathscr{P}}(A)$ is the formal group corresponding to ${\text{Def}}_A$, it is represented by the (non-counital) cocommutative coalgebra $\overline{U}(\mathbb{T}^{\mathscr{P}}_A)$. Similarly, one can show that $(\varphi_{\ast})^{\ast}(\underline{\faut}_{\mathscr{P}}(A))$ is represented by the non-counital universal enveloping algebra of the \textbf{extended operadic tangent complex}
\begin{align*}
    \mathbb{T}^{\mathscr{P},\text{ext}}_A := \text{Conv}((\mathscr{P}^{\text{!`}})^{\text{cu}},\e_A)^{\phi},
\end{align*}
with $\phi$ the Maurer-Cartan element corresponding to the $\mathscr{P}$-algebra structure on $A$. This differs from the operadic tangent complex by an extra component of $A$ corresponding to the 0 ary operations.
\displaypar
Taking duals, the morphism of cooperads $\varphi$ induces a morphism of operads $\varphi^{\vee}: \mathscr{P}^!\{-1\} \rightarrow \nucomm$. This yields an extension morphism $(\varphi^{\vee})^{\ast}: \alg_{\nucomm}(\ch(\field))[W^{-1}] \rightarrow \alg_{\mathscr{P}^!\{-1\}}(\ch(\field))[W^{-1}]$ which preserves Artin algebras. In particular, we get a morphism
\begin{align*}
    \text{Res}^{\mathscr{P}^!}_{\nucomm}:= ([-1]\circ (\varphi^{\vee})^{\ast})^{\ast}: \text{FMP}_{\mathscr{P}^!} \rightarrow \text{FMP}.
\end{align*}
\begin{proposition}\label{prop12}
Under the restriction $\textnormal{Res}^{\mathscr{P}^!}_{\nucomm}$, the $\mathscr{P}^!$-formal moduli problem $\underline{\faut}_{\mathscr{P}}(A)|_{\alg^{\textnormal{Art}}_{\mathscr{P}^{\text{!}}}}$ corresponds to the commutative formal moduli problem $(([1]\circ \varphi_{\ast})^{\ast}(\underline{\faut}_{\mathscr{P}}(A)))|_{\alg^{\textnormal{Art}}_{\nucomm}}$.
\end{proposition}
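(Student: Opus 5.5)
The plan is to reduce everything to a comparison of the two test-object functors and then use that both formal moduli problems are restrictions of the single presheaf $\underline{\faut}_{\mathscr{P}}(A)$ on $\ccoalg_{\mathscr{P}^{\text{!`}}}(\ch(\field))[W_{\text{Kos}}^{-1}]^{\text{op}}$. By definition, $\textnormal{Res}^{\mathscr{P}^!}_{\nucomm}$ applied to $\underline{\faut}_{\mathscr{P}}(A)|_{\alg^{\textnormal{Art}}_{\mathscr{P}^!}}$ is the functor
\begin{align*}
R\longmapsto \underline{\faut}_{\mathscr{P}}(A)\bigl(\delta_{\mathscr{P}^!}((\varphi^{\vee})^{\ast}(R)[-1])\bigr), \qquad R\in \alg^{\textnormal{Art}}_{\nucomm}.
\end{align*}
The right-hand side of the proposition sends $R$ to $\underline{\faut}_{\mathscr{P}}(A)\bigl(\varphi_{\ast}(R^{\vee}[1])\bigr)$, with the appropriate shift. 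So it suffices to produce a natural equivalence of $\mathscr{P}^{\text{!`}}$-coalgebras
\begin{align*}
\delta_{\mathscr{P}^!}\bigl((\varphi^{\vee})^{\ast}R[-1]\bigr)\simeq \varphi_{\ast}\bigl(R^{\vee}[\pm 1]\bigr),
\end{align*}
natural in $R\in \alg^{\textnormal{Art}}_{\nucomm}$. Here "natural" means as a natural transformation of functors into $\ccoalg_{\mathscr{P}^{\text{!`}}}[W_{\text{Kos}}^{-1}]^{\text{op}}$.

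The first step is to reduce to strict Artin algebras. We use \cite[Lemma 5.12]{CCN} together with the fact that $(\varphi^{\vee})^{\ast}$ preserves Artin algebras, as stated above, and is compatible with the chosen strict models.

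The second step, for strict $R$, applies the preceding Lemma. It identifies $\delta_{\mathscr{P}^!}(S)=\textnormal{Bar}(\mathfrak{D}_\phi(S))$ with the shifted linear dual $S^{\vee}[-1]$, viewed as a $\mathscr{P}^{\text{!`}}$-coalgebra via coextension along $(\mathscr{P}^!_\infty)^{\vee}\simeq\mathscr{P}^{\text{!`}}\{-1\}$. For $S=(\varphi^{\vee})^{\ast}R[-1]$, the linear dual of the restriction of scalars along $\varphi^{\vee}$ is, on the nose, the corestriction of $R^{\vee}$ along $\varphi=(\varphi^{\vee})^{\vee}$. This holds because $R$ is finite dimensional, so dualization intertwines $(\varphi^{\vee})^{\ast}$ and $\varphi_{\ast}$ strictly. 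Tracking the suspensions $[-1]$ and the operadic shifts $\{-1\}$ then identifies the two coalgebras, up to the global shift $[1]$ appearing in the statement.

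The main obstacle is the compatibility between two equivalences: the coextension along the resolution $(\mathscr{P}^!_\infty)^{\vee}\xleftarrow{\simeq}\mathscr{P}^{\text{!`}}\{-1\}$ used in the Lemma, and the corestriction $\varphi_{\ast}$. I would handle this by showing that $\varphi^{\vee}$ lifts to a map $\mathscr{P}^!_\infty\{-1\}\to \nucomm_\infty$ compatible with the resolutions. Then both coalgebras arise as corestrictions of the same $\nccocomm$-coalgebra $R^{\vee}$ along homotopic cooperad maps $\nccocomm\to\mathscr{P}^{\text{!`}}$. Corestriction along homotopic, or weakly equivalent, cooperad maps gives naturally equivalent functors in $\ccoalg[W_{\text{Kos}}^{-1}]$, which yields naturality.

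Finally, both sides are formal moduli problems: the left by Theorem \ref{thm21} and exactness of restriction, the right by the identification above. They agree on all of $\alg^{\textnormal{Art}}_{\nucomm}$, which proves the claim.
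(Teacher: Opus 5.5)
Your proposal is correct in substance, but you prove the key comparison differently from the paper. Both you and the paper reduce the statement to a natural equivalence of test coalgebras $\delta_{\mathscr{P}^!}\bigl((\varphi^{\vee})^{\ast}R[-1]\bigr)\simeq\varphi_{\ast}\bigl(\delta_{\nucomm}(R)\bigr)[1]$.

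The paper proves this by factoring $\delta=\obar\circ\mathfrak{D}$ and checking two intertwining relations between model-level functors defined on all algebras:
\begin{itemize}
\item $\mathfrak{D}$ carries restriction along the lift $\Omega((\varphi^{\vee})^{\text{!`}})$ of $\varphi^{\vee}$ to the resolutions to extension along the Koszul dual map $(\varphi^{\vee})^{!}\colon\lie\to\mathscr{P}\{1\}$;
\item $\obar$ carries that extension to corestriction along $\varphi\{1\}$.
\end{itemize}

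You instead apply the preceding lemma ($\delta\simeq(-)^{\vee}[-1]$ on strict models) to both sides. You then use that, for finite-dimensional nilpotent algebras, linear duality exchanges restriction along $\varphi^{\vee}$ with corestriction along $\varphi$ on the nose. This is more elementary, and it avoids $(\varphi^{\vee})^{!}$ and the extension functor altogether.

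It also makes the ``main obstacle'' you announce largely disappear. The algebra $S=(\varphi^{\vee})^{\ast}R[-1]$ is an honest $\mathscr{P}^{!}$-algebra, so $S^{\vee}$ is already a $(\mathscr{P}^{!})^{\vee}$-coalgebra. The coextension in the lemma is then just the derived inverse of corestriction along the quasi-isomorphism $(\mathscr{P}^{!})^{\vee}\to(\mathscr{P}^{!}_{\infty})^{\vee}$. Hence both sides are literally $\varphi_{\ast}(R^{\vee})$ up to shift, corestricted along the same map $\varphi$ rather than along two homotopic ones. The only additional input is that $\varphi_{\ast}$ preserves Koszul weak equivalences, so that it respects $\delta_{\nucomm}(R)\simeq R^{\vee}[-1]$.

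Your lift of $\varphi^{\vee}$ to $\mathscr{P}^{!}_{\infty}\{-1\}\to\nucomm_{\infty}$ is exactly the middle row of the paper's diagram. It is genuinely needed there, because $\mathfrak{D}$ is applied to $\infty$-algebras, but it is superfluous in your route.

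What the paper's route buys is naturality on the whole $\infty$-category. Every functor in its diagram is defined on all algebras, so the equivalence holds directly as functors on $\alg^{\textnormal{Art}}_{\nucomm}$.

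Your route uses $(-)^{\vee}$, which is functorial only on strict (finite-dimensional) models. So it gives a natural equivalence only after restricting to the localization of the $1$-category of strict Artin algebras. To upgrade this to an equivalence of formal moduli problems, you need one of the following:
\begin{itemize}
\item strict Artin algebras actually present $\alg^{\textnormal{Art}}_{\nucomm}$;
\item both functors are determined by their restriction to strict ones.
\end{itemize}
Either is more than the objectwise statement of \cite[Lemma 5.12]{CCN} that every Artin algebra is equivalent to a strict one. Your phrase ``compatible with the chosen strict models'' is the place where this must be supplied.
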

The proof of this proposition directly follows from the following observation. 
\begin{lemma}
There is a natural equivalence of functors
\begin{align*}
    \delta_{\mathscr{P}^!\{-1\}} \circ (\varphi^{\vee})^{\ast} \simeq (\varphi\{1\})_{\ast}\circ \delta_{\nucomm}: \alg^{\textnormal{Art}}_{\nucomm} \rightarrow \ccoalg_{\mathscr{P}^{\text{!`}}\{1\}}(\ch(\field))[W_{\textnormal{Kos}}^{-1}]^{\textnormal{op}}
\end{align*}
\end{lemma}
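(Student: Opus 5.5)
We need to show $\delta_{\mathscr{P}^!\{-1\}}\circ(\varphi^{\vee})^{\ast}\simeq(\varphi\{1\})_{\ast}\circ\delta_{\nucomm}$ on Artin $\nucomm$-algebras. The plan is to reduce to strict Artin algebras and unwind both sides to linear duals. By the preceding lemma, for any Artin $\mathscr{Q}$-algebra $R$ (with $\mathscr{Q}=\mathscr{P}^!\{-1\}$ or $\nucomm$, suitably shifted) we have $\delta_{\mathscr{Q}}(R)=\textnormal{Bar}(\mathfrak{D}_{\phi}(R))\simeq R_s^{\vee}[-1]$. Here $R_s$ is a strict model, and the coalgebra structure is coextended along the quasi-isomorphism of cooperads $(\mathscr{Q}_\infty)^{\vee}\xleftarrow{\simeq}\mathscr{Q}^{\vee}$. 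Since every Artin $\nucomm$-algebra is equivalent to a strict one by \cite[Lemma 5.12]{CCN}, and $(\varphi^{\vee})^{\ast}$ is restriction of structure along an operad map, it preserves equivalences and strictness. It therefore suffices to build a natural equivalence on strict Artin $\nucomm$-algebras $R$ and check that it is compatible with the strict replacements.

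On a strict $R$ the comparison is a point-set identification. The algebra $(\varphi^{\vee})^{\ast}R$ has the same underlying finite-dimensional nilpotent complex as $R$, with $\mathscr{P}^!\{-1\}$-action obtained by precomposing the commutative action with $\varphi^{\vee}\colon\mathscr{P}^!\{-1\}\to\nucomm$. Dualizing a finite-dimensional algebra over $\mathscr{O}$ gives a conilpotent coalgebra over $\mathscr{O}^{\vee}$. The structure map of $((\varphi^{\vee})^{\ast}R)^{\vee}$ is then the dual of $R^{\vee}\to\nccocomm\circ R^{\vee}$ followed by $\varphi$, which is exactly the corestriction $\varphi_{\ast}(R^{\vee})$. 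After the shift $[-1]$ this is $(\varphi\{1\})_{\ast}(R^{\vee}[-1])$. This identity is natural in $R$ because dualization and restriction/corestriction are strict functors on finite-dimensional objects.

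It remains to pass from the strict operads to their resolutions. The left-hand side uses the coextension along $(\mathscr{P}^!_\infty\{-1\})^{\vee}\xleftarrow{\simeq}\mathscr{P}^{\text{!`}}$, and the right-hand side uses $(\nucomm_\infty)^{\vee}\xleftarrow{\simeq}\nccocomm$. I would lift $\varphi^{\vee}$ to a map $\mathscr{P}^!_\infty\{-1\}\to\nucomm_\infty$ of Koszul resolutions, namely $\Omega$ applied to the Koszul dual of $\varphi^{\vee}$. This yields a commutative square of cooperads, from which one sees that coextension commutes with corestriction up to the canonical equivalence. Finally, one checks that $\textnormal{Bar}\circ\mathfrak{D}_\phi$ intertwines these: via \cite[Remark 7.9]{CCN}, $\mathfrak{D}_\phi$ is $\textnormal{Cobar}_{\phi^\dagger}$ of the dual, and Cobar commutes with corestriction along cooperad maps compatible with the twisting morphisms.

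I expect the main obstacle to be this last compatibility. Specifically, the twisting morphisms $\phi^\dagger$ for the two operads must be related by $\varphi$, so that the Bar--Cobar equivalences $W_{\textnormal{Kos}}$ on both sides match. In addition, corestriction $(\varphi\{1\})_{\ast}$ must preserve $W_{\textnormal{Kos}}$, so that it descends to the localizations. For the latter, I would first try to show that $\varphi_{\ast}$ commutes with Cobar up to the natural map $\Omega\varphi$ and that it preserves filtered quasi-isomorphisms, using conilpotency.
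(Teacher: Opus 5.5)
Your argument is correct at the paper's level of rigor, but it is organized differently from the paper's proof.

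\textbf{The paper's route.} The paper keeps $\delta=\textnormal{Bar}\circ\mathfrak{D}$ factored and never passes through strict models or the preceding lemma. First it lifts $\varphi^{\vee}$ to $\Omega(\varphi^{\vee})^{\text{!`}}$ on the resolutions, which is the same lift you use. It then observes that $\mathfrak{D}$ turns restriction along this map into extension along the Koszul dual operad map $(\varphi^{\vee})^{!}\colon\lie\to\mathscr{P}\{1\}$. Next it uses $(\varphi\{1\})_{\ast}\circ\textnormal{Bar}_{\lie}\simeq\textnormal{Bar}_{\mathscr{P}\{1\}}\circ((\varphi^{\vee})^{!})_{\textnormal{ext}}$, and composing the two gives the lemma.

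\textbf{Your route.} You instead use the preceding lemma to collapse $\textnormal{Bar}\circ\mathfrak{D}$ to shifted linear duality of strict models. The core then becomes two facts:
\begin{itemize}
\item the point-set fact that the dual of a restriction is the corestriction of the dual;
\item the formal fact that coextension along the two resolution quasi-isomorphisms commutes with corestriction, because the square of cooperads involving $(\Omega(\varphi^{\vee})^{\text{!`}})^{\vee}$ commutes.
\end{itemize}

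\textbf{Comparison.} Your route is more concrete and avoids the Koszul dual algebra side entirely. In exchange, it leans on two things: the preceding lemma being natural in $R$, and strict Artin algebras presenting the Artin $\infty$-category. The naturality does hold on strict models, since the identification of \cite[Remark 7.9]{CCN} and the Bar--Cobar counit are natural.

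The paper's route is more structural, and its second step is exactly what disposes of the obstacles you flag. Equivalently, that step says $\textnormal{Cobar}_{\mathscr{P}\{1\}}\circ(\varphi\{1\})_{\ast}\cong((\varphi^{\vee})^{!})_{\textnormal{ext}}\circ\textnormal{Cobar}_{\lie}$. This holds because $\varphi$ and $(\varphi^{\vee})^{!}$ intertwine the Koszul twisting morphisms: they agree on generators, and both composites vanish outside weight one.

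Since $\textnormal{Cobar}$ lands in cofibrant algebras and extension is left Quillen, this identity shows that $(\varphi\{1\})_{\ast}$ preserves $W_{\textnormal{Kos}}$. Hence it descends to the localizations. So the ``commutes with Cobar'' half of your planned check is the one that matters. The filtered quasi-isomorphism check is unnecessary, and it would not suffice on its own, since $W_{\textnormal{Kos}}$ is larger than the class of filtered quasi-isomorphisms.
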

\begin{proof}
First note that the following diagram commutes
\begin{center}
\begin{tikzcd}
\alg_{\nucomm}(\ch(\field)) \arrow[r, "(\varphi^{\vee})^{\ast}"] \arrow[d]                                                                  & \alg_{\mathscr{P}^!\{-1\}}((\ch(\field)) \arrow[d]                                                         \\
\alg_{\nucomm_{\infty}}((\ch(\field)) \arrow[r, "(\Omega(\varphi^{\vee})^{\text{!`}})^{\ast}"] \arrow[d, "\mathfrak{D}_{\nucomm_{\infty}}"'] & \alg_{\mathscr{P}_{\infty}^!\{-1\}}((\ch(\field)) \arrow[d, "\mathfrak{D}_{\mathscr{P}_{\infty}^!\{-1\}}"] \\
\alg_{\lie}^{\text{op}}((\ch(\field)) \arrow[r, "((\varphi^{\vee})^!)_{\text{ext}}"']                                                                       & \alg_{\mathscr{P}\{1\}}^{\text{op}}((\ch(\field))                                                         
\end{tikzcd}
\end{center}
Further, we have an equivalence of functors
\begin{align*}
(\varphi\{1\})_{\ast} \circ \text{Bar}_{\lie} \simeq \text{Bar}_{\mathscr{P}\{1\}} \circ ((\varphi^{\vee})^!)_{\text{ext}}: \alg_{\lie}(\dk) \rightarrow \ccoalg_{\mathscr{P}^{\text{!`}}\{1\}}(\ch(\field))[W_{\text{Kos}^{-1}}]^{\textnormal{op}}.
\end{align*}
Restricting to Artin algebras, this yields the result. 
\end{proof}
\begin{corollary}
There is an equivalence of non-counital cocommutative coalgebras
\begin{align*}
    \varphi^{!}(\mathfrak{Z}(\textnormal{id}_{\textnormal{Bar}(A)})) \simeq \overline{U}(\mathbb{T}^{\mathscr{P},\textnormal{ext}}_A).
\end{align*}
\end{corollary}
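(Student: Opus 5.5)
The plan is to compare representing objects on both sides, via Yoneda. The previous proposition identifies $\underline{\faut}_{\mathscr{P}}(A)$ with the presheaf represented by $\mathfrak{Z}(\textnormal{id}_{\textnormal{Bar}(A)})$ on $\ccoalg_{\mathscr{P}^{\text{!`}}}(\ch(\field))[W_{\textnormal{Kos}}^{-1}]$. Precomposing with the corestriction $\varphi_{\ast}$ (suitably shifted) therefore gives the presheaf
\begin{align*}
S\mapsto \map_{\ccoalg_{\mathscr{P}^{\text{!`}}}[W_{\textnormal{Kos}}^{-1}]}(\varphi_{\ast}S,\mathfrak{Z}(\textnormal{id}_{\textnormal{Bar}(A)})).
\end{align*}
Since $\varphi_{\ast}$ is a left adjoint (corestriction of conilpotent coalgebras along a cooperad map, derived for the Koszul model structures) with right adjoint $\varphi^{!}$, this presheaf is represented by the non-counital conilpotent cocommutative coalgebra $\varphi^{!}(\mathfrak{Z}(\textnormal{id}_{\textnormal{Bar}(A)}))$.

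For the right-hand side, I will use the discussion before Proposition \ref{prop12}. There, $(\varphi_{\ast})^{\ast}(\underline{\faut}_{\mathscr{P}}(A))$ is identified with the functor of morphisms of coaugmented $(\mathscr{P}^{\text{!`}})^{\text{cu}}$-coalgebras out of $\varphi_{\ast}(S)^+\otimesC \textnormal{Bar}(A)^+$ fixing the identity. This functor is claimed to be represented by $\overline{U}(\mathbb{T}^{\mathscr{P},\textnormal{ext}}_A)$, the analogue of the statement that $\faut_{\mathscr{P}}(A)$ is represented by $\overline{U}(\mathbb{T}^{\mathscr{P}}_A)$. I will justify this in three steps:
\begin{enumerate}
\item Proposition \ref{prop12} and the preceding lemma show that the restricted presheaf is a commutative formal moduli problem.
\item Theorem \ref{thm15} then says it is controlled by some Lie algebra $\mathfrak{h}$.
\item I compute $\mathfrak{h}$ directly. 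Deformations of the identity of $\textnormal{Bar}(A)^+$ over $S$ are twisted coderivations, that is, Maurer--Cartan elements in $\textnormal{Conv}((\mathscr{P}^{\text{!`}})^{\text{cu}},\e_A)^{\phi}\otimes S^\vee$, by the same twisting-morphism argument used to compute $\mathbb{T}^{\mathscr{P}}_A$ earlier. Hence $\mathfrak{h}\simeq \mathbb{T}^{\mathscr{P},\textnormal{ext}}_A$.
\end{enumerate}
Through the Gaitsgory--Rozenblyum identification $U^{\textnormal{Hopf}}\simeq \textnormal{Grp}(\textnormal{Chev}^{\textnormal{enh}})\circ\Omega$, the loop group of the formal moduli problem $B\exp(\mathfrak{h})$ has distributions $\overline{U}(\mathfrak{h})$. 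So the presheaf is represented by $\overline{U}(\mathbb{T}^{\mathscr{P},\textnormal{ext}}_A)$.

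Both coalgebras now represent the same presheaf on non-counital conilpotent cocommutative coalgebras. By Yoneda in $\ccoalg_{\nccocomm}(\ch(\field))[W_{\textnormal{Kos}}^{-1}]$ they are equivalent, which proves the corollary.

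The main obstacle is the representability step. The presheaf is a priori only known on Artin objects, and formal moduli problems are not always determined by their coalgebra of distributions. I would try to avoid this by working with presheaves on all of $\ccoalg_{\nccocomm}[W_{\textnormal{Kos}}^{-1}]$ throughout. There the Koszul model structure makes $\textnormal{Chev}$ an equivalence with Lie algebras, so both objects correspond to Lie algebras and representability is automatic. The remaining check is that the Lie algebra associated to $\varphi^{!}\mathfrak{Z}$ under $\textnormal{Cobar}$ matches the convolution algebra, and I expect this to follow from the adjunction compatibility proved in the lemma above.
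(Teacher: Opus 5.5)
Your skeleton matches the paper's. You get the left-hand side from the adjunction $\varphi_{\ast}\dashv\varphi^{!}$ applied to the functor represented by $\mathfrak{Z}(\textnormal{id}_{\textnormal{Bar}(A)})$. You get the right-hand side from the claim that $(\varphi_{\ast})^{\ast}\underline{\faut}_{\mathscr{P}}(A)$ is represented by $\overline{U}(\mathbb{T}^{\mathscr{P},\textnormal{ext}}_A)$. The paper only asserts that claim (``similarly, one can show'') before Proposition \ref{prop12}, then transports it to Artin objects via that proposition and the preceding lemma. Working on all of $\ccoalg_{\nccocomm}(\ch(\field))[W_{\textnormal{Kos}}^{-1}]$, as you do, makes the lemma unnecessary for the left-hand side. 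The gap is in your attempt to prove the right-hand claim: it is off by one delooping.

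In step 3 you describe the $S$-points as Maurer--Cartan elements of $\mathbb{T}^{\mathscr{P},\textnormal{ext}}_A\otimes S^{\vee}$, which makes the restricted presheaf $\Psi(\mathbb{T}^{\mathscr{P},\textnormal{ext}}_A)=B\exp(\mathbb{T}^{\mathscr{P},\textnormal{ext}}_A)$. A presheaf of that form is represented by $\textnormal{Chev}^{\textnormal{enh}}(\mathbb{T}^{\mathscr{P},\textnormal{ext}}_A)$; this is Appendix \ref{appendixB}, $\textnormal{Distr}^{\textnormal{Cocom}^{\textnormal{aug}}}(\Psi(\mathfrak{g}))\simeq\textnormal{Chev}^{\textnormal{enh}}(\mathfrak{g})$, or equivalently $\map(\delta(R),C)\simeq\Psi(\textnormal{Cobar}\,C)(R)$ in the Koszul model. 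Moreover, $\textnormal{Chev}^{\textnormal{enh}}(\mathfrak{h})\simeq\textnormal{Bar}(U^{\textnormal{Hopf}}(\mathfrak{h}))$ is the delooping of $\overline{U}(\mathfrak{h})$, not $\overline{U}(\mathfrak{h})$ itself. Your next sentence silently replaces the presheaf by the loop group of $B\exp(\mathfrak{h})$, which is a different functor. The same slip recurs in your last paragraph: if $\textnormal{Cobar}(\varphi^{!}\mathfrak{Z}(\textnormal{id}_{\textnormal{Bar}(A)}))$ were the convolution Lie algebra $\mathbb{T}^{\mathscr{P},\textnormal{ext}}_A$, you would get $\textnormal{Chev}^{\textnormal{enh}}$, contradicting the statement.

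The root cause is the description of the points. $\underline{\faut}_{\mathscr{P}}(A)$ is an automorphism functor. The earlier twisting-morphism computation identifies first-order automorphisms (derivations) with $\mathbb{T}^{\mathscr{P}}_A$, which makes $\mathbb{T}$ the Lie algebra of the group, not the Lie algebra whose Maurer--Cartan space is the group. Concretely, coalgebra maps into $\textnormal{Bar}(A)$ deforming the identity satisfy a Maurer--Cartan equation for the cup-type convolution structure (the product of $A$ applied to several outputs). They do not satisfy the one for the composition bracket on $\textnormal{Conv}((\mathscr{P}^{\text{!`}})^{\textnormal{cu}},\e_A)^{\phi}$. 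For example, for $\mathscr{P}=\nuassoc$, the $\infty$-endomorphisms $\textnormal{id}+x$ of $R\otimes A$ satisfy $\delta x=\pm x\smile x$, whereas deformations of the product satisfy the Gerstenhaber Maurer--Cartan equation.

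What you need is that the restricted presheaf is the loop object $\Omega\Psi(\mathbb{T}^{\mathscr{P},\textnormal{ext}}_A)=\exp(\mathbb{T}^{\mathscr{P},\textnormal{ext}}_A)$, with $R$-points $\Omega_0\mathbf{MC}_{\bullet}(\mathbb{T}^{\mathscr{P},\textnormal{ext}}_A\otimes\mathfrak{m}_R)$. This is the extended analogue of $\faut_{\mathscr{P}}(A)=\Omega\,\defm_A$. Equivalently, $\textnormal{Cobar}(\varphi^{!}\mathfrak{Z}(\textnormal{id}_{\textnormal{Bar}(A)}))$ should be the abelian loop Lie algebra on $\mathbb{T}^{\mathscr{P},\textnormal{ext}}_A[-1]$, not $\mathbb{T}^{\mathscr{P},\textnormal{ext}}_A$. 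Only then do $U^{\textnormal{Hopf}}\simeq\textnormal{Grp}(\textnormal{Chev}^{\textnormal{enh}})\circ\Omega$ and PBW give $\overline{U}(\mathbb{T}^{\mathscr{P},\textnormal{ext}}_A)$ as the representing coalgebra.
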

\begin{proof}
The restriction of the $\mathscr{P}^!$-formal moduli problem $\underline{\faut}_{\mathscr{P}}(A)|_{\alg^{\text{Art}}_{\mathscr{P}^{\text{!}}}}$ represented by the $\mathscr{P}^{\text{!`}}$-coalgebra $\mathfrak{Z}(\text{id}_{\text{Bar}(A)})$ to a commutative formal moduli problem is given by 
\begin{align*}
     \alg^{\text{Art}}_{\nucomm} \ni R &\mapsto \map_{\ccoalg_{\mathscr{P}^{\text{!`}}}[W_{\text{Kos}}^{-1}]}(\delta_{\mathscr{P}^!}((\varphi^{\vee})^{\ast}(R)[-1]), \mathfrak{Z}(\text{id}_{\text{Bar}(A)})) \\& \simeq \map_{\ccoalg_{\mathscr{P}^{\text{!`}}}[W_{\text{Kos}}^{-1}]}(\varphi_{\ast}(\delta_{\nucomm}(R)[1]), \mathfrak{Z}(\text{id}_{\text{Bar}(A)})) \\&\simeq \map_{\ccoalg_{\nccocomm}[W_{\text{Kos}}^{-1}]}(\delta_{\nucomm}(R)[1], \varphi^{!}(\mathfrak{Z}(\text{id}_{\text{Bar}(A)}))) \\&\simeq \map_{\ccoalg_{\nccocomm\{1\}}[W_{\text{Kos}}^{-1}]}(\delta_{\nucomm}(R), \varphi^{!}(\mathfrak{Z}(\text{id}_{\text{Bar}(A)}))[-1]).
\end{align*}
In particular, this commutative formal moduli problem is represented by the shifted cocommutative coalgebra $\varphi^{!}(\mathfrak{Z}(\text{id}_{\text{Bar}(A)}))[-1]$. But by Proposition \ref{prop12}, we know that this commutative formal moduli problem is also represented by $\overline{U}(\mathbb{T}^{\mathscr{P},\textnormal{ext}}_A)[-1]$. 
\end{proof}
Since the morphism $\varphi: \nccocomm \rightarrow \mathscr{P}^{\text{!`}}$ is compatible with the Hopf counital structures, the restriction functor $\text{Res}^{\mathscr{P}^!}_{\nucomm}$ respects monoid objects with respect to the Day convolution symmetric monoidal structure on $\mathscr{P}^!$-formal moduli problems and commutative formal moduli problems respectively. Note that the Day convolution symmetric monoidal structure on commutative formal moduli problems is just the Cartesian one. In particular, 
\begin{corollary}
There is an equivalence of non-counital Hopf algebras
\begin{align*}
    \varphi^!(\mathfrak{Z}(\textnormal{Bar}(A))) \simeq \overline{U}(\mathbb{T}^{\mathscr{P}, \textnormal{ext}}_A)
\end{align*}
\end{corollary}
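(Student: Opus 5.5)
The plan is to upgrade the previous corollary, which identified $\varphi^{!}(\mathfrak{Z}(\textnormal{id}_{\textnormal{Bar}(A)}))$ with $\overline{U}(\mathbb{T}^{\mathscr{P},\textnormal{ext}}_A)$ as non-counital cocommutative coalgebras, to an equivalence of monoid objects. The input is the observation just made: since $\varphi$ is compatible with the Hopf counital structures, $\textnormal{Res}^{\mathscr{P}^!}_{\nucomm}$ preserves monoid objects for Day convolution, and on commutative formal moduli problems Day convolution is the Cartesian structure. So monoids there are formal groups.

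The first step is to record the monoid structure on the $\mathscr{P}^!$ side. The functor $\underline{\faut}_{\mathscr{P}}(A)$ carries the lax monoidal structure given by composition, $(f,g)\mapsto g\circ(\textnormal{id}_T\otimesC f)$. By the representability proposition and the monoidality of the Yoneda embedding for Day convolution, this corresponds to the $\mathbb{E}_1$-structure on $\mathfrak{Z}(\textnormal{id}_{\textnormal{Bar}(A)})$. By Proposition~\ref{prop7}, this is the center $\mathfrak{Z}(\textnormal{Bar}(A))\in\alg_{\mathbb{E}_1}(\ccoalg_{\mathscr{P}^{\text{!`}}}(\ch(\field))[W_{\textnormal{Kos}}^{-1}])$.

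Next, I apply $\textnormal{Res}^{\mathscr{P}^!}_{\nucomm}$. By Proposition~\ref{prop12} and the lemma before it, the restricted functor is represented by $\varphi^!(\mathfrak{Z}(\textnormal{Bar}(A)))[-1]$. I need to check that the lax monoidal structure survives this: $(\varphi\{1\})_{\ast}$ is symmetric monoidal because it is compatible with the Hopf counital structures, and the natural equivalence $\delta_{\mathscr{P}^!\{-1\}}\circ(\varphi^\vee)^{\ast}\simeq(\varphi\{1\})_{\ast}\circ\delta_{\nucomm}$ should be monoidal. Then the right adjoint $\varphi^!$ is lax monoidal, so it carries the $\mathbb{E}_1$-algebra $\mathfrak{Z}(\textnormal{Bar}(A))$ to an $\mathbb{E}_1$-algebra in conilpotent cocommutative coalgebras, which is a non-counital Hopf algebra. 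It represents the commutative formal group $(\varphi_\ast)^\ast\underline{\faut}_{\mathscr{P}}(A)$.

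Finally, I identify this formal group with the one represented by $\overline{U}(\mathbb{T}^{\mathscr{P},\textnormal{ext}}_A)$, carrying its group structure from composition of automorphisms. Here I use the presentation via coaugmented $(\mathscr{P}^{\text{!`}})^{\textnormal{cu}}$-coalgebras and $\textnormal{Bar}(A)^+$: it exhibits $(\varphi_\ast)^\ast\underline{\faut}_{\mathscr{P}}(A)$ as the loop group at the identity of the deformation problem for the extended tangent complex. Its distribution Hopf algebra is then $U^{\textnormal{Hopf}}$ of its Lie algebra by \cite[Chapter 6, Theorem 6.1.2]{GR2}. Since the representing object is unique up to contractible choice and the monoid structures match under the Yoneda embedding, the equivalence of the previous corollary lifts to Hopf algebras.

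I expect the main obstacle to be the monoidality check in the second step. The lemma gives only an equivalence of functors, not of lax monoidal functors, so I need to compare the monoidal constraints of $\delta$ with $\otimesC$ and the Cartesian product. I would try to deduce this from the Bar–Cobar description: $\textnormal{Bar}$ is symmetric monoidal for the Hopf (co)unital structures, so a naturality argument should suffice.
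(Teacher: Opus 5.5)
Your proposal is correct and takes the paper's route. The paper also upgrades the preceding coalgebra-level corollary by observing that, because $\varphi$ is compatible with the Hopf counital structures, $\textnormal{Res}^{\mathscr{P}^!}_{\nucomm}$ preserves Day-convolution monoids, and on commutative formal moduli problems these are formal groups for the Cartesian structure. Your additional steps spell out what the paper asserts without proof rather than changing the argument: identifying the composition monoid on $\underline{\faut}_{\mathscr{P}}(A)$ with the center's $\mathbb{E}_1$-structure via Proposition~\ref{prop7}, checking monoidality of $\delta_{\mathscr{P}^!\{-1\}}\circ(\varphi^\vee)^{\ast}\simeq(\varphi\{1\})_{\ast}\circ\delta_{\nucomm}$, and matching the resulting group structure with the one whose distributions are $\overline{U}(\mathbb{T}^{\mathscr{P},\textnormal{ext}}_A)$.
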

Finally, since there is a canonical inclusion $\mathbb{T}^{\mathscr{P}}_A \rightarrow \mathbb{T}^{\mathscr{P}, \text{ext}}_A$, we get the expected morphism of non-counital Hopf algebras
\begin{align*}
    \overline{U}(\mathbb{T}^{\mathscr{P}}_A) \rightarrow \varphi^!(\mathfrak{Z}(\text{Bar}(A))).
\end{align*}
It is a natural question whether one can recover the associative algebra structure in $\mathscr{P}^{\text{!`}}$-coalgebras on $\mathfrak{Z}(\text{Bar}(A))$ in terms of the extended operadic tangent complex $\mathbb{T}^{\mathscr{P},\text{ext}}_A$ using the Koszul property of $\mathscr{P}$. \displaypar
To this end, Calaque and Willwacher in \cite{CW} introduced a new operad $\mathbf{preLie}_{\mathscr{C}}$ for any cooperad $\mathscr{C}$ with a Hopf counital structure. This operad $\mathbf{preLie}_{\mathscr{C}}$ automatically acts on 
\begin{align*}
    \text{Conv}(\mathscr{C}^{\text{cu}},\e_A)
\end{align*}
for any complex $A$. Specifically, the operad $\mathbf{preLie}_{\mathscr{C}}$ has operations parametrized by rooted trees decorated by operations in $\mathscr{C}^{\text{cu}}$, such that the arity of the decorating operations agrees with the number of incoming edges at the vertex. Just like for the pre-Lie operad, the composition operations are given by grafting trees, and the Hopf cooperad structure is used to multiply the decorating operations at the vertices. Similarly, the action of $\mathbf{preLie}_{\mathscr{C}}$ on $\text{Conv}(\mathscr{C}^{\text{cu}},\e_A)$ is given by the pre-Lie action and using the Hopf cooperad structure to multiply the label of a vertex with the elements of the convolution complex.\displaypar
Twisting with a Maurer-Cartan element in the underlying Lie algebra does not lift the $\mathbf{preLie}_{\mathscr{C}}$-action. One therefore needs to employ the formalism of operadic twisting as described in \cite{DW}. Using this, the map $\lie \rightarrow \mathbf{preLie}_{\mathscr{C}}$ yields a twisted operad called the $\mathscr{C}$-brace operad $\mathbf{Br}_{\mathscr{C}}$, which acts on $\text{Conv}(\mathscr{C}^{\text{cu}},\e_A)^{\phi}$ for any Maurer-Cartan element $\phi$. \displaypar
The brace operad associated to $\mathscr{P}^{\text{!`}}$ should be interpreted as a homotopically well-behaved version of the Boardman-Vogt tensor product $\mathbf{Assoc}\otimes \mathscr{P}$. To this end, P. Safronov showed in \cite{Saf} that a $\mathbf{Br}_{\mathscr{C}}$-algebra structure on a complex $B$ induces the structure of an associative algebra in $\mathscr{C}$-coalgebras on the cofree conilpotent $\mathscr{C}^{\text{cu}}$-coalgebra $\mathscr{C}^{\text{cu}}(B)$ equipped with the Bar-differential. In particular, the unit is induced by $\mathscr{C}^{\text{cu}}(0) = \field$, and the product is uniquely determined by a map
\begin{align*}
    \mathscr{C}^{\text{cu}}(B) \otimes \mathscr{C}^{\text{cu}}(B) \rightarrow B,
\end{align*}
which is defined to be given by the $\mathscr{C}$-brace operations
\begin{align*}
\{-;-,\dots,-\}_{-}: \mathscr{C}^{\text{cu}}(n) \otimes B \otimes B^{\otimes n} \rightarrow B.
\end{align*}
This defines an \textbf{additivity functor} 
\begin{align*}
\text{add}: \alg_{\mathbf{Br}_{\mathscr{C}}}(\ch(\field)) \rightarrow \alg_{\assoc}(\ccoalg_{\mathscr{C}}(\ch(\field))).
\end{align*}
In particular, in the above situation, the $\mathbf{Br}_{\mathscr{P}^{\text{!`}}}$-algebra structure on $\text{Conv}((\mathscr{P}^{\text{!`}})^{\text{cu}},\e_A)^{\phi}$ yields
\begin{align*}
    \text{add}(\text{Conv}((\mathscr{P}^{\text{!`}})^{\text{cu}},\e_A)^{\phi}) \in \alg_{\mathbf{Assoc}}(\ccoalg_{\mathscr{P}^{\text{!`}}}(\ch(\field))).
\end{align*}
We expect this to be precisely the associative algebra in $\mathscr{P}^{\text{!`}}$-coalgebras structure corresponding to the internal formal automorphism group:
\begin{conjecture}\label{conj3}
    We have an equivalence of $\mathbb{E}_1$-algebras
    \begin{align*}
    \textnormal{add}(\textnormal{Conv}((\mathscr{P}^{\textnormal{!`}})^{\textnormal{cu}},\e_A)^{\phi}) \simeq \mathfrak{Z}(A) \in \alg_{\mathbb{E}_1}(\alg_{\mathscr{P}}(\dk)).
    \end{align*}
\end{conjecture}
\begin{remark}
Indeed, in \cite{MS}, V. Melani and Safronov call the $\mathscr{P}^{\text{!`}}$-brace-algebra $\text{Conv}((\mathscr{P}^{\text{!`}})^{\textnormal{cu}}, \e_A)^{\phi}$ the center of $A$.
\end{remark}
In \cite{Tam4}, Tamarkin showed a weak version of Conjecture \ref{conj3}, which holds in the underlying 1-category for $\mathscr{P} = \mathbb{P}_n$, the $(n-1)$-shifted Poisson operad. In particular, he proved the following
\begin{proposition}[Proposition 3.2 \cite{Tam4}]
Let $A\in \alg_{\mathbb{P}_n}(\ch(\field))$. The functor
\begin{align*}
    &F_{A,A}^{\textnormal{id}_A}: (\ccoalg_{\textnormal{co}\mathbb{P}_n}(\ch(\field)))^{\textnormal{op}} \rightarrow \mathbf{Set}\\ 
    &S \mapsto \hom_{\ccoalg_{\textnormal{co}\mathbb{P}_n}(\ch(\field))}(S \otimes_{\mathbb{P}_n} \textnormal{Bar}(A[n]), \textnormal{Bar}(A[n])) \times_{\hom_{\ccoalg_{\textnormal{co}\mathbb{P}_n}(\ch(\field))}(\textnormal{Bar}(A[n]), \textnormal{Bar}(A[n]))} \{\textnormal{id}_A\}
\end{align*}
is represented by $\textnormal{Bar}(\hom_{\field}(\textnormal{Bar}(A[n])^+,A)[n])$, where $\hom_{\field}(\textnormal{Bar}(A[n])^+,A)$ is equipped with the canonical $\mathbb{P}_n$-algebra structure. The induced $\assoc$-algebra structure is compatible with the $\assoc$-algebra structure on $\overline{U}(\mathbb{T}^{\mathbb{P}_n}_A)$.
\end{proposition}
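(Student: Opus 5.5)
The plan is to imitate the proof of the representability proposition for $\underline{\faut}_{\mathscr{P}}(A)$, but entirely at the 1-categorical level and with an explicit representing object. Since $\mathbb{P}_n$ is Koszul with $\mathbb{P}_n^{\text{!`}} = \textnormal{co}\mathbb{P}_n$ up to the appropriate suspension, the cooperad admits a Hopf counital structure. The tensor product $S\otimes_{\mathbb{P}_n}\textnormal{Bar}(A[n])$ therefore has underlying complex $S\oplus \textnormal{Bar}(A[n])\oplus S\otimes\textnormal{Bar}(A[n])$, as computed for $\otimesC$ before Proposition~\ref{prop12}. A morphism of conilpotent $\textnormal{co}\mathbb{P}_n$-coalgebras into the cofree-on-generators coalgebra $\textnormal{Bar}(A[n])$ is determined by its corestriction to cogenerators, $S\otimes_{\mathbb{P}_n}\textnormal{Bar}(A[n])\to A[n]$, subject to a Maurer--Cartan-type equation involving the Bar differential (\cite[Proposition 11.3.2]{LV}). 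The fibre condition over $\textnormal{id}_A$ fixes the component on the summand $\textnormal{Bar}(A[n])$. The remaining datum is therefore a linear map $S\otimes \textnormal{Bar}(A[n])^+\to A[n]$, i.e.\ a linear map $S\to \hom_{\field}(\textnormal{Bar}(A[n])^+,A)[n]$ after adjusting shifts.

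Next I would show that the compatibility equation for the coalgebra map is exactly the statement that the adjoint $S\to \hom_{\field}(\textnormal{Bar}(A[n])^+,A)[n]$ is a twisting morphism, in the sense of the Koszul twisting morphism for $\mathbb{P}_n$, with respect to the canonical $\mathbb{P}_n$-algebra structure on $C:=\hom_{\field}(\textnormal{Bar}(A[n])^+,A)$. The product on $C$ is the convolution product, using the Hopf (cocommutative) part of $\textnormal{co}\mathbb{P}_n^{\text{cu}}$ to coproduct $\textnormal{Bar}(A[n])^+$ and then multiplying in $A$. The bracket comes from the $\mathbb{P}_n$-structure of $A$ together with the brace/insertion operations into $\textnormal{Bar}(A[n])$. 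In other words, $C$ is the extended tangent complex $\textnormal{Conv}((\mathbb{P}_n^{\text{!`}})^{\text{cu}},\e_A)^{\phi}$, and $F^{\textnormal{id}_A}_{A,A}(S)\cong \textnormal{Tw}(S, C[n])$. By the bar--twisting adjunction for conilpotent $\textnormal{co}\mathbb{P}_n$-coalgebras, $\textnormal{Tw}(S,C[n])\cong \hom(S,\textnormal{Bar}(C[n]))$, naturally in $S$. This gives representability.

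Finally, for the $\assoc$-compatibility, I would transport the composition $g\circ(\textnormal{id}_T\otimes f)$, as in the lax monoidal structure discussed after Theorem~\ref{thm21}, through the identification. This produces an associative product on $\textnormal{Bar}(C[n])$ whose corestriction is given by brace operations, matching Safronov's additivity construction. Restricting along the cocommutative part $\nccocomm\to \textnormal{co}\mathbb{P}_n$ then yields the product on $\overline{U}$ of the Lie algebra $C[n-1]\supseteq \mathbb{T}^{\mathbb{P}_n}_A$, which is the claimed compatibility with $\overline{U}(\mathbb{T}^{\mathbb{P}_n}_A)$.

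I expect the main obstacle to be the second step: checking that the coalgebra-map equation on $S\otimes_{\mathbb{P}_n}\textnormal{Bar}(A[n])$ decodes exactly into the Maurer--Cartan equation for the $\mathbb{P}_n$-structure on $C$. The cross terms $S\otimes \textnormal{Bar}$ must reproduce both the convolution product and the bracket with the correct signs and shifts. I would do this first for the cogenerating arities of $\textnormal{co}\mathbb{P}_n$ (binary co-commutative and co-Lie parts), and then argue that the higher components follow by cofreeness.
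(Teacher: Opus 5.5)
The paper gives no proof of this statement; it quotes it from \cite{Tam4}. The closest in-paper argument is the abstract proof that $\underline{\faut}_{\mathscr{P}}(A)$ is represented by a centralizer, which uses a right fibration together with \cite[Proposition 4.4.4.5]{HTT} and produces no explicit formula.

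Your steps one and two are a sound and more explicit route to the representability half. The Koszul twisting morphism of $\mathbb{P}_n$ vanishes outside arity $2$, and the arity-$2$ part of the decomposition of $S\otimes_{\mathbb{P}_n}\textnormal{Bar}(A[n])$ uses only the arity-$2$ Hopf diagonal. So the decoding is a finite binary check, and there are no higher components to handle by cofreeness.

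You should, however, correct your description of $C=\hom_{\field}(\textnormal{Bar}(A[n])^+,A)$. Write $\Delta$ and $\delta$ for the coproduct and cobracket of $\textnormal{Bar}(A[n])^+$, and $\mu_A,\lambda_A$ for the product and bracket of $A$. Then:
\begin{itemize}
\item the product of $C$ is $\mu_A\circ(\phi\otimes\psi)\circ\Delta$, which is what pairs under $\kappa$ with the cobracket of $S$;
\item the bracket of $C$ is $\lambda_A\circ(\phi\otimes\psi)\circ\Delta+\mu_A\circ(\phi\otimes\psi)\circ\delta$, which is what pairs with the coproduct of $S$;
\item the differential of $C$ is $d_A\circ-\pm-\circ d_{\textnormal{Bar}}+\{\pi,-\}$.
\end{itemize}
Insertion (brace) operations produce the bracket of the $\mathbb{P}_{n+1}$-structure on the deformation complex. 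That bracket does not have the degree of a $\mathbb{P}_n$-bracket, and insertions enter your decoding only through $d_{\textnormal{Bar}}$ and the twist by $\pi$.

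The genuine gap is step three. Let $\varphi\colon\nccocomm\to\textnormal{co}\mathbb{P}_n$ be the cooperad map, and consider the two readings of ``restricting along $\varphi$''.
\begin{itemize}
\item If it means forgetting the cobracket: the underlying graded cocommutative coalgebra of $\textnormal{Bar}(C[n])$ is cofree on shifted co-Lie words in $C[n]$. It is therefore not $\overline{U}(C[n-1])$, even as a graded coalgebra.
\item If it means the coextension $\varphi^!$, the right adjoint of $\varphi_{\ast}$, which is what represents $F^{\textnormal{id}_A}_{A,A}\circ\varphi_{\ast}$: you get the Chevalley--Eilenberg coalgebra of $C$ for its $\mathbb{P}_n$-bracket.
\end{itemize}
In neither case have you shown that the composition product is the product of a $\overline{U}$. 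Moreover, the Lie bracket on $\mathbb{T}^{\mathbb{P}_n}_A$ that $\overline{U}$ refers to is the commutator of coderivations of $\textnormal{Bar}(A[n])$, and this bracket appears nowhere in your construction.

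What is missing is a comparison of functors rather than of products.
\begin{itemize}
\item For conilpotent cocommutative $S$, let $G(S)$ be the set of coalgebra maps $S^+\otimes\textnormal{Bar}(A[n])\to\textnormal{Bar}(A[n])$ that restrict to the identity. Near-identity automorphisms of $R\otimes\textnormal{Bar}(A[n])$, with $R=S^{\vee}$ for finite-dimensional $S$, are exponentials of coderivations. Hence $G$ is represented by $\overline{U}(\mathbb{T}^{\mathbb{P}_n}_A)$, and composition corresponds to its product.
\item The projection $\varphi_{\ast}S\otimes_{\mathbb{P}_n}\textnormal{Bar}(A[n])\to S^+\otimes\textnormal{Bar}(A[n])$ kills the coideal $S\otimes 1$, so it is a coalgebra map. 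Precomposing with it gives a natural transformation $G\to F^{\textnormal{id}_A}_{A,A}\circ\varphi_{\ast}$, and you should check that it respects composition.
\item By Yoneda, the induced map $\overline{U}(\mathbb{T}^{\mathbb{P}_n}_A)\to\varphi^!\textnormal{Bar}(C[n])$ is then a morphism of associative algebras. This is what ``compatible'' should mean.
\end{itemize}
This is exactly the mechanism the paper uses in the $\infty$-categorical setting: the map $\faut_{\mathscr{P}}(A)\to(\varphi_{\ast})^{\ast}(\underline{\faut}_{\mathscr{P}}(A))$ and the resulting morphism $\overline{U}(\mathbb{T}^{\mathscr{P}}_A)\to\varphi^!(\mathfrak{Z}(\textnormal{Bar}(A)))$. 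The brace formula for the product is not needed for the statement. Its claimed agreement with Safronov's additivity is a separate assertion that you have not proved.
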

\begin{example}
\begin{itemize}
    \item For the Lie operad, the brace operad $\mathbf{Br}_{\text{co}\mathscr{C}\text{om}}$ agrees with the Lie operad itself, and the additivity functor $\text{add}: \alg_{\lie}(\dk) \rightarrow \alg_{\mathbb{E}_1}(\alg_{\lie}(\dk))$ is given by the loops functor, and is thus an equivalence. Note that this is precisely the functor used in Gaitsgory-Rozenblyum's version of the Lurie-Pridham correspondence to obtain the Lie algebra associated to the Hopf algebra of distributions of a formal group.
    \item For the shifted Poisson operad $\mathbb{P}_n$, there is an identification $\mathbf{Br}_{\text{co}\mathbb{P}_n} \simeq \mathbb{P}_{n+1}$. The additivity functor $\text{add}: \alg_{\mathbb{P}_{n+1}}(\dk) \rightarrow \alg_{\mathbb{E}_1}(\alg_{\mathbb{P}_n}(\dk))$ is given by the Bar-Cobar duality for shifted Lie bialgebras, see \cite[Proposition 2.17]{Saf}. This is the Poisson version of the Dunn additivity theorem.
\end{itemize}
\end{example}
The operad $\mathbf{Braces}$ can be identified with a strictification of the $\mathscr{P}^{\text{!`}}$-brace operad for the Koszul operad $\nuassoc$. In particular, the following is a pushout of operads by \cite[Proposition 2.2]{Saf}
\begin{center}
    \begin{tikzcd}
\mathbb{A}_{\infty} \arrow[d] \arrow[r] & \mathbf{Br}_{\nccoassoc}\{1\} \arrow[d] \\
\nuassoc \arrow[r]    & \mathbf{Braces}                                          
\end{tikzcd}
\end{center}
For a cofibrant associative algebra $A$, the $\mathbf{Braces}$-algebra structure on $C^{\ast}(A,A) \simeq \text{Conv}(\coassoc\{1\}, \e_A)^{\phi}[-1]$ agrees with the $\mathbf{Br}_{\nccoassoc}$-algebra structure constructed by \cite{CW}. In particular, Safronov's additivity functor produces an $\mathbb{E}_2$-algebra directly from the $\mathbf{Braces}$-algebra structure on the Hochschild cochain complex, which in contrast to Tamarkin's morphism $\Psi_T: \ger_{\infty} \rightarrow \mathbf{Braces}$ does not depend on the choice of a Drinfeld associator. Therefore, for $\mathscr{P}$ the associative operad, Conjecture \ref{conj3} can also be interpreted as saying that the center $\mathbb{E}_2$-algebra structure on the Hochschild complex is canonically equivalent to the $\mathbf{Braces}$-algebra structure.
\section{Kontsevich formality and the center}
Recall from Chapter \ref{Kontsevich_formality_and_deformation_quantization} that the Kontsevich formality map is an $\infty$-quasi-isomorphism of $L_{\infty}$-algebras
\begin{align*}
    T^{\ast}_{\text{poly}}(A)[1] \rightarrow D^{\ast}_{\text{poly}}(A)[1]
\end{align*}
for any regular commutative $\field$-algebra $A$.\displaypar
On the other hand, Tamarkin's formality morphism, which depends on the choice of a Drinfeld associator, is an $\infty$-quasi-isomorphism of $\mathbf{Ger}_{\infty}$-algebras
\begin{align*}
    T^{\ast}_{\text{poly}}(A) \rightarrow D^{\ast}_{\text{poly}}(A),
\end{align*}
where the complex of polyvector fields forms a $\mathbf{Ger}_{\infty}$-algebra via the resolution $\mathbf{Ger}_{\infty} \rightarrow \mathbf{Ger}= \mathbb{P}_2$, and the complex of polydifferential operators forms a $\mathbf{Ger}_{\infty}$-algebra via Tamarkin's morphism $\Psi_T: \mathbf{Ger}_{\infty} \rightarrow \mathbf{Braces}$. \displaypar
By Corollary \ref{cor9}, the $\mathbf{Ger}_{\infty}$-algebra structure on $D^{\ast}_{\text{poly}}(A)$ can, at least up to homotopy, be recovered from the $\mathbb{E}_2$-algebra structure coming from the identification $D^{\ast}_{\text{poly}}(A) \simeq \mathfrak{Z}_{\mathbb{E}_1}(A)$. On the other hand, the complex of polyvector fields generalizes to arbitrary Poisson algebras $B\in \alg_{\mathbb{P}_1}(\ch(\field))$ corresponding to a Maurer-Cartan element $\psi\in\text{Conv}(\text{co}\mathbb{P}_1\{1\},\e_A)$, see \cite[Section 1.1]{Saf2}, and if $B$ is cofibrant as a commutative algebra, we have a quasi-isomorphism
\begin{align*}
    T^{\ast}_{\text{poly}}(B) \simeq \text{Conv}((\text{co}\mathbb{P}_1\{1\})^{\text{cu}}, \e_B)^{\psi}[-1]
\end{align*}
which is compatible with the homotopy $\mathbb{P}_2$-algebra structures on both sides. We can view a commutative algebra $A$ as a Poisson algebra with trivial bracket, and if $A$ is cofibrant we can identify the classical complex of polyvector fields with the above convolution complex.\displaypar
By Safronov's Poisson additivity theorem \cite[Theorem 2.22]{Saf}, the derived operadic center
\begin{align*}
    \mathfrak{Z}_{\mathbb{P}_1}(A) \in \alg_{\mathbb{E}_1}(\alg_{\mathbb{P}_1}(\dk)) \simeq \alg_{\mathbb{P}_2}(\dk)
\end{align*}
also has the structure of a $\mathbb{P}_2$-algebra. Assuming Conjecture \ref{conj3}, this $\mathbb{P}_2$-algebra agrees with the $\mathbb{P}_2$-algebra $T^{\ast}_{\text{poly}}(A)$. \displaypar
This suggests that Tamarkin's formality theorem is really a comparison between the Poisson and associative center respectively of a commutative algebra $A$. Since the Poisson center $\mathfrak{Z}_{\mathbb{P}_1}(A)$ is a $\mathbb{P}_2$-algebra, while the associative center $\mathfrak{Z}_{\mathbb{E}_1}(A)$ is a $\mathbb{E}_2$-algebra, this requires a choice of identification between $\mathbb{P}_2$ and $\mathbb{E}_2$, which is precisely the choice of a Drinfeld associator also required for Tamarkin's original formality morphism. In particular, Tamarkin's formality morphism can be viewed as comparing the $\mathbb{P}_2$-formal moduli problem of Poisson deformations of $A$ with the $\mathbb{E}_2$-formal moduli problem of associative deformations of $A$. \displaypar
On the other hand, Kontsevich's formality map, which is only a morphism of homotopy Lie algebras, can be viewed as comparing the underlying commutative formal moduli problems of Poisson and associative deformations of $A$ respectively.
\newpage
\renewcommand{\thetheorem}{\thechapter.\arabic{theorem}}
\setcounter{theorem}{0}
\appendix
\chapter{The endomorphism $\infty$-category}\label{appendixA}
In this section we show that our endomorphism $\infty$-category $\mathcal{C}_{\mathfrak{a}}^{\otimes} \times_{{\mathcal{C}_{\mathfrak{m}}}} {\mathcal{C}_{\mathfrak{m}}}_{/M}$ defined in Definition \ref{def1} is equivalent to Lurie's endomorphism $\infty$-category as defined in \cite[Definition 4.7.1.1]{HA}. In particular, this will imply that $\mathcal{C}_{\mathfrak{a}}^{\otimes} \times_{\mathcal{C}_{\mathfrak{m}}} (\mathcal{C}_{\mathfrak{m}})_{/M}$ is the underlying $\infty$-category of a monoidal $\infty$-category.\displaypar
Lurie's definition is given in terms of planar $\infty$-operads as defined in \cite[Section 4.1.3]{HA}. In particular it is given for a map $p: \mathscr{M}^{\circledast} \rightarrow \Delta^1 \times N(\mathbb{\Delta})^{\text{op}}$ exhibiting $\mathscr{M} := \mathscr{M}^{\circledast}_{0,[0]}$ as left tensored over $\mathcal{C}^{\circledast}:= \mathscr{M}^{\circledast}\times_{\Delta^1} \{1\}$ in the planar sense as in \cite[Variant 4.2.2.11]{HA}, and an element $M\in \mathscr{M}$. 
\begin{definition}[\cite{HA}, Definition 4.7.1.1]\label{def4}
Let $p: \mathscr{M}^{\circledast} \rightarrow \Delta^1 \times N(\mathbb{\Delta})^{\text{op}}$ be a map exhibiting $\mathscr{M}$ as left tensored over the planar $\infty$-operad $\mathcal{C}^{\circledast}$. An enriched morphism of $\mathscr{M}$ is a span 
\begin{align*}
    M \xleftarrow{\alpha} X \xrightarrow{\beta} N
\end{align*}
such that 
\begin{itemize}
    \item $p(\alpha)$ is the map $(0,[1]) \rightarrow (0, [0])$ given by $[0] \cong \{0\} \hookrightarrow [1]$ in $\mathbb{\Delta}$,
    \item $\beta$ is inert and $p(\beta)$ is the map $(0,[1]) \rightarrow (0, [0])$ given by $[0] \cong \{1\} \hookrightarrow [1]$ in $\mathbb{\Delta}$.
\end{itemize}
Denote by $\text{EnMor}(\mathscr{M}^{\circledast})$ the full subcategory of $\text{Fun}_{\Delta^1 \times N(\mathbb{\Delta})^{\text{op}}}(\Lambda_0^2,\mathscr{M}^{\circledast})$ spanned by the enriched morphisms. There is a pair of forgetful functors $\text{EnMor}(\mathscr{M}^{\circledast})\rightarrow \mathscr{M}$ sending a span $M \leftarrow X \rightarrow N$ to $M$ and $N$ respectively. The \textbf{endomorphism $\infty$-category} of $M\in \mathscr{M}$ is the fiber product
\begin{align*}
    \mathcal{C}[M]_{\text{Lurie}} := \{M\} \times_{\mathscr{M}} \text{EnMor}(\mathscr{M}^{\circledast}) \times_{\mathscr{M}} \{M\}.
\end{align*}
\end{definition}
Instead, we start from a coCartesian fibration of $\infty$-operads $q: \mathcal{C}^{\otimes} \rightarrow \lm$, exhibiting $\mathcal{C}_{\mathfrak{m}}$ as left tensored over the monoidal $\infty$-category $\mathcal{C}^{\otimes}_{\mathfrak{a}}$. Hence, we first need to construct from $q$ an $\infty$-category $\mathscr{M}^{\circledast}$ and a map $p: \mathscr{M}^{\circledast} \rightarrow \Delta^1 \times N(\mathbb{\Delta})^{\text{op}}$ as above.\displaypar

Construct the $\infty$-category $\mathscr{M}^{\circledast}$ as the fiber product
    \begin{center}
        \begin{tikzcd}
\mathscr{M}^{\circledast} :=\mathcal{C}^{\otimes}\times_{\lm}(N(\mathbb{\Delta}^{\text{op}})\times\Delta^1) \arrow[d] \arrow[r] & \mathcal{C}^{\otimes} \arrow[d, "q"] \\
N(\mathbb{\Delta}^{\text{op}})\times \Delta^1 \arrow[r, "\gamma"']                                 & \lm                                 
\end{tikzcd}
    \end{center}
    where $\gamma$ is defined in \cite[Remark 4.2.2.8]{HA}. In particular, $\mathscr{M}^{\circledast}$ comes equipped with a coCartesian fibration $p: \mathscr{M}^{\circledast} \rightarrow N(\mathbb{\Delta}^{\text{op}}) \times \Delta^1$. We have
    \begin{align*}
        \mathscr{M}^{\circledast}_{[0],0} = \mathcal{C}^{\otimes} \times_{\lm} \{[0],0\} \simeq \mathcal{C}_{\mathfrak{m}},
    \end{align*}
    since the functor $\text{LCut}: N(\mathbb{\Delta}^{\text{op}})\rightarrow \lm$ defined in \cite[Construction 4.2.2.6]{HA} sends $[0]$ to $(\langle 1\rangle, \{1\}) = \mathfrak{m}$. Consider the diagram
    \begin{center}
       \begin{tikzcd}
\mathscr{M}^{\circledast}\times_{\Delta^1}\{1\} \arrow[d] \arrow[r]  & \mathcal{C}^{\otimes}\times_{\lm}(N(\mathbb{\Delta}^{\text{op}})\times\Delta^1) \arrow[d] \arrow[r] & \mathcal{C}^{\otimes} \arrow[d, "q"] \\
N(\mathbb{\Delta}^{\text{op}})\times \{1\} \arrow[r, hook] \arrow[d] & N(\mathbb{\Delta}^{\text{op}})\times \Delta^1 \arrow[r, "\gamma"'] \arrow[d]                        & \lm                                  \\
\{1\} \arrow[r, hook]                                                & \Delta^1                                                                                           &                                     
\end{tikzcd}.
    \end{center}
    Then the upper right hand side square is a pullback by definition, the lower left hand side square is a pullback and the left hand side rectangle is a pullback, again by definition. By the pasting law, the upper left hand side square is a pullback, and hence, again by the pasting law, the large upper rectangle is a pullback. The lower horizontal arrow of this rectangle agrees with the map $\text{Cut}: N(\mathbb{\Delta}^{\text{op}}) \rightarrow \lm$ defined in \cite[Construction 4.1.2.9]{HA}, so 
    \begin{align*}
        \mathscr{M}^{\circledast} \times_{\Delta^1}\{1\} \simeq \mathcal{C}^{\otimes}\times_{\lm} N(\mathbb{\Delta}^{\text{op}}).
    \end{align*}
    But in the diagram
    \begin{center}
        \begin{tikzcd}
\mathcal{C}_{\mathfrak{a}}^{\otimes}\times_{\ass} N(\mathbb{\Delta}^{\text{op}}) \arrow[r] \arrow[d] & \mathcal{C}^{\otimes}_{\mathfrak{a}} \arrow[r] \arrow[d] & \mathcal{C}^{\otimes} \arrow[d, "q"] \\
N(\mathbb{\Delta}^{\text{op}}) \arrow[r, "\text{Cut}"']                                               & \ass \arrow[r, hook]                                     & \lm                                 
\end{tikzcd}
    \end{center}
    both squares are pullbacks, so the rectangle is as well, and we get
    \begin{align*}
        \mathscr{M}^{\circledast} \times_{\Delta^1} \{1\} \simeq \mathcal{C}_{\mathfrak{a}}^{\otimes} \times_{\ass} N(\mathbb{\Delta}^{\text{op}}),
    \end{align*}
    which is the $\mathbb{A}_{\infty}$-monoidal $\infty$-category corresponding to the monoidal $\infty$-category $\mathcal{C}^{\otimes}_{\mathfrak{a}}$. Call this $\mathbb{A}_{\infty}$-monoidal category $\mathcal{C}_{\mathfrak{a}}^{\circledast}$. Then $p$ exhibits ${\mathcal{C}_{\mathfrak{m}}}$ as left-tensored over $\mathcal{C}_{\mathfrak{a}}^{\circledast}$ in the planar sense. We are now able to state
\begin{proposition}
    Let $q: \mathcal{C}^{\otimes} \rightarrow \lm$ be a coCartesian fibration of $\infty$-operads. Let $p: {\mathcal{C}_{\mathfrak{m}}}^{\circledast} \rightarrow N(\mathbb{\Delta}^{\textnormal{op}})\times \Delta^1$ as constructed above. Then the $\infty$-category $\mathcal{C}_{\mathfrak{a}}[M]$ from Definition \ref{def1} is equivalent to $\mathcal{C}[M]_{\textnormal{Lurie}}$ as in Definition \ref{def4}.
\end{proposition}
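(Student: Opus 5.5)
We compare the two $\infty$-categories by unwinding Lurie's span description of enriched morphisms into the slice description of Definition \ref{def1}. An object of $\mathcal{C}[M]_{\textnormal{Lurie}}$ is a span $M \xleftarrow{\alpha} X \xrightarrow{\beta} M$ in $\mathscr{M}^{\circledast}$ with $X$ lying over $(0,[1])$. Since $p$ is a coCartesian fibration (it is a pullback of $q$ along $\gamma$), the fiber $\mathscr{M}^{\circledast}_{0,[1]}$ is identified with $\mathcal{C}_{\mathfrak{a}}\times \mathcal{C}_{\mathfrak{m}}$ via the inert maps $\rho^i$, because $\textnormal{LCut}([1])$ is the object $(\mathfrak{a},\mathfrak{m})$ of $\lm$. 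So $X \simeq (C, M')$. The inert map $\beta$ over $\{1\}\hookrightarrow[1]$ is the projection to the module coordinate, and the condition $\beta$ lands in $M$ forces $M'\simeq M$. The map $\alpha$ over $\{0\}\hookrightarrow [1]$ corresponds under $\gamma$ to the active map $(\mathfrak{a},\mathfrak{m})\to \mathfrak{m}$. Factoring it as a $p$-coCartesian edge followed by a morphism in the fiber $\mathscr{M}^{\circledast}_{0,[0]}\simeq \mathcal{C}_{\mathfrak{m}}$ identifies $\alpha$ with a morphism $C\otimes M \to M$ in $\mathcal{C}_{\mathfrak{m}}$. On objects this gives exactly the pairs $(C, C\otimes M\to M)$ of $\mathcal{C}_{\mathfrak{a}}\times_{\mathcal{C}_{\mathfrak{m}}}{\mathcal{C}_{\mathfrak{m}}}_{/M}$.

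To make this an equivalence of $\infty$-categories, the plan is as follows.

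First, restrict to the full subcategory $\mathcal{E}$ of $\textnormal{Fun}_{\Delta^1\times N(\mathbb{\Delta})^{\textnormal{op}}}(\Lambda^2_0,\mathscr{M}^{\circledast})$ of spans in which $\beta$ is inert. By \cite[Proposition 4.3.2.15]{HTT}, the restriction $\mathcal{E}\to \textnormal{Fun}(\{0\}\to\{1\}\text{-leg}, \mathscr{M}^{\circledast})$ that forgets $\beta$ is a trivial fibration, because $\beta$ is a $p$-coCartesian lift determined by $X$.

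Second, using that $p$ is coCartesian, replace $\alpha$ by a pair consisting of the coCartesian lift $X\to C\otimes M'$ and a morphism $C\otimes M'\to M$ in the fiber. Again by \cite[Proposition 4.3.2.15]{HTT}, the space of such factorizations is contractible. This realizes $\textnormal{EnMor}(\mathscr{M}^{\circledast})$ as equivalent to $\mathcal{C}_{\mathfrak{a}}\times \textnormal{Fun}(\Delta^1,\mathcal{C}_{\mathfrak{m}})\times_{\mathcal{C}_{\mathfrak{m}}}\mathcal{C}_{\mathfrak{m}}$, glued along the functor $(C,M')\mapsto C\otimes M'$.

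Third, take the fibers over $M$ at both ends. Since ${\mathcal{C}_{\mathfrak{m}}}_{/M}\simeq \textnormal{Fun}(\Delta^1,\mathcal{C}_{\mathfrak{m}})\times_{\mathcal{C}_{\mathfrak{m}}}\{M\}$ (join versus fat slice, \cite[Proposition 4.2.1.5]{HTT}), this yields $\mathcal{C}_{\mathfrak{a}}\times_{\mathcal{C}_{\mathfrak{m}}}{\mathcal{C}_{\mathfrak{m}}}_{/M}$, where the pullback is along $-\otimes M$ as in (\ref{eq2}).

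The main obstacle is the second step. Identifying the functor $\mathscr{M}^{\circledast}_{0,[1]}\to\mathscr{M}^{\circledast}_{0,[0]}$ induced by $\alpha$ with the tensoring $-\otimes M$ of Definition \ref{def3} requires checking, via \cite[Remark 4.2.2.8]{HA}, that $\gamma$ sends the edge $(0,\{0\}\hookrightarrow[1])$ to the active map $\{\mathfrak{a},\mathfrak{m}\}\to\mathfrak{m}$. One also has to verify that the successive trivial fibrations are compatible with the two forgetful functors to $\mathscr{M}$, so that they descend to the fiber products.
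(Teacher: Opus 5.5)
Your proposal is correct and is essentially the paper's argument. The paper likewise identifies the fiber over $(0,[1])$ with $\mathcal{C}_{\mathfrak{a}}\times\mathcal{C}_{\mathfrak{m}}$, checks that $\gamma$ sends the two legs to the active map $\{\mathfrak{a},\mathfrak{m}\}\to\mathfrak{m}$ and to the inert projection onto $\mathfrak{m}$, and uses coCartesian lifts to turn $\alpha$ into a map $C\otimes M'\to M$ and $\beta$ into an equivalence $M'\simeq N$; it stops at this object-level unpacking, so your additional steps (the trivial fibrations from \cite[Proposition 4.3.2.15]{HTT}, the fat-slice comparison, and compatibility with the two evaluation functors to $\mathcal{C}_{\mathfrak{m}}$) supply functor-level justification that the paper leaves implicit.
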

\begin{proof}
    Under $\gamma: N(\mathbb{\Delta}^{\text{op}})\times \Delta^1 \rightarrow \lm$, the map
    \begin{align*}
        a: ([0],0) \rightarrow ([1],0)
    \end{align*}
    sending the point in $[0]$ to $0\in [1]$ maps to 
    \begin{align*}
        \text{LCut}(a): (\langle 2\rangle,\{2\}) &\rightarrow (\langle 1\rangle, \{1\})\\
        1 &\mapsto 1\\
        2 &\mapsto 1.
    \end{align*}
    Interpreting $(\langle 2\rangle,\{2\})$ as $(\mathfrak{a},\mathfrak{m})$ and $(\langle 1\rangle,\{1\})$ as $\mathfrak{m}$, this map corresponds to the unique element
    \begin{align*}
        \phi \in \text{Mul}_{\alm}(\{\mathfrak{a},\mathfrak{m}\},\mathfrak{m}).
    \end{align*}
    Similarly, the map
    \begin{align*}
        b: ([0],0) \rightarrow ([1],0)
    \end{align*}
    sending the point in $[0]$ to $1\in [1]$ maps to 
    \begin{align*}
    \text{LCut}(b): (\langle 2\rangle, \{2\}) &\rightarrow (\langle 1\rangle, \{1\})\\
    1 &\mapsto \ast \\
    2 &\mapsto 1.
    \end{align*}
    This map corresponds to the unique element
    \begin{align*}
        \psi \in \text{Mul}_{\alm}(\{\mathfrak{m}\},\mathfrak{m}).
    \end{align*}
    Therefore, to give an enriched morphism of ${\mathcal{C}_{\mathfrak{m}}}$ is equivalent to giving a diagram
    \begin{align*}
        M \xleftarrow{\alpha} X \xrightarrow{\beta} N
    \end{align*}
    in $\mathcal{C}^{\otimes}$ such that 
    \begin{enumerate}
        \item $q(\alpha) = \text{LCut}(a)$,
        \item $q(\beta) = \text{LCut}(b)$, and
        \item $\beta$ is inert, i.e. $q$-coCartesian.
    \end{enumerate}
    Unpacking this, $M$ and $N$ are objects in ${\mathcal{C}_{\mathfrak{m}}}$, and $X=(C,M')$ is an object in $\mathcal{C}^{\otimes}_{(\mathfrak{a},\mathfrak{m})}$, and $\mathcal{C}^{\otimes}_{(\mathfrak{a},\mathfrak{m})}\simeq \mathcal{C}_{\mathfrak{a}} \times \mathcal{C}_{\mathfrak{m}}$ by \cite[Proposition 2.1.2.12 (b)]{HA} and taking $T=(\mathfrak{a},\mathfrak{m})\in \lm_{\langle 2\rangle}$. The map $\alpha: (C,M') \rightarrow M$ and $\beta: (C,M') \rightarrow N$ are morphisms in $\mathcal{C}^{\otimes}$ lifting $\phi$ and $\psi$ respectively. Since $q$ is coCartesian, there is a $q$-coCartesian lift for $\phi$ and $X=(C,M')$, namely the map $(C,M') \rightarrow C\otimes M'$. Hence, the data of $\alpha$ is equivalent to a map $C\otimes M' \rightarrow M$ in ${\mathcal{C}_{\mathfrak{m}}}$. Similarly, there is a $q$-coCartesian lift for $\psi$ and $X=(C,M')$, namely the map $(C,M') \rightarrow M'$. Hence the data of $\beta$ is equivalent to a map $M'\rightarrow N$ in ${\mathcal{C}_{\mathfrak{m}}}$, and since $\beta$ is $q$-coCartesian as well, this map is an equivalence. Hence, the $\infty$-category $\mathcal{C}[M]_{\textnormal{Lurie}}$ is equivalent to the $\infty$-category $\mathcal{C}_{\mathfrak{a}}\times_{{\mathcal{C}_{\mathfrak{m}}}} {\mathcal{C}_{\mathfrak{m}}}_{/M}$ with objects given by pairs $(C\in \mathcal{C}_{\mathfrak{a}}, C\otimes M\rightarrow M \in \mathcal{C}_{\mathfrak{m}})$.
\end{proof}
\newpage
\setcounter{theorem}{0}
\chapter{Comparing notions of formal moduli problems}\label{appendixB}
In this section we prove that Lurie's equivalence
\begin{align*}
\Psi: \alg_{\mathbf{Lie}}(\dk) \rightarrow \mathbf{FMP}.
\end{align*}
constructed in \cite[Theorem 2.0.2]{DAGX} agrees with the equivalence
\begin{align*}
\alg_{\mathbf{Lie}}(\dk) \xrightarrow{\text{exp}} \text{Grp}(\mathbf{FMP}) \xrightarrow{B} \mathbf{FMP}
\end{align*}
constructed by Gaitsgory and Rozenblyum in \cite[Chapter 7 Section 3.3.8]{GR2}.\displaypar
Recall that $\Psi$ is given by
\begin{align*}
\mathfrak{g} \mapsto \map_{\alg_{\mathbf{Lie}}(\dk)}(\mathfrak{D}(-),\mathfrak{g}),
\end{align*}
where $\mathfrak{D}: (\alg_{\dgcomm}^{\text{aug}}(\dk))^{\text{op}} \rightarrow \text{Alg}_{\textbf{Lie}}(\dk)$ denotes the right adjoint of the cohomological Chevalley-Eilenberg complex functor.\displaypar
The category of formal moduli problems is Cartesian symmetric monoidal, and, by construction, every formal moduli problem over $\spec(\field)$ is pointed. We hence have a functor
\begin{align*}
    \Omega: \mathbf{FMP} \rightarrow \text{Grp}(\mathbf{FMP})
\end{align*}
given by $\mathcal{Y} \mapsto \spec(\field) \times_{\mathcal{Y}} \spec(\field)$. Gaitsgory and Rozenblyum show in \cite[Chapter 5, Theorem 2.3.2]{GR2} that this functor is an equivalence with inverse given by the classifying space functor $B$. \displaypar
If $\mathcal{X}$ is a prestack of locally almost finite type, one can define the category $\text{IndCoh}(\mathcal{X})$ of ind-coherent sheaves of $\mathcal{X}$. For any map $f: \mathcal{X} \rightarrow \mathcal{Y}$, there is a !-pullback
\begin{align*}
    f^!: \text{IndCoh}(\mathcal{Y}) \rightarrow \text{IndCoh}(\mathcal{X}).
\end{align*}
For a class of maps called \textbf{inf-schematic}, there also exists a well-defined continuous functor
\begin{align*}
    f_{\ast}^{\text{IndCoh}}: \text{IndCoh}(\mathcal{X}) \rightarrow \text{IndCoh}(\mathcal{Y})
\end{align*}
called the direct image functor. For a prestack of locally almost finite type $\mathcal{X}$, Gaitsgory and Rozenblyum define
\begin{align*}
    \omega_{\mathcal{X}} := p^!(\field),
\end{align*}
where $p: \mathcal{X} \rightarrow \spec(\field)$ is the unique projection. This is the monoidal unit of the $\overset{!}{\otimes}$-monoidal structure on $\text{IndCoh}(\mathcal{X})$.
\begin{definition}[Chapter 7 Section 1 \cite{GR2}] \label{def7}
    Denote by
    \begin{center}
    \begin{tikzcd}
            \text{Distr}^{\text{Cocom}^{\text{aug}}}: \mathbf{FMP} \arrow[r, shift left=1ex, ""{name=G}] &\coalg_{\dgcocomm}^{\text{coaug}}(\dk) :\text{Spec}^{\text{inf}}\arrow[l, shift left=1ex, ""{name=F}]
            \arrow[phantom, from=F, to=G, , "\scriptscriptstyle\boldsymbol{\perp}"]
        \end{tikzcd}
\end{center}
the adjoint pair of functors with $\text{Distr}^{\text{Cocom}^{\text{aug}}}$ given by $\mathcal{Y} \mapsto p_{\ast}^{\text{IndCoh}}(\omega_{\mathcal{Y}})$.
\end{definition}
In fact, $\text{Distr}^{\text{Cocom}^{\text{aug}}}$ is strict symmetric monoidal, and hence $\text{Spec}^{\text{inf}}$ is right-lax symmetric monoidal. In particular, the above adjunction upgrades to an adjunction between monoid (or group) objects in the respective symmetric monoidal categories.
\begin{center}
    \begin{tikzcd}
            \text{Grp(Distr}^{\text{Cocom}^{\text{aug}}}): \text{Grp}(\mathbf{FMP}) \arrow[r, shift left=1ex, ""{name=G}] &\text{Monoid(CoAlg}_{\dgcocomm}^{\text{coaug}}(\dk)):\text{Monoid(Spec}^{\text{inf}})\arrow[l, shift left=1ex, ""{name=F}]
            \arrow[phantom, from=F, to=G, , "\scriptscriptstyle\boldsymbol{\perp}"]
        \end{tikzcd}.
\end{center}
Neither the functor $\text{Distr}^{\text{Cocom}^{\text{aug}}}$ nor $\text{Spec}^{\text{inf}}$ is fully faithful, in contrast to the situation in classical algebraic geometry (where the spectrum functor is fully faithful). However, one can show that the counit of the adjunction is an equivalence for so-called \textbf{vector prestacks}\footnote{See \cite[Chapter 7, Section 1.4.1]{GR2}} of the form $\text{Vect}(V)$ for a $\field$-vector space $V$. A formal moduli problem on which the unit is an equivalence is called \textbf{inf-affine}.\displaypar
Denote by 
\begin{center}
    \begin{tikzcd}
            \text{Chev}^{\text{enh}}: \text{Alg}_{\text{Lie}}(\dk) \arrow[r, shift left=1ex, ""{name=G}] &\coalg_{\dgcocomm}^{\text{coaug}}(\dk):\text{coChev}^{\text{enh}}\arrow[l, shift left=1ex, ""{name=F}]
            \arrow[phantom, from=F, to=G, , "\scriptscriptstyle\boldsymbol{\perp}"]
        \end{tikzcd}
\end{center}
Gaitsgory-Rozenblyum's Bar-Cobar adjunction for the Lie operad \cite[Chapter 6, Section 4.1.4]{GR2}. The functor $\text{Chev}^{\text{enh}}$ is strict symmetric monoidal, and therefore its right adjoint is right-lax symmetric monoidal, and in particular preserves monoid objects.\displaypar
Again following Gaitsgory and Rozenblyum, we can now define the functors of taking the exponential group of a Lie algebra and taking the Lie algebra of a formal group 
\begin{definition}[Chapter 7, Section 3 \cite{GR2}]
    The exponential functor is given by
    \begin{align*}
        \text{exp}:= \text{Alg}_{\textbf{Lie}}(\dk) \xrightarrow{\Omega} \text{Grp}(\text{Alg}_{\textbf{Lie}}(\dk)) \xrightarrow{\text{Grp}(\text{Chev}^{\text{enh}})} \text{Monoid}(\coalg_{\dgcocomm}^{\text{coaug}}(\dk)) \\ \xrightarrow{\text{Monoid}(\text{Spec}^{\text{inf}})} \text{Grp}(\mathbf{FMP}).
    \end{align*}
    The Lie functor is given by
    \begin{align*}
        \text{Lie}:= \text{Grp}(\mathbf{FMP}) \xrightarrow{\text{Grp}(\text{Distr}^{\text{Cocom}^{\text{aug}}})} \text{Monoid}(\coalg_{\dgcocomm}^{\text{coaug}}(\dk))\\ \xrightarrow{\text{Monoid}(\text{coChev}^{\text{enh}})} \text{Grp}(\text{Alg}_{\textbf{Lie}}(\dk)) \xrightarrow{B} \text{Alg}_{\textbf{Lie}}(\dk).
    \end{align*}
\end{definition}
\begin{proposition}[Chapter 6, Proposition 4.3.3 \cite{GR2}]\label{prop11}
    The functor 
    \begin{align*}
        \textnormal{forget}_{\textnormal{Monoid}} \circ \textnormal{Grp}(\textnormal{Chev}^{\textnormal{enh}}) \circ \Omega: \alg_{\textnormal{Lie}}(\dk) \rightarrow \coalg_{\dgcocomm}^{\textnormal{coaug}}(\dk)
    \end{align*}
    is equivalent to the functor $\textnormal{Sym}\circ \textnormal{forget}_{\textnormal{Lie}}$.
\end{proposition}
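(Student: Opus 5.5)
The plan is to compute the composite directly from the definitions, using a Bar--Cobar identification for the Lie operad. By definition, $\textnormal{Grp}(\textnormal{Chev}^{\textnormal{enh}})\circ\Omega$ applied to $\mathfrak{g}$ is $\textnormal{Chev}^{\textnormal{enh}}$ of the loop object $\Omega\mathfrak{g} = 0\times_{\mathfrak{g}} 0$ in $\alg_{\lie}(\dk)$, with its group structure. After forgetting the monoid structure, this is just the coaugmented cocommutative coalgebra $\textnormal{Chev}^{\textnormal{enh}}(\Omega\mathfrak{g})$. So the statement reduces to producing an equivalence
\begin{align*}
\textnormal{Chev}^{\textnormal{enh}}(\Omega\mathfrak{g}) \simeq \sym(\textnormal{forget}_{\lie}(\mathfrak{g})),
\end{align*}
natural in $\mathfrak{g}$.

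First, the underlying object of $\Omega\mathfrak{g}$ is $\mathfrak{g}[-1]$, because $\textnormal{forget}_{\lie}$ preserves limits. Second, I use the standard description of $\textnormal{Chev}^{\textnormal{enh}}$ as the Bar construction for the twisting morphism between $\dgcocomm$ and $\lie$. Concretely, its underlying object is $\sym(\mathfrak{h}[1])$, with differential twisted by the bracket of $\mathfrak{h}$. For $\mathfrak{h} = \Omega\mathfrak{g}$ this gives underlying object $\sym(\mathfrak{g})$. The remaining claim is that, for a loop object, the twisted differential is trivial up to coherent homotopy.

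For this I would argue that $\Omega\mathfrak{g}$ is the cotensor of $\mathfrak{g}$ with $S^1$ in the pointed sense. Since $\textnormal{Chev}^{\textnormal{enh}}$ is a left adjoint, it does not obviously commute with limits, so I would go through the adjunction instead. For any coaugmented cocommutative coalgebra $C$,
\begin{align*}
\map(\textnormal{Chev}^{\textnormal{enh}}(\Omega\mathfrak{g}), C) \simeq \map(\Omega\mathfrak{g}, \textnormal{coChev}^{\textnormal{enh}}(C)).
\end{align*}
An alternative route, which I think is cleaner, is to check that $\textnormal{Chev}^{\textnormal{enh}}\circ\Omega$ and $\sym\circ\textnormal{forget}_{\lie}$ agree on free Lie algebras, where the bracket contribution can be computed explicitly. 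Both functors commute with sifted colimits, since $\Omega$ does in the stable-like setting of $\lie$-algebras in characteristic $0$, so agreement extends to all Lie algebras by resolving $\mathfrak{g}$ by free ones. On a free Lie algebra $\textnormal{free}_{\lie}(V)$, the loop object is equivalent to $\textnormal{free}_{\lie}(V[-1])$ with trivial bracket up to homotopy. This follows because the suspension of a free object is free on the suspension and $\Omega$ inverts it. Then $\textnormal{Chev}^{\textnormal{enh}}$ of an abelian Lie algebra is the cofree $\sym$ of its shift.

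The main obstacle is naturality, not the pointwise identification. The comparison map must be defined globally, not merely on free objects. I would construct it as follows. There is a natural map $\textnormal{Chev}^{\textnormal{enh}}(\Omega\mathfrak{g}) \to \textnormal{Chev}^{\textnormal{enh}}(0)\sqcup_{\textnormal{Chev}^{\textnormal{enh}}(\mathfrak{g})} \textnormal{Chev}^{\textnormal{enh}}(0)$, and the right-hand side has underlying object $\sym(\mathfrak{g})$, computed via the cobar side and primitive filtration. I would then verify that this map is an equivalence using the filtration by $\sym^{\leq n}$, with associated graded computed on underlying objects. The key input is that the associated graded of $\textnormal{Chev}^{\textnormal{enh}}$ forgets the bracket. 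I expect this filtered argument to be the delicate step.
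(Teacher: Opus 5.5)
The paper gives no proof of this proposition; it quotes it from Gaitsgory--Rozenblyum. Your argument therefore has to stand on its own, and it has a genuine gap at exactly the point that carries the content.

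Your reduction to a natural equivalence $\textnormal{Chev}^{\textnormal{enh}}(\Omega\mathfrak{g})\simeq\sym(\mathfrak{g})$ is fine, and so is your input $\textnormal{Chev}^{\textnormal{enh}}\circ\textnormal{triv}_{\lie}\simeq\sym\circ[1]$. But neither of your routes shows that the twisted differential on $\textnormal{Chev}^{\textnormal{enh}}(\Omega\mathfrak{g})$ is trivial.

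In the free-algebra route, ``$\Omega$ inverts $\Sigma$'' is false, because $\alg_{\lie}(\dk)$ is not stable. Take $V=\field$ in degree $0$. Then $\Sigma\,\textnormal{free}_{\lie}(V)\simeq\textnormal{free}_{\lie}(\field[1])$ has underlying object $\field[1]\oplus\field[2]$, spanned by an odd generator $y$ and $[y,y]$. So $\Omega\Sigma\,\textnormal{free}_{\lie}(V)$ is two-dimensional, whereas $\textnormal{free}_{\lie}(V)=\field$. More basically, $\Omega\,\textnormal{free}_{\lie}(V)$ has underlying object $\textnormal{free}_{\lie}(V)[-1]$, not $\textnormal{free}_{\lie}(V[-1])$. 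Your identification therefore cannot produce the required $\sym(\textnormal{free}_{\lie}(V))$. Restricting to free algebras also does not remove the naturality problem: you would still need the comparison to be functorial in all Lie maps between free algebras, not just those induced from maps of generators.

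In your final paragraph, the pushout $\field\sqcup_{\textnormal{Chev}^{\textnormal{enh}}(\mathfrak{g})}\field$ of coaugmented coalgebras is computed on underlying objects. It equals $\field\oplus\overline{\textnormal{Chev}}(\mathfrak{g})[1]$. For $\mathfrak{g}$ abelian and one-dimensional in degree $0$ this is $\field\oplus\field[2]$, not $\sym(\mathfrak{g})=\field[x]$. Moreover, the evident maps from $\textnormal{Chev}^{\textnormal{enh}}(\Omega\mathfrak{g})$ into that pushout factor through $\field$. Applying $\textnormal{Chev}^{\textnormal{enh}}$ to the loop square yields maps into $\textnormal{Chev}^{\textnormal{enh}}(\mathfrak{g})$, ultimately $\textnormal{Chev}^{\textnormal{enh}}(\mathfrak{g})\simeq\obar(\textnormal{Chev}^{\textnormal{enh}}(\Omega\mathfrak{g}))$. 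This goes the opposite way and does not determine the underlying coalgebra of $\textnormal{Chev}^{\textnormal{enh}}(\Omega\mathfrak{g})$. If you meant the pullback in coalgebras, identifying it with $\sym(\mathfrak{g})$ is the PBW statement itself.

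The missing idea is that $\Omega\mathfrak{g}$ is naturally equivalent, as a Lie algebra, to the abelian Lie algebra $\textnormal{triv}_{\lie}(\mathfrak{g}[-1])$. Your discarded cotensor-with-$S^1$ idea gives exactly this.
\begin{itemize}
\item The functor $\mathfrak{g}\otimes(-)$ from non-unital commutative algebras to Lie algebras comes from $\lie\otimes_{\textnormal{H}}\nucomm\cong\lie$. It preserves finite limits, since limits on both sides are computed in $\dk$.
\item Hence $\Omega\mathfrak{g}=0\times_{\mathfrak{g}}0\simeq\mathfrak{g}\otimes(0\times_{\field}0)$, and $0\times_{\field}0\simeq\widetilde{C}^{\ast}(S^1;\field)$ is the reduced cochain algebra of the circle.
\item Its cohomology is one-dimensional in cohomological degree $1$, so every $C_{\infty}$-operation on it vanishes for degree reasons. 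It is therefore equivalent to the trivial algebra on $\field[-1]$.
\item Thus $\Omega\mathfrak{g}\simeq\mathfrak{g}\otimes\textnormal{triv}(\field[-1])\simeq\textnormal{triv}_{\lie}(\mathfrak{g}[-1])$. This is natural in $\mathfrak{g}$, because the equivalence of the cochain algebra does not involve $\mathfrak{g}$.
\end{itemize}
Applying $\textnormal{Chev}^{\textnormal{enh}}\circ\textnormal{triv}_{\lie}\simeq\sym\circ[1]$ then gives $\textnormal{Chev}^{\textnormal{enh}}(\Omega\mathfrak{g})\simeq\sym(\mathfrak{g})$ naturally, with no filtration argument needed.
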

By \cite[Chapter 6, Theorem 6.1.2]{GR2}, $\text{Grp}(\text{Chev}^{\text{enh}}) \circ \Omega$ is equivalent to the universal enveloping algebra functor $U^{\text{Hopf}}: \text{Alg}_{\text{Lie}}(\dk) \rightarrow \alg_{\mathbb{E}_1}(\coalg_{\dgcocomm}(\dk))$. Hence, the proposition just says that the underlying coalgebra of the universal enveloping algebra is the symmetric coalgebra, i.e. the cofree conilpotent commutative coalgebra.
\begin{theorem}[Chapter 7, Theorem 3.1.4 \cite{GR2}]\label{thm19}
    The functor $\textnormal{exp}: \alg_{\mathbf{Lie}}(\dk) \rightarrow \textnormal{Grp}(\mathbf{FMP})$ is an equivalence.
\end{theorem}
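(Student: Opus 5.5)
We need to prove the last statement, Theorem \ref{thm19}: exp is an equivalence. It is cited from GR2, but a proof plan is still wanted.

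Since $B$ and $\Omega$ are mutually inverse equivalences by \cite[Chapter 5, Theorem 2.3.2]{GR2} (Theorem \ref{thm20} over the point), the plan is to reduce the claim to showing that the composite
\begin{align*}
B\circ \textnormal{exp}: \alg_{\mathbf{Lie}}(\dk) \rightarrow \mathbf{FMP}
\end{align*}
is an equivalence. I would then identify this composite with Lurie's functor $\Psi$, which is an equivalence by the Lurie--Pridham correspondence (Theorem \ref{thm15}, \cite[Theorem 2.0.2]{DAGX}). Concretely, for $R$ a dg Artin $\field$-algebra and $\mathfrak{g}$ a Lie algebra, I would compute $B\textnormal{exp}(\mathfrak{g})(R)$ as follows. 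First, $\textnormal{Monoid}(\textnormal{Spec}^{\textnormal{inf}})$ is right adjoint to distributions, so maps from $\textnormal{Spec}(R)$ into the group are coalgebra maps $R^\vee \rightarrow \textnormal{Chev}^{\textnormal{enh}}$. After delooping, these become maps from $R^\vee$ (regarded as a coaugmented cocommutative coalgebra) into $\textnormal{Chev}^{\textnormal{enh}}(\mathfrak{g})$. For Artinian $R$, $R^\vee$ is the Chevalley--Eilenberg coalgebra of $\mathfrak{D}(R)$. Then $\textnormal{Chev}^{\textnormal{enh}}$ is fully faithful on such "good" Lie algebras (Koszul duality on $\mathfrak{D}$ of Artinians), which yields $\map_{\alg_{\mathbf{Lie}}}(\mathfrak{D}(R),\mathfrak{g}) = \Psi(\mathfrak{g})(R)$.

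The key steps, in order, are as follows.
\begin{enumerate}
\item Check that $\textnormal{Grp}(\textnormal{Chev}^{\textnormal{enh}})\circ\Omega$ commutes with $B$ on the coalgebra side. Using Proposition \ref{prop11} and the fact that $\textnormal{Chev}^{\textnormal{enh}}$ is symmetric monoidal, the bar construction of the Hopf algebra $U^{\textnormal{Hopf}}(\mathfrak{g})$ recovers $\textnormal{Chev}^{\textnormal{enh}}(\mathfrak{g})$.
\item Show that $\textnormal{Spec}^{\textnormal{inf}}$ commutes with this delooping. This holds because the relevant objects are inf-affine: the underlying formal moduli problem of $\textnormal{exp}(\mathfrak{g})$ is $\textnormal{Vect}(\mathfrak{g})$, via Proposition \ref{prop11} and the counit equivalence for vector prestacks.
\item Evaluate on Artinian test objects and use that formal moduli problems are determined by their values there (Theorem \ref{thm14}).
\end{enumerate}

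I expect step 2 to be the main obstacle, since $\textnormal{Spec}^{\textnormal{inf}}$ is only lax monoidal and is not fully faithful in general. To get around this, I would first prove that $\textnormal{exp}$ is conservative and commutes with sifted colimits, together with the fiber/square-zero pullbacks of Theorem \ref{thm14}. I would then check the equivalence on the generators given by free Lie algebras, where $\textnormal{exp}(\textnormal{free}(V))$ is explicitly $\Omega$ of the square-zero formal moduli problem and everything is inf-affine. Finally, I would extend to all of $\alg_{\mathbf{Lie}}(\dk)$ by the same colimit argument as in \cite[Proof of Theorem 1.3.12]{DAGX}.
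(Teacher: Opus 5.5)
Your main route fails at the identification you make after delooping:
$B\textnormal{exp}(\mathfrak{g})(R)\simeq\map(R^{\vee},\textnormal{Chev}^{\textnormal{enh}}(\mathfrak{g}))\simeq\map_{\alg_{\lie}(\dk)}(\mathfrak{D}(R),\mathfrak{g})$, i.e.\ the claim $B\textnormal{exp}(\mathfrak{g})\simeq\textnormal{Spec}^{\textnormal{inf}}(\textnormal{Chev}^{\textnormal{enh}}(\mathfrak{g}))\simeq\Psi(\mathfrak{g})$. This is false in general.

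\begin{itemize}
\item Inf-affineness of the group $\textnormal{exp}(\mathfrak{g})\simeq\textnormal{Vect}(\mathfrak{g})$ says nothing about its delooping. $B$ is a geometric realization in $\mathbf{FMP}$, and $\textnormal{Spec}^{\textnormal{inf}}$ is a right adjoint, so your step 2 has no reason to hold.
\item Full faithfulness of $\textnormal{Chev}^{\textnormal{enh}}$ \emph{between} good Lie algebras does not help either, because your target $\mathfrak{g}$ is arbitrary. Test your claim against the good Lie algebras $\mathfrak{D}(\field\oplus\field[n])$, which are free. It then says exactly that the unit $\mathfrak{g}\to\textnormal{coChev}^{\textnormal{enh}}\textnormal{Chev}^{\textnormal{enh}}(\mathfrak{g})$ is an equivalence, and it is not.
\item Concretely, take $\mathfrak{g}=\mathfrak{sl}_2$ in degree $0$. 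Formality of $C^{\ast}(\mathfrak{sl}_2)$ gives $\textnormal{Chev}^{\textnormal{enh}}(\mathfrak{sl}_2)\simeq\field\oplus\field[3]\simeq\textnormal{Chev}^{\textnormal{enh}}(\field[2])$, with $\field[2]$ abelian. Hence $\textnormal{Spec}^{\textnormal{inf}}(\textnormal{Chev}^{\textnormal{enh}}(\mathfrak{sl}_2))\simeq\textnormal{Vect}(\field[3])$ has tangent complex $\field[3]$. But $B\textnormal{exp}(\mathfrak{sl}_2)$ and $\Psi(\mathfrak{sl}_2)$ both have tangent complex $\mathfrak{sl}_2[1]$.
\end{itemize}

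What does hold, and what Appendix \ref{appendixB} proves, is only $\textnormal{Distr}^{\textnormal{Cocom}^{\textnormal{aug}}}(\Psi(\mathfrak{g}))\simeq\textnormal{Chev}^{\textnormal{enh}}(\mathfrak{g})$. The classifying FMP $\Psi(\mathfrak{g})$ is typically not inf-affine, so $\textnormal{Chev}^{\textnormal{enh}}$ loses information exactly at the delooped level where you want to compute.

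Your fallback (conservativity, sifted colimits, free generators, colimit extension) has the right shape: a monadicity comparison over the tangent space at the identity. However, the substantive inputs are missing rather than sketched.

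\begin{itemize}
\item Knowing what $\textnormal{exp}(\textnormal{free}(V))$ is as an object gives no control over mapping spaces.
\item You need that $V\mapsto\textnormal{exp}(\textnormal{free}(V))$ is left adjoint to the tangent-at-identity functor $T_e:\textnormal{Grp}(\mathbf{FMP})\to\dk$.
\item You need that $T_e$ is conservative and preserves sifted colimits. Then, using $T_e\circ\textnormal{exp}\simeq\textnormal{forget}_{\lie}$, Barr--Beck--Lurie reduces the claim to an equivalence of monads.
\end{itemize}

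None of this is formal: $\textnormal{Spec}^{\textnormal{inf}}$ is a right adjoint, and geometric realizations in $\mathbf{FMP}$ are not computed objectwise. Also, the argument of \cite[Theorem 1.3.12]{DAGX} is built for the nerve-type functor $\Psi$. It does not transfer to $\textnormal{exp}$ until the two are compared, and that comparison is exactly the step that failed.

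You also cannot import that comparison from the paper. Its proof of Theorem \ref{thm18} (via Lemma \ref{lem7}) uses that all formal groups are inf-affine, which is a corollary of Theorem \ref{thm19} itself. The paper gives no proof of Theorem \ref{thm19}; it cites \cite{GR2}.
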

\begin{corollary}
    The functors $\textnormal{exp}: \alg_{\mathbf{Lie}}(\dk) \rightarrow \textnormal{Grp}(\mathbf{FMP})$ and $\textnormal{Lie}: \textnormal{Grp}(\mathbf{FMP}) \rightarrow \alg_{\mathbf{Lie}}(\dk)$ are mutually inverse.
\end{corollary}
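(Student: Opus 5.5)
Since Theorem \ref{thm19} shows that $\textnormal{exp}$ is an equivalence, it suffices to construct a natural equivalence $\textnormal{Lie}\circ \textnormal{exp} \simeq \textnormal{id}_{\alg_{\mathbf{Lie}}(\dk)}$. Then $\textnormal{Lie}$ is equivalent to the inverse of $\textnormal{exp}$, and the other composite $\textnormal{exp}\circ\textnormal{Lie}\simeq \textnormal{id}$ follows formally. I would assemble the equivalence by writing out the composite
\begin{align*}
    \textnormal{Lie}\circ\textnormal{exp} = B\circ \textnormal{Monoid}(\textnormal{coChev}^{\textnormal{enh}})\circ \textnormal{Grp}(\textnormal{Distr}^{\textnormal{Cocom}^{\textnormal{aug}}})\circ \textnormal{Monoid}(\textnormal{Spec}^{\textnormal{inf}})\circ \textnormal{Grp}(\textnormal{Chev}^{\textnormal{enh}})\circ \Omega
\end{align*}
and collapsing it from the middle outward. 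At each stage I use the unit or counit of the relevant adjunction, which is automatically natural.

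The first step is the middle pair. The counit $\textnormal{Distr}^{\textnormal{Cocom}^{\textnormal{aug}}}\circ\textnormal{Spec}^{\textnormal{inf}} \Rightarrow \textnormal{id}$ is a natural transformation of symmetric monoidal functors, so it lifts to monoid objects. I would show that it is an equivalence on every object of the form $\textnormal{Grp}(\textnormal{Chev}^{\textnormal{enh}})(\Omega\mathfrak{g})$. Being an equivalence can be checked after forgetting the monoid structure. By Proposition \ref{prop11}, the underlying coaugmented cocommutative coalgebra is $\textnormal{Sym}(\textnormal{forget}_{\mathbf{Lie}}\mathfrak{g})$, and $\textnormal{Spec}^{\textnormal{inf}}$ of this coalgebra is the vector prestack $\textnormal{Vect}(\mathfrak{g})$. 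The counit is an equivalence on vector prestacks, as recalled after Definition \ref{def7}. Hence the middle collapses and we are left with $B\circ\textnormal{Monoid}(\textnormal{coChev}^{\textnormal{enh}}\circ\textnormal{Chev}^{\textnormal{enh}})\circ\Omega$.

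The second step uses the unit $\textnormal{id}\Rightarrow \textnormal{coChev}^{\textnormal{enh}}\circ\textnormal{Chev}^{\textnormal{enh}}$. It is again monoidal, since $\textnormal{Chev}^{\textnormal{enh}}$ is strictly symmetric monoidal, so it lifts to monoid objects. I would show that it is an equivalence on $\Omega\mathfrak{g}$, i.e.\ that $\textnormal{Chev}^{\textnormal{enh}}$ is fully faithful on $\alg_{\mathbf{Lie}}(\dk)$. I expect this to be the main obstacle. My first attempt would be to deduce it from the analysis in \cite[Chapter 6]{GR2}, where the unit is identified with an equivalence by the same Koszul duality argument underlying the proof of Theorem \ref{thm19}. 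An alternative is to reduce to free Lie algebras: there $\textnormal{Chev}^{\textnormal{enh}}$ is computed explicitly, and one extends using that both sides commute with sifted colimits. Granting this, the composite becomes $B\circ\Omega\simeq\textnormal{id}$, because $B$ and $\Omega$ are mutually inverse on Lie algebras, just as for formal moduli problems.

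Finally, every identification above is induced by a unit or counit transformation, so the resulting equivalence $\textnormal{Lie}\circ\textnormal{exp}\simeq\textnormal{id}$ is natural in $\mathfrak{g}$. Combined with Theorem \ref{thm19}, this gives $\textnormal{Lie}\simeq \textnormal{exp}^{-1}$, proving the corollary.
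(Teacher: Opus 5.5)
Your overall architecture is the paper's. You reduce via Theorem \ref{thm19} to $\textnormal{Lie}\circ\textnormal{exp}\simeq\textnormal{id}$. You then collapse the middle, because $\textnormal{Spec}^{\textnormal{inf}}(\textnormal{Sym}(\mathfrak{g}))=\textnormal{Vect}(\mathfrak{g})$ is a vector prestack on which $\textnormal{Distr}^{\textnormal{Cocom}^{\textnormal{aug}}}\dashv\textnormal{Spec}^{\textnormal{inf}}$ is an equivalence. Finally you use the Koszul duality unit and $B\circ\Omega\simeq\textnormal{id}$.

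The problem is your second step. You restate ``the unit $\textnormal{id}\Rightarrow\textnormal{coChev}^{\textnormal{enh}}\circ\textnormal{Chev}^{\textnormal{enh}}$ is an equivalence on $\Omega\mathfrak{g}$'' as ``$\textnormal{Chev}^{\textnormal{enh}}$ is fully faithful on $\alg_{\lie}(\dk)$''. These are not equivalent, and the stronger statement is false. By formality of $\mathfrak{sl}_2$, the projection from $C_*(\mathfrak{sl}_2)$ onto the coinvariant coalgebra $(\Lambda^{\bullet}\mathfrak{sl}_2)_{\mathfrak{sl}_2}$ is a quasi-isomorphism of dg coalgebras onto $\field\oplus\field[3]$ with primitive generator. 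That is also the Chevalley--Eilenberg coalgebra of the abelian Lie algebra $\field[2]$. So $\textnormal{Chev}^{\textnormal{enh}}$ identifies two non-equivalent Lie algebras and cannot be fully faithful.

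For the same reason your fallback cannot work. Checking the unit on free Lie algebras and propagating along sifted colimits would prove full faithfulness on all of $\alg_{\lie}(\dk)$. The step that breaks is that $\textnormal{coChev}^{\textnormal{enh}}$ is a right adjoint (a totalization), so $\textnormal{coChev}^{\textnormal{enh}}\circ\textnormal{Chev}^{\textnormal{enh}}$ does not commute with sifted colimits.

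What you actually need, and what is true, is only the statement on the image of $\Omega$. There Proposition \ref{prop11} identifies the underlying coalgebra of $\textnormal{Grp}(\textnormal{Chev}^{\textnormal{enh}})(\Omega\mathfrak{g})$ with $\textnormal{Sym}(\mathfrak{g})$. The required input is that $B\circ\textnormal{Monoid}(\textnormal{coChev}^{\textnormal{enh}})$ is a left inverse of $\textnormal{Grp}(\textnormal{Chev}^{\textnormal{enh}})\circ\Omega$. This is \cite[Chapter 6, Theorem 4.4.6]{GR2}, and it is exactly what the paper invokes after the vector-prestack step. So your ``first attempt'' of citing Gaitsgory--Rozenblyum Chapter 6 is the right move, provided you cite this restricted left-inverse statement rather than a general full faithfulness claim. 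With that correction your argument coincides with the paper's.
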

\begin{proof}
    By the above Proposition \ref{prop11} and the definition of vector prestacks, $\text{exp}(\mathfrak{g}) \simeq \text{Vect}(\text{forget}_{\text{Lie}}(\mathfrak{g}))$ as formal moduli problems (i.e. after forgetting the group structure). This is a vector prestack, and in particular it is inf-affine by \cite[Chapter 7, Proposition 1.4.7]{GR2}, so both the unit and the counit of the adjunction in \ref{def7} are equivalences. Noting that the functor
    \begin{align*}
        B \circ \text{Monoid}(\text{coChev}^{\text{enh}})
    \end{align*}
    is a left-inverse of 
    \begin{align*}
        \text{Grp}(\text{Chev}^{\text{enh}}) \circ \Omega
    \end{align*}
    by \cite[Chapter 6, Theorem 4.4.6]{GR2}, this shows that $\text{Lie}\circ \text{exp} \simeq \text{id}_{\alg_{\textbf{Lie}}(\dk)}$.
\end{proof}
This implies that we can build an equivalence between formal moduli problems and Lie algebras by composing the exponential map and the Lie map with delooping and looping respectively:
\begin{center}
    \begin{tikzcd}
            \alg_{\textbf{Lie}}(\dk)\arrow[r, shift left=1ex, "\text{exp}"{name=G}] & \text{Grp}(\mathbf{FMP})\arrow[l, shift left=1ex, "\text{Lie}"{name=F}]  \arrow[phantom, from=F, to=G, , "\sim"] \arrow[r, shift left=1ex, "B"{name=H}] &  \mathbf{FMP}\arrow[l, shift left=1ex, "\Omega"{name=I}]
            \arrow[phantom, from=H, to=I, , "\sim"]
        \end{tikzcd}.
\end{center}
In the remainder of this chapter we will prove the following
\begin{theorem}[\cite{AK}]\label{thm18} 
    Let $\mathfrak{g}\in \alg_{\mathbf{Lie}}(\dk)$. Then 
    \begin{align*}
        \Psi(\mathfrak{g}) \simeq B\circ \textnormal{exp}(\mathfrak{g})
    \end{align*}
    as formal moduli problems.
\end{theorem}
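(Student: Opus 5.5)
Since both $\Psi$ and $B\circ \textnormal{exp}$ are equivalences onto $\mathbf{FMP}$, and every formal moduli problem is determined by its restriction to dg Artin $\field$-algebras (Theorem \ref{thm14}), it suffices to compare their values on a dg Artin algebra $R$, naturally in $R$. By definition $\Psi(\mathfrak{g})(R) \simeq \map_{\alg_{\mathbf{Lie}}(\dk)}(\mathfrak{D}(R),\mathfrak{g})$. I first identify $\mathfrak{D}(R)$ with a Koszul-dual coalgebra. For Artin $R$, the augmentation ideal $\mathfrak{m}_R$ is finite in each degree, so $R^{\vee}$ is a coaugmented cocommutative coalgebra, and $\mathfrak{D}(R)\simeq \textnormal{coChev}^{\textnormal{enh}}(R^{\vee})$. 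This is the standard fact that on Artin (good) objects $\mathfrak{D}$ is computed by dualizing and then applying the Lie cobar construction. The adjunction $\textnormal{Chev}^{\textnormal{enh}}\dashv \textnormal{coChev}^{\textnormal{enh}}$ runs the wrong way for a direct computation. I would therefore use that $\textnormal{Chev}^{\textnormal{enh}}$ restricted to the essential image of $\mathfrak{D}$ on Artin objects is fully faithful, so that $R^{\vee}\simeq \textnormal{Chev}^{\textnormal{enh}}(\mathfrak{D}(R))$. This gives
\begin{align*}
\Psi(\mathfrak{g})(R)\simeq \map_{\coalg^{\textnormal{coaug}}_{\dgcocomm}(\dk)}(R^{\vee},\textnormal{Chev}^{\textnormal{enh}}(\mathfrak{g})),
\end{align*}
the mapping space being computed in conilpotent coalgebras, where Lie Koszul duality is an equivalence.

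For the other side, I use Theorem \ref{thm20} and Proposition \ref{prop11}. I compute $\textnormal{Distr}^{\textnormal{Cocom}^{\textnormal{aug}}}(B\,\textnormal{exp}(\mathfrak{g}))$ by commuting $\textnormal{Distr}$ with the bar construction of the group $\textnormal{exp}(\mathfrak{g})$. This is legitimate because $\textnormal{Distr}^{\textnormal{Cocom}^{\textnormal{aug}}}$ is symmetric monoidal and a left adjoint, so it preserves the geometric realization defining $B$. The group $\textnormal{exp}(\mathfrak{g})$ is inf-affine with distributions $U^{\textnormal{Hopf}}(\mathfrak{g})\simeq \textnormal{Grp}(\textnormal{Chev}^{\textnormal{enh}})(\Omega\mathfrak{g})$. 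Since $\textnormal{Chev}^{\textnormal{enh}}$ is symmetric monoidal and commutes with colimits, I obtain $\textnormal{Distr}(B\,\textnormal{exp}(\mathfrak{g}))\simeq \textnormal{Chev}^{\textnormal{enh}}(B\Omega\mathfrak{g})\simeq \textnormal{Chev}^{\textnormal{enh}}(\mathfrak{g})$.

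Next I evaluate the formal moduli problem $\mathcal{Y}=B\,\textnormal{exp}(\mathfrak{g})$ at $S=\spec R$. By the $\textnormal{Distr}\dashv\textnormal{Spec}^{\textnormal{inf}}$ adjunction, $\mathcal{Y}(S)=\map_{\mathbf{FMP}}(S,\mathcal{Y})$, and $\textnormal{Distr}(S)\simeq R^{\vee}$ for Artin $S$. This yields a natural map
\begin{align*}
\mathcal{Y}(S)\to \map(R^{\vee},\textnormal{Chev}^{\textnormal{enh}}(\mathfrak{g})),
\end{align*}
and I must show it is an equivalence. This is the main obstacle, because $\mathcal{Y}$ is generally not inf-affine, as the spectrum functor is not fully faithful.

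To handle it, I would reduce to the case of a square-zero extension and argue by induction. Both sides send the pullbacks $R'\times_{\field\oplus\field[n]}\field$ to pullbacks: the left side by the formal moduli problem axioms, and the right side since the target is a coalgebra built from $\textnormal{Chev}^{\textnormal{enh}}$. The two sides also agree on the generators $\field\oplus\field[n]$, where both are computed by the tangent complex $\mathfrak{g}[1]$ via the equivalence $T_{\mathcal{Y}}\simeq\mathfrak{g}[1]$. Induction on the length of Artin algebras then gives the equivalence. Naturality in $R$ is built into the construction, which completes the comparison $\Psi(\mathfrak{g})\simeq B\circ\textnormal{exp}(\mathfrak{g})$.
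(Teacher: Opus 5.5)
Your last step has a genuine gap. The map $\mathcal{Y}(\spec R)\to\map(R^{\vee},\textnormal{Chev}^{\textnormal{enh}}(\mathfrak{g}))$ coming from $\textnormal{Distr}^{\textnormal{Cocom}^{\textnormal{aug}}}\dashv\textnormal{Spec}^{\textnormal{inf}}$ is exactly the unit $\mathcal{Y}\to\textnormal{Spec}^{\textnormal{inf}}(\textnormal{Distr}^{\textnormal{Cocom}^{\textnormal{aug}}}(\mathcal{Y}))$ evaluated on $\spec R$. Formal moduli problems are determined by their values on Artin algebras, so showing this map is an equivalence for all Artin $R$ means showing that $B\exp(\mathfrak{g})$ is inf-affine. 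As you say yourself, that is false in general.

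Concretely, the induction fails at its base case. On $\field\oplus\field[n]$ the right-hand side is governed, up to shift, by the derived primitives $\textnormal{coChev}^{\textnormal{enh}}(\textnormal{Chev}^{\textnormal{enh}}(\mathfrak{g}))$, not by $\mathfrak{g}$. Take $\mathfrak{g}=\mathfrak{sl}_2$ in degree $0$. Then $C^{\ast}(\mathfrak{sl}_2)\simeq\sym(\field[-3])$, so at the dual numbers the right-hand side is $\map_{\textnormal{CAlg}^{\textnormal{aug}}}(C^{\ast}(\mathfrak{sl}_2),\field[\epsilon]/\epsilon^2)\simeq K(\field,3)$. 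But $B\exp(\mathfrak{sl}_2)$, and likewise $\Psi(\mathfrak{sl}_2)$, evaluated at the dual numbers is $K(\mathfrak{sl}_2,1)$.

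The root cause is that you use two different $\infty$-categories of coalgebras. Your formula $\Psi(\mathfrak{g})(R)\simeq\map(R^{\vee},\textnormal{Chev}^{\textnormal{enh}}(\mathfrak{g}))$ holds only in conilpotent coalgebras localized at Koszul weak equivalences, where Lie Koszul duality is an equivalence. Full faithfulness of $\textnormal{Chev}^{\textnormal{enh}}$ on the image of $\mathfrak{D}$ does not give it, since $\mathfrak{g}$ is arbitrary. By contrast, $\textnormal{Distr}^{\textnormal{Cocom}^{\textnormal{aug}}}$, $\textnormal{Spec}^{\textnormal{inf}}$ and $\textnormal{Chev}^{\textnormal{enh}}\dashv\textnormal{coChev}^{\textnormal{enh}}$ live in $\coalg^{\textnormal{coaug}}_{\dgcocomm}(\dk)$, where equivalences are quasi-isomorphisms and $\textnormal{Chev}^{\textnormal{enh}}$ is not fully faithful. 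Mapping spaces into $\textnormal{Chev}^{\textnormal{enh}}(\mathfrak{g})$ differ between the two settings, and the $\mathfrak{sl}_2$ example is exactly this discrepancy. The adjunction therefore gives you no map into the mapping space that actually computes $\Psi(\mathfrak{g})$.

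The missing idea is not to reconstruct a formal moduli problem from its distributions, but to pass to loop groups, which are inf-affine. This is what the paper does. Using that $\Omega$, $B$ and $\exp$ are equivalences and that formal groups are inf-affine, Lemma \ref{lem7} reduces the theorem to $\textnormal{Grp}(\textnormal{Distr}^{\textnormal{Cocom}^{\textnormal{aug}}})(\Omega\Psi(\mathfrak{g}))\simeq U^{\textnormal{Hopf}}(\mathfrak{g})$. Applying $\textnormal{Bar}$ and using $\textnormal{Bar}(U^{\textnormal{Hopf}}(\mathfrak{g}))\simeq\textnormal{Chev}^{\textnormal{enh}}(\mathfrak{g})$, this reduces further to $\textnormal{Distr}^{\textnormal{Cocom}^{\textnormal{aug}}}(\Psi(\mathfrak{g}))\simeq\textnormal{Chev}^{\textnormal{enh}}(\mathfrak{g})$. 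The paper proves that last statement with Lurie's equivalence $\textnormal{Rep}_{\mathfrak{g}}\simeq\indcoh(\Psi(\mathfrak{g}))$. Under it, $p^{\indcoh}_{\ast}\dashv p^{!}$ becomes $\field\otimes^{\mathbb{L}}_{U^{\textnormal{Hopf}}(\mathfrak{g})}-\dashv\textnormal{triv}$, and $\omega_{\Psi(\mathfrak{g})}$ becomes the trivial representation. Your computation of $\textnormal{Distr}^{\textnormal{Cocom}^{\textnormal{aug}}}(B\exp(\mathfrak{g}))$ covers only the $\exp$ side of this. What you still need is the distributions of $\Psi(\mathfrak{g})$, with the comparison carried out at the level of group objects.
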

\begin{lemma}\label{lem7}
    To prove Theorem \ref{thm18}, it suffices to show that
    \begin{align*}
        \textnormal{Grp}(\textnormal{Distr}^{\textnormal{Cocom}^{\textnormal{aug}}})(\Omega \Psi(\mathfrak{g})) \simeq U^{\textnormal{Hopf}}(\mathfrak{g}).
    \end{align*}
\end{lemma}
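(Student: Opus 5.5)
The reduction will be carried out using only the equivalences already recorded in this appendix. Suppose the displayed equivalence of monoids in coaugmented cocommutative coalgebras holds, $\textnormal{Grp}(\textnormal{Distr}^{\textnormal{Cocom}^{\textnormal{aug}}})(\Omega \Psi(\mathfrak{g})) \simeq U^{\textnormal{Hopf}}(\mathfrak{g})$. By \cite[Chapter 6, Theorem 6.1.2]{GR2} we have $U^{\textnormal{Hopf}} \simeq \textnormal{Grp}(\textnormal{Chev}^{\textnormal{enh}})\circ \Omega$. Applying $B\circ \textnormal{Monoid}(\textnormal{coChev}^{\textnormal{enh}})$ to both sides therefore gives
\begin{align*}
\textnormal{Lie}(\Omega\Psi(\mathfrak{g})) \simeq B\circ\textnormal{Monoid}(\textnormal{coChev}^{\textnormal{enh}})\circ \textnormal{Grp}(\textnormal{Chev}^{\textnormal{enh}})\circ\Omega(\mathfrak{g}) \simeq \mathfrak{g}.
\end{align*}
The first equivalence is the definition of $\textnormal{Lie}$. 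The second uses that $B\circ \textnormal{Monoid}(\textnormal{coChev}^{\textnormal{enh}})$ is a left inverse of $\textnormal{Grp}(\textnormal{Chev}^{\textnormal{enh}})\circ\Omega$, by \cite[Chapter 6, Theorem 4.4.6]{GR2}, as in the proof of the preceding corollary.

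Next I would use that $\textnormal{exp}$ and $\textnormal{Lie}$ are mutually inverse equivalences. This is Theorem \ref{thm19} together with the corollary that follows it. Applying $\textnormal{exp}$ to the equivalence above gives $\Omega\Psi(\mathfrak{g}) \simeq \textnormal{exp}(\textnormal{Lie}(\Omega\Psi(\mathfrak{g}))) \simeq \textnormal{exp}(\mathfrak{g})$ in $\textnormal{Grp}(\mathbf{FMP})$. Since $\Omega: \mathbf{FMP}\to\textnormal{Grp}(\mathbf{FMP})$ is an equivalence with inverse $B$ (\cite[Chapter 5, Theorem 2.3.2]{GR2}, Theorem \ref{thm20}), applying $B$ yields $\Psi(\mathfrak{g})\simeq B\Omega\Psi(\mathfrak{g})\simeq B\circ\textnormal{exp}(\mathfrak{g})$. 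This is the claim of Theorem \ref{thm18}, and the argument is natural in $\mathfrak{g}$ whenever the hypothesised equivalence is.

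The one point that needs care is that the hypothesis must be read as an equivalence of group objects, that is, of monoids in $\coalg^{\textnormal{coaug}}_{\dgcocomm}(\dk)$, and not merely of underlying coalgebras. Otherwise $\textnormal{Monoid}(\textnormal{coChev}^{\textnormal{enh}})$ cannot be applied to it. I would therefore state explicitly that the equivalence in the lemma is taken in $\textnormal{Monoid}(\coalg^{\textnormal{coaug}}_{\dgcocomm}(\dk))$. With that reading, every remaining step only invokes results already cited in this appendix, so I expect no substantive obstacle.
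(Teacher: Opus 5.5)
Your argument is correct, but it takes a different route from the paper's. The paper applies $\textnormal{Monoid}(\textnormal{Spec}^{\textnormal{inf}})$ directly to both sides of the hypothesised equivalence. On the left it uses that every group object in $\mathbf{FMP}$ is inf-affine (\cite[Chapter 7, Corollary 3.2.2]{GR2}), so the unit of $\textnormal{Distr}^{\textnormal{Cocom}^{\textnormal{aug}}}\dashv\textnormal{Spec}^{\textnormal{inf}}$ returns $\Omega\Psi(\mathfrak{g})$. On the right, $\textnormal{Monoid}(\textnormal{Spec}^{\textnormal{inf}})\circ\textnormal{Grp}(\textnormal{Chev}^{\textnormal{enh}})\circ\Omega$ is $\textnormal{exp}$ by definition. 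It then applies $B$.

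You instead descend to Lie algebras: you compute $\textnormal{Lie}(\Omega\Psi(\mathfrak{g}))\simeq\mathfrak{g}$ via the left-inverse statement \cite[Chapter 6, Theorem 4.4.6]{GR2}, and then come back up using $\textnormal{exp}\circ\textnormal{Lie}\simeq\textnormal{id}$. You rightly cite both Theorem \ref{thm19} and its corollary here. The corollary's proof only establishes $\textnormal{Lie}\circ\textnormal{exp}\simeq\textnormal{id}$, so the direction you need genuinely uses that $\textnormal{exp}$ is an equivalence.

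Your route stays within statements already recorded in the appendix, at the cost of a round trip through $\mathfrak{g}$. The paper's route is one step shorter but imports inf-affineness of arbitrary formal groups. That fact is itself deduced from Theorem \ref{thm19}, so the two arguments rest on the same input.

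Your remark that the hypothesis must be an equivalence of monoid objects, not merely of underlying coalgebras, is correct. The paper's version needs it just as much, since it applies $\textnormal{Monoid}(\textnormal{Spec}^{\textnormal{inf}})$ to that equivalence.
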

\begin{proof}
As a corollary of Theorem \ref{thm19}, one can show that all group objects in formal moduli problems are inf-affine when we forget the group structure (see \cite[Chapter 7, Corollary 3.2.2]{GR2}). Therefore, applying $\textnormal{Monoid}(\textnormal{Spec}^{\textnormal{inf}})$ to the left hand side of the above equation we get
\begin{align*}
    \textnormal{Monoid}(\textnormal{Spec}^{\textnormal{inf}}) \circ \textnormal{Grp}(\textnormal{Distr}^{\textnormal{Cocom}^{\textnormal{aug}}})(\Omega \Psi(\mathfrak{g})) \simeq \Omega \Psi(\mathfrak{g}).
\end{align*}
For the right hand side of the equation, we get
\begin{align*}
     \textnormal{Monoid}(\textnormal{Spec}^{\textnormal{inf}}) \circ U^{\textnormal{Hopf}}(\mathfrak{g}) \simeq \textnormal{Monoid}(\textnormal{Spec}^{\textnormal{inf}}) \circ \textnormal{Grp}(\textnormal{Chev}^{\textnormal{enh}}) \circ \Omega (\mathfrak{g}) = \textnormal{exp}(\mathfrak{g}).
\end{align*}
Then applying $B$ to both sides yields the result of the theorem.
\end{proof}
\begin{lemma}
    To prove Theorem \ref{thm18}, it further suffices to show that
    \begin{align*}
        \textnormal{Distr}^{\textnormal{Cocom}^{\textnormal{aug}}}(\Psi(\mathfrak{g})) \simeq \textnormal{Chev}^{\textnormal{enh}}(\mathfrak{g}).
    \end{align*}
\end{lemma}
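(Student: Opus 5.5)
The plan is to reduce the desired equivalence of group objects in Lemma \ref{lem7} to an equivalence of underlying coaugmented cocommutative coalgebras, using that both sides are built from $\textnormal{Distr}^{\textnormal{Cocom}^{\textnormal{aug}}}$ and $\textnormal{Chev}^{\textnormal{enh}}$ applied compatibly with looping. Since $\textnormal{Distr}^{\textnormal{Cocom}^{\textnormal{aug}}}$ is strict symmetric monoidal, it commutes with finite limits of the relevant shape only after one knows the objects are inf-affine. So I would not pass through limits directly. Instead I would proceed as follows.

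First, assume $\textnormal{Distr}^{\textnormal{Cocom}^{\textnormal{aug}}}(\Psi(\mathfrak{g}))\simeq \textnormal{Chev}^{\textnormal{enh}}(\mathfrak{g})$, naturally in $\mathfrak{g}$. Apply $\textnormal{Spec}^{\textnormal{inf}}$ to obtain a natural map $\Psi(\mathfrak{g})\to \textnormal{Spec}^{\textnormal{inf}}\textnormal{Chev}^{\textnormal{enh}}(\mathfrak{g})$; this is the unit of the adjunction in Definition \ref{def7}.

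Next, loop this map. The loop $\Omega\Psi(\mathfrak{g})=\spec(\field)\times_{\Psi(\mathfrak{g})}\spec(\field)$ is a group object, hence inf-affine by \cite[Chapter 7, Corollary 3.2.2]{GR2}. Since $\Psi$ is an equivalence preserving limits, $\Omega\Psi(\mathfrak{g})\simeq\Psi(\Omega\mathfrak{g})$ as group objects, where $\Omega\mathfrak{g}$ is the loop Lie algebra, viewed as a group object in $\alg_{\mathbf{Lie}}(\dk)$. Applying the hypothesis to the simplicial (Čech) object $\Psi(\Omega^{\bullet}\mathfrak{g})$ levelwise, and using that $\textnormal{Distr}^{\textnormal{Cocom}^{\textnormal{aug}}}$ and $\textnormal{Chev}^{\textnormal{enh}}$ are both strict symmetric monoidal, one gets an equivalence of monoid objects
\[
\textnormal{Grp}(\textnormal{Distr}^{\textnormal{Cocom}^{\textnormal{aug}}})(\Omega\Psi(\mathfrak{g}))\simeq \textnormal{Grp}(\textnormal{Chev}^{\textnormal{enh}})(\Omega\mathfrak{g}).
\]
By \cite[Chapter 6, Theorem 6.1.2]{GR2}, the right-hand side is $U^{\textnormal{Hopf}}(\mathfrak{g})$. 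Lemma \ref{lem7} then finishes the argument.

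The main obstacle is the naturality and monoidality needed in this looping step. The hypothesis is only an objectwise statement, while the Čech object involves products $\Psi(\mathfrak{g})^{\times n}\simeq\Psi(\mathfrak{g}^{\oplus n})$. I would therefore first upgrade the coalgebra equivalence to a natural symmetric monoidal equivalence of functors $\textnormal{Distr}^{\textnormal{Cocom}^{\textnormal{aug}}}\circ\Psi\simeq\textnormal{Chev}^{\textnormal{enh}}$ on $\alg_{\mathbf{Lie}}(\dk)$.

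To build this, I would use that $\Psi(\mathfrak{g})=\map(\mathfrak{D}(-),\mathfrak{g})$ is the colimit of representables $\spec(R)$ over Artinian $R$ with maps $\mathfrak{D}(R)\to\mathfrak{g}$. On representables, $\textnormal{Distr}^{\textnormal{Cocom}^{\textnormal{aug}}}(\spec R)\simeq R^{\vee}\simeq\textnormal{Chev}^{\textnormal{enh}}(\mathfrak{D}(R))$, the latter being the Koszul duality for good Lie algebras. Both functors preserve sifted colimits and finite products. Left Kan extension from the representables then gives the natural monoidal equivalence.
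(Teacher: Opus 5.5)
Your proposal does not establish the reduction the lemma asserts. The lemma says that a single equivalence $\textnormal{Distr}^{\textnormal{Cocom}^{\textnormal{aug}}}(\Psi(\mathfrak{g}))\simeq\textnormal{Chev}^{\textnormal{enh}}(\mathfrak{g})$ of coaugmented coalgebras, for the given $\mathfrak{g}$, already yields the equivalence of group objects in Lemma \ref{lem7}. That objectwise equivalence is all the paper's final step via $\textnormal{Rep}_{\mathfrak{g}}\simeq\textnormal{IndCoh}(\Psi(\mathfrak{g}))$ supplies.

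Your levelwise argument on the bar (\v{C}ech) object of $\Omega\mathfrak{g}$ cannot run on an objectwise equivalence. An equivalence of simplicial objects needs compatibility with all face and degeneracy maps, i.e.\ naturality and product-compatibility. You recognize this, but your ``upgrade'' never uses the hypothesis. The left Kan extension from representables rests on a natural identification $\textnormal{Distr}^{\textnormal{Cocom}^{\textnormal{aug}}}(\textnormal{Spec}\,R)\simeq R^{\vee}\simeq\textnormal{Chev}^{\textnormal{enh}}(\mathfrak{D}(R))$ for Artinian $R$. That is an independent attempt to prove a stronger form of the hypothesis, and hence essentially Theorem \ref{thm18} itself. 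So the implication the lemma is about is bypassed rather than proved, and the hardest part of the final result is moved into a sketched step. As written, that step also cites the wrong property: the category of elements indexing the density colimit is not sifted, so you would need that both functors are left adjoints.

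The missing idea is that no naturality is needed. Looping is a functor on pointed objects of the cartesian symmetric monoidal $\infty$-category $\coalg^{\textnormal{coaug}}_{\dgcocomm}(\dk)$. Any equivalence of coaugmented coalgebras therefore induces an equivalence of the associated loop group objects. The paper's argument is as follows.
\begin{enumerate}
\item Apply the bar construction to both sides of Lemma \ref{lem7}.
\item On the left, $\textnormal{Distr}^{\textnormal{Cocom}^{\textnormal{aug}}}$ commutes with $B$ and $B\Omega\Psi(\mathfrak{g})\simeq\Psi(\mathfrak{g})$. So the bar construction of $\textnormal{Grp}(\textnormal{Distr}^{\textnormal{Cocom}^{\textnormal{aug}}})(\Omega\Psi(\mathfrak{g}))$ is $\textnormal{Distr}^{\textnormal{Cocom}^{\textnormal{aug}}}(\Psi(\mathfrak{g}))$ by \cite[Chapter 7, Lemma 1.2.6]{GR2}.
\item On the right, $\textnormal{Bar}(U^{\textnormal{Hopf}}(\mathfrak{g}))\simeq\textnormal{Chev}^{\textnormal{enh}}(\mathfrak{g})$ by \cite[Chapter 6, Proposition 5.3.3]{GR2}.
\item The hypothesis thus says exactly that the two bar constructions are equivalent as coaugmented coalgebras.
\item Apply $\Omega$. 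Both group objects are group-like, so $\Omega\circ\textnormal{Bar}$ returns them, giving the equivalence required in Lemma \ref{lem7}.
\end{enumerate}
This is what makes a pointwise statement sufficient.
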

\begin{proof}
    Applying $\textnormal{Bar}\circ -$ to both sides of the equation in Lemma \ref{lem7} yields
    \begin{align*}
        \textnormal{Bar}\circ \textnormal{Grp}(\textnormal{Distr}^{\textnormal{Cocom}^{\textnormal{aug}}})(\Omega \Psi(\mathfrak{g})) \simeq \textnormal{Distr}^{\textnormal{Cocom}^{\textnormal{aug}}}\circ B(\Omega \Psi(\mathfrak{g})) \simeq \textnormal{Distr}^{\textnormal{Cocom}^{\textnormal{aug}}}(\Psi(\mathfrak{g})) 
    \end{align*}
    by \cite[Chapter 7, Lemma 1.2.6]{GR2}, and
    \begin{align*}
        \textnormal{Bar}\circ U^{\textnormal{Hopf}}(\mathfrak{g}) \simeq \textnormal{Chev}^{\textnormal{enh}}(\mathfrak{g})
    \end{align*}
    by \cite[Chapter 6, Proposition 5.3.3]{GR2}. Since the monoidal structure on cocommutative coalgebras is cartesian, the Bar functor is identified with the classifying space functor. Hence, since the functors $\textnormal{Grp}(\textnormal{Distr}^{\textnormal{Cocom}^{\textnormal{aug}}})$ and $U^{\textnormal{Hopf}}$ land in  group-like elements, applying $\Omega$ recovers the original equation, and the result follows.
\end{proof}
Finally, to show $\textnormal{Distr}^{\textnormal{Cocom}^{\textnormal{aug}}}(\Psi(\mathfrak{g})) \simeq \textnormal{Chev}^{\textnormal{enh}}(\mathfrak{g})$, we use the following result of Lurie
\begin{proposition}[Proposition 2.4.31, \cite{DAGX}]
    Let $\mathfrak{g}\in \alg_{\textnormal{Lie}}(\dk)$. There is a canonical equivalence of $\infty$-categories
    \begin{align*}
        \textnormal{Rep}_{\mathfrak{g}} \xrightarrow{\simeq} \textnormal{IndCoh}(\Psi(\mathfrak{g})).
    \end{align*}
\end{proposition}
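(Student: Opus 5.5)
The plan is to compute both sides as limits over the same indexing diagram and match them term by term. First, I would describe $\textnormal{IndCoh}(\Psi(\mathfrak{g}))$ as a limit. Since $\Psi(\mathfrak{g})\in\mathbf{FMP}$ is determined by its values on dg Artin $\field$-algebras (Theorem \ref{thm14}), ind-coherent sheaves on it are the right Kan extension of $\textnormal{IndCoh}$ from representables along $!$-pullbacks. This gives
\begin{align*}
\textnormal{IndCoh}(\Psi(\mathfrak{g}))\simeq \lim_{(\spec R\to \Psi(\mathfrak{g}))}\textnormal{IndCoh}(\spec R).
\end{align*}
Here the indexing category consists of pairs of a dg Artin algebra $R$ and a point $\eta\in\Psi(\mathfrak{g})(R)=\map_{\alg_{\mathbf{Lie}}(\dk)}(\mathfrak{D}(R),\mathfrak{g})$. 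On the other side, restriction of representations along each $\eta:\mathfrak{D}(R)\to\mathfrak{g}$ gives a compatible family of functors, and hence a comparison functor
\begin{align*}
\textnormal{Rep}_{\mathfrak{g}}\longrightarrow \lim_{(R,\eta)}\textnormal{Rep}_{\mathfrak{D}(R)}.
\end{align*}

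Second, I would establish the representable case: for $R$ a dg Artin algebra there is a natural equivalence $\textnormal{Rep}_{\mathfrak{D}(R)}\simeq\textnormal{IndCoh}(\spec R)$, compatible with restriction along $\mathfrak{D}(f)$ on the left and $f^!$ on the right. I would argue by Koszul duality. $\textnormal{Rep}_{\mathfrak{D}(R)}\simeq\lmod_{U(\mathfrak{D}(R))}(\dk)$ is compactly generated by $U(\mathfrak{D}(R))$, which is the induced representation on the trivial module $\field$. For Artin $R$, $\mathfrak{D}(R)$ is good, so $C^*(\mathfrak{D}(R))\simeq R$. Consequently the functor $V\mapsto C^*(\mathfrak{D}(R),V)$ identifies the thick subcategory generated by the trivial representation $\field$ with $\textnormal{Coh}(R)$, since $\field$ generates $\textnormal{Coh}(R)$ for $R$ Artinian. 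Ind-extending then gives the equivalence. Compatibility with $f^!$ follows because restriction along $\mathfrak{D}(f)$ preserves the trivial representation and corresponds to $\map_R(R',-)=f^!$ on coherent objects. This identifies the target limit with $\textnormal{IndCoh}(\Psi(\mathfrak{g}))$.

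Third, and this is the main obstacle, I must show the comparison functor is an equivalence for arbitrary $\mathfrak{g}$. The first step is to note that it is tautologically an equivalence when $\mathfrak{g}=\mathfrak{D}(R_0)$, since then $(R_0,\textnormal{id})$ is terminal in the indexing category. For general $\mathfrak{g}$, I would use the structure result from the proof of \cite[Theorem 1.3.12]{DAGX}: every Lie algebra is obtained from good ones $\mathfrak{D}(R)$ by filtered colimits and pushouts along maps of the form $\mathfrak{D}(\field\oplus\field[n])\to 0$ (cells). Moreover, $\Psi$ carries these colimits to the colimits of formal moduli problems that present $\Psi(\mathfrak{g})$. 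Since $\textnormal{IndCoh}$ sends colimits of formal moduli problems to limits, it suffices to show that $\mathfrak{h}\mapsto\textnormal{Rep}_{\mathfrak{h}}$ sends these particular colimits to limits.

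For filtered colimits this is immediate, because $U$ preserves them and $\textnormal{Rep}$ is a colimit-compatible module category (as the limit of categories along restriction, using compact generation). The pushout along a free Lie algebra is the delicate case. I would first try to write $U$ of the pushout as an amalgamated coproduct of associative algebras, $U(\mathfrak{h}\sqcup_{\mathrm{free}}0)\simeq U(\mathfrak{h})\sqcup_{T(V)}\field$. I would then verify directly that modules over such a pushout of $\mathbb{E}_1$-algebras form the fiber product of module categories, exploiting that $T(V)$ is free, so that a $T(V)$-module structure is just a map $V\otimes M\to M$ and the fiber product condition is a trivialization of this data. If this direct verification proves awkward, the fallback is to check the equivalence on compact generators and on mapping spaces via the Chevalley--Eilenberg cochains with coefficients, reducing to the identity $C^*(\mathfrak{g},V)\simeq\Gamma(\Psi(\mathfrak{g}),-)$.
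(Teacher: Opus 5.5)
The paper does not prove this statement; it is quoted from Lurie, so I am judging your argument on its own. Step~1 is fine, but Steps~2 and~3 fail as written.

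\textbf{Step 2: the representable case uses the wrong generator and the wrong functor.} The trivial representation does not generate $\textnormal{Rep}_{\mathfrak{D}(R)}$ unless $R=\field$. Moreover, $C^*(\mathfrak{D}(R),-)=\textnormal{Hom}_{\textnormal{Rep}}(\field,-)$ sends it to $C^*(\mathfrak{D}(R))\simeq R$, not to the residue field. So this functor identifies the thick subcategory generated by $\field$ with $\textnormal{Perf}(R)$, not with $\textnormal{Coh}(R)$. Its Ind-extension is $\textnormal{Mod}_R=\textnormal{QCoh}(R)$, and it lands in the localizing subcategory of $\textnormal{Rep}_{\mathfrak{D}(R)}$ generated by $\field$, which is a proper subcategory. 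For example, take $R=\field[\epsilon]$: then $U\mathfrak{D}(R)\simeq\field[x]$ with $x$ in cohomological degree $1$, and inverting $x$ kills $\field=\field[x]/x$ but not the free module.

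You have conflated the trivial representation with the residue field $\field\in\textnormal{Coh}(R)$. The correct dictionary is: the free module $U\mathfrak{D}(R)$ corresponds to the residue field, and the trivial representation corresponds to $\omega_R=R^{\vee}$. This is also how the paper itself uses the proposition in Appendix B.

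The input you are missing is the Koszul duality statement $\textnormal{End}_R(\field)\simeq U(\mathfrak{D}(R))$ as $\mathbb{E}_1$-algebras, naturally in $R$. With it, $M\mapsto\textnormal{Hom}_{\textnormal{IndCoh}(R)}(\field,M)$ gives $\textnormal{IndCoh}(R)\simeq\textnormal{Rep}_{\mathfrak{D}(R)}$ by Morita theory. Compatibility of restriction with $f^!$ then follows by passing to left adjoints, since induction and $f^{\textnormal{IndCoh}}_*$ both send generator to generator.

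Your compatibility sentence actually contradicts your own identification. Under $C^*$, ``restriction preserves the trivial representation'' matches $f^*\mathcal{O}_R\simeq\mathcal{O}_{R'}$, whereas $f^!$ preserves $\omega$, not $\mathcal{O}$.

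\textbf{Step 3: the structural claim is false.} The source $\mathfrak{D}(\field\oplus\field[n])$ is free on a generator of homological degree $-n-1$, so a pushout along $\mathfrak{D}(\field\oplus\field[n])\to 0$ adjoins one generator of degree $-n\leq 0$. Starting from good algebras, filtered colimits and such pushouts only reach algebras concentrated in non-positive homological degrees. They do not even reach ordinary Lie algebras with relations such as $\mathfrak{sl}_2$: killing a degree-$0$ relation needs a cell along a free algebra on a degree-$0$ generator, which is not of the form $\mathfrak{D}(\field\oplus\field[n])$.

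The claim that $\textnormal{IndCoh}$ sends colimits in $\mathbf{FMP}$ to limits is also false in general. The initial formal moduli problem is $\ast$, while $\textnormal{IndCoh}(\ast)\simeq\dk$. For pushouts it is not automatic either: pushouts in $\mathbf{FMP}$ are localizations of objectwise pushouts, and the limit formula only converts objectwise colimits into limits. You would need a descent statement for $\textnormal{IndCoh}$ of roughly the same depth as the proposition.

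\textbf{How to repair Step 3.} None of the cell decomposition is needed.
\begin{itemize}
\item Since $\Psi$ is an equivalence and every formal moduli problem is the colimit in $\mathbf{FMP}$ of its Artinian points, $\mathfrak{g}\simeq\colim_{(R,\eta)}\mathfrak{D}(R)$ in $\alg_{\lie}(\dk)$.
\item This colimit is indexed by the slice of good Lie algebras over $\mathfrak{g}$, which has the initial object $0=\mathfrak{D}(\field)$ and is therefore weakly contractible.
\item $U$ preserves colimits.
\item $A\mapsto\lmod_A(\dk)$ carries weakly contractible colimits of $\mathbb{E}_1$-algebras to limits along restriction. This is the general form of your $T(V)$ argument: a module structure on $M$ is an algebra map $A\to\textnormal{End}(M)$, and you apply this levelwise in $\textnormal{Fun}(\Delta^n,\dk)$.
\end{itemize}
Hence your comparison functor $\textnormal{Rep}_{\mathfrak{g}}\to\lim_{(R,\eta)}\textnormal{Rep}_{\mathfrak{D}(R)}$ is an equivalence directly. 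Together with the corrected representable case, this proves the proposition.
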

\begin{proof}[Proof of Theorem \ref{thm18}]
    Recall that $\textnormal{Distr}^{\textnormal{Cocom}^{\textnormal{aug}}}(\Psi(\mathfrak{g})) = p_{\ast}^{\text{IndCoh}}(\omega_{\Psi(\mathfrak{g})})$, where $\omega_{\Psi(\mathfrak{g})} = p^!(\field)$, and $p: \Psi(\mathfrak{g}) \rightarrow \spec \field$ is the canonical projection. Since formal moduli problems are proper, we also have that $p_{\ast}^{\text{IndCoh}} \dashv p^!$. Under the correspondence between representations of $\mathfrak{g}$ and ind-coherent sheaves on $\Psi(\mathfrak{g})$, this adjunction corresponds to the extension $\dashv$ restriction adjunction of modules induced by the map $\mathfrak{g} \rightarrow 0$:
    \begin{center}
    \begin{tikzcd}
            \field \otimes_{U^{\text{Hopf}}(\mathfrak{g})}^{\mathbb{L}} -: \textnormal{LMod}_{U^{\text{Hopf}}(\mathfrak{g})}(\dk) \arrow[r, shift left=1ex, ""{name=G}] &\dk: \text{trivial}\arrow[l, shift left=1ex, ""{name=F}]
            \arrow[phantom, from=F, to=G, , "\scriptscriptstyle\boldsymbol{\perp}"]
        \end{tikzcd}.
    \end{center}
    In particular, $\omega_{\Psi(\mathfrak{g})} = p^!(\field)$ corresponds to the trivial $\mathfrak{g}$-representation on $\field$, and therefore
    \begin{align*}
        \textnormal{Distr}(\Psi(\mathfrak{g})) = p_{\ast}^{\text{IndCoh}}(\omega_{\Psi(\mathfrak{g})}) \simeq  \field \otimes_{U^{\text{Hopf}}(\mathfrak{g})}^{\mathbb{L}} \field = \text{Chev}(\mathfrak{g})
    \end{align*}
    in $\dk$. The fact that they also agree as coalgebras then follows from the construction of the coalgebra structures on $\textnormal{Distr}^{\textnormal{Cocom}^{\textnormal{aug}}}(\Psi(\mathfrak{g}))$ and $\text{Chev}^{\text{enh}}(\mathfrak{g})$ from their respective structures of a monoidal units.
\end{proof}
\newpage
\thispagestyle{empty}
\bibliographystyle{alpha}
\inputencoding{utf8}    
\bibliography{main}
\end{document}